\newtheorem{prop}{Proposition}[section]
\newtheorem{thm}[prop]{Theorem}
\newtheorem{lemm}[prop]{Lemma}
\newtheorem{coro}[prop]{Corollary}
\newtheorem*{claim*}{Claim}
\newtheorem*{lemm*}{Lemma}
\newtheorem*{thm*}{Theorem}
\theoremstyle{definition}
\newtheorem{defin}[prop]{Definition}
\newtheorem{remark}[prop]{Remark}
\newtheorem*{rmk*}{Remark}
\newtheorem{exam}[prop]{Example}
\newtheorem*{example*}{Example}
\newenvironment{defi}{\begin{defin}%
  \pushQED{\qed}}%
  {\popQED\end{defin}}
\newenvironment{rmk}{\begin{remark}%
  \pushQED{\qed}}%
  {\popQED\end{remark}}
\newenvironment{example}{\begin{exam}%
  \pushQED{\qed}}%
  {\popQED\end{exam}}
\newcommand{\CC}{\mathbb{C}}
\newcommand{\HH}{\mathbb{H}}
\newcommand{\NN}{\mathbb{N}}
\newcommand{\RR}{\mathbb{R}}
\newcommand{\ZZ}{\mathbb{Z}}
\newcommand{\cC}{\mathcal C}
\newcommand{\cD}{\mathcal D}
\newcommand{\cE}{\mathcal E}
\newcommand{\cH}{\mathcal H}
\newcommand{\cK}{\mathcal K}
\newcommand{\cL}{\mathcal L}
\newcommand{\cR}{\mathcal R}
\newcommand{\cU}{\mathcal U}
\newcommand{\cY}{\mathcal Y}
\newcommand{\rg}{{\rm g}}
\def\fsu{\mathfrak{su}}
\DeclareMathOperator{\tr}{tr}
\DeclareMathOperator{\Span}{span}
\DeclareMathOperator{\supp}{supp}
\DeclareMathOperator{\loc}{loc}
\DeclareMathOperator{\Id}{Id}
\DeclareMathOperator{\diam}{diam}
\DeclareMathOperator{\dist}{dist}
\DeclareMathOperator{\Ric}{Ric}
\DeclareMathOperator{\Inte}{Int}
\DeclareMathOperator{\vol}{\text{vol}}
\DeclareMathOperator{\Vol}{\text{Vol}}
\DeclareMathOperator{\inj}{\text{inj}}
\newcommand{\ep}{\varepsilon}
\newcommand{\id}{\text{id}}
\newcommand{\bangle}[1]{\langle #1 \rangle}
\newcommand{\floor}[1]{\left\lfloor #1 \right\rfloor}
\newcommand{\Hom}{{\rm Hom}}
\newcommand{\rom}[1]{\expandafter\romannumeral #1}
\newcommand{\Rom}[1]{\uppercase\expandafter{\romannumeral #1}}
\setlist[enumerate]{leftmargin = 2em}
\setlist[itemize]{leftmargin = 2em}
\title[$SU(2)$ Yang--Mills--Higgs functional with Higgs self-interaction on $3$-manifolds]{$SU(2)$ Yang--Mills--Higgs functional with Higgs self-interaction on $3$-manifolds}
\author{Da Rong Cheng}
\address[Da Rong Cheng]{Department of Mathematics\\University of Miami\\1365 Memorial Drive\\
Coral Gables, FL 33146\\USA}
\email{darong.cheng@miami.edu}
\author{Daniel Fadel} 
\address[Daniel Fadel]{Instituto de Ci\^{e}ncias Matem\'{a}ticas e de Computa\c{c}\~{a}o (ICMC), Universidade de São Paulo (USP), São Carlos - SP, Brazil}
\urladdr{\href{https://sites.google.com/view/daniel-fadel-math-homepage/home}{sites.google.com/view/daniel-fadel-math-homepage/home}}
\email{\href{daniel.fadel@icmc.usp.br}{daniel.fadel@icmc.usp.br}}
\author{Luiz Lara}
\address[Luiz Lara]{Instituto de Matm\'{a}tica, Estat\'{i}stica e Computa\c{c}\~{a}o Cient\'{i}fica (IMECC), Universidade Estadual de Campinas (UNICAMP), Campinas - SP, Brazil}
\email{\href{luizlara@ime.unicamp.br}{luizlara@ime.unicamp.br}}
\keywords{Yang--Mills--Higgs theory, min-max theory, energy gap phenomena, asymptotic analysis, low-dimensional topology}
\subjclass[2020]{Primary 53C07, 53C21, 58E15, 58E30, 58J37; Secondary 35A15, 35B45, 35R01, 70S15}
\begin{document}

\begin{abstract}
Fixing a positive coupling constant $\lambda>0$, for any parameter $\ep>0$ we study critical points of the ($\ep$-scaled) Yang--Mills--Higgs energy
\[
\cY_{\ep}(\nabla,\Phi) = \int_M \left(\ep^2|F_{\nabla}|^2 + |\nabla\Phi|^2 + \frac{\lambda}{4\ep^2}(1-|\Phi|^2)^2\right)\vol_g =: \int_{M} e_{\ep}(\nabla,\Phi) \vol_{g},
\] defined for pairs $(\nabla,\Phi)$ consisting of a connection $\nabla$ on a $SU(2)$-bundle over an oriented, Riemannian $3$-manifold $(M^3, g)$, and a section $\Phi$ of the associated adjoint bundle. When $M$ is closed, 
we use a $2$-parameter min-max construction to produce, for $\ep\ll_M 1$, non-trivial critical points 
of $\cY_{\ep}$ within the energy regime
\[
1 \lesssim_{\lambda}\ep^{-1}\cY_{\ep}(\nabla_{\ep},\Phi_{\ep}) \lesssim_{\lambda, M} 1.
\] Furthermore, 
when the first Betti number of $M$ is zero, the constructed critical points are guaranteed to be irreducible in the sense that $\nabla_{\ep}\Phi_{\ep}\neq 0$. Next, assuming that $M$ has bounded geometry but is not necessarily compact, and given a family of critical points with $\ep^{-1}\cY_{\ep}(\nabla_{\ep}, \Phi_{\ep})$ uniformly bounded, we show that as $\ep \to 0$, the associated energy measures $\ep^{-1} e_{\ep}(\nabla_{\ep}, \Phi_{\ep})\vol_{g}$ converge along a subsequence to
\[
|h|^2 \vol_g + \sum_{x \in S}\Theta(x)\delta_{x},
\]
where $h$ is an $L^2$ harmonic $1$-form on $M$, while $S$ is a finite set of points. Moreover, each $\Theta(x)$ is equal to the total energy of a finite collection of $\cY_{1}$-critical points on the Euclidean $3$-space $\mathbb{R}^3$. Finally, from the a priori estimates involved in proving the above statements, we obtain an energy gap for critical points on 
$3$-manifolds with bounded geometry, implying in particular that over $\mathbb{R}^3$, there is $\theta_{\text{gap}}>0$ such that if $\ep^{-1}\cY_{\ep}(\nabla,\Phi)\leqslant \theta_{\text{gap}}\cdot\min\{1,\lambda\}$ then in fact $\cY_{\ep}(\nabla,\Phi)=0$. As a byproduct of our results, we also deduce the existence of non-trivial critical points of $\cY_{1}$ over $\mathbb{R}^3$, for any $\lambda>0$.

\end{abstract}

\maketitle 
\tableofcontents

\section{Introduction}
Let $(M^3,g)$ be a complete, connected and oriented Riemannian $3$-manifold without boundary, and let $P$ be a principal $SU(2)$-bundle over $M$. The standard (faithful) representation of $SU(2)$ on $\mathbb{C}^2$ then gives rise to an associated complex vector bundle $E:=P\times_{SU(2)}\mathbb{C}^2$, which carries a Hermitian metric and an orientation. A connection on $E$ that is compatible with both these structures is said to be \emph{$SU(2)$-compatible}, and we denote by $\mathscr{A}(E)$ the space of smooth, $SU(2)$-compatible connections on $E$. In particular, $\mathscr{A}(E)$ is an affine space modeled on $\Omega^1(M,\mathfrak{su}(E))$, where $\mathfrak{su}(E)$ denotes the associated adjoint bundle of $E$, namely the real vector bundle of traceless, skew-Hermitian endomorphisms of $E$. In terms of the principal bundle $P$, we have $\mathfrak{su}(E)\cong P\times_{(\mathrm{Ad},SU(2))}\mathfrak{su}(2)$.

Fixing a constant $\lambda > 0$, for each parameter $\varepsilon>0$, 
we consider the $(\ep$-scaled) \textbf{Yang--Mills--Higgs energy} functional 
\begin{equation}\label{eq: YMH_energy}
\cY_{\ep}(\nabla, \Phi) := \int_{M} \ep^2 |F_{\nabla}|^2 + |\nabla \Phi|^2 + \frac{\lambda}{4\ep^2}(1 - |\Phi|^2)^2 \vol_{g},
\end{equation}
defined on the \emph{configuration space}
\[
\mathscr{C}(E):=\{(\nabla,\Phi)\in\mathscr{A}(E)\times\Gamma(\mathfrak{su}(E)):|F_{\nabla}|,\ |\nabla\Phi|,\ (1-|\Phi|^2)\in L^2(M)\}.
\] 
Here 
$F_{\nabla}\in\Omega^2(M,\mathfrak{su}(E))$ is the curvature of the connection $\nabla$, 
and the norms $|F_{\nabla}|$ and $|\nabla\Phi|$ are induced by the metric $g$ on $M$ together with the metric on $\mathfrak{su}(E)$ arising from the Ad$_{SU(2)}$-invariant inner product 
\[
(a,b)\mapsto -2\text{tr}(ab) = : \langle a, b \rangle
\]
on the Lie algebra $\mathfrak{su}(2)$. The section $\Phi$ of a pair $(\nabla,\Phi)\in\mathscr{C}(E)$ is called the \emph{Higgs field}, and the potential term $V(|\Phi|):=\frac{\lambda}{4}(1 - |\Phi|^2)^2$ appearing in~\eqref{eq: YMH_energy} is known as the \emph{Higgs self-interaction}, with $\lambda$ referred to commonly as the \emph{coupling constant}. For convenience, the integrand in~\eqref{eq: YMH_energy} will often be denoted by $e_{\ep}(\nabla, \Phi)$, so that given $(\nabla, \Phi) \in \mathscr{A}(E) \times \Gamma(\fsu(E))$, we have
\begin{equation}\label{eq:e-definition}
e_{\ep}(\nabla, \Phi) := \ep^2 |F_{\nabla}|^2 + |\nabla\Phi|^2 + \frac{\lambda}{4\ep^2} (1 - |\Phi|^2)^2.
\end{equation}
When the choice of $(\nabla, \Phi)$ is clear from the context, we simply write $e_{\ep}$ for $e_{\ep}(\nabla, \Phi)$.

Next, given $(\nabla, \Phi) \in \mathscr{C}(E)$ and a variation
\[
t \mapsto (\nabla + ta, \Phi + t\phi),
\]
where $(a,\phi)\in\Omega_c^1\oplus\Omega_c^0(M,\mathfrak{su}(E))$, a direct computation followed by integration by parts shows that 
\[
\begin{split}
\frac{d}{dt}\Big|_{t=0}\mathcal{Y}_{\varepsilon}(\nabla + ta,\Phi + t\phi) =\ & 2\langle \varepsilon^2 F_{\nabla},d_{\nabla}a\rangle_{L^2}  +  2\langle {\nabla}\Phi,\nabla\phi + [a,\Phi]\rangle_{L^2} -\frac{\lambda}{\varepsilon^2}\langle (1-|\Phi|^2)\Phi,\phi\rangle_{L^2}\\
=\ & 2\langle \varepsilon^2 d_{\nabla}^{\ast}F_{\nabla} - [\nabla\Phi,\Phi], a\rangle_{L^2} + 2\big\langle \nabla^{\ast}\nabla\Phi - \frac{\lambda}{2\varepsilon^2}(1-|\Phi|^2)\Phi,\phi\big\rangle_{L^2}.
\end{split}
\]
Here $d_{\nabla}^*$ is the adjoint of the exterior derivative $d_{\nabla}$ induced by $\nabla$, and $\nabla^*\nabla$ stands for the rough Laplacian, which is nonnegative definite according to the convention we adopt. Consequently, a configuration $(\nabla, \Phi) \in \mathscr{C}(E)$ is critical for $\cY_{\ep}$ subject to smooth, compactly supported variations if and only if it is a solution of the \emph{Yang--Mills--Higgs equations} 
(cf.~\cite[p.101]{Jaffe-Taubes}):
    \begin{equation}\label{eq: 2nd_order_crit_pt_intro}
        \begin{cases}
            \varepsilon^2 d_{\nabla}^{\ast}F_{\nabla} = [\nabla\Phi,\Phi],\\
            \nabla^{\ast}\nabla\Phi = \frac{\lambda}{2\varepsilon^2}(1-|\Phi|^2)\Phi.
        \end{cases}
    \end{equation}    
Below, when considering pairs in $\mathscr{C}(E)$, we use ``solution to~\eqref{eq: 2nd_order_crit_pt_intro}'' interchangeably with ``critical point of $\cY_\ep$''.

Finally, an important feature of the Yang--Mills--Higgs energy $\mathcal{Y}_{\varepsilon}$ is its invariance under the action of the group $\mathscr{G}(E)$ of gauge transformations of $E$. That is,
\begin{equation}\label{eq: gauge_invariance}
    \mathcal{Y}_{\varepsilon}({\rg}\cdot{(\nabla,\Phi)}) = \mathcal{Y}_{\varepsilon}(\nabla,\Phi),\quad \text{ for all } {\rg}\in\mathscr{G}(E),
\end{equation} where ${\rg}\cdot{(\nabla,\Phi)}:=({\rg}\circ\nabla\circ{\rg}^{-1},{\rg}\circ\Phi\circ{\rg}^{-1})$ denotes the gauge action. (This is due to the following relations 
\[
F_{{\rg}\circ\nabla\circ{\rg}^{-1}} = {\rg}\circ F_{\nabla}\circ{\rg}^{-1},\ \ \ \  ({\rg}\circ\nabla\circ{\rg}^{-1})({\rg}\circ\Phi\circ{\rg}^{-1}) = {\rg}\circ\nabla\Phi\circ {\rg}^{-1},
\]
and the fact that the metric in $\mathfrak{su}(E)$ comes from an Ad-invariant inner product.) Likewise, if $(\nabla, \Phi) \in \mathscr{C}(E)$ is a solution of~\eqref{eq: 2nd_order_crit_pt_intro}, then so is ${\rg} \cdot (\nabla, \Phi)$ for any ${\rg} \in \mathscr{G}(E)$. 

We are now ready to describe our main results. 
\subsection{Statements}\label{subsec: main_results}
To begin, using a variational construction via min-max families, similar to the one carried out in the work of Pigati--Stern~\cite{Pigati-Stern2021} on the $U(1)$ Yang--Mills--Higgs functional, we prove
\begin{thm}[Existence of critical points]\label{thm: existence}
Suppose $(M^3,g)$ is closed. Then there exist a universal constant $\cC_0>0$, and constants $\ep_M,\Lambda_M\in (0,\infty)$ depending only on $(M,g)$, such that for all $\ep\in(0,\ep_M)$, there is a 
solution $(\nabla_{\ep},\Phi_{\ep})\in\mathscr{C}(E)$ 
of~\eqref{eq: 2nd_order_crit_pt_intro} satisfying
    \begin{equation}\label{ineq: right_energy_regime}
    \min\{1,\lambda\}\cdot\cC_0\leqslant \ep^{-1}\cY_{\ep}(\nabla_{\ep},\Phi_{\ep})\leqslant \max\{1,\lambda\}\cdot\Lambda_M.
    \end{equation} Moreover, we can assume $\ep_M$ is sufficiently small, depending only on $M$, and possibly on $\lambda$ when $\lambda<1$, so that we also have $\Phi_{\ep}\not\equiv 0$. 
\end{thm}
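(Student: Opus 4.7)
The plan is to carry out a $2$-parameter min-max, in direct analogy with the $1$-parameter construction of Pigati--Stern~\cite{Pigati-Stern2021} for the $U(1)$ Yang--Mills--Higgs functional. Because $SU(2)$ is $2$-connected, every $SU(2)$-bundle over the $3$-manifold $M$ is trivial; I would fix a trivialization $E\cong M\times \CC^2$, let $\nabla_0$ denote the corresponding product connection, and observe that constant sections $\Phi_*:M\to S^2\subset \fsu(2)$ are absolute minimizers with $\cY_\ep(\nabla_0,\Phi_*)=0$. These ground states will serve as the reference configurations at the boundary of the sweepouts.

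\textbf{Construction of sweepouts and upper bound.} I would exhibit a continuous family $F:S^2\to \mathscr{C}(E)$ with a distinguished nontrivial homotopy class (modulo gauge) and supremum energy bounded by $\Lambda_M\,\ep\cdot\max\{1,\lambda\}$. A natural model: fix a geodesic ball $B_r(p_0)\subset M$ with $r\sim \ep$, keep $\nabla=\nabla_0$ throughout, and let $\Phi^s$, $s\in S^2$, be built from a single radial profile $f_\ep:[0,r]\to[0,1]$ that interpolates between a zero at the center and a fixed unit constant outside $B_r(p_0)$, ``rotated'' so that as $s$ varies the field sweeps out all of $S^2\subset \fsu(2)$ inside the ball. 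Standard pointwise bounds $|\nabla\Phi^s|\lesssim \ep^{-1}$ and $(1-|\Phi^s|^2)^2\leq 1$ on a region of volume $\sim \ep^3$ then yield
\[
\sup_{s\in S^2}\cY_\ep(F(s))\;\leq\; \Lambda_M\,\ep\cdot\max\{1,\lambda\},
\]
controlling the min-max level $c_\ep:=\inf_{F'\in[F]}\sup_{s}\cY_\ep(F'(s))$ from above.

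\textbf{Lower bound, the main obstacle.} The goal is to show that every $F'$ in the admissible class contains some $s_*$ with $\cY_\ep(F'(s_*))\geq \cC_0\,\ep\cdot\min\{1,\lambda\}$. Heuristically, the homotopy class is detected by a gauge-invariant topological invariant of the map $s\mapsto \Phi^s/|\Phi^s|$ (where defined) into $S^2\subset\fsu(2)$, for instance a linking or degree statement on the zero-locus of $\Phi$ in $M\times S^2$. Its nontriviality forces, at some parameter $s_*$, the Higgs field to vanish somewhere, or, at scale $\sim \ep$, to cover a substantial portion of the target sphere. The scalar Modica-type identity
\[
|\nabla\Phi|^2 + \frac{\lambda}{2\ep^2}(1-|\Phi|^2)^2 \;\geq\; \frac{\sqrt{2\lambda}}{\ep}(1-|\Phi|^2)|\nabla\Phi|
\]
then converts such coverage into an $\ep$-independent lower bound on $\cY_\ep/\ep$, with the factor $\min\{1,\lambda\}$ tracking the weakened potential when $\lambda<1$. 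Making this coverage argument robust against gauge transformations, and compatible with the specific homotopy class chosen in the previous step, is the technical crux of the proof.

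\textbf{Existence and nontriviality of $\Phi_\ep$.} With both bounds in hand, I would invoke a Struwe-type deformation theorem for gauge-invariant functionals: $\cY_\ep$ is smooth on the affine space $\mathscr{C}(E)$, invariant under the gauge group $\mathscr{G}(E)$, and after imposing Coulomb gauge around any bounded-energy sequence the Palais--Smale condition follows from elliptic regularity applied to~\eqref{eq: 2nd_order_crit_pt_intro} on closed $M$. This yields a critical point $(\nabla_\ep,\Phi_\ep)\in\mathscr{C}(E)$ with $\cY_\ep(\nabla_\ep,\Phi_\ep)=c_\ep$, so that~\eqref{ineq: right_energy_regime} holds. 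Finally, if $\Phi_\ep\equiv 0$ then $\cY_\ep(\nabla_\ep,0)\geq \tfrac{\lambda}{4\ep^2}\Vol(M,g)$, which contradicts the upper bound once $\ep^3 < c(M)\min\{1,\lambda\}$; this gives precisely the stated dependence of $\ep_M$ on $M$ (and only on $\lambda$ in the regime $\lambda<1$), completing the proof.
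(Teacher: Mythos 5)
Your overall architecture (trivialize the bundle, min-max, upper bound from an explicit concentrated family, Palais--Smale up to gauge, non-triviality of $\Phi_\ep$ from the potential term) matches the paper, and your last step ruling out $\Phi_\ep\equiv 0$ is exactly the paper's argument. But there is a genuine gap at the heart of the construction: the topological setup of your min-max does not work, and the lower bound --- which you yourself flag as ``the technical crux'' --- is left unproven and cannot be obtained from the class of families you propose. The configuration space $\mathscr{C}(E)$ is an affine space, hence contractible, so a \emph{free} homotopy class of maps $F:S^2\to\mathscr{C}(E)$ is always trivial; worse, your concrete family contracts to a constant configuration \emph{within the low-energy sublevel}: shrinking the concentration ball $B_r(p_0)$ to a point (replacing $r$ by $tr$ and letting $t\to 0$) keeps the energy $O(\max\{1,\lambda\}\,\ep)$ throughout and ends at the zero-energy ground state, so $\inf_{F'\in[F]}\sup_s\cY_\ep(F'(s))=0$. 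Passing to the gauge quotient does not rescue this, since the homotopy used is gauge-equivariant. The paper avoids this by taking families $H:B^3\to X$ with boundary values \emph{pinned} to the zero-energy constants $H(y)=(0,y)$ for $y\in\partial B^3\cong S^2\subset\fsu(2)$; the nontriviality is then detected not by a homotopy class of the family itself but by a retraction argument: after a \emph{continuous} choice of local Coulomb gauges on a ball of radius $\theta\ep$ (Proposition~\ref{prop:gauge_fixing_family}), the map $y\mapsto \fint \widetilde{\Phi}_y\big/\big|\fint\widetilde{\Phi}_y\big|$ would retract $B^3$ onto $S^2$ unless some $\widetilde{\Phi}_{y_0}$ has zero average, and the Poincar\'e inequality together with $2t+(1-t)^2\geqslant 1$ then forces energy $\gtrsim \min\{1,\lambda\}\,\ep$ at $y_0$.

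Your Modica-type inequality does not substitute for this: it only converts a lower bound into another lower bound once you already know that $|\Phi|$ ranges over essentially all of $[0,1]$ at some parameter, which is precisely the topological input that is missing. Two smaller issues: the continuity of the gauge-fixing in the parameter $y$ is itself nontrivial and is needed for the retraction to be well defined (this is the content of Appendix~\ref{sec:Coulomb}); and your justification of Palais--Smale (``elliptic regularity applied to~\eqref{eq: 2nd_order_crit_pt_intro}'') does not apply to almost-critical sequences, which satisfy no equation --- the paper's Proposition~\ref{prop:PS_up_to_gauge} instead tests the first variation against cut-off differences of locally gauge-fixed configurations and patches.
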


A few comments are in order. First, several key components of the proof of Theorem~\ref{thm: existence}, most notably the choice of min-max families and the upper bound on the associated widths, are adapted from~\cite[Section 7]{Pigati-Stern2021}. Secondly, the assumption that $\lambda > 0$ is used in an essential way, especially in bounding the widths from below. (See Proposition~\ref{prop:min-max_lower_bound}.) Thirdly, as we describe in more detail in Section~\ref{subsec:context}, the normalization $\ep^{-1}\cY_{\ep}$ can be explained by a scaling argument that underlies the choice of energy regime in~\cite{Pigati-Stern2021} as well, and is at the same time equivalent to the normalization adopted in~\cite{Fadel-Oliveira2019} when analyzing the ``large mass'' limit of Yang--Mills--Higgs critical points over asymptotically conical $3$-manifolds.

In addition, note that the final part of the conclusion of Theorem \ref{thm: existence} is merely a consequence of the upper bound in \eqref{ineq: right_energy_regime}, because if $\Phi_{\ep}\equiv 0$ then 
    \[
    \ep^{-1}\cY_{\ep}(\nabla_{\ep},\Phi_{\ep})\geqslant\frac{\lambda}{4\ep^3}\mathrm{vol}(M),
    \] so up to further requiring
    \[
    \ep_M^3<\frac{\lambda\cdot\mathrm{vol}(M)}{4\max\{1,\lambda\}\cdot\Lambda_M},
    \] we must have $\Phi_{\ep}\not\equiv 0$. Now, an immediate follow-up question is whether one can also guarantee that the solutions $(\nabla_{\ep}, \Phi_{\ep})$ are \emph{irreducible}, in the sense that $\nabla_{\ep}\Phi_{\ep} \not\equiv 0$. Before clarifying the notion of irreducibility we adopt, and addressing the question just raised, we pause to make the following important discussion: 
\begin{rmk}[Reducible solutions with $\Phi\not\equiv 0$]\label{rmk: reducible_sols}
Suppose $(\nabla,\Phi)$ is a solution of \eqref{eq: 2nd_order_crit_pt_intro} such that $\nabla\Phi \equiv 0$. On the one hand, by the first equation in~\eqref{eq: 2nd_order_crit_pt_intro}, we get that $\nabla$ satisfies the Yang--Mills equation:
    \begin{equation}\label{eq: YM}
    d_{\nabla}^{\ast}F_{\nabla} = 0,
    \end{equation}
    which, we emphasize, is independent of both the coupling constant $\lambda$ and the parameter $\ep$. On the other hand, $\nabla\Phi \equiv 0$ implies that $|\Phi|$ is constant, and the second equation in~\eqref{eq: 2nd_order_crit_pt_intro} forces either $\Phi\equiv 0$ or $|\Phi|\equiv 1$. In the latter case, we have an orthogonal splitting $\mathfrak{su}(E)=\langle\Phi\rangle\oplus\langle\Phi\rangle^{\perp}$, and since 
            \[
            [F_{\nabla},\Phi] = d_{\nabla}(\nabla\Phi) \equiv 0,
            \] it follows that $F_{\nabla} = \langle F_{\nabla},\Phi\rangle\Phi$. In particular, the fact that $\nabla$ is Yang--Mills (together with the Bianchi identity, $d_{\nabla}F_{\nabla}=0$) then reduces to the fact that $\bangle{F_{\nabla}, \Phi}$ is a harmonic $2$-form, with $\bangle{F_{\nabla}, \Phi}\in\ker(d\oplus d^\ast)$. In fact, in this case the connection $\nabla$ necessarily \emph{reduces} to a $U(1)$ Yang--Mills connection on $L:=\ker(\Phi - \frac{\sqrt{-1}}{2})$, and in an appropriate gauge one has
\[
            F_{\nabla}=\mathrm{diag}(F_L,-F_L),\ \ \ \Phi = \mathrm{diag}(\frac{\sqrt{-1}}{2},-\frac{\sqrt{-1}}{2}),
            \]
            where $F_L$ is the curvature of the reduced connection.
            
            Ignoring pure Yang--Mills solutions $(\nabla,0)$, henceforth we shall say that a solution $(\nabla,\Phi)$ of \eqref{eq: 2nd_order_crit_pt_intro} with $\Phi\not\equiv 0$ is \emph{reducible} when $\nabla\Phi\equiv 0$, and \emph{irreducible} otherwise. By the above discussion, a pair $(\nabla,\Phi\not\equiv 0)$ is a reducible solution of \eqref{eq: 2nd_order_crit_pt_intro} if and only if
            \begin{equation}\label{eq: reducible_pair_intro}
            |\Phi|\equiv 1,\quad\nabla\Phi\equiv 0,\quad\text{and}\quad F_{\nabla}=\langle F_{\nabla},\Phi\rangle\Phi,\quad\text{with}\quad \langle F_{\nabla},\Phi\rangle\in\ker(d\oplus d^*). 
            \end{equation}
            
            A configuration $(\nabla,\Phi)\in\mathscr{C}(E)$ is called \emph{trivial} when $\cY_{\ep}(\nabla,\Phi)=0$, that is, when
            \[
            |\Phi|\equiv 1,\quad\nabla\Phi\equiv 0,\quad\text{and}\quad F_{\nabla}\equiv 0.
            \] It follows that if $(M^3,g)$ admits no $L^2$-bounded harmonic $2$-forms (or, equivalently, $1$-forms), then all reducible solutions $(\nabla,\Phi\not\equiv 0)\in\mathscr{C}(E)$ of \eqref{eq: 2nd_order_crit_pt_intro} must be trivial. Conversely, as we explain in Section~\ref{subsec:reducible_solutions}, if $M$ is \emph{closed} and $b_2(M) \neq 0$, then there always exist reducible solutions which are non-trivial.
\end{rmk}

The discussion above leads to the following refinement of Theorem~\ref{thm: existence} when further conditions are placed on $M$.
\begin{thm}[Existence of irreducible critical points]\label{thm: irred}
Suppose $(M^3, g)$ is closed and $M^3$ is a rational homology 3-sphere, that is $b_1(M)=b_2(M)=0$. Then the solutions $(\nabla_{\ep},\Phi_{\ep})$ produced by Theorem~\ref{thm: existence} are \emph{irreducible} in the sense that $\nabla_{\ep}\Phi_{\ep}\not\equiv 0$.
\end{thm}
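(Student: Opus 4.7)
The proof will be a quick consequence of Theorem~\ref{thm: existence} combined with the structural characterization of reducible solutions given in Remark~\ref{rmk: reducible_sols}, together with Hodge theory for rational homology $3$-spheres. My plan is to argue by contradiction: assume that for some $\ep\in(0,\ep_M)$ the min-max critical point $(\nabla_{\ep},\Phi_{\ep})$ satisfies $\nabla_{\ep}\Phi_{\ep}\equiv 0$, and derive that $\cY_{\ep}(\nabla_{\ep},\Phi_{\ep})=0$, contradicting the nontrivial lower bound in~\eqref{ineq: right_energy_regime}.

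The key steps in order are as follows. First, by (the final part of) Theorem~\ref{thm: existence}, after possibly shrinking $\ep_M$, we may assume $\Phi_{\ep}\not\equiv 0$. Under the hypothetical assumption $\nabla_{\ep}\Phi_{\ep}\equiv 0$, I invoke the characterization in Remark~\ref{rmk: reducible_sols}, which gives $|\Phi_{\ep}|\equiv 1$ and
\[
F_{\nabla_{\ep}}=\omega_{\ep}\,\Phi_{\ep},\qquad \omega_{\ep}:=\langle F_{\nabla_{\ep}},\Phi_{\ep}\rangle\in\Omega^2(M),
\]
with $\omega_{\ep}\in\ker(d\oplus d^{\ast})$, i.e.\ $\omega_{\ep}$ is a harmonic $2$-form on the closed $3$-manifold $M$.

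Second, I use the hypothesis that $M$ is a rational homology $3$-sphere. By Hodge theory on closed oriented Riemannian manifolds, the space of harmonic $2$-forms on $M$ is isomorphic to $H^2(M;\RR)$, which vanishes by assumption $b_2(M)=0$ (equivalently by Poincaré duality, $b_1(M)=0$). Therefore $\omega_{\ep}\equiv 0$, so $F_{\nabla_{\ep}}\equiv 0$, and combined with $|\Phi_{\ep}|\equiv 1$ and $\nabla_{\ep}\Phi_{\ep}\equiv 0$ this means the configuration is trivial in the sense of Remark~\ref{rmk: reducible_sols}; in particular
\[
\cY_{\ep}(\nabla_{\ep},\Phi_{\ep})=0.
\]
But this directly contradicts the lower bound $\ep^{-1}\cY_{\ep}(\nabla_{\ep},\Phi_{\ep})\geqslant \min\{1,\lambda\}\cdot\cC_0>0$ guaranteed by Theorem~\ref{thm: existence}. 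Hence $\nabla_{\ep}\Phi_{\ep}\not\equiv 0$, as claimed.

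There is essentially no obstacle here: the two genuinely nontrivial ingredients (the positive min-max lower bound on $\ep^{-1}\cY_{\ep}$, and the rigidity statement that reducible critical points with $\Phi\not\equiv 0$ encode harmonic $2$-forms) are already established upstream in Theorem~\ref{thm: existence} and Remark~\ref{rmk: reducible_sols}, respectively. The only thing to verify is that the topological hypothesis $b_2(M)=0$ kills the harmonic part, which is immediate from Hodge theory. Note that the argument also explains why the hypothesis cannot be weakened to $b_1(M)=0$ alone in a general closed $3$-manifold context (though for a closed oriented $3$-manifold the two are equivalent, matching the statement given).
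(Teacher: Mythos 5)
Your argument is correct and is essentially identical to the paper's own (very brief) proof: the paper likewise combines the non-triviality of the min-max solution from Theorem~\ref{thm: existence} with the characterization of reducible solutions in Remark~\ref{rmk: reducible_sols} and the vanishing of harmonic $2$-forms when $b_2(M)=0$. You have merely spelled out the Hodge-theoretic step that the paper leaves implicit.
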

\begin{proof}[Proof of Theorem~\ref{thm: irred} assuming Theorem~\ref{thm: existence}]
Since $(\nabla_{\ep}, \Phi_{\ep}\not\equiv 0)\in\mathscr{C}(E)$ is a non-trivial solution of \eqref{eq: 2nd_order_crit_pt_intro}, the conclusion follows by Remark \ref{rmk: reducible_sols} and the assumption $b_2(M)=0$. 
\end{proof}

Next, we prove that on any complete, oriented, Riemannian $3$-manifold of  bounded geometry, there is an \textbf{energy gap} for irreducible solutions of \eqref{eq: 2nd_order_crit_pt_intro}, as long as the parameter $\ep$ is sufficiently small. Here, and throughout this paper, by \emph{bounded geometry} we mean the existence of a positive lower bound for the injectivity radius, together with bounds on the Riemann curvature tensor and its covariant derivatives of all orders.
\begin{thm}[Gap theorems]\label{thm: gap}
Suppose $(M^3,g)$ has bounded geometry, and let $\lambda_0$ be an upper bound for $\lambda$. 
Then, there exist constants $\theta_{\mathrm{gap}} = \theta_{\mathrm{gap}}(\lambda_0)$ and $\tau_{\mathrm{gap}} = \tau_{\mathrm{gap}}(\lambda_0, M, g)$ 
such that if 
\[
\ep < \tau_{\mathrm{gap}} \cdot \min\{\sqrt{\lambda}, 1\}
\]
and if $(\nabla, \Phi) \in \mathscr{C}(E)$ is a 
solution of~\eqref{eq: 2nd_order_crit_pt_intro} satisfying 
    \[
    \ep^{-1}\cY_{\ep}(\nabla,\Phi)\leqslant \theta_{\mathrm{gap}} \cdot \min\{\lambda, 1\},
    \] then $(\nabla,\Phi)$ is reducible as in \eqref{eq: reducible_pair_intro}. In particular, if $(M^3,g)$ admits no $L^2$-bounded harmonic $2$-forms (or, equivalently, $1$-forms), for instance if furthermore we impose
\vskip 1mm
    \begin{itemize} {\normalsize 
        \item[(i)] $M$ is closed and $b_1(M)=0$; or
        \vskip 1mm
        \item[(ii)] $M$ is noncompact and\footnote{It is well known that if $(M^n,g)$ is a complete noncompact Riemannian manifold with $\mathrm{Ric}(g)\geqslant 0$ then it admits no nonzero $L^2$-bounded harmonic $1$-forms; see \cite[Theorem 1]{greene1981harmonic}.} $\mathrm{Ric}(g)\geqslant 0$;}
    \end{itemize}  
\vskip 1mm
then in fact $(\nabla,\Phi)$ is trivial, that is, $\cY_{\ep}(\nabla,\Phi)=0$. 
\end{thm}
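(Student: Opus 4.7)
The overall strategy is to reduce both claims of the theorem to Remark~\ref{rmk: reducible_sols} by showing that small enough rescaled energy forces $\nabla\Phi\equiv 0$. Once this is established, the pointwise smallness derived in Step~1 below forces $|\Phi|\equiv 1$, so Remark~\ref{rmk: reducible_sols} delivers the reducible form \eqref{eq: reducible_pair_intro} of $(\nabla,\Phi)$. The triviality conclusions under (i) or (ii) then further reduce to the vanishing of the $L^{2}$ harmonic $2$-form $\langle F_\nabla,\Phi\rangle$, which follows from Poincar\'e duality ($b_{1}(M)=b_{2}(M)$ on a closed oriented $3$-manifold) in case (i), and from the Greene--Wu theorem cited in the statement in case (ii).

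\textbf{Step 1 (pointwise smallness via $\ep$-regularity).} Using the $\ep$-regularity estimate for YMH critical points developed earlier in the paper---schematically of the form $\ep^{2}e_\ep(x)\leqslant C\ep^{-1}\int_{B_\ep(x)}e_\ep\,d\mathrm{vol}$, valid on manifolds of bounded geometry whenever $\ep$ is below a scale set by the injectivity radius and curvature bounds---the integral hypothesis $\ep^{-1}\cY_\ep\leqslant\theta_{\mathrm{gap}}\min\{\lambda,1\}$ upgrades to the pointwise bound $\ep^{2}e_\ep\leqslant C\theta_{\mathrm{gap}}\min\{\lambda,1\}$ throughout $M$. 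For $\theta_{\mathrm{gap}}$ sufficiently small in terms of $\lambda_{0}$, this yields $|\Phi|^{2}\geqslant 3/4$ on all of $M$, together with the pointwise bound
\[
\ep|F_\nabla|+|\nabla\Phi|+\sqrt{\lambda}\ep^{-1}(1-|\Phi|^{2})\leqslant C\sqrt{\theta_{\mathrm{gap}}\min\{\lambda,1\}}.
\]

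\textbf{Step 2 (Bochner absorption).} Commuting $\nabla$ past $\nabla^{*}\nabla$ in the Higgs equation $\nabla^{*}\nabla\Phi=\tfrac{\lambda}{2\ep^{2}}(1-|\Phi|^{2})\Phi$, and substituting the first YMH equation $\ep^{2}d_\nabla^{*}F_\nabla=[\nabla\Phi,\Phi]$ into the commutator $[\nabla^{*}\nabla,\nabla]\Phi$, produces a Weitzenb\"ock identity which, paired with $\eta^{2}$ for a bounded-geometry Shi-type cutoff $\eta$ and integrated, schematically reads
\[
\int\eta^{2}\Bigl(|\nabla^{2}\Phi|^{2}+\tfrac{\lambda}{\ep^{2}}|\langle\nabla\Phi,\Phi\rangle|^{2}+\tfrac{1}{\ep^{2}}|[\nabla\Phi,\Phi]|^{2}\Bigr) \leqslant \int\eta^{2}\Bigl(\tfrac{\lambda}{2\ep^{2}}(1-|\Phi|^{2})|\nabla\Phi|^{2}+\Ric(\nabla\Phi,\nabla\Phi)+\langle F_\nabla,[\nabla\Phi\wedge\nabla\Phi]\rangle\Bigr)+(\text{cutoff error}).
\]
The two Lie-algebraic terms on the left-hand side jointly supply a coercive $\gtrsim\min\{\lambda,1\}\ep^{-2}|\nabla\Phi|^{2}$-contribution: $|\langle\nabla\Phi,\Phi\rangle|^{2}$ controls the radial component $|d|\Phi||$, while $|[\nabla\Phi,\Phi]|^{2}$ controls the transverse component $|\nabla_{\perp}\Phi|$ via the injectivity of $\mathrm{ad}_{\Phi}$ on $\langle\Phi\rangle^{\perp}$ combined with $|\Phi|^{2}\geqslant 3/4$. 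Using the Step~1 pointwise bounds to estimate $(1-|\Phi|^{2})$ and $|F_\nabla|$ on the right, the hypothesis $\ep<\tau_{\mathrm{gap}}\min\{\sqrt{\lambda},1\}$ to guarantee $\min\{\lambda,1\}\ep^{-2}\geqslant\tau_{\mathrm{gap}}^{-2}$, and choosing the constants in the correct order (first $\tau_{\mathrm{gap}}$ small in terms of $\lambda_{0},M,g$ to absorb the Ricci term, then $\theta_{\mathrm{gap}}$ small in terms of $\lambda_{0}$ to absorb the remaining right-hand side contributions), one concludes $\int\eta^{2}|\nabla\Phi|^{2}=0$ after sending $\eta\nearrow 1$ via the $L^{2}$-integrability built into $\mathscr{C}(E)$, hence $\nabla\Phi\equiv 0$.

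The main obstacle is the Bochner absorption in Step 2: one must simultaneously exploit both positive Lie-algebraic LHS terms (each of which controls only one ``component'' of $\nabla\Phi$ by itself), manage the cross-coupling to $F_\nabla$ through the first YMH equation and the Step~1 pointwise bounds, and arrange the dependency structure of $\theta_{\mathrm{gap}}$ and $\tau_{\mathrm{gap}}$ so that all absorbable terms can be handled uniformly in $\lambda\in(0,\lambda_{0}]$. A secondary technical issue is the vanishing of the cutoff tails, which I would address through a Shi-type choice of $\eta$ with uniformly bounded gradient and Laplacian, combined with the $L^{2}$-integrability of $F_\nabla$, $\nabla\Phi$, and $1-|\Phi|^{2}$ built into $\mathscr{C}(E)$.
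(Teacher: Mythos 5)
Your proposal is essentially correct and rests on the same Bochner mechanism as the paper, but the final step differs in a way worth noting. The paper's route (Remark~\ref{rmk:clearing_out}, Lemma~\ref{lemm:improved_coarse_estimate_base}, Lemma~\ref{lemm:nablaPhi_exp_decay_base}, Proposition~\ref{prop:concentration_scale}) first derives the \emph{pointwise} differential inequality $\Delta|\nabla\Phi|^2\leqslant -\tfrac{\mu}{2\ep^2}|\nabla\Phi|^2$ with $\mu=\min\{\lambda,1\}$ and then concludes $\nabla\Phi\equiv 0$ by the maximum principle: integration over $M$ in the compact case, and the strong maximum principle combined with the decay at infinity of Proposition~\ref{prop:finite_action_decay} in the noncompact case. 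You instead integrate the same inequality against a cutoff $\eta^2$ and send $\eta\nearrow 1$; after Young's inequality the error is controlled by $\int|\nabla\eta|^2|\nabla\Phi|^2$, which vanishes using only $\nabla\Phi\in L^2(M)$ from the definition of $\mathscr{C}(E)$. This is a legitimate alternative that avoids invoking the uniform decay at infinity in the noncompact case; what it costs is nothing essential, since the pointwise smallness inputs are identical. Your coercivity mechanism (splitting into $|\langle\nabla\Phi,\Phi\rangle|^2$ and $|[\nabla\Phi,\Phi]|^2$ and using $|[\nabla\Phi,\Phi]|^2+|\langle\nabla\Phi,\Phi\rangle|^2=|\Phi|^2|\nabla\Phi|^2$ with $|\Phi|^2\geqslant 3/4$) is exactly the computation behind the paper's Lemma~\ref{lemm:basic_differential_ineqs}(b), and your ordering of the constants ($\tau_{\mathrm{gap}}$ depending on the geometry to absorb the Ricci term via $\ep\leqslant\tau\sqrt{\mu}\rho$, then $\theta_{\mathrm{gap}}=\theta_{\mathrm{gap}}(\lambda_0)$ to absorb the $F_\nabla$ coupling) matches the paper's.

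Two minor cautions. First, your Step~1 claims the pointwise bound $\sqrt{\lambda}\,\ep^{-1}(1-|\Phi|^2)\leqslant C\sqrt{\theta_{\mathrm{gap}}\min\{\lambda,1\}}$; the paper does not prove (and the argument does not need) pointwise control of $\ep^{-1}w$ at this stage --- only the weaker statement $\|1-|\Phi|\|_\infty\leqslant\beta$ obtained in Lemma~\ref{lemm:w_mean_value_estimate} via a mean-value/integration argument rather than Moser iteration. All that is used downstream is $\lambda w-\mu|\Phi|^2\leqslant-\mu/2$, which that weaker bound provides. Second, be explicit that after $\nabla\Phi\equiv 0$ the alternative $\Phi\equiv 0$ is excluded precisely by your lower bound $|\Phi|^2\geqslant 3/4$, so that Remark~\ref{rmk: reducible_sols} yields $|\Phi|\equiv 1$ and the form \eqref{eq: reducible_pair_intro}; the triviality reductions under (i) and (ii) are then as you state.
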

Theorem~\ref{thm: gap} is a consequence of local \textit{a priori} estimates obtained largely by following~\cite[Chapter IV]{Jaffe-Taubes}, which together with the smallness assumptions in Theorem~\ref{thm: gap} leads to a differential inequality on $|\nabla\Phi|^2$ that is favorable for the application of the maximum principle. (See especially Lemma~\ref{lemm:nablaPhi_exp_decay_base} and Proposition~\ref{prop:concentration_scale}.) Throughout this argument, the assumption $\lambda > 0$ is again used heavily. For reasons we elaborate on shortly, the case when $M$ is $\RR^3$ equipped with the standard flat metric $g_{\mathbb{R}^3}$ is worth singling out. Here, it turns out that we can remove the smallness assumption on $\ep$ by a scaling argument. This leads to the following gap result for solutions of~\eqref{eq: 2nd_order_crit_pt_intro} on $\RR^3$.

\begin{thm}[Gap theorem on $\RR^3$]\label{thm: gap_for_R3}
Suppose $\lambda \in (0, \lambda_0]$. For any $\ep > 0$, if $(\nabla,\Phi)$ is a smooth solution to~\eqref{eq: 2nd_order_crit_pt_intro} on an $SU(2)$-bundle $E\to\mathbb{R}^3$ over the Euclidean space $(\mathbb{R}^3,g_{\mathbb{R}^3})$, satisfying in addition that
    \[
    \ep^{-1}\cY_{\ep}(\nabla,\Phi)\leqslant \theta_{\mathrm{gap}}\cdot \min\{\lambda, 1\}, 
    \] then in fact
    \[
    \cY_{\ep}(\nabla,\Phi) = 0.
    \]
\end{thm}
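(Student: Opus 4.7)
The strategy is to remove the smallness assumption on $\ep$ in Theorem \ref{thm: gap} via a scaling argument, exploiting that the flat Euclidean metric on $\RR^3$ is scale-invariant. In particular, the constants $\theta_{\mathrm{gap}}$ and $\tau_{\mathrm{gap}}$ produced by Theorem \ref{thm: gap} applied to $(\RR^3, g_{\RR^3})$ depend only on $\lambda_0$, since the bounded-geometry constants of flat $\RR^3$ (infinite injectivity radius, vanishing curvature) are invariant under dilations.

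The first step is to verify that the normalized energy $\ep^{-1}\cY_\ep$ is invariant under the joint rescaling $(x,\ep) \mapsto (cx,\ep/c)$. Concretely, given $(\nabla,\Phi)$ on $\RR^3$ and $c > 0$, let $f_c(x) = cx$ and set $\tilde{\nabla} := f_c^{\ast}\nabla$, $\tilde{\Phi} := \Phi \circ f_c$, and $\tilde{\ep} := \ep/c$. Using the scaling $|F_{\tilde{\nabla}}|(x) = c^2|F_{\nabla}|(cx)$, $|\tilde{\nabla}\tilde{\Phi}|(x) = c|\nabla\Phi|(cx)$, and $|\tilde\Phi|(x) = |\Phi|(cx)$, together with $dx = c^{-3}\,dy$ upon substituting $y = cx$, a direct calculation yields
\[
\cY_{\tilde{\ep}}(\tilde{\nabla}, \tilde{\Phi}) = c^{-1}\cY_{\ep}(\nabla,\Phi), \qquad \text{hence} \qquad \tilde{\ep}^{-1}\cY_{\tilde{\ep}}(\tilde{\nabla},\tilde{\Phi}) = \ep^{-1}\cY_{\ep}(\nabla,\Phi).
\]
An analogous computation shows that the system \eqref{eq: 2nd_order_crit_pt_intro} is equivariant under this rescaling: $(\tilde{\nabla},\tilde{\Phi})$ is a smooth solution of \eqref{eq: 2nd_order_crit_pt_intro} with parameter $\tilde{\ep}$, and since $\cY_{\tilde{\ep}}(\tilde{\nabla},\tilde{\Phi})$ remains finite, the rescaled pair lies in $\mathscr{C}(E)$.

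The second step is to choose $c$ large enough that $\tilde{\ep} = \ep/c < \tau_{\mathrm{gap}}(\lambda_0) \cdot \min\{\sqrt{\lambda}, 1\}$; this is possible for any initial $\ep > 0$. The hypothesis $\ep^{-1}\cY_{\ep}(\nabla,\Phi) \leqslant \theta_{\mathrm{gap}}\cdot\min\{\lambda,1\}$ then transfers verbatim to $(\tilde{\nabla},\tilde{\Phi})$ by scale invariance. Applying Theorem \ref{thm: gap} to the rescaled solution on $\RR^3$ shows that $(\tilde{\nabla},\tilde{\Phi})$ is reducible in the sense of \eqref{eq: reducible_pair_intro}. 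Finally, since $\RR^3$ has $\mathrm{Ric} \equiv 0 \geqslant 0$, it admits no nonzero $L^2$-bounded harmonic $1$-forms (case (ii) of Theorem \ref{thm: gap}), so by Remark \ref{rmk: reducible_sols} the rescaled solution must in fact be trivial, i.e.\ $\cY_{\tilde{\ep}}(\tilde{\nabla},\tilde{\Phi}) = 0$. The scaling identity then forces $\cY_{\ep}(\nabla,\Phi) = c\,\cY_{\tilde{\ep}}(\tilde{\nabla},\tilde{\Phi}) = 0$, as desired.

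There is no substantial analytic obstacle here beyond Theorem \ref{thm: gap} itself; the entire content of the argument is the scale invariance of $\ep^{-1}\cY_{\ep}$ combined with the scale invariance of the bounded-geometry constants for $(\RR^3, g_{\RR^3})$. The only point requiring any care is confirming that $\tau_{\mathrm{gap}}$ for flat $\RR^3$ depends only on $\lambda_0$, which follows by tracking its dependence through the proof of Theorem \ref{thm: gap}: the relevant local estimates are derived on normal-coordinate balls whose size is controlled by the injectivity radius and curvature bounds, both of which are preserved under dilation on $\RR^3$.
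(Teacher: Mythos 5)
Your proposal is correct and is essentially the paper's own argument: the paper likewise pulls back by the dilation $x \mapsto m_i x$ with $m_i \to \infty$ (so the rescaled parameter $\delta_i\ep$ becomes small), uses the scale invariance of $\ep^{-1}\cY_\ep$ and of the Euclidean geometry, and then invokes Theorem~\ref{thm: gap} (via its Ricci-nonnegative case on $\RR^3$) to conclude triviality. The only cosmetic difference is that the paper packages the scaling identities in Lemmas~\ref{lemm:scaling_g_to_delta_g} and~\ref{lem: scaling} rather than verifying them inline.
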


To explain the interest of Theorem~\ref{thm: gap_for_R3}, we digress to recall a notion closely related to energy gaps, namely the magnetic charge. Suppose $M=\mathbb{R}^3$ and let $(\nabla,\Phi)\in\mathscr{C}(E)$. Then, it follows from the work of Taubes~\cite{Jaffe-Taubes} and Groisser~\cite{Groisser1984} that the \textbf{magnetic charge} of $(\nabla,\Phi)$, defined by 
\begin{equation}\label{eq: magnetic_charge}
k=k(\nabla,\Phi):=\frac{1}{4\pi}\int_{\mathbb{R}^3}\langle F_{\nabla}\wedge\nabla\Phi\rangle = \frac{1}{4\pi} \int_{\mathbb{R}^3}\langle * F_{\nabla}, \nabla\Phi \rangle \vol_{g_{\mathbb{R}^3}},
\end{equation} 
is always an integer. (See also Fadel~\cite{fadel2023asymptotics} for the integrality of the magnetic charge on general asymptotically conical $3$-manifolds.) The magnetic charge has the following interpretation. For sufficiently large $R$ depending on the configuration $(\nabla, \Phi)$, restricting $\Phi/|\Phi|$ to $\Sigma_R:=\partial \overline{B}_R(0)\cong\mathbb{S}^2$ determines a homotopy class of maps $\mathbb{S}^2\to\mathbb{S}^2$, and $k$ is the Brouwer degree of this class. Alternatively, the restrictions of the associated vector bundle $E=P\times_{SU(2)}\mathbb{C}^2$ over $\Sigma_R$ split as $L\oplus L^{-1}$, where $L$ is a complex line bundle over $\Sigma_{R}\cong\mathbb{S}^2$, corresponding to one of the eigenspaces of $\Phi$, and the degree of any such $L$ does not depend on $R$ and equals the charge $k$. 
Recalling also the formula (see~\cite[p.103]{Jaffe-Taubes} or~\cite[p.13]{Atiyah-Hitchin1988})
\begin{equation}\label{eq: energy_formula}
\mathcal{Y}_{\ep}(\nabla,\Phi) = \pm 8\pi k \ep + \|\ep F_{\nabla}\mp\ast\nabla\Phi\|_{L^2}^2 + \frac{\lambda}{4\ep^2}\|1-|\Phi|^2\|_{L^2}^2,
\end{equation} 
we arrive at the following well-known topological lower bound for the Yang--Mills--Higgs energy:
\begin{equation}\label{ineq: topological_energy_lower_bound}
\ep^{-1}\mathcal{Y}_{\ep}(\nabla,\Phi)\geqslant 8\pi |k|.
\end{equation}

When $\lambda=0$, equality in \eqref{ineq: topological_energy_lower_bound} holds if, and only if, $(\nabla,\Phi)$ is a solution to the first order \textbf{(anti-)monopole equations} 
\begin{equation}\label{eq: monopole_eq}
\ep F_{\nabla} = \pm\ast\nabla\Phi,
\end{equation}
which are easily seen to imply the second order equations~\eqref{eq: 2nd_order_crit_pt_intro}. In contrast, when $\lambda>0$, attaining the topological lower bound forces the configuration to be trivial in the sense that $\cY_{\ep}(\nabla,\Phi)=0$, in which case $k=0$. At any rate, one sees that when $M=\mathbb{R}^3$, the number $8\pi$ gives an energy gap for $\ep^{-1}\cY_{\ep}$ over the subset of $\mathscr{C}(E)$ consisting of configurations $(\nabla,\Phi)$ with $k(\nabla,\Phi)\neq 0$. On the other hand, Sibner--Talvacchia~\cite{Sibner-Talvacchia1994} has shown that, for any $\lambda>0$, there exists a finite-action solution $(\nabla,\Phi)$ of~\eqref{eq: 2nd_order_crit_pt_intro} on $\mathbb{R}^3$ with $\cY_{\ep}(\nabla, \Phi) > 0$ and $k(\nabla,\Phi)=0$. It is therefore interesting to find an energy gap that applies to configurations with zero charge as well, and that is what Theorem~\ref{thm: gap_for_R3} addresses.

Our next results concern the asymptotic behavior as $\ep \to 0$ of critical points of $\cY_{\ep}$ satisfying suitable energy bounds, and in particular are applicable to the family produced by Theorem~\ref{thm: existence}. Specifically, suppose $(M^3,g)$ has bounded geometry, $\lambda\in (0,\lambda_0]$, and let $(\nabla_{\ep},\Phi_{\ep})\in\mathscr{C}(E)$ be a family of critical points of $\mathcal{Y}_{\ep}$, satisfying a uniform energy bound 
\[
\ep^{-1}\cY_{\ep}(\nabla_{\ep},\Phi_{\ep})\leqslant\Lambda,
\] for some constant $\Lambda>0$ (possibly depending on $\lambda$ and $(M,g)$). We define the \textbf{blow-up set} of the sequence by
\begin{equation*}
S:=\bigcap_{0<r\leqslant  r_0}\left\{x\in M: \liminf_{\ep\to 0}\ep^{-1}\int_{B_r(x)}e_{\ep}(\nabla_{\ep},\Phi_{\ep})\geqslant\eta_{\ast}\right\},
\end{equation*}
where $\eta_{\ast} > 0$ is a threshold to be determined depending only on $(M, g)$, $\min\{1,\lambda\}$ and $\lambda_0$ (see Section \ref{sec:asymptotic}). Define also, for any $\beta\in (0,\frac{1}{2})$, the sets
\[
Z_{\beta}(\Phi_{\ep}):=\{x\in M: |\Phi_{\ep}(x)|^2\leqslant 1-2\beta\},
\] and let 
\begin{equation*}
Z_{\beta}:= \bigcap_{\kappa>0}\overline{\bigcup_{0<\ep<\kappa} Z_{\beta}(\Phi_{\ep})}.    
\end{equation*}

\begin{thm}[Asymptotic limit as $\ep\to 0$]
\label{thm: asymptotic}
In the above setting, we have the following.
\begin{enumerate}
\item[(a)] $Z_{\beta}\subset S$ and $\mathcal{H}^0(S)\leqslant\eta_\ast^{-1}\Lambda<\infty$. In particular both $Z_{\beta}$ and $S$ are finite sets.
\vskip 1mm
\item[(b)] Along a sequence of $\ep_i$'s converging to $0$, we have, in the sense of Radon measures,
\[
\mu_i:=\ep_i^{-1}e_{\ep_i}(\nabla_i, \Phi_i)\mathcal{H}^3 \rightharpoonup |h|^2\mathcal{H}^3+\sum_{x \in S}\Theta(x)\delta_{x},
\] and
\[
\kappa_{\ep_i} := 2\langle * F_{\nabla_i}, \nabla_i\Phi_i\rangle\mathcal{H}^3 \rightharpoonup \sum_{x \in S}\Xi(x)\delta_{x},
\] 
where $h$ is a harmonic $1$-form on $M$, and for all $x \in S$ there holds $\Theta(x)\geqslant\eta_{\ast}$, $\Xi(x) \in 8\pi \mathbb{Z}$, and $|\Xi(x)| \leqslant \Theta(x)$. Furthermore, $\Xi(x)=0$ for any $x\in S\setminus Z_{\beta}$.
\vskip 1mm
\item[(c)] Assume in addition that $M$ is closed and consider the Hodge decomposition
\begin{equation*}
\ep_i^{\frac{1}{2}}\langle * F_{\nabla_i}, \Phi_i\rangle = h_i + df_i + d^*\alpha_i,
\end{equation*}
where $f_i \in C^{\infty}(M)$, $\alpha_i \in \Omega^2(M)$, and $h_i$ is harmonic. Then both $df_i$ and $d^*\alpha_i$ subconverge smoothly to $0$ on compact subsets of $M\setminus S$, while $h_i$ subconverges smoothly on $M$ to the harmonic $1$-form $h$ from part (b).
\end{enumerate}
\end{thm}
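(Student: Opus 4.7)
The proof rests on three technical inputs: an $\ep$-regularity result (essentially Lemma~\ref{lemm:nablaPhi_exp_decay_base}/Proposition~\ref{prop:concentration_scale}) saying that if $\ep^{-1}\int_{B_r(x)}e_\ep\leqslant\eta_\ast$ for suitably small $\eta_\ast$ at a scale $r$ comparable to the minimum of $\sqrt{\lambda}$ and $1$, then $(\nabla_\ep,\Phi_\ep)$ satisfies a priori $C^k$-estimates on $B_{r/2}(x)$ forcing $|\Phi_\ep|\to 1$ locally; an almost-monotonicity formula for the rescaled energy density; and a blow-up analysis at the concentration points leveraging the $\cY_\ep$-scale-invariance that identifies the small-$\ep$, small-scale limit with $\cY_1$-critical points on $\RR^3$.

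For part (a), $Z_\beta\subset S$ is immediate from the $\ep$-regularity dichotomy: if $x\notin S$ then along a subsequence $\ep^{-1}\int_{B_r(x)}e_\ep<\eta_\ast$ for some $r>0$, and the a priori estimates force $|\Phi_{\ep_i}|\to 1$ uniformly on $B_{r/2}(x)$, giving $x\notin Z_\beta$. Finiteness (with $\mathcal{H}^0(S)\leqslant\eta_\ast^{-1}\Lambda$) is a Vitali covering argument using the mass bound $\mu_i(M)\leqslant\Lambda$. For part (b), I would proceed in two steps. On $M\setminus S$, the $\ep$-regularity together with Uhlenbeck-type gauge fixing yields a smooth subsequential limit $(\nabla_\infty,\Phi_\infty)$ with $|\Phi_\infty|\equiv 1$ and $\nabla_\infty\Phi_\infty\equiv 0$; as in Remark~\ref{rmk: reducible_sols}, this reduces $\nabla_\infty$ to a $U(1)$-connection on the eigenline bundle $L\subset E|_{M\setminus S}$ cut out by $\Phi_\infty$, with $F_{\nabla_\infty}=\langle F_{\nabla_\infty},\Phi_\infty\rangle\Phi_\infty$. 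Passing to the limit in the YMH equations shows $\langle F_{\nabla_\infty},\Phi_\infty\rangle$ is closed (Bianchi) and co-closed (YMH), hence Hodge-dual to a harmonic $1$-form $h$; the $|h|^2\mathcal{H}^3$ contribution arises from the $\ep_i^2|F_{\nabla_i}|^2$ part of the energy. Near each $x\in S$, one rescales at appropriate scales $r_i\to 0$ to extract finitely many nontrivial $\cY_1$-critical points on $\RR^3$, whose total energy equals $\Theta(x)$. The integrality $\Xi(x)\in 8\pi\ZZ$ follows from the Taubes--Groisser magnetic-charge quantization recalled in~\eqref{eq: magnetic_charge} applied bubble-by-bubble, and $|\Xi(x)|\leqslant\Theta(x)$ is the topological bound~\eqref{ineq: topological_energy_lower_bound}; for $x\in S\setminus Z_\beta$, the hypothesis $|\Phi_\ep|>1-2\beta$ near $x$ forces all bubbles there to have zero charge, yielding $\Xi(x)=0$.

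For part (c), the uniform $L^2$-bound $\int|\sqrt{\ep_i}\langle * F_{\nabla_i},\Phi_i\rangle|^2\leqslant \int\ep_i|F_{\nabla_i}|^2\leqslant\Lambda$ gives, via orthogonality of the Hodge decomposition, uniformly $L^2$-bounded $h_i$, $df_i$, $d^\ast\alpha_i$. Since the space of harmonic $1$-forms on $M$ is finite-dimensional, $h_i$ subconverges smoothly; testing the measure convergence from part (b) against harmonic test forms identifies $\lim h_i$ with the $h$ from part (b). Elliptic regularity for the Hodge Laplacian $\Delta=dd^\ast+d^\ast d$, combined with the $\ep$-regularity on $F_\nabla$ away from $S$, yields smooth decay of $df_i$ and $d^\ast\alpha_i$ to zero on compact subsets of $M\setminus S$.

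The most delicate step is the blow-up/bubble-tree analysis at points of $S$: one must iteratively rescale at possibly distinct scales to capture all bubbles at a given singular point, establish a Pohozaev/energy-quantization statement ruling out energy loss in the necks between scales (so that $\Theta(x)$ truly equals the sum of bubble energies), and carefully aggregate the bubble charges via the asymptotics of finite-energy $\cY_1$-solutions on $\RR^3$ in order to identify the total with $\Xi(x)/(8\pi)$.
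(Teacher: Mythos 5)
Your part (a) matches the paper's argument (clearing-out plus a disjoint-ball count), but parts (b) and (c) contain genuine problems. In (b), your claim that on $M\setminus S$ one extracts a \emph{smooth subsequential limit} $(\nabla_\infty,\Phi_\infty)$ with bounded curvature is inconsistent with the conclusion you want: since $\ep_i^{-1}e_{\ep_i}=\ep_i|F_{\nabla_i}|^2+\cdots\to|h|^2$, the curvature must blow up like $\ep_i^{-1/2}$ wherever $h\neq 0$, so the connections do not converge and there is no limit pair to which you can apply Remark~\ref{rmk: reducible_sols}. The object that converges is the gauge-invariant real $1$-form $\ep_i^{1/2}\langle\ast F_{\nabla_i},\Phi_i\rangle$ (together with exponential decay of the transversal components $[F_{\nabla_i},\Phi_i]$ and $\nabla_i\Phi_i$), and the paper's Lemma~\ref{lemm:identities_for_local_convergence} and Proposition~\ref{prop:local_convergence} are built around exactly this normalization. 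Separately, your route to $\Xi(x)\in 8\pi\ZZ$ and $|\Xi(x)|\leqslant\Theta(x)$ goes through bubble-by-bubble charge quantization plus an energy/charge identity in the necks — i.e., it presupposes essentially all of Theorem~\ref{thm: bubbling}, which you yourself defer as ``the most delicate step.'' The paper proves these two facts for Theorem~\ref{thm: asymptotic}(b) much more cheaply and independently of any bubbling: $|\Xi(x)|\leqslant\Theta(x)$ is the pointwise Cauchy--Schwarz bound $2|\langle\ast F,\nabla\Phi\rangle|\leqslant\ep^{-1}e_\ep$ passed to the limit measures, and integrality comes from $\ast\kappa_{\ep}=2\,d\langle F_{\nabla},\Phi\rangle$ together with Taubes' identity~\eqref{eq:Taubes_formula_for_degree}, which exhibits $\Xi(x)$ as $8\pi$ times a Brouwer degree of $\Phi_i/|\Phi_i|$ on small spheres around $x$ (this also gives $\Xi=0$ off $Z_\beta$ directly). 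You should also note that harmonicity of $h$ across the finite set $S$ needs a (easy, but nonzero) removable-singularity argument.

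In part (c), the assertion that ``elliptic regularity combined with $\ep$-regularity yields smooth decay of $df_i$ and $d^*\alpha_i$ to zero'' does not follow: away from $S$ one has $d^*df_i=d^*(\ep_i^{1/2}\langle\ast F_{\nabla_i},\Phi_i\rangle)\to 0$ and $dd^*\alpha_i\to 0$, so elliptic estimates only give local subconvergence of $df_i$ and $d^*\alpha_i$ to limits that are \emph{locally harmonic} on $M\setminus S$; nothing local forces them to vanish. One must pass to the global identity $h=\widetilde h+df+d^*\alpha$ on $M\setminus S$ and then kill $df$ and $d^*\alpha$ using their mutual $L^2$-orthogonality on $M$ together with a cutoff argument across the point singularities (the paper uses the $W^{1,2}\hookrightarrow L^6$ control on $f$ and $\alpha$ to make the cutoff error terms vanish). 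Your identification of $\lim h_i$ with $h$ is salvageable — pairing the weak $L^2$ convergence $\ep_i^{1/2}\langle\ast F_{\nabla_i},\Phi_i\rangle\rightharpoonup h$ with harmonic forms and using that the harmonic projection has finite rank does give $h_i\to h$ — but as written you test the \emph{energy measure} convergence, which only carries $|h|^2$ and cannot recover $h$.
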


Given the decomposition of the limiting measure in Theorem~\ref{thm: asymptotic}(b), and in view of the numerous precedents of \textbf{bubble tree} convergence theorems, a natural question is whether each of the densities $\Theta(x)$ is equal to the total energy of finitely many \textbf{bubbles}, or scaling limits of $(\nabla_i, \Phi_i)$ at points near $S$ where energy is concentrating most rapidly. To this, we are able to give a positive answer.

\begin{thm}[Bubbling]\label{thm: bubbling}
For all $x \in S$ there exists a critical point $(\nabla, \Phi)$ of $\cY_1^{g_{\mathbb{R}^3}}$ on $\mathbb{R}^3\cong (T_xM,g|_{T_xM})$ such that 
\begin{equation*}
0 < \cY_1^{g_{\mathbb{R}^3}}(\nabla, \Phi; \mathbb{R}^3) \leqslant \Theta(x).
\end{equation*}
 In fact, for each $x\in S$, there exists a finite collection of non-trivial, finite-action critical points of $\cY_1^{g_{\mathbb{R}^3}}$ on $\mathbb{R}^3$ whose energy and charge sum up to $\Theta(x)$ and $\Xi(x)$, respectively.

\end{thm}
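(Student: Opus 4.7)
My plan is to execute a bubble-tree construction for the $\varepsilon$-dependent Yang--Mills--Higgs equations in three dimensions, extracting a first non-trivial bubble by rescaling at the natural length scale $\varepsilon_i$ around a concentration point, then iterating to capture all bubbles in a finite tree, and finally establishing no-neck-energy in the annular regions between successive bubbles. The central role of Theorem~\ref{thm: gap_for_R3} is twofold: it ensures non-triviality of each extracted bubble, and it guarantees that the tree terminates.

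I fix $x \in S$ and work in geodesic normal coordinates about $x$. First, I choose radii $\rho_i \to 0$ slowly enough that $\varepsilon_i^{-1}\int_{B_{\rho_i}(x)} e_{\varepsilon_i} \to \Theta(x)$, which is possible by Theorem~\ref{thm: asymptotic}(b). Next, I pick $y_i \in \overline{B_{\rho_i/2}(x)}$ and a minimal scale $\sigma_i > 0$ with $\varepsilon_i^{-1}\int_{B_{\sigma_i}(y_i)} e_{\varepsilon_i} = \eta_0$, where $\eta_0 \in (0,\eta_\ast)$ is fixed strictly below the $\varepsilon$-regularity threshold used in Theorem~\ref{thm: asymptotic}; concentration at $x$ gives $(y_i,\sigma_i)$ with $\sigma_i \to 0$. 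Rescaling via $z = (y-y_i)/\varepsilon_i$ and setting $\widetilde\nabla_i := \varepsilon_i \nabla_{\varepsilon_i}$, $\widetilde\Phi_i := \Phi_{\varepsilon_i}$ pulled back to the rescaled metric $g_i := \varepsilon_i^{-2}\exp_{y_i}^{\ast} g$, scale-invariance of $\varepsilon^{-1}\cY_\varepsilon$ makes $(\widetilde\nabla_i,\widetilde\Phi_i)$ a critical point of $\cY_1$ in $g_i$, with $g_i \to g_{\mathbb{R}^3}$ smoothly. The ratio $\tau_i := \sigma_i/\varepsilon_i$ must diverge: otherwise, the rescaled configurations would live on bounded Euclidean domains, and Theorem~\ref{thm: gap_for_R3} combined with $\varepsilon$-regularity would preclude reaching the threshold $\eta_0$, contradicting minimality of $\sigma_i$.

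With uniform $\cY_1$-energy bounds on each $B_R(0)$, Uhlenbeck's compactness together with the local $\varepsilon$-regularity estimates already developed for Theorem~\ref{thm: asymptotic} produce, after gauge transformations placing $\widetilde\nabla_i$ in a common Coulomb gauge on compact sets, uniform $C^k_{\mathrm{loc}}$ bounds away from a finite "bad" set $\Sigma \subset \mathbb{R}^3$. A diagonal subsequence converges smoothly on $\mathbb{R}^3 \setminus \Sigma$ to a limit $(\nabla_\infty, \Phi_\infty)$ satisfying the $\cY_1$-Euler--Lagrange equations. A removable-singularity argument, based on the pointwise bound $|\widetilde\Phi_i|\leqslant 1$ maintained under the scaling together with the bounded $L^2$-curvature energy in three dimensions, extends $(\nabla_\infty, \Phi_\infty)$ smoothly across $\Sigma$. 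Lower semicontinuity of the energy then yields $\cY_1(\nabla_\infty, \Phi_\infty; \mathbb{R}^3) \leqslant \Theta(x)$, while the lower bound $\cY_1(\nabla_\infty, \Phi_\infty; B_1(0)) \geqslant \eta_0 > 0$ combined with Theorem~\ref{thm: gap_for_R3} gives strict positivity, establishing the first conclusion.

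For the stronger statement I iterate: the residual energy $\Theta(x) - \cY_1(\nabla_\infty, \Phi_\infty)$ together with any energy lost into $\Sigma$ corresponds to bubbles appearing at either finer scales around $y_i$, comparable scales around farther points $y_i'$, or intermediate scales producing "stacked" bubbles --- the standard three cases in bubble-tree constructions. Each extracted bubble has $\cY_1 \geqslant \theta_{\mathrm{gap}}\cdot\min\{1,\lambda\} > 0$ by Theorem~\ref{thm: gap_for_R3}, so the total bound $\Lambda$ forces termination in finitely many stages. The principal obstacle is the \emph{no-neck-energy} claim --- that total bubble energy equals $\Theta(x)$ --- which I would address by combining (i) pointwise curvature, gradient, and potential estimates on each neck annulus, valid since the energy density there sits below the $\varepsilon$-regularity threshold, with (ii) a monotonicity-type formula for $\varepsilon^{-1}\cY_\varepsilon$ on small geodesic balls, derived from the domain-variation stationarity of critical points, yielding decay of the neck energy to zero. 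Charge conservation $\sum_j 8\pi k_j = \Xi(x)$ then follows from the pointwise bound $|\langle \ast F,\nabla\Phi\rangle| \leqslant \tfrac{1}{2}\varepsilon^{-1}(\varepsilon^2|F|^2 + |\nabla\Phi|^2)$, which forces no-neck energy to imply no-neck charge, completing the proof.
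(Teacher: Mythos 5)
There are two genuine gaps, both at the points the paper identifies as the technical heart of the argument.

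First, your treatment of the concentration scale is backwards. You claim $\tau_i=\sigma_i/\ep_i\to\infty$, justified by an appeal to Theorem~\ref{thm: gap_for_R3} and ``minimality of $\sigma_i$''; but the gap theorem constrains solutions with small \emph{total} energy on $\RR^3$ and says nothing about a configuration carrying energy $\eta_0$ on a bounded rescaled ball, so the contradiction you invoke does not exist. The correct (and crucial) fact is the opposite: $\sigma_i\sim\ep_i$. The lower bound $\sigma_i\gtrsim\ep_i$ follows from the pointwise bound $e_{\ep_i}\leqslant C\ep_i^{-2}$, and the upper bound $\sigma_i\lesssim\ep_i$ is Lemma~\ref{lemm:rate_of_rescaling}(b) (refined in Lemma~\ref{lemm:rate_of_rescaling_better}): if $\sigma_i/\ep_i\to\infty$ one rescales by $\sigma_i$, the effective parameter $\ep_i/\sigma_i$ tends to $0$, Proposition~\ref{prop:local_convergence} applies on every unit ball, and the limit energy density is $|h|^2$ for an $L^2$ harmonic $1$-form on $\RR^3$, which vanishes --- contradicting that $B_1(0)$ carries energy exactly $\eta_0$. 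Your own claims are in fact mutually inconsistent: if $\sigma_i/\ep_i\to\infty$ and $\sigma_i$ is the \emph{first} radius at which the energy reaches $\eta_0$, then $\ep_i^{-1}\int_{B_{\ep_i}(y_i)}e_{\ep_i}<\eta_0$, and after rescaling by $\ep_i$ the limit could carry zero energy on every fixed ball; the asserted bound $\cY_1(\nabla_\infty,\Phi_\infty;B_1(0))\geqslant\eta_0$ is therefore unjustified, and with it the non-triviality of the first bubble. (Also, once $\sigma_i\sim\ep_i$ is known, the coarse estimates give uniform $C^k$ bounds on the rescaled sequence everywhere, so the ``bad set $\Sigma$'' and removable-singularity step are unnecessary.)

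Second, your no-neck-energy argument rests on ``a monotonicity-type formula for $\ep^{-1}\cY_\ep$ on small geodesic balls,'' which is precisely the tool the authors state they do \emph{not} have (see the discussion in \S\ref{subsec:context}); the difficulty is that the three terms of $e_\ep$ scale differently, so domain-variation stationarity does not yield almost-monotonicity of $r^{0}\int_{B_r}e_\ep$. Your item (i) also only controls part of the integrand: the $\ep$-regularity and exponential-decay estimates of \S\ref{subsec:exp-decay} give radial decay of $|\nabla\Phi|$ and $1-|\Phi|^2$ on the neck annuli, but \emph{not} of $|F_\nabla|$ (only its transversal part decays). The missing idea is Lemma~\ref{lem:conservation_law}: one integrates the divergence-free stress-energy tensor against the radial vector field on the neck annulus, which expresses $\int\ep^2|F_{\nabla}|^2$ in terms of boundary fluxes and the already-controlled Higgs-field terms, and this is what closes Proposition~\ref{prop:no_neck}. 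The remaining ingredients of your sketch --- termination of the tree via Theorem~\ref{thm: gap_for_R3}, and the deduction of the charge identity from the energy identity via $2|\bangle{\ast F,\nabla\Phi}|\leqslant\ep^{-1}e_\ep$ --- do match the paper and are fine.
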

\begin{rmk}\label{rmk: existence_by_blowup}
It follows from Theorem~\ref{thm: bubbling} and Theorem~\ref{thm: gap_for_R3} that $\Theta(x) \geqslant \theta_{\mathrm{gap}} \cdot \min\{\lambda, 1\}$ for all $x \in S$. Also, we can actually replace the upper bound on the number of elements in $S$ from Theorem \ref{thm: asymptotic}(a) by $\cH^0(S)\leqslant \theta_{\mathrm{gap}}^{-1}\cdot\max\{\lambda^{-1},1\}\cdot\Lambda$ (see Remark \ref{rmk:improved_upper_bound_S}). Finally, applying Theorem~\ref{thm: irred} to any closed $3$-manifold admitting no non-trivial harmonic $1$-forms, say $M=\mathbb{S}^3$, we deduce from Theorem~\ref{thm: asymptotic} and Theorem~\ref{thm: bubbling} the existence of non-trivial critical points of $\cY_1^{g_{\mathbb{R}^3}}$ on $\mathbb{R}^3$, for any $\lambda > 0$ (see Proposition \ref{prop:existence_R3}). Extracting information about the magnetic charge of critical points obtained this way is something we wish to take up in a future work.
\end{rmk}

A few comments on the proof of Theorem~\ref{thm: bubbling} might be helpful at this point. The standard procedure for extracting bubbles determines a sequence of rescaling rates whose ratio to $\ep_i$ cannot be prescribed beforehand. Nonetheless, thanks to the local estimates obtained in Section~\ref{sec:estimates}, in particular Proposition~\ref{prop:coarse_estimate} and Proposition~\ref{prop:local_convergence}, this rate turns out to be comparable to $\ep_i$ (Lemma~\ref{lemm:rate_of_rescaling}), and therefore the bubbles we obtain are non-trivial, finite energy solutions of~\eqref{eq: 2nd_order_crit_pt_intro} on $\RR^3$ with $\ep = 1$, as the statement asserts (Proposition~\ref{prop:multiplicity_lower_bound}). Next, identifying \textbf{neck regions} between bubbles is a routine matter, and we show that eventually the neck regions carry no energy by appealing to the exponential decay estimates on $|\nabla_i\Phi_i|$ and $1 - |\Phi_i|$ in Section~\ref{subsec:exp-decay}, and combining them with a local version of the equipartition theorem in~\cite[Corollary II.2.2]{Jaffe-Taubes} to control $|F_{\nabla_i}|$, taking advantage of the fact that we are working in dimension three (Lemma~\ref{lem:conservation_law} and Proposition~\ref{prop:no_neck}).

\subsection{Context}\label{subsec:context}
The main concerns of this paper are the construction of solutions to the SU(2) Yang--Mills--Higgs equation~\eqref{eq: 2nd_order_crit_pt_intro}, and the study of their limiting behavior as $\ep\to 0$. Below we briefly mention a number of previous works that, in our own biased view, are most relevant to our results, making no attempt to survey the many facets of the vast literature on monopoles and the Yang--Mills--Higgs equations.

In the case $\lambda = 0$, the earliest known solution is the celebrated Bogomol'nyi--Prasad--Sommerfield (BPS) monopole~\cite{Bogomolnyi1976,Prasad-Sommerfield1975}, which is a spherically symmetric, charge one solution of~\eqref{eq: monopole_eq} on $\RR^3$. Later, in what is perhaps the first instance of a gluing construction, Taubes established the existence of monopoles on $\RR^3$ with arbitrary charge~\cite[Chapter IV]{Jaffe-Taubes} by perturbing approximate solutions built out of BPS monopoles. Shortly thereafter, in a series of works~\cite{Taubes1982I,Taubes1982II,Taubes1983,Taubes1984,Taubes1985} that constituted a major tour de force, Taubes developed a min-max theory for the $SU(2)$ Yang--Mills--Higgs functional on $\RR^3$ and proved that for each prescribed charge, there exist infinitely many solutions to~\eqref{eq: 2nd_order_crit_pt_intro} which are not monopoles\footnote{Our focus here is on analytical approaches, but as is well-known, around the same time, an algebraic description of the space of $SU(2)$-monopoles with arbitrary fixed charge on $\RR^3$ emerged from the works of Hitchin~\cite{Hitchin1982,Hitchin1983}, Donaldson~\cite{Donaldson-CMP1984} and Hurtubise~\cite{Hurtubise1983,Hurtubise1985}. The interested reader is referred to the classical text~\cite{Atiyah-Hitchin1988} by Atiyah and Hitchin.}. Both the perturbative and variational approaches pioneered by Taubes were subsequently applied to produce solutions of~\eqref{eq: 2nd_order_crit_pt_intro} on other $3$-manifolds. For instance, as a crucial step in their construction of non-self-dual $SU(2)$ Yang--Mills connections on $S^4$, L. M. Sibner, R. J. Sibner and Uhlenbeck~\cite{Sibner-Sibner-Uhlenbeck1989} performed an analogue of Taubes' min-max construction on $\HH^3$. On the other hand, gluing constructions of monopoles were carried out by Floer and Ernst on asymptotically flat $3$-manifolds~\cite{Floer1987,Ernst1995}, by L. M. Sibner and R. J. Sibner on $\HH^3$~\cite{Sibner-Sibner2012}, by Foscolo on $\RR^2 \times S^1$~\cite{Foscolo2017}, and by Oliveira on asymptotically conical $3$-manifolds~\cite{Oliveira2016}, to name a few examples. Also, on closed $3$-manifolds, where taking $\lambda = 0 $ forces all monopoles to be trivial, Esfahani instead constructed monopoles with prescribed point singularities on rational homology $3$-spheres, by gluing together BPS solutions and liftings of Dirac monopoles~\cite{Esfahani2022}. 

When $\lambda > 0$, due partly to the absence of a first-order reduction such as~\eqref{eq: monopole_eq}, it appears that much fewer existence results for~\eqref{eq: 2nd_order_crit_pt_intro} are available, even on $\RR^3$, compared to the $\lambda = 0$ case. A part of Taubes' min-max theory was extended by Groisser~\cite{Groisser-thesis} to the case of sufficiently small positive $\lambda$. This restriction was later removed by L. M. Sibner and Talvacchia~\cite{Sibner-Talvacchia1994}, who proved that for any $\lambda > 0$, there exists a non-trivial solution of~\eqref{eq: 2nd_order_crit_pt_intro} on $\RR^3$ with charge zero. On the other hand, completing earlier work by Tyupkin, Fateev and Shvarts~\cite{Tyupkin-Fateev-Shvarts}, Plohr~\cite{Plohr-thesis} obtained spherically symmetric solutions with magnetic charge one by minimizing the Yang--Mills--Higgs functional over a class of symmetric configurations similar to the 't Hooft--Polyakov ansatz underlying the BPS monopole, and showing that the resulting minimizer is a solution of~\eqref{eq: 2nd_order_crit_pt_intro}. A similar construction was carried out by Schechter and Weder~\cite{Schechter-Weder1981}. Dostoglou~\cite{Dostoglou-thesis} then succeeded in finding solutions that minimize the Yang--Mills--Higgs functional over all spherically symmetric configurations. Our existence result noted in Remark~\ref{rmk: existence_by_blowup} adds to this list of approaches to solving~\eqref{eq: 2nd_order_crit_pt_intro} on $\RR^3$ when $\lambda > 0$.

Turning to the limiting behavior of solutions as $\ep \to 0$, we note that Theorem~\ref{thm: asymptotic} and Theorem~\ref{thm: bubbling} can be regarded as a three-dimensional and non-abelian analogue of the results of Hong, Jost and Struwe~\cite{Hong-Jost-Struwe1996} on the $U(1)$ Yang--Mills--Higgs functional over closed surfaces, in that the energy regimes considered in the two works arise from similar scaling arguments. In our context, suppose for simplicity that $M$ is $\RR^3$. Then, given any $(\nabla, \Phi) \in \mathscr{C}(E)$ and a sequence $\ep_i \to 0$, with $s_{i}$ denoting the map $x \mapsto \ep_i x$, and with $(\nabla_{i}, \Phi_{i})$ defined via $(\nabla, \Phi) = s_{i}^*(\nabla_{i}, \Phi_{i})$, we see that
\[
\cY_{1}(\nabla, \Phi) = \ep_i^{-1}\cY_{\ep_i}(\nabla_{i}, \Phi_{i}),
\]
relating a uniform bound on $\ep^{-1}\cY_{\ep}$ to concentration behavior resembling a blow-down process. The work~\cite{Hong-Jost-Struwe1996} was later vastly generalized by Pigati and Stern in~\cite{Pigati-Stern2021}, where $U(1)$ Yang--Mills--Higgs critical points with uniformly bounded energy are produced on arbitrary closed Riemannian $n$-manifolds, and shown to concentrate along the support of a stationary, integral $(n-2)$-varifold, thereby giving, in the codimension-$2$ case, an alternative proof of the fundamental existence result of Almgren. To find an analogue of this correspondence in the codimension-$3$ setting is one of our motivations for studying the $SU(2)$ Yang--Mills--Higgs functional. Although $\lambda = 0$ appears to be the more appropriate choice for such a search, since it is here that monopoles arise and could potentially fill the role played by vortices in~\cite{Pigati-Stern2021} (see especially Proposition 6.7 therein), we nonetheless regard the results in this paper as a first step in that direction, not least because the $\lambda > 0$ assumption allows us to obtain non-trivial critical points on closed $3$-manifolds. Motivated by the very recent work of Parise, Pigati and Stern~\cite{Parise-Pigati-Stern2025} on the $\Gamma$-convergence of $SU(2)$ Yang--Mills--Higgs (with $\lambda = 0$) to the $(n-3)$-volume, particularly Remark 1.5 therein, we hope to investigate in a future work the possibility of letting $\lambda$ tend to $0$ along with $\ep$ in our asymptotic analysis. Also, to go from either the $\Gamma$-convergence in~\cite{Parise-Pigati-Stern2025} or our analysis of critical points in dimension $3$ to a convergence result for critical points in general dimensions in the style of~\cite{Pigati-Stern2021}, one major obstacle is proving a monotonicity-type formula which would allow bounds on $r^{3-n} \int_{B_r(x)}e_{\ep}$ to be passed from one scale to smaller scales. This entails analyzing how energy is distributed among the terms in the integrand of~\eqref{eq: YMH_energy}, and is again something we wish to address eventually.

Our results are also related to another type of asymptotic analysis on the $SU(2)$ Yang--Mills--Higgs functional. Specifically, working over an asymptotically conical $3$-manifold, Fadel and Oliveira~\cite{Fadel-Oliveira2019} studied sequences of \emph{finite mass} monopoles\footnote{In \cite{Fadel-Oliveira2019}, Fadel and Oliveira worked with a definition of finite mass as in Oliveira's thesis \cite[Definition 1.4.1]{Oliveira-thesis}, which later was proved by Fadel \cite{fadel2023asymptotics} to be exactly the condition that the Higgs field norm $|\Phi|$ converges uniformly, along the conical end, to a constant at infinity; see \cite[Remark 1.10]{fadel2023asymptotics}. In particular, it follows from combining \cite[Theorems 1.1 and 1.4]{fadel2023asymptotics} with \cite[Proposition 1.4.4]{Oliveira-thesis} that the finite mass condition for a monopole is equivalent to finite energy.} with fixed magnetic charge and with mass tending to infinity. By a version of the formula~\eqref{eq: energy_formula}, the energy divided by the mass remains constant along such a sequence, a condition which is closely related to the energy bound considered in this paper by the following observation:  taking $\lambda = 0$ in~\eqref{eq: YMH_energy}, writing $m$ for $\ep^{-1}$, and then letting $\Psi = m\Phi$, one sees that
\[
\ep^{-1}\cY_{\ep}(\nabla, \Phi) = m^{-1}\int_{M} |F_{\nabla}|^2 + |\nabla\Psi|^2 \vol_g.
\]
In fact the relationship goes beyond this formal level, as can be seen by comparing Theorem~\ref{thm: asymptotic} and Theorem~\ref{thm: bubbling} above to Theorem 1.1 in~\cite{Fadel-Oliveira2019}, with the notable difference that we do not know in Theorem~\ref{thm: asymptotic} whether $Z_{\beta} = S$. By the first part of Theorem~\ref{thm: bubbling}, the failure of this equality would yield a non-trivial, finite energy critical point $(\nabla, \Phi)$ of $\cY_1^{g_{\RR^3}}$ (with $\lambda > 0$) on $\RR^3$ with a non-vanishing Higgs field. We are currently still investigating whether there could indeed be such a solution. Finally, while we shall not enter into a detailed discussion of this topic, we would be remiss not to mention that the monopole equation admits generalizations to Calabi--Yau and $G_2$-manifolds (see~\cite[Chapter 1]{Oliveira-thesis} for a succinct exposition of the background), and that sequences of these higher-dimensional monopoles, in the large mass limit, are expected to concentrate along codimension-$3$ calibrated submanifolds. We refer the reader to ~\cite{Fadel-thesis},~\cite{Fadel-Nagy-Oliveira2024},~\cite{Parise-Pigati-Stern2025} and~\cite{Li2025} for examples of recent progress in this direction.

\subsection{Notation and conventions}
As already mentioned after~\eqref{eq: YMH_energy}, we consider on $\mathfrak{su}(E)$ the metric $\langle\cdot{},\cdot{}\rangle$ induced by minus one-half the Cartan--Killing form of $\mathfrak{su}(2)$. That is, $\langle a,b\rangle := -2\tr(ab)$. If $\sigma_1,\sigma_2,\sigma_3$ denote the Pauli matrices, then
\begin{align}\label{eq:su2-on-basis}
T_1:=\frac{i\sigma_1}{2},\quad T_2:=\frac{i\sigma_2}{2},\quad T_3:=\frac{i\sigma_3}{2}    
\end{align}
gives an orthonormal basis of $\mathfrak{su}(2)$ with respect to $\langle\cdot{},\cdot{}\rangle$, satisfying
\[
[T_1,T_2]=-T_3,\quad [T_1,T_3]=T_2,\quad [T_2,T_3]=-T_1.
\] In particular, we get for all $a, b, c \in \fsu(2)$ that
\begin{equation}\label{eq: triple_cross_product}
[a,[b,c]] = b\langle a,c\rangle - c\langle a,b\rangle,
\end{equation}
and that
\begin{equation}\label{eq: bracket_norm}
\big|[a, b]\big|^2 + \big(\bangle{a, b}\big)^2 = |a|^2|b|^2.
\end{equation}
Given a Higgs field $0\neq\Phi\in\Gamma(\mathfrak{su}(E))$, we shall often write $w$ to mean $\frac{1}{2}(1 - |\Phi|^2)$. Also, we denote by 
\[
Z(\Phi):=\{x\in X: \Phi(x)=0\}
\]
the (gauge invariant\footnote{Note that $Z(\Phi)=Z({\rg}\circ\Phi\circ {\rg}^{-1})$ for any gauge transformation ${\rg}\in\mathscr{G}(E)$.}) zero locus of $\Phi$. Note that $Z(\Phi)$ is closed in $M$, and therefore would be compact if $M$ is. On the other hand, in the case where $M$ is complete, noncompact, if in addition $w$ decays to zero at infinity (which would occur if for instance $(\nabla,\Phi)$ is a finite energy critical point of $\cY_{\ep}$ with $\lambda > 0$, see Proposition~\ref{prop:maximum_principle_for_w} below), then $Z(\Phi)$ would be bounded, so by completeness of $M$ we again get that $Z(\Phi)$ is compact. At any rate, on the open set $V := M\setminus Z(\Phi)$, we have the decomposition
\begin{equation}\label{eq: adjoint_decomp}
\mathfrak{su}(E)|_{V}=\mathfrak{su}(E)^{||} \oplus \mathfrak{su}(E)^\perp,
\end{equation} where the \emph{longitudinal} line bundle $\mathfrak{su}(E)^{||}$ is given by
\begin{equation}
	\mathfrak{su}(E)^{||} = \ker \left(\mathrm{ad}(\Phi):\mathfrak{su}(E)|_{V}\to \mathfrak{su}(E)|_{V}\right) = \langle \Phi \rangle,
\end{equation}
and the \emph{transverse} rank $2$ bundle $\mathfrak{su}(E)^\perp$ is the orthogonal complement of $\mathfrak{su}(E)^{||}$. We note that
\begin{equation}\label{eq: decomp_Lie}
	[\mathfrak{su}(E)^{\perp},\mathfrak{su}(E)^{\perp}]\subset \mathfrak{su}(E)^{||}\quad\text{and}\quad [\mathfrak{su}(E)^{||},\mathfrak{su}(E)^{\perp}]\subset \mathfrak{su}(E)^{\perp}.
\end{equation}
Henceforth, over $M \setminus Z(\Phi)$, we split any section $\xi$ of $\mathfrak{su}(E)$ as $\xi =\xi^{||} + \xi^\perp$ according to the decomposition~\eqref{eq: adjoint_decomp}. More explicitly:
\begin{subequations}
	\begin{align}
		\xi^{||} &:=|\Phi|^{-2}\langle\xi,\Phi\rangle\Phi, \label{eq:long_part} \\
		\xi^{\perp} &:= |\Phi|^{-2}[\Phi,[\xi,\Phi]]. \label{eq:transv_part}
	\end{align} 
\end{subequations} It is clear that $\xi^{||}$ and $\xi^{\perp}$ are smooth on the complement of $Z(\Phi)$. For future use, we also note the following relations which hold outside of $Z(\Phi)$. Given sections $a, b, c$ of $\fsu(E)$, we first have by~\eqref{eq: bracket_norm} that
\begin{equation}\label{ineq:eigen_ad}
|[\Phi, a]| = |[\Phi, a^\perp]| = |\Phi| |a^{\perp}|.
\end{equation} 
Second, combining~\eqref{eq: bracket_norm} with the fact that $[a, b] = [a^{||}, b^{\perp}] + [a^{\perp}, b]$, we have
\begin{equation}\label{eq: bracket_ineq}
|[a, b]| \leqslant |a| |b^{\perp}| + |a^{\perp}| |b|,
\end{equation}
from which we get 
\begin{equation}\label{eq: double_bracket_ineq}
|[[a, b], \Phi]| \leqslant  |a||[b, \Phi]| + |[a,\Phi]||b|.
\end{equation}
Note that~\eqref{eq: double_bracket_ineq} holds even on $Z(\Phi)$. Finally, again using \eqref{eq: decomp_Lie}, and the Ad-invariance of the inner product, 
\begin{equation}\label{eq: inner_prod_decomp}
	\langle [a,b],c\rangle = \langle [a^{||},b^{\perp}],c^{\perp}\rangle + \langle [a^{\perp},b^{||}],c^{\perp}\rangle + \langle [a^{\perp},b^{\perp}],c^{||}\rangle\quad,
    \text{ for all }a,b,c\in \mathfrak{su}(E).
\end{equation} 

Next, given $\nabla \in \mathscr{A}(E)$, as noted above, $d_{\nabla}$ stands for the exterior covariant derivative induced by $\nabla$, and throughout the paper it mostly acts on $\fsu(E)$-valued forms. Since we are on a $3$-manifold, its adjoint, when acting on $p$-forms, is given by 
\begin{equation}\label{eq: d_star}
d_{\nabla}^* = (-1)^p * d_\nabla *.
\end{equation}
We then denote by $\Delta_{\nabla}$ the Hodge Laplacian induced by $\nabla$, that is, $\Delta_{\nabla} = d_{\nabla}d_{\nabla}^* + d_{\nabla}^* d_{\nabla}$. On the other hand, $\nabla^*\nabla$ denotes the rough Laplacian, given by $-\sum_{i = 1}^{3}\nabla^2_{e_i, e_i}$ in terms of a local orthonormal frame on $M$, and the same convention is adopted for the usual Laplacian acting on scalar-valued functions. In this notation, given an $\mathfrak{su}(E)$-valued tensor $S$, there holds
\begin{equation}\label{eq: Laplacian_conventions}
\Delta\big( \frac{|S|^2}{2} \big) = - |\nabla S|^2 + \bangle{S, \nabla^*\nabla S}.
\end{equation}
In addition, with subscripts denoting components with respect to a local orthonormal frame, the standard Weitzenb\"ock formulas for $1$-forms and $2$-forms with values in $\fsu(E)$ are given respectively by
\begin{equation}\label{eq: 1form_Weitzenbock}
(\Delta_{\nabla}a)_{i} = (\nabla^*\nabla a)_{i} + [(F_{\nabla})_{ki}, a_k] + \Ric_{ki}a_k,
\end{equation}
for $a \in \Omega^1(\fsu(E))$, and
\begin{equation}\label{eq: 2form_Weitzenbock}
(\Delta_{\nabla}\varphi)_{ij} = (\nabla^*\nabla \varphi)_{ij} + [F_{ki}, \varphi_{kj}] - [F_{kj}, \varphi_{ki}] + [\cR_2(\varphi)]_{ij}, 
\end{equation}
for $\varphi \in \Omega^2(\fsu(E))$, where $\cR_2:\Lambda^2T^*M \to \Lambda^2 T^*M$ denotes the bundle map given by
\begin{equation}\label{eq:R2-definition}
[\cR_{2}(\varphi)]_{ij} = \varphi_{\Ric(e_i), e_j} + \varphi_{e_i, \Ric(e_j)} + \varphi_{e_k, R_{e_i, e_j}e_k}.
\end{equation}

In deriving estimates, we allow constants such as ``$C$'' or ``$a$'' to change from line to line, unless otherwise stated. In addition, we use subscripts when we want to emphasize the dependence of a constant on other parameters. To shorten the statement of certain inequalities, we sometimes write ``$A \lesssim_{c_1, c_2, \cdots} B$'' to mean $A \leqslant C B$ with $C$ depending on $c_1, c_2, \cdots$. When we have both $A \lesssim_{c_1, c_2, \cdots} B$ and $B \lesssim_{c_1, c_2, \cdots }A $, we write ``$A \sim_{c_1, c_2, \cdots}B$''.

Finally, given $s\in [0,3]$, for any subset $S\subset (M^3,g)$ we write $\cH^s(S)$ for its Hausdorff $s$-dimensional measure with respect to the metric induced by $g$, normalized so that $\cH^3=\vol_g$ as measures. With no risk of confusion with the later notation, we sometimes write $\mathscr{H}^k(M)$ for the space of harmonic $k$-forms on $(M^3,g)$. That is,
\[
\mathscr{H}^k(M) = \{h \in \Omega^k(M):dh = 0,\quad d_g^{*}h = 0\}.
\]
\subsection{Organization}
Section~\ref{sec:existence} is devoted to the proof of Theorem~\ref{thm: existence}. We begin by recalling some well-known analytical properties of $\cY_{\ep}$, including the first variation formula of $\cY_{\ep}$ and the Palais--Smale condition up to change of gauge. Then we proceed to set up the min-max construction and establish the lower and upper bounds on the min-max values needed to invoke standard theory and conclude that they are critical values of $\cY_{\ep}$.

Section~\ref{sec:estimates} opens with the derivation of Bochner--Weitzenb\"ock formulas from~\eqref{eq: 2nd_order_crit_pt_intro}. Then, Sections \ref{subsec:coarse-estimates}--\ref{subsec:improved-estimates} consist essentially of a series of inductive arguments, whereby a priori estimates are obtained on solutions of~\eqref{eq: 2nd_order_crit_pt_intro}. The techniques involved are known to experts, and in most cases can be traced back to~\cite{Jaffe-Taubes}. A number of consequences of the estimates that are relevant for later parts of the paper are deduced in Sections \ref{subsec:local-convergence} and \ref{subsec:consequences-estimates}.

We begin Section~\ref{sec:gap} by proving Theorem~\ref{thm: gap}, and deducing from it Theorem~\ref{thm: gap_for_R3}. Then, to complement these gap results, we describe how to obtain examples of non-trivial, reducible solutions $(\nabla, \Phi \not\equiv 0)$ of~\eqref{eq: 2nd_order_crit_pt_intro} when $M$ is closed and $b_2(M) \neq 0$.

In Section~\ref{sec:asymptotic}, we take up the proofs of the last two main theorems. Specifically, we establish the statements in Theorem \ref{thm: asymptotic} throughout Sections \ref{subsec:blow-up}--\ref{subsec:hodge_decomp_longit_part}, while Theorem \ref{thm: bubbling} is proved in the remaining Sections \ref{subsec:rescaling}--\ref{subsec:energy_identity}. The analysis involved in the proofs of both theorems relies heavily on the main a priori estimates 
obtained in Section~\ref{sec:estimates}. Moreover, the gap result of Theorem \ref{thm: gap_for_R3}, proved in Section~\ref{sec:gap}, is also used in the bubbling analysis leading to the energy and charge identities for $\Theta(x)$ and $\Xi(x)$, respectively. 

Several standard facts which are invoked multiple times throughout the paper are gathered in the appendices for the sake of completeness. Appendix~\ref{sec:Coulomb} concerns the issue of finding Coulomb gauges locally in a continuous manner when a family of connections is involved. In Appendix~\ref{sec:Moser-iteration}, we trace the steps in Moser's iteration to state the resulting estimate in a way that suits our purposes in Sections~\ref{sec:estimates} and~\ref{sec:asymptotic}. Appendix~\ref{sec:commute} records a standard estimate on the commutator terms generated when moving $\nabla^*\nabla$ across covariant derivatives. Finally, Appendix~\ref{sec:proofs_derivative_formulas} collects a number of standard but lengthy computations involving repeated differentiation of the Bochner--Weitzenb\"ock formulas obtained from~\eqref{eq: 2nd_order_crit_pt_intro}. 
\vskip 1em
\noindent\textbf{Acknowledgements.} The authors are grateful to Henrique S\'a Earp for introducing them to each other, and thank Saman Esfahani, Gonçalo Oliveira, Daniel Stern, and Mark Stern for insightful conversations on the subject of this paper. In addition, D.C. wishes to thank Chris Scaduto for an inspiring topics course from which he learned a great deal about $SU(2)$ Yang--Mills--Higgs theory. L.L. has been funded by the São Paulo Research Foundation (Fapesp) [2020/15054-2].
\section{Existence of critical points}\label{sec:existence}
In this section, we assume that $(M^3,g)$ is a \emph{closed} Riemannian $3$-manifold, and we address the problem of existence of non-trivial critical points of $\mathcal{Y}_{\varepsilon}$ on a $SU(2)$-bundle $E$ over $(M^3,g)$. We shall prove that, as long as $\ep$ is sufficiently small depending only on the geometry of $(M,g)$, there always exist critical points within the energy regime $\ep\lesssim_{\lambda}\mathcal{Y}_{\ep}(\nabla,\Phi) \lesssim_{\lambda,M} \ep$, and satisfying furthermore that $\Phi \not \equiv 0$. These critical points are produced by a 2-parameter min-max procedure inspired by a similar construction done by Pigati--Stern \cite{Pigati-Stern2021} in the case of the $U(1)$-version of $\cY_{\ep}$. 

In \S\ref{subsec:variational_properties}, we prepare for the min-max construction by introducing the relevant function space and establishing some basic analytical facts about $\cY_{\ep}$. Much of this material is standard, and when proofs are included, it is only for the reader's convenience. In particular, we derive the first variation formula (Lemma~\ref{lem: first_variation_SU(2)}), and verify the Palais--Smale condition up to change of gauge (Proposition~\ref{prop:PS_up_to_gauge}). The min-max construction is carried out in \S\ref{subsec:min_max_construction}. Upon fixing an identification of $\fsu(2)$ with $\RR^3$, we define the min-max values, denoted $\omega_{\ep}$, using a collection of $2$-parameter families similar to the one used in~\cite[Definition 7.7]{Pigati-Stern2021}. We then establish that $\omega_\ep \sim \ep$ (Propositions~\ref{prop:min-max_lower_bound} and~\ref{prop:min-max_upper_bound}), which permits us to produce critical points of $\cY_{\ep}$ at the level $\omega_{\ep}$, and hence satisfying~\eqref{ineq: right_energy_regime}, by standard arguments (Proposition~\ref{prop:existence_critical_point}). The proof of Theorem~\ref{thm: existence} is completed at the end of \S\ref{subsec:min_max_construction}.

\subsection{Some variational properties of the functional}\label{subsec:variational_properties}
We start by noticing that the $SU(2)$-bundle $E\to M$ is trivializable in our setting. Indeed, since $SU(2)$ is a simply connected Lie group, it must be $2$-connected\footnote{Every connected Lie group $G$ has $\pi_2(G)=\{1\}$; see \cite[p. 183]{stern2010geometry} for an analytic proof (using Yang--Mills theory) of this fact when $G$ is compact.}, and it then follows from obstruction theory that a principal $SU(2)$-bundle over a $3$-manifold must be topologically trivializable. Since our underlying principal $SU(2)$-bundle $P\to M$ is smooth, it must actually be smoothly trivializable (see for instance \cite{muller2009equivalences}), and thus the same is true for the associated vector bundle $E$. Consequently, after choosing a trivialization, we can assume $E=M\times\mathbb{C}^2$, 
so that $\mathfrak{su}(E)$ is the trivial bundle $\underline{\mathfrak{su}(2)}=M\times\mathfrak{su}(2)$. Using the flat connection $d$ as a reference, 
each connection $\nabla\in\mathscr{A}(E)$ then corresponds bijectively to an $\mathfrak{su}(2)$-valued $1$-form $A$ such that $\nabla = d + A$,  and the space $\mathscr{C}(E)$ defined after~\eqref{eq: YMH_energy} becomes
\[
\mathscr{C}(E) = \{ (d + A, \Phi) \ |\ (A, \Phi) \in \Omega^1\oplus\Omega^0(M,\underline{\mathfrak{su}(2)})\},
\]
the integrability requirements being implied by smoothness since $M$ is closed.

In order to find critical points of $\cY_{\ep}$, we shall in fact work with configurations $(d+A,\Phi)$ on $E$ in which the pair $(A,\Phi)$ lies in the Hilbert space $X$ defined as the $W^{1,2}$ Sobolev completion of $\Omega^1\oplus\Omega^0(M,\underline{\mathfrak{su}(2)})$, with norm given by
\[
\|(A, \Phi)\|_{X} := \big(\|A\|_{2; M}^2 + \|DA\|_{2; M}^2 + \|\Phi\|_{2; M}^2 + \|d\Phi\|_{2; M}^2\big)^{\frac{1}{2}}.
\]
Here, to distinguish it from the exterior derivative, we have written $D$ for the connection acting on 
$\Omega^1(M,\underline{\mathfrak{su}(2)})$ induced by the flat connection on $E$ and the Levi--Civita connection of $g$. The $3$-dimensional Sobolev embedding $W^{1,2}\hookrightarrow L^6$ together with H\"older's inequality ensures that if $(A,\Phi)\in X$, then both $\Phi\in L^4$ and $A\in L^4$, and we get
\begin{align*}
    F_{d+A} &= dA + \frac{1}{2}[A,A]\in L^2,\\
    (d+A)\Phi &= d\Phi + [A,\Phi]\in L^2,\quad\text{and}\\
    (1-|\Phi|^2) &\in L^2.
\end{align*} We may therefore consider the Yang--Mills--Higgs energy $\mathcal{Y}_{\varepsilon}$ as a well-defined functional on $X$ given by
\[
\begin{split}
\cY_{\ep}(A, \Phi) =\ & \int_{M} \ep^2 |F_{d + A}|^2 + |(d + A)\Phi|^2 + \frac{\lambda}{4\ep^2}(1 - |\Phi|^2)^2 \vol_g\\
=\ & \int_{M} \ep^2 |dA + \frac{1}{2}[A, A]|^2 + |d\Phi + [A, \Phi]|^2 + \frac{\lambda}{4\ep^2}(1 - |\Phi|^2)^2 \vol_g.
\end{split}
\]
That is, $\cY_{\ep}$ can be thought of as being defined on the space of pairs $(A, \Phi)$ where $A$ is an $\fsu(2)$-valued $1$-form on $M$ of class $W^{1, 2}$, and $\Phi$ is a $W^{1, 2}$ map $M \to \fsu(2)$. 
It is standard to check that $\cY_{\ep}$, understood this way, is a smooth function from the Hilbert space $(X, \|\cdot \|_X)$ to $\RR$. 

Likewise, given $\rg \in W^{2, 2}(M; SU(2))$, we define
\begin{equation}\label{eq:gauge_action_on_X}
\rg \cdot (A, \Phi) := ({\rg}d({\rg}^{-1}) + {\rg}A {\rg}^{-1}, {\rg}\Phi {\rg}^{-1}), \quad \text{ for }(A, \Phi) \in X,
\end{equation}
where the right-hand side again lies in $X$ thanks to the following consequence of Sobolev embedding:
\begin{equation}\label{eq:W22_W12_multiplication}
\|uv\|_{1, 2} \leqslant C_M\|u\|_{1, 2} \|v\|_{2, 2},\quad \text{ for all }u \in W^{1,2}(M),\ v \in W^{2, 2}(M),
\end{equation}
which also implies that the affine map $(A, \Phi) \longmapsto \rg \cdot (A, \Phi)$ from $(X, \|\cdot\|_X)$ to itself is smooth. It is another standard exercise to check that
\begin{equation}\label{eq:gauge_invariance_on_X}
\cY_{\ep}(A, \Phi) = \cY_{\ep}(\rg \cdot (A, \Phi)),\quad \text{ for all } \rg \in W^{2, 2}(M; SU(2)),\ (A, \Phi) \in X.
\end{equation}
To continue, given $(A, \Phi) \in X$, we write $T_{(A, \Phi)}X$ for the space $X$ equipped with the norm
\[
\|(a, \phi)\|_{T_{(A, \Phi)}X} : = \big(\|a\|_{2; M}^2 + \|(D + A)a\|_{2; M}^2 + \|\phi\|_{2; M}^2 + \|(d + A)\phi\|_{2; M}^2\big)^{\frac{1}{2}},
\]
where $D + A$ is induced by the connection $d + A$ on $E$ and the Levi--Civita connection of $g$, and thus acts by 
\[
[(D + A)a]_{e_i, e_j} = (Da)_{e_i, e_j} + [A_{e_i}, a_{e_j}],
\]
in terms of a local orthonormal frame on $M$. Notice that, for $\rg\in W^{2,2}(M,SU(2))$, we have
\begin{equation}\label{eq:TX_norm_invariance}
\|( {\rg}a {\rg}^{-1}, {\rg}\phi {\rg}^{-1})\|_{T_{\rg \cdot (A, \Phi)}X} = \|(a, \phi)\|_{T_{(A, \Phi)}X}.
\end{equation}
The three lemmas that follow collect some basic properties of $X$ that permit us to invoke standard variational tools later this section.
\begin{lemm}\label{lemm:tangent-norm-equivalent}
Given $\Lambda > 0$, whenever $\|(A, \Phi)\|_{X} \leqslant \Lambda$, we have 
\[
C^{-1}_{M, \Lambda}\|(a, \phi)\|_{T_{(A, \Phi)}X} \leqslant \|(a, \phi)\|_{X} \leqslant C_{M, \Lambda} \|(a, \phi)\|_{T_{(A, \Phi)}X}, \text{ for all }(a, \phi) \in T_{(A, \Phi)}X.
\]
\end{lemm}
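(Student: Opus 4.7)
The two norms differ only in which connection is used to measure first derivatives: the $X$-norm uses the flat/Levi-Civita reference $(D, d)$, while the $T_{(A, \Phi)}X$-norm uses $(D + A, d + A)$. Consequently, controlling one norm by the other amounts to estimating the $L^2$ size of the commutators $[A, a]$ and $[A, \phi]$. The plan is to do this with the three-dimensional Sobolev embedding $W^{1,2}(M) \hookrightarrow L^p(M)$ for $p \leqslant 6$ together with H\"older's inequality, supplementing the harder direction with an interpolation-plus-absorption argument.

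For the easier inequality $\|(a,\phi)\|_{T_{(A,\Phi)}X} \leqslant C_{M,\Lambda}\|(a,\phi)\|_{X}$, I would expand $(D + A)a = Da + [A, a]$ and $(d + A)\phi = d\phi + [A, \phi]$ and then estimate, via H\"older and Sobolev,
\[
\|[A, a]\|_{2; M} \leqslant C\|A\|_{4; M}\|a\|_{4; M} \leqslant C_{M}\|A\|_{W^{1,2}(M)}\|a\|_{W^{1,2}(M)} \leqslant C_{M}\Lambda\, \|(a, \phi)\|_{X},
\]
and similarly for $[A, \phi]$. Squaring and summing the relevant pieces gives the claim.

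The reverse inequality $\|(a,\phi)\|_{X} \leqslant C_{M,\Lambda}\|(a,\phi)\|_{T_{(A,\Phi)}X}$ is more delicate because, writing $Da = (D + A)a - [A, a]$, the commutator $[A, a]$ must be controlled without recourse to $\|a\|_{W^{1,2}(M)}$, since this is exactly the quantity one wishes to bound. The remedy is the Gagliardo--Nirenberg-type interpolation
\[
\|a\|_{4; M} \leqslant C_{M}\,\|a\|_{W^{1,2}(M)}^{3/4}\|a\|_{2; M}^{1/4},
\]
valid on the $3$-manifold $M$, which combined with $\|A\|_{4; M} \leqslant C_{M}\Lambda$ yields
\[
\|[A, a]\|_{2; M} \leqslant C_{M,\Lambda}\|a\|_{W^{1,2}(M)}^{3/4}\|a\|_{2; M}^{1/4}.
\]
Applying Young's inequality with exponents $4/3$ and $4$ then gives, for any $\delta > 0$,
\[
\|a\|_{W^{1,2}(M)}^{3/4}\|a\|_{2; M}^{1/4} \leqslant \delta\|a\|_{W^{1,2}(M)} + C_{\delta}\|a\|_{2; M}.
\]
Choosing $\delta$ small enough (depending on $M$ and $\Lambda$) to absorb the resulting $\|Da\|_{2; M}$ term into the left-hand side produces $\|Da\|_{2; M} \leqslant 2\|(D + A)a\|_{2; M} + C_{M,\Lambda}\|a\|_{2; M}$, with an entirely analogous bound for $\phi$.

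The main obstacle is precisely this second direction: the $L^2$-norm of $[A, a]$ sits at the borderline of what the three-dimensional Sobolev embedding allows, and a naive H\"older estimate reintroduces $\|Da\|_{2; M}$ on the right-hand side with a constant one cannot a priori make small. The interpolation-plus-Young trick circumvents this by exploiting the sub-linear exponent $3/4$, and relies essentially on the fixed bound $\|A\|_{W^{1,2}(M)} \leqslant \Lambda$ together with $\dim M = 3$.
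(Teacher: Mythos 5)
Your proposal is correct and follows essentially the same route as the paper: both directions use H\"older and Sobolev embedding, and the harder direction rests on the same $L^2$--$L^6$ interpolation with exponent $3/4$, followed by Young's inequality and absorption of the $\|Da\|_{2}$ term. The only cosmetic difference is that the paper applies Young's inequality to $\|a\|_{6}^{3/4}\|a\|_{2}^{1/4}$ before invoking Sobolev, whereas you fold the Sobolev bound into the interpolation first; the estimates are identical.
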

\begin{proof}
By H\"older's inequality, the interpolation between $L^{2}$ and $L^6$, Young's inequality, and the Sobolev embedding $W^{1, 2} \hookrightarrow L^6$, we have
\[
\begin{split}
\|[A, a]\|_{2} \leqslant\ & \|A\|_{4} \|a\|_{4} \leqslant \|A\|_{4} \|a\|_{6}^{\frac{3}{4}}\|a\|_{2}^{\frac{1}{4}}\\
\leqslant\ & \|A\|_4 \cdot \big(\ep \|a\|_{6} + C\ep^{-3}\|a\|_2\big)\\
\leqslant\ & C_M\ep \|A\|_4 (\|a\|_{2} + \|Da\|_{2}) + C\ep^{-3}\|A\|_{4} \|a\|_{2}.
\end{split}
\]
Choosing 
\[
\ep = \frac{1}{2(C_M \|A\|_4 + 1)},
\]
we deduce from the above that 
\[
\begin{split}
\|Da\|_{2}\leqslant\ & \|(D + A)a\|_2 + \|[A, a]\|_{2}\\
\leqslant\ & \|(D + A)a\|_{2} + \frac{1}{2} \|Da\|_2 + C_{M}(1 + \|A\|_4)^4\|a\|_{2}\\
\leqslant\ & \|(D + A)a\|_{2} + \frac{1}{2} \|Da\|_2 + C_{M,\Lambda}\|a\|_{2},
\end{split}
\]
where in the last step we used
\[
\|A\|_{4} \leqslant C_M (\|A\|_{2} + \|DA\|_{2}) \leqslant C_{M}'\Lambda.
\]
Taking the two ends of the above string of inequalities, absorbing $\frac{1}{2}\|Da\|_2$ to the left-hand side, and adding $\|a\|_2$ to both sides, we obtain
\[
\|a\|_{2} + \|Da\|_{2} \leqslant C_{M, \Lambda}\|a\|_2 +  2\|(D + A)a\|_2.
\]
A bound to the reverse effect is much simpler to obtain. Indeed, by the triangle inequality, H\"older's inequality and Sobolev embedding, we have 
\[
\begin{split}
\|(D + A)a\|_2 \leqslant\ & \|Da\|_2 + \|A\|_4 \|a\|_4 \leqslant \|Da\|_2 +C_M \|A\|_4 \cdot(\|a\|_2 + \|Da\|_2) \\
\leqslant\ & C_{M, \Lambda}(\|a\|_{2} + \|Da\|_{2}).
\end{split}
\]
Similar arguments applied to $d\phi$ and $(d + A)\phi$ gives the desired equivalence of norms with the admissible dependence.
\end{proof}

\begin{lemm}\label{lemm:Finsler}
For all $(A_0, \Phi_0)$ and $\delta > 0$, there exists $\rho > 0$ such that whenever $\|(A, \Phi) - (A_0, \Phi_0)\|_{X} < \rho$, there holds
\[
1 - \delta \leqslant \frac{\|(a, \phi)\|_{T_{(A, \Phi)}X}}{\|(a, \phi)\|_{T_{(A_0, \Phi_0)}X}} \leqslant 1 + \delta,
\]
for all $(a, \phi) \in X \setminus \{(0, 0)\}$.
\end{lemm}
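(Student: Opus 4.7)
The plan is to exploit the fact that the only difference between the two tangent norms arises from replacing $A_0$ by $A$ inside the covariant derivatives, since the $\|a\|_2^2$ and $\|\phi\|_2^2$ terms are identical. Writing $B := A - A_0$, so that $(D+A)a = (D+A_0)a + [B, a]$ and $(d+A)\phi = (d+A_0)\phi + [B, \phi]$, expanding the squares gives
\[
\begin{split}
\|(a,\phi)\|_{T_{(A,\Phi)}X}^2 - \|(a,\phi)\|_{T_{(A_0,\Phi_0)}X}^2 = &\ 2\bigl\langle (D+A_0)a, [B, a]\bigr\rangle_{L^2} + \|[B, a]\|_{2}^2 \\
&\ + 2\bigl\langle (d+A_0)\phi, [B, \phi]\bigr\rangle_{L^2} + \|[B, \phi]\|_{2}^2,
\end{split}
\]
so the task reduces to bounding the four commutator terms on the right by a small multiple of $\|(a,\phi)\|_{T_{(A_0,\Phi_0)}X}^2$.

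For this I would apply H\"older's inequality with the split $\tfrac{1}{3} + \tfrac{1}{6} = \tfrac{1}{2}$ together with the $3$-dimensional Sobolev embeddings $W^{1,2}(M) \hookrightarrow L^3(M)$ and $W^{1,2}(M) \hookrightarrow L^6(M)$, yielding
\[
\|[B, a]\|_{2} \lesssim_{M} \|B\|_{3}\,\|a\|_{6} \lesssim_{M} \|B\|_{1,2}\,\|a\|_{1,2},
\]
and the analogous bound for $\|[B, \phi]\|_2$. Setting $\Lambda := \|(A_0, \Phi_0)\|_X + 1$ and restricting to $\rho < 1$ ensures $\|(A_0, \Phi_0)\|_X \leqslant \Lambda$, so Lemma~\ref{lemm:tangent-norm-equivalent} applied at $(A_0, \Phi_0)$ gives $\|a\|_{1,2} + \|\phi\|_{1,2} \leqslant C_{M,\Lambda}\,\|(a,\phi)\|_{T_{(A_0,\Phi_0)}X}$. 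Combining this with Cauchy--Schwarz on the two cross terms, and noting that $\|(D + A_0) a\|_2$ and $\|(d + A_0)\phi\|_2$ are each controlled by $\|(a, \phi)\|_{T_{(A_0, \Phi_0)} X}$ by definition, produces
\[
\bigl|\|(a,\phi)\|_{T_{(A,\Phi)}X}^2 - \|(a,\phi)\|_{T_{(A_0,\Phi_0)}X}^2\bigr| \leqslant C_{M,\Lambda}\,(\rho + \rho^2)\,\|(a,\phi)\|_{T_{(A_0,\Phi_0)}X}^2.
\]

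To finish, I would choose $\rho$ small enough, depending on $\delta$, $M$ and $\|(A_0, \Phi_0)\|_X$, so that $C_{M, \Lambda}(\rho + \rho^2)$ is bounded by $\min\{(1+\delta)^2 - 1,\ 1 - (1-\delta)^2\}$, and then extract square roots to obtain the claimed two-sided ratio bound. The argument is essentially routine given the tools already in place; the only mild subtlety is that the tangent norms are defined pointwise on $X$ without a global normalization, so one must pass through the fixed reference bound $\|(A_0, \Phi_0)\|_X + 1$ in order to invoke Lemma~\ref{lemm:tangent-norm-equivalent}, which in turn is precisely what forces $\rho$ to depend on $(A_0, \Phi_0)$ itself rather than just on $\delta$ and $M$.
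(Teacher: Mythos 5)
Your proof is correct and follows essentially the same route as the paper: both reduce the comparison to the commutator terms $[A-A_0,a]$ and $[A-A_0,\phi]$, control them by H\"older plus Sobolev embedding to extract a factor of $\rho$, and then invoke Lemma~\ref{lemm:tangent-norm-equivalent} at the reference point to return to the $T_{(A_0,\Phi_0)}X$-norm. The only differences are cosmetic — you expand the squared norms and handle cross terms by Cauchy--Schwarz with the exponent split $\tfrac13+\tfrac16=\tfrac12$, whereas the paper applies the triangle inequality directly to the (unsquared) norms with the split $\tfrac14+\tfrac14=\tfrac12$.
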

\begin{proof}
We first require that $\rho < 1$, so that, by the triangle inequality,
\begin{equation}\label{eq:Finsler_uniform_bound}
\|(A, \Phi)\|_X < \|(A_0, \Phi_0)\|_{X} + 1 =: \Lambda, \text{ for all }(A, \Phi) \in B_{\rho}^{\|\cdot\|_X}((A_0, \Phi_0)).
\end{equation}
Given $(a, \phi) \in X$ with $\|(a, \phi)\|_{T_{(A_0, \Phi_0)}X} = 1$, we have by the triangle inequality and Sobolev embedding that
\[
\begin{split}
\big|\|(a, \phi)\|_{T_{(A_0, \Phi_0)}X} - \|(a, \phi)\|_{T_{(A, \Phi)}X}\big| \leqslant\ & \big(\|[A - A_0, a]\|_{2}^2  + \|[A - A_0, \phi]\|_{2}^2\big)^{\frac{1}{2}}\\
\leqslant\ & \|A - A_0\|_{4} \big( \|a\|_{4}^2 + \|\phi\|_4^2 \big)^{\frac{1}{2}}
\leqslant C_M \rho \|(a, \phi)\|_{X}.
\end{split}
\]
By the bound~\eqref{eq:Finsler_uniform_bound} and Lemma~\ref{lemm:tangent-norm-equivalent}, we have
\[
\|(a, \phi)\|_{X} \leqslant C_{M, \Lambda} \|(a, \phi)\|_{T_{(A_0, \Phi_0)}X} = C_{M,\Lambda}.
\]
Substituting this back above yields 
\[
\big|\|(a, \phi)\|_{T_{(A_0, \Phi_0)}X} - \|(a, \phi)\|_{T_{(A, \Phi)}X}\big|  \leqslant C_{M, \Lambda}\rho.
\]
We get the desired bounds upon decreasing $\rho$ if necessary.
\end{proof}

Thanks to Lemma~\ref{lemm:tangent-norm-equivalent} and Lemma~\ref{lemm:Finsler}, the family of norms $\{\|\cdot\|_{T_{(A, \Phi)}X}\}_{(A, \Phi) \in X}$ defines a \emph{Finsler structure} on the tangent bundle $TX$ of $X$ (see~\cite[Chapter II, \S 3.7]{StrBook}). We can then introduce a distance metric $d_X$ on $X$ by letting
\[
d_X(p_0, p_1) = \inf \int_{0}^{1}\|\gamma'(t)\|_{T_{\gamma(t)}X} dt, \text{ for }p_0, p_1 \in X,
\]
where the infimum is taken over all $C^1$-paths $\gamma:[0, 1] \to (X, \|\cdot\|_X)$ with $\gamma(0) = p_0$ and $\gamma(1) = p_1$. 

\begin{lemm}[\cite{Palais1966}, Theorem 3.3]
\label{lemm:Finsler_topology}
With the above definition, we have:
\vskip 1mm
\begin{enumerate}
\item[(a)] $d_X$ is indeed a distance metric.
\vskip 1mm
\item[(b)] $d_X$ induces the same topology on $X$ as $\|\cdot\|_{X}$.
\vskip 1mm
\item[(c)] $d_X(p, q) = d_X({\rg} \cdot p, {\rg} \cdot q)$ for all ${\rg} \in W^{2, 2}(M; SU(2))$.
\end{enumerate}
\end{lemm}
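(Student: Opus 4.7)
The three assertions are standard facts about distance metrics arising from Finsler structures on Banach manifolds (see Palais' original paper), and the plan is to verify them in turn. For part (a), non-negativity of $d_X$ is immediate, symmetry follows from reversing parameterizations of admissible paths, and the triangle inequality follows from concatenating paths by a standard reparameterization. The one subtle axiom, that $d_X(p, q) = 0$ forces $p = q$, will be obtained as a byproduct of the argument for (b) below.

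For part (b), the plan is to establish two local comparison estimates about an arbitrary fixed basepoint $p_0 \in X$: namely, that $d_X(p_0, q) \leqslant C_0 \|q - p_0\|_X$ whenever $\|q-p_0\|_X$ is sufficiently small, and that $\|q - p_0\|_X \leqslant C_1 d_X(p_0, q)$ whenever $d_X(p_0, q)$ is sufficiently small. Combined, these show that the identity map between $(X, d_X)$ and $(X, \|\cdot\|_X)$ is a local homeomorphism at every point, hence a global homeomorphism. For the first estimate, I would take the straight-line path $\gamma(t) = p_0 + t(q - p_0)$ in the affine space $X$; applying Lemma~\ref{lemm:Finsler} to uniformly compare $\|\cdot\|_{T_{\gamma(t)} X}$ with $\|\cdot\|_{T_{p_0} X}$ along $\gamma$ (ensured by taking $\|q-p_0\|_X < \rho$), and then Lemma~\ref{lemm:tangent-norm-equivalent} to swap $\|\cdot\|_{T_{p_0} X}$ for $\|\cdot\|_X$ at a cost depending only on $\|p_0\|_X$, yields $d_X(p_0, q) \leqslant C_0 \|q-p_0\|_X$. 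For the second estimate, given any $C^1$ path $\gamma$ from $p_0$ to $q$ of length $L$, I would let $T \in (0, 1]$ be the largest time such that $\gamma|_{[0, T]}$ stays in the $\|\cdot\|_X$-ball of radius $\rho$ around $p_0$; on $[0, T]$ the same two lemmas give $\|\gamma'(t)\|_X \leqslant C_1 \|\gamma'(t)\|_{T_{\gamma(t)} X}$, so that $\|\gamma(t) - p_0\|_X \leqslant C_1 L$ throughout. If $L < \rho/C_1$, a continuation argument then forces $T = 1$, yielding $\|q - p_0\|_X \leqslant C_1 L$, and taking the infimum over paths produces the desired estimate. Positivity in (a) follows immediately: for $p \neq q$, the second estimate forces $d_X(p, q) > 0$ in the small-distance regime, while otherwise $d_X(p, q)$ is bounded below by a fixed positive constant.

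For part (c), the key observation is that the gauge action in~\eqref{eq:gauge_action_on_X} is an affine map of $X$ whose linear part, $(a, \phi) \mapsto (\rg a \rg^{-1}, \rg \phi \rg^{-1})$, preserves the Finsler norms pointwise by~\eqref{eq:TX_norm_invariance}: for any $p \in X$ we have $\|(\rg a \rg^{-1}, \rg \phi \rg^{-1})\|_{T_{\rg \cdot p} X} = \|(a, \phi)\|_{T_{p} X}$. Consequently, any $C^1$ path $\gamma$ from $p_0$ to $p_1$ is sent to a $C^1$ path $\rg \cdot \gamma$ from $\rg \cdot p_0$ to $\rg \cdot p_1$ of equal length, and applying the same observation to $\rg^{-1}$ shows that the two sets of admissible path lengths coincide, giving $d_X(\rg \cdot p_0, \rg \cdot p_1) = d_X(p_0, p_1)$.

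The main obstacle, in my view, lies in the continuation argument for the lower bound in (b): one must show that a sufficiently short path cannot escape the $\|\cdot\|_X$-ball where the two comparison lemmas apply, which requires feeding the length bound back into itself in a careful manner to prevent a circular dependence.
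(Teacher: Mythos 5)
Your proposal is correct and follows essentially the same route as the paper: the triangle inequality by concatenation, positivity and the ball inclusions via the straight-line path in one direction and an exit-time/continuation argument (preventing the path from leaving the ball where Lemma~\ref{lemm:tangent-norm-equivalent} applies) in the other, and gauge invariance from~\eqref{eq:TX_norm_invariance} applied to $\rg$ and $\rg^{-1}$. The "obstacle" you flag is exactly what the paper resolves with the supremum-of-exit-times argument, so there is no gap.
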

\begin{proof}
For part (a), clearly we have $d_X(p, q) = d_X(q, p)$, and that $d_X(p, p) = 0$. Next, two $C^1$-paths with a common endpoint can be joined in a $C^1$-manner using cutoff functions as in~\cite[Lemma 3.1]{Palais1966}, from which it is not hard to prove the triangle inequality. It remains to show that $p \neq q$ implies $d_X(p, q) > 0$. Suppose $p_0, p_1 \in X$ are such that 
\[
d_X(p_0, p_1) = 0,
\] let $\Lambda = \|p_0\|_X + 1$, and denote by $C_{M, \Lambda}$ the constant given by Lemma~\ref{lemm:tangent-norm-equivalent}. We claim that whenever there is a $C^1$-path $\gamma:[0, 1] \to X$ from $p_0$ to $p_1$ such that
\begin{equation}\label{eq:p0_p1_short_curve}
L(\gamma) : = \int_{0}^{1}\|\gamma'(t)\|_{T_{\gamma(t)}X} dt < C_{M, \Lambda}^{-1},
\end{equation}
there holds
\begin{equation}\label{eq:norm_by_metric}
\|p_1 - p_0\|_X \leqslant C_{M, \Lambda}L(\gamma). 
\end{equation}
To see this, take any $\delta \in (L(\gamma), C_{M, \Lambda}^{-1})$ and set 
\[
t^* = \sup\{t \in [0, 1]\ |\ \gamma([0, t]) \subset B^{\|\cdot\|_X}_{C_{M, \Lambda}\delta}(p_0)\}.
\]
Assume towards a contradiction that $t^* < 1$. Then since $t \mapsto \|\gamma(t) - p_0\|_X$ is a continuous function, we must have 
\begin{equation}\label{eq:exit_at_boundary}
\|\gamma(t^*) - p_0\|_{X} = C_{M, \Lambda}\delta.
\end{equation}
On the other hand, by the triangle inequality we have 
\[
\|\gamma(t)\|_X \leqslant C_{M, \Lambda}\delta + \|p_0\|_X < \Lambda, \text{ for all }t \in [0, t^*],
\]
and hence, by Lemma~\ref{lemm:tangent-norm-equivalent},
\begin{equation}\label{eq:exit_time}
\int_{0}^{t^*} \|\gamma'(t)\|_{X} dt \leqslant C_{M, \Lambda}\int_{0}^{t^*}\|\gamma'(t)\|_{T_{\gamma(t)}X} dt \leqslant C_{M, \Lambda}L(\gamma) < C_{M, \Lambda}\delta.
\end{equation}
Combining~\eqref{eq:exit_time} and~\eqref{eq:exit_at_boundary} with the fact that 
\[
\|\gamma(t^*) - p_0\|_{X} \leqslant \int_{0}^{t^*} \|\gamma'(t)\|_{X} dt,
\]
we obtain a contradiction. Therefore $t^* = 1$, and hence
\[
\|p_1 - p_0\|_{X} \leqslant C_{M, \Lambda}\delta
\]
by continuity. Since $\delta \in (L(\gamma), C_{M, \Lambda}^{-1})$ is arbitrary, we get~\eqref{eq:norm_by_metric}, as claimed. Recalling the assumption $d_X(p_0, p_1) =0$ and the definition of $d_X$, it follows that $\|p_0 - p_1\|_X = 0$, that is, $p_0 = p_1$.

The proof that~\eqref{eq:p0_p1_short_curve} implies~\eqref{eq:norm_by_metric} actually demonstrates that given any $p \in X$, with $\Lambda = \|p\|_{X} + 1$ there holds
\begin{equation}\label{eq:norm_ball_metric_ball}
B_{\eta}^{d_X}(p) \subset B_{C_{M, \Lambda}\eta}^{\|\cdot\|_X}(p), \text{ whenever }\eta < C_{M, \Lambda}^{-1}.
\end{equation}
Conversely, for all $r < 1$ and $q \in B_{r}^{\|\cdot\|_X}(p)$, by considering the line segment from $p$ to $q$, which lies entirely in the convex set $B_r^{\|\cdot\|_{X}}(p)$, we have
\[
d_X(p, q) \leqslant \int_{0}^{1}\|q - p\|_{T_{tq + (1-t)p}X} \leqslant C_{M, \Lambda}\|q - p\|_{X} < C_{M, \Lambda}r,
\]
and thus
\begin{equation}\label{eq:metric_ball_norm_ball}
B_{r}^{\|\cdot\|_X}(p) \subset B_{C_{M, \Lambda}r}^{d_X}(p).
\end{equation}
The inclusions~\eqref{eq:metric_ball_norm_ball} and~\eqref{eq:norm_ball_metric_ball} implies that $d_X$ and $\|\cdot\|_X$ define the same collection of open sets, and we are done with part (b).

For part (c), given $p, q \in X$, a gauge transformation ${\rg} \in W^{2, 2}(M; SU(2))$, and a $C^1$-path $\gamma = (A, \Phi):[0, 1] \to X$ from $p$ to $q$, using~\eqref{eq:W22_W12_multiplication},  it is not hard to show that $t \longmapsto {\rg}\cdot \gamma(t)$ is still a $C^1$-path, and that in fact
\[
({\rg} \cdot \gamma)'(t) = ({\rg}A'(t){\rg}^{-1}, {\rg}\Phi'(t){\rg}^{-1}).
\]
Thus by~\eqref{eq:TX_norm_invariance} we have
\[
\|({\rg} \cdot \gamma)'(t)\|_{T_{{\rg}\cdot\gamma(t)}X} = \|\gamma'(t)\|_{T_{\gamma(t)}X},
\]
from which it is straightforward to deduce that $d_X(p,q) = d_X({\rg}\cdot p, {\rg}\cdot q)$, as asserted.
\end{proof}

Returning to the main line of discussion, let us compute the first variation of $\cY_{\ep}$. Below, and throughout the rest of Section~\ref{sec:existence}, we abuse notation and write $F_A$ for $F_{d + A}$.
\begin{lemm}[First variation of $\mathcal{Y}_{\varepsilon}$]\label{lem: first_variation_SU(2)}
    Given $(A, \Phi) \in X$ and $(a, \phi) \in X$, the first variation of the Yang--Mills--Higgs energy $\mathcal{Y}_{\varepsilon}:X\to\mathbb{R}$ is given by the formula
\begin{align}
(\delta \cY_{\ep})_{(A, \Phi)}(a, \phi) &= \frac{d}{dt}\Big|_{t = 0}\cY_{\ep}(A + ta, \Phi + t\phi)\nonumber\\
&= 2\ep^2 \bangle{F_{A}, da + [A, a]}_{L^2} + 2\bangle{(d + A)\Phi, (d + A)\phi + [a, \Phi]}_{L^2} \nonumber\\
&\quad\quad + \frac{\lambda}{\ep^2}\bangle{(|\Phi|^2 - 1)\Phi,\phi}_{L^2}.\label{eq:first_var_repeat}
\end{align}
\end{lemm}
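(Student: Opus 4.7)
The statement is a direct consequence of expanding each summand of $\cY_\ep(A + ta, \Phi + t\phi)$ as a polynomial in $t$ and differentiating at $t = 0$. I would organize the computation into three steps, one for each summand in \eqref{eq: YMH_energy}.

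For the curvature term, I use
\[
F_{d + (A + ta)} = F_A + t\big(da + [A, a]\big) + \tfrac{t^2}{2}[a, a],
\]
and differentiate $|F_{A + ta}|^2$ to obtain $2\langle F_A, da + [A, a]\rangle$ at $t = 0$. For the Higgs kinetic term, I use
\[
(d + A + ta)(\Phi + t\phi) = (d + A)\Phi + t\big((d + A)\phi + [a, \Phi]\big) + t^2 [a, \phi],
\]
whose squared norm differentiates at $t = 0$ to $2\langle (d + A)\Phi, (d+A)\phi + [a, \Phi]\rangle$. For the potential, using $|\Phi + t\phi|^2 = |\Phi|^2 + 2t\langle \Phi, \phi\rangle + t^2|\phi|^2$, one computes
\[
\tfrac{d}{dt}\Big|_{t = 0}\bigl(1 - |\Phi + t\phi|^2\bigr)^2 = -4(1 - |\Phi|^2)\langle \Phi, \phi\rangle,
\]
which, after multiplication by $\lambda/(4\ep^2)$ and absorption of $(|\Phi|^2 - 1)$ into the inner product, yields the third term on the right-hand side of \eqref{eq:first_var_repeat}.

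The only point requiring care is the exchange of $\tfrac{d}{dt}$ with $\int_M$ in the low-regularity Sobolev setting. Since $X \subset W^{1,2}(M)$ embeds into $L^6(M)$ in dimension three, all the factors $A, a, \Phi, \phi$ lie in $L^4 \cap L^6$, while $F_A$ and $(d + A)\Phi$ lie in $L^2$. A straightforward application of H\"older's inequality then bounds, for $|t| \leq 1$, the pointwise $t$-derivatives of the three integrands by a fixed $L^1(M)$ function, so that dominated convergence applies. Alternatively, one may invoke the fact already noted in the paragraph preceding Lemma~\ref{lem: first_variation_SU(2)} that $\cY_\ep: X \to \RR$ is smooth, in which case the above calculation simply identifies its Fr\'echet differential.

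I do not anticipate any substantive obstacle; the result is entirely standard, and the computation is essentially the same as the formal one already carried out in the Introduction to derive \eqref{eq: 2nd_order_crit_pt_intro}, the only nominal difference being that here $(a, \phi)$ is taken in $X$ rather than in $\Omega^1_c \oplus \Omega^0_c(M, \underline{\mathfrak{su}(2)})$.
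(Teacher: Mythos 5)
Your proposal is correct and follows essentially the same route as the paper: expand each summand of $\cY_{\ep}(A+ta,\Phi+t\phi)$ as a polynomial in $t$ using the same formulas for $F_{A+ta}$ and $(d+A+ta)(\Phi+t\phi)$, justify term-by-term differentiation via the Sobolev embedding $W^{1,2}\hookrightarrow L^6$ and H\"older's inequality, and sum the three contributions. No gaps.
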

\begin{proof}
With the help of the formulas 
\begin{align}
F_{A + ta} &= F_{A} + t(da + [A, a]) + \frac{t^2}{2}[a,a],\quad\text{and}\nonumber\\
(d + A + ta)(\Phi + t\phi) &= (d\Phi + [A, \Phi]) + t (d\phi + [A,\phi] + [a,\Phi])+t^2[a,\phi],\nonumber
\end{align}
together with the $3$-dimensional Sobolev embedding $W^{1,2}\hookrightarrow L^6$, one 
sees that $\|F_{A + ta}\|_{L^2}^2$ and $\|(d + A + ta)(\Phi + t\phi)\|_{L^2}^2$, as well as $\|(1 - |\Phi + t\phi|^2)\|_{L^2}^2$ for that matter, are each a quartic polynomial in $t$, with coefficients being the integral of $L^1$-functions. Differentiating at $t = 0$, we find that
    \begin{align}
        \frac{d}{dt}\Big|_{t=0}\varepsilon^2\|F_{A + ta}\|_{L^2}^2 &= 2 \varepsilon^2 \langle  F_{A},da + [A, a]\rangle_{L^2}, \nonumber\\
        \frac{d}{dt}\Big|_{t=0}\|{(d + A + ta)}(\Phi + t\phi)\|_{L^2}^2 &= 2\langle {(d + A)}\Phi, (d+ A)\phi  + [a,\Phi]\rangle_{L^2},\quad\text{and} \nonumber\\
        \frac{d}{dt}\Big|_{t=0}\frac{\lambda}{4\varepsilon^2}\|(1-|\Phi + t\phi|^2)\|_{L^2}^2 &= -\frac{\lambda}{\varepsilon^2}\langle (1-|\Phi|^2)\Phi,\phi\rangle_{L^2}. \nonumber
    \end{align} Summing the above gives the desired result.
\end{proof}

Given $(A, \Phi) \in X$, the norm of $(\delta\cY_{\ep})_{(A, \Phi)}$ is defined by duality. That is, 
\[
\|(\delta \cY_{\ep})_{(A, \Phi)}\| : = \sup\big\{ (\delta \cY_{\ep})_{(A, \Phi)}(a, \phi)\ \big|\ (a, \phi) \in X,\ \ \|(a, \phi)\|_{T_{(A, \Phi)}X} \leqslant 1 \big\}.
\]
With the help of Lemma~\ref{lemm:tangent-norm-equivalent}, we see that $\|(\delta \cY_{\ep})_{(A, \Phi)}\| < \infty$. The following obvious remark will be useful later.
\begin{rmk}\label{rmk:first_var_gauge_symmetry}
Given any ${\rg} \in W^{2, 2}(M; SU(2))$, we note that 
\[
(\delta \cY_{\ep})_{(A, \Phi)}(a, \phi) = (\delta \cY_{\ep})_{{\rg}\cdot (A, \Phi)}({\rg}a{\rg}^{-1}, {\rg}\phi{\rg}^{-1}).
\]
Recalling also~\eqref{eq:TX_norm_invariance}, we infer that 
\[
\|(\delta \cY_{\ep})_{(A, \Phi)}\| = \|(\delta \cY_{\ep})_{{\rg}\cdot (A, \Phi)}\|.
\]
\end{rmk}
Next, we recall two well-known important properties of $\cY_{\ep}$ in three dimensions, which rely heavily on Uhlenbeck's work \cite{uhlenbeck1982connections}. The first is that critical points are smooth up to change of gauge (Proposition~\ref{prop:smoothness_of_weak_solution}). The second is that the Palais--Smale condition holds, again up to change of gauge (Proposition~\ref{prop:PS_up_to_gauge}). 
\begin{prop}[\cite{Jaffe-Taubes}, Theorem V.2.4]
\label{prop:smoothness_of_weak_solution}
Suppose $(A, \Phi) \in X$ is a critical point of $\cY_{\ep}$. That is, assume that 
\[
(\delta\cY_{\ep})_{(A, \Phi)}(a, \phi) = 0, \text{ for all }(a, \phi) \in X.
\]
Then there exists $\rg \in W^{2, 2}(M; SU(2))$ such that $(\widetilde{A}, \widetilde{\Phi}): = \rg \cdot (A, \Phi)$ is smooth, and consequently $(d + \widetilde{A}, \widetilde{\Phi})$ satisfies~\eqref{eq: 2nd_order_crit_pt_intro} in the classical sense.
\end{prop}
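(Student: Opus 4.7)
The plan is to combine Uhlenbeck's Coulomb gauge fixing with an elliptic bootstrap, then patch the local gauges into a global one. First, cover $M$ by finitely many small geodesic balls $B_1, \ldots, B_N$, chosen so small (using absolute continuity of integration, since $F_A \in L^2$) that $\|F_A\|_{L^2(B_i)}$ falls below the Uhlenbeck small-curvature threshold. Applying the local Coulomb gauge theorem (Appendix~\ref{sec:Coulomb}) on each $B_i$ yields $\rg_i \in W^{2,2}(B_i; SU(2))$ with $\widetilde{A}_i := \rg_i \cdot A$ satisfying $d^*\widetilde{A}_i = 0$ together with the bound $\|\widetilde{A}_i\|_{W^{1,2}(B_i)} \lesssim \|F_A\|_{L^2(B_i)}$; the transformed Higgs field $\widetilde{\Phi}_i := \rg_i \cdot \Phi$ remains in $W^{1,2}(B_i)$.

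Next, I would bootstrap the regularity of $(\widetilde{A}_i, \widetilde{\Phi}_i)$ locally. Expanding~\eqref{eq: 2nd_order_crit_pt_intro} in the Coulomb gauge and using $d^*\widetilde{A}_i = 0$, the Euler--Lagrange system takes the schematic elliptic form
\begin{align*}
-\ep^2 \Delta \widetilde{A}_i &= \mathcal{Q}_1(\widetilde{A}_i, d\widetilde{A}_i) + \bigl[(d+\widetilde{A}_i)\widetilde{\Phi}_i, \widetilde{\Phi}_i\bigr], \\
-\Delta \widetilde{\Phi}_i &= \mathcal{Q}_2(\widetilde{A}_i, \widetilde{\Phi}_i, d\widetilde{\Phi}_i) + \tfrac{\lambda}{2\ep^2}(1 - |\widetilde{\Phi}_i|^2)\widetilde{\Phi}_i,
\end{align*}
where $\mathcal{Q}_1, \mathcal{Q}_2$ are polynomial nonlinearities of degree at most $3$. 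Since $W^{1,2} \hookrightarrow L^6$ in dimension $3$, H\"older places the right-hand sides in $L^{3/2}$, so interior $L^p$-elliptic estimates promote $(\widetilde{A}_i, \widetilde{\Phi}_i)$ to $W^{2,3/2} \hookrightarrow W^{1,3}$, hence to $L^q$ for every finite $q$. Iterating on slightly shrinking subballs yields $(\widetilde{A}_i, \widetilde{\Phi}_i) \in C^\infty(B_i')$ on some $B_i' \Subset B_i$.

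Finally, I would patch the local gauges into a global one. Arrange that the shrunken balls $\{B_i'\}$ still cover $M$. On an overlap $B_i \cap B_j$, the $W^{2,2}$ transition $h_{ij} := \rg_i \rg_j^{-1}$ satisfies
\[
d h_{ij} = \widetilde{A}_i\, h_{ij} - h_{ij}\, \widetilde{A}_j,
\]
whose right-hand side is smooth, and a standard bootstrap based on this first-order relation shows $h_{ij}$ is itself smooth on $B_i' \cap B_j'$. Build $\rg$ inductively: set $\rg := \rg_1$ near $\overline{B_1'}$, and at step $k+1$ extend $\rg$ across $\overline{B_{k+1}'}$ by interpolating, via a smooth cutoff, between the current $\rg$ and $h_{(k+1)\,i}\,\rg_{k+1}$ on the appropriate overlap. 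Such an interpolation preserves $W^{2,2}$ regularity at each stage and leaves $\rg \cdot (A, \Phi)$ smooth where $\rg$ has already been fixed. After $N$ steps we obtain a global $\rg \in W^{2,2}(M; SU(2))$ with $\rg \cdot (A, \Phi)$ smooth on $M$, and then~\eqref{eq:first_var_repeat} together with integration by parts shows that~\eqref{eq: 2nd_order_crit_pt_intro} is satisfied classically.

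The principal analytic difficulty sits in the bootstrap step: in dimension $3$, the starting regularity $W^{1,2}$ is \emph{critical} for the cubic terms hidden inside $\mathcal{Q}_1, \mathcal{Q}_2$, so one only gains $W^{2,3/2}$ at the first iteration rather than something stronger. The smallness of $\|\widetilde{A}_i\|_{W^{1,2}(B_i)}$ supplied by the Coulomb gauge theorem is precisely what permits one to absorb these nonlinear contributions into the leading-order Laplacian; once this threshold is crossed, subsequent iterations gain derivatives without obstruction. The globalization is technical but, given the smooth transitions $h_{ij}$, entirely standard.
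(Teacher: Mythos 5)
Your argument is correct and is precisely the standard Coulomb-gauge-plus-bootstrap-plus-patching proof that the paper delegates entirely to the cited reference (\cite{Jaffe-Taubes}, Chapter V); the paper itself supplies no independent proof. One small correction to your closing commentary: in dimension $3$ the starting regularity $W^{1,2}$ is \emph{sub}critical for the Yang--Mills--Higgs system (the critical space would be $W^{1,3/2}$), so the first bootstrap step from $L^{3/2}$ right-hand sides to $W^{2,3/2}\hookrightarrow W^{1,3}$ already yields a strict gain and no smallness is actually needed to absorb the nonlinearities there — the Uhlenbeck smallness is only required to obtain the Coulomb gauge in the first place.
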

\begin{proof}
See Chapter V of~\cite{Jaffe-Taubes}, especially Theorem V.1.1 and Theorem V.2.4. 
\end{proof}

\begin{prop}
\label{prop:PS_up_to_gauge}
Fix $\ep > 0$. Let $(A_i, \Phi_i)$ be a sequence in $X$ such that $\cY_\ep(A_i, \Phi_i) \leqslant \Lambda$ for all $i$, and that 
\begin{equation}\label{eq:PS_vanishing}
\lim_{i \to \infty}\|(\delta\cY_{\ep})_{(A_i, \Phi_i)}\| = 0.
\end{equation}
Then, up to taking a subsequence, there exist $\rg_i \in W^{2, 2}(M; SU(2))$ such that $\rg_i \cdot (A_i, \Phi_i)$ converges strongly in $W^{1,  2}$ to a critical point $(A, \Phi)$ of $\cY_\ep$.
\end{prop}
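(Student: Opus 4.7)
My plan is to mimic the classical Palais--Smale scheme for Yang--Mills-type functionals in three dimensions, combining Uhlenbeck's gauge fixing theorem with an elliptic subtraction argument. From the bound $\cY_\ep(A_i, \Phi_i) \leqslant \Lambda$ I first extract the uniform estimates $\|F_{A_i}\|_{L^2} \leqslant \ep^{-1}\Lambda^{1/2}$, $\|(d+A_i)\Phi_i\|_{L^2} \leqslant \Lambda^{1/2}$, and $\|1 - |\Phi_i|^2\|_{L^2} \leqslant 2\ep (\Lambda/\lambda)^{1/2}$, the last of which yields a bound on $\|\Phi_i\|_{L^4}$ depending on $(\ep, \lambda, \Lambda, M)$. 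Since $E \to M$ is trivial and $M$ is closed, Uhlenbeck's theorem~\cite{uhlenbeck1982connections}, applied via a small-curvature patching argument over a fine covering of $M$, then furnishes $\rg_i \in W^{2,2}(M;SU(2))$ such that $\widetilde{A_i} := \rg_i \cdot A_i$ satisfies the Coulomb condition $d^*\widetilde{A_i} = 0$ and is uniformly bounded in $W^{1,2}$. The identity $d\widetilde{\Phi_i} = (d+\widetilde{A_i})\widetilde{\Phi_i} - [\widetilde{A_i}, \widetilde{\Phi_i}]$, combined with H\"older's inequality and the Sobolev embedding $W^{1,2}\hookrightarrow L^4$ in dimension three, then gives a uniform $W^{1,2}$-bound on $\widetilde{\Phi_i} := \rg_i \Phi_i \rg_i^{-1}$ as well.

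\textbf{Weak limit and criticality.} Passing to a subsequence, $(\widetilde{A_i}, \widetilde{\Phi_i}) \rightharpoonup (A, \Phi)$ weakly in $W^{1,2}$ and strongly in $L^p$ for every $p \in [1, 6)$, by Rellich--Kondrachov. By Remark~\ref{rmk:first_var_gauge_symmetry}, the hypothesis~\eqref{eq:PS_vanishing} transfers to the gauged sequence. Fixing any smooth test pair $(a, \phi) \in \Omega^1\oplus\Omega^0(M, \underline{\fsu(2)})$, each summand on the right-hand side of~\eqref{eq:first_var_repeat} contains at most one first-order factor in $(\widetilde{A_i}, \widetilde{\Phi_i})$, the remaining factors being zeroth-order and converging strongly in $L^p$ for $p < 6$; passing to the limit yields $(\delta\cY_\ep)_{(A, \Phi)}(a, \phi) = 0$, and by density $(A, \Phi)$ is a critical point.

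\textbf{Strong convergence (the main obstacle).} The principal difficulty is upgrading weak to strong $W^{1,2}$ convergence, and the plan is to apply the vanishing of the first variation to the admissible variation $(b_i, \psi_i) := (\widetilde{A_i} - A, \widetilde{\Phi_i} - \Phi)$, which by Lemma~\ref{lemm:tangent-norm-equivalent} is uniformly bounded in the norm $\|\cdot\|_{T_{(\widetilde{A_i}, \widetilde{\Phi_i})}X}$. Hence $(\delta\cY_\ep)_{(\widetilde{A_i}, \widetilde{\Phi_i})}(b_i, \psi_i) \to 0$, while $(\delta\cY_\ep)_{(A, \Phi)}(b_i, \psi_i) = 0$ since $(A, \Phi)$ is critical. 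Subtracting the two evaluations and expanding using
\[
F_{\widetilde{A_i}} - F_A = db_i + [A, b_i] + \tfrac{1}{2}[b_i, b_i], \qquad (d+\widetilde{A_i})\widetilde{\Phi_i} - (d+A)\Phi = (d+A)\psi_i + [b_i, \widetilde{\Phi_i}],
\]
together with the polynomial identity $(|\widetilde{\Phi_i}|^2-1)\widetilde{\Phi_i} - (|\Phi|^2-1)\Phi = (|\Phi|^2 - 1)\psi_i + (2\bangle{\Phi, \psi_i} + |\psi_i|^2)(\Phi + \psi_i)$ for the potential term, I expect the leading quadratic contribution to reduce to
\[
2\ep^2\|db_i\|_{L^2}^2 + 2\|(d+A)\psi_i\|_{L^2}^2,
\]
while every remaining term pairs $(b_i, \psi_i)$ or its first derivative against a factor controlled in $L^2$ or $L^6$, and hence tends to zero by strong $L^p$ convergence of $(b_i, \psi_i)$ for $p < 6$. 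This forces $\|db_i\|_{L^2} + \|(d+A)\psi_i\|_{L^2} \to 0$. Combined with the Coulomb condition $d^*b_i = 0$ -- from which Hodge theory on the closed manifold $M$ yields $\|b_i\|_{W^{1,2}} \lesssim \|db_i\|_{L^2} + \|\mathrm{proj}_{\mathscr{H}^1(M)}(b_i)\|_{L^2}$, the harmonic projection being controlled by the strong $L^2$ convergence $b_i \to 0$ in the finite-dimensional space $\mathscr{H}^1(M)$ -- and with the elementary estimate $\|d\psi_i\|_{L^2} \leqslant \|(d+A)\psi_i\|_{L^2} + \|A\|_{L^6}\|\psi_i\|_{L^3}$, I conclude the desired strong convergence $(\widetilde{A_i}, \widetilde{\Phi_i}) \to (A, \Phi)$ in $W^{1,2}$.
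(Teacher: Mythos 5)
Your overall architecture is sound and close in spirit to the paper's, but there is one load-bearing claim that does not hold as stated: that Uhlenbeck's theorem, "applied via a small-curvature patching argument over a fine covering of $M$," produces global gauges $\rg_i$ for which $d^*\widetilde{A_i}=0$ on all of $M$ with uniform $W^{1,2}$ bounds. Uhlenbeck's gauge-fixing theorem is local: it gives a Coulomb representative on each small ball where $\|F\|_{L^{3/2}}$ is below a threshold. The patching procedure (e.g.\ \cite[Lemma 7.2]{wehrheim2004uhlenbeck}) glues these local gauges into a single global gauge in which the connection form is uniformly bounded in $W^{1,2}$ — that part of your claim is fine and is exactly weak Uhlenbeck compactness — but the glued gauge satisfies no global gauge condition; the Coulomb identity is destroyed in the overlaps. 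A genuinely global Coulomb gauge relative to the flat connection $d$ would require solving a nonlinear equation $d^*(\rg d\rg^{-1}+\rg A_i\rg^{-1})=0$ on $M$, which is not what Uhlenbeck provides. This matters because your final step hinges precisely on $d^*b_i=0$ globally: from the subtraction argument you only extract $\|db_i\|_{L^2}\to 0$, and without control of $d^*b_i$ (the functional's first variation gives you none, since $\cY_\ep$ only sees $A$ through $F_A$), the Hodge-theoretic estimate $\|b_i\|_{W^{1,2}}\lesssim \|db_i\|_{L^2}+\|d^*b_i\|_{L^2}+\|\mathrm{proj}_{\mathscr{H}^1}b_i\|_{L^2}$ does not close, and strong $W^{1,2}$ convergence of the connections does not follow. (A repair is possible — re-gauge by the local slice theorem so that $d^*(\widetilde{A_i}-A)=0$ relative to the weak limit, using that $\widetilde{A_i}\to A$ strongly in $L^3$ — but that is an additional nontrivial input you have not supplied.)

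The paper sidesteps this entirely by never leaving the local picture until the end: it keeps the local Coulomb gauges $\mathrm{g}_i^\alpha$ on each ball of a fixed finite cover, runs a Cauchy-difference version of your subtraction argument with test pairs $\zeta^2(\widetilde{A}_i^\alpha-\widetilde{A}_j^\alpha)$, $\zeta^2(\widetilde{\Phi}_i^\alpha-\widetilde{\Phi}_j^\alpha)$ cut off inside each ball — where the local condition $d^*\widetilde{A}_k^\alpha=0$ is available and suffices to convert $\|d(\zeta\widetilde{A}_i^\alpha-\zeta\widetilde{A}_j^\alpha)\|_{L^2}\to 0$ into local strong $W^{1,2}$ convergence — and only then patches the local gauges into global $\mathrm{g}_i$, using the already-established strong local convergence to control the transition functions. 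Apart from this, your computation of the leading quadratic term $2\ep^2\|db_i\|_{L^2}^2+2\|(d+A)\psi_i\|_{L^2}^2$ and the vanishing of the cross terms by strong $L^p$ convergence for $p<6$ is correct, your treatment of $\psi_i$ does not need any gauge condition and is fine, and your identification of the weak limit as a critical point by passing to the limit in~\eqref{eq:first_var_repeat} using Remark~\ref{rmk:first_var_gauge_symmetry} and Lemma~\ref{lemm:tangent-norm-equivalent} is also correct.
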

\begin{proof}
Although this result should be known by experts in the field, and can be inferred from the proofs of similar statements such as~\cite[Theorem 5.6]{Taubes1982I},~\cite[Proposition 4.5]{Taubes-JDG1984} or~\cite[Theorem 2.3]{Parker-Duke1992}, we did not find a reference proving this exact version in the literature, thus we include a proof here. The argument differs in no essential way from the references just mentioned, and is in fact simpler as we are working over a closed $3$-manifold. Since both $\ep$ and $\lambda$ are fixed, we only consider the case $\ep = \lambda = 1$. The remaining cases require only change of notation. 

To begin, since $M$ is closed there exists $\rho = \rho_M \in (0, \inj(M))$ such that for all $x \in M$ we have on $B_{\rho}(0)$ that 
\begin{equation}\label{eq:PS_metric_bound}
\frac{1}{2}g_{\RR^3} \leqslant \exp_{x}^*g \leqslant 2g_{\RR^3},\ \ |\partial_k(\exp_{x}^*g)_{ij}| \leqslant 1.
\end{equation}
For all $x \in M$, denoting still by $(A_i, \Phi_i)$ their pullbacks to $B_{\rho}(0) \subset \RR^3$ by the exponential map, we find by H\"older's inequality and~\eqref{eq:PS_metric_bound} that
\[
\begin{split}
\int_{B_r(0)} |F_{A_i}|^{\frac{3}{2}} \leqslant\ & C r^{\frac{3}{4}}\big( \int_{B_r(0)}|F_{A_i}|^2 \big)^{\frac{3}{4}} \leqslant Cr^{\frac{3}{4}}\big(\int_{B_r(x)} |F_{A_i}|_g^2 \vol_g \big)^{\frac{3}{4}}.
\end{split}
\]
Taking the $\frac{2}{3}$-th power of each term and using the uniform energy bound gives
\[
\|F_{A_i}\|_{\frac{3}{2}; B_r(0)} + r^{\frac{1}{2}} \|F_{A_i}\|_{2; B_{r}(0)} \leqslant Cr^{\frac{1}{2}}\Lambda^{\frac{1}{2}}.
\]
Now pick $r_0 \leqslant \rho_M$ such that 
\[
Cr_0^{\frac{1}{2}}\Lambda^{\frac{1}{2}} < \ep_{\text{gauge}},
\]
where $\ep_{\text{gauge}}$ is the threshold in Proposition~\ref{prop:continuous_change}, and choose a finite cover of $M$ consisting of $B_{\frac{r_0}{2}}(x_1), \cdots, B_{\frac{r_0}{2}}(x_L)$. Then, given $\alpha \in \{1, \cdots, L\}$, there exists for each $i \in \NN$ some $\mathrm{g}^\alpha_i \in W^{2, 2}(B_{r_0}(x_{\alpha}); SU(2))$ such that upon defining
\[
\widetilde{A}_i^\alpha =  \mathrm{g}^\alpha_i d(\mathrm{g}^\alpha_i)^{-1} +  \mathrm{g}^\alpha_i A_i  (\mathrm{g}^\alpha_i)^{-1},\ \ \widetilde{\Phi}_i^\alpha = \mathrm{g}^\alpha_i \Phi_i  (\mathrm{g}^\alpha_i)^{-1},
\]
and also identifying $\rg_{i}^{\alpha}$ and $(\widetilde{A}_i^{\alpha}, \widetilde{\Phi}_{i}^{\alpha})$ with their pullbacks to $B_{r_0}(0)$ by $\exp_{x_\alpha}$, we have that $\widetilde{A}_i^\alpha$ satisfies on $B_{r_0}(0)$ a suitably rescaled version of condition (U). In particular 
\[
\|\widetilde{A}_i^{\alpha}\|_{1, 2; B_{r_0}(0)} \leqslant C\|F_{A_i}\|_{2; B_{r_0}(0)} \leqslant C\Lambda^{\frac{1}{2}},
\]
which is independent of $i$ and $\alpha$. (Here and below, since $r_0$ is fixed, we do not mark explicitly the dependence of constants on $r_0$.) On the other hand, using the potential term in the functional, we see with the help of Young's inequality that
\[
\begin{split}
C\Lambda \geqslant\ &  C\int_{B_{r_0}(0)} (1 - |\Phi_i |^2)^2 \vol_{\exp_{x_\alpha}^*g} \geqslant \int_{B_{r_0}(0)} (1 - |\Phi_i |^2)^2\\
=\ & \int_{B_{r_0}(0)} (1 - |\widetilde{\Phi}_i^{\alpha}|^2)^2 = \int_{B_{r_0}(0)} (|\widetilde{\Phi}_i^{\alpha}|^4 - 2|\widetilde{\Phi}_i^{\alpha}|^2 + 1) \geqslant \int_{B_{r_0}(0)} \frac{|\widetilde{\Phi}_i^{\alpha}|^4}{2} - 1,
\end{split}
\]
and hence 
\[
\|\widetilde{\Phi}_i^{\alpha}\|_{4; B_{r_0}(0)} \leqslant C_{\Lambda}.
\]
Also, from the gradient term we have
\[
\begin{split}
\int_{B_{r_0}(0)} |d\widetilde{\Phi}_i^{\alpha}|^2 \leqslant\ & 2\int_{B_{r_0}(0)}|(d + \widetilde{A}_i^{\alpha})\widetilde{\Phi}_i^{\alpha}|^2 + 2\int_{B_{r_0}(0)} |\widetilde{A}_i^{\alpha}|^2 |\widetilde{\Phi}_i^{\alpha}|^2\\
\leqslant\ & C\int_{B_{r_0}(0)} |(d + \widetilde{A}_i^{\alpha})\widetilde{\Phi}_i^{\alpha}|_{\exp_{x_a}^*g}^2 \vol_{\exp_{x_a}^*g} + C\|\widetilde{A}_i^{\alpha}\|_{4; B_{r_0}(0)}^2 \|\widetilde{\Phi}_i^{\alpha}\|_{4; B_{r_0}(0)}^2\\
\leqslant\ & C\int_{M}|(d + A_i)\Phi_i|^2_g \vol_g +  C_{\Lambda} \leqslant C_{\Lambda},
\end{split}
\]
where we used the fact that 
\[
(d + \widetilde{A}_i^{\alpha})\widetilde{\Phi}_i^{\alpha} = \mathrm{g}^\alpha_i \cdot (d + A_i)\Phi_i \cdot (\mathrm{g}^\alpha_i)^{-1},
\]
the $L^4$-bound on $\widetilde{\Phi}^{\alpha}_i$ and $W^{1, 2}$-bound on $\widetilde{A}_{i}^{\alpha}$ just established, and Sobolev embedding. To summarize, up to now we have shown that 
\begin{equation}\label{eq:PS_uniform_bound}
\|\widetilde{A}_{i}^{\alpha}\|_{1, 2; B_{r_0}(0)} + \|\widetilde{\Phi}_i^{\alpha}\|_{1, 2; B_{r_0}(0)} \leqslant C_{\Lambda}.
\end{equation}
Since we are in dimension $3$ and the covering is finite, up to taking successive subsequences we can assume that, for each $\alpha \in \{1, \cdots, L\}$ the sequence $(\widetilde{A}_i^{\alpha}, \widetilde{\Phi}_i^{\alpha})$ converges weakly in $W^{1, 2}(B_{r_0}(0))$ and strongly in $L^4(B_{r_0}(0))$ as $i \to \infty$. Below we use the assumption~\eqref{eq:PS_vanishing} to upgrade this to strong $W^{1, 2}$-convergence. Since the argument is the same for each $\alpha$, we only consider $\alpha = 1$ and drop the superscript ``$\alpha$'' from the notation. 

Let $\zeta$ be a cut-off function such that $\zeta = 1$ on $B_{\frac{r_0}{2}}(0)$ while $\zeta = 0$ outside of $B_{\frac{3r_0}{4}}(0)$. Also, for $i, j$, we define 
\[
a = a_{i, j} =  \zeta^2 (\widetilde{A}_i - \widetilde{A}_j),\ \ v = v_{i, j} = \zeta^2 (\widetilde{\Phi}_i - \widetilde{\Phi}_j).
\]
Using~\eqref{eq:PS_uniform_bound} and arguing as in the proof of Lemma~\ref{lemm:tangent-norm-equivalent}, we see that for all $i$, $j$, and $k$, there holds
\[
\|a_{i, j}\|_{2; B_{r_0}} + \|(d + \widetilde{A}_k)a_{i, j}\|_{2; B_{r_0}} + \|v_{i, j}\|_{2; B_{r_0}} + \|(d + \widetilde{A}_k)v_{i, j}\|_{2; B_{r_0}} \leqslant C_{\Lambda},
\]
from which we deduce upon recalling the definition of $\widetilde{A}_k$ that
\begin{equation}\label{eq:PS_test_function_bound_in_charts}
\begin{split}
\|\mathrm{g}_k^{-1} a_{i, j}\mathrm{g}_{k}\|_{2; B_{r_0}} +\ & \|(d + A_k)(\mathrm{g}_k^{-1} a_{i, j}\mathrm{g}_k)\|_{2; B_{r_0}}\\
&+ \|\mathrm{g}_k^{-1} v_{i, j} \mathrm{g}_k\|_{2; B_{r_0}} + \|(d + A_k)(\mathrm{g}_k^{-1} v_{i, j}\mathrm{g}_k)\|_{2; B_{r_0}} \leqslant C_{\Lambda}.
\end{split}
\end{equation}
Now, thanks to the cutting off, the pullbacks of $(\mathrm{g}_k^{-1} a_{i, j}\mathrm{g}_k , \mathrm{g}_k^{-1} v_{i, j}\mathrm{g}_k )$ via $\exp_{x_1}^{-1}$ to $B_{r_0}(x_1)$, which we denote with the same letters, extend to all of $M$ and give elements of $X$. Further, by~\eqref{eq:PS_test_function_bound_in_charts} and the bounds~\eqref{eq:PS_metric_bound} on the metric, we get
\[
\|(\mathrm{g}_k^{-1} a_{i, j}\mathrm{g}_k, \mathrm{g}_k^{-1} v_{i, j}\mathrm{g}_k)\|_{T_{(A_k, \Phi_k)}X} \leqslant C_{M, \Lambda}.
\]
Recalling the assumption~\eqref{eq:PS_vanishing}, and then using the first variation formula~\eqref{eq:first_var_repeat} together with the gauge invariance properties of $\delta\cY_{\ep}$ in Remark~\ref{rmk:first_var_gauge_symmetry}, we see upon taking $k = i$ that the following integrals converge to $0$ as $i, j \to \infty$:
\[
\begin{split}
I_{i, j} : =\ & \int_{B_{r_0}(0)}\bangle{F_{\widetilde{A}_i}, da_{i, j} + [\widetilde{A}_i, a_{i, j}]}_g + \bangle{(d + \widetilde{A}_i)\widetilde{\Phi}_i, (d + \widetilde{A}_i)v_{i, j} + [a_{i, j}, \widetilde{\Phi}_i]}_g  \vol_g \\
& + \int_{B_{r_0}(0)}\frac{|\widetilde{\Phi}_{i}|^2 - 1}{2}\bangle{\widetilde{\Phi}_i, v_{i, j}} \vol_g.
\end{split}
\]
Taking instead $k = j$, we see that if we define $J_{i, j}$ to be the same integral as above with $(\widetilde{A}_i, \widetilde{\Phi}_i)$ replaced by $(\widetilde{A}_j, \widetilde{\Phi}_j)$, then 
\[
\lim_{i, j \to \infty}J_{i, j} = 0.
\]
By a direct computation we have
\[
I_{i, j} - J_{i, j} = P_{1, i, j} + P_{2, i, j} + Q_{1, i, j} + Q_{2, i, j} + R_{i, j},
\]
where
\[
\begin{split}
P_{1, i, j} =\ & \int_{B_{r_0}(0)} \bangle{F_{\widetilde{A}_i} - F_{\widetilde{A}_j}, \zeta^2(d\widetilde{A}_i - d\widetilde{A}_j) + 2\zeta d\zeta \wedge (\widetilde{A}_i - \widetilde{A}_j) }_g \vol_g,\\
P_{2, i, j} =\ & -\int_{B_{r_0}(0)} \zeta^2\bangle{ [\widetilde{A}_i, F_{\widetilde{A}_i}]- [\widetilde{A}_j, F_{\widetilde{A}_j}], \widetilde{A}_i - \widetilde{A}_j}_g \vol_g\\& - \int_{B_{r_0}(x_0)} \zeta^2 \bangle{ 
[(d + \widetilde{A}_i)\widetilde{\Phi}_i, \widetilde{\Phi}_i] - [(d + \widetilde{A}_j)\widetilde{\Phi}_j, \widetilde{\Phi}_j], \widetilde{A}_i - \widetilde{A}_j }_{g}\vol_{g}\\
Q_{1, i, j} =\ & \int_{B_{r_0}(0)} \bangle{(d + \widetilde{A}_i)\widetilde{\Phi}_i - (d + \widetilde{A}_j)\widetilde{\Phi}_j, \zeta^2(d\widetilde{\Phi}_i - d\widetilde{\Phi}_j) + 2\zeta d\zeta\otimes (\widetilde{\Phi}_i - \widetilde{\Phi}_j)}_g \vol_g\\
Q_{2, i, j} =\ & -\int_{B_{r_0}(0)} \zeta^2\bangle{ [\widetilde{A}_i, (d + \widetilde{A}_i)\widetilde{\Phi}_i] - [\widetilde{A}_j, (d + \widetilde{A}_j)\widetilde{\Phi}_j], \widetilde{\Phi}_i - \widetilde{\Phi}_j }_{g} \vol_{g}\\
R_{i, j} =\ & \frac{1}{2}\int_{B_{r_0}(0)} \zeta^2\bangle{(|\widetilde{\Phi}_{i}|^2 - 1)\widetilde{\Phi}_i - (|\widetilde{\Phi}_{j}|^2 - 1)\widetilde{\Phi}_j, \widetilde{\Phi}_i - \widetilde{\Phi}_j} \vol_g.
\end{split}
\]
Using the fact that the sequence $(\widetilde{A}_k, \widetilde{\Phi}_k)$ is bounded in $W^{1, 2}$ and converges strongly in $L^4$, we find after a straightforward computation that
\[
\int_{B_{r_0}(0)} \zeta^2 |d\widetilde{A}_i -d\widetilde{A}_j|_g^2 \vol_g + \int_{B_{r_0}(0)} \zeta^2 |d\widetilde{\Phi}_i - d\widetilde{\Phi}_j|^2_{g} \vol_g \leqslant (I_{i, j} - J_{i, j}) + o_{i, j}(1).
\]
Recalling~\eqref{eq:PS_metric_bound} and the fact that both $I_{i, j}$ and $J_{i, j}$ tend to $0$ as $i, j \to \infty$, we get
\[
\int_{B_{r_0}(0)}\zeta^2 \big( |d\widetilde{A}_i - d\widetilde{A}_j|^2 + |d\widetilde{\Phi}_i - d\widetilde{\Phi}_j|^2 \big) \to 0 \text{ as }i, j \to 0.
\]
Using again the strong $L^4$-convergence of the sequence, and also recalling that $d^*\widetilde{A}_k = 0$ on $B_{r_0}(0)$ for all $k$, we deduce further that 
\[
\lim_{i, j \to \infty}\big( \|d(\zeta \widetilde{A}_i - \zeta \widetilde{A}_j)\|_{2; B_{r_0}(0)}  + \|d^*(\zeta \widetilde{A}_i - \zeta \widetilde{A}_j)\|_{2; B_{r_0}(0)}  + \|d(\zeta \widetilde{\Phi}_i - \zeta \widetilde{\Phi}_j)\|_{2; B_{r_0}(0)}\big) = 0.
\]
This proves that $(\widetilde{A}_k, \widetilde{\Phi}_k)$ converges strongly in $W^{1, 2}$ on $B_{\frac{r_0}{2}}(0)$. Repeating this argument shows that $\rg_{i}^{\alpha} \cdot (A_{i}, \Phi_{i})$ converges strongly in $W^{1, 2}(B_{\frac{r_0}{2}}(x_{\alpha}))$ for each $\alpha = 1, \cdots, L$, and it is standard to deduce that, after passing to a further subsequence if needed, we have
\[
\rg_{i}^{\alpha} (\rg_{i}^{\beta})^{-1} \text{ converges strongly in } (C^0 \cap W^{2, 2})(B_{\frac{r_0}{2}}(x_{\alpha}) \cap B_{\frac{r_0}{2}}(x_{\beta})),
\]
whenever $B_{\frac{r_0}{2}}(x_{\alpha}) \cap B_{\frac{r_0}{2}}(x_{\beta}) \neq \emptyset$. We may then follow the patching argument in, for instance, \cite[Lemma 7.2]{wehrheim2004uhlenbeck}, to obtain $\mathrm{g}_i \in W^{2, 2}(M; SU(2))$ such that $\mathrm{g}_i \cdot (A_i, \Phi_i)$ converges strongly in $W^{1, 2}$ on $M$. By the first variation formula~\eqref{eq:first_var_repeat}, the assumption~\eqref{eq:PS_vanishing}, and Lemma~\ref{lemm:Finsler}, the limit must be a critical point.
\end{proof}
\subsection{Min-max construction of critical points}\label{subsec:min_max_construction}
We next define the admissible families of configurations for use in the min-max construction. Specifically, let $B^3$ be the closed unit ball in $\RR^3$ and define
\[
\Gamma = \{H \in C^0(B^3; X)\ |\ H(y) = (0, y), \text{ for all }y \in \partial B^3\},
\]
where in viewing $(0, y)$ as an element of $X$ we have fixed an identification of $\fsu(2)$ with $\RR^3$, and understood $y$ as a constant function from $M$ to $\mathfrak{su}(2)$. Then $\Gamma$ is a non-empty collection since $H: y \mapsto (0, y)$ lies in it, and we may therefore define
\[
\omega_\ep = \inf_{H \in \Gamma}\big[ \sup_{y \in B^3}\cY_{\ep}(H(y)) \big]. 
\]
\begin{prop}\label{prop:min-max_lower_bound}
For $\ep$ sufficiently small depending on $M$, we have
\[
\omega_{\ep} \gtrsim \min\{1,\lambda\}\cdot\ep.
\]
\end{prop}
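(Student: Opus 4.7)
The plan is a three-stage argument: first use the Brouwer degree to extract, within any admissible family, a configuration whose Higgs field has vanishing average over a small geodesic ball centered at a chosen base point; then rescale that ball to the unit ball in $\RR^3$ so that the energy scales as $\ep^{-1}\cY_\ep \mapsto \cY_1$; and finally apply Uhlenbeck's local Coulomb gauge theorem to deduce a contradiction from the assumed smallness of the energy.

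Fix any base point $x_0\in M$, and for any $H\in\Gamma$ with $H(y)=(A_y,\Phi_y)$, consider the map $F\colon B^3\to\fsu(2)\cong\RR^3$ given by
\[
F(y):=\frac{1}{\vol_g(B_\ep(x_0))}\int_{B_\ep(x_0)}\Phi_y\,\vol_g.
\]
Its continuity follows from that of $H$ into $L^1(M;\fsu(2))$, and on $\partial B^3$ one has $F(y)=y$. Brouwer's degree theorem thus produces $y^*\in\mathrm{int}(B^3)$ with $F(y^*)=0$, so that $(A,\Phi):=H(y^*)$ satisfies $\int_{B_\ep(x_0)}\Phi\,\vol_g=0$. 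The proposition then reduces to proving $\theta:=\cY_\ep(A,\Phi)\gtrsim\min\{1,\lambda\}\ep$ for any such $(A,\Phi)$.

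Arguing by contradiction, suppose $\theta\leq c\min\{1,\lambda\}\ep$ for a small constant $c>0$ to be fixed later. Pulling $(A,\Phi)|_{B_\ep(x_0)}$ back via $\exp_{x_0}$ and performing the rescaling $x=\ep\tilde x$ produces a configuration $(\tilde A,\tilde\Phi)$ on $B_1\subset\RR^3$, whose pullback metric $\tilde g_\ep$ tends uniformly to the Euclidean metric as $\ep\to 0$. A direct calculation gives
\[
\cY_1^{\tilde g_\ep}(\tilde A,\tilde\Phi;B_1)=\ep^{-1}\cY_\ep(A,\Phi;B_\ep(x_0))\leq c\min\{1,\lambda\},
\]
while the vanishing-mean condition rescales to $\int_{B_1}\tilde\Phi\,\vol_{\tilde g_\ep}=0$. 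Consequently, each of $\|\tilde F_{\tilde A}\|_{L^2(B_1)}$, $\|\tilde\nabla_{\tilde A}\tilde\Phi\|_{L^2(B_1)}$, and $\|1-|\tilde\Phi|^2\|_{L^2(B_1)}$ is of order $\sqrt{c}$.

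For $c$ sufficiently small, $\|\tilde F_{\tilde A}\|_{L^2(B_1)}$ falls below Uhlenbeck's threshold, producing a gauge $g\in W^{2,2}(B_1;SU(2))$ with $g(0)=\mathrm{Id}$ such that $\tilde A^U:=g\tilde Ag^{-1}+g\,dg^{-1}$ satisfies $d^*\tilde A^U=0$ and $\|\tilde A^U\|_{W^{1,2}(B_1)}\lesssim\|\tilde F_{\tilde A}\|_{L^2(B_1)}$. Using the Sobolev embedding $W^{1,2}(B_1)\hookrightarrow L^6(B_1)$ and the relation $dg=g\tilde A-\tilde A^U g$, one obtains $\|dg\|_{L^6(B_1)}\lesssim\sqrt{c}$; Morrey's inequality $W^{1,6}(B_1)\hookrightarrow C^{0,1/2}(B_1)$, combined with $g(0)=\mathrm{Id}$, then yields $\|g-\mathrm{Id}\|_{C^0(B_1)}\lesssim\sqrt{c}$. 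In the Uhlenbeck gauge, $d\tilde\Phi^U=\tilde\nabla_{\tilde A^U}\tilde\Phi^U-[\tilde A^U,\tilde\Phi^U]$ is $O(\sqrt{c})$ in $L^2$ and $1-|\tilde\Phi^U|^2$ is $O(\sqrt{c})$ in $L^2$, so $\tilde\Phi^U$ is $W^{1,2}$-close to a constant of unit norm, whence $\bigl|\int_{B_1}\tilde\Phi^U\bigr|\geq\vol_{\tilde g_\ep}(B_1)\cdot(1-O(\sqrt{c}))$. Combining with
\[
\bigl|\textstyle\int_{B_1}\tilde\Phi^U-\int_{B_1}\tilde\Phi\bigr|\leq 2\|g-\mathrm{Id}\|_{C^0(B_1)}\|\tilde\Phi\|_{L^1(B_1)}=O(\sqrt{c}),
\]
this contradicts $\int_{B_1}\tilde\Phi=0$ once $c$ is fixed small enough. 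The main difficulty is the $C^0$-control on $g$ in three dimensions, resolved by the bootstrap $W^{1,2}\to W^{1,6}\to C^{0,1/2}$; the essential use of $\lambda>0$ enters through the potential term, which forces $|\tilde\Phi|\to 1$ in $L^2(B_1)$ and thereby pins down the unit norm of the limiting $\tilde\Phi^U$.
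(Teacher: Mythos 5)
Your argument has a genuine gap, and it is fatal as written: the reduction in your second paragraph — that it suffices to bound below the energy of any configuration whose \emph{ungauged} Higgs field has zero average over $B_\ep(x_0)$ — is false, because the quantity $\int_{B_\ep(x_0)}\Phi\,\vol_g$ is not gauge invariant while the energy is. Concretely, take any map $h:B_\ep(x_0)\to SU(2)$ for which $h T_3 h^{-1}$ averages to zero over the ball, and set $A = h\,dh^{-1}$, $\Phi = hT_3h^{-1}$ there. Then $F_A = 0$, $(d+A)\Phi = 0$ and $|\Phi|\equiv 1$, so the local energy vanishes even though the average of $\Phi$ is zero. Your contradiction argument is entirely local on $B_1$ after rescaling, so it must fail on such a configuration. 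The precise step that breaks is the claim $\|dg\|_{L^6(B_1)}\lesssim\sqrt{c}$: from $dg = g\tilde A - \tilde A^U g$ you control $\|\tilde A^U\|_{L^6}$ by the curvature via Uhlenbeck and Sobolev, but $\|\tilde A\|_{L^6}$ is \emph{not} controlled by $\|F_{\tilde A}\|_{L^2}$ in the given trivialization (in the example above $\tilde A^U=0$ but $\tilde A$ is as large as you like). Hence $g$ need not be $C^0$-close to the identity, and indeed in the example $g=h^{-1}$ carries $\tilde\Phi$ (zero average) to the constant $T_3$ (nonzero average).

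The cure is to reverse the order of operations, which is what the paper does: one first gauge-fixes the \emph{entire family} $\{A_y\}_{y\in B^3}$ on a small ball by a map $y\mapsto \mathrm{g}(y)$ that is continuous in $y$ and equal to the identity for $y\in\partial B^3$ (Proposition~\ref{prop:gauge_fixing_family}), and only then runs the degree/retraction argument on the \emph{gauged} fields $\widetilde\Phi_y=\mathrm{g}(y)\Phi_y\mathrm{g}(y)^{-1}$, which still satisfy $\widetilde\Phi_y\equiv y$ on $\partial B^3$. This produces a $y_0$ with $\fint_{B_r(0)}\widetilde\Phi_{y_0}=0$ for the field in the \emph{good} gauge, where $\|\widetilde A_{y_0}\|$ really is controlled by the curvature, so that $(d+\widetilde A)\widetilde\Phi$ controls $d\widetilde\Phi$ and the Poincar\'e inequality applies. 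Your pointwise-in-$y$ use of Uhlenbeck cannot substitute for this: applying the degree argument to $y\mapsto\fint\widetilde\Phi_y$ requires the gauges to depend continuously on $y$ and to be trivial on $\partial B^3$, which is exactly the content of the family gauge-fixing lemma you bypassed. (Your final quantitative step — closeness of the gauged Higgs field to a unit-norm constant versus the paper's Poincar\'e-plus-$2t+(1-t)^2\geq1$ estimate — is an acceptable variant once the correct zero-average condition is in hand; the problem is purely the order in which the gauge fixing and the topological argument are performed.)
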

\begin{proof}
As in the beginning of the proof of Proposition~\ref{prop:PS_up_to_gauge}, since $M$ is closed there exists $\rho = \rho_M \in (0, \inj(M))$ such that for all $x \in M$, we have on $B_{\rho}(0) \subset \RR^3$ that 
\begin{equation}\label{eq:min-max_lower_metric_compare}
\frac{1}{2}g_{\RR^3} \leqslant \exp_{x}^*g \leqslant 2g_{\RR^3},\ \ |\partial_k(\exp_{x}^*g)_{ij}| \leqslant 1.
\end{equation}
Below we require that 
\[
\ep < \rho_M.
\]
To prove the asserted lower bound on $\omega_\ep$, suppose $H \in \Gamma$ is such that 
\begin{equation}\label{eq:small_sweepout}
\sup_{y \in B^3}\cY_{\ep}(H(y)) < \ep.
\end{equation}
(If there are no such $H$ in $\Gamma$ then we are done.) In particular, writing $H(y)$ as $(A_y, \Phi_y)$, we have
\[
\int_{M}|F_{A_y}|_g^2\vol_g < \frac{1}{\ep} \text{ for all }y \in B^3,
\]
so that for all $x \in M$, $r \leqslant \rho$ and $y \in B^3$ we have by~\eqref{eq:min-max_lower_metric_compare} and H\"older's inequality that
\begin{equation}\label{eq:omega_lowerbound_F_estimate}
\begin{split}
\int_{B_r(0)} |F_{A_y}|^{\frac{3}{2}} \leqslant \ &  (\vol_{g_{\RR^3}}(B_r))^{\frac{1}{4}} \big( \int_{B_r(0)} |F_{A_y}|^2 \big)^{\frac{3}{4}} \leqslant C_1 r^{\frac{3}{4}}\big(\int_{B_r(0)} |F_{A_y}|_{\exp_x^* g}^{2} \vol_{\exp_x^*g}\big)^{\frac{3}{4}} \\
=\ &  
C_1 r^{\frac{3}{4}}\big(\int_{B_r(x)} |F_{A_y}|^{2}_g \vol_g \big)^{\frac{3}{4}} \leqslant C_1\big(\frac{r}{\ep} \big)^{\frac{3}{4}}.
\end{split}
\end{equation}
Here, for brevity we have written $A_y$ for $\exp_x^*A_y$. Fixing for the rest of the proof some $x_0 \in M$, and also letting $r = \theta \cdot \ep$ with
\[
0< \theta \leqslant \min\{\frac{1}{2}, \big(\frac{\ep_{\text{gauge}}}{2C_1}\big)^{\frac{4}{3}}\},
\]
so that in particular $r < \rho_M$, we see from~\eqref{eq:omega_lowerbound_F_estimate} and the scaling-invariance of the $L^{\frac{3}{2}}$-norm of the curvature that Proposition~\ref{prop:gauge_fixing_family} is applicable to the rescaled connections $\{r A_y(r\  \cdot)\}_{y \in B^3}$, giving us, upon scaling back, a continuous map 
\[
\mathrm{g}: B^3 \to W^{2, 2}(B_r(0); SU(2))
\]
such that $\mathrm{g}(y)\equiv \id$ for all $y\in \partial B^3$ and that $\widetilde{A}_y: = \mathrm{g}(y) \cdot (A_y|_{B_r(0)})$ satisfies condition (U), suitably scaled. Consequently, there holds
\begin{equation}\label{eq:Coulomb_estimate_scaled}
r^{-1}\|\widetilde{A}_y\|_{\frac{3}{2}; B_r(0)} + \|D \widetilde{A}_y\|_{\frac{3}{2}; B_r(0)} \leqslant 2C_{\text{hodge}} \|F_{A_y}\|_{\frac{3}{2}; B_r(0)} < C_2 \theta^{\frac{1}{2}}.
\end{equation}
Moreover, letting also
\[
\widetilde{\Phi}_y = \mathrm{g}(y) \cdot (\Phi_y|_{B_r(0)} ) \cdot \mathrm{g}(y)^{-1},
\]
where as above we have written $\Phi_y$ for $\exp_{x_0}^*\Phi_y$, we see that $\widetilde{\Phi}_y \equiv y$ on $B_r(0)$ for all $y \in \partial B^3$. Now if it happened that
\[
\fint_{B_r(0)}\widetilde{\Phi}_y \neq 0, \text{ for all }y \in B^3,
\]
then letting
\[
h(y) = \frac{\fint_{B_r(0)}\widetilde{\Phi}_y}{\Big| \fint_{B_r(0)}\widetilde{\Phi}_y \Big|},\ \ y \in B^3
\]
defines a retraction of $B^3$ onto $S^2$, which is a contradiction. Therefore there exists $y_0 \in B^3$  such that $\fint_{B_r(0)}\widetilde{\Phi}_{y_0} = 0$. Below we fix this $y_0$ and write $(\widetilde{A}, \widetilde{\Phi})$ for $(\widetilde{A}_{y_0}, \widetilde{\Phi}_{y_0})$. To compare $(d + \widetilde{A})\widetilde{\Phi}$ with $d\widetilde{\Phi}$, we observe that by H\"older's inequality, as well as suitably scaled versions of the Sobolev inequalities $W^{1, \frac{3}{2}} \to L^3$ and $W^{1, 2} \to L^6$, and the usual Poincar\'e inequality for $W^{1, 2}$-functions with zero average, we have
\[
\begin{split}
\int_{B_r(0)}|[\widetilde{A}, \widetilde{\Phi}]|^2 \leqslant\ & \int_{B_r(0)} |\widetilde{A}|^2 |\widetilde{\Phi}|^2 \leqslant \|\widetilde{A}\|_{3; B_r(0)}^{2} \|\widetilde{\Phi}\|_{6; B_r(0)}^2\\
\leqslant\ & C\big( r^{-1}\|\widetilde{A}\|_{\frac{3}{2}; B_r(0)} + \|D\widetilde{A}\|_{\frac{3}{2}; B_r(0)} \big)^2 \big(\int_{B_r(0)}r^{-2}|\widetilde{\Phi}|^2 + |d\widetilde{\Phi}|^2\big)\\
\leqslant\ & C'(C_2 \theta^{\frac{1}{2}})^2 \int_{B_r(0)} |d\widetilde{\Phi}|^2 = : C_3 \theta \int_{B_r(0)}|d\widetilde{\Phi}|^2,
\end{split}
\]
where for the last inequality we used~\eqref{eq:Coulomb_estimate_scaled}. Requiring further that 
\[
\theta \leqslant \frac{1}{4(C_3 + 1)},
\]
we infer that
\begin{equation}\label{eq:min-max_lower_bound_Neumann_eigenvalue}
\begin{split}
\int_{B_r(0)} |(d + \widetilde{A})\widetilde{\Phi}|^2 \geqslant\ & \frac{1}{2}\int_{B_r(0)}|d\widetilde{\Phi}|^2 - \int_{B_r(0)} |[\widetilde{A}, \widetilde{\Phi}]|^2 \\
\geqslant\ & \frac{1}{4}\int_{B_r(0)}|d\widetilde{\Phi}|^2 \geqslant \frac{c_N}{4r^2} \int_{B_r(0)}|\widetilde{\Phi}|^2,
\end{split}
\end{equation}
where in the last step we used again the Poincar\'e inequality for $W^{1, 2}$-functions with zero average, and $c_N$ stands for the lowest positive Neumann eigenvalue of the standard Laplacian on $B_1(0) \subset \RR^3$. Adding the potential term and recalling that $r = \theta \ep$ gives
\[
\int_{B_r(0)} |(d + \widetilde{A})\widetilde{\Phi}|^2 + \frac{\lambda}{4\ep^2}(1 - |\widetilde{\Phi}|^2)^2 \geqslant \frac{1}{4\ep^2}\int_{B_r(0)} \frac{c_N}{\theta^2}|\widetilde{\Phi}|^2 + \lambda(1 - |\widetilde{\Phi}|^2)^2.
\]
Further decreasing $\theta$, if necessary, so that
\[
2\theta^2 \leqslant c_N,
\]
and also using the obvious estimate $2t + (1-t)^2\geqslant 1$, we arrive at 
\begin{equation}\label{eq:min-max_lower_bound_almost}
\begin{split}
\int_{B_r(0)} |(d + \widetilde{A})\widetilde{\Phi}|^2 + \frac{\lambda}{4\ep^2}(1 - |\widetilde{\Phi}|^2)^2 
\geqslant\ & \frac{\min\{1, \lambda\}}{4\ep^2}\int_{B_r(0)} 2|\widetilde{\Phi}|^2 + (1 - |\widetilde{\Phi}|^2)^2\\
\geqslant\ & \frac{\min\{1, \lambda\}}{4\ep^2}\cdot \mathrm{vol}_{g_{\mathbb{R}^3}}(B_r)\\
=\ & c' \cdot \frac{\min\{1, \lambda\}}{\ep^2} (\theta \ep)^3 = c' \cdot \min\{1, \lambda\}\cdot \theta^3\ep,
\end{split}
\end{equation}
To continue, recall the metric comparison~\eqref{eq:min-max_lower_metric_compare}, and the basic fact that, on $B_r(0)$, we have
\[
d\widetilde{\Phi} + [\widetilde{A}, \widetilde{\Phi}] = \mathrm{g}(y) \cdot (d + A_{y_0})\Phi_{y_0}\cdot \mathrm{g}(y)^{-1}.
\]
It follows that
\begin{equation}\label{eq:min-max_lower-bound_almost_3}
\begin{split}
\cY_{\ep}(A_{y_0}, \Phi_{y_0}; B_r(x_0))\geqslant \ &\int_{B_r(x_0)} \big(|(d + A_{y_0})\Phi_{y_0}|_g^2 + \frac{\lambda}{4\ep^2}(1 - |\Phi_{y_0}|^2)^2 \big)\vol_g \\
= \ & \int_{B_r(0)}\big( |(d + A_{y_0})\Phi_{y_0}|_{\exp_{x_0}^* g}^2 + \frac{\lambda}{4\ep^2}(1 - |\Phi_{y_0}|^2)^2 \big) \vol_{\exp_{x_0}^*g}\\
\geqslant\ & c\int_{B_r(0)} \big( |(d + \widetilde{A})\widetilde{\Phi}|_{g_{\RR^3}}^2 + \frac{\lambda}{4\ep^2}(1 - |\widetilde{\Phi}|^2)^2 \big)\vol_{g_{\RR^3}}. 
\end{split}
\end{equation}
Therefore, combining the estimates~\eqref{eq:min-max_lower_bound_almost} and~\eqref{eq:min-max_lower-bound_almost_3}, we have shown that
\[
\sup_{y \in B^3}\cY_{\ep}(H(y)) \geqslant \min\{1,\lambda\}
    \cdot cc'\theta^3 \ep,
\] whenever $H \in \Gamma$ satisfies~\eqref{eq:small_sweepout}. In other words, 
\begin{equation}\label{ineq: lower_bound_omega_ep}
\omega_{\ep} \geqslant \min\left\{\ep, \min\{1,\lambda\}\cdot cc'\theta^3\ep\right\}  \geqslant \min\left\{1, cc'\theta^3\right\}\cdot\min\{1,\lambda\}\cdot \ep.
\end{equation} This completes the proof. 
\end{proof} 
We now proceed to establish an upper bound for $\omega_\ep$ in terms of $\ep$ (and $\lambda$).  We adopt much of the notation and basically follow the arguments in Section 3 of~\cite{Stern2021} and Section 7 of~\cite{Pigati-Stern2021}, making small changes here and there as needed, since we are working with $SU(2)$ as opposed to $U(1)$. First we recall the following construction from Stern's work on the Ginzburg--Landau equations~\cite{Stern2021}. 
\begin{lemm}\label{lem: stern_lipschitz_map}
There exists a Lipschitz function $f: M \to \RR^3 \simeq \fsu(2)$ and some constant $C > 0$ such that 
\begin{equation}\label{eq:min-max_upper_preimage}
\cH^{0}(f^{-1}(y)) \leqslant C \text{ for all }y \in \RR^3,
\end{equation}
and that 
\begin{equation}\label{eq:min-max_upper_Jacobian}
C^{-2} \leqslant \det((df)(df)^T ), \text{ almost everywhere on }M,
\end{equation}
where the transpose of $df$ is taken with respect to $g$ on $TM$ and the flat metric on $\RR^3$.
\end{lemm}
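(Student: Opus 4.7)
The plan is to construct $f$ as a piecewise-affine map with respect to a smooth triangulation of $M$, and then verify the three claimed properties separately. Since $M$ is a closed smooth manifold, by classical results it admits a smooth triangulation $\cT$, with finite vertex set $V = \{v_1, \ldots, v_N\}$ and finitely many $3$-simplices $T_1, \ldots, T_m$. For each $T_j$ I would fix a smooth, bi-Lipschitz parameterization $b_j : \Delta^3 \to T_j$ from the standard $3$-simplex, say by using barycentric coordinates in a suitable geodesic normal chart around the barycenter of $T_j$; by compactness of $M$ and finiteness of $\cT$, the bi-Lipschitz constants of the $b_j$'s may be taken uniform in $j$.

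Next, I would choose image points $p_1, \ldots, p_N \in \RR^3$ for the vertices, and for each $3$-simplex $T_j$ with vertices $v_{j_1}, \ldots, v_{j_4}$ define the unique affine map $A_j : \Delta^3 \to \RR^3$ sending the $k$-th vertex of $\Delta^3$ to $p_{j_k}$. Setting $f|_{T_j} := A_j \circ b_j^{-1}$ then yields a well-defined continuous map $f : M \to \RR^3$: continuity across any shared face $T_i \cap T_j$ is automatic, because the vertex assignments agree on $V \cap (T_i \cap T_j)$ and two affine maps on a simplex that agree on all its vertices coincide on the whole simplex.

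For the verifications: the global Lipschitz bound follows from the linearity of each $A_j$ together with the uniform bi-Lipschitz estimates on the $b_j$'s. For the multiplicity bound, provided the linear part of each $A_j$ is non-degenerate, $A_j$ is injective, so $\cH^0(f^{-1}(y) \cap T_j) \leqslant 1$ for every $y$, and summing over $j$ gives $\cH^0(f^{-1}(y)) \leqslant m$. For the Jacobian bound, on the interior of $T_j$ one has $df = (dA_j)(db_j^{-1})$, so in an orthonormal frame $\det((df)(df)^T) = |\det(dA_j)|^2 \cdot \det((db_j^{-1})(db_j^{-1})^T)$; the first factor is bounded below by genericity (see below) and the second by the uniform bi-Lipschitz bounds on the $b_j$'s, yielding the a.e. lower bound on $M$, since the $2$-skeleton is $\cH^3$-null.

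The main obstacle is producing the vertex assignment $v_i \mapsto p_i$ so that $|\det(dA_j)|$ admits a uniform positive lower bound. This is essentially a general-position argument: for each fixed $T_j$, the condition $\det(dA_j) = 0$ is a polynomial equation on the quadruple $(p_{j_1}, \ldots, p_{j_4}) \in (\RR^3)^4$ cutting out a proper algebraic subvariety. Since there are only finitely many $3$-simplices, a generic choice of $(p_1, \ldots, p_N) \in (\RR^3)^N$ avoids all bad loci simultaneously, and by finiteness of $\cT$ the resulting minimum of $|\det(dA_j)|$ over $j$ is strictly positive. This gives the desired constant $C$, completing the construction.
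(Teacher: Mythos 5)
Your overall strategy is sound and genuinely different from the paper's, which does not triangulate $M$ directly: following Stern, it takes a bi-Lipschitz homeomorphism $\Psi$ from $M$ onto a finite simplicial complex $|K|\subset\RR^L$ and then composes with a \emph{generic orthogonal projection} $\pi:\RR^L\to\RR^3$ whose kernel is transverse to every direction space $V(\Delta)$. Injectivity of $\pi$ on each simplex gives the multiplicity bound, and invertibility of $\pi|_{V(\Delta)}$ on $3$-simplices combined with the chain rule for the a.e.-differentiable bi-Lipschitz $\Psi$ gives the Jacobian bound. Your route replaces the ``embed then project'' step by a direct piecewise-affine map determined by a generic assignment of vertex images; the general-position arguments (affine independence of quadruples of image points versus transversality of one $(L-3)$-plane to finitely many subspaces) are of comparable difficulty. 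What the paper's version buys is that $f=\pi\circ\Psi$ is manifestly globally Lipschitz and globally well defined, since $\pi$ is a single linear map on the ambient $\RR^L$; what your version buys is the avoidance of the auxiliary embedding.

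There is, however, one genuine gap in your write-up: the well-definedness and continuity of $f$ across shared faces does \emph{not} follow merely from agreement of the vertex assignments if the parameterizations $b_j$ are chosen independently for each $3$-simplex, as you propose (``barycentric coordinates in a geodesic normal chart around the barycenter of $T_j$''). The maps $A_i\circ b_i^{-1}$ and $A_j\circ b_j^{-1}$ restricted to a shared face $F=T_i\cap T_j$ agree at the vertices of $F$, but they are affine with respect to two \emph{different} coordinate systems on $F$; the transition $b_j^{-1}\circ b_i$ between the corresponding faces of $\Delta^3$ fixes the vertices but need not be the affine identification, so the two maps need not coincide on $F$. The statement ``two affine maps on a simplex that agree on all its vertices coincide'' presupposes a single affine structure, which a curved face of a geodesic triangulation does not canonically carry. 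The repair is standard: take the $b_j$ to be the characteristic maps of the smooth triangulation itself, i.e.\ the restrictions of a single homeomorphism $h:|K|\to M$ to the $3$-simplices of $|K|$. Then $f=A\circ h^{-1}$, where $A:|K|\to\RR^3$ is the simplicial map induced by the vertex assignment, and continuity, the multiplicity bound, and the Jacobian computation all go through as you describe (with the uniform bi-Lipschitz bounds on $h$ supplied by compactness). With that modification your argument is complete.
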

\begin{proof}
Suppose $M$ is isometrically embedded in some Euclidean space. As noted in~\cite{Stern2021}, there exists a finite simplicial complex $K$ in some $\RR^L$ and a bi-Lipschitz map
\[
\Psi: M \to |K|,
\]
where $|K| \subset \RR^L$ denotes the union of all the simplices in $K$. For each $d$-simplex $\Delta \in K$ ($d = 0, 1, 2, 3$), we let $V(\Delta)$ denote the $d$-dimensional subspace of $\RR^L$ parallel to $\Delta$. That is, 
\[
V(\Delta) = \Span\{v - w\ |\ v, w \in \Delta\}.
\]
Then we may find an $(L-3)$-plane $P$ in $\RR^L$ such that 
\[
P \cap V(\Delta) = \{0\}, \text{ for all }\Delta \in K.
\]
Let $\pi: \RR^L \to P^{\perp}$ denote orthogonal projection onto the $3$-plane $P^{\perp}$, which we from now on identify with $\RR^3$. By the positioning we arranged, for all $y \in \RR^3$ and $\Delta \in K$ the pre-image $\pi^{-1}(y) \cap \Delta$ contains at most one point. Moreover, for each $3$-simplex $\Delta\in K$, the restriction $\pi|_{V(\Delta)}$ is invertible. Combining these observations with the fact that $K$ consists of only finitely many simplices, we conclude there exists $C > 0$ such that 
\begin{equation}\label{eq:simplex_preimage}
\cH^0(\pi^{-1}(y) \cap |K|) \leqslant C, \text{ for all }y \in \RR^3,
\end{equation}
and that for each $3$-simplex $\Delta \in K$, 
\begin{equation}\label{eq:simplex_jacobian}
C^{-1} \leqslant |\det(\pi|_{V(\Delta)})|.
\end{equation}
Now define the composition
\[
f = \pi \circ \Psi: M \to \RR^3.
\]
Then clearly we get~\eqref{eq:min-max_upper_preimage} with the same $C$ as in~\eqref{eq:simplex_preimage}. On the other hand, given a $3$-simplex $\Delta$ in $K$, we denote the interior of $\Delta$ by $\mathring{\Delta}$, and let $D_{\Delta}$ be the set of points in $\mathring{\Delta}$ where $\Psi^{-1}$ is differentiable. Similarly, we let $E_{\Delta}$ be the set of points in $\Psi^{-1}(\mathring{\Delta}) \subset M$ where $\Psi$ is differentiable. Since $\Psi^{-1}$ and $\Psi$ are both Lipschitz maps, we see that
\[
B_{\Delta} := \Psi^{-1}(\mathring{\Delta} \setminus D_{\Delta}) \cup \big(\Psi^{-1}(\mathring{\Delta})\setminus E_{\Delta}\big) \text{ has }\cH^3\text{-measure zero}.
\]
For all $p \in \Psi^{-1}(\mathring{\Delta}) \setminus B_{\Delta}$, we have that $\Psi$ is differentiable at $p$ and $\Psi^{-1}$ is differentiable at $\Psi(p)$. The chain rule applied to $\big(\Psi^{-1}|_{\Delta}\big)\circ \big(\Psi|_{\Psi^{-1}(\Delta)}\big) = \Id_{\Psi^{-1}(\Delta)}$ gives
\[
d(\Psi^{-1})_{\Psi(p)} \circ (d\Psi)_p = \Id_{T_p M},
\]
where we emphasize that the linear maps involved have the following domains and targets:
\[
(d\Psi)_p : T_p M \to V(\Delta),\ \ d(\Psi^{-1})_{\Psi(p)}: V(\Delta) \to T_p M.
\]
By the above relation, and the inequality
\[
|\det A| = (\det(A^TA))^\frac{1}{2} \leqslant \big( \frac{\tr A^T A}{3} \big)^{\frac{3}{2}}
\] applied to the matrix representation of $d(\Psi^{-1})_{\Psi(p)}$ with respect to orthonormal bases of $V(\Delta)$ and $T_p M$, we get
\[
\big|\det((d\Psi)_p)\big| = \frac{1}{\big|\det(d(\Psi^{-1})_{\Psi(p)})\big|} \geqslant \frac{1}{C([\Psi^{-1}]_{\text{Lip}})^3}.
\]
Combining this with~\eqref{eq:simplex_jacobian} gives that $df_p: T_p M \to \RR^3$ satisfies
\begin{equation}\label{eq:Jac_lower_on_good_set}
\det((df_p) (df_p)^T)  = \big( \det d\Psi_p \big)^2 \big(\det\pi|_{V(\Delta)}\big)^2 \geqslant C^{-2},
\end{equation}
for all $p \in \Psi^{-1}(\mathring{\Delta})\setminus B_{\Delta}$,
with a constant $C$ which does not depend on $\Delta$. To finish, let $K^3$ denote the collection of $3$-simplices in $K$ and note that
\[
M \setminus \big( \cup_{\Delta \in K^3} (\Psi^{-1}(\mathring{\Delta}) \setminus B_{\Delta}) \big) \subset (\cup_{\Delta \in K^3}B_{\Delta})  \bigcup  \big( M \setminus \cup_{\Delta \in K^3}\Psi^{-1}(\mathring{\Delta})\big),
\]
the right-hand side being a set of $\cH^3$-measure zero. Combining this with~\eqref{eq:Jac_lower_on_good_set}, we get~\eqref{eq:min-max_upper_Jacobian}.
\end{proof}
\begin{prop}\label{prop:min-max_upper_bound}
For $\ep \in (0, \frac{1}{2})$, we have $\omega_{\ep} \lesssim_M \max\{1,\lambda\}\cdot\ep$.
\end{prop}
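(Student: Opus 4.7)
The plan is to exhibit an explicit family $H \in \Gamma$ satisfying $\sup_{y \in B^3} \cY_{\ep}(H(y)) \leqslant C_M \max\{1,\lambda\}\ep$, from which the proposition follows immediately by definition of $\omega_{\ep}$. The construction parallels the $U(1)$ vortex sweepout in \cite[Section 7]{Pigati-Stern2021}, with hedgehog-type profiles replacing vortex profiles to fit the codimension-3 setting.

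Using Lemma \ref{lem: stern_lipschitz_map}, let $f: M \to \RR^3$ be the Lipschitz map satisfying \eqref{eq:min-max_upper_preimage}--\eqref{eq:min-max_upper_Jacobian}. The coarea formula, combined with the bounded multiplicity and lower Jacobian bound, yields the inequality $\int_M u(f(x))\, \vol_g(x) \leqslant C_M \int_{\RR^3} u(z)\, dz$ valid for any non-negative Borel function $u$ on $\RR^3$, which lets me transfer integrals over $M$ to integrals over $\RR^3$. I would then define $H(y) = (0, \Phi_y)$ where $\Phi_y(x) := \Psi_{\ep}(y, f(x))$, for a continuous profile $\Psi_{\ep}: B^3 \times \RR^3 \to \RR^3 \cong \fsu(2)$ satisfying: (i) $\Psi_{\ep}(y, z) = y$ for all $z \in \RR^3$ whenever $|y| = 1$; (ii) $|\Psi_{\ep}(y, z)|$ is close to $1$ outside an essential variation set $E_y \subset \RR^3$ of Lebesgue measure at most $C\ep^3$; and (iii) $|d_z \Psi_{\ep}(y, z)| \leqslant C/\ep$ pointwise, vanishing outside $E_y$. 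The profile is modelled on a rescaled hedgehog bump supported in an $\ep$-ball of $\RR^3$, whose location and orientation are controlled by $y$.

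Given such a $\Psi_{\ep}$, property (i) yields the boundary condition and so $H \in \Gamma$. Since the flat connection $\nabla = d$ kills the curvature term of $\cY_{\ep}$, the chain-rule estimate $|d\Phi_y|^2 \leqslant [f]_{\mathrm{Lip}}^2 |d_z \Psi_{\ep}(y, f(x))|^2$ together with the coarea bound above and properties (ii)--(iii) give
\[
\int_M |d\Phi_y|^2\, \vol_g \lesssim_M \ep \quad \text{and} \quad \int_M (1 - |\Phi_y|^2)^2\, \vol_g \lesssim_M \ep^3,
\]
uniformly in $y$. Multiplying the potential estimate by $\lambda/(4\ep^2)$, it follows that $\cY_{\ep}(H(y)) \leqslant C_M \max\{1, \lambda\} \ep$, completing the proof.

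The main technical obstacle lies in constructing $\Psi_{\ep}$ so that all three properties hold simultaneously. Since there is no retraction $B^3 \to S^2$, one cannot arrange $|\Psi_{\ep}(y, \cdot)| \equiv 1$ outside a small $z$-set for every $y$ while keeping $\Psi_{\ep}$ continuous in $y$. The resolution is to let the bubble's location and orientation in $\RR^3$ depend on $y$, using the topological degree of a hedgehog-type profile to continuously reconcile the boundary identity with a small-volume interior extension, in the spirit of the codimension-3 analogue of the $U(1)$ vortex sweepouts of \cite{Pigati-Stern2021}.
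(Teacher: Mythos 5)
Your overall strategy — produce an explicit $H \in \Gamma$ with $\sup_y \cY_{\ep}(H(y)) \lesssim_M \max\{1,\lambda\}\ep$ — is the right one, and the use of the Lipschitz map $f$ from Lemma~\ref{lem: stern_lipschitz_map} together with the coarea transfer matches the paper. But there is a genuine gap: you take the connection to be flat, $H(y) = (0,\Phi_y)$, and this cannot work, for reasons that go beyond the "technical obstacle" you flag at the end. First, the profile you postulate cannot exist: if $\Psi_{\ep}(y,\cdot)$ has vanishing $z$-derivative outside a small set $E_y$ and near-unit modulus there, then (on the connected complement of $E_y$, or after composing with $f$ and averaging over $M$) the normalized value $y \mapsto \Psi_{\ep}(y,\infty)/|\Psi_{\ep}(y,\infty)|$ is a continuous retraction of $B^3$ onto $S^2$ — the very obstruction you cite. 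Moving the bubble's location and orientation around does not evade this, because the obstruction lives in the value of the field \emph{away} from the bubble. Second, and more decisively, \emph{no} choice of Higgs profile can rescue a flat-connection sweepout: since $y \mapsto \fint_M \Phi_y$ is continuous and equals $y$ on $\partial B^3$, some $y_0$ has $\fint_M \Phi_{y_0} = 0$; the potential bound forces $|\Phi_{y_0}|$ to be bounded below outside a set of volume $O_{\lambda}(\ep^3)$, whence $\int_M |\Phi_{y_0}|^2 \gtrsim_M 1$, and the Poincar\'e inequality then gives $\int_M |d\Phi_{y_0}|^2 \gtrsim_M 1 \gg \ep$. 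So the gradient term of a flat sweepout is always of order one.

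The paper's proof resolves exactly this tension by pairing the Higgs family $\varphi_y = R\big(\frac{h(x)-a(y)}{\ep}\big)$ with the \emph{non-flat} connections $B_y = [d\varphi_y, \varphi_y]$, chosen so that $(d+B_y)\varphi_y = 0$ wherever $|\varphi_y|=1$. The topologically unavoidable variation of $\varphi_y$ over the large region $M \setminus h^{-1}(B_{\ep}(a(y)))$ then costs nothing in the covariant gradient term; it reappears only in the curvature term, where $\ep^2|F_{B_y}|^2 \leqslant C\ep^2|d\varphi_y|^4 \leqslant C\ep^2|h-a(y)|^{-4}$, and the integral bound $\int_{M\setminus h^{-1}(B_\ep(a(y)))}|h-a(y)|^{-4}\vol_g \lesssim_M \ep^{-1}$ yields a contribution of order $\ep$. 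The $\ep^2$ weight on $|F|^2$ is thus doing essential work; your estimates on the inner region $h^{-1}(B_\ep(a(y)))$ (pointwise density $\lesssim \max\{1,\lambda\}\ep^{-2}$ against volume $\lesssim \ep^3$) are correct in spirit but cover only the easy part of the argument.
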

\begin{proof}
The proof is adapted from Section 7 of~\cite{Pigati-Stern2021}. Throughout this proof, we fix $\ep \in (0, \frac{1}{2})$. It is enough to produce some $H \in \Gamma$ for which $\sup_{y \in B^3}\cY_\ep(H(y)) \lesssim_{M} \max\{1,\lambda\}\cdot\ep$. Let $f$ be the Lipschitz map produced by Lemma \ref{lem: stern_lipschitz_map}. From~\eqref{eq:min-max_upper_preimage} and~\eqref{eq:min-max_upper_Jacobian}, as well as the co-area formula, we see that for all $y \in \RR^3$ and $r > 0$, there holds
\begin{equation}\label{eq:min-max_upper_volume_1}
\begin{split}
\Vol_g(f^{-1}(B_r(y))) \leqslant\ & C\int_{f^{-1}(B_r(y))} \sqrt{\det((df)(df)^T )} \vol_g\\
=\ & C\int_{B_r(y)} \cH^0(f^{-1}(z) \cap M) dz \leqslant K_1 r^3,
\end{split}
\end{equation}
and, by a similar reasoning,
\begin{equation}\label{eq:min-max_upper_volume_2}
\begin{split}
\int_{M \setminus f^{-1}(B_r(y))} \frac{1}{|f(x) - y|^4} \vol_g \leqslant C\int_{\RR^3 \setminus B_r(y)} \frac{dz}{|z - y|^4} \leqslant K_2r^{-1}.
\end{split}
\end{equation}
By using a partition of unity and mollifying in coordinate charts, we obtain a sequence of smooth maps $f_i: M \to \RR^3$ such that
\begin{equation}\label{eq:uniform_lipschitz_bound}
\|f_i - f\|_{\infty} \to 0,\ \ \|df_i\|_{\infty} \leqslant C_M(\|f\|_{\infty} + [f]_{\text{Lip}}) =: K_3.
\end{equation}
Fix $i_0 \in \NN$ such that 
\[
\|f_i - f\|_{\infty} < \ep^8 \text{ for all }i \geqslant i_0.
\]
By the triangle inequality, for all $y \in \RR^3$, there holds
\[
f_i^{-1}(B_{\ep}(y)) \subset f^{-1}(B_{2\ep}(y)),\ \ \ M \setminus f_i^{-1}(B_{\ep}(y)) \subset M \setminus f^{-1}(B_{\frac{\ep}{2}}(y)).
\]
With the help of the second inclusion, we see that everywhere on $M \setminus f_i^{-1}(B_{\ep}(y))$ there holds
\[
\begin{split}
\Big|\frac{1}{|f_i(x) - y|^4} - \frac{1}{|f(x) - y|^4}\Big| \leqslant\ & \sup\big\{ \big|\frac{1}{t^4} - \frac{1}{s^4}\big|\ \big|\ t, s \geqslant \frac{\ep}{2},\ |t - s| \leqslant \|f - f_i\|_{\infty} \big\}\\
\leqslant\ & \frac{4}{(\ep/2)^5}\|f - f_i\|_{\infty} < 128\ep^3.
\end{split}
\]
Combining these with~\eqref{eq:min-max_upper_volume_1},~\eqref{eq:min-max_upper_volume_2}, and letting
\[
h: = f_{i_0},
\]
we have for all $y \in \RR^3$ that
\begin{equation}\label{eq:min-max_upper_volume_1'}
\Vol_g(h^{-1}(B_{\ep}(y)))\leqslant \Vol_g(f^{-1}(B_{2\ep}(y))) \leqslant 8K_1\ep^3,
\end{equation}
and that
\begin{equation}\label{eq:min-max_upper_volume_2'}
\begin{split}
\int_{M \setminus h^{-1}(B_{\ep}(y))}\frac{1}{|h(x)- y|^4}\vol_g \leqslant\ & \int_{M \setminus h^{-1}(B_{\ep}(y))}\frac{1}{|f(x)- y|^4}\vol_g + 128\ep^3\cdot \Vol_g(M)\\
\leqslant\ &  \int_{M \setminus f^{-1}(B_{\frac{\ep}{2}}(y))}\frac{1}{|f(x)- y|^4}\vol_g + 128\ep^3\cdot \Vol_g(M)\\
\leqslant\ & (2K_2 + C_M)\ep^{-1}.
\end{split}
\end{equation}
Next we let $\rho: [0, \infty) \to [0, 1]$ be a smooth function such that
\[
\rho(t) = t \text{ for }t \leqslant \frac{2}{3}, \ \ \rho(t) = 1 \text{ for }t \geqslant 1,\ \ 0 \leqslant \rho'(t) \leqslant 5 \text{ everywhere }.
\]
(Such a $\rho$ can be produced as follows: one starts with a smooth function $\zeta:\RR \to [0, 1]$ with 
\[
\zeta(t) = 0 \text{ for }t \leqslant \frac{2}{3},\ \ \zeta(t) = 1 \text{ for }t \geqslant 1,\ \ 0 \leqslant \zeta' \leqslant 4 \text{ everywhere}, 
\]
and then set $\rho(t) =(1-\zeta(t)) t + \zeta(t)$.) Letting $R(x) = \rho(|x|)\frac{x}{|x|}$, we see that it satisfies 
\[
R(x) = \left\{
\begin{array}{ll}
x, & \text{ if }|x| \leqslant \frac{2}{3},\\
\frac{x}{|x|}, & \text{ if }|x| > 1,
\end{array}
\right.
\]
and is smooth on all of $\RR^3$. Also, for $|y| < 1$ we let $a(y) = \frac{-y}{1 - |y|}$. 

With these building blocks, we define a family $\{\varphi_y\}_{y \in B^3}$ of maps $\varphi_y: M \to \RR^3 \simeq \fsu(2)$ by
\[
\varphi_{y}(x) = \left\{
\begin{array}{ll}
R(\frac{h(x) - a(y)}{\ep}), & \text{ if }|y| < 1,\\
y,& \text{ if }|y| = 1.
\end{array}
\right.
\]
Since $h(M)$ is a bounded subset of $\RR^3$ and $R$ along with its derivatives of all orders are uniformly continuous on compact subsets of $\RR^3$, and since the derivatives of $h$ of all orders are bounded on $M$, we see with the help of the chain rule that $\varphi_y$ varies smoothly when $y$ varies in $\Inte(B^3)$. On the other hand, again since $h(M)$ is bounded, when $y \in \Inte(B^3)$ is sufficiently close to $\partial B^3$ we must have 
\[
h(M) \cap B_{\ep}(a(y)) = \emptyset,
\]
in which case
\[
\varphi_y(x) = \frac{h(x) - a(y)}{|h(x) - a(y)|} = \frac{y + (1 - |y|)h(x)}{|y + (1-|y|)h(x)|}, \text{ for all }x \in M.
\]
From this, and again using the fact that $h$ and all its derivatives are bounded on $M$, it is not hard to see that whenever $(y_i) \subset B^3$ converges to some $y_0 \in \partial B^3$, the maps $\varphi_{y_i}$ converge smoothly to the constant map $y_0$. 

Next, by a direct computation, for all $y \in \Inte(B^3)$ we have
\begin{equation}\label{eq:min-max_upper_dphi}
d\varphi_y = \frac{1}{|h - a(y)|}\big( dh - \bangle{dh, \frac{h - a(y)}{|h 
- a(y)|}} \frac{h - a(y)}{|h - a(y)|}\big), \text{ on }M \setminus h^{-1}(B_{\ep}(a(y))),
\end{equation}
and in particular we have on $M \setminus h^{-1}(B_{\ep}(a(y)))$ that
\begin{equation}\label{eq:min-max_upper_dphi_trans}
\bangle{d\varphi_y, \varphi_y} = 0, \ \ \ |d\varphi_y| \leqslant C_{K_3} |h - a(y)|^{-1},
\end{equation}
where the inequality follows from~\eqref{eq:uniform_lipschitz_bound}. On the other hand, on $h^{-1}(B_{\ep}(a(y)))$, again using~\eqref{eq:uniform_lipschitz_bound}, we find that there holds
\begin{equation}\label{eq:min-max_upper_dphi_estimate}
|d\varphi_y| \leqslant \|dR\|_{\infty; B_1} \cdot \|dh\|_{\infty} \cdot \ep^{-1} \leqslant C_{R, K_3}\ep^{-1}.
\end{equation}
To define the family of connections to go with $\{\varphi_y\}$, note that for $x \in M \setminus h^{-1}(B_{\ep}(a(y)))$, from~\eqref{eq:min-max_upper_dphi_trans} and the fact that $|\varphi_y(x)| = 1$, we get
\[
d\varphi_y(x) = -[[d\varphi_y(x), \varphi_y(x)], \varphi_y(x)].
\]
Thus, if we define a family of $\fsu(2)$-valued $1$-forms $\{B_y\}_{y \in B^3}$ by
\[
B_y = [d\varphi_y, \varphi_y],
\]
which varies smoothly as $y$ varies in $B^3$, 
then we have for all $y \in \Inte(B^3)$ that
\begin{equation}\label{eq:min-max_upper_parallel}
(d + B_y)\varphi_y = 0 \text{ on }M \setminus h^{-1}(B_{\ep}(a(y))).
\end{equation}
Moreover, the curvature of $B_y$ satisfies
\[
|F_{B_y}| \leqslant C|d\varphi_y|^2.
\]
Letting $H(y) = (B_y, \varphi_y)$, we see that $H \in C^0(B^3; X)$ and moreover $H(y) = (0, \text{ const. }y)$ for all $y \in \partial B^3$. That is, $H \in \Gamma$. To estimate $\cY_{\ep}(H(y))$, it suffices to consider only $|y| < 1$, since otherwise $\cY_{\ep}(H(y)) = 0$. On $M \setminus h^{-1}(B_{\ep}(a(y)))$, upon recalling~\eqref{eq:min-max_upper_parallel} and the fact that $|\varphi_y| = 1$, and also using the estimate in~\eqref{eq:min-max_upper_dphi_trans} to bound the curvature term, we have
\begin{equation}\label{eq:min-max_upper_1}
\begin{split}
\cY_{\ep}(B_y, \varphi_y; M\setminus h^{-1}(B_{\ep}(a(y))))  = \ &\ep^2\int_{M \setminus h^{-1}(B_{\ep}(a(y)))} |F_{B_y}|^2 \vol_g \\
\leqslant\ & C\ep^2\int_{M \setminus h^{-1}(B_{\ep}(a(y)))} |d\varphi_y|^4 \vol_g\\
\leqslant\ & C_{K_3}\ep^2 \int_{M \setminus h^{-1}(B_{\ep}(a(y)))} |h(x) - a(y)|^{-4}\vol_g\\
\leqslant \ & C_{K_2, K_3, M} \cdot \ep,
\end{split}
\end{equation}
where we used~\eqref{eq:min-max_upper_volume_2'} for the last inequality. On the other hand, on $h^{-1}(B_{\ep}(a(y)))$ we use~\eqref{eq:min-max_upper_dphi_estimate} to get
\[
\ep^2|F_{B_y}|^2 \leqslant C_{R, K_3}\ep^{-2},\ \ |d\varphi_y|^2 + |B_y|^2 |\varphi_y|^2 \leqslant C_{R, K_3}\ep^{-2},
\]
while the potential term $\frac{\lambda}{4\ep^2}(1-|\varphi_y|^2)^2$ we simply estimate from above by $\lambda\cdot\ep^{-2}$, noting that $|\varphi_y|\leqslant 1$ by construction. Putting these pointwise bounds together gives 
\[
e_{\ep}(B_y, \varphi_y) \leqslant C_{R, K_3}\cdot\max\{1,\lambda\}\cdot\ep^{-2} \text{ on }h^{-1}(B_{\ep}(a(y))).
\]
Upon recalling~\eqref{eq:min-max_upper_volume_1'}, we arrive at
\[
\begin{split}
\cY_{\ep}(B_y, \varphi_y; h^{-1}(B_{\ep}(a(y)))) \leqslant\ &  C_{R, K_1, K_3}\cdot\max\{1,\lambda\}\cdot\ep,
\end{split}
\]
Adding this to~\eqref{eq:min-max_upper_1} gives 
\[
\cY_\ep(H(y)) \leqslant C_{R, M, K_1, K_2, K_3}\cdot\max\{1,\lambda\}\cdot\ep, \text{ for all }y \in B^3 \text{ and }\ep \in (0, \frac{1}{2}),
\]
with $C$ being independent of both $y$ and $\ep$. The proof is complete.
\end{proof}
\begin{prop}\label{prop:existence_critical_point}
For $\ep$ sufficiently small depending on $M$, the min-max value $\omega_{\ep}$ is a critical value of $\cY_{\ep}$.
\end{prop}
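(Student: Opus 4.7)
The strategy is the standard min-max deformation argument, adapted to the Finsler structure on $X$ established in Lemma~\ref{lemm:Finsler_topology}. I would argue by contradiction, assuming $\omega_{\ep}$ is not a critical value of $\cY_{\ep}$. Combining this with the Palais--Smale condition modulo gauge (Proposition~\ref{prop:PS_up_to_gauge}) together with the gauge invariance of the dual norm $\|(\delta \cY_{\ep})_{\cdot}\|$ noted in Remark~\ref{rmk:first_var_gauge_symmetry}, I would first deduce the existence of constants $\delta, \rho > 0$ with
\[
\|(\delta \cY_{\ep})_{(A, \Phi)}\| \geqslant \delta, \quad \text{whenever } \big|\cY_{\ep}(A, \Phi) - \omega_{\ep}\big| \leqslant 2\rho.
\]
Indeed, in the absence of such $\delta, \rho$, a diagonal selection would produce a sequence $(A_i, \Phi_i)$ with $\cY_{\ep}(A_i, \Phi_i) \to \omega_{\ep}$ and $\|(\delta \cY_{\ep})_{(A_i, \Phi_i)}\| \to 0$; Proposition~\ref{prop:PS_up_to_gauge}, together with the gauge invariance of $\cY_{\ep}$, would then produce a critical point at level $\omega_{\ep}$, a contradiction.

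Next, I would construct a locally Lipschitz pseudo-gradient vector field $V$ on the open strip $U := \{|\cY_\ep - \omega_\ep| < 2\rho\}$ with respect to the Finsler structure. At each $(A_0, \Phi_0) \in U$, the definition of the dual norm supplies $(a_0, \phi_0) \in X$ with $\|(a_0, \phi_0)\|_{T_{(A_0, \Phi_0)}X} \leqslant 1$ and $(\delta \cY_\ep)_{(A_0, \Phi_0)}(a_0, \phi_0) \geqslant \delta/2$; the continuity of $(A, \Phi) \mapsto (\delta \cY_\ep)_{(A,\Phi)}(a_0, \phi_0)$ together with Lemma~\ref{lemm:Finsler} shows that the same variation is effective on a neighborhood. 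A standard partition-of-unity construction subordinate to a locally finite refinement then produces a continuous $V$ on $U$ with $\|V(A,\Phi)\|_{T_{(A,\Phi)}X} \leqslant 1$ and $(\delta \cY_\ep)_{(A,\Phi)}(V(A,\Phi)) \geqslant \delta/4$ everywhere.

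Multiplying $V$ by a cutoff of the form $\eta \circ \cY_\ep$, with $\eta \in C^\infty(\RR)$ equal to $1$ on $[\omega_\ep - \rho, \omega_\ep + \rho]$ and supported in $[\omega_\ep - 2\rho, \omega_\ep + 2\rho]$, and extending by zero, gives a continuous flow $\Psi_t : X \to X$ that decreases $\cY_\ep$ at rate at least $\delta/4$ whenever the orbit meets $\{|\cY_\ep - \omega_\ep| < \rho\}$. I would then take $H \in \Gamma$ with $\sup_{B^3} \cY_\ep \circ H \leqslant \omega_\ep + \rho/2$ and set $H' := \Psi_T \circ H$ for $T := 4\rho/\delta$. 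For $\ep$ sufficiently small, Proposition~\ref{prop:min-max_lower_bound} guarantees $\omega_\ep - 2\rho > 0 = \cY_\ep(0, y)$ for $y \in \partial B^3$, so the boundary configurations lie outside the support of the cutoff; hence $\Psi_t$ fixes them, $H' \in \Gamma$, and a straightforward Gr\"onwall-type argument gives $\sup_{B^3} \cY_\ep \circ H' \leqslant \omega_\ep - \rho/2$, contradicting the definition of $\omega_\ep$.

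The only technical point requiring care is the pseudo-gradient construction itself: in the Finsler setting of $(X, d_X)$ one must verify that the comparability of nearby tangent norms provided by Lemma~\ref{lemm:Finsler} is strong enough to make the partition-of-unity patching work and to integrate the resulting vector field into a continuous semiflow. This is the framework of Palais~\cite{Palais1966}, so no genuinely new difficulty arises from the gauge symmetry, which enters only through the Palais--Smale step.
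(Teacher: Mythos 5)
Your argument is correct and follows essentially the same route as the paper: a pseudo-gradient deformation of a nearly optimal sweepout in a Finsler strip around the level $\omega_{\ep}$, combined with Proposition~\ref{prop:PS_up_to_gauge} and the gauge invariance of $\|\delta\cY_{\ep}\|$, with Proposition~\ref{prop:min-max_lower_bound} ensuring the boundary configurations stay fixed. The only differences are organizational — the paper runs the deformation first to manufacture a Palais--Smale sequence at level $\omega_{\ep}$ and cites Struwe for the pseudo-gradient field on all of $\{\delta\cY_{\ep}\neq 0\}$, whereas you negate criticality first to get a uniform gradient lower bound on the strip and sketch the partition-of-unity construction yourself — and neither affects the validity of the proof.
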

\begin{proof}
Having shown that $\omega_{\ep} > 0$, or more relevantly that there is a positive distance between $\omega_{\ep}$ and $(\cY_{\ep} \circ H )(\partial B^3)$ that is uniform over all $H \in \Gamma$, the existence of a critical point at the level $\omega_{\ep}$ is a consequence of the Palais--Smale condition and a standard gradient flow argument. Specifically, since the functional $\cY_{\ep}$ is $C^1$ on the Hilbert space $X$, and since, by Lemma~\ref{lemm:Finsler}, the norms $\{\|\cdot\|_{T_{(A, \Phi)}X}\}_{(A, \Phi) \in X}$ form a Finsler structure on the tangent bundle $TX$ according to the definition given in~\cite[Chapter II.3]{StrBook}, standard theory shows that the functional possesses a pseudo-gradient vector field~\cite[Lemma II.3.9]{StrBook}, that is, a locally Lipschitz map 
\[
v: \widetilde{X}:= \{p \in X\ |\ (\delta\cY_{\ep})_{p} \neq 0\} \to X
\]
having the following two properties:
\vskip 1mm
\begin{enumerate}
\item[(pg1)] $\|v(p)\|_{T_{p}X} \leqslant 2\cdot \min\{1, \|(\delta\cY_{\ep})_p\|\}$, for all $p \in \widetilde{X}$.
\vskip 1mm
\item[(pg2)] $(\delta\cY_{\ep})_{p}(v(p)) \geqslant \min\{1, \|(\delta\cY_{\ep})_p\|\} \cdot \|(\delta \cY_{\ep})_p\|$, for all $p \in \widetilde{X}$.
\end{enumerate}
Now suppose, towards a contradiction, that there exist $\eta, \beta \in (0, 1)$ with $\eta < \frac{1}{8}\min\{\beta^2, \omega_{\ep}\}$ such that 
\begin{equation}\label{eq:min-max_contradiction}
\|(\delta \cY_{\ep})_{p}\| \geqslant \beta, \text{ whenever } |\cY_{\ep}(p) - \omega_{\ep}| \leqslant 3\eta.
\end{equation}
Let $\varphi: \RR \to [0, 1]$ be a smooth cutoff function such that
\[
\varphi(t) = 1 \text{ if }|t - \omega_{\ep}| \leqslant \eta,\ \ \varphi(t) = 0 \text{ if }|t - \omega_{\ep}| \geqslant 2\eta,
\]
and define 
\[
v_1(p) = 
\left\{
\begin{array}{ll}
-\varphi(\cY_{\ep}(p))v(p), & \text{ if }p \in \widetilde{X}\\
0, & \text{ otherwise. }
\end{array}
\right.
\]
Under the assumption~\eqref{eq:min-max_contradiction}, critical points of $\cY_{\ep}$ occur outside $\cY_{\ep}^{-1}(\supp(\varphi))$, and thus $v_1$ is a locally Lipschitz vector field on all of $X$. Moreover, by (pg1) and the completeness of $X$, it has a globally defined flow $\Psi:[0, \infty) \times X \to X$. From the definition of $v_1$ and $\varphi$, we see that 
\begin{equation}\label{eq:min-max_stay_put}
\Psi(t, p) = p, \text{ for all $p$ such that }\cY_{\ep}(p) \not\in  [\omega_{\ep} - 2\eta, \omega_{\ep} + 2\eta].
\end{equation}
Also, differentiating and using (pg2), we see that $t\mapsto \cY_{\ep}(\Psi(t, p))$ is non-increasing for all $p \in X$. Next, take $H \in \Gamma$ such that 
\[
\sup_{y \in B^3}\cY_{\ep}(H(y)) < \omega_\ep + \eta,
\]
and define
\[
\widetilde{H}(y) = \Psi(1, H(y)).
\]
Then $\widetilde{H}: B^3 \to X$ is still a continuous map. Moreover, for all $y \in \partial B^3$, since $\cY_{\ep}(H(y)) = 0 < \omega_{\ep} - 2\eta$, we have by~\eqref{eq:min-max_stay_put} that
\[
\widetilde{H}(y) = H(y) = (0, \text{const. } y).
\]
That is, $\widetilde{H} \in \Gamma$. By the continuity of $\cY_{\ep} \circ \widetilde{H}$, there exists $y_0 \in B^3$ such that
\[
\cY_{\ep}(\widetilde{H}(y_0))  = \sup_{y \in B^3}\cY_{\ep}(\widetilde{H}(y)) \geqslant \omega_{\ep},
\]
where the last inequality follows from the definition of $\omega_\ep$ and the fact that $\widetilde{H} \in \Gamma$. We are now ready to draw a contradiction. Below, for simplicity of notation we define
\[
p_0 = H(y_0),\ \ p_t = \Psi(t, p_0).
\]
Since
\[
\cY_{\ep}(p_1) \geqslant \omega_\ep,\ \ \cY_{\ep}(p_0) < \omega_{\ep} + \eta
\]
and since $\cY_{\ep}(p_t)$ is non-increasing with respect to $t$, we deduce that
\[
\cY_{\ep}(p_t) \in [\omega_{\ep}, \omega_{\ep} + \eta], \text{ for all }t \in [0, 1].
\]
In particular $\varphi(\cY_{\ep}(p_t)) = 1$ for all $t \in [0, 1]$, which together with (pg2) and the assumption~\eqref{eq:min-max_contradiction} gives
\[
\frac{d}{dt}\big(\cY_{\ep}(p_t)\big) = -(\delta\cY_{\ep})_{p_t}(v(p_t)) \leqslant -\beta^2, \text{ for all }t \in [0, 1].
\]
Feeding this into the fundamental theorem of calculus gives
\[
\begin{split}
\omega_{\ep} \leqslant\ & \cY_{\ep}(p_1) = \cY_{\ep}(p_0) + \int_{0}^{1}\frac{d}{dt}\big(\cY_{\ep}(p_t) \big) dt\\
<\ & \omega_\ep + \eta - \beta^2,
\end{split}
\]
which gives $\beta^2 < \eta$, contradicting our assumption that $\eta < \frac{\beta^2}{8}$. Therefore~\eqref{eq:min-max_contradiction} must fail whenever $\eta, \beta \in (0, 1)$ are such that $\eta < \frac{1}{8}\min\{\beta^2, \omega_{\ep}\}$. Letting $\beta = \frac{1}{i}$ and $\eta = \frac{1}{16 i^2}$, then for sufficiently large $i$ we obtain some $(A_i, \Phi_i) \in X$ such that 
\[
|\cY_{\ep}(A_i, \Phi_i) - \omega_{\ep}| \leqslant \frac{3}{16i^2},\ \ \|(\delta \cY_{\ep})_{(A_i, \Phi_i)}\| \leqslant \frac{1}{i}.
\]
By Proposition~\ref{prop:PS_up_to_gauge}, up to taking a subsequence and changing gauge, $(A_i, \Phi_i)$ converges strongly in $X$ to some critical point $(A, \Phi)$. Passing to the limit in the first inequality above shows that 
$\cY_{\ep}(A, \Phi) = \omega_{\ep}$. This finishes the proof.
\end{proof}
As a direct consequence of the results in this section, more specifically Propositions \ref{prop:min-max_lower_bound}, \ref{prop:min-max_upper_bound} and \ref{prop:existence_critical_point}, as well as Proposition~\ref{prop:smoothness_of_weak_solution}, we get our first main result stated in the introduction as Theorem \ref{thm: existence}. Moreover, as explained in \S\ref{subsec: main_results}, under the additional assumption that $b_1(M)=b_2(M)=0$, we are able to guarantee that the solutions produced by Theorem \ref{thm: existence} are actually irreducible; see Theorem \ref{thm: irred}. Ultimately, the application of such result to, say, the $3$-sphere $\mathbb{S}^3$, will lead in Section \ref{sec:asymptotic}, after we study the energy concentration behavior of critical points when $\ep\to 0$, to the existence of non-trivial critical points of $\cY_{1}$ on the flat $3$-dimensional Euclidean space $\mathbb{R}^3$, for any $\lambda>0$; see Proposition \ref{prop:existence_R3}.

\section{A priori estimates for critical points}\label{sec:estimates}
In this section we establish various a priori estimates for smooth solutions $(\nabla, \Phi)$ to the Yang--Mills--Higgs equations~\eqref{eq: 2nd_order_crit_pt_intro}:
\[
\begin{cases}
\varepsilon^2 d_{\nabla}^{\ast}F_{\nabla} = [\nabla\Phi,\Phi],\\
\nabla^{\ast}\nabla\Phi = \frac{\lambda}{2\varepsilon^2}(1-|\Phi|^2)\Phi,
\end{cases}
\]
where $\lambda$ is a positive constant. To introduce the quantities to be estimated, given $(\nabla,\Phi)\in \mathscr{A}(E)\times\Gamma(\mathfrak{su}(E))$, recall that
\[
e_{\ep} = e_{\ep}(\nabla, \Phi) = \ep^2 |F_{\nabla}|^2 + |\nabla\Phi|^2 + \frac{\lambda w^2}{\ep^2},
\]
where $w = \frac{1 - |\Phi|^2}{2}$. Next, for $k \in \NN \cup \{0\}$, we define
\begin{equation}\label{eq:Psi_Theta_definition}
\begin{split}
\Psi_k =\ & (\ep^2|\nabla^k F_{\nabla}|^2 + |\nabla^{k + 1}\Phi|^2)^{\frac{1}{2}},\\
\Theta_k =\ & (\ep^2|\nabla^k [F_{\nabla}, \Phi]|^2 + |\nabla^{k}[\nabla\Phi, \Phi]|^2)^{\frac{1}{2}}.
\end{split}
\end{equation}
In addition, away from the zero locus $Z(\Phi)$ of $\Phi$, we let
\begin{equation}\label{eq:Psi_perp_definition}
\begin{split}
\Psi_k^\perp =\ & (\ep^2|(\nabla^k F_{\nabla})^{\perp}|^2 + |(\nabla^{k + 1}\Phi)^{\perp}|^2)^{\frac{1}{2}}\\
=\ & |\Phi|^{-1}(\ep^2|[\nabla^k F_{\nabla}, \Phi]|^2 + |[\nabla^{k + 1}\Phi, \Phi]|^2)^{\frac{1}{2}}.
\end{split}
\end{equation}
Notice that 
\begin{equation}\label{eq:Theta_by_Psi_basic}
\Theta_0 \leqslant \Psi_0 |\Phi|,
\end{equation}
which on $M \setminus Z(\Phi)$ can be refined to 
\begin{equation}\label{eq:perp_by_Theta_basic}
\Psi_0^\perp = |\Phi|^{-1} \Theta_0.
\end{equation}
In the following, in \S\ref{subsec:identities-and-diff-ineqs}, we derive a number of formulas and inequalities involving the functions just defined. In \S\ref{subsec:coarse-estimates} we obtain basic estimates for $\Psi_0$, $w$, as well as $\Psi_k$ for $k \in \NN$. In \S\ref{subsec:exp-decay}, under additional smallness conditions, we prove exponential decay estimates for $\Theta_k$ and $\Psi_k^{\perp}$. Taking advantage of the fact that $\lambda > 0$, we also get similar estimates for $w$ and $\nabla^{k + 1}\Phi$. In \S\ref{subsec:improved-estimates}, still under suitable smallness assumptions, we show how the estimates of the previous section feed back into the proofs in \S\ref{subsec:coarse-estimates} to give improved estimates on $\Psi_k$. In \S\ref{subsec:local-convergence} we apply the estimates to sequences of solutions and establish a convergence result that will factor into the proofs of Theorems~\ref{thm: asymptotic} and~\ref{thm: bubbling} later. In \S\ref{subsec:consequences-estimates} we collect some other consequences of the estimates that are useful elsewhere in the paper.
\subsection{Preliminaries}\label{subsec:identities-and-diff-ineqs}
For this section, the dimension of $M$ is irrelevant and we denote it by $n$. We begin by mentioning, without proof, some other direct consequences of the definitions~\eqref{eq:Psi_Theta_definition} and~\eqref{eq:Psi_perp_definition}.
\begin{lemm}\label{lemm:Psi_Theta_basic_relations}
Given $(\nabla,\Phi)\in \mathscr{A}(E)\times\Gamma(\mathfrak{su}(E))$, we have for all $k \in \NN \cup \{0\}$ that
\begin{equation}\label{eq:Psi_Theta_schwarz}
\big|d(\Psi_k^2)\big| \leqslant 2 \Psi_k \Psi_{k + 1},\ \ \ \big|d(\Theta_k^2)\big| \leqslant 2 \Theta_k \Theta_{k + 1}.
\end{equation}
Moreover, the following identities hold:
\begin{align}
\Psi_{k + 1}^2 + \Delta \big(\frac{\Psi_k^2}{2} \big) =\ & \bangle{\nabla^*\nabla \nabla^{k + 1}\Phi, \nabla^{k + 1}\Phi} + \ep^2 \bangle{\nabla^* \nabla \nabla^k F_{\nabla}, \nabla^k F_{\nabla}}, \label{eq:Psi_laplacian} \\
\Theta_{k + 1}^2 + \Delta \big(\frac{\Theta_k^2}{2} \big) = \ & \bangle{\nabla^*\nabla \nabla^{k}[\nabla\Phi, \Phi], \nabla^{k}[\nabla\Phi, \Phi]} \nonumber\\
&+ \ep^2 \bangle{\nabla^* \nabla \nabla^k [F_{\nabla}, \Phi], \nabla^k [F_{\nabla}, \Phi]}. \label{eq:Theta_laplacian}
\end{align}
\end{lemm}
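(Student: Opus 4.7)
The plan is to treat the two assertions separately, both being essentially consequences of the rough-Laplacian identity
\[
\Delta\big(|S|^2/2\big) = -|\nabla S|^2 + \langle S, \nabla^*\nabla S\rangle
\]
recorded in the notation section (applied to $\fsu(E)$-valued tensor fields, using that the induced connections are metric-compatible), together with pointwise Cauchy--Schwarz inequalities. No analytic input beyond those two facts is required.

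For the Laplacian identities, I would apply the rough-Laplacian formula separately to $S = \nabla^k F_\nabla$ and $S = \nabla^{k+1}\Phi$, obtaining
\begin{align*}
\Delta\big(|\nabla^k F_\nabla|^2/2\big) &= -|\nabla^{k+1}F_\nabla|^2 + \langle \nabla^k F_\nabla,\ \nabla^*\nabla \nabla^k F_\nabla\rangle,\\
\Delta\big(|\nabla^{k+1}\Phi|^2/2\big) &= -|\nabla^{k+2}\Phi|^2 + \langle \nabla^{k+1}\Phi,\ \nabla^*\nabla \nabla^{k+1}\Phi\rangle.
\end{align*}
Multiplying the first by $\ep^2$, adding, and recognizing that $\Psi_{k+1}^2 = \ep^2|\nabla^{k+1}F_\nabla|^2 + |\nabla^{k+2}\Phi|^2$, one obtains the identity for $\Psi_k$. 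The identity for $\Theta_k$ is produced in the same way, applying the rough-Laplacian formula to $S = \nabla^k[F_\nabla,\Phi]$ and $S = \nabla^k[\nabla\Phi,\Phi]$ and invoking the definition $\Theta_{k+1}^2 = \ep^2|\nabla^{k+1}[F_\nabla,\Phi]|^2 + |\nabla^{k+1}[\nabla\Phi,\Phi]|^2$.

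For the gradient bound on $\Psi_k^2$, I would evaluate the differential pointwise on an arbitrary tangent vector $X$, using metric compatibility to write
\[
d(\Psi_k^2)(X) \;=\; 2\ep^2 \langle \nabla_X \nabla^k F_\nabla, \nabla^k F_\nabla\rangle + 2\langle \nabla_X \nabla^{k+1}\Phi, \nabla^{k+1}\Phi\rangle.
\]
Applying fiberwise Cauchy--Schwarz inside each inner product and then Cauchy--Schwarz to the resulting two-term sum viewed as the dot product of the vectors $(\ep|\nabla_X\nabla^k F_\nabla|, |\nabla_X\nabla^{k+1}\Phi|)$ and $(\ep|\nabla^k F_\nabla|, |\nabla^{k+1}\Phi|)$ in $\mathbb{R}^2$ bounds the right-hand side by
\[
2\big(\ep^2|\nabla_X\nabla^k F_\nabla|^2 + |\nabla_X\nabla^{k+1}\Phi|^2\big)^{1/2} \cdot \Psi_k.
\]
Since $|\nabla_X T|\leqslant |X|\,|\nabla T|$ for any tensor $T$, the first factor is at most $|X|\Psi_{k+1}$, yielding $|d(\Psi_k^2)|\leqslant 2\Psi_k\Psi_{k+1}$. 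The $\Theta_k$ inequality is established verbatim with $[F_\nabla,\Phi]$ and $[\nabla\Phi,\Phi]$ in place of $F_\nabla$ and $\nabla\Phi$.

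The computations are entirely routine and there is no substantive obstacle; the only care-point is bookkeeping, namely keeping track of the bundle in which each inner product is taken and ensuring the weight $\ep^2$ is propagated consistently between the curvature and Higgs contributions on both sides of the identities.
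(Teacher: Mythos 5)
Your proof is correct and fills in exactly the routine computation the paper omits: the Laplacian identities follow from the convention~\eqref{eq: Laplacian_conventions} applied to each summand of $\Psi_k^2$ and $\Theta_k^2$, and the gradient bounds from two applications of Cauchy--Schwarz together with $|\nabla_X T|\leqslant |X||\nabla T|$. Nothing further is needed.
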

\begin{proof}
Straightforward computation. The details are omitted.\\
\end{proof}
Next, we compute in Lemma~\ref{lemm:laplacian_formulas_with_lambda} the Laplacian of several key quantities using the equations~\eqref{eq: 2nd_order_crit_pt_intro}, and then establish in Lemma~\ref{lemm:basic_differential_ineqs} a number of basic differential inequalities. To simplify notation we sometimes write $F$ for $F_{\nabla}$. Also, when we find it more convenient to express a tensor in terms of local orthonormal frames, we use subscripts to denote the components.
\begin{lemm}[See also \cite{Jaffe-Taubes}, Chapter IV.9]
\label{lemm:laplacian_formulas_with_lambda}
Suppose $(\nabla, \Phi)$ is a smooth solution of~\eqref{eq: 2nd_order_crit_pt_intro}. Then we have the following identities. 
\begin{subequations}
\begin{align}
\Delta w =\ & |\nabla\Phi|^2 - \frac{\lambda w}{\ep^2} |\Phi|^2, \label{eq:w_laplacian_with_lambda}\\
\nabla^*\nabla \nabla \Phi =\ & \lambda \frac{w - |\Phi|^2}{\ep^2}\nabla\Phi - \frac{1 - \lambda}{\ep^2}[\Phi, [\nabla\Phi, \Phi]] -2 [F_{ki}, \nabla_{k}\Phi] - \Ric_{ki}\nabla_{k}\Phi, \label{eq:nablaPhi_laplacian_with_lambda}\\
\nabla^*\nabla (\ep F) =\ & -\frac{1}{\ep^2}[\Phi, [\ep F, \Phi]] - \frac{2}{\ep}[\nabla_i\Phi, \nabla_j\Phi] - \frac{2}{\ep}[\ep F_{ki}, \ep F_{kj}] - \cR_2(\ep F), \label{eq:F_laplacian_with_lambda}\\
\nabla^*\nabla [\nabla\Phi, \Phi] =\ & -2[\nabla_{k, i}^2\Phi, \nabla_{k}\Phi] + \frac{2\lambda w - |\Phi|^2}{\ep^2}[\nabla\Phi, \Phi] \nonumber\\
&- \frac{2}{\ep}[[\ep F_{ki}, \nabla_k\Phi], \Phi] - \Ric_{ki}[\nabla_k\Phi, \Phi], \label{eq:nablaPhi_trans_laplacian_with_lambda}\\
\nabla^*\nabla[\ep F, \Phi] =\ & -2[\nabla_{k} (\ep  F)_{ij}, \nabla_{k}\Phi] + \frac{\lambda w - |\Phi|^2}{\ep^2}[\ep F,\Phi] - \frac{2}{\ep}[[\nabla_i\Phi, \nabla_j\Phi], \Phi]\nonumber\\
&- \frac{2}{\ep}[[\ep F_{ki}, \ep F_{kj}], \Phi] - \cR_2([\ep F, \Phi]). \label{eq:F_trans_laplacian_with_lambda}
\end{align}
\end{subequations}
\end{lemm}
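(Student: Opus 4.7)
The plan is to derive the five identities in sequence, using each to obtain the next, and invoking the two Yang--Mills--Higgs equations~\eqref{eq: 2nd_order_crit_pt_intro} at appropriate steps. First I would establish~\eqref{eq:w_laplacian_with_lambda} by direct computation: since $w = \frac{1 - |\Phi|^2}{2}$, applying~\eqref{eq: Laplacian_conventions} with $S = \Phi$ gives $-\Delta w = \Delta(\tfrac{|\Phi|^2}{2}) = -|\nabla \Phi|^2 + \langle \Phi, \nabla^*\nabla \Phi\rangle$, into which I substitute $\nabla^*\nabla \Phi = \frac{\lambda w}{\ep^2}\Phi$ from the second YMH equation.

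For~\eqref{eq:nablaPhi_laplacian_with_lambda} and~\eqref{eq:F_laplacian_with_lambda}, I would view $\nabla\Phi$ as an $\fsu(E)$-valued $1$-form and $\ep F_{\nabla}$ as an $\fsu(E)$-valued $2$-form, and use the Hodge decomposition $\Delta_{\nabla} = d_{\nabla}d_{\nabla}^* + d_{\nabla}^* d_{\nabla}$ together with the Weitzenb\"ock formulas~\eqref{eq: 1form_Weitzenbock} and~\eqref{eq: 2form_Weitzenbock} to isolate $\nabla^*\nabla$. In the first case, $d_{\nabla}^*(\nabla\Phi) = \nabla^*\nabla \Phi = \frac{\lambda w}{\ep^2}\Phi$ and $d_{\nabla}(\nabla\Phi) = [F_{\nabla}, \Phi]$ (the standard fact that $d_{\nabla}^2$ acts on sections of $\fsu(E)$ as the adjoint of $F_{\nabla}$); expanding $d_{\nabla}d_{\nabla}^*(\nabla\Phi)$ by the Leibniz rule and $d_{\nabla}^* d_{\nabla}(\nabla\Phi) = d_{\nabla}^*[F_{\nabla}, \Phi]$ by the product rule --- substituting $\ep^2 d_{\nabla}^*F_{\nabla} = [\nabla\Phi, \Phi]$ from the first YMH equation --- and then collapsing brackets of the form $[\Phi, [\nabla_i\Phi, \Phi]]$ via~\eqref{eq: triple_cross_product}, using $\nabla_i w = -\langle \nabla_i\Phi, \Phi\rangle$, yields~\eqref{eq:nablaPhi_laplacian_with_lambda}. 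In the second case, I instead use $d_{\nabla}F_{\nabla} = 0$ (Bianchi) together with $d_{\nabla}^*(\ep F_{\nabla}) = \ep^{-1}[\nabla\Phi, \Phi]$, then differentiate via the product rule (noting $d_{\nabla}\nabla\Phi = [F_{\nabla}, \Phi]$ again) and reduce $[\Phi, [\ep F_{ij}, \Phi]]$ via~\eqref{eq: triple_cross_product}, to obtain~\eqref{eq:F_laplacian_with_lambda}.

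Finally, I would obtain~\eqref{eq:nablaPhi_trans_laplacian_with_lambda} and~\eqref{eq:F_trans_laplacian_with_lambda} from the Leibniz-type identity
\[
\nabla^*\nabla[X,Y] = [\nabla^*\nabla X, Y] + [X, \nabla^*\nabla Y] - 2g^{ij}[\nabla_i X, \nabla_j Y],
\]
applied with $(X,Y) = (\nabla\Phi, \Phi)$ and $(X,Y) = (\ep F_{\nabla}, \Phi)$ respectively, substituting the expressions for $\nabla^*\nabla\Phi$, $\nabla^*\nabla(\nabla\Phi)$ and $\nabla^*\nabla(\ep F_{\nabla})$ from the previous steps. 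The coefficients $\frac{2\lambda w - |\Phi|^2}{\ep^2}$ and $\frac{\lambda w - |\Phi|^2}{\ep^2}$ emerge after one more application of~\eqref{eq: triple_cross_product} to collapse nested brackets $[\Phi, [A, \Phi]]$ arising from the middle cross term $-2g^{ij}[\nabla_i X, \nabla_j \Phi]$, and from combining them with the $\lambda w$ contributions inherited from the preceding formulas.

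The main obstacle I anticipate is purely bookkeeping: tracking signs across the two Weitzenb\"ock identities and the numerous commutator terms generated by repeated Leibniz expansions, and systematically collapsing iterated brackets involving $\Phi$ so that the output matches the precise form stated. No step requires ideas beyond those already present in Chapter IV.9 of~\cite{Jaffe-Taubes} and the standard Weitzenb\"ock framework.
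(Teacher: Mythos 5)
Your proposal is correct and follows essentially the same route as the paper: the same direct computation for $\Delta w$, the same use of $\Delta_{\nabla}=d_{\nabla}d_{\nabla}^*+d_{\nabla}^*d_{\nabla}$ combined with the Weitzenb\"ock formulas~\eqref{eq: 1form_Weitzenbock}--\eqref{eq: 2form_Weitzenbock} and the two equations in~\eqref{eq: 2nd_order_crit_pt_intro} for the Laplacians of $\nabla\Phi$ and $\ep F_{\nabla}$, and the same Leibniz identity for $\nabla^*\nabla[\cdot,\Phi]$ with the collapse $[[\Phi,[S,\Phi]],\Phi]=|\Phi|^2[S,\Phi]$ for the last two formulas. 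The only quibble is bookkeeping: the nested brackets that get collapsed in~\eqref{eq:nablaPhi_trans_laplacian_with_lambda} and~\eqref{eq:F_trans_laplacian_with_lambda} come from substituting~\eqref{eq:nablaPhi_laplacian_with_lambda} and~\eqref{eq:F_laplacian_with_lambda} into the term $[\nabla^*\nabla X,\Phi]$, not from the cross term $-2[\nabla_kX,\nabla_k\Phi]$, and in~\eqref{eq:F_laplacian_with_lambda} the term $[\Phi,[\ep F,\Phi]]$ is left unreduced.
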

\begin{proof}
For~\eqref{eq:w_laplacian_with_lambda}, by the definition of $w$ and the second equation in~\eqref{eq: 2nd_order_crit_pt_intro}, we have
\[
\begin{split}
\Delta w =\ & -\Delta(\frac{|\Phi|^2}{2}) = |\nabla \Phi|^2 - \bangle{\Phi, \nabla^*\nabla\Phi} = |\nabla\Phi|^2 - \frac{\lambda w}{\ep^2}|\Phi|^2,
\end{split}
\]
which is the desired identity.

For~\eqref{eq:nablaPhi_laplacian_with_lambda}, again using the second equation in~\eqref{eq: 2nd_order_crit_pt_intro}, we have 
\[
\begin{split}
\Delta_{\nabla}\nabla\Phi =\ & d_{\nabla}(\frac{\lambda w}{\ep^2}\Phi) + d_{\nabla}^*[F_{\nabla}, \Phi]\\
= \ & -\frac{\lambda}{\ep^2}\bangle{\Phi, \nabla\Phi}\Phi + \frac{\lambda w}{\ep^2}\nabla\Phi + [d_{\nabla}^* F_{\nabla}, \Phi] - [F_{ki}, \nabla_k\Phi].
\end{split}
\]
The second-to-last term can be simplified by using the first equation in~\eqref{eq: 2nd_order_crit_pt_intro} as follows:
\[
\begin{split}
[d_{\nabla}^* F_{\nabla}, \Phi] =\frac{1}{\ep^2}[[\nabla\Phi, \Phi], \Phi] =\ & -\frac{\lambda}{\ep^2}[\Phi, [\nabla\Phi, \Phi]] - \frac{1 - \lambda}{\ep^2}[\Phi, [\nabla\Phi, \Phi]]\\
=\ & -\frac{\lambda |\Phi|^2}{\ep^2}\nabla\Phi + \frac{\lambda}{\ep^2}\bangle{\Phi, \nabla\Phi}\Phi - \frac{1 - \lambda}{\ep^2}[\Phi, [\nabla\Phi, \Phi]].
\end{split}
\]
Substituting this into the previous computation and observing a cancellation leads to
\[
\Delta_{\nabla}\nabla\Phi = \lambda \frac{w - |\Phi|^2}{\ep^2}\nabla\Phi - \frac{1 - \lambda}{\ep^2}[\Phi, [\nabla\Phi, \Phi]] - [F_{ki}, \nabla_{k}\Phi],
\]
which gives~\eqref{eq:nablaPhi_laplacian_with_lambda} when combined with the Weitzenb\"ock formula~\eqref{eq: 1form_Weitzenbock}. 

For~\eqref{eq:F_laplacian_with_lambda}, we note by the first equation in~\eqref{eq: 2nd_order_crit_pt_intro} and the Bianchi identity that
\[
\begin{split}
\Delta_{\nabla}F_{\nabla} =\ep^{-2} d_{\nabla}[\nabla\Phi, \Phi]
=\ & \frac{1}{\ep^2}[[F_{\nabla}, \Phi], \Phi] - \frac{2}{\ep^2}[\nabla_i\Phi, \nabla_j\Phi],
\end{split}
\]
and we are done upon recalling~\eqref{eq: 2form_Weitzenbock}. 

For~\eqref{eq:nablaPhi_trans_laplacian_with_lambda}, we begin with
\[
\begin{split}
\nabla^*\nabla[\nabla\Phi, \Phi] =\ & -2[\nabla_k(\nabla\Phi), \nabla_k\Phi] + [\nabla^*\nabla(\nabla\Phi), \Phi] + \frac{\lambda w}{\ep^2}[\nabla\Phi, \Phi],
\end{split}
\]
where we used the second equation in~\eqref{eq: 2nd_order_crit_pt_intro} to get the last term. 
Replacing $\nabla^*\nabla \nabla\Phi$ by the right-hand side of~\eqref{eq:nablaPhi_laplacian_with_lambda} and observing with the help of~\eqref{eq: triple_cross_product} that
\begin{equation}\label{eq:bracket_with_Phi_three_times}
[[\Phi, [S, \Phi]], \Phi] = [|\Phi|^2 S - \bangle{\Phi, S}\Phi, \Phi] = |\Phi|^2[S, \Phi],
\end{equation}
where $S$ is any $\fsu(E)$-valued tensor, we obtain ~\eqref{eq:nablaPhi_trans_laplacian_with_lambda}.

For~\eqref{eq:F_trans_laplacian_with_lambda}, starting instead with 
\[
\nabla^*\nabla[\ep F_{\nabla}, \Phi] = -2[\nabla_k (\ep F_{\nabla}), \nabla_k\Phi] + [\nabla^*\nabla (\ep F_{\nabla}), \Phi] + \frac{\lambda w}{\ep^2} [\ep F_{\nabla}, \Phi],
\]
using~\eqref{eq:F_laplacian_with_lambda} to replace $\nabla^*\nabla (\ep F_{\nabla})$, and applying~\eqref{eq:bracket_with_Phi_three_times} with $S = \ep F_{\nabla}$, we arrive at~\eqref{eq:F_trans_laplacian_with_lambda}. \\
\end{proof}

\begin{lemm}\label{lemm:basic_differential_ineqs}
Let $(\nabla, \Phi)$ be a smooth solution of~\eqref{eq: 2nd_order_crit_pt_intro}. Then we have the following.
\vskip 1mm
\begin{enumerate}
\item[(a)] In the sense of distributions, we have
\begin{equation}\label{eq:laplacian_|w|}
\Delta |w| \leqslant |\nabla\Phi|^2 - \frac{\lambda |\Phi|^2}{\ep^2}|w|.
\end{equation}
\vskip 1mm
\item[(b)] With $\mu = \min\{\lambda, 1\}$, there holds
\begin{equation}\label{eq:nablaPhi_bochner_with_lambda}
\begin{split}
\bangle{\nabla^*\nabla \nabla\Phi, \nabla\Phi} \leqslant\ & \frac{\lambda w - \mu |\Phi|^2}{\ep^2}|\nabla\Phi|^2 - 2 \sum_{i, j}\bangle{[\nabla_i\Phi, \nabla_j\Phi], F_{ij}} - \Ric(\nabla\Phi, \nabla\Phi).
\end{split}
\end{equation}
\vskip 1mm
\item[(c)] With $\Psi_0, \Theta_0$ defined as in~\eqref{eq:Psi_Theta_definition}, we have
\begin{equation}\label{eq:Psi_0_bochner}
\begin{split}
\bangle{\nabla^*\nabla \nabla\Phi, \nabla\Phi} + \ep^2 \bangle{\nabla^*\nabla F_{\nabla}, F_{\nabla}}
 \leqslant\ & -\frac{\Theta_0^2}{\ep^2} + \big( \frac{\lambda w_+}{\ep^2} + C_n|R| \big)\Psi_0^2 
 \\
 &- 3\sum_{i, j}\bangle{[\nabla_i\Phi, \nabla_j\Phi], F_{ij}} - \ep^2\sum_{i, j, k}\bangle{[F_{ki}, F_{kj}], F_{ij}}.
\end{split}
\end{equation}
\vskip 1mm
\item[(d)] Letting $S$ stand for either $\nabla\Phi$ or $\ep F_{\nabla}$, we have on $M \setminus Z(\Phi)$ that
\begin{equation}\label{eq:trans_laplacian_base}
\begin{split}
\bangle{\nabla^*\nabla[S, \Phi], [S, \Phi]} \leqslant\ & C_n|\Phi|^{-1}\Theta_1\Psi_0\Theta_0 +  \frac{2\lambda w_+ - |\Phi|^2}{\ep^2}|[S, \Phi]|^2\\
& + C_n\big( |\Phi|^{-2}\Psi_0^2 + |\Phi|^{-1}\Psi_1 +  \frac{\Psi_0}{\ep} +  |R|\big)\Theta_0^2.
\end{split}
\end{equation}
\end{enumerate}
\end{lemm}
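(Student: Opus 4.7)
The plan is to read off all four estimates directly from the second-order identities in Lemma~\ref{lemm:laplacian_formulas_with_lambda}, pairing each with an appropriate tensor and exploiting Ad-invariance together with the algebraic identities \eqref{eq: triple_cross_product}--\eqref{eq: inner_prod_decomp}. Part~(a) is immediate from Kato's inequality applied to \eqref{eq:w_laplacian_with_lambda}: distributionally $\Delta|w|\leqslant \sgn(w)\,\Delta w$, and bounding $\sgn(w)|\nabla\Phi|^2\leqslant |\nabla\Phi|^2$ gives \eqref{eq:laplacian_|w|} while keeping the potential term intact.

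For part~(b) I would pair \eqref{eq:nablaPhi_laplacian_with_lambda} with $\nabla\Phi$. Ad-invariance turns the triple bracket into $|[\nabla\Phi,\Phi]|^2\geqslant 0$ and rewrites the connection cross term $\sum\bangle{[F_{ki},\nabla_k\Phi],\nabla_i\Phi}$ as $\sum\bangle{F_{ij},[\nabla_i\Phi,\nabla_j\Phi]}$ using the antisymmetry of $F$. What remains is the pointwise algebraic inequality
\[
\lambda|\Phi|^2|\nabla\Phi|^2 + (1-\lambda)\,|[\nabla\Phi,\Phi]|^2 \geqslant \mu|\Phi|^2|\nabla\Phi|^2,
\]
which I would verify separately in the cases $\lambda\leqslant 1$ and $\lambda>1$, each using only $|[\nabla\Phi,\Phi]|\leqslant |\Phi||\nabla\Phi|$. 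Part~(c) is then obtained by adding to (b) the analogous inner product of \eqref{eq:F_laplacian_with_lambda} with $\ep^2 F$. There, Ad-invariance produces $-|[F,\Phi]|^2$, the $\cR_2$ and $\Ric$ contributions absorb into $C_n|R|\Psi_0^2$, and crucially the $2$-form normalization $|F|^2=\tfrac12 F_{ij}F_{ij}$ contributes a factor of $\tfrac12$ that yields only one copy of $-\sum\bangle{[\nabla_i\Phi,\nabla_j\Phi],F_{ij}}$ and of $-\ep^2\sum\bangle{[F_{ki},F_{kj}],F_{ij}}$ from the $F$-equation: combined with (b) these produce the claimed coefficients $-3$ and $-1$. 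Finally, $-|[F,\Phi]|^2$ together with the $|[\nabla\Phi,\Phi]|^2/\ep^2$ residue controlled by the algebraic inequality above repackages into $-\Theta_0^2/\ep^2$.

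Part~(d), which I expect to be the main obstacle, is handled by pairing \eqref{eq:nablaPhi_trans_laplacian_with_lambda} (resp.\ \eqref{eq:F_trans_laplacian_with_lambda}) with $[\nabla\Phi,\Phi]$ (resp.\ $[\ep F,\Phi]$) on $M\setminus Z(\Phi)$. The coefficient $\tfrac{2\lambda w-|\Phi|^2}{\ep^2}|[S,\Phi]|^2$ and the $C_n|R|\Theta_0^2$ contribution read off directly from the formulas. The mixed-commutator term $-\tfrac{2}{\ep}[[\ep F,\nabla\Phi],\Phi]$ and its $S=\ep F$ counterpart are controlled by the triple-bracket estimate \eqref{eq: double_bracket_ineq} combined with Cauchy--Schwarz, producing exactly the $(\Psi_0/\ep)\Theta_0^2$ contribution. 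The genuinely delicate piece is the second-derivative term $-2[\nabla^2_{k,i}\Phi,\nabla_k\Phi]$ (and its $S=\ep F$ analogue with $\ep\nabla F$). The plan is to first rewrite
\[
\bangle{[\nabla^2_{k,i}\Phi,\nabla_k\Phi],[\nabla_i\Phi,\Phi]}=\bangle{\nabla^2_{k,i}\Phi,[\nabla_k\Phi,[\nabla_i\Phi,\Phi]]}
\]
by Ad-invariance, then decompose $\nabla^2\Phi$ into its $\Phi$-parallel and transverse parts. The key input is the identity $\nabla_k[\nabla_i\Phi,\Phi]=[\nabla^2_{k,i}\Phi,\Phi]+[\nabla_i\Phi,\nabla_k\Phi]$, which together with $|(\nabla^2\Phi)^{\perp}|\leqslant|\Phi|^{-1}|[\nabla^2\Phi,\Phi]|$ bounds the transverse part of $\nabla^2\Phi$ by a multiple of $|\Phi|^{-1}(\Theta_1+\Psi_0^2)$. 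Cauchy--Schwarz against the factors $|\nabla\Phi|\leqslant\Psi_0$ and $|[\nabla_i\Phi,\Phi]|\leqslant\Theta_0$ then delivers precisely the three mixed terms $|\Phi|^{-1}\Theta_1\Psi_0\Theta_0$, $|\Phi|^{-1}\Psi_1\Theta_0^2$, and, after one application of Young's inequality to dispose of a cubic-in-$\Psi_0$ residue, $|\Phi|^{-2}\Psi_0^2\Theta_0^2$. The $S=\ep F$ case is parallel, with $\ep\nabla F$ playing the role of $\nabla^2\Phi$ and the same parallel/transverse bookkeeping tracking the $\Theta_1$ and $\Psi_0$ factors.
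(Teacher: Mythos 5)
Parts (a), (b) and (c) of your proposal are correct and match the paper's proof essentially step for step: the regularized Kato inequality for (a), the case split on $\lambda\lessgtr 1$ via $|[\nabla\Phi,\Phi]|\leqslant|\Phi||\nabla\Phi|$ for (b), and the bookkeeping of the coefficients $-3$ and $-1$ together with the extraction of $-\Theta_0^2/\ep^2$ for (c). (One small imprecision in (c): the inequality you actually need there is $\lambda|\Phi|^2|\nabla\Phi|^2+(1-\lambda)|[\nabla\Phi,\Phi]|^2\geqslant|[\nabla\Phi,\Phi]|^2$, not the one with $\mu|\Phi|^2|\nabla\Phi|^2$ on the right; it follows from the same elementary bound, valid for all $\lambda>0$.)

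Part (d) has a genuine gap in your treatment of the second-derivative term. Your bound $|(\nabla^2\Phi)^{\perp}|\leqslant|\Phi|^{-1}(\Theta_1+\Psi_0^2)$ rests on the crude estimate $|[\nabla_i\Phi,\nabla_k\Phi]|\leqslant\Psi_0^2$ for the error in the identity $\nabla_k[\nabla_i\Phi,\Phi]=[\nabla^2_{k,i}\Phi,\Phi]+[\nabla_i\Phi,\nabla_k\Phi]$. After pairing against the factors $\Psi_0$ and $\Theta_0$ this leaves the residue $|\Phi|^{-1}\Psi_0^3\Theta_0$, and this term cannot be disposed of by Young's inequality as you claim: any splitting produces a summand such as $C\Psi_0^4$ that is not multiplied by $\Theta_0^2$ (or by $\Theta_1\Theta_0$), hence is not among the terms on the right-hand side of \eqref{eq:trans_laplacian_base}; such a term would also break the downstream application in Lemma~\ref{lemm:exp_decay_base}, where every contribution other than $-|\Phi|^2\Theta_0^2/\ep^2$ must be either absorbed into $\frac{1}{8}\Theta_1^2$ or be a small multiple of $\ep^{-2}\Theta_0^2$. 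Nor is direct absorption possible, since $\Theta_0\leqslant|\Phi|\Psi_0$ gives the \emph{reverse} inequality $|\Phi|^{-1}\Psi_0^3\Theta_0\geqslant|\Phi|^{-2}\Psi_0^2\Theta_0^2$. The fix is what the paper does, following Jaffe--Taubes: apply the transverse decomposition \eqref{eq:tensor-bracket-norm-with-decomp-2} to the commutator of first derivatives as well, giving $|[\nabla_i\Phi,\nabla_k\Phi]|\leqslant 2|\Phi|^{-1}|[\nabla\Phi,\Phi]|\,|\nabla\Phi|\leqslant 2|\Phi|^{-1}\Psi_0\Theta_0$ (and likewise $|[S,\nabla\Phi]|\leqslant 2|\Phi|^{-1}\Psi_0\Theta_0$ in the $S=\ep F_{\nabla}$ case). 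The residue then becomes $|\Phi|^{-2}\Psi_0^2\Theta_0^2$ directly, with no Young's inequality needed. The remainder of your part (d) --- the potential coefficient, the curvature term, and the triple bracket $[[T_1,T_2],\Phi]$ handled via \eqref{eq: double_bracket_ineq} --- is correct and matches the paper.
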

\begin{proof}
For part (a), given any $\delta >0$, we have by~\eqref{eq:w_laplacian_with_lambda} that
\[
\begin{split}
\Delta \sqrt{w^2 + \delta^2} \leqslant\ &\frac{w\Delta w}{\sqrt{w^2 + \delta^2}} \\
\leqslant \ & |\nabla\Phi|^2 - \frac{\lambda |\Phi|^2}{\ep^2}\frac{w^2}{\sqrt{w^2 + \delta^2}}.
\end{split}
\]
Noting that $\frac{w^2}{\sqrt{w^2 + \delta^2}}$ converges pointwise to $|w|$, we deduce with the help of the dominated convergence theorem that~\eqref{eq:laplacian_|w|} holds distributionally. 

To prove part (b), we pair~\eqref{eq:nablaPhi_laplacian_with_lambda} with $\nabla\Phi$ and notice that 
\[
\begin{split}
\bangle{\frac{\lambda w - \lambda |\Phi|^2}{\ep^2}\nabla\Phi - \frac{1 - \lambda}{\ep^2}[\Phi, [\nabla\Phi, \Phi]] , \nabla\Phi} =\ & \frac{\lambda w - \lambda|\Phi|^2}{\ep^2}|\nabla\Phi|^2 - \frac{1 - \lambda}{\ep^2}|[\nabla\Phi, \Phi]|^2\\
=\ & \frac{\lambda w - |\Phi|^2}{\ep^2}|\nabla\Phi|^2  - \frac{\lambda - 1}{\ep^2}|\bangle{\nabla\Phi,\Phi}|^2.
\end{split}
\]
Using the first line when $\lambda \leqslant 1$, and the second line otherwise, we get~\eqref{eq:nablaPhi_bochner_with_lambda}. 

For part (c), we first notice from the previous computation that
\begin{equation}\label{eq:nabla_Phi_bochner_for_Psi0_bochner}
\begin{split}
\bangle{\nabla^*\nabla\nabla\Phi, \nabla\Phi} \leqslant\  & \frac{\lambda w }{\ep^2}|\nabla\Phi|^2 - \frac{1}{\ep^2}|[\nabla\Phi, \Phi]|^2- 2 \bangle{[\nabla\Phi, \nabla\Phi], F_{\nabla}} - \Ric(\nabla\Phi, \nabla\Phi).
\end{split}
\end{equation}
Noting that 
\[
w|\nabla\Phi|^2 \leqslant w_+  |\nabla\Phi|^2 \leqslant w_+ \Psi_0^2,
\]
we get part (c) upon adding to~\eqref{eq:nabla_Phi_bochner_for_Psi0_bochner} the following consequence of pairing~\eqref{eq:F_laplacian_with_lambda} with $\ep F_{\nabla}$:
\begin{equation}\label{eq:F_bochner_with_lambda}
\begin{split}
\ep^2\bangle{\nabla^*\nabla F_{\nabla}, F_{\nabla}} =\ & -\frac{1}{\ep^2}|[\ep F_{\nabla}, \Phi]|^2 -\sum_{i, j} \bangle{[\nabla_i\Phi, \nabla_j\Phi], F_{ij}}\\
& - \ep^2\sum_{i,j,k}\bangle{[F_{ki}, F_{kj}], F_{ij}} - \ep^2\bangle{\cR_2(F_{\nabla}), F_{\nabla}}.
\end{split}
\end{equation}

For part (d), upon pairing~\eqref{eq:nablaPhi_trans_laplacian_with_lambda} and~\eqref{eq:F_trans_laplacian_with_lambda}, respectively, with $[\nabla\Phi, \Phi]$ and $[\ep F_{\nabla}, \Phi]$, we have
\begin{equation}\label{eq:trans_laplacian_base_initial}
\begin{split}
\bangle{\nabla^*\nabla[S, \Phi], [S, \Phi]} \leqslant\ & C_n |[\nabla S, \nabla\Phi]| |[S, \Phi]| + \frac{2\lambda w_+-|\Phi|^2}{\ep^2}|[S, \Phi]|^2\\
&+ \frac{C_n}{\ep}\big|\big[[T_1, T_2],\Phi\big]\big| \big|[S, \Phi]\big| + C_n |R||[S, \Phi]|^2,
\end{split}
\end{equation}
where $T_1, T_2$ can each be either $\nabla\Phi$ or $\ep F_{\nabla}$, and the dimension $n$ affects the constants through the contractions involved in producing some of the terms in~\eqref{eq:nablaPhi_trans_laplacian_with_lambda} and~\eqref{eq:F_trans_laplacian_with_lambda} from tensor products. To continue, by the estimate~\eqref{eq:tensor-bracket-norm-with-decomp-1} from Appendix~\ref{sec:commute} we have
\begin{equation}\label{eq:trans_laplacian_base_quadratic_terms}
\begin{split}
|[[T_1, T_2],\Phi]| \leqslant\  & |[T_1, \Phi]||T_2| + |T_1||[T_2, \Phi]|.
\end{split}
\end{equation}
On the other hand, for the first term on the right-hand side of~\eqref{eq:trans_laplacian_base_initial}, we follow~\cite[Lemma IV.12.2]{Jaffe-Taubes}. Specifically, by~\eqref{eq:tensor-bracket-norm-with-decomp-2} we have
\[
\begin{split}
|[\nabla S, \nabla\Phi]| \leqslant\ & |(\nabla S)^\perp| |\nabla\Phi| + |\nabla S||(\nabla \Phi)^\perp|\\
\leqslant\ & |\Phi|^{-1}|[\nabla S, \Phi]||\nabla\Phi| + |\Phi|^{-1} |\nabla S| |[\nabla\Phi, \Phi]|.
\end{split}
\]
For the first term on the second line, combining the Leibniz rule and~\eqref{eq:tensor-bracket-norm-with-decomp-2} yields
\[
\begin{split}
|[\nabla S, \Phi]| \leqslant \ &|\nabla [S, \Phi]|  + |S^{\perp}| |\nabla\Phi| + |S||(\nabla\Phi)^{\perp}| \\
\leqslant\ & |\nabla[S, \Phi]| + |\Phi|^{-1}\big( |[S, \Phi]||\nabla\Phi| + |S||[\nabla\Phi,\Phi]|\big).
\end{split}
\]
Consequently, 
\begin{equation}\label{eq:trans_laplacian_base_cross_terms}
\begin{split}
|[\nabla S, \nabla\Phi]| \leqslant\ & |\Phi|^{-1}\big|\nabla[S, \Phi]\big||\nabla\Phi| + |\Phi|^{-2}\big( |[S, \Phi]||\nabla\Phi| + |S||[\nabla\Phi,\Phi]|\big) |\nabla\Phi|\\
& + |\Phi|^{-1}|\nabla S| |[\nabla \Phi, \Phi]|.
\end{split}
\end{equation}
Substituting~\eqref{eq:trans_laplacian_base_cross_terms} and~\eqref{eq:trans_laplacian_base_quadratic_terms} back into~\eqref{eq:trans_laplacian_base_initial}, and recalling the definitions~\eqref{eq:Psi_Theta_definition}, we easily get~\eqref{eq:trans_laplacian_base}.\\
\end{proof}
Lemma~\ref{lemm:Psi_Theta_relations} below generalizes~\eqref{eq:Theta_by_Psi_basic} and~\eqref{eq:perp_by_Theta_basic}. On the other hand, the estimates in Lemma~\ref{lemm:bochner_for_derivatives} and Lemma~\ref{lemm:trans_laplacian_diffed} can be viewed as parallels of those in Lemma~\ref{lemm:basic_differential_ineqs}(b)(c)(d), and appear later in induction arguments that lead to estimates on $\Psi_k$ and $\Theta_k$. That said, in contrast to their counterparts in Lemma~\ref{lemm:basic_differential_ineqs}, in stating the inequalities in Lemma~\ref{lemm:bochner_for_derivatives} and Lemma~\ref{lemm:trans_laplacian_diffed} we drop certain non-positive terms on the right-hand side, as they are not needed for our purposes. In any case, the reader is referred to Appendix~\ref{sec:proofs_derivative_formulas} for the standard yet tedious proofs of the next three lemmas. 

\begin{lemm}\label{lemm:Psi_Theta_relations}
For all $m \geqslant 1$ we have
\begin{equation}\label{eq:Psi_Theta_relation}
\Theta_m \leqslant |\Phi| \Psi_m + C_{m}\sum_{i = 0}^{m-1}\Psi_i \Psi_{m-1-i}.
\end{equation}
Moreover, on $M \setminus Z(\Phi)$, we have for $m \geqslant 1$ that
\begin{equation}\label{eq:Theta_perp_relation}
\big| \Psi_m^\perp - |\Phi|^{-1}\Theta_m \big| \leqslant C_{m}|\Phi|^{-1}\sum_{i = 0}^{m-1}\Psi_i^\perp \Psi_{m-1-i}.
\end{equation}
\end{lemm}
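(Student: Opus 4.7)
The plan is to prove both inequalities by applying the Leibniz rule to expand $\nabla^m[F_{\nabla},\Phi]$ and $\nabla^m[\nabla\Phi,\Phi]$, isolating the top-order terms (which produce the $|\Phi|\Psi_m$ contributions) and bounding the remaining mixed terms using the pointwise bracket estimates already collected in the ``Notation and conventions'' subsection. The two inequalities share the same combinatorial skeleton; they differ only in which pointwise bracket bound is used.

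For~\eqref{eq:Psi_Theta_relation}, I start from the Leibniz identities
\[
\nabla^m[F_{\nabla},\Phi] = [\nabla^m F_{\nabla},\Phi] + \sum_{i=0}^{m-1}\binom{m}{i}[\nabla^{i}F_{\nabla},\nabla^{m-i}\Phi],
\]
and the analogous one for $\nabla^m[\nabla\Phi,\Phi]$, with leading term $[\nabla^{m+1}\Phi,\Phi]$. Using $|[a,b]|\leqslant |a||b|$ (a direct consequence of~\eqref{eq: bracket_norm}) together with the definitional bounds $\ep|\nabla^{m}F_{\nabla}|\leqslant \Psi_m$ and $|\nabla^{m+1}\Phi|\leqslant \Psi_m$, the top-order pieces yield $\ep|[\nabla^m F_\nabla,\Phi]|\leqslant |\Phi|\Psi_m$ and $|[\nabla^{m+1}\Phi,\Phi]|\leqslant |\Phi|\Psi_m$. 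Each mixed term has $1\leqslant m-i\leqslant m$, so one may use $\ep|\nabla^{i}F_{\nabla}|\leqslant \Psi_i$, $|\nabla^{i+1}\Phi|\leqslant \Psi_i$, and $|\nabla^{m-i}\Phi|\leqslant \Psi_{m-i-1}$ to bound such a term by $C_m\Psi_i\Psi_{m-i-1}$. Finally, invoking $\sqrt{a^2+b^2}\leqslant |a|+|b|$ on the definition of $\Theta_m$ assembles~\eqref{eq:Psi_Theta_relation}.

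For~\eqref{eq:Theta_perp_relation}, I use the same Leibniz expansions but rewrite them as
\[
\nabla^m[F_{\nabla},\Phi]-[\nabla^m F_{\nabla},\Phi]=\sum_{i=0}^{m-1}\binom{m}{i}[\nabla^{i}F_{\nabla},\nabla^{m-i}\Phi],
\]
and similarly for the Higgs counterpart. The reverse triangle inequality in the form $\bigl|\sqrt{A^2+B^2}-\sqrt{C^2+D^2}\bigr|\leqslant |A-C|+|B-D|$, applied to the pair defining $|\Phi|\Psi_m^{\perp}$ and $\Theta_m$, reduces the problem to bounding $\ep\big|[\nabla^{i}F_{\nabla},\nabla^{m-i}\Phi]\big|$ and $\big|[\nabla^{i+1}\Phi,\nabla^{m-i}\Phi]\big|$ for $0\leqslant i\leqslant m-1$ by multiples of $\Psi_j^{\perp}\Psi_{m-1-j}$. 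For this, I invoke the finer pointwise bound~\eqref{eq: bracket_ineq}, $|[a,b]|\leqslant |a||b^{\perp}|+|a^{\perp}||b|$; combined with $\ep|(\nabla^{i}F_{\nabla})^{\perp}|\leqslant \Psi_i^{\perp}$ and $|(\nabla^{j+1}\Phi)^{\perp}|\leqslant \Psi_j^{\perp}$, this produces terms of the two shapes $\Psi_i\Psi_{m-i-1}^{\perp}$ and $\Psi_i^{\perp}\Psi_{m-i-1}$. Under the involution $i\leftrightarrow m-1-i$ on the index set $\{0,1,\dots,m-1\}$, the first shape is converted into the second, so both families collapse into a single sum of the form $C_m\sum_{i=0}^{m-1}\Psi_i^{\perp}\Psi_{m-1-i}$; dividing through by $|\Phi|$ gives~\eqref{eq:Theta_perp_relation}.

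The argument is purely algebraic and local, so there is no substantial analytic obstacle; the main bookkeeping task is tracking the Leibniz indices and correctly deploying~\eqref{eq: bracket_norm} versus~\eqref{eq: bracket_ineq} at the appropriate places, together with the index-reversal symmetrization needed to remove the asymmetric mixed products from the transverse estimate.
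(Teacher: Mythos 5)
Your argument is essentially the paper's proof: the same Leibniz expansions, the same use of~\eqref{eq: bracket_norm} for the first inequality and of~\eqref{eq: bracket_ineq} together with the index reversal $i\leftrightarrow m-1-i$ for the second. The one discrepancy is in~\eqref{eq:Psi_Theta_relation}: splitting $\Theta_m$ via $\sqrt{a^2+b^2}\leqslant|a|+|b|$ and bounding the two top-order pieces separately only yields $\sqrt{2}\,|\Phi|\Psi_m$ (since $\ep|\nabla^mF_{\nabla}|+|\nabla^{m+1}\Phi|\leqslant\sqrt{2}\,\Psi_m$), not the stated leading coefficient $1$; the paper instead applies the reverse triangle inequality to the pairs $(\ep|\nabla^m[F_{\nabla},\Phi]|,\,|\nabla^m[\nabla\Phi,\Phi]|)$ and $(\ep|[\nabla^mF_{\nabla},\Phi]|,\,|[\nabla^{m+1}\Phi,\Phi]|)$ --- exactly the device you already use for~\eqref{eq:Theta_perp_relation} --- and observes that the second pair has Euclidean norm at most $|\Phi|\Psi_m$. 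This is a one-line fix, and in any case harmless for the later applications, which only use the bound up to $m$-dependent constants.
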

\begin{proof}
See Appendix~\ref{sec:proofs_derivative_formulas}.
\end{proof}
\begin{lemm}\label{lemm:bochner_for_derivatives}
Suppose $(\nabla, \Phi)$ is a smooth solution of~\eqref{eq: 2nd_order_crit_pt_intro}. Let $S$ stand for either $\nabla\Phi$ or $\ep F_{\nabla}$, and set
\[
a = \lambda, \text{ if }S = \nabla\Phi;\ \ \ a = 0, \text{ if } S = \ep F_{\nabla}.
\]
Then we have for all $m \geqslant 1$ that
\begin{equation}\label{eq:bochner_for_derivatives}
\begin{split}
\bangle{\nabla^m\nabla^*\nabla S, \nabla^m S} \leqslant\ & \frac{\lambda w_+ }{\ep^2}|\nabla^m S|^2 + \frac{C_{n}}{\ep}\Psi_0\Psi_m|\nabla^m S| + C_{n} |R| |\nabla^m S|^2\\
& + \big(a \cdot (I) + |1-a|\cdot (II) + (III) + (IV)\big) |\nabla^m S|,
\end{split}
\end{equation}
where $(I)$ to $(IV)$ stand for the following expressions:
\begin{equation}\label{eq:bochner_for_derivatives_remainders}
\begin{array}{ll}
&\displaystyle (I) =  \frac{C_{m, n}}{\ep^2}\sum_{i = 0}^{m-1} \Big(\sum_{j+k = m-i} |\nabla^j\Phi||\nabla^k\Phi|\Big) |\nabla^{i}S|,\\
&\displaystyle(II) = \frac{C_{m, n}}{\ep^2}\sum_{i = 0}^{m-1}|\nabla^{m-i}\Phi| \big(|\nabla^i[S, \Phi]| + |[\nabla^i S, \Phi]| \big) +  \frac{C_{m, n}}{\ep^2}\sum_{i=0}^{m-1} |\nabla^i S||[\nabla^{m-i}\Phi, \Phi]|,\\
&\displaystyle (III) = \frac{C_{m, n}}{\ep}\sum_{i = 1}^{m-1}\Psi_i \Psi_{m-i},\ \ \ \ \displaystyle(IV) = C_{m, n}\sum_{i = 0}^{m-1}|\nabla^{m-i}R||\nabla^i S|.
\end{array}
\end{equation}
Moreover, alternative estimates hold in the following special cases:
\vskip 1mm
\begin{enumerate}
\item[(i)] When $S = \nabla\Phi$, the term $C_{n}\ep^{-1}\Psi_0\Psi_m|\nabla^{m + 1} \Phi|$ on the right-hand side of~\eqref{eq:bochner_for_derivatives} can be replaced by $C_{n}\ep^{-1}\Psi_0 |\nabla^{m + 1}\Phi|^2$, in which case $(III)$ should be replaced by 
\[
(III)_{\nabla\Phi} = \frac{C_{m, n}}{\ep}\sum_{i = 0}^{m-1}\Psi_{m-i}|\nabla^{i + 1}\Phi|.
\]
\vskip 1mm
\item[(ii)] On $M \setminus Z(\Phi)$, the term $C_{n}\ep^{-1}\Psi_0\Psi_m|\nabla^m S|$ on the right-hand side of~\eqref{eq:bochner_for_derivatives} can be replaced by 
$C_{n}\ep^{-1}\Psi_0^{\perp}\Psi_m|\nabla^m S|$, in which case $(III)$ is replaced by 
\[
(III)_{M \setminus Z} = \frac{C_{m, n}}{\ep}\sum_{i = 0}^{m-1}\Psi_{m-i}^\perp \Psi_i.
\]
\end{enumerate}
\end{lemm}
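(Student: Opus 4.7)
The plan is to use the identities \eqref{eq:nablaPhi_laplacian_with_lambda} and \eqref{eq:F_laplacian_with_lambda} from Lemma~\ref{lemm:laplacian_formulas_with_lambda} to rewrite $\nabla^*\nabla S$, apply $\nabla^m$ to both sides, and then pair with $\nabla^m S$. Each term on the right-hand side is a polynomial-type expression in $w$, $\Phi$, $\nabla\Phi$, $F_{\nabla}$, and curvature, so that applying $\nabla^m$ via the Leibniz rule produces a finite sum of terms with the $m$ covariant derivatives distributed among the various factors in all possible ways; we estimate each such term by Cauchy--Schwarz together with the bracket inequalities \eqref{eq: bracket_ineq}, \eqref{eq: double_bracket_ineq}, and then sort the results into the four buckets $(I)$--$(IV)$ plus the three explicitly displayed leading contributions on the right of \eqref{eq:bochner_for_derivatives}. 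Wherever commuting covariant derivatives against each other produces curvature correction terms (e.g. in identifying $\nabla^m\nabla_k\Phi$ with a component of $\nabla^{m+1}\Phi$), the resulting expressions are controlled by $|\nabla^{m-i}R||\nabla^i S|$ and absorbed into $C_n|R||\nabla^m S|^2$ (when $i = m$) or into $(IV)$ (when $i < m$).

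\textbf{Top-order contributions.} Consider first the distributions in which all $m$ derivatives land on the ``principal'' factor in each term. For $S = \nabla\Phi$: differentiating $\tfrac{\lambda(w - |\Phi|^2)}{\ep^2}\nabla\Phi$ and pairing with $\nabla^m S = \nabla^{m+1}\Phi$ contributes $\tfrac{\lambda w}{\ep^2}|\nabla^m S|^2 - \tfrac{\lambda|\Phi|^2}{\ep^2}|\nabla^m S|^2$, while the analogous distribution for $-\tfrac{1-\lambda}{\ep^2}[\Phi,[\nabla\Phi,\Phi]]$ contributes $-\tfrac{1-\lambda}{\ep^2}|[\nabla^m\nabla\Phi,\Phi]|^2$. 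Combining these in the manner of part (b) of Lemma~\ref{lemm:basic_differential_ineqs}, using whichever sign grouping is favorable according to whether $\lambda \leqslant 1$ or $\lambda > 1$, leaves a non-positive residual which we drop, producing the leading $\tfrac{\lambda w_+}{\ep^2}|\nabla^m S|^2$. For $S = \ep F_{\nabla}$: the analogous top-order term $-\tfrac{1}{\ep^2}[\Phi,[\ep F,\Phi]]$ contributes $-\tfrac{1}{\ep^2}|[\nabla^m(\ep F),\Phi]|^2 \leqslant 0$, also dropped (consistent with $a = 0$ in the bound). In both cases, the bilinear cross terms — $[F_{ki},\nabla_k\Phi]$ in~\eqref{eq:nablaPhi_laplacian_with_lambda} and $[\nabla_i\Phi,\nabla_j\Phi]$, $[\ep F_{ki},\ep F_{kj}]$ in~\eqref{eq:F_laplacian_with_lambda} — contribute at most $\tfrac{C_n}{\ep}\Psi_0\Psi_m|\nabla^m S|$ after Cauchy--Schwarz, once all $m$ derivatives have been placed on the factor of highest differential order.

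\textbf{Remainder bookkeeping and the alternative bounds.} The non-top-order distributions now populate $(I)$--$(IV)$ essentially by type: derivatives hitting $w$, $|\Phi|^2$, or the outer $\nabla\Phi$ in the scalar-potential term yield $(I)$ weighted by $a = \lambda$ (which is present only when $S = \nabla\Phi$); derivatives hitting the inner factors of $[\Phi,[\cdot,\Phi]]$, together with \eqref{eq: double_bracket_ineq}, yield $(II)$ weighted by $|1-a|$; the remaining distributions of the bilinear cross terms yield $(III)$; and differentiating $\Ric_{ki}\nabla_k\Phi$ or $\cR_2(\ep F)$ together with the curvature-commutator remainders yields $(IV)$ as well as the explicit $C_n|R||\nabla^m S|^2$ term. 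For the alternative (i), when $S = \nabla\Phi$, the top-order bilinear contribution $\bigl\langle [F_{ki},\nabla^m\nabla_k\Phi],\nabla^m\nabla_i\Phi\bigr\rangle$ is handled by $|\bangle{[F,X],Y}| \leqslant |F||X||Y|$ with $X, Y$ both pieces of $\nabla^{m+1}\Phi$, giving the sharper $\tfrac{C_n}{\ep}\Psi_0|\nabla^{m+1}\Phi|^2$; the remaining bilinear distributions then reorganize into the lighter $(III)_{\nabla\Phi}$. For (ii), on $M\setminus Z(\Phi)$ the triple-product identity~\eqref{eq: inner_prod_decomp} forces the relevant inner products involving $\Phi$ to carry at least one transverse factor, so we may replace $|F|$ by $|F^\perp| = |\Phi|^{-1}|[F,\Phi]|$ (and likewise $|\nabla\Phi|$ by $|(\nabla\Phi)^\perp|$), converting $\Psi_0$ into $\Psi_0^\perp$ and the subleading bilinear remainders into $(III)_{M\setminus Z}$. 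The main difficulty is entirely combinatorial: enumerating the distributions of $\nabla^m$ across all factors in~\eqref{eq:nablaPhi_laplacian_with_lambda} and~\eqref{eq:F_laplacian_with_lambda} and verifying that each fragment ends up in the correct bucket with the asserted weights in $\ep$, $|\Phi|$, and brackets with $\Phi$.
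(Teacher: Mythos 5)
Your proposal is correct and follows essentially the same route as the paper's proof (given in Appendix~\ref{sec:proofs_derivative_formulas}): split the right-hand sides of \eqref{eq:nablaPhi_laplacian_with_lambda} and \eqref{eq:F_laplacian_with_lambda} into potential, double-bracket, bilinear, and curvature types, distribute $\nabla^m$ by the Leibniz rule, keep the distributions with all derivatives on the principal factor as the displayed leading terms, and sort the rest into $(I)$--$(IV)$, with the same mechanisms for the alternatives (i) and (ii). The only cosmetic difference is in part (ii), where the paper uses the bracket estimate \eqref{eq: bracket_ineq} in its tensor form $|[A,B]|\leqslant |A^\perp||B|+|A||B^\perp|$ applied to $|\nabla^m[T_1,T_2]|$ rather than the inner-product identity \eqref{eq: inner_prod_decomp} you cite, but both rest on the same fact that brackets annihilate longitudinal--longitudinal pairs, so this does not affect the argument.
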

\begin{proof}
See Appendix~\ref{sec:proofs_derivative_formulas}.
\end{proof}

\begin{lemm}\label{lemm:trans_laplacian_diffed}
Suppose $(\nabla, \Phi)$ is a smooth solution of~\eqref{eq: 2nd_order_crit_pt_intro}, and again let $S$ stand for either $\nabla\Phi$ or $\ep F_{\nabla}$. Then for all $m \geqslant 1$, we have on $M \setminus Z(\Phi)$ that
\begin{equation}\label{eq:trans_laplacian_diffed}
\begin{split}
\bangle{\nabla^m\nabla^*\nabla[S, \Phi], \nabla^m[S,\Phi]}\leqslant\ & C_{m, n}\Theta_{m + 1}\cdot (|\Phi|^{-1}\Psi_0\Theta_m)\\
&+ \big( \frac{2\lambda w_+}{\ep^2} + C_{m, n} |R| \big) |\nabla^m[S, \Phi]|^2\\
&+ C_{m, n}\big(\frac{|\Phi|\Psi_0}{\ep} + |\Phi|^{-1}\Psi_0^2 + \Psi_1\big) \Psi_m^\perp \Theta_m \\
& + ((\lambda + 1)\cdot(I) + (II) + (III) + (IV))\cdot \Theta_m,
\end{split}
\end{equation}
where we set
\begin{equation}\label{eq:trans_laplacian_diffed_remainders}
\begin{split}
(I)=\ & \frac{C_{m, n}}{\ep^2} \sum_{i = 0}^{m-1} \Big(\sum_{j + k = m-i} |\nabla^j \Phi||\nabla^k\Phi|\Big) \Theta_i,\\
(II) =\ & \frac{C_{m, n}}{\ep}\sum_{i=0}^{m-1}\Big(\sum_{j + k = m-i}  \Psi_j |\nabla^k\Phi|\Big)\Psi_{i}^\perp,\\
(III) =\ & C_{m ,n}\sum_{i = 0}^{m-1} |\nabla^{m-i}R|\Theta_i,\ \ \ \ (IV) = C_{m, n}\sum_{i = 0}^{m-1}\Psi_i^\perp \big( |\Phi|^{-1}\Psi_0\Psi_{m-i} + \Psi_{m+1-i} \big).
\end{split}
\end{equation}
\end{lemm}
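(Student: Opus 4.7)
The plan is to proceed analogously to the proof of Lemma~\ref{lemm:basic_differential_ineqs}(d), but now starting from a differentiated form of the identities~\eqref{eq:nablaPhi_trans_laplacian_with_lambda} and~\eqref{eq:F_trans_laplacian_with_lambda}. Concretely, I would apply $\nabla^m$ to both identities, pair with $\nabla^m[S,\Phi]$, and estimate each resulting term. Along the way one has to move $\nabla^m$ past the rough Laplacian on the left-hand side via the commutator estimate recorded in Appendix~\ref{sec:commute}; this produces error terms of the schematic form $|\nabla^i F_\nabla|\,|\nabla^{m-i}[S,\Phi]|$ and curvature contributions $|\nabla^{m-i}R|\,|\nabla^{i}[S,\Phi]|$, which can be absorbed into the expressions $(III)$ and $(IV)$ of~\eqref{eq:trans_laplacian_diffed_remainders} after applying Cauchy--Schwarz and recalling the definitions~\eqref{eq:Psi_Theta_definition}.

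Once $\nabla^m$ is brought inside the Laplacian, I would systematically expand each of the five terms on the right-hand sides of~\eqref{eq:nablaPhi_trans_laplacian_with_lambda} and~\eqref{eq:F_trans_laplacian_with_lambda} via the Leibniz rule. The coefficient $(2\lambda w_+ - |\Phi|^2)/\ep^2$ in front of $[S,\Phi]$ produces the dominant pointwise term $(2\lambda w_+/\ep^2)|\nabla^m[S,\Phi]|^2$ (discarding the non-positive piece $-|\Phi|^2/\ep^2|\nabla^m[S,\Phi]|^2$, consistently with how such terms are dropped in the statement), together with remainders in which at least one derivative falls on either $w$ or $|\Phi|^2$; each of these can be estimated by $\ep^{-2}\sum |\nabla^j\Phi||\nabla^k\Phi|\,\Theta_i$ and absorbed into $(\lambda+1)\cdot(I)$. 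The cubic term $\ep^{-1}[[T_1,T_2],\Phi]$ (with $T_1,T_2 \in \{\nabla\Phi,\ep F_\nabla\}$), when differentiated, similarly produces the $(II)$ contributions, after using the inequality~\eqref{eq: double_bracket_ineq} to extract a factor of $|[{\,\cdot\,},\Phi]|$ so that the resulting expression involves $\Psi_i^\perp$.

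The most delicate step is handling the ``leading'' term $[\nabla^2_{k,i}\Phi,\nabla_k\Phi]$ (respectively $[\nabla_k(\ep F)_{ij},\nabla_k\Phi]$) in which one derivative is already present. After applying $\nabla^m$ and isolating the summand where all $m$ new derivatives land on the first factor, I get an expression of the form $[\nabla^{m+2}\Phi,\nabla\Phi]$ (or $\ep[\nabla^{m+1}F_\nabla,\nabla\Phi]$), and this is where the factor $\Theta_{m+1}$ appears. To convert this into the form $C_{m,n}\,\Theta_{m+1}\cdot(|\Phi|^{-1}\Psi_0 \Theta_m)$ displayed in~\eqref{eq:trans_laplacian_diffed}, one repeatedly uses~\eqref{eq:tensor-bracket-norm-with-decomp-2} and the Leibniz rule as in the proof of Lemma~\ref{lemm:basic_differential_ineqs}(d), pairing with $\nabla^m[S,\Phi]$ and rewriting transverse components via $|\cdot^\perp|=|\Phi|^{-1}|[\,\cdot\,,\Phi]|$ on $M\setminus Z(\Phi)$. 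The remaining summands, where derivatives are split between the two factors of the commutator, give rise to terms estimated by $(|\Phi|\Psi_0/\ep + |\Phi|^{-1}\Psi_0^2+\Psi_1)\Psi_m^\perp \Theta_m$ and by $(IV)$, respectively, after invoking Lemma~\ref{lemm:Psi_Theta_relations} to interchange $\Theta_j$ with $|\Phi|\Psi_j + \text{lower order}$ whenever needed.

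The main obstacle, as in the proofs of Lemmas~\ref{lemm:Psi_Theta_relations} and~\ref{lemm:bochner_for_derivatives} deferred to Appendix~\ref{sec:proofs_derivative_formulas}, is purely combinatorial: carefully partitioning the Leibniz expansion so that \emph{exactly one} of the factors in each summand is pulled out in its ``transverse'' form (to produce a $\Psi_i^\perp$ or $|\Phi|^{-1}\Theta_i$ factor), while the remaining factors are measured in the ordinary norms. Any summand where this separation fails must be pushed into the lower-order buckets $(I)$--$(IV)$ using~\eqref{eq: bracket_ineq},~\eqref{eq: double_bracket_ineq}, and the fact that $|\nabla^k[S,\Phi]| \leqslant \Theta_k$ by definition, so that the weighted sum structures in~\eqref{eq:trans_laplacian_diffed_remainders} are respected. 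Given the length of such bookkeeping, the bulk of the argument is naturally relegated to Appendix~\ref{sec:proofs_derivative_formulas}, where the analogous tracking for Lemma~\ref{lemm:bochner_for_derivatives} is carried out in parallel.
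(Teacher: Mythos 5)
Your overall strategy coincides with the paper's proof in Appendix~\ref{sec:proofs_derivative_formulas}: apply $\nabla^m$ to the identities~\eqref{eq:nablaPhi_trans_laplacian_with_lambda} and~\eqref{eq:F_trans_laplacian_with_lambda}, pair with $\nabla^m[S,\Phi]$, sort the right-hand side into the four term types (the coefficient term $\ep^{-2}(a\lambda w-|\Phi|^2)[S,\Phi]$, the cubic bracket $\ep^{-1}[[T_1,T_2],\Phi]$, the curvature term $R*[S,\Phi]$, and the quadratic-in-derivatives term $[\nabla S,\nabla\Phi]$), estimate each via the Leibniz rule and the bracket inequalities~\eqref{eq: bracket_ineq}--\eqref{eq: double_bracket_ineq}, and extract the factor $\Theta_{m+1}$ from the top Leibniz summand of $[\nabla S,\nabla\Phi]$ by converting $\Psi_{m+1}^{\perp}$ into $|\Phi|^{-1}\Theta_{m+1}$ plus lower-order corrections via Lemma~\ref{lemm:Psi_Theta_relations} (note that for $m+1$ derivatives you need the full estimate~\eqref{eq:Theta_perp_relation}, not just the pointwise identity $|\xi^{\perp}|=|\Phi|^{-1}|[\xi,\Phi]|$). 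The remaining bookkeeping you describe matches the paper's treatment.

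The one step you should delete is the commutation of $\nabla^m$ past the rough Laplacian. The quantity in the statement is $\bangle{\nabla^m\nabla^*\nabla[S,\Phi],\nabla^m[S,\Phi]}$, i.e.\ $\nabla^m$ applied to $\nabla^*\nabla[S,\Phi]$, so you simply substitute the right-hand side of the identity before differentiating and no commutator ever appears; the commutator $[\nabla^*\nabla,\nabla^m]$ is handled separately at the point where the lemma is applied (see~\eqref{eq:commutator_applied_to_Theta} in the proof of Proposition~\ref{prop:exp_decay}). If you did carry out the commutation you would be bounding the different quantity $\bangle{\nabla^*\nabla\nabla^m[S,\Phi],\nabla^m[S,\Phi]}$, and the resulting error terms would not all fit where you claim: the curvature half of Lemma~\ref{lemm:commutator-estimate} does land in $|R|\,|\nabla^m[S,\Phi]|^2$ plus $(III)$, but the bracket half produces terms of the schematic form $\ep^{-1}\Psi_l\Theta_{m-l}\Theta_m$ with $1\leqslant l\leqslant m$, which match neither $(II)$ (which carries a factor $|\nabla^k\Phi|$ and a $\Psi_i^{\perp}$) nor $(IV)$ (which carries a factor $\Psi_i^{\perp}$) as defined in~\eqref{eq:trans_laplacian_diffed_remainders}. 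Removing that step leaves your argument correct and essentially identical to the paper's.
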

\begin{proof}
See Appendix~\ref{sec:proofs_derivative_formulas}.
\end{proof}
\subsection{Coarse estimates}\label{subsec:coarse-estimates}
Here, and for the remainder of Section~\ref{sec:estimates}, we restrict ourselves back to $3$-manifolds. Let $\Omega$ be an open subset of $(M^3, g)$ for which there are constants $\rho_0, A_0, A_1, A_2, \cdots$ such that the following two conditions hold: First, 
\begin{equation}\label{eq:injectivity_radius_for_estimates}
\inj_g(x) \geqslant 2\rho_0 > 0 \text{ for all }x \in \Omega.
\end{equation}
Secondly, with $\rho_0$ as above, for all $k \geqslant 0$ we have
\begin{equation}\label{eq:curvature_bound_for_estimates}
\rho_0^{k + 2}\|(\nabla^g)^k R_g\|_{\infty; \Omega} \leqslant A_k < \infty,
\end{equation}
where $R_g$ denotes the Riemann curvature tensor and $\nabla^g$ the Levi--Civita connection of $g$. By the Hessian comparison theorem, there exist universal constants $c_0\in(0, \frac{1}{4})$ and $C_0 > 0$ such that if 
\begin{equation}\label{eq:radius_rel_curvature}
\big( \frac{\rho}{\rho_0} \big)^2  < \min\{\frac{c_0}{A_0}, 1\} = : \mu_1^2,
\end{equation}
or, equivalently, if
\begin{equation}\label{ineq: rho_1_def}
\rho < \mu_1 \rho_0 =: \rho_1,
\end{equation}
then on any geodesic ball $B_{\rho}(x_0)$ contained in $\Omega$ we have that
\begin{equation}\label{eq:Hessian_comparison_for_estimates}
-C_0A_0 \cdot  \big(\frac{d(\cdot, x_0)}{\rho_0}\big)^2 g \leqslant \mathrm{Hess}(\frac{d(\cdot, x_0)^2}{2}) - g \leqslant C_0A_0 \cdot  \big(\frac{d(\cdot, x_0)}{\rho_0}\big)^2 g,
\end{equation}
\begin{equation}\label{eq:metric_bounds_for_estimates}
\frac{1}{2}g_{\RR^3} \leqslant \exp_{x_0}^* g \leqslant 2 g_{\RR^3},
\end{equation}
and it follows from~\eqref{eq:metric_bounds_for_estimates} that 
\begin{equation}\label{eq:volume_bounds_for_estimates}
\frac{1}{C} \leqslant \frac{\Vol_{g}(B_{\rho}(x_0))}{\rho^3} \leqslant C, 
\end{equation}
for some universal constant $C$. Furthermore, still with $B_{\rho}(x_0) \subset \Omega$ and $\rho < \rho_1$, since geodesics with length at most $2\rho$ between pairs of points in $B_{\rho}(x_0)$ are minimizing (due to the lower bound~\eqref{eq:injectivity_radius_for_estimates}), we see that in particular $\diam B_{\rho}(x_0) = 2\rho$, which in turn implies that if $B_{r}(x) \subset B_{s}(y) \subset \Omega$ with $s < \rho_1$, then $r \leqslant s$. For later use, we also note that if $\rho < \rho_1$, then we have by~\eqref{eq:radius_rel_curvature} that
\begin{equation}\label{eq:curvature_bound_usage}
\rho_0^{-2}A_0 \leqslant \rho^{-2}c_0.
\end{equation}

As a starting point for the estimates in this section, we derive a pointwise bound on $\Psi_0$ and an integral bound on $\Psi_1$ in terms of the integral of $\Psi_0^2$ (Lemma~\ref{lemm:coarse_estimate_base}). We then combine the bound on $\Psi_0$ with the fundamental theorem of calculus to get a pointwise estimate on $|1 - |\Phi||$ in terms of the integral of $\frac{\lambda w^2}{\ep^2}$ (Lemma~\ref{lemm:w_mean_value_estimate}). This estimate on $|1 - |\Phi||$ in turn enables us to begin an induction argument based on Lemma~\ref{lemm:coarse_estimate_base} whereby estimates on $\Psi_k$ are obtained (Proposition~\ref{prop:coarse_estimate}).

\begin{lemm}\label{lemm:coarse_estimate_base}
Suppose $\lambda \in (0, \lambda_0]$ and that $(\nabla, \Phi)$ is a smooth solution of~\eqref{eq: 2nd_order_crit_pt_intro} on $\Omega$ satisfying for some $B_{4\rho}(x_0) \subset \Omega$ with $\rho \in (0, \frac{\rho_1}{4})$ and some $\Lambda> 0$ that
\begin{equation}\label{eq:e_estimate_energy_bound}
\int_{B_{4\rho}(x_0)} \Psi_0^2 \vol_g =: \ep \cdot \eta \leqslant \ep \cdot \Lambda.
\end{equation}
Then, provided $\ep \leqslant \rho$, we have 
\begin{equation}\label{eq:e_estimate}
\|\Psi_0\|_{\infty; B_{\frac{7\rho}{2}}(x_0)} \leqslant C_{\lambda_0, \Lambda}\ep^{-1}\eta^{\frac{1}{2}},
\end{equation}
\begin{equation}\label{eq:Psi_1_estimate}
\int_{B_{3\rho}(x_0)}\Psi_1^2 \leqslant C_{\lambda_0, \Lambda} \ep^{-1} \eta.
\end{equation}
\end{lemm}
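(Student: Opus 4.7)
The plan is to derive a pointwise differential inequality for $\Psi_0^2$, apply Moser iteration after a parabolic rescaling to obtain the $L^\infty$-bound~\eqref{eq:e_estimate}, and then test the same inequality against a cutoff function to deduce~\eqref{eq:Psi_1_estimate}. For the first step I would combine~\eqref{eq:Psi_laplacian} with $k=0$ and the Bochner--Weitzenb\"ock inequality~\eqref{eq:Psi_0_bochner} of Lemma~\ref{lemm:basic_differential_ineqs}(c). The two triple-product remainders in~\eqref{eq:Psi_0_bochner} are bounded in absolute value by $C\ep^{-1}\Psi_0^3$ using $|\nabla\Phi|\leqslant\Psi_0$ and $\ep|F_\nabla|\leqslant\Psi_0$, while $w_+\leqslant 1/2$ identically and the geometric assumptions combined with $\ep\leqslant\rho<\rho_1$ yield $|R|\lesssim c_0\ep^{-2}$ via~\eqref{eq:curvature_bound_usage}. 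Together these give the key pointwise inequality
\begin{equation*}
\Delta(\Psi_0^2/2)+\Psi_1^2 \leqslant \frac{C_{\lambda_0}}{\ep^2}\Psi_0^2 + \frac{C}{\ep}\Psi_0^3.
\end{equation*}

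For~\eqref{eq:e_estimate} I would rescale by $\tilde x=x/\ep$, pulling back through $\exp_{x_0}$; in the rescaled metric $\tilde g$ the ball $B_{4\rho}(x_0)$ becomes a ball of radius at least $4$ with curvature $|R_{\tilde g}|\leqslant \ep^2 A_0/\rho_0^2\leqslant c_0$, so the rescaled picture carries uniformly bounded geometry on a fixed scale. The rescaled quantity $\tilde\Psi_0(\tilde x):=\ep\,\Psi_0(\ep\tilde x)$ satisfies $\int \tilde\Psi_0^2\, d\mathrm{vol}_{\tilde g}=\eta\leqslant\Lambda$ and the dimensionless inequality $\Delta_{\tilde g}(\tilde\Psi_0^2/2)\leqslant C_{\lambda_0}\tilde\Psi_0^2+C\tilde\Psi_0^3$. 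Viewing this as $\Delta_{\tilde g} u\leqslant (C_{\lambda_0}+Cu^{1/2})u$ with $u=\tilde\Psi_0^2\in L^1$, I would invoke the Moser iteration of Appendix~\ref{sec:Moser-iteration}: starting from $\|u\|_{L^1}\leqslant\Lambda$, successive test-function steps combined with the Sobolev embedding $W^{1,2}\hookrightarrow L^6$ in dimension three promote $u$ to $L^p$ for arbitrarily large $p$, thereby placing the coefficient $u^{1/2}$ in the relevant $L^q$-norm at each stage and ultimately yielding $\|u\|_{L^\infty}\leqslant C_{\lambda_0,\Lambda}\,\eta$ on a slightly smaller ball. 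Unscaling then recovers~\eqref{eq:e_estimate}.

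For~\eqref{eq:Psi_1_estimate} I would pick $\zeta\in C_c^\infty(B_{7\rho/2}(x_0))$ with $\zeta\equiv 1$ on $B_{3\rho}(x_0)$ and $|d\zeta|\leqslant C/\rho$, integrate the key differential inequality against $\zeta^2$, and use $|d(\Psi_0^2)|\leqslant 2\Psi_0\Psi_1$ together with a weighted Cauchy--Schwarz to absorb the boundary contribution $\int\zeta|d\zeta|\Psi_0\Psi_1$ into $\tfrac12\int\zeta^2\Psi_1^2$. The pointwise bound~\eqref{eq:e_estimate} just established turns the cubic term $\Psi_0^3/\ep$ into a bounded multiple of the quadratic $\Psi_0^2/\ep^2$, and since $\rho\geqslant\ep$ the remainder $\int|d\zeta|^2\Psi_0^2$ is also dominated by $\ep^{-2}\int\Psi_0^2$. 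Feeding in the hypothesis $\int_{B_{4\rho}}\Psi_0^2\leqslant\ep\eta$ finishes the proof of~\eqref{eq:Psi_1_estimate}.

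The main technical obstacle is the Moser iteration in the middle step: unlike classical $\ep$-regularity arguments in the spirit of~\cite{Jaffe-Taubes}, here $\eta$ is only assumed \emph{bounded} (not small), so the superlinear term $\Psi_0^3/\ep$ cannot be treated perturbatively by a smallness threshold, and one must instead bootstrap the $L^p$-integrability of $u=\tilde\Psi_0^2$ along the iteration, feeding the current $L^p$-bound back into the coefficient $u^{1/2}$ before the next Sobolev step. This is precisely what forces the final constant to depend on $\Lambda$; once this iteration is set up correctly, everything else reduces to the standard combination of the Bochner identity, integration by parts, and rescaling.
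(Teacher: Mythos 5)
Your proposal follows essentially the same route as the paper: the differential inequality $\Psi_1^2+\Delta(\Psi_0^2/2)\leqslant C(\ep^{-2}\lambda+\ep^{-1}\Psi_0+\rho_0^{-2}A_0)\Psi_0^2$ from~\eqref{eq:Psi_0_bochner}, a Moser-type $L^1\to L^\infty$ estimate at scale $\ep$, and a cutoff/Young argument for~\eqref{eq:Psi_1_estimate}. Two remarks on the middle step. First, the ``bootstrap'' you describe as the main obstacle is unnecessary: after rescaling, the coefficient of $u=\tilde\Psi_0^2$ is $C_{\lambda_0}+Cu^{1/2}$, and $u\in L^1$ already places $u^{1/2}$ in $L^2$ with $2>n/2=3/2$, so a single application of Lemma~\ref{lemm:moser-improve}(b) (which allows an $L^q$ coefficient for any $q>n/2$) gives the sup bound directly; this is exactly how the paper proceeds, estimating $\|\ep^{-2}\lambda+\ep^{-1}\Psi_0+\rho_0^{-2}A_0\|_{2;B_{\ep/2}(x)}\lesssim\ep^{-1/2}(\lambda+\Lambda^{1/2}+1)$. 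Second, and more importantly, the Moser step must be run on balls of radius comparable to $\ep$ centered at each point of $B_{7\rho/2}(x_0)$, not on the full rescaled ball of radius $4\rho/\ep$ shrunk to a ``slightly smaller ball'': on the large ball the constant part $C_{\lambda_0}$ of the coefficient has $L^2$-norm $\sim(\rho/\ep)^{3/2}$, so the prefactor $(1+r^{2-n/q}\|b\|_q)^{qn/(2q-n)}$ in Lemma~\ref{lemm:moser-improve} degenerates as $\ep/\rho\to 0$ and the resulting bound is not of the claimed form. With the estimate localized to $\ep$-scale balls (where $\int_{B_{\ep/2}(x)}\Psi_0^2\leqslant\ep\Lambda$ still holds by inclusion), the argument closes and the cutoff step for~\eqref{eq:Psi_1_estimate} is exactly the paper's.
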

\begin{proof}
Noting that $w_+ = \frac{1}{2}(1 - |\Phi|^2)_+ \leqslant \frac{1}{2}$, estimating the two cubic terms on the second line of~\eqref{eq:Psi_0_bochner} in the straightforward manner, and also recalling~\eqref{eq:curvature_bound_for_estimates}, we get on $B_{4\rho}(x_0)$ that
\begin{equation}\label{eq:Psi_0_bochner_for_basic_estimate}
\begin{split}
\Psi_1^2 + \Delta(\frac{\Psi_0^2}{2}) \leqslant\ &  C(\ep^{-2}\lambda + \ep^{-1}\Psi_0 + \rho_0^{-2}A_0)\Psi_0^2,
\end{split}
\end{equation}
where $C$ is a dimensional constant. Given $x \in B_{\frac{7\rho}{2}}(x_0)$, we estimate with the help of~\eqref{eq:e_estimate_energy_bound} and~\eqref{eq:volume_bounds_for_estimates}, as well as the inclusion $B_{\frac{\ep}{2}}(x) \subset B_{4\rho}(x_0)$, that
\begin{equation}\label{eq:Psi_0_bochner_coefficient_L2}
\begin{split}
\|\ep^{-2}\lambda + \ep^{-1}\Psi_0 + \rho_0^{-2}A_0\|_{2; B_{\frac{\ep}{2}}(x)} \leqslant\ & C\ep^{-\frac{1}{2}}\lambda  + \ep^{-1} \cdot (\ep\Lambda)^{\frac{1}{2}} + C\ep^{\frac{3}{2}}\rho_0^{-2}A_0\\
\leqslant\ & C\ep^{-\frac{1}{2}}(\lambda + \Lambda^{\frac{1}{2}} + 1),
\end{split}
\end{equation}
where for the second inequality we used~\eqref{eq:curvature_bound_usage}, and still $C$ is a universal constant. Thanks to this $L^2$-estimate and also the metric bounds~\eqref{eq:metric_bounds_for_estimates}, we may apply Lemma~\ref{lemm:moser-improve}(b) to the differential inequality~\eqref{eq:Psi_0_bochner_for_basic_estimate}, with $n = 3$, $q = 2$ and $r = \frac{\ep}{4}$, to obtain
\[
\Psi_0^2(x) \leqslant C(1 + \lambda + \Lambda^{\frac{1}{2}})^{6} \cdot \ep^{-3}\int_{B_{4\rho}(x_0)}\Psi_0^2 \leqslant C_{\lambda_0, \Lambda}\ep^{-2}\eta,
\]
where the last inequality follows from~\eqref{eq:e_estimate_energy_bound}. This proves~\eqref{eq:e_estimate}. To prove~\eqref{eq:Psi_1_estimate}, we first use~\eqref{eq:e_estimate} and~\eqref{eq:curvature_bound_usage} to deduce from~\eqref{eq:Psi_0_bochner_for_basic_estimate} that 
\begin{equation}\label{eq:Psi_1_inequality}
\Psi_1^2 + \Delta(\frac{\Psi_0^2}{2}) \leqslant C_{\lambda_0, \Lambda}\ep^{-2}\Psi_0^2, \text{ on }B_{\frac{7\rho}{2}}(x_0).
\end{equation} 
Now choose a cut-off function $\zeta$ such that
\[
\zeta = 1 \text{ on }B_{3\rho}(x_0),\ \ \zeta = 0 \text{ outside of }B_{\frac{7\rho}{2}}(x_0),\ \ |\nabla\zeta| \leqslant C \rho^{-1}.
\]
Testing~\eqref{eq:Psi_1_inequality} against $\zeta^2$ and using~\eqref{eq:Psi_Theta_schwarz} gives
\[
\begin{split}
\int_{M} \zeta^2 \Psi_1^2 \leqslant\ & \int_{M} C_{\lambda_0, \Lambda}\ep^{-2}\zeta^2\Psi_0^2  + 2 \zeta|\nabla\zeta|\Psi_0 \Psi_1\\
\leqslant\ & \int_{M} (C_{\lambda_0, \Lambda}\ep^{-2}\zeta^2 + 2|\nabla\zeta|^2)\Psi_0^2 + \frac{1}{2}\int_{M} \zeta^2 \Psi_1^2,
\end{split}
\]
where we used Young's inequality for the second line. Rearranging and recalling the properties of $\zeta$, we obtain
\[
\int_{B_{3\rho}(x_0)} \Psi_1^2 \leqslant C_{\lambda_0, \Lambda}(\ep^{-2} + \rho^{-2})\int_{B_{4\rho}(x_0)}\Psi_0^2,
\]
from which we deduce~\eqref{eq:Psi_1_estimate} upon recalling~\eqref{eq:e_estimate_energy_bound} and the assumption $\ep \leqslant \rho$.
\end{proof}

\begin{lemm}\label{lemm:w_mean_value_estimate}
Under the hypotheses of Lemma~\ref{lemm:coarse_estimate_base}, if we assume in addition that
\begin{equation}\label{eq:e_estimate_potential_bound}
\int_{B_{4\rho}(x_0)} \frac{\lambda w^2}{\ep^2} \vol_g \leqslant \ep \cdot \Lambda',
\end{equation}
then
\begin{equation}\label{eq:w_estimate}
\|1 - |\Phi|\|_{\infty; B_{\frac{13\rho}{4}}(x_0)} \leqslant C_{\lambda_0, \Lambda} \cdot \max\big\{\big(\frac{\Lambda'}{\lambda} \big)^{\frac{1}{4}}, \big(\frac{\Lambda'}{\lambda} \big)^{\frac{1}{7}}\big\}.
\end{equation}
\end{lemm}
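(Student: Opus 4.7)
The plan is to combine the $L^\infty$ bound on $\Psi_0$ from Lemma~\ref{lemm:coarse_estimate_base}, which via $|\nabla|\Phi|| \leqslant |\nabla\Phi| \leqslant \Psi_0$ gives Lipschitz control on $|\Phi|$, with the hypothesized $L^2$ bound on $w$. Concretely, since $\eta \leqslant \Lambda$, Lemma~\ref{lemm:coarse_estimate_base} yields a constant $K = C_{\lambda_0,\Lambda}\ep^{-1}$ such that $|\Phi|$ is $K$-Lipschitz on $B_{7\rho/2}(x_0)$. I would then fix an arbitrary $x \in B_{13\rho/4}(x_0)$, set $t := |1 - |\Phi|(x)|$, and work on the ball $B_r(x)$ of radius $r := \min\{t/(2K),\, \rho/4\}$, which fits inside $B_{7\rho/2}(x_0) \subset B_{4\rho}(x_0)$. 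On this ball the Lipschitz estimate gives $||\Phi|(y) - |\Phi|(x)| \leqslant K r$, and I split according to which term realizes the minimum.

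In the first case $r = t/(2K)$, Lipschitz propagation gives $|1 - |\Phi|(y)| \geqslant t/2$ on $B_r(x)$. When $t \leqslant 1$ all I can extract is $|w(y)| = \tfrac{1}{2}|1-|\Phi|(y)|(1+|\Phi|(y)) \geqslant t/4$ (using $1 + |\Phi| \geqslant 1$), whereas when $t \geqslant 1$ the constraint $|\Phi| \geqslant 0$ forces $|\Phi|(x) = 1 + t$, so $|\Phi|(y) \geqslant 1 + t/2$ and $|w(y)| \geqslant t^2/8$. Integrating $\lambda w^2/\ep^2$ over $B_r(x)$ using $\vol(B_r) \geqslant c r^3$ with $r \sim \ep t$, and comparing against hypothesis~\eqref{eq:e_estimate_potential_bound}, gives $t^5 \leqslant C_{\lambda_0,\Lambda} \Lambda'/\lambda$ when $t \leqslant 1$, and $t^7 \leqslant C_{\lambda_0,\Lambda}\Lambda'/\lambda$ when $t \geqslant 1$.

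In the second case $r = \rho/4 < t/(2K)$, one has $t > \rho K/2 \geqslant C_{\lambda_0,\Lambda}$ (using $\rho \geqslant \ep$), so automatically $t \geqslant 1$ and $|\Phi|(x) = 1 + t$. The Lipschitz bound still gives $|\Phi|(y) \geqslant 1 + t - K\rho/4 \geqslant 1 + t/2$ on $B_{\rho/4}(x)$, so $|w(y)| \geqslant t^2/8$. Integrating over this \emph{fixed}-size ball of volume $\sim \rho^3 \geqslant \ep^3$ now produces $t^4 \leqslant C_{\lambda_0,\Lambda} \Lambda'/\lambda$.

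Collecting the three inequalities $t^4,\, t^5,\, t^7 \leqslant C_{\lambda_0,\Lambda}\Lambda'/\lambda$ and noting the elementary fact that $(\Lambda'/\lambda)^{1/5} \leqslant \max\{(\Lambda'/\lambda)^{1/4},\, (\Lambda'/\lambda)^{1/7}\}$ for all $\Lambda'/\lambda > 0$ (checked by splitting the two regimes $\Lambda'/\lambda \lessgtr 1$) gives the claimed bound. The main subtlety is precisely the case distinction that forces the two different exponents: the $1/7$ arises in the moderate-$t$ regime where the Lipschitz-scaled radius $t/(2K) \sim \ep t$ still fits in our domain, while the worse $1/4$ exponent is unavoidable in the very-large-$t$ regime, where we must fall back on the fixed radius $\rho/4$ and lose the favorable scaling of the volume with $t$.
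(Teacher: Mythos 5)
Your argument is correct and follows essentially the same route as the paper: use the pointwise gradient bound from Lemma~\ref{lemm:coarse_estimate_base} to propagate the value of $|1-|\Phi||$ over a small geodesic ball, then integrate the potential term over that ball against the hypothesis~\eqref{eq:e_estimate_potential_bound}; the paper simply caps the radius at $\sim\ep\min\{t,1\}$ and works with $(1-|\Phi|)^4\leqslant 4w^2$ in one shot, which collapses your three sub-cases into the single inequality $t^4(\min\{t,1\})^3\lesssim \Lambda'/\lambda$. The only point to make explicit is that your Case 2 deduction ``$t>\rho K/2\geqslant C_{\lambda_0,\Lambda}$ forces $t\geqslant 1$'' requires normalizing the Lipschitz constant so that $C_{\lambda_0,\Lambda}\geqslant 1$, which is harmless since it is an upper bound that may be enlarged.
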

\begin{proof}
To prove~\eqref{eq:w_estimate}, we first note from~\eqref{eq:e_estimate} that
\begin{equation}\label{eq:Phi_bound_for_mean_value}
|\nabla \Phi(x)| \leqslant C_1\ep^{-1} \text{ for all }x \in B_{\frac{7\rho}{2}}(x_0),
\end{equation}
for some $C_1$ depending only on $\lambda_0$ and $\Lambda$. Now, take any $x_1 \in B_{\frac{13\rho}{4}}(x_0)$ where $K: = \big| 1 - |\Phi(x_1)| \big| > 0$. Then, letting 
\[
\sigma = \frac{\min\{K, 1\}}{4(1 + C_1)} \cdot \ep,
\]
with $C_1 = C_1(\lambda_0, \Lambda)$ being the constant from~\eqref{eq:Phi_bound_for_mean_value}, we have $B_{\sigma}(x_1) \subset B_{\frac{7\rho}{2}}(x_0)$. Given $x \in B_{\sigma}(x_1)$ and $\delta > 0$, by integrating along the geodesic segment from $x_1$ to $x$, which lies entirely in $B_{\sigma}(x_1)$, and using the bound~\eqref{eq:Phi_bound_for_mean_value}, we have
\[
\begin{split}
\Big|\big(|\Phi(x)|^2 + \delta^2\big)^{\frac{1}{2}} - \big(|\Phi(x_1)|^2 + \delta^2\big)^{\frac{1}{2}}\Big| \leqslant C_1\ep^{-1}\sigma.
\end{split}
\]
Sending $\delta \to 0$ and using the triangle inequality gives
\[
\big| 1 - |\Phi(x)| \big| \geqslant \frac{3K}{4} \text{ for all }x \in B_{\sigma}(x_1).
\]
Raising to the 4th power, integrating over $B_{\sigma}(x_1)$, and using~\eqref{eq:volume_bounds_for_estimates} along with the inequality $(1 - t)^2 \leqslant (1 + t)^2$ for $t \geqslant 0$, we get
\[
\begin{split}
C K^4 \sigma^3 \leqslant\ & \int_{B_{\sigma}(x_1)} (1 - |\Phi|)^4 \leqslant \int_{B_{4\rho}(x_0)} (1 - |\Phi|)^2(1 + |\Phi|)^2 \leqslant \frac{4\ep^3 \Lambda'}{\lambda},
\end{split}
\]
where $C$ is a universal constant, and for the last inequality we used~\eqref{eq:e_estimate_potential_bound}. Recalling the definition of $\sigma$, we infer that 
\[
K^4 \cdot \big(\min\{K, 1\}\big)^3 \leqslant C_{\lambda_0, \Lambda} \cdot \frac{\Lambda'}{\lambda}.
\]
Since $x_1$ is an arbitrary point in $B_{\frac{13\rho}{4}}(x_0)$ with $|1 - |\Phi(x_1)|| > 0$, we conclude that~\eqref{eq:w_estimate} holds.
\end{proof}

As mentioned above, Lemma~\ref{lemm:coarse_estimate_base} provides the base step for an induction argument that is the content of the next proposition. 
\begin{prop}\label{prop:coarse_estimate}
Suppose $\lambda \in (0, \lambda_0]$. Given constants $\Lambda, K_0 > 0$, let $(\nabla, \Phi)$ be a smooth solution of~\eqref{eq: 2nd_order_crit_pt_intro} such that, on some $B_{4\rho}(x_0) \subset \Omega$ with $\rho \in (0, \frac{\rho_1}{4})$, we have
\begin{equation}\label{eq:energy_bound_for_coarse}
\int_{B_{4\rho}(x_0)} \Psi_0^2 \vol_g =: \ep \cdot \eta \leqslant \ep \cdot \Lambda,
\end{equation}
and that 
\begin{equation}\label{eq:Phi_K_0}
\||\Phi|\|_{\infty; B_{3\rho}(x_0)} \leqslant K_0.
\end{equation}
Then, provided also $\ep \leqslant \rho$, there holds for all $k \in \NN$ that
\begin{equation}\label{eq:coarse_estimate}
\|\Psi_k\|_{\infty; B_{2\rho}(x_0)} \leqslant C \ep^{-k-1} \eta^{\frac{1}{2}},
\end{equation}
where $C$ depends only on $k, \Lambda, \lambda_0, K_0$ and the bounds $A_1, \cdots, A_k$ from~\eqref{eq:curvature_bound_for_estimates}.
\end{prop}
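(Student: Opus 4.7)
The proof will proceed by induction on $k$, carrying along at each stage both the pointwise bound~\eqref{eq:coarse_estimate} and a companion integral bound of the form $\int_{B_{r_k}(x_0)} \Psi_{k+1}^2 \leqslant C \ep^{-1-2k}\eta$ on a shrinking sequence of radii $r_k \in (2\rho, 3\rho)$. The base case $k=0$ is already established by Lemma~\ref{lemm:coarse_estimate_base}: in particular it supplies $\|\Psi_0\|_{\infty; B_{7\rho/2}(x_0)} \leqslant C\ep^{-1}\eta^{1/2}$ and the integral bound $\int_{B_{3\rho}(x_0)} \Psi_1^2 \leqslant C\ep^{-1}\eta$ needed to start the induction.

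For the inductive step, assume the bounds hold up to level $k-1$ on $B_{r_{k-1}}(x_0)$. The plan is to combine the identity~\eqref{eq:Psi_laplacian} with Lemma~\ref{lemm:bochner_for_derivatives} applied, with $m=k$, to both $S=\nabla\Phi$ (with $a=\lambda$) and $S=\ep F_{\nabla}$ (with $a=0$). Adding the two resulting inequalities produces a differential inequality of the Bochner type
\begin{equation*}
\Delta\Big(\frac{\Psi_k^2}{2}\Big) + \Psi_{k+1}^2 \leqslant V \cdot \Psi_k^2 + E\cdot \Psi_k,
\end{equation*}
where $V = C\big(\lambda \ep^{-2} + \ep^{-1}\Psi_0 + |R|\big)$, and $E$ consists of the lower-order contributions $(I)$--$(IV)$ in~\eqref{eq:bochner_for_derivatives_remainders}. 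Exactly as in~\eqref{eq:Psi_0_bochner_coefficient_L2}, the coefficient $V$ satisfies $\|V\|_{2;B_{\ep/2}(x)} \leqslant C_{\lambda_0,\Lambda}\ep^{-1/2}$ for every $x \in B_{2\rho}(x_0)$, using~\eqref{eq:e_estimate}, the volume bound~\eqref{eq:volume_bounds_for_estimates}, and~\eqref{eq:curvature_bound_usage}. The terms making up $E$ are all products of factors of the form $|\nabla^j\Phi|$, $|\nabla^i S|$, $|[\nabla^i S,\Phi]|$ or $|\nabla^{k-i}R|$ with $i < k$; by the inductive pointwise bounds, the bound $|\Phi| \leqslant K_0$, and~\eqref{eq:curvature_bound_for_estimates}, each such product is pointwise controlled by $C_{k,\lambda_0,\Lambda,K_0,A_1,\ldots,A_k}\, \ep^{-k-1}\eta^{1/2}$. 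Applying Young's inequality to the $E\cdot \Psi_k$ term lets us absorb it as $C \Psi_k^2 + C\ep^{-2k-2}\eta$, yielding
\begin{equation*}
\Delta\Big(\frac{\Psi_k^2}{2}\Big) \leqslant (V + C)\Psi_k^2 + C\ep^{-2k-2}\eta.
\end{equation*}

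Moser iteration (Lemma~\ref{lemm:moser-improve}(b) with $n=3$, $q=2$, $r=\ep/4$), just as in the base case, converts this into the pointwise bound
\begin{equation*}
\Psi_k^2(x) \leqslant C\Big(\ep^{-3}\!\int_{B_{\ep/2}(x)} \Psi_k^2 + \ep^{-2k-2}\eta\Big),
\end{equation*}
for each $x$ in a suitable interior ball, with $C$ depending only on the already-admitted parameters. Using the inductive integral bound $\int \Psi_k^2 \leqslant C\ep^{-2k+1}\eta$, the first term on the right contributes $C\ep^{-2k-2}\eta$, closing the pointwise estimate~\eqref{eq:coarse_estimate} at level $k$. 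The matching integral bound $\int_{B_{r_k}(x_0)}\Psi_{k+1}^2 \leqslant C\ep^{-2k-1}\eta$ follows by testing the Bochner inequality against the square of a standard cut-off between $B_{r_k}$ and $B_{r_{k-1}}$ (with $r_k < r_{k-1}$), exactly as in the derivation of~\eqref{eq:Psi_1_estimate}, using~\eqref{eq:Psi_Theta_schwarz} and Young's inequality to absorb the gradient term, and using the pointwise bound on $\Psi_k$ just obtained to control the right-hand side.

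The main technical nuisance, rather than a deep obstacle, will be the bookkeeping in estimating the four remainder types $(I)$--$(IV)$ from Lemma~\ref{lemm:bochner_for_derivatives}: the mixed products $|\nabla^j\Phi||\nabla^{k'}\Phi|$ with $j+k' = k-i$ in $(I)$ and $(II)$ force separate treatment of the $j=0$ factor (for which only $|\Phi| \leqslant K_0$ is available, not an induction bound), and the $[\,\cdot\,,\Phi]$ brackets in $(II)$ must be expanded via the Leibniz rule so that each factor is ultimately bounded by $\Psi_{j-1}$, $K_0\,\Psi_{j-1}$, or $K_0$. One also has to choose the shrinking radii $r_k$ and Moser radii $\ep/4$ so that every ball $B_{\ep/2}(x)$ with $x \in B_{r_k}$ sits inside $B_{r_{k-1}}$, which is automatic provided $\ep \leqslant \rho$ and we take, e.g., $r_k = (2 + 2^{-k})\rho$. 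Once these ingredients are arranged, the induction closes cleanly with the asserted dependence of the constant on $k,\Lambda,\lambda_0,K_0,A_1,\ldots,A_k$.
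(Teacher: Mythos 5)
Your proposal follows the paper's proof almost verbatim: the same two-part induction carrying a pointwise bound on $\Psi_k$ together with an integral bound on $\Psi_{k+1}^2$ over the shrinking radii $\rho_k = (2+2^{-k})\rho$, the same base case from Lemma~\ref{lemm:coarse_estimate_base}, the same use of Lemma~\ref{lemm:bochner_for_derivatives} for the inductive step, and the same Moser-plus-cutoff machinery to close both halves of the induction.

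There is, however, one genuine gap in the step where you claim that combining~\eqref{eq:Psi_laplacian} with Lemma~\ref{lemm:bochner_for_derivatives} ``produces'' the Bochner-type differential inequality. The identity~\eqref{eq:Psi_laplacian} has $\bangle{\nabla^*\nabla\,\nabla^{k}S,\nabla^{k}S}$ on its right-hand side, whereas Lemma~\ref{lemm:bochner_for_derivatives} estimates $\bangle{\nabla^{k}\nabla^*\nabla S,\nabla^{k}S}$. These differ by the commutator $\bangle{[\nabla^*\nabla,\nabla^{k}]S,\nabla^{k}S}$, which does not vanish and is nowhere addressed in your write-up. The paper controls it with the standard commutator estimate of Lemma~\ref{lemm:commutator-estimate} (Appendix~\ref{sec:commute}), which generates terms of the form $\ep^{-1}|[\nabla^{k-i}(\ep F_{\nabla}),\nabla^{i}S]|$ and $|\nabla^{k-i}R||\nabla^{i}S|$; these have the same structure as the remainders $(I)$--$(IV)$ and are absorbed by exactly the induction hypotheses you already invoke, so the gap is fillable with the tools at hand --- but as written your claimed differential inequality does not follow from the ingredients you cite.

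Two smaller remarks. First, your pointwise bound on the lower-order term $E$ should read $C\ep^{-k-3}\eta^{1/2}$ rather than $C\ep^{-k-1}\eta^{1/2}$: the prefactors $\ep^{-2}$ in $(I)$ and $(II)$ are not absorbed by the inductive bounds on $|\nabla^{j}\Phi|$ and $|\nabla^{i}S|$. With the corrected exponent, weighted Young gives $C\ep^{-2}\Psi_k^2 + C\ep^{-2k-4}\eta$, and the Moser step (which picks up a factor $r^{2-n/q}\sim\ep^{1/2}$ against $\|c\|_{2;B_r}$) still lands on $\Psi_k^2 \leqslant C\ep^{-2k-2}\eta$, so the conclusion is unaffected. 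Second, in the inductive step the coefficient of $\Psi_k^2$ is already bounded pointwise by $C\ep^{-2}$ thanks to the base-case estimate on $\Psi_0$ and~\eqref{eq:curvature_bound_usage}, so one may take $q=\infty$ in Lemma~\ref{lemm:moser-improve} as the paper does; your choice $q=2$ with the $L^2$ coefficient bound also works.
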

\begin{proof}
Define $\rho_k = (2 + 2^{-k})\rho$. We prove by induction that for all $k \in \NN \cup \{0\}$ there exists $C = C(k, \Lambda, \lambda_0, K_0, \{A_i\}_{1 \leqslant i \leqslant k})$ such that 
\begin{subequations}
\begin{align}
\|\Psi_k\|_{\infty; B_{\rho_{k + 1}}(x_0)} \leqslant\ & C \ep^{-k-1}\eta^{\frac{1}{2}}, \label{eq:coarse_estimate_induction_pointwise}\\
\int_{B_{\rho_{k + 1}}(x_0)}\Psi_{k+1}^2 \leqslant\ & C\ep^{-2k -1} \eta. \label{eq:coarse_estimate_induction_L2}
\end{align}
\end{subequations}
Here and below, when $k = 0$, it is understood that $\{A_i\}_{1 \leqslant i \leqslant k} = \emptyset$, and $\sum_{i = 1}^{k}(\cdots)$ should be interpreted as $0$.

The base step, namely the two above estimates for $k = 0$, is already established in Lemma~\ref{lemm:coarse_estimate_base}. For the induction step, we assume that~\eqref{eq:coarse_estimate_induction_pointwise} and~\eqref{eq:coarse_estimate_induction_L2} hold for $k = 0, \cdots, m-1$ for some $m \in \NN$. Letting $S$ denote either $\nabla\Phi$ or $\ep F_{\nabla}$, by  Lemma~\ref{lemm:commutator-estimate} and the inequality~\eqref{eq:tensor-bracket-norm}, we have that
\begin{equation}\label{eq:coarse_estimate_induction_commutator_1}
\begin{split}
\big|[\nabla^*\nabla, \nabla^m] S \big| \leqslant\ & C_{m}\sum_{i = 0}^{m} |[\nabla^{m-i}  F_{\nabla}, \nabla^i S]| + C_{m}\sum_{i=0}^{m}|\nabla^{m-i}R| |\nabla^i S|\\
\leqslant \ & \frac{C_{m}}{\ep}\sum_{i = 0}^{m} |\nabla^{m-i} (\ep F_{\nabla})| |\nabla^i S| + C_{m}\sum_{i=0}^{m} \rho_0^{i-m-2}A_{m-i} |\nabla^i S| \\
\leqslant\ & C_{ m}\big(\frac{\Psi_0}{\ep}+ \rho_0^{-2}A_0\big)\Psi_m + \frac{C_{m}}{\ep}\sum_{i=1}^{m-1}\Psi_{m-i}\Psi_i  + C_{m}\sum_{i = 0}^{m-1}\rho_0^{i-m-2}A_{m-i}\Psi_i,
\end{split}
\end{equation}
where in getting the second line we also used~\eqref{eq:curvature_bound_for_estimates}. From Lemma~\ref{lemm:coarse_estimate_base} and~\eqref{eq:curvature_bound_usage}, we have
\[
\frac{\Psi_0}{\ep} + \rho_0^{-2}A_0 \leqslant C_{\lambda_0, \Lambda}\ep^{-2} \text{ on }B_{3\rho}(x_0).
\]
Substituting this into~\eqref{eq:coarse_estimate_induction_commutator_1}, using the induction hypothesis to bound the terms $\Psi_{m-i}$ in the first summation on the last line, and noting also that $\rho_0^{i-m-2} \leqslant \ep^{i-m-2}$, we deduce that
\begin{equation}\label{eq:coarse_estimate_induction_commutator_2}
\begin{split}
\bangle{[\nabla^*\nabla, \nabla^m] S, \nabla^m S} \leqslant\ & C_{m, \Lambda, \lambda_0}\ep^{-2}\Psi_m^2 + C_{m, \Lambda, \lambda_0, K_0, \{A_{k}\}_{1\leqslant k \leqslant m}}\sum_{i = 0}^{m-1}\ep^{i-m-2}\Psi_i\Psi_m,
\end{split}
\end{equation}
on $B_{\rho_m}(x_0)$. Next we want to estimate $\bangle{\nabla^m \nabla^*\nabla S, \nabla^m S}$ with the help of Lemma~\ref{lemm:bochner_for_derivatives}. As preparation, note that by~\eqref{eq:Theta_by_Psi_basic} and Lemma~\ref{lemm:coarse_estimate_base}, on $B_{3\rho}(x_0)$ we have
\[
\Theta_0 \leqslant C_{\lambda_0, \Lambda} \cdot K_0 \ep^{-1}.
\]
By the first conclusion of Lemma~\ref{lemm:Psi_Theta_relations} and the assumption that~\eqref{eq:coarse_estimate_induction_pointwise} holds up to $k = m-1$, provided $m \geqslant 2$ we have for all $1 \leqslant l \leqslant m-1$ that, on $B_{\rho_m}(x_0)$,
\[
\Theta_l \leqslant C_{m, \Lambda, \lambda_0, K_0, \{A_{k}\}_{1 \leqslant k \leqslant m-1}}\ep^{-l-1}.
\]
The two above inequalities, along with~\eqref{eq:coarse_estimate_induction_pointwise} for $k = 0, \cdots, m-1$, help us bound the term $(II)$ in Lemma~\ref{lemm:bochner_for_derivatives} on $B_{\rho_m}(x_0)$ as follows:
\begin{equation}\label{eq:bochner_for_derivative_II_bound}
\begin{split}
|(II)| \leqslant \ & \frac{C_m}{\ep^2}\sum_{i = 0}^{m-1}(\Theta_{m-1-i} \Psi_i + |\Phi|\Psi_{m-1-i} \Psi_i)\\
\leqslant\ & C_{m, \Lambda, \lambda_0, K_0, \{A_k\}_{1 \leqslant k \leqslant m-1}} \sum_{i=0}^{m-1} \ep^{i-m-2}\Psi_i. 
\end{split}
\end{equation}
Using the induction hypothesis and~\eqref{eq:curvature_bound_for_estimates} to estimate the terms $(I), (III)$, and $(IV)$ in a similar fashion, we infer from Lemma~\ref{lemm:bochner_for_derivatives} that
\begin{small}
\[
\begin{split}
\bangle{\nabla^m\nabla^*\nabla S, \nabla^m S} \leqslant\ & \big( \frac{\lambda w_+}{\ep^2}+ \frac{C\Psi_0}{\ep} + C\rho_0^{-2}A_0 \big)\Psi_m^2+ C_{m, \Lambda, \lambda_0, K_0, \{A_{k}\}_{1 \leqslant k \leqslant m-1}}\sum_{i=0}^{m-1}\ep^{i-m-2}\Psi_i\Psi_m\\
&+ C_{m}\sum_{i = 0}^{m-1}\rho_0^{i - m - 2}A_{m-i}\Psi_i\Psi_m\ \ \text{ on }B_{\rho_m}(x_0).
\end{split}
\]
\end{small}
Using Lemma~\ref{lemm:coarse_estimate_base} and~\eqref{eq:curvature_bound_usage} to estimate $\ep^{-1}\Psi_0$ and $\rho_0^{-2}A_0$, respectively, while also noting that $\lambda w_+ \leqslant \lambda_0$ and that $\rho_0^{i-m-2} \leqslant \ep^{i-m-2}$, we get
\begin{equation}\label{eq:coarse_estimate_induction_diffed}
\begin{split}
\bangle{\nabla^m\nabla^*\nabla S, \nabla^m S} \leqslant\ & C_{\Lambda, \lambda_0}\ep^{-2}\Psi_m^2 + C_{m, \Lambda, \lambda_0, K_0, \{A_{k}\}_{1 \leqslant k \leqslant m}}\sum_{i=0}^{m-1}\ep^{i-m-2}\Psi_i\Psi_m, \text{ on }B_{\rho_m}(x_0),
\end{split}
\end{equation}
Combining~\eqref{eq:coarse_estimate_induction_diffed} and~\eqref{eq:coarse_estimate_induction_commutator_2} and recalling~\eqref{eq:Psi_laplacian} gives
\begin{equation}\label{eq:coarse_estimate_induction_diff_ineq_int}
\Psi_{m + 1}^2 + \Delta(\frac{\Psi_m^2}{2}) \leqslant C_{m,\Lambda,\lambda_0}\ep^{-2}\Psi_m^2 + C_{m, \Lambda,\lambda_0, K_0, \{A_{k}\}_{1 \leqslant k \leqslant m}}\sum_{i=0}^{m-1}\ep^{i-m-2}\Psi_i\Psi_m,
\end{equation}
on $B_{\rho_m}(x_0)$. Using the induction hypothesis again and applying Young's inequality, we obtain, still on $B_{\rho_m}(x_0)$, that
\begin{equation}\label{eq:coarse_estimate_induction_diff_ineq}
\begin{split}
\Psi_{m + 1}^2 + \Delta(\frac{\Psi_m^2}{2}) \leqslant\ &  C_{m,\Lambda,\lambda_0}\ep^{-2}\Psi_m^2 + C_{m, \Lambda, \lambda_0, K_0, \{A_{k}\}_{1 \leqslant k \leqslant m}}\ep^{-m-3}\eta^{\frac{1}{2}}\Psi_m \\
\leqslant\ & C\ep^{-2}\Psi_m^2 + C\ep^{-2m-4}\eta,
\end{split}
\end{equation}
with the constants $C$ having the admissible dependencies. Now we define
\[
r = \min\{\ep, \rho_{m} - \rho_{m + 1}\}.
\]
Then, given $x \in B_{\rho_{m + 1}}(x_0)$, since $B_{r}(x) \subset B_{\rho_m}(x_0)$ and $r \sim_{m} \ep$, upon applying Lemma~\ref{lemm:moser-improve}(b) to~\eqref{eq:coarse_estimate_induction_diff_ineq}, with $q = \infty$, we obtain 
\begin{equation}\label{eq:coarse_induction_after_moser}
\|\Psi_m^2\|_{\infty; B_{\frac{r}{4}}(x)}\leqslant C \big( \ep^{-3}\int_{B_{\rho_m}(x_0)}\Psi_m^2 \vol_g + \ep^{-2m-2}\eta \big).
\end{equation}
Using~\eqref{eq:coarse_estimate_induction_L2} with $k = m-1$ to estimate the right-hand side leads to 
\[
\Psi_m(x)^2 \leqslant C\ep^{-2m-2}\eta.
\]
Since $x \in B_{\rho_{m + 1}}(x_0)$ is arbitrary, we have proved~\eqref{eq:coarse_estimate_induction_pointwise} for $k = m$. To get~\eqref{eq:coarse_estimate_induction_L2} for $k = m$, let $\zeta$ be a cut-off function such that
\[
\zeta = 1 \text{ on }B_{\rho_{m + 1}}(x_0),\ \ \zeta = 0 \text{ outside of }B_{\rho_m}(x_0),\ \ |\nabla \zeta| \leqslant C_{m}\rho^{-1}.
\]
Multiplying~\eqref{eq:coarse_estimate_induction_diff_ineq_int} by $\zeta^2$ and integrating by parts while using~\eqref{eq:Psi_Theta_schwarz} and H\"older's inequality, we get
\[
\begin{split}
\int_{M} \zeta^2\Psi_{m+1}^2 \leqslant\ & C\ep^{-2} \int_{M}\zeta^2\Psi_m^2 + \int_{M}2\zeta|\nabla\zeta| \Psi_m \Psi_{m + 1} \\
&+  C\sum_{i=0}^{m-1}\ep^{i-m-2}\Big(\int_{B_{\rho_m}(x_0)} 
\Psi_i^2 \Big)^{\frac{1}{2}} \Big(\int_{B_{\rho_m}(x_0)} \Psi_m^2\Big)^{\frac{1}{2}},
\end{split}
\]
where the constants $C$ depend only on $m, \Lambda, \lambda_0, K_0, A_1, \cdots, A_m$. Applying Young's inequality in the second integral on the right-hand side and rearranging, and also using the assumption~\eqref{eq:energy_bound_for_coarse} and the induction hypothesis, respectively, to bound the terms corresponding to $i = 0$ and $1 \leqslant i\leqslant m-1$ in the summation, we obtain
\begin{small}
\[
\begin{split}
\int_{B_{\rho_{m + 1}}(x_0)}\Psi_{m + 1}^2 \vol_{g} \leqslant\ & C(\ep^{-2} + \rho^{-2})\int_{B_{\rho_m}(x_0)}\Psi_m^2 \vol_{g}+ C\sum_{i=0}^{m-1} \ep^{i-m-2} \big(  \ep^{-2i + 1}\eta \big)^{\frac{1}{2}}  \big(  \ep^{-2m + 1}\eta \big)^{\frac{1}{2}}\\
\leqslant\ & C(\ep^{-2} + \rho^{-2})\int_{B_{\rho_m}(x_0)}\Psi_m^2 \vol_{g} + C\ep^{-2m-1}\eta.
\end{split}
\]
\end{small}
Using~\eqref{eq:coarse_estimate_induction_L2} with $k = m-1$ once more and noting that $\rho^{-1} \leqslant \ep^{-1}$ gives~\eqref{eq:coarse_estimate_induction_L2} for $k = m$. The proof is now complete.
\end{proof}
\subsection{Exponential decay}\label{subsec:exp-decay}
We continue to work in the setting of \S\ref{subsec:coarse-estimates}. In this section, under suitable smallness conditions, we first establish exponential decay estimates for the transversal components of $F_{\nabla}$ and $\nabla\Phi$. Then, relying on the assumption $\lambda > 0$, we derive similar estimates for $\nabla\Phi$ itself and $w$. The argument closely follows~\cite[Chapters IV.12 and IV.13]{Jaffe-Taubes}. We begin by recalling the following standard barrier construction.
\begin{lemm}\label{lemm:barrier}
Given $L > 0$, there exists a constant $A$ depending only on $L$ such that for any $B_{L\rho}(x_0) \subset \Omega$ with $\rho < \frac{\rho_1}{L}$, letting $r(\cdot)$ denote the geodesic distance to $x_0$, we have, provided also $\ep \leqslant \rho$, that the function $\varphi(x) = e^{\frac{r(x)^2}{A\ep\rho}}$ satisfies
\begin{equation}\label{eq:barrier}
\Delta\varphi \geqslant - \frac{1}{2\ep^2}\varphi, \text{ on }B_{L\rho}(x_0).
\end{equation}
\end{lemm}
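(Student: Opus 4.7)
The plan is to directly compute $\Delta\varphi$ and show that, with $A$ chosen sufficiently large depending only on $L$ (and the universal constants $c_0, C_0$ from the Hessian comparison), the required inequality follows. Write $\varphi = e^f$ with $f = r^2/(A\ep\rho)$. Using the sign convention of the paper ($\Delta = -\mathrm{div}\,\mathrm{grad}$ on functions), one has the identity
\[
\Delta(e^f) = e^f\big(\Delta f - |\nabla f|^2\big).
\]
Since $|\nabla r| = 1$ wherever $r$ is smooth (which includes all of $B_{L\rho}(x_0)\setminus\{x_0\}$ by the injectivity radius bound $L\rho < \rho_1 < 2\rho_0$), I would compute
\[
\nabla f = \tfrac{2r}{A\ep\rho}\nabla r,\qquad |\nabla f|^2 = \frac{4r^2}{A^2\ep^2\rho^2}.
\]

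Next I would bound $\Delta f$ from below via the Hessian comparison~\eqref{eq:Hessian_comparison_for_estimates} applied to the smooth function $u := r^2/2$ (which is smooth across $x_0$). The upper bound in~\eqref{eq:Hessian_comparison_for_estimates} gives $\mathrm{tr}\,\mathrm{Hess}(u) \leqslant 3 + 3 C_0 A_0(r/\rho_0)^2$, and hence
\[
\Delta f \;=\; \frac{2}{A\ep\rho}\Delta u \;=\; -\frac{2}{A\ep\rho}\mathrm{tr}\,\mathrm{Hess}(u) \;\geqslant\; -\frac{2}{A\ep\rho}\Big(3 + 3C_0 A_0(r/\rho_0)^2\Big).
\]
Combining with the formula for $|\nabla f|^2$ above, the desired inequality $\Delta \varphi \geqslant -\tfrac{1}{2\ep^2}\varphi$ reduces, after multiplication by $\ep^2$, to checking that
\[
\frac{6\ep}{A\rho} \;+\; \frac{6 C_0 A_0\, \ep\, r^2}{A\rho\,\rho_0^2} \;+\; \frac{4r^2}{A^2\rho^2} \;\leqslant\; \frac{1}{2} \qquad \text{on } B_{L\rho}(x_0).
\]

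To finish, I would estimate each of the three terms on the left using the three hypotheses: $\ep\leqslant\rho$ controls the first term by $6/A$; the constraint $\rho < \rho_1/L = \mu_1\rho_0/L$ gives $A_0(r/\rho_0)^2 \leqslant L^2 A_0(\rho/\rho_0)^2 \leqslant A_0\mu_1^2 \leqslant c_0$, which together with $\ep\leqslant\rho$ bounds the second term by $6 C_0 c_0/A$; and $r\leqslant L\rho$ bounds the third by $4L^2/A^2$. Each term is thus $O(1/A) + O(L^2/A^2)$, so choosing $A = A(L)$ large enough (explicitly any $A$ with $A \geqslant 12(1+C_0 c_0)$ and $A^2 \geqslant 16 L^2$ suffices) yields the bound $\leqslant 1/2$ and completes the proof.

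I do not anticipate a real obstacle: the statement is standard and the computation is routine once one tracks the sign convention $\Delta = -\mathrm{div}\,\mathrm{grad}$ carefully (so that $\Delta(r^2/2)$ is \emph{negative} on flat space) and uses that the ratio $\rho/\rho_0$ is small enough, through the assumption $\rho < \rho_1/L$, to absorb the $A_0$ from the Hessian comparison into the universal constant $c_0$. The only mild care needed is to confirm that $\varphi$ is smooth across the center $x_0$ since it depends only on $r^2$, not $r$.
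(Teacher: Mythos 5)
Your proposal is correct and follows essentially the same route as the paper's proof: compute $\Delta(e^f)=e^f(\Delta f-|\nabla f|^2)$, bound $\Delta(r^2)$ via the Hessian comparison~\eqref{eq:Hessian_comparison_for_estimates}, and absorb everything using $\ep\leqslant\rho$, $r\leqslant L\rho$, and $A_0(r/\rho_0)^2\leqslant c_0$. The only blemish is your explicit choice of $A$: with $A=12(1+C_0c_0)$ and $A=4L$ your three terms sum to $3/4$ rather than $1/2$, so the constants need to be roughly doubled — but this does not affect the argument, since the claim only requires $A=A(L)$ sufficiently large.
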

\begin{proof}
By a direct computation with the help of~\eqref{eq:Hessian_comparison_for_estimates} we have
\begin{equation}\label{eq:barrier_initial_ineq}
\begin{split}
\Delta \varphi =\ & \varphi \cdot \big( \frac{\Delta (r^2)}{A\ep \rho} - \frac{|d(r^2)|^2}{A^2 \ep^2 \rho^2} \big) \geqslant \varphi \cdot \big( \frac{-2 \cdot \dim M -C\rho_0^{-2}A_0r^2}{A\ep\rho} - \frac{4r^2}{A^2\ep^2\rho^2} \big),
\end{split}
\end{equation}
where $C$ is a dimensional constant. Since $\ep \leqslant \rho$ and since $r \leqslant L\rho < \rho_1 = \mu_1\rho_0$, we have
\[
\frac{1}{A\ep\rho} \leqslant \frac{1}{A\ep^2},\ \  \ \frac{r^2}{\rho^2} \leqslant L^2  ,\ \ \text{ and } \ \frac{r^2 A_0}{\rho_0^2} \leqslant c_0.
\]
Hence we deduce from~\eqref{eq:barrier_initial_ineq} that
\[
\Delta \varphi \geqslant -\varphi \cdot \big( \frac{6}{A\ep^2} + \frac{C \cdot c_0}{A\ep^2} + \frac{4L^2}{A^2\ep^2} \big) = -\frac{\varphi}{\ep^2}\cdot \big( \frac{6 + C \cdot c_0}{A} + \frac{4L^2}{A^2} \big).
\]
It follows that if  
\[
A \geqslant 4 \cdot (6 + C\cdot c_0 + L),
\]
then we get the desired differential inequality for $\varphi$ on $B_{L\rho}(x_0)$.
\end{proof}

As an initial illustration of how smallness assumptions combine with the inequalities in Lemma~\ref{lemm:basic_differential_ineqs}, and as another preliminary result to be used at a later point (Lemma~\ref{lemm:nablaPhi_exp_decay_base}), we record the following improvement of Lemma~\ref{lemm:coarse_estimate_base}.

\begin{lemm}\label{lemm:improved_coarse_estimate_base}
Suppose $\lambda \in (0, \lambda_0]$. There exists a constant $\overline{\eta} \in (0, 1)$, depending only on $\lambda_0$, such that if $(\nabla, \Phi)$ is a smooth solution of~\eqref{eq: 2nd_order_crit_pt_intro} satisfying for some $B_{4\rho}(x_0) \subset \Omega$ with $\rho \in (0, \frac{\rho_1}{4})$ and some $\beta \in (0, \frac{1}{4})$ that 
\begin{equation}\label{eq:small_energy_for_improved_coarse}
\int_{B_{4\rho}(x_0)} \Psi_0^2\vol_g =: \ep \cdot \eta \leqslant \ep \cdot \overline{\eta},
\end{equation}
and that
\begin{equation}\label{eq:w_bound_for_improved_coarse}
\|w\|_{\infty; B_{3\rho}(x_0)} \leqslant \beta,
\end{equation}
then, provided $\ep \leqslant \sqrt{\mu}\rho$, where $\mu = \min\{\lambda, 1\}$, we have 
\begin{equation}\label{eq:Psi_0_improved_bochner}
\Psi_1^2 + \Delta\big( \frac{\Psi_0^2}{2} \big) \leqslant C_{\lambda_0}\big(\frac{\mu w_+}{\ep^2} + \frac{A_0}{\rho_0^2}\big) \Psi_0^2,\text{ on }  B_{3\rho}(x_0),
\end{equation}
and that
\begin{equation}\label{eq:improved_coarse_pointwise}
\|\Psi_0\|_{\infty; B_{\frac{11\rho}{4}}(x_0)} \leqslant C_{\lambda_0}\mu^{\frac{3}{4}} \ep^{-1}\eta^{\frac{1}{2}}.
\end{equation}
\end{lemm}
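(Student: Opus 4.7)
The plan is to sharpen the Bochner-type inequality~\eqref{eq:Psi_0_bochner} from Lemma~\ref{lemm:basic_differential_ineqs}(c) by absorbing its cubic terms into the negative $-\Theta_0^2/\ep^2$ term, and then to run Moser iteration on a carefully chosen scale. The assumption~\eqref{eq:w_bound_for_improved_coarse} gives $|\Phi|^2 \geqslant 1 - 2\beta \geqslant 1/2$ on $B_{3\rho}(x_0)$, and combining the identity $|S^\perp| = |\Phi|^{-1}|[S,\Phi]|$ (for $S = \nabla\Phi$ or $\ep F_\nabla$) with the triple-bracket decomposition~\eqref{eq: inner_prod_decomp} forces at least one perpendicular factor in each cubic summand. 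A direct bookkeeping then yields an estimate of the form
\[
3\,|\langle [\nabla_i\Phi,\nabla_j\Phi],F_{ij}\rangle| + \ep^2\,|\langle [F_{ki},F_{kj}],F_{ij}\rangle| \;\leqslant\; C\,\ep^{-1}\Psi_0\,\Theta_0^2,
\]
so that these terms combine with $-\Theta_0^2/\ep^2$ into $-\ep^{-2}\Theta_0^2(1 - C\ep\Psi_0)$.

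To make the parenthesized factor nonnegative, the plan is to invoke Lemma~\ref{lemm:coarse_estimate_base} with $\Lambda = \overline\eta \leqslant 1$, which gives $\ep\Psi_0 \leqslant C_{\lambda_0}\overline\eta^{1/2}$ on $B_{7\rho/2}(x_0)$; choosing $\overline\eta$ small enough depending only on $\lambda_0$ makes $C\ep\Psi_0 \leqslant 1/2$ and drops the whole $-\Theta_0^2$ contribution on the RHS. Together with $|R| \leqslant CA_0/\rho_0^2$ from~\eqref{eq:curvature_bound_for_estimates}, this reduces~\eqref{eq:Psi_0_bochner} to
\[
\Psi_1^2 + \Delta\bigl(\tfrac{\Psi_0^2}{2}\bigr) \;\leqslant\; \Bigl(\tfrac{\lambda w_+}{\ep^2} + \tfrac{C A_0}{\rho_0^2}\Bigr)\Psi_0^2 \quad\text{on }B_{3\rho}(x_0),
\]
and since $\lambda \leqslant \lambda_0$ one can write $\lambda w_+ \leqslant \max\{1,\lambda_0\}\,\mu w_+$ and absorb the prefactor into $C_{\lambda_0}$, giving~\eqref{eq:Psi_0_improved_bochner}.

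For the pointwise bound~\eqref{eq:improved_coarse_pointwise}, discard $\Psi_1^2$ and bound $w_+ \leqslant \beta < 1/4$, producing a linear inequality $\Delta(\Psi_0^2/2) \leqslant V\Psi_0^2$ with $\sup V \leqslant C_{\lambda_0}(\mu/\ep^2 + A_0/\rho_0^2)$. For each $x \in B_{11\rho/4}(x_0)$, I will apply the Moser estimate (Lemma~\ref{lemm:moser-improve}) on a ball $B_r(x) \subset B_{3\rho}(x_0)$ with $r = \min\{\ep/\sqrt\mu,\ \rho/8\}$: the hypothesis $\ep \leqslant \sqrt\mu\,\rho$ together with~\eqref{eq:radius_rel_curvature} ensures $r^2\sup V \leqslant C_{\lambda_0}$, and in either regime the relation $r \gtrsim \ep/\sqrt\mu$ gives $r^{-3} \leqslant C\mu^{3/2}\ep^{-3}$. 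Moser iteration then yields
\[
\Psi_0(x)^2 \;\leqslant\; C\,r^{-3}\!\!\int_{B_r(x)}\!\Psi_0^2 \;\leqslant\; C_{\lambda_0}\,\mu^{3/2}\ep^{-3}\!\cdot\ep\,\eta \;=\; C_{\lambda_0}\,\mu^{3/2}\ep^{-2}\eta,
\]
which is the claimed estimate after a square root. I expect the main technical obstacle to be the cubic-term absorption: verifying that~\eqref{eq: inner_prod_decomp} consistently supplies the perpendicular factor needed to produce a full $\Theta_0^2$ (rather than only $\Psi_0\,\Theta_0$), and calibrating $\overline\eta$ so that the pre-estimate from Lemma~\ref{lemm:coarse_estimate_base} kicks in early enough to kill the remainder.
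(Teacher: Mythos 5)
Your proposal is correct and follows essentially the same route as the paper: start from~\eqref{eq:Psi_0_bochner}, use the decomposition~\eqref{eq: inner_prod_decomp} (which indeed puts two perpendicular factors in every cubic summand) to absorb the cubic terms into the negative transversal term once $\overline\eta$ is calibrated via Lemma~\ref{lemm:coarse_estimate_base} and the lower bound $|\Phi|^2\geqslant 1-2\beta\geqslant\tfrac12$, pass from $\lambda w_+$ to $C_{\lambda_0}\mu w_+$, and then run Moser at scale comparable to $\ep/\sqrt{\mu}$ to pick up the factor $\mu^{3/2}\ep^{-3}$. The paper writes the absorbed term as $(C\ep\Psi_0-|\Phi|^2)(\Psi_0^\perp)^2/\ep^2$ rather than in terms of $\Theta_0^2$ and fixes $r=\ep/(8\sqrt\mu)$ outright, but these are only cosmetic differences from your argument.
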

\begin{proof}
Taking~\eqref{eq:Psi_0_bochner} from Lemma~\ref{lemm:basic_differential_ineqs}(c) and estimating the last two terms using the identity~\eqref{eq: inner_prod_decomp}, we get on $B_{3\rho}(x_0)$ that
\begin{equation}\label{eq:Psi_0_bochner_off_Z}
\begin{split}
\Psi_1^2 + \Delta\big( \frac{\Psi_0^2}{2} \big) \leqslant\ & (\frac{\lambda w_+}{\ep^2} + C\rho_0^{-2}A_0)\Psi_0^2 + \big( C\ep \Psi_0 - |\Phi|^2 \big)\frac{(\Psi_0^{\perp})^2}{\ep^2},
\end{split}
\end{equation}
where the constants $C$ on the right-hand side are universal. Since $\frac{\lambda}{\mu} = \max\{1, \lambda\} \leqslant 1 + \lambda_0$, we have
\begin{equation}\label{eq:Psi_0_bochner_off_Z_leading_term}
\frac{\lambda w_+}{\ep^2} = \frac{\lambda}{\mu} \cdot \frac{\mu w_+}{\ep^2}  \leqslant C_{\lambda_0} \frac{\mu w_+}{\ep^2}.
\end{equation}
On the other hand, by Lemma~\ref{lemm:coarse_estimate_base} and~\eqref{eq:w_bound_for_improved_coarse} we have on $B_{3\rho}(x_0)$ that
\[
C\ep \Psi_0 - |\Phi|^2 \leqslant C_{\lambda_0} \overline{\eta}^{\frac{1}{2}} - (1 - 2\beta)< C_{\lambda_0} \overline{\eta}^{\frac{1}{2}} - \frac{1}{2} < 0,
\]
provided we choose $\overline{\eta} \in (0, 1)$ so that
\[
C_{\lambda_0}\overline{\eta}^{\frac{1}{2}} \leqslant \frac{1}{4}.
\] 
Substituting the previous estimate and~\eqref{eq:Psi_0_bochner_off_Z_leading_term} back into~\eqref{eq:Psi_0_bochner_off_Z}, we get the asserted differential inequality~\eqref{eq:Psi_0_improved_bochner}. Using the bound $w_+ \leqslant \beta < \frac{1}{4}$, the assumption $\ep \leqslant \sqrt{\mu}\rho$, and~\eqref{eq:curvature_bound_usage}, we deduce from~\eqref{eq:Psi_0_improved_bochner} that
\[
\Psi_1^2 + \Delta\big( \frac{\Psi_0^2}{2} \big)  \leqslant C_{\lambda_0}\cdot \mu \ep^{-2} \cdot \Psi_0^2,\text{ on }B_{3\rho}(x_0).
\]
Now, for all $x \in B_{\frac{11\rho}{4}}(x_0)$, we apply Lemma~\ref{lemm:moser-improve}(b) to the above differential inequality, with $q = \infty$ and $r = \frac{\ep}{8\sqrt{\mu}}$, obtaining
\[
\Psi_0^2(x) \leqslant C_{\lambda_0}\mu^{\frac{3}{2}}\ep^{-3}\int_{B_{4\rho}(x_0)} \Psi_0^2.
\]
Since $x \in B_{\frac{11\rho}{4}}(x_0)$ is arbitrary, we deduce~\eqref{eq:improved_coarse_pointwise} upon recalling the assumption~\eqref{eq:small_energy_for_improved_coarse}.
\end{proof}
Next, we establish the exponential decay of $\Theta_k$ and $\Psi_k^{\perp}$. Similar to \S\ref{subsec:coarse-estimates}, we begin with pointwise estimates on $\Theta_0, \Psi_0^{\perp}$ and an integral estimate of $\Theta_1$ (Lemma~\ref{lemm:exp_decay_base}), and then inductively obtain estimates on $\Theta_k$ and $\Psi_k^{\perp}$ (Proposition~\ref{prop:exp_decay}). Between Lemma~\ref{lemm:exp_decay_base} and Proposition~\ref{prop:exp_decay}, we address the issue of verifying the smallness assumption~\eqref{eq:exp_decay_base_small_w} on $w$ in applications (Remark~\ref{rmk:clearing_out}).
\begin{lemm}\label{lemm:exp_decay_base}
Suppose $\lambda \in (0, \lambda_0]$. There exist $\eta_0, \tau_0 \in (0, 1)$, depending only on $\lambda_0$ and $A_1$, with the following property. If $(\nabla, \Phi)$ is a smooth solution of~\eqref{eq: 2nd_order_crit_pt_intro} on $\Omega$ satisfying for some $B_{4\rho}(x_0) \subset \Omega$ with $\rho \in (0, \frac{\rho_1}{4})$ and some $\beta \in (0, \frac{1}{4(\lambda_0 + 1)})$ that
\begin{equation}\label{eq:exp_decay_base_small_energy}
\int_{B_{4\rho}(x_0)} \Psi_0^2\vol_g =: \ep \cdot \eta \leqslant \ep \cdot \eta_0,
\end{equation}
and that
\begin{equation}\label{eq:exp_decay_base_small_w}
\|w\|_{\infty; B_{3\rho}(x_0)} \leqslant \beta,
\end{equation}
and if also $\ep \leqslant \tau_0 \rho$, then the following estimates hold:
\begin{equation}\label{eq:exp_decay_base_pointwise}
\|\Theta_0\|_{\infty; B_{\frac{3\rho}{2}}(x_0)} + \|\Psi_0^\perp\|_{\infty; B_{\frac{3\rho}{2}}(x_0)} \leqslant C \ep^{-1}e^{-a\frac{\rho}{\ep}} \eta^{\frac{1}{2}},
\end{equation}
\begin{equation}\label{eq:exp_decay_base_L2}
\int_{B_{\rho}(x_0)} \Theta_1^2 \leqslant C\ep^{-1}e^{-a\frac{\rho}{\ep}}\eta,
\end{equation}
where in both~\eqref{eq:exp_decay_base_pointwise} and~\eqref{eq:exp_decay_base_L2}, the constant $a$ is dimensional, while $C = C(\lambda_0)$.
\end{lemm}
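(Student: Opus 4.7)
My plan is to exploit the Bochner-type inequality from Lemma~\ref{lemm:basic_differential_ineqs}(d), combined with~\eqref{eq:Theta_laplacian}, to derive a differential inequality of the form $\Delta(\Theta_0^2) + c\ep^{-2}\Theta_0^2 \leqslant 0$ on $B_{3\rho}(x_0)$, and then to feed this into a barrier-comparison argument based on Lemma~\ref{lemm:barrier}.

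Since $\|w\|_\infty \leqslant \beta < \tfrac{1}{4}$ on $B_{3\rho}(x_0)$, we have $|\Phi|^2 \geqslant \tfrac{1}{2}$ there, so $B_{3\rho}(x_0) \subset M \setminus Z(\Phi)$, and applying Lemma~\ref{lemm:basic_differential_ineqs}(d) with $S = \nabla\Phi$ and $S = \ep F_{\nabla}$ separately, then adding the results via the identity~\eqref{eq:Theta_laplacian}, I obtain
\begin{align*}
\Theta_1^2 + \Delta\Bigl(\tfrac{1}{2}\Theta_0^2\Bigr) \leqslant\ & \frac{2\lambda w_+ - |\Phi|^2}{\ep^2}\Theta_0^2 + 2C_n|\Phi|^{-1}\Theta_1\Psi_0\Theta_0 \\
& + 2C_n\bigl(|\Phi|^{-2}\Psi_0^2 + |\Phi|^{-1}\Psi_1 + \ep^{-1}\Psi_0 + |R|\bigr)\Theta_0^2.
\end{align*}
The restriction $\beta < 1/(4(\lambda_0 + 1))$ forces $2\lambda w_+ - |\Phi|^2 \leqslant 2(\lambda_0 + 1)\beta - (1 - 2\beta) \leqslant -\tfrac{1}{2}$, which furnishes the favorable leading term $-\tfrac{1}{2}\ep^{-2}\Theta_0^2$. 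I absorb the cross term via Young's inequality into $\tfrac{1}{2}\Theta_1^2$, paying an extra contribution $C_n|\Phi|^{-2}\Psi_0^2\Theta_0^2$. To bound the remaining coefficients, I invoke Lemma~\ref{lemm:coarse_estimate_base} for $\Psi_0 \leqslant C_{\lambda_0}\ep^{-1}\eta^{1/2}$ and Proposition~\ref{prop:coarse_estimate} with $k=1$ (whose hypotheses are met since $\||\Phi|\|_\infty \leqslant \sqrt{3/2}$ on $B_{3\rho}(x_0)$) for $\Psi_1 \leqslant C_{\lambda_0, A_1}\ep^{-2}\eta^{1/2}$, together with $|R| \leqslant \rho_0^{-2}A_0 \leqslant c_0\tau_0^2\ep^{-2}$ (from~\eqref{eq:curvature_bound_usage} and $\ep \leqslant \tau_0\rho$). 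Everything then collapses to
\[
\tfrac{1}{2}\Theta_1^2 + \Delta\Bigl(\tfrac{1}{2}\Theta_0^2\Bigr) \leqslant \Bigl[-\frac{1}{2\ep^2} + \frac{C_{\lambda_0, A_1}(\eta_0^{1/2} + \tau_0^2)}{\ep^2}\Bigr]\Theta_0^2,
\]
so that fixing $\eta_0, \tau_0$ small enough (in terms only of $\lambda_0$ and $A_1$) to ensure $C_{\lambda_0, A_1}(\eta_0^{1/2} + \tau_0^2) \leqslant \tfrac{1}{4}$ produces $\Delta(\Theta_0^2) + (2\ep^2)^{-1}\Theta_0^2 \leqslant 0$.

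Next, I invoke Lemma~\ref{lemm:barrier} with $L = 3$ to obtain the barrier $\varphi(x) = \exp(r(x)^2/(A\ep\rho))$ (with $A$ a universal constant) satisfying $\Delta\varphi + (2\ep^2)^{-1}\varphi \geqslant 0$. Setting $u = \Theta_0^2$ and $M = e^{-9\rho/(A\ep)}\sup_{B_{3\rho}(x_0)} u$, the function $u - M\varphi$ satisfies the same sign-favorable elliptic inequality and is non-positive on $\partial B_{3\rho}(x_0)$, so the weak maximum principle yields $u \leqslant M\varphi$ throughout $B_{3\rho}(x_0)$. For $x \in B_{3\rho/2}(x_0)$ we have $\varphi(x) \leqslant e^{9\rho/(4A\ep)}$, hence $\Theta_0^2(x) \leqslant \sup_{B_{3\rho}(x_0)}\Theta_0^2 \cdot e^{-27\rho/(4A\ep)}$. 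Using $\Theta_0 \leqslant |\Phi|\Psi_0 \leqslant \sqrt{3/2}\,\Psi_0$ from~\eqref{eq:Theta_by_Psi_basic} and Lemma~\ref{lemm:coarse_estimate_base}, the factor $\sup_{B_{3\rho}(x_0)}\Theta_0$ is at most $C_{\lambda_0}\ep^{-1}\eta^{1/2}$, which yields the pointwise estimate~\eqref{eq:exp_decay_base_pointwise} for $\Theta_0$ with $a = 27/(8A)$ dimensional. The corresponding estimate for $\Psi_0^\perp$ is immediate from $\Psi_0^\perp = |\Phi|^{-1}\Theta_0 \leqslant \sqrt{2}\,\Theta_0$.

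For the $L^2$-bound~\eqref{eq:exp_decay_base_L2} on $\Theta_1$, I pick a cutoff $\zeta$ with $\zeta \equiv 1$ on $B_\rho(x_0)$, $\zeta \equiv 0$ outside $B_{3\rho/2}(x_0)$, and $|d\zeta| \leqslant C/\rho$. Testing the differential inequality $\tfrac{1}{2}\Theta_1^2 + \Delta(\tfrac{1}{2}\Theta_0^2) \leqslant 0$ against $\zeta^2$, integrating by parts and using $|d(\Theta_0^2)| \leqslant 2\Theta_0\Theta_1$ together with Young's inequality, I obtain $\int_{B_\rho(x_0)} \Theta_1^2 \leqslant C\rho^{-2}\int_{B_{3\rho/2}(x_0)}\Theta_0^2$. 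Combining this with $\Vol_g(B_{3\rho/2}) \leqslant C\rho^3$ and the pointwise decay just proved, I arrive at $\int_{B_\rho(x_0)} \Theta_1^2 \leqslant C\ep^{-1}\eta \cdot (\rho/\ep)e^{-2a\rho/\ep}$; absorbing the polynomial factor $\rho/\ep$ into the exponential (at the cost of slightly decreasing $a$) gives~\eqref{eq:exp_decay_base_L2}. The main obstacle is precisely the constant bookkeeping in the first step: the two "bad" terms $|\Phi|^{-1}\Psi_1$ and $|R|$ carry, respectively, the potentially large factors $C_{\lambda_0, A_1}\ep^{-2}\eta^{1/2}$ and $c_0\tau_0^2\ep^{-2}$, and one must verify that both can be made quantitatively smaller than the leading $\tfrac{1}{2}\ep^{-2}$ term by choosing $\eta_0$ and $\tau_0$ small with dependencies only on $\lambda_0$ and $A_1$.
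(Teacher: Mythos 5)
Your proposal is correct and follows essentially the same route as the paper: the same combination of Lemma~\ref{lemm:basic_differential_ineqs}(d) with~\eqref{eq:Theta_laplacian}, the same smallness mechanism turning $\beta < \frac{1}{4(\lambda_0+1)}$ into the leading term $-\frac{1}{2\ep^2}\Theta_0^2$, the same barrier comparison from Lemma~\ref{lemm:barrier}, and the same cutoff-plus-absorption argument for the $L^2$ bound on $\Theta_1$. The only slip is that Proposition~\ref{prop:coarse_estimate} controls $\Psi_1$ only on $B_{2\rho}(x_0)$, so your differential inequality is justified on $B_{2\rho}(x_0)$ rather than $B_{3\rho}(x_0)$; running the maximum principle on $B_{2\rho}(x_0)$ (as the paper does) still yields the stated estimates on $B_{\frac{3\rho}{2}}(x_0)$.
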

\begin{proof}
Since $\beta < \frac{1}{4}$, by~\eqref{eq:exp_decay_base_small_w} we have
\begin{equation}\label{eq:exp_decay_base_Phi_two_sided}
\frac{1}{2} \leqslant |\Phi(x)|^2 \leqslant \frac{3}{2},\quad \text{on }B_{3\rho}(x_0).
\end{equation}
In particular, the condition~\eqref{eq:Phi_K_0} in Proposition~\ref{prop:coarse_estimate} is fulfilled with, say, $K_0 = 2$. Also using the fact that $\eta_0 < 1$, we deduce from Lemma~\ref{lemm:coarse_estimate_base} and Proposition~\ref{prop:coarse_estimate} (with $\Lambda = 1$) that
\begin{equation}\label{eq:e_estimate_for_exp_decay_base}
\|\Psi_0\|_{\infty; B_{3\rho}(x_0)} \leqslant C_{\lambda_0}\ep^{-1}\eta^{\frac{1}{2}},
\end{equation}
\begin{equation}\label{eq:Psi_1_estimate_for_exp_decay_base}
\|\Psi_1\|_{\infty; B_{2\rho}(x_0)} \leqslant C_{A_1, \lambda_0}\ep^{-2}\eta^{\frac{1}{2}}.
\end{equation}
Next, using~\eqref{eq:exp_decay_base_small_w} and the assumption $\beta \in(0, \frac{1}{4(1 + \lambda_0)})$ to see that
\begin{equation}\label{eq:exp_decay_small_w_effect}
2\lambda |w| - |\Phi|^2 \leqslant 2\lambda_0\beta - (1-2\beta) = 2(\lambda_0 + 1)\beta - 1 < -\frac{1}{2},
\end{equation}
and applying Young's inequality to the first term on the right-hand side of~\eqref{eq:trans_laplacian_base}, we have on $B_{3\rho}(x_0)$ that
\[
\begin{split}
\bangle{\nabla^*\nabla[S, \Phi], [S,\Phi]} \leqslant\ &  -\frac{1}{2\ep^2}|[S, \Phi]|^2 + \frac{\Theta_1^2}{8}\\
&+ \frac{C}{\ep^2}\big( |\Phi|^{-2}\ep^2\Psi_0^2 + |\Phi|^{-1}\ep^2\Psi_1 + \ep \Psi_0 + \ep^2\rho_0^{-2}A_0 \big)\Theta_0^2.
\end{split}
\]
Incorporating the estimates~\eqref{eq:e_estimate_for_exp_decay_base} and~\eqref{eq:Psi_1_estimate_for_exp_decay_base}, using~\eqref{eq:exp_decay_base_Phi_two_sided} to bound $|\Phi|^{-1}$, and also noting that $\ep^2\rho_0^{-2}A_0  \leqslant \tau_0^2 c_0$, we obtain on $B_{2\rho}(x_0)$ that
\[
\begin{split}
\bangle{\nabla^*\nabla[S, \Phi], [S,\Phi]}  \leqslant\ &  \frac{\Theta_1^2}{8} -\frac{1}{2\ep^2}|[S, \Phi]|^2  + \frac{C_{\lambda_0, A_1}}{\ep^2}\big( \eta_0^{\frac{1}{2}} + \tau_0^2 \big)\Theta_0^2,
\end{split}
\]
and consequently
\begin{equation}\label{eq:exp_decay_base_ineq}
\begin{split}
\Theta_1^2 + \Delta(\frac{\Theta_0^2}{2}) =\ & \bangle{\nabla^*\nabla[\nabla\Phi, \Phi], [\nabla\Phi, \Phi]} + \ep^2 \bangle{\nabla^*\nabla[F_{\nabla}, \Phi], [F_{\nabla}, \Phi]}\\
\leqslant\ & \frac{\Theta_1^2 }{4}- \frac{1}{2\ep^2}\Theta_0^2  +  \frac{C_{A_1, \lambda_0}}{\ep^2}(\eta_0^{\frac{1}{2}} + \tau_0^2)\Theta_0^2.
\end{split}
\end{equation}
Decreasing $\eta_0$ and $\tau_0$ so that
\[
C_{A_1, \lambda_0}(\eta_0^{\frac{1}{2}} + \tau_0^2) < \frac{1}{4}, 
\]
we get after rearranging~\eqref{eq:exp_decay_base_ineq} that
\begin{equation}\label{eq:exp_decay_base_ineq_good}
\Theta_1^2 + \Delta(\Theta_0^2) \leqslant -\frac{1}{2\ep^2}\Theta_0^2, \text{ on }B_{2\rho}(x_0).
\end{equation}
Now let $\varphi = e^{\frac{(d(\cdot, x_0))^2}{A\ep\rho}}$, and note by Lemma~\ref{lemm:barrier} that if $A$ is above a universal threshold, we have
\[
\Delta \varphi \geqslant -\frac{1}{2\ep^2}\varphi,\text{ on }B_{2\rho}(x_0). 
\]
Moreover, by~\eqref{eq:Theta_by_Psi_basic},~\eqref{eq:exp_decay_base_Phi_two_sided} and~\eqref{eq:e_estimate_for_exp_decay_base}, we have
\[
\Theta_0^2 \leqslant |\Phi|^2 \Psi_0^2 \leqslant C_{\lambda_0} \ep^{-2}\eta, \text{ on }B_{3\rho}(x_0).
\]
Thus, we may apply the maximum principle on $B_{2\rho}(x_0)$ to get
\[
\Theta_0^2 \leqslant C_{\lambda_0}\ep^{-2}\eta  \cdot e^{-\frac{4\rho^2}{A\ep\rho}}\varphi = C_{\lambda_0} \ep^{-2}\eta \cdot e^{\frac{d(\cdot, x_0)^2 - 4\rho^2}{A\ep\rho}} \text{ on }B_{2\rho}(x_0).
\]
Restricting to $B_{\frac{3\rho}{2}}(x_0)$ gives the estimate on $\Theta_0$ asserted in~\eqref{eq:exp_decay_base_pointwise}, from which we derive the estimate on $\Psi_0^\perp$ upon recalling~\eqref{eq:perp_by_Theta_basic} and~\eqref{eq:exp_decay_base_Phi_two_sided}. It remains to prove~\eqref{eq:exp_decay_base_L2}. To that end, let $\zeta$ be a cut-off function such that
\[
\zeta = 1 \text{ on }B_{\rho}(x_0),\ \ \zeta = 0 \text{ outside of }B_{\frac{3\rho}{2}}(x_0),\ \ |\nabla \zeta|\leqslant C\rho^{-1}.
\]
Multiplying~\eqref{eq:exp_decay_base_ineq_good} by $\zeta^2$, integrating by parts, and using~\eqref{eq:Psi_Theta_schwarz}, we get
\[
\begin{split}
\int_{M} \zeta^2 \Theta_1^2 \leqslant\ & \int_{M}4\zeta |\nabla\zeta|\Theta_0\Theta_1 \leqslant \frac{1}{2}\int_{M}\zeta^2\Theta_1^2 + 8\int_{M} |\nabla\zeta|^2 \Theta_0^2.
\end{split}
\]
Rearranging and using~\eqref{eq:exp_decay_base_pointwise} and~\eqref{eq:volume_bounds_for_estimates} gives
\[
\begin{split}
\int_{B_\rho(x_0)}\Theta_1^2 \leqslant C \rho^{-2}\int_{B_{\frac{3\rho}{2}}(x_0)}\Theta_0^2\leqslant\ & C_{\lambda_0}\rho^{-2} \rho^{3} \ep^{-2}e^{-2a\frac{\rho}{\ep}}\eta\\ 
=\  & C_{\lambda_0}\ep^{-1} \cdot \frac{\rho}{\ep} \cdot e^{-2a\frac{\rho}{\ep}}\eta.
\end{split}
\]
Using the fact that $\sup_{t \geqslant 0}t e^{-at} \sim a^{-1}$, we arrive at~\eqref{eq:exp_decay_base_L2}.
\end{proof}
\begin{rmk}\label{rmk:clearing_out}
Before proceeding, we pause to describe a couple of scenarios in which~\eqref{eq:exp_decay_base_small_w} is guaranteed to hold. 
\begin{enumerate}
\item[(i)]
Condition~\eqref{eq:exp_decay_base_small_w} can be fulfilled for instance by requiring that $\ep \leqslant \rho$, and that
\begin{equation}\label{eq:bound_1_for_clearing_out}
\int_{B_{4\rho}(x_0)} e_{\ep}(\nabla, \Phi) \vol_g \leqslant \ep \cdot \theta_0\lambda,
\end{equation}
with a sufficiently small $\theta_0 = \theta_0(\lambda_0, \beta) \in (0, 1)$. Indeed, since $\theta_0 < 1$, we may invoke Lemma~\ref{lemm:w_mean_value_estimate} with $\Lambda = \lambda_0$ and $\Lambda' = \theta_0 \lambda$, and the resulting estimate~\eqref{eq:w_estimate} reduces to
\[
\|1 - |\Phi|\|_{\infty; B_{\frac{13\rho}{4}}(x_0)} \leqslant C_{\lambda_0} \theta_0^{\frac{1}{7}} \leqslant \frac{\beta}{2},
\]
provided that $\theta_0 \leqslant \big( \frac{\beta}{2(C_{\lambda_0} + 1)} \big)^7$, in which case~\eqref{eq:exp_decay_base_small_w} follows, since we then have
\[
|w| = |1 - |\Phi|| \cdot \frac{1 + |\Phi|}{2} \leqslant \frac{\beta}{2} \cdot (1 + \frac{\beta}{4}) < \beta \text{ on }B_{\frac{13\rho}{4}}(x_0).
\] 
\vskip 1mm
\item[(ii)] More generally, assuming~\eqref{eq:e_estimate_energy_bound} for some given $\Lambda > 0$, then by~\eqref{eq:w_estimate} we obtain some $\theta'_0 = \theta'_0(\lambda_0, \Lambda, \beta) \in (0, 1)$ so that $\|1 - |\Phi|\|_{\infty; B_{\frac{13\rho}{4}}(x_0)} < \frac{\beta}{2}$ provided in addition that
\begin{equation}\label{eq:bound_2_for_clearing_out}
\int_{B_{4\rho}(x_0)} \frac{\lambda w^2}{\ep^2}\vol_{g} \leqslant \ep \cdot \theta'_0 \lambda,
\end{equation}
and that $\ep \leqslant \rho$, in which case we deduce~\eqref{eq:exp_decay_base_small_w} as in (i).
\end{enumerate}
\end{rmk}
\begin{prop}\label{prop:exp_decay}
Again assume $\lambda \in (0, \lambda_0]$, and let $\eta_0$ and $\tau_0$ be as in Lemma~\ref{lemm:exp_decay_base}. Suppose that $(\nabla,\Phi)$ is a smooth solution of~\eqref{eq: 2nd_order_crit_pt_intro} satisfying~\eqref{eq:exp_decay_base_small_energy} and~\eqref{eq:exp_decay_base_small_w} for some geodesic ball $B_{4\rho}(x_0) \subset \Omega$ with $\rho \in (0, \frac{\rho_1}{4})$ and some $\beta \in (0, \frac{1}{4(\lambda_0 + 1)})$. Suppose also that $\ep \leqslant \tau_0 \rho$. Then for all $k \in \NN$ we have
\begin{equation}\label{eq:exp_decay}
\|\Theta_k\|_{\infty; B_{\frac{\rho}{2}}(x_0)} + \|\Psi_{k}^\perp\|_{\infty; B_{\frac{\rho}{2}}(x_0)} \leqslant C\ep^{-k-1}e^{-a\frac{\rho}{\ep}}\eta^{\frac{1}{2}},
\end{equation}
where $C = C(k, \lambda_0, A_1, \cdots, A_{k+1})$ and $a = a(k)$.
\end{prop}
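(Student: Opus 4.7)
\textbf{Proof plan for Proposition~\ref{prop:exp_decay}.} The argument will proceed by induction on $k$, with the base case $k=0$ supplied by Lemma~\ref{lemm:exp_decay_base} (applied on balls of slightly larger radius than $B_{\rho/2}(x_0)$). To set up the induction, one should strengthen the statement to be proved by tracking the decay on a shrinking family of balls $B_{\rho_k}(x_0)$ with $\rho_k = (\tfrac{1}{2}+2^{-k-1})\rho$, and by including an auxiliary integral bound of the form
\[
\int_{B_{\rho_{k+1}}(x_0)} \Theta_{k+1}^2 \vol_g \leqslant C \ep^{-2k-1} e^{-2a\rho/\ep}\eta,
\]
analogous to the pair~\eqref{eq:exp_decay_base_pointwise}--\eqref{eq:exp_decay_base_L2} in the base case. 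Note at the outset that by the hypotheses~\eqref{eq:exp_decay_base_small_energy}--\eqref{eq:exp_decay_base_small_w} and $\ep\leqslant \tau_0\rho$, the coarse bounds
\[
\tfrac{1}{2} \leqslant |\Phi|^2 \leqslant \tfrac{3}{2},\qquad \|\Psi_j\|_{\infty;B_{3\rho}(x_0)} \leqslant C_{\lambda_0,A_1,\ldots,A_j}\,\ep^{-j-1}\eta^{1/2}
\]
on the longitudinal part are available for free from Proposition~\ref{prop:coarse_estimate}; these will be used throughout to control all $\Psi$-factors that appear.

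The inductive step at level $m\geqslant 1$ proceeds by combining the identity~\eqref{eq:Theta_laplacian} with Lemma~\ref{lemm:trans_laplacian_diffed} and the commutator estimate of Lemma~\ref{lemm:commutator-estimate}, applied separately to $S=\nabla\Phi$ and $S=\ep F_\nabla$. The resulting differential inequality for $\Theta_m^2$ has the schematic form
\[
\Theta_{m+1}^2 + \Delta\!\left(\tfrac{\Theta_m^2}{2}\right) \leqslant \left(\tfrac{2\lambda w_+ - |\Phi|^2}{\ep^2} + E_m\right)\Theta_m^2 + R_m,
\]
where $E_m$ collects terms of the form $|\Phi|^{-1}\Psi_0$, $\ep^{-1}\Psi_0$, $|R|$ and $\ep^2\rho_0^{-2}A_0$ (all bounded by $C_{\lambda_0,A_1}(\eta_0^{1/2}+\tau_0^2)\ep^{-2}$ thanks to the coarse estimates, Cauchy--Schwarz, and~\eqref{eq:curvature_bound_usage}), while the remainder $R_m$ gathers all the products of lower-order factors $\Psi_i^\perp,\Theta_i$ ($i<m$) with $\Psi_j$-type terms. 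By the inductive hypothesis, each $\Psi_i^\perp$ and $\Theta_i$ ($i<m$) is pointwise bounded on $B_{\rho_m}(x_0)$ by $C\ep^{-i-1}e^{-a\rho/\ep}\eta^{1/2}$, while all $\Psi_j$-factors obey the coarse bound above; hence $R_m$ is of order $\ep^{-2m-2}e^{-2a\rho/\ep}\eta$ times harmless constants. Using~\eqref{eq:exp_decay_small_w_effect} to see that $2\lambda w_+ - |\Phi|^2 \leqslant -\tfrac{1}{2}$, after shrinking $\eta_0$ and $\tau_0$ if necessary one arrives at
\[
\Theta_{m+1}^2 + \Delta\!\left(\tfrac{\Theta_m^2}{2}\right) \leqslant -\tfrac{1}{4\ep^2}\Theta_m^2 + C\ep^{-2m-2}e^{-2a\rho/\ep}\eta \quad\text{on }B_{\rho_m}(x_0).
\]

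To convert this into the pointwise exponential decay of $\Theta_m$, one runs the maximum principle with the barrier $\varphi = e^{d(\cdot,x_0)^2/(A\ep\rho)}$ from Lemma~\ref{lemm:barrier}: the inhomogeneous term $C\ep^{-2m-2}e^{-2a\rho/\ep}\eta$ is absorbed by subtracting a constant multiple of $\ep^{-2m}e^{-2a\rho/\ep}\eta$, and on the boundary of a suitable annular region one controls $\Theta_m$ using Lemma~\ref{lemm:moser-improve}(b) applied to the coarse $L^2$-estimate of $\Theta_m^2$ (itself obtained by testing the differential inequality at the previous step against a cutoff, exactly as in the passage from~\eqref{eq:exp_decay_base_ineq_good} to~\eqref{eq:exp_decay_base_L2}). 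This yields the desired pointwise bound on $\Theta_m$, and the companion bound on $\Psi_m^\perp$ then follows from Lemma~\ref{lemm:Psi_Theta_relations}(b) combined with the lower bound $|\Phi|\geqslant 1/\sqrt{2}$ and the inductively available bounds on the lower-order $\Psi_i^\perp$. The corresponding integral estimate $\int_{B_{\rho_{m+1}}} \Theta_{m+1}^2$ is then obtained from the differential inequality by multiplication by a cutoff $\zeta^2$ and integration by parts, as in the proof of Lemma~\ref{lemm:exp_decay_base}.

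The main obstacle will be bookkeeping: there are many error terms in Lemma~\ref{lemm:trans_laplacian_diffed}, each involving a product of up to three factors of mixed types (longitudinal $\Psi_j$, transversal $\Psi_j^\perp$, bracket $\Theta_j$, curvature $|\nabla^\ell R|$), and one must verify that after applying the inductive decay on $(\Psi_j^\perp,\Theta_j)_{j<m}$ and the coarse bounds on $\Psi_j$, every such product is of order $\ep^{-m-2}e^{-a\rho/\ep}\eta^{1/2}\cdot\Theta_m$ (so that Young's inequality produces a harmless $\ep^{-2m-2}e^{-2a\rho/\ep}\eta$ inhomogeneity plus a small multiple of $\ep^{-2}\Theta_m^2$ that can be absorbed). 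This amounts to a careful but essentially mechanical check that the exponents in $\ep$ balance at each level, analogous to the one already carried out in Proposition~\ref{prop:coarse_estimate} but with the additional decay factor $e^{-a\rho/\ep}$ propagated through the induction.
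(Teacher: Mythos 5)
Your overall architecture matches the paper's: a simultaneous induction on the pointwise bounds for $\Theta_k$, $\Psi_k^\perp$ and an $L^2$ bound for $\Theta_{k+1}$ on shrinking balls $B_{\rho_k}(x_0)$, with the base case from Lemma~\ref{lemm:exp_decay_base}, the commutator controlled by Lemma~\ref{lemm:commutator-estimate}, the Laplacian terms by Lemma~\ref{lemm:trans_laplacian_diffed}, and $\Psi_m^\perp$ recovered from $\Theta_m$ via~\eqref{eq:Theta_perp_relation}. However, the mechanism you propose for propagating the exponential decay through the inductive step has a genuine gap. You want, at each level $m\geqslant 1$, a differential inequality of the form $\Theta_{m+1}^2+\Delta(\Theta_m^2/2)\leqslant -\tfrac{1}{4\ep^2}\Theta_m^2+R_m$ so that the barrier of Lemma~\ref{lemm:barrier} can be run again. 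Two problems arise. First, Lemma~\ref{lemm:trans_laplacian_diffed} as stated does \emph{not} contain the term $\tfrac{2\lambda w_+-|\Phi|^2}{\ep^2}|\nabla^m[S,\Phi]|^2$; the coercive piece $-|\Phi|^2/\ep^2$ has been dropped (only $\tfrac{2\lambda w_+}{\ep^2}$ survives), so you would have to re-derive the lemma keeping that term. Second, and more seriously, even if you retain $-|\Phi|^2/\ep^2\leqslant -1/(2\ep^2)$, the error coefficients multiplying $\ep^{-2}\Theta_m^2$ — e.g.\ the $i=m$ term $C_m\ep^{-1}\Psi_0\,\Theta_m^2$ in the commutator estimate and the term $C_{m,n}\Theta_{m+1}\cdot|\Phi|^{-1}\Psi_0\Theta_m$ in Lemma~\ref{lemm:trans_laplacian_diffed} after Young — carry combinatorial constants $C_{m,\lambda_0}$ that grow with $m$, so they are bounded only by $C_{m,\lambda_0}(\eta_0^{1/2}+\tau_0^2)\ep^{-2}\Theta_m^2$. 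To make this less than $\tfrac14\ep^{-2}\Theta_m^2$ you must shrink $\eta_0,\tau_0$ depending on $m$; but the Proposition fixes $\eta_0,\tau_0$ once and for all in Lemma~\ref{lemm:exp_decay_base}, independently of $k$. With fixed thresholds the coefficient of $\ep^{-2}\Theta_m^2$ is in general positive for large $m$, and the maximum principle argument collapses.

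The paper avoids this entirely: at the inductive step it only needs the crude inequality
\begin{equation*}
\Theta_{m+1}^2+\Delta(\Theta_m^2)\leqslant C\ep^{-2}\Theta_m^2+C\ep^{-2m-4}e^{-a\rho/\ep}\eta,
\end{equation*}
with a possibly large ($m$-dependent) constant $C$, and then applies the mean value inequality of Lemma~\ref{lemm:moser-improve}(b) on balls of radius $\sim\ep$. The exponential factor in the pointwise bound at level $m$ is inherited from the \emph{$L^2$ bound at level $m-1$}, $\int_{B_{\rho_m}}\Theta_m^2\leqslant C\ep^{-2m+1}e^{-a\rho/\ep}\eta$, which appears as the datum fed into Moser iteration; the barrier/maximum principle is used only once, in the base case. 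If you want to salvage your plan, you should adopt this device: once the $L^2$ bound with the decay factor is part of the induction hypothesis, Moser iteration at scale $\ep$ gives the pointwise decay directly, no sign condition on the zeroth-order coefficient is needed, and your separate maximum-principle step (together with the need to absorb the inhomogeneity by a constant) becomes unnecessary. Aside from this, your exponent bookkeeping for $R_m$ ($\ep^{-2m-2}$ should be $\ep^{-2m-4}$ inside the differential inequality) is a minor slip that does not affect the outcome.
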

\begin{proof}
As in the previous proof, we have the bounds~\eqref{eq:exp_decay_base_Phi_two_sided} on $|\Phi|$, and may also apply Lemma~\ref{lemm:coarse_estimate_base} and Proposition~\ref{prop:coarse_estimate} with, for instance, $K_0 = 2$ and $\Lambda = 1$. Also, as we are working under the hypotheses of Lemma~\ref{lemm:exp_decay_base}, its conclusions still stand. Letting $\rho_k = (2^{-1} + 2^{-k-1})\rho$, we shall prove inductively that for all $k \in \NN \cup \{0\}$, we have
\begin{subequations}
\begin{align}
\|\Theta_k\|_{\infty; B_{\rho_{k + 1}}(x_0)} \leqslant\ & C\ep^{-k-1}e^{-a\frac{\rho}{\ep}}\eta^{\frac{1}{2}}, \label{eq:exp_decay_induction_pointwise}\\
\|\Psi_k^\perp\|_{\infty; B_{\rho_{k + 1}}(x_0)} \leqslant\ & C\ep^{-k-1}e^{-a\frac{\rho}{\ep}}\eta^{\frac{1}{2}}, \label{eq:exp_decay_induction_perp}\\
\int_{B_{\rho_{k + 1}}(x_0)}\Theta_{k+1}^2 \leqslant\ & C \ep^{-2k-1} e^{-a\frac{\rho}{\ep}} \eta,\label{eq:exp_decay_induction_L2}
\end{align}
\end{subequations}
where $C = C(k, \lambda_0, A_1, \cdots, A_{k + 1})$ and $a = a(k)$. The base case $k = 0$ follows from Lemma~\ref{lemm:exp_decay_base}. For the induction step, suppose for some $m \geqslant 1$ that the three estimates above hold for $k = 0, \cdots, m-1$. Then upon recalling the inequality~\eqref{eq:Theta_perp_relation} from Lemma~\ref{lemm:Psi_Theta_relations} and estimating its right-hand side using Lemma~\ref{lemm:coarse_estimate_base}, Proposition~\ref{prop:coarse_estimate}, and~\eqref{eq:exp_decay_base_Phi_two_sided}, we have on $B_{2\rho}(x_0)$ that
\begin{equation}\label{eq:exp_decay_perp_by_theta}
\begin{split}
\Psi_m^\perp \leqslant \ & |\Phi|^{-1}\Theta_m + C_{m}|\Phi|^{-1}\sum_{i=0}^{m-1}\Psi_i^{\perp}\Psi_{m-1-i}\leqslant 2\Theta_m + C\sum_{i = 0}^{m-1}\ep^{i-m}\Psi_i^{\perp},
\end{split}
\end{equation}
where $C$ depends only on $m, \lambda_0$, and $\{A_i\}_{1 \leqslant i \leqslant m-1}$. 

Next, as in the proof of Proposition~\ref{prop:coarse_estimate}, we invoke Lemma~\ref{lemm:commutator-estimate} to get, with $S = \nabla\Phi$ or $S = \ep F_{\nabla}$, that
\begin{equation}\label{eq:commutator_applied_to_Theta}
\begin{split}
\big| [\nabla^*\nabla, \nabla^m][S, \Phi] \big| \leqslant\ & C_{m}\sum_{i =0}^{m} (\frac{|\nabla^{m-i}(\ep F_{\nabla})|}{\ep} + |\nabla^{m-i}R|)|\nabla^i[S, \Phi]|\\
\leqslant\ & C_{m}(\frac{\Psi_0}{\ep} + \rho_0^{-2}A_0)\Theta_m + C_{m}\sum_{i=0}^{m-1}(\frac{\Psi_{m-i}}{\ep} + \rho_0^{i-m-2}A_{m-i})\Theta_{i}.
\end{split}
\end{equation}
Using Lemma~\ref{lemm:coarse_estimate_base} and~\eqref{eq:curvature_bound_usage}, we have 
\begin{equation}\label{eq:bound_for_commutator_1}
\frac{\Psi_0}{\ep} + \rho_0^{-2}A_0 \leqslant C_{\lambda_0}\ep^{-2} \text{ on }B_{3\rho}(x_0).
\end{equation}
On the other hand, by Proposition~\ref{prop:coarse_estimate} and the fact that $\ep < \rho_0$, on $B_{2\rho}(x_0)$ there holds
\begin{equation}\label{eq:bound_for_commutator_2}
\frac{\Psi_{m-i}}{\ep} + \rho_0^{i-m-2}A_{m-i} \leqslant C_{m,\lambda_0, A_1, \cdots, A_m} \ep^{i-m-2}, \text{ for }i = 0, \cdots m-1,
\end{equation} Substituting~\eqref{eq:bound_for_commutator_1} and~\eqref{eq:bound_for_commutator_2} back into~\eqref{eq:commutator_applied_to_Theta}, we get on $B_{2\rho}(x_0)$ that
\begin{equation}\label{eq:exp_decay_commutator_induction}
\begin{split}
\bangle{ [\nabla^*\nabla, \nabla^m][S, \Phi], \nabla^m[S, \Phi]} \leqslant\ & C_{m,\lambda_0}\ep^{-2}\Theta_m^2 + C\sum_{i = 0}^{m-1} \ep^{i-m-2}\Theta_i\Theta_m,
\end{split}
\end{equation}
where $C$ depends only on $m, \lambda_0, A_1, \cdots, A_m$. Next we estimate $\bangle{\nabla^m\nabla^*\nabla [S, \Phi], \nabla^m[S, \Phi]}$ using Lemma~\ref{lemm:trans_laplacian_diffed}. To start, we use~\eqref{eq:exp_decay_base_Phi_two_sided}, Lemma~\ref{lemm:coarse_estimate_base} and Proposition~\ref{prop:coarse_estimate} to bound the terms $(I)$ to $(IV)$ in~\eqref{eq:trans_laplacian_diffed} on $B_{2\rho}(x_0)$, which gives
\begin{equation}\label{eq:exp_decay_laplacian_terms_1}
(1 +\lambda) \cdot(I) + (II) + (IV) \leqslant C\sum_{i = 0}^{m-1}\ep^{i-m-2}(\Theta_i + \Psi_i^\perp), \text{ and }
\end{equation}
\begin{equation}\label{eq:exp_decay_laplacian_terms_2}
(III) \leqslant C_m\sum_{i = 0}^{m-1}\rho_0^{i-m-2}A_{m-i}\Theta_i \leqslant 
C\sum_{i = 0}^{m-1} \ep^{i-m-2}\Theta_i,
\end{equation}
where in both estimates, $C$ depends only on $m, \lambda_0, A_1, \cdots, A_{m + 1}$. On the other hand, using in addition~\eqref{eq:curvature_bound_usage}, we get, again on $B_{2\rho}(x_0)$,
\begin{equation}\label{eq:exp_decay_laplacian_terms_3}
\begin{split}
\big(|\Phi|^{-2}\Psi_0^2 + \rho_0^{-2}A_0\big) \Theta_m^2 + \big(\frac{|\Phi|\Psi_0}{\ep} + |\Phi|^{-1}\Psi_0^2 + \Psi_1\big)  \Psi_m^{\perp} \Theta_m\leqslant C_{\lambda_0, A_1} \ep^{-2} \cdot (\Theta_m^2 + \Psi_m^{\perp}\Theta_m ).
\end{split}
\end{equation}
Now, applying Young's inequality to the first term on the right-hand side of~\eqref{eq:trans_laplacian_diffed} and then taking into account~\eqref{eq:exp_decay_laplacian_terms_3},~\eqref{eq:exp_decay_laplacian_terms_2} and~\eqref{eq:exp_decay_laplacian_terms_1}, we get on $B_{2\rho}(x_0)$ that
\[
\begin{split}
\bangle{\nabla^m\nabla^*\nabla[S, \Phi], \nabla^m[S, \Phi]}\leqslant\ & \frac{1}{8}\Theta_{m + 1}^2 +  \frac{2\lambda w_+}{\ep^2}\cdot|\nabla^m[S, \Phi]|^2 + C_{m, \lambda_0, A_1}\ep^{-2}(\Theta_m^2 + \Psi_m^{\perp}\Theta_m)\\
& + C_{m, \lambda_0, A_1, \cdots, A_{m + 1}} \sum_{i = 0}^{m-1}\ep^{i-m-2}(\Theta_i + \Psi_i^{\perp})\Theta_m.
\end{split}
\]
Noting that $2\lambda w_+ \leqslant 2\lambda_0 \beta < \frac{1}{2}$ on $B_{3\rho}(x_0)$, and recalling~\eqref{eq:exp_decay_perp_by_theta}, we arrive at
\begin{equation}\label{eq:exp_decay_laplacian_terms}
\begin{split}
\bangle{\nabla^m\nabla^*\nabla[S, \Phi], \nabla^m[S, \Phi]}
\leqslant\ & \frac{1}{8}\Theta_{m + 1}^2 + C_{m, \lambda_0, A_1}\ep^{-2}\Theta_m^2\\
&+ C\sum_{i = 0}^{m-1} \ep^{i-m-2}(\Theta_i + \Psi_{i}^{\perp})\Theta_m \ \text{ on }B_{2\rho}(x_0),
\end{split}
\end{equation}
where again $C = C(m, \lambda_0, A_1, \cdots, A_{m+1})$. Recalling~\eqref{eq:Theta_laplacian}, we deduce from~\eqref{eq:exp_decay_laplacian_terms},~\eqref{eq:exp_decay_commutator_induction} and a rearrangement that
\begin{equation}\label{eq:exp_decay_induction_diff_ineq_for_integral}
\Theta_{m + 1}^2 + \Delta(\Theta_m^2) \leqslant C\ep^{-2}\Theta_m^2 + C\sum_{i = 0}^{m-1}\ep^{i-m-2}(\Theta_i + \Psi_i^{\perp})\Theta_m\  \text{ on }B_{2\rho}(x_0).
\end{equation}
Applying the induction hypothesis and Young's inequality then leads to
\begin{equation}\label{eq:exp_decay_induction_diff_ineq}
\begin{split}
\Theta_{m + 1}^2 + \Delta(\Theta_m^2) 
\leqslant\ &  C\ep^{-2}\Theta_m^2 + C\ep^{-m-3}e^{-a\frac{\rho}{\ep}}\eta^{\frac{1}{2}}\Theta_m\\
\leqslant\ & C\ep^{-2}\Theta_m^2 + C\ep^{-2m-4}e^{-a\frac{\rho}{\ep}}\eta\ \  \text{ on }B_{\rho_m}(x_0),
\end{split}
\end{equation}
where the constants $C$ and $a$ have only the admissible dependencies. We are now ready to finish the proof. Specifically, let
\[
r = \min\{\ep, \rho_m - \rho_{m + 1}\}.
\]
Then, as in Proposition~\ref{prop:coarse_estimate}, for all $x \in B_{\rho_{m + 1}}(x_0)$, upon applying Lemma~\ref{lemm:moser-improve}(b) with $q = \infty$, we get
\[
\begin{split}
\|\Theta_m^2\|_{\infty; B_{\frac{r}{4}}(x)} \leqslant\ & C\cdot \big( \ep^{-3}\int_{B_{\rho_m}(x_0)} \Theta_m^2 \vol_{g} + \ep^{-2m-2}e^{-a\frac{\rho}{\ep}}\eta \big)\\
\leqslant\ & C \ep^{-2m-2} e^{-a\frac{\rho}{\ep}}\eta,
\end{split}
\]
where in getting the last line we used~\eqref{eq:exp_decay_induction_L2} for $k = m-1$, which is part of the induction hypothesis. This proves~\eqref{eq:exp_decay_induction_pointwise} for $k = m$, from which we immediately get~\eqref{eq:exp_decay_induction_perp} for $k = m$ thanks to~\eqref{eq:exp_decay_perp_by_theta} and the induction hypothesis. Finally, take a cutoff function $\zeta$ such that 
\[
\zeta = 1 \text{ on }B_{\rho_{m + 1}}(x_0),\ \ \zeta = 0 \text{ outside of }B_{\rho_m}(x_0),\ \ |\nabla\zeta|\leqslant C_m\rho^{-1}.
\]
Multiplying~\eqref{eq:exp_decay_induction_diff_ineq} by $\zeta^2$ and integrating by parts while using~\eqref{eq:Psi_Theta_schwarz} and~\eqref{eq:volume_bounds_for_estimates}, we get
\[
\begin{split}
\int_{M} \zeta^2\Theta_{m + 1}^2\vol_{g} \leqslant\ & \int_{M}\big[C\ep^{-2}\zeta^2\Theta_m^2 + 4\zeta|\nabla\zeta|\Theta_m \Theta_{m + 1} \big] \vol_{g} +C\rho_m^3 \cdot \ep^{-2m-4}e^{-a\frac{\rho}{\ep}}\eta \\
\leqslant\ & \frac{1}{2}\int_{M}\zeta^2 \Theta_{m + 1}^2 \vol_{g} + C\ep^{-2}\int_{B_{\rho_m}(x_0)}\Theta_m^2 \vol_{g}+ C \ep^{-2m-1} \cdot \big(\frac{\rho}{\ep} \big)^3 e^{-a\frac{\rho}{\ep}}\eta,
\end{split}
\]
where in passing to the second line we used Young's inequality and also observed that $\rho^{-2} \leqslant \ep^{-2}$. Rearranging, applying~\eqref{eq:exp_decay_induction_L2} with $k = m-1$ once more, and using part of the term $e^{-a\frac{\rho}{\ep}}$ to absorb the factor $(\frac{\rho}{\ep})^3$, we arrive at
\[
\int_{B_{\rho_{m + 1}}(x_0)} \Theta_{m + 1}^2 \leqslant C\ep^{-2m - 1}e^{-a\frac{\rho}{\ep}} \eta.
\]
This proves~\eqref{eq:exp_decay_induction_L2} for $k = m$, and we are done.
\end{proof}
The remainder of this section is devoted to proving exponential decay estimates for $w$ and $\nabla\Phi$. This is achieved by another induction argument (Proposition~\ref{prop:nablaPhi_exp_decay}), and we again single out the base step (Lemma~\ref{lemm:nablaPhi_exp_decay_base}). The assumption $\lambda > 0$ plays an essential role in the process. Also, we shall make frequent use of the following two facts which have already appeared in the previous proofs: first, with $\mu = \min\{\lambda, 1\}$, there holds
\begin{equation}\label{eq:lambda_mu_ratio}
\frac{\lambda}{\mu} = \max\{1, \lambda\} \leqslant 1 + \lambda.
\end{equation}
Secondly, given $k \geqslant 0$ and $a > 0$, we have
\begin{equation}\label{eq:trading_factors}
s^{-k}e^{-\frac{at}{s}} \leqslant C_{k, a}\cdot t^{-k}e^{-\frac{at}{2s}}, \text{ for all }s, t > 0,
\end{equation}
where we can take the constant $C_{k, a}$ to be $\sup_{x \geqslant 0}x^ke^{-\frac{ax}{2}}$.
\begin{lemm}\label{lemm:nablaPhi_exp_decay_base}
Suppose $\lambda \in (0, \lambda_0]$. There exist $\theta_1, \tau_1 \in (0, 1)$, depending only on $\lambda_0$, with the following property. Suppose $(\nabla, \Phi)$ is a smooth solution of~\eqref{eq: 2nd_order_crit_pt_intro} on $\Omega$ satisfying that
\begin{equation}\label{eq:small_energy_for_nablaPhi_decay}
\int_{B_{4\rho}(x_0)} \Psi_0^2\vol_g =: \ep \cdot \eta \leqslant \ep \cdot (\theta_1 \mu),
\end{equation}
for some $B_{4\rho}(x_0) \subset \Omega$ with $\rho \in (0, \frac{\rho_1}{4})$, and that
\begin{equation}\label{eq:nablaPhi_exp_decay_Phi_two_sided}
\|w\|_{\infty; B_{3\rho}(x_0)} \leqslant \beta,
\end{equation}
for some $0 < \beta < \min\{\frac{1}{6}, \frac{1}{2(\lambda_0 + 2)}\}$. Assume also that $\frac{\ep}{\rho} \leqslant \tau_1\sqrt{\mu}$. Then we have the following. First of all, 
\begin{equation}\label{eq:nablaPhi_diff_ineq_for_gap}
\Delta |\nabla\Phi|^2 \leqslant -\frac{\mu}{2\ep^2}|\nabla\Phi|^2 \text{ on }B_{\frac{11\rho}{4}}(x_0).
\end{equation}
Secondly, we have 
\begin{equation}\label{eq:nablaPhi_exp_decay_base}
\|\nabla\Phi\|_{\infty; B_{\frac{5\rho}{2}}(x_0)} \leqslant C(\sqrt{\mu}\rho)^{-1}e^{-a\frac{\sqrt{\mu}\rho }{\ep}}\eta^{\frac{1}{2}},
\end{equation}
\begin{equation}\label{eq:w_exp_decay}
\|w\|_{\infty; B_{2\rho}(x_0)} \leqslant C e^{-a\frac{\sqrt{\mu}\rho}{\ep}},
\end{equation}
\begin{equation}\label{eq:nablaPhi_integral_decay}
\int_{B_{2\rho}(x_0)} |\nabla^2 \Phi|^2 \vol_g \leqslant C (\sqrt{\mu}\rho)^{-1}e^{-a\frac{\sqrt{\mu}\rho}{\ep}}\eta,
\end{equation}
where in all three estimates, $C$ depends only on $\lambda_0$, while $a$ is universal.
\end{lemm}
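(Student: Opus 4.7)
The plan is to establish \eqref{eq:nablaPhi_diff_ineq_for_gap}--\eqref{eq:nablaPhi_integral_decay} in order. Throughout, write $\tilde{\ep} := \ep/\sqrt{\mu}$ and $c := \mu/(2\ep^2)$; the hypothesis $\ep \leqslant \tau_1 \sqrt{\mu}\rho$ ensures $\tilde{\ep} \leqslant \rho$. The first step is to upgrade Lemma~\ref{lemm:basic_differential_ineqs}(b) on $B_{11\rho/4}(x_0)$: using $w_+ \leqslant \beta$, $|\Phi|^2 \geqslant 1-2\beta$, $\lambda/\mu \leqslant \max\{1,\lambda_0\}$, and $\beta \leqslant 1/(2(\lambda_0+2))$, the potential coefficient $(\lambda w - \mu|\Phi|^2)/\ep^2$ is bounded above by $-\mu/(2\ep^2)$. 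The cubic term $\sum\bangle{[\nabla_i\Phi,\nabla_j\Phi], F_{ij}}$ is dominated by $C|F_{\nabla}||\nabla\Phi|^2$ and absorbed via Lemma~\ref{lemm:improved_coarse_estimate_base} (applicable since $\eta \leqslant \theta_1\mu \leqslant \overline{\eta}$ for $\theta_1$ small), which yields $|F_{\nabla}| \leqslant \Psi_0/\ep \leqslant C_{\lambda_0}\theta_1^{1/2}\mu^{5/4}/\ep^2$; the curvature term is absorbed using $|\Ric| \lesssim A_0/\rho_0^2 \leqslant c_0\tau_1^2 \mu/\ep^2$ from \eqref{eq:curvature_bound_usage} and $\ep\leqslant\tau_1\sqrt{\mu}\rho$. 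With $\theta_1$ and $\tau_1$ small enough, these absorptions together with \eqref{eq: Laplacian_conventions} leave
\[
\Delta\big(|\nabla\Phi|^2/2\big) + |\nabla^2\Phi|^2 \leqslant -\frac{\mu}{4\ep^2}|\nabla\Phi|^2,
\]
from which \eqref{eq:nablaPhi_diff_ineq_for_gap} follows upon dropping $|\nabla^2\Phi|^2$.

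For \eqref{eq:nablaPhi_exp_decay_base}, apply Lemma~\ref{lemm:barrier} with $\ep$ replaced by $\tilde{\ep}$ and $L=11/4$, producing a barrier $\varphi := \exp(d(\cdot,x_0)^2/(A\tilde{\ep}\rho))$ on $B_{11\rho/4}(x_0)$ satisfying $\Delta\varphi \geqslant -c\varphi$. Setting $u := |\nabla\Phi|^2$ and $K := (\sup_{\partial B_{11\rho/4}} u)/\varphi|_{\partial B_{11\rho/4}}$, a direct computation using \eqref{eq:nablaPhi_diff_ineq_for_gap} shows $(\Delta+c)(u - K\varphi) \leqslant 0$, with $u - K\varphi \leqslant 0$ on $\partial B_{11\rho/4}$; the classical maximum principle for the operator $\Delta+c$ with $c>0$ (applied to subsolutions) then yields $u \leqslant K\varphi$ throughout $B_{11\rho/4}$. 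Combining with Lemma~\ref{lemm:improved_coarse_estimate_base}, which provides $\sup_{\partial} u \leqslant C_{\lambda_0}\mu^{3/2}\ep^{-2}\eta$, and evaluating $\varphi$ at interior points of $B_{5\rho/2}$ yields $|\nabla\Phi|(x) \leqslant C\mu^{3/4}\ep^{-1}\eta^{1/2} e^{-a\sqrt{\mu}\rho/\ep}$. Finally, converting the prefactor $\mu^{3/4}/\ep$ to $(\sqrt{\mu}\rho)^{-1}$ via \eqref{eq:trading_factors} (at the cost of halving $a$) gives \eqref{eq:nablaPhi_exp_decay_base}.

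The estimate \eqref{eq:w_exp_decay} is obtained by a similar barrier comparison on $B_{5\rho/2}(x_0)$ enlarged by a particular-solution term. From Lemma~\ref{lemm:basic_differential_ineqs}(a), $|w|$ satisfies distributionally $\Delta|w| + V|w| \leqslant |\nabla\Phi|^2$ with $V := \lambda|\Phi|^2/\ep^2$; the hypotheses $\beta \leqslant 1/6$ and $\lambda \geqslant \mu$ give $V \geqslant 2\mu/(3\ep^2) > c$. Using the same $\varphi$ (now with $L=5/2$), construct $K\varphi + \psi_p$ where $K := \beta/\varphi|_{\partial B_{5\rho/2}}$ (so $K\varphi \geqslant |w|$ on the boundary) and $\psi_p := \|\nabla\Phi\|_{\infty; B_{5\rho/2}}^2/V_{\min}$ is a constant; the relations $(\Delta+V)\varphi \geqslant 0$ and $V\psi_p \geqslant |\nabla\Phi|^2$ then imply that $v := |w| - K\varphi - \psi_p$ is a subsolution of $\Delta+V$ with $v \leqslant 0$ on the boundary. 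The maximum principle for $\Delta+V$ with $V>0$ yields $|w| \leqslant K\varphi + \psi_p$. Restricting to $B_{2\rho}$ and using \eqref{eq:nablaPhi_exp_decay_base} together with $\eta \leqslant \theta_1\mu$ and $\ep \leqslant \tau_1\sqrt{\mu}\rho$ to bound $\psi_p \lesssim e^{-2a\sqrt{\mu}\rho/\ep}$, both terms decay exponentially at rate $\sqrt{\mu}/\ep$ (by comparing the exponents in $K\varphi$), giving \eqref{eq:w_exp_decay}. The assumption $\lambda > 0$ enters crucially here through $V > 0$.

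Finally, \eqref{eq:nablaPhi_integral_decay} follows from the refined Bochner inequality by a Caccioppoli-type iteration. Testing $\Delta(|\nabla\Phi|^2/2) + |\nabla^2\Phi|^2 + (\mu/(4\ep^2))|\nabla\Phi|^2 \leqslant 0$ against $\zeta^2$, integrating by parts, and applying Young's inequality yields the bound
\[
\int\zeta^2|\nabla\Phi|^2 \leqslant C(\ep^2/\mu)\int|\nabla\zeta|^2|\nabla\Phi|^2.
\]
Taking cutoffs between nested balls whose radii differ by $\delta \sim \ep/\sqrt{\mu}$ reduces the prefactor $C\ep^2/(\mu\delta^2)$ to a universal constant, producing geometric contraction with some ratio $q \in (0,1)$. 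Iterating $\sim \sqrt{\mu}\rho/\ep$ times from $B_{5\rho/2}$ to $B_{9\rho/4}$ gives $\int_{B_{9\rho/4}} |\nabla\Phi|^2 \leqslant C\ep\eta \, e^{-a\sqrt{\mu}\rho/\ep}$, and a final cutoff step (now retaining $|\nabla^2\Phi|^2$ on the left) between $B_{2\rho}$ and $B_{9\rho/4}$ gives $\int_{B_{2\rho}} |\nabla^2\Phi|^2 \leqslant C(\ep/\rho^2) \eta \, e^{-a\sqrt{\mu}\rho/\ep}$, which is dominated by the claim since $\ep/\rho^2 \leqslant \tau_1\sqrt{\mu}/\rho \leqslant (\sqrt{\mu}\rho)^{-1}$. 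The main obstacle throughout is the careful calibration of the smallness parameters $\theta_1,\tau_1$ (and $\beta$) so that every perturbative absorption in the Bochner step and every barrier/maximum-principle comparison goes through uniformly in $\lambda \in (0,\lambda_0]$.
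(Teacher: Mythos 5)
Your proposal is correct and follows essentially the same route as the paper: the refined Bochner inequality from Lemma~\ref{lemm:basic_differential_ineqs}(b) with the cubic and curvature terms absorbed via Lemma~\ref{lemm:improved_coarse_estimate_base} and \eqref{eq:curvature_bound_usage}, then the barrier of Lemma~\ref{lemm:barrier} (rescaled by $\sqrt{\mu}$) and the maximum principle for $|\nabla\Phi|^2$ and for $|w|$ (with an inhomogeneous correction term, which the paper takes to be the constant $2C_1\theta_1 e^{-a_1\sqrt{\mu}\rho/\ep}$ and you take to be $\|\nabla\Phi\|_\infty^2/V_{\min}$ — both work). The only genuine divergence is in \eqref{eq:nablaPhi_integral_decay}, where the paper simply combines the already-proved pointwise bound \eqref{eq:nablaPhi_exp_decay_base} with H\"older's inequality to get exponential decay of $\int|\nabla\Phi|^2$ and then does a single Caccioppoli step, whereas you re-derive that decay by iterating a hole-filling Caccioppoli estimate at scale $\ep/\sqrt{\mu}$; your version is more work but equally valid.
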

\begin{proof}
We first require that 
\[
\theta_1 < \overline{\eta},
\]
where $\overline{\eta}$ is the threshold from Lemma~\ref{lemm:improved_coarse_estimate_base}. Then since $\mu \leqslant 1$, assumption~\eqref{eq:small_energy_for_nablaPhi_decay} above implies~\eqref{eq:small_energy_for_improved_coarse} in Lemma~\ref{lemm:improved_coarse_estimate_base}. Noting also that $\beta < \frac{1}{4}$ and $\ep \leqslant \sqrt{\mu}\rho$, we have available the estimate~\eqref{eq:improved_coarse_pointwise}. Next, noting that 
\[
\frac{\mu}{\lambda + 2\mu} = \left\{
\begin{array}{ll}
\frac{1}{3}, &  \text{ if }\lambda \leqslant 1, \\
\frac{1}{\lambda + 2} \geqslant \frac{1}{\lambda_0 + 2}, & \text{ if }1 < \lambda \leqslant \lambda_0, 
\end{array}
\right.
\]
we get $\beta < \frac{\mu}{2(\lambda + 2\mu)}$, so that, by~\eqref{eq:nablaPhi_exp_decay_Phi_two_sided}, 
\[
\lambda w - \mu|\Phi|^2 \leqslant (\lambda + 2\mu)\beta - \mu < -\frac{\mu}{2}  \text{ on }B_{3\rho}(x_0).
\]
Combining this and the estimate~\eqref{eq:improved_coarse_pointwise} with~\eqref{eq:nablaPhi_bochner_with_lambda}, we get on $B_{\frac{11\rho}{4}}(x_0)$ that
\[
\begin{split}
\bangle{\nabla^*\nabla\nabla \Phi, \nabla\Phi} \leqslant\ & -\frac{\mu}{2\ep^2}|\nabla\Phi|^2 + \frac{C}{\ep^2}\big( \ep\Psi_0 + \ep^2\rho_0^{-2}A_0 \big) |\nabla\Phi|^2\\
\leqslant\ & -\frac{\mu}{2\ep^2}|\nabla\Phi|^2 + \frac{C_{\lambda_0}}{\ep^2}\big( \mu^{\frac{3}{4}}(\theta_1 \mu)^{\frac{1}{2}} + \tau_1^2 \mu \big) |\nabla\Phi|^2.
\end{split}
\]
where in getting the second line we also used~\eqref{eq:curvature_bound_usage} and the assumption that $\frac{\ep}{\rho} \leqslant \tau_1\sqrt{\mu}$. Upon requiring that
\[
C_{\lambda_0} \theta_1^{\frac{1}{2}} \leqslant \frac{1}{8},\ \ \ \  C_{\lambda_0}\tau_1^2 \leqslant \frac{1}{8},
\]
we get
\begin{equation}\label{eq:nablaPhi_exp_decay_diff_ineq}
|\nabla^2\Phi|^2 + \Delta(\frac{|\nabla\Phi|^2}{2}) = \bangle{\nabla^*\nabla\nabla\Phi, \nabla\Phi} \leqslant -\frac{\mu}{4\ep^2}|\nabla \Phi|^2 \text{ on }B_{\frac{11\rho}{4}}(x_0).
\end{equation}
In particular this gives~\eqref{eq:nablaPhi_diff_ineq_for_gap}. To continue, since $\ep \leqslant \sqrt{\mu}\rho$, we have by Lemma~\ref{lemm:barrier} (with $\frac{\ep}{\sqrt{\mu}}$ in place of $\ep$) that
\begin{equation}\label{eq:barrier_for_nablaPhi_decay}
(\Delta + \frac{\mu}{2\ep^2})e^{\frac{(d(\cdot, x_0))^2}{A(\ep/\sqrt{\mu}) \rho}} \geqslant 0 \text{ on }B_{\frac{11\rho}{4}}(x_0),
\end{equation}
provided $A$ is above a universal threshold. Recalling also the estimate~\eqref{eq:e_estimate} from Lemma~\ref{lemm:coarse_estimate_base}, which implies in particular that
\[
|\nabla\Phi(x)|^2 \leqslant \Psi_0^2(x) \leqslant C_{\lambda_0}\ep^{-2}\eta, \text{ for all }x \in B_{\frac{11\rho}{4}}(x_0),
\]
we obtain by the maximum principle that
\begin{equation}\label{eq:nablaPhi_exp_decay_epsilon}
|\nabla\Phi|^2 \leqslant C_{\lambda_0}\ep^{-2}\eta \cdot e^{\frac{d(\cdot, x_0)^2 - (11\rho/4)^2}{A(\ep/\sqrt{\mu}) \rho}} \text{ on }B_{\frac{11\rho}{4}}(x_0).
\end{equation}
Restricting this to $B_{\frac{5\rho}{2}}(x_0)$ and using~\eqref{eq:trading_factors}, we obtain~\eqref{eq:nablaPhi_exp_decay_base}. 

To prove~\eqref{eq:w_exp_decay}, note that from~\eqref{eq:nablaPhi_exp_decay_epsilon} and the differential inequality~\eqref{eq:laplacian_|w|}, as well as the lower bound on $|\Phi|^2$ provided by~\eqref{eq:nablaPhi_exp_decay_Phi_two_sided}, we have on $B_{\frac{5\rho}{2}}(x_0)$ that
\[
\Delta |w| \leqslant -\frac{\mu}{2\ep^2}|w| + C_1\ep^{-2}\cdot \theta_1 \mu \cdot e^{-a_1\frac{\sqrt{\mu}\rho}{\ep}},
\]
where $C_1$ depends only on $\lambda_0$, and $a_1$ is a universal constant. From this and~\eqref{eq:barrier_for_nablaPhi_decay}, we deduce
\[
(\Delta + \frac{\mu}{2\ep^2})|w| \leqslant (\Delta + \frac{\mu}{2\ep^2})\Big(e^{\frac{d(\cdot, x_0)^2 - (5\rho/2)^2}{A(\ep/\sqrt{\mu})\rho}} + 2C_1 \theta_1\cdot e^{-a_1\frac{\sqrt{\mu}\rho}{\ep}}\Big)  \text{ on }B_{\frac{5\rho}{2}}(x_0).
\]
Since $|w| \leqslant \beta < 1$ on $B_{3\rho}(x_0)$ by assumption, while the function on the right-hand side above is at least $1$ on $\partial B_{\frac{5\rho}{2}}(x_0)$, we may apply the maximum principle to obtain
\[
|w(x)| \leqslant   e^{\frac{d(x, x_0)^2 - (5\rho/2)^2}{A(\ep/\sqrt{\mu})\rho}} + 2C_1  \cdot \theta_1 \cdot e^{-a_1\frac{\sqrt{\mu}\rho}{\ep}}, \text{ for all }x \in B_{\frac{5\rho}{2}}(x_0).
\]
Recalling once again that $\theta_1 < 1$, we get~\eqref{eq:w_exp_decay} with the asserted constant dependencies upon restricting the above estimate to $B_{2\rho}(x_0)$. 

To prove~\eqref{eq:nablaPhi_integral_decay}, we notice that by~\eqref{eq:nablaPhi_exp_decay_base}, H\"older's inequality, the assumption~\eqref{eq:small_energy_for_nablaPhi_decay}, and the volume estimate~\eqref{eq:volume_bounds_for_estimates}, we have
\begin{equation}\label{eq:prepare_for_nablaPhi_integral}
\begin{split}
\int_{B_{\frac{5\rho}{2}}(x_0)} |\nabla\Phi|^2 \vol_{g} \leqslant\ & C_{\lambda_0}(\sqrt{\mu}\rho)^{-1} e^{-a\frac{\sqrt{\mu}\rho}{\ep}}\eta^{\frac{1}{2}}  \int_{B_{\frac{5\rho}{2}}(x_0)}|\nabla\Phi| \vol_g\\
\leqslant\ & C_{\lambda_0}(\sqrt{\mu}\rho)^{-1} e^{-a\frac{\sqrt{\mu}\rho}{\ep}}\eta^{\frac{1}{2}} \cdot \rho^{\frac{3}{2}} \cdot (\ep\eta)^{\frac{1}{2}} \\
\leqslant\ & C_{\lambda_0} \rho^2  \cdot (\sqrt{\mu}\rho)^{-1}e^{-a\frac{\sqrt{\mu}\rho}{\ep}} \eta,
\end{split}
\end{equation}
where in getting the last line we also used $\ep \leqslant \rho$. Now take a cutoff function $\zeta$ such that 
\[
\zeta = 1 \text{ on }B_{2\rho}(x_0),\ \ \zeta = 0 \text{ outside of }B_{\frac{5\rho}{2}}(x_0),\ \ |\nabla\zeta|\leqslant C\rho^{-1}.
\]
Testing~\eqref{eq:nablaPhi_exp_decay_diff_ineq} against $\zeta^2$ and noting that $\nabla(\frac{|\nabla\Phi|^2}{2}) \leqslant |\nabla\Phi||\nabla^2\Phi|$, we get
\[
\begin{split}
\int_{M}\zeta^2 |\nabla^2\Phi|^2 \leqslant \int_{M} 2\zeta|\nabla\zeta| |\nabla\Phi||\nabla^2\Phi| \leqslant \frac{1}{2}\int_{M}\zeta^2 |\nabla^2 \Phi|^2 + 2\int_{M} |\nabla\zeta|^2|\nabla\Phi|^2.
\end{split}
\]
Rearranging and using~\eqref{eq:prepare_for_nablaPhi_integral} gives
\[
\begin{split}
\int_{B_{2\rho}(x_0)} |\nabla^2 \Phi|^2 \leqslant\ & C\rho^{-2}\int_{B_{\frac{5\rho}{2}}(x_0)} |\nabla\Phi|^2 \leqslant\ C_{\lambda_0} (\sqrt{\mu}\rho)^{-1} e^{-a\frac{\sqrt{\mu}\rho}{\ep}}\eta.
\end{split}
\]
This establishes~\eqref{eq:nablaPhi_integral_decay}.
\end{proof}
\begin{prop}\label{prop:nablaPhi_exp_decay}
Again assume $\lambda \in (0, \lambda_0]$, set $\mu = \min\{\lambda, 1\}$, and let $\theta_1 = \theta_1(\lambda_0)$ and $\tau_1 = \tau_1(\lambda_0)$ be as in Lemma~\ref{lemm:nablaPhi_exp_decay_base}. Suppose that $(\nabla,\Phi)$ is a smooth solution of~\eqref{eq: 2nd_order_crit_pt_intro} on $\Omega$ satisfying~\eqref{eq:small_energy_for_nablaPhi_decay} and~\eqref{eq:nablaPhi_exp_decay_Phi_two_sided} for some geodesic ball $B_{4\rho}(x_0) \subset \Omega$ with $\rho \in (0, \frac{\rho_1}{4})$, and some $0 < \beta < \min\{\frac{1}{6}, \frac{1}{2(\lambda_0 + 2)}\}$. Assume also that $\frac{\ep}{\rho} \leqslant \tau_1\sqrt{\mu}$. Then, we have for all $k \in \NN$ that 
\begin{equation}\label{eq:nablaPhi_exp_decay}
\|\nabla^{k + 1}\Phi\|_{\infty; B_{\rho}(x_0)} \leqslant C (\sqrt{\mu}\rho)^{-k-1}e^{-a\frac{\sqrt{\mu}\rho}{\ep}} \eta^{\frac{1}{2}},
\end{equation}
where $C = C(k, \lambda_0, A_1, \cdots, A_k)$ and $a = a(k)$.
\end{prop}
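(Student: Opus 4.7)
The proof proceeds by induction on $k$. The base case $k=0$ is essentially already contained in Lemma~\ref{lemm:nablaPhi_exp_decay_base}: estimate~\eqref{eq:nablaPhi_exp_decay_base} gives $\|\nabla\Phi\|_{\infty; B_{5\rho/2}(x_0)} \leqslant C(\sqrt{\mu}\rho)^{-1}e^{-a\sqrt{\mu}\rho/\ep}\eta^{1/2}$, which implies the $k=0$ case on the smaller ball $B_{\rho}(x_0)$. For the inductive step, I would assume~\eqref{eq:nablaPhi_exp_decay} for $k=0,\ldots,m-1$ (on a nested sequence of balls shrinking down to $B_\rho(x_0)$) and aim to derive it for $k=m$.

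The heart of the induction will be a Bochner-type inequality of the schematic form
\[
|\nabla^{m+2}\Phi|^2 + \Delta\bigl(|\nabla^{m+1}\Phi|^2/2\bigr) \leqslant -\frac{\mu}{4\ep^2}|\nabla^{m+1}\Phi|^2 + E_m,
\]
valid on a slightly larger ball. Unlike Lemma~\ref{lemm:bochner_for_derivatives}, which drops the non-positive contribution $-\mu|\Phi|^2/\ep^2\cdot|\nabla^m S|^2$, I must retain it here. To obtain it, I would differentiate~\eqref{eq:nablaPhi_laplacian_with_lambda} $m$ times, pair with $\nabla^{m+1}\Phi$, and handle the commutator $[\nabla^*\nabla,\nabla^{m+1}]\Phi$ via Lemma~\ref{lemm:commutator-estimate}. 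The ``leading'' piece, arising when $\nabla^m$ falls entirely on $\nabla\Phi$ in $\lambda\ep^{-2}(w-|\Phi|^2)\nabla\Phi$, contributes $\ep^{-2}(\lambda w-\lambda|\Phi|^2)|\nabla^{m+1}\Phi|^2\leqslant\ep^{-2}(\lambda w-\mu|\Phi|^2)|\nabla^{m+1}\Phi|^2$, and under $|w|\leqslant\beta<1/(2(\lambda_0+2))$ this is $\leqslant -\frac{\mu}{2\ep^2}|\nabla^{m+1}\Phi|^2$, exactly as in the proof of~\eqref{eq:nablaPhi_exp_decay_diff_ineq}.

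The remainder $E_m$ consists of: (a) cross-terms from differentiating the scalar coefficients $\ep^{-2}w,\ep^{-2}|\Phi|^2$, giving products of the form $\ep^{-2}|\nabla^{j}\Phi||\nabla^{i+1}\Phi|$ with $1\leqslant j\leqslant m$, handled by the induction hypothesis; (b) terms from differentiating $[\Phi,[\nabla\Phi,\Phi]]$, treated with~\eqref{eq: double_bracket_ineq} so as to produce a $[\nabla\Phi,\Phi]$-factor, which decays exponentially by Proposition~\ref{prop:exp_decay}; (c) curvature-commutator brackets $[\nabla^{m-i}F,\nabla^{i+1}\Phi]$, which I would split via~\eqref{eq: bracket_ineq} as $|\nabla^{m-i}F||(\nabla^{i+1}\Phi)^\perp|+|(\nabla^{m-i}F)^\perp||\nabla^{i+1}\Phi|$; the transversal factors $\Psi_i^\perp$, $\Psi_{m-i}^\perp$ decay exponentially by Proposition~\ref{prop:exp_decay}, while the other factor is bounded by Proposition~\ref{prop:coarse_estimate}; (d) geometric terms with $\nabla^{m-i}R_g$, bounded by $A_{m-i}\rho_0^{i-m-2}\leqslant C\ep^{i-m-2}$. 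In each case the product yields a bound of the form $C(\sqrt{\mu}\rho)^{-2m-4}e^{-a\sqrt{\mu}\rho/\ep}\eta$ on $E_m$. To invoke Proposition~\ref{prop:exp_decay} I may need to further shrink $\theta_1,\tau_1$ (depending only on $\lambda_0,A_1$), or alternatively re-run the argument of Lemma~\ref{lemm:exp_decay_base} on a slightly interior ball where $w$ is already exponentially small by~\eqref{eq:w_exp_decay}, which effectively replaces the threshold $\beta<1/(4(\lambda_0+1))$ by an arbitrarily small one.

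To conclude, I would combine the differential inequality with the barrier $\varphi=\exp\!\bigl(d(\cdot,x_0)^2/(A(\ep/\sqrt{\mu})\rho)\bigr)$, which satisfies $(\Delta+\mu/(2\ep^2))\varphi\geqslant 0$ by Lemma~\ref{lemm:barrier} applied with $\ep/\sqrt{\mu}$ in place of $\ep$. On the boundary of the working ball, the coarse pointwise bound $|\nabla^{m+1}\Phi|\leqslant C\ep^{-m-1}\eta^{1/2}$ from Proposition~\ref{prop:coarse_estimate} provides the required control; the maximum principle applied to an appropriate linear combination of $|\nabla^{m+1}\Phi|^2$, $\varphi$, and the $E_m$-contribution then yields $|\nabla^{m+1}\Phi|^2\leqslant C\ep^{-2m-2}\eta\cdot e^{-a\sqrt{\mu}\rho/\ep}$ on $B_\rho(x_0)$, and~\eqref{eq:trading_factors} converts $\ep^{-m-1}$ into $(\sqrt{\mu}\rho)^{-m-1}$ at the price of halving the exponent. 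The main obstacle will be the bookkeeping in step (c): every time an undecomposed $\nabla^{m-i}F$ appears it must be paired in such a way that a transversal factor can be extracted using the decomposition~\eqref{eq: adjoint_decomp} and the identity~\eqref{eq: inner_prod_decomp}, since $F$ itself carries no exponential decay.
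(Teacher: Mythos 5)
Your overall architecture differs from the paper's: you try to preserve a strictly negative zeroth-order coefficient $-\tfrac{\mu}{4\ep^2}|\nabla^{m+1}\Phi|^2$ at every order of differentiation and then run a barrier/maximum-principle argument at each step, whereas the paper only does this at $m=0$ (Lemma~\ref{lemm:nablaPhi_exp_decay_base}). For $m\geqslant 1$ the paper deliberately gives up the sign, accepts the inequality $|\nabla^{m+2}\Phi|^2+\Delta(|\nabla^{m+1}\Phi|^2/2)\leqslant C\ep^{-2}|\nabla^{m+1}\Phi|^2+(\text{source})$ with a \emph{positive} coefficient, carries an exponentially decaying $L^2$ bound $\int_{B_{\rho_{k+1}}}|\nabla^{k+2}\Phi|^2\leqslant C(\sqrt{\mu}\rho)^{-2k-1}e^{-a\sqrt{\mu}\rho/\ep}\eta$ through the induction alongside the pointwise bound, and extracts the pointwise decay at order $m$ from Moser iteration (Lemma~\ref{lemm:moser-improve}(b)) at scale $\ep$ applied to that $L^2$ bound. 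The exponential decay is inherited from the induction hypothesis, not regenerated by a barrier.

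Your route has a genuine gap in the inductive step. To keep the term $-\tfrac{\mu}{4\ep^2}|\nabla^{m+1}\Phi|^2$ you must absorb every positive term proportional to $|\nabla^{m+1}\Phi|^2$, and among these the commutator $[\nabla^*\nabla,\nabla^m](\nabla\Phi)$ produces, via Lemma~\ref{lemm:commutator-estimate}, the contribution $C_{n,m}|R|\,|\nabla^{m+1}\Phi|^2$. Under the hypotheses one only has $|R|\leqslant A_0\rho_0^{-2}\leqslant c_0\rho^{-2}\leqslant c_0\tau_1^2\mu\ep^{-2}$, so this term is of size $C_{n,m}c_0\tau_1^2\cdot\mu\ep^{-2}|\nabla^{m+1}\Phi|^2$ — the same order as the good term. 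Since the combinatorial constant $C_{n,m}$ grows with $m$ while $\tau_1=\tau_1(\lambda_0)$ and $\theta_1=\theta_1(\lambda_0)$ are fixed by the statement, for large $m$ the effective potential becomes negative and the operator $\Delta+\tfrac{\mu}{4\ep^2}-C_{n,m}\rho^{-2}$ no longer admits the comparison with the barrier $\varphi=e^{d(\cdot,x_0)^2/(A(\ep/\sqrt{\mu})\rho)}$; shrinking $\tau_1,\theta_1$ to depend on $m$ would change the hypotheses of the proposition. (Your transversal decomposition in step (c) rescues the $\ep^{-1}[F,\nabla^{m+1}\Phi]$-type terms, but not this Riemann-curvature term.) You cannot instead dump $C_{n,m}|R||\nabla^{m+1}\Phi|^2$ into $E_m$ using the coarse bound $|\nabla^{m+1}\Phi|\leqslant C\ep^{-m-1}\eta^{1/2}$, since that bound carries no exponential factor and would destroy the decay of the conclusion. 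The fix is the paper's: drop the sign requirement, add the $L^2$ decay estimate to the induction hypothesis, and replace the maximum principle by Moser iteration on balls of radius comparable to $\ep$. Without that extra $L^2$ estimate in the induction (which your proposal does not carry), even this fallback does not close.
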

\begin{proof}
The proof again proceeds by induction. Letting $\rho_{k} = (1 + 2^{-k})\rho$, we shall prove that for all $k \in \NN \cup \{0\}$, we have
\begin{subequations}
\begin{align}
\|\nabla^{k+1}\Phi\|_{\infty; B_{\rho_{k+1}}(x_0)} \leqslant\ & C (\sqrt{\mu}\rho)^{-k-1}e^{-a\frac{\sqrt{\mu}\rho}{\ep}}\eta^{\frac{1}{2}}, \label{eq:nablaPhi_exp_decay_pointwise}\\
\int_{B_{\rho_{k + 1}}(x_0)} |\nabla^{k + 2}\Phi|^2 \leqslant \ & C(\sqrt{\mu}\rho)^{-2k-1} e^{-a\frac{\sqrt{\mu}\rho}{\ep}}\eta, \label{eq:nablaPhi_exp_decay_L2}
\end{align}
\end{subequations}
where $a = a(k)$ and $C = C(k, \lambda_0, \{A_i\}_{1 \leqslant i \leqslant k})$. (Again, when $k = 0$, we take $\{A_i\}_{1 \leqslant i \leqslant k}$ to mean the empty set.) Lemma~\ref{lemm:nablaPhi_exp_decay_base} establishes the base step. For the induction step we assume that both estimates above hold for $k = 0, \cdots, m-1$ for some $m \in \NN$. As in the proofs of Propositions~\ref{prop:coarse_estimate} and~\ref{prop:exp_decay}, upon using Lemma~\ref{lemm:commutator-estimate}, we have
\[
\begin{split}
\bangle{[\nabla^*\nabla, \nabla^m]\nabla\Phi, \nabla^{m + 1}\Phi} \leqslant\ &  C_{m}\big(\frac{\Psi_0}{\ep} + A_0\rho_0^{-2} \big)|\nabla^{m + 1}\Phi|^2\\
&+ C_{m}\sum_{i=0}^{m-1}\big(\frac{\Psi_{m-i}}{\ep}+ A_{m-i}\rho_0^{i-m-2} \big)|\nabla^{i + 1}\Phi| |\nabla^{m + 1}\Phi|.
\end{split}
\]
With the help of~\eqref{eq:curvature_bound_usage}, Lemma~\ref{lemm:coarse_estimate_base} and Proposition~\ref{prop:coarse_estimate}, we deduce that on $B_{2\rho}(x_0)$ there holds
\begin{equation}\label{eq:nablaPhi_decay_commutator}
\begin{split}
\bangle{[\nabla^*\nabla, \nabla^m]\nabla\Phi, \nabla^{m + 1}\Phi} \leqslant\ & C_{m, \lambda_0}\ep^{-2}|\nabla^{m + 1}\Phi|^2 \\
&+ C_{m, \lambda_0, A_1, \cdots, A_m}\sum_{i = 0}^{m-1}\ep^{i-m-2}|\nabla^{i + 1}\Phi| |\nabla^{m + 1}\Phi|.
\end{split}
\end{equation}
Next we turn to estimating $\bangle{\nabla^m \nabla^*\nabla (\nabla\Phi), \nabla^{m + 1}\Phi}$. Upon recalling Lemma~\ref{lemm:bochner_for_derivatives}(i), we have on $B_{3\rho}(x_0)$ that
\begin{equation}\label{eq:nablaPhi_decay_derivative_1}
\begin{split}
\bangle{\nabla^m \nabla^*\nabla(\nabla\Phi), \nabla^{m + 1}\Phi} \leqslant \ &  C\big(\frac{\lambda w_+}{\ep^2} + \frac{\Psi_0}{\ep} + \rho_0^{-2}A_0\big) |\nabla^{m + 1}\Phi|^2\\
& +\big( \lambda \cdot (I) + (1 + \lambda) \cdot (II) + (III)_{\nabla\Phi} + (IV)\big) |\nabla^{m + 1}\Phi|.
\end{split}
\end{equation}
By Lemma~\ref{lemm:coarse_estimate_base} and~\eqref{eq:curvature_bound_usage}, as well as the assumption~\eqref{eq:nablaPhi_exp_decay_Phi_two_sided}, we have
\begin{equation}\label{eq:nablaPhi_decay_derivative_leading}
\frac{\lambda w_+}{\ep^2} + \frac{\Psi_0}{\ep} + \rho_{0}^{-2}A_0 \leqslant C_{\lambda_0} \ep^{-2} \text{ on }B_{3\rho}(x_0).
\end{equation}
Recalling in addition Proposition~\ref{prop:coarse_estimate}, we have on $B_{2\rho}(x_0)$ that
\begin{equation}\label{eq:nablaPhi_decay_remainder_1}
\begin{split}
(I) +(III)_{\nabla\Phi} + (IV) \leqslant\ & C_{m, \lambda_0, A_{1}, \cdots, A_{m}} \sum_{i = 0}^{m-1}\ep^{i-m-2} |\nabla^{i + 1}\Phi|.
\end{split}
\end{equation}
To bound $(II)$, we note that
\[
|\nabla^{i}[\nabla\Phi, \Phi]| \leqslant C_{m}\sum_{j = 0}^{i}|\nabla^{j + 1}\Phi||\nabla^{i-j}\Phi|, \text{ for }i = 0, \cdots, m-1,
\]
and thus
\begin{equation}\label{eq:nablaPhi_decay_remainder_2}
\begin{split}
(II) \leqslant\ & C_m\ep^{-2}\sum_{j = 0}^{m-1}\sum_{i + k = m-j}|\nabla^i\Phi||\nabla^k\Phi| \cdot |\nabla^{j + 1}\Phi| + C_m\ep^{-2}\sum_{i = 0}^{m-1} |\nabla^{m-i}\Phi||\nabla^{i + 1}\Phi| |\Phi| \\
\leqslant\ & C_{m, \lambda_0, \{A_i\}_{1 \leqslant i \leqslant m-1}}\sum_{i=0}^{m-1}\ep^{i-m-2} |\nabla^{i + 1}\Phi| \text{ on }B_{2\rho}(x_0).
\end{split}
\end{equation}
Putting~\eqref{eq:nablaPhi_decay_remainder_2},~\eqref{eq:nablaPhi_decay_remainder_1} and~\eqref{eq:nablaPhi_decay_derivative_leading} back into~\eqref{eq:nablaPhi_decay_derivative_1}, we obtain
\begin{equation}\label{eq:nablaPhi_decay_derivative}
\begin{split}
\bangle{\nabla^m \nabla^*\nabla (\nabla\Phi), \nabla^{m + 1}\Phi} \leqslant\ & C_{\lambda_0}\ep^{-2}|\nabla^{m + 1}\Phi|^2\\
&+ C_{m, \lambda_0, A_1, \cdots, A_m} \sum_{i=0}^{m-1}\ep^{i-m-2}|\nabla^{i+1}\Phi||\nabla^{m+ 1}\Phi| \text{ on }B_{2\rho}(x_0).
\end{split}
\end{equation}
Summing this with~\eqref{eq:nablaPhi_decay_commutator} leads to
\begin{equation}\label{eq:nablaPhi_decay_for_integral}
\begin{split}
|\nabla^{m + 2}\Phi|^2 + \Delta\big( \frac{|\nabla^{m + 1}\Phi|^2}{2} \big) \leqslant\ &  C\ep^{-2}|\nabla^{m + 1}\Phi|^2 + C\sum_{i=0}^{m-1}\ep^{i-m-2}|\nabla^{i+1}\Phi||\nabla^{m+ 1}\Phi|,
\end{split}
\end{equation}
where the constants $C$ have only the admissible dependencies. Applying the induction hypothesis to the terms $|\nabla^{i + 1}\Phi|$ and using~\eqref{eq:trading_factors} a number of times, we deduce that on $B_{\rho_{m}}(x_0)$ there holds
\begin{equation}\label{eq:nablaPhi_decay_diff_ineq}
\begin{split}
|\nabla^{m + 2}\Phi|^2 + \Delta\big( \frac{|\nabla^{m + 1}\Phi|^2}{2} \big) \leqslant\ & C\ep^{-2}|\nabla^{m + 1}\Phi|^2 + C \ep^{-2} (\sqrt{\mu}\rho)^{-m-1} e^{-a\frac{\sqrt{\mu}\rho}{\ep}} \eta^{\frac{1}{2}} \cdot |\nabla^{m + 1}\Phi|\\
\leqslant\ &  C\ep^{-2}|\nabla^{m + 1}\Phi|^2 + C\ep^{-2}(\sqrt{\mu}\rho)^{-2m-2} e^{-a\frac{\sqrt{\mu}\rho}{\ep}}\eta,
\end{split}
\end{equation}
where in getting the second line we used Young's inequality. By the argument leading to~\eqref{eq:coarse_induction_after_moser} in the proof of Proposition~\ref{prop:coarse_estimate}, we have for all $x \in B_{\rho_{m + 1}}(x_0)$ that
\[
\begin{split}
|\nabla^{m + 1}\Phi|^2(x) \leqslant\ & C\ep^{-3}\int_{B_{\rho_{m}}(x_0)} |\nabla^{m + 1}\Phi|^2 \vol_g + C (\sqrt{\mu}\rho)^{-2m-2}e^{-a\frac{\sqrt{\mu}\rho}{\ep}}\eta\\
\leqslant\ & C\ep^{-3}(\sqrt{\mu}\rho)^{-2m + 1} e^{-a\frac{\sqrt{\mu}\rho}{\ep}}\eta + C (\sqrt{\mu}\rho)^{-2m-2}e^{-a\frac{\sqrt{\mu}\rho}{\ep}}\eta,
\end{split}
\]
where the second inequality follows from~\eqref{eq:nablaPhi_exp_decay_L2} for $k = m-1$. From this we deduce~\eqref{eq:nablaPhi_exp_decay_pointwise} for $k = m$ upon using~\eqref{eq:trading_factors} once more. To prove~\eqref{eq:nablaPhi_exp_decay_L2} for $k = m$, we take a cutoff function $\zeta$ such that
\[
\zeta = 1 \text{ on }B_{\rho_{m + 1}}(x_0),\ \ \zeta = 0 \text{ outside of }B_{\rho_m}(x_0),\ \ |\nabla\zeta|\leqslant C_m\rho^{-1}.
\]
Multiplying~\eqref{eq:nablaPhi_decay_for_integral} by $\zeta^2$ and integrating by parts while using H\"older's inequality, we get that
\[
\begin{split}
\int_{M}\zeta^2 |\nabla^{m + 2}\Phi|^2 \leqslant\ & \int_{M} [C\ep^{-2} \zeta^2 |\nabla^{m + 1}\Phi|^2 + 2\zeta |\nabla\zeta| |\nabla^{m + 1}\Phi||\nabla^{m + 2}\Phi|] \\
& + C\sum_{i = 0}^{m-1}\ep^{i-m-2} \|\nabla^{i + 1}\Phi\|_{2; B_{\rho_m}(x_0)} \|\nabla^{m + 1}\Phi\|_{2; B_{\rho_m}(x_0)}.
\end{split}
\]
Applying Young's inequality and rearranging, and then using the induction hypothesis along with~\eqref{eq:small_energy_for_nablaPhi_decay}, we deduce that
\begin{equation}\label{eq:nablaPhi_exp_decay_L2_penultimate}
\begin{split}
\int_{B_{\rho_{m + 1}}(x_0)} |\nabla^{m + 2}\Phi|^2  \leqslant\ & C\ep^{-2} \cdot (\sqrt{\mu}\rho)^{-2m + 1}e^{-a\frac{\sqrt{\mu}\rho}{\ep}}\eta\\
&+ C\sum_{i = 1}^{m-1}\ep^{i-m-2} \big[(\sqrt{\mu}\rho)^{-2i + 1}e^{-a\frac{\sqrt{\mu}\rho}{\ep}}\eta\big]^{\frac{1}{2}} \big[(\sqrt{\mu}\rho)^{-2m + 1}e^{-a\frac{\sqrt{\mu}\rho}{\ep}}\eta\big]^{\frac{1}{2}} \\
& + C\ep^{-m-2} (\ep\eta)^{\frac{1}{2}} \big[(\sqrt{\mu}\rho)^{-2m + 1}e^{-a\frac{\sqrt{\mu}\rho}{\ep}}\eta\big]^{\frac{1}{2}}.
\end{split}
\end{equation}
A few further applications of~\eqref{eq:trading_factors} yields~\eqref{eq:nablaPhi_exp_decay_L2} for $k = m$, and the proof is complete.
\end{proof}
\subsection{Improved estimates}\label{subsec:improved-estimates}
In this section, we show how the results of the previous section feed back into the inductive argument in \S\ref{subsec:coarse-estimates} to yield improved estimates on $\Psi_k$ under smallness assumptions. (See~\cite[Corollary 4.9]{fadel2023asymptotics} for a precedent of this type of argument.) Below we continue to write $\mu$ for $\min\{\lambda, 1\}$, and assume that $\lambda_0$ is an upper bound for $\lambda$. Also, we let $\overline{\eta}$, $(\eta_0$, $\tau_0)$ and $(\theta_1, \tau_1)$ be the thresholds given by Lemma~\ref{lemm:improved_coarse_estimate_base}, Lemma~\ref{lemm:exp_decay_base} and Lemma~\ref{lemm:nablaPhi_exp_decay_base}, respectively. 
\begin{lemm}\label{lemm:improved_estimates_base}
Suppose $(\nabla, \Phi)$ is a smooth solution of~\eqref{eq: 2nd_order_crit_pt_intro} satisfying that
\begin{equation}\label{eq:small_energy_for_improved_estimate}
\int_{B_{4\rho}(x_0)} \Psi_0^2\vol_g  = \ep\cdot \eta \leqslant \ep \cdot \min\{\overline{\eta}, \eta_0, \theta_1 \mu\}.
\end{equation}
for some geodesic ball $B_{4\rho}(x_0) \subset \Omega$ with $\rho \in (0, \frac{\rho_1}{4})$, and that
\begin{equation}\label{eq:small_w_for_improved_estimate}
\|w\|_{\infty; B_{3\rho}(x_0)} \leqslant \beta,
\end{equation}
for some $\beta \in (0, \frac{1}{4(\lambda_0 + 2)})$. Assume also that $\frac{\ep}{\rho} \leqslant \min\{\tau_0, \tau_1\sqrt{\mu}\}$. Then, we have
\begin{equation}\label{eq:improved_estimates_base_point}
\|\Psi_0\|_{\infty; B_{\rho}(x_0)} \leqslant C\rho^{-\frac{3}{2}}\ep^{\frac{1}{2}} \eta^{\frac{1}{2}},
\end{equation}
\begin{equation}\label{eq:improved_estimates_base_integral}
\int_{B_{\frac{\rho}{2}}(x_0)}\Psi_1^2 \leqslant C\rho^{-2}\ep \eta,
\end{equation}
where $C = C(\lambda_0)$ in both estimates.
\end{lemm}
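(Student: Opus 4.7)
Under the hypotheses of the lemma, I will invoke in parallel the Bochner-type inequality from Lemma~\ref{lemm:improved_coarse_estimate_base} and the exponential decay of $w$ from Lemma~\ref{lemm:nablaPhi_exp_decay_base}, then perform a Moser iteration on a rescaled differential inequality. To set up, note that the conditions $\eta \leqslant \min\{\overline{\eta}, \eta_0, \theta_1\mu\}$, $\beta < \tfrac{1}{4(\lambda_0 + 2)}$, and $\ep/\rho \leqslant \min\{\tau_0, \tau_1\sqrt{\mu}\}$ are precisely calibrated to meet the hypotheses of both Lemma~\ref{lemm:improved_coarse_estimate_base} (requiring $\beta < \tfrac{1}{4}$ and $\eta \leqslant \overline{\eta}$) and Lemma~\ref{lemm:nablaPhi_exp_decay_base} (requiring $\beta < \min\{\tfrac{1}{6}, \tfrac{1}{2(\lambda_0 + 2)}\}$, $\eta \leqslant \theta_1\mu$, and $\ep/\rho \leqslant \tau_1\sqrt{\mu}$). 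This simultaneously supplies the Bochner inequality~\eqref{eq:Psi_0_improved_bochner} on $B_{3\rho}(x_0)$ and the decay estimate $\|w\|_{\infty; B_{2\rho}(x_0)} \leqslant C e^{-a\sqrt{\mu}\rho/\ep}$ from~\eqref{eq:w_exp_decay}.

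The crucial step, and the main point of the whole argument, is to substitute the latter into the former. Applying~\eqref{eq:trading_factors} with $k = 2$, $t = \sqrt{\mu}\rho$, and $s = \ep$ yields
\[
\frac{\mu w_+}{\ep^2} \leqslant \frac{C\mu}{\ep^2}\,e^{-a\sqrt{\mu}\rho/\ep} \leqslant \frac{C}{\rho^2} \ \text{ on }B_{2\rho}(x_0),
\]
and since $A_0/\rho_0^2 \leqslant c_0\rho^{-2}$ by~\eqref{eq:curvature_bound_usage}, inserting these bounds into~\eqref{eq:Psi_0_improved_bochner} reduces it on $B_{2\rho}(x_0)$ to
\[
\Psi_1^2 + \Delta\bigl(\Psi_0^2/2\bigr) \leqslant C_{\lambda_0}\rho^{-2}\Psi_0^2.
\]
This has the same form as the differential inequality driving Lemma~\ref{lemm:coarse_estimate_base}, but with the coefficient $\ep^{-2}$ traded for the much smaller $\rho^{-2}$; the exponential decay of $w$ is precisely what makes this replacement possible.

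Finally, I apply Lemma~\ref{lemm:moser-improve}(b) at scale $r = \rho/2$ about each $x \in B_\rho(x_0)$: since $B_r(x) \subset B_{2\rho}(x_0)$ and $\|C_{\lambda_0}\rho^{-2}\|_{L^2(B_r(x))} \cdot r^{2 - 3/2}$ is bounded by a universal constant (mirroring the role of~\eqref{eq:Psi_0_bochner_coefficient_L2} in the proof of Lemma~\ref{lemm:coarse_estimate_base}), the Moser output becomes
\[
\Psi_0^2(x) \leqslant C_{\lambda_0}\rho^{-3}\int_{B_{r}(x)}\Psi_0^2\,\vol_g \leqslant C_{\lambda_0}\rho^{-3}\,\ep\eta,
\]
which is~\eqref{eq:improved_estimates_base_point}. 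For the integral bound~\eqref{eq:improved_estimates_base_integral}, I take a cutoff $\zeta$ equal to $1$ on $B_{\rho/2}(x_0)$, supported in $B_\rho(x_0)$, with $|d\zeta|\leqslant C\rho^{-1}$; multiplying the reduced Bochner inequality by $\zeta^2$, integrating by parts using~\eqref{eq:Psi_Theta_schwarz}, and applying Young's inequality to absorb $\tfrac{1}{2}\int\zeta^2\Psi_1^2$ leads immediately to $\int_{B_{\rho/2}(x_0)}\Psi_1^2 \leqslant C_{\lambda_0}\rho^{-2}\,\ep\eta$. There is no substantive obstacle here; the entire proof is mechanical once the exponential decay of $w$ is in hand, the only genuine insight being the trading of an $\ep^{-2}$ factor for a $\rho^{-2}$ factor via~\eqref{eq:trading_factors}.
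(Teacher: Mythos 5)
Your proof is correct and follows essentially the same route as the paper: feed the exponential decay of $w$ from Lemma~\ref{lemm:nablaPhi_exp_decay_base} into the Bochner inequality~\eqref{eq:Psi_0_improved_bochner} of Lemma~\ref{lemm:improved_coarse_estimate_base} to trade the coefficient $\mu w_+/\ep^2$ for $C\rho^{-2}$, then run Moser iteration at scale $\rho$ and a standard cutoff argument. The only cosmetic difference is that the paper applies Lemma~\ref{lemm:moser-improve}(b) directly with $q=\infty$ (the coefficient being a constant, no $L^2$ verification in the style of~\eqref{eq:Psi_0_bochner_coefficient_L2} is needed), but this changes nothing of substance.
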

\begin{proof}
Under our current hypotheses, we have $\eta \leqslant \overline{\eta}$, $\beta < \frac{1}{4}$ and $\ep \leqslant \sqrt{\mu}\rho$. Consequently we get from Lemma~\ref{lemm:improved_coarse_estimate_base} the differential inequality
\begin{equation}\label{eq:Psi_0_improved_bochner_again}
\Psi_1^2 + \Delta\big( \frac{\Psi_0^2}{2} \big) \leqslant C_{\lambda_0}\big(\frac{\mu w_+}{\ep^2} + \frac{A_0}{\rho_0^2}\big) \Psi_0^2,\  \text{ on }B_{3\rho}(x_0).
\end{equation}
Estimating $w_+$ using Lemma~\ref{lemm:nablaPhi_exp_decay_base} instead and recalling~\eqref{eq:curvature_bound_usage} leads to
\begin{equation}\label{eq:improved_diff_ineq_base}
\begin{split}
\Psi_1^2 + \Delta\big(\frac{\Psi_0^2}{2} \big) \leqslant \ & C_{\lambda_0}\rho^{-2}\cdot \big[ \big(\frac{\sqrt{\mu} \rho}{\ep}\big)^2\cdot e^{-a\frac{\sqrt{\mu}\rho}{\ep}} + 1 \big]\Psi_0^2 \leqslant C'_{\lambda_0}\rho^{-2}\Psi_0^2,\ \  \text{ on }B_{2\rho}(x_0).
\end{split}
\end{equation}
Given $x\in B_{\rho}(x_0)$, by Lemma~\ref{lemm:moser-improve}(b) with $q = \infty$ and $r = \frac{\rho}{2}$ applied to~\eqref{eq:improved_diff_ineq_base}, we obtain 
\[
\|\Psi_0^2\|_{\infty; B_{\frac{\rho}{4}}(x)} \leqslant C_{\lambda_0}\rho^{-3}\int_{B_{2\rho}(x_0)} \Psi_0^2 \leqslant C_{\lambda_0}\rho^{-3} \ep \eta,
\]
where we used~\eqref{eq:small_energy_for_improved_estimate} to get the second inequality. Since $x \in B_{\rho}(x_0)$ is arbitrary, we get~\eqref{eq:improved_estimates_base_point}. Next, take a cutoff function $\zeta$ such that
\[
\zeta = 1 \text{ on }B_{\frac{\rho}{2}}(x_0),\ \ \zeta = 0 \text{ outside of }B_{\rho}(x_0),\ \ |\nabla\zeta|\leqslant C\rho^{-1}.
\]
Then we have from~\eqref{eq:improved_diff_ineq_base},~\eqref{eq:Psi_Theta_schwarz}, and Young's inequality that
\[
\begin{split}
\int_{M}\zeta^2 \Psi_1^2 \leqslant \ & \int_{M}\big[ C_{\lambda_0}\rho^{-2}\zeta^2 \Psi_0^2 + 2\zeta |\nabla\zeta| \Psi_0 \Psi_1\big] \leqslant C_{\lambda_0}\rho^{-2}\int_{B_{\rho}(x_0)} \Psi_0^2 + \frac{1}{2}\int_{M} \zeta^2 \Psi_1^2.
\end{split}
\]
Rearranging and using~\eqref{eq:small_energy_for_improved_estimate} again gives~\eqref{eq:improved_estimates_base_integral}.
\end{proof}
\begin{prop}\label{prop:improved_estimates}
Under the assumptions of Lemma~\ref{lemm:improved_estimates_base}, for all $k \in \NN \cup \{0\}$ we have
\begin{equation}\label{eq:improved_estimates}
\|\Psi_k\|_{\infty; B_{\frac{\rho}{4}}(x_0)} \leqslant C(\sqrt{\mu}\rho)^{-k-\frac{3}{2}} \ep^{\frac{1}{2}} \eta^{\frac{1}{2}},
\end{equation}
where $C$ depends only on $k, \lambda_0, A_1, \cdots, A_{k + 1}$.
\end{prop}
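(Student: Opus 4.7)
The proof will proceed by induction on $k$, run in parallel with an auxiliary $L^2$-bound of the shape
\begin{equation*}
\int_{B_{r_k}(x_0)}\Psi_{k+1}^2 \vol_g \;\leqslant\; C\,(\sqrt{\mu}\rho)^{-2k-2}\,\ep\,\eta,
\end{equation*}
on a decreasing sequence of radii $r_k\in (\rho/4,\rho/2]$. Since $\mu\leqslant 1$ forces $\rho^{-3/2}\leqslant (\sqrt\mu\rho)^{-3/2}$, the base case $k=0$ of both the pointwise and $L^2$ estimates is furnished by~\eqref{eq:improved_estimates_base_point} and~\eqref{eq:improved_estimates_base_integral} in Lemma~\ref{lemm:improved_estimates_base}.

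For the induction step, I would derive a differential inequality
\begin{equation*}
\Psi_{m+1}^2 \;+\; \Delta\!\left(\tfrac{1}{2}\Psi_m^2\right) \;\leqslant\; C\,(\sqrt{\mu}\rho)^{-2}\,\Psi_m^2 \;+\; C\,(\sqrt{\mu}\rho)^{-2m-4}\,\ep\,\eta
\end{equation*}
on a ball slightly larger than $B_{r_m}(x_0)$, by combining the identity~\eqref{eq:Psi_laplacian}, the Bochner-type bound~\eqref{eq:bochner_for_derivatives} of Lemma~\ref{lemm:bochner_for_derivatives}, and the commutator estimate of Lemma~\ref{lemm:commutator-estimate}. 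The crucial improvement over the analogous inequality produced inside Proposition~\ref{prop:coarse_estimate} is that the coefficient of $\Psi_m^2$ becomes $(\sqrt\mu\rho)^{-2}$ in place of $\ep^{-2}$. This hinges on three inputs: the leading term $\lambda w_+/\ep^2$ is controlled by the exponential decay~\eqref{eq:w_exp_decay} from Lemma~\ref{lemm:nablaPhi_exp_decay_base} together with the factor-trading inequality~\eqref{eq:trading_factors}; the term $\Psi_0/\ep$ is controlled by the sharp base estimate~\eqref{eq:improved_estimates_base_point} after incorporating the standing smallness $\eta\leqslant\theta_1\mu$ and $\ep\leqslant\tau_1\sqrt\mu\rho$; and $\rho_0^{-2}A_0\leqslant c_0\rho^{-2}$ thanks to~\eqref{eq:curvature_bound_usage}. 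The remaining lower-order pieces are of two types: the $\Psi_{m-i}\Psi_i$ cross terms are absorbed using the induction hypothesis and Young's inequality, whereas the $\Theta_i$- and $\Psi_i^\perp$-type contributions are rendered negligible by the exponential decay from Proposition~\ref{prop:exp_decay}, again passed through~\eqref{eq:trading_factors}.

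With the differential inequality in hand, I would then invoke Lemma~\ref{lemm:moser-improve}(b) with $q=\infty$ on balls of radius $r\sim\sqrt\mu\rho$ contained in the domain of validity, so that the quantity $C(\sqrt\mu\rho)^{-2}\cdot r^2$ is bounded and the Moser constants remain admissible. At any $x\in B_{r_m}(x_0)$ this yields
\begin{equation*}
\Psi_m^2(x) \;\leqslant\; C\,(\sqrt{\mu}\rho)^{-3}\!\int_{B_{r_{m-1}}(x_0)}\!\!\Psi_m^2\vol_g \;+\; C\,(\sqrt{\mu}\rho)^{-2m-2}\,\ep\,\eta,
\end{equation*}
which, by the $L^2$-bound from the induction hypothesis at step $m-1$, is dominated by $C(\sqrt\mu\rho)^{-2m-3}\ep\,\eta$; this is the pointwise bound at step $m$. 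The auxiliary $L^2$-bound at step $m$ is then obtained, exactly as in the earlier inductive arguments, by multiplying the same differential inequality by the square of a cutoff between $B_{r_m}(x_0)$ and its enlargement, integrating by parts with the help of~\eqref{eq:Psi_Theta_schwarz}, and absorbing $\|\zeta\Psi_{m+1}\|_{2}^{2}$ into the left-hand side via Young's inequality.

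The main obstacle will be the bookkeeping required to verify that every remainder term coming out of~\eqref{eq:bochner_for_derivatives} and the commutator estimate fits either into the coefficient $(\sqrt\mu\rho)^{-2}$ of $\Psi_m^2$ or into the inhomogeneity $(\sqrt\mu\rho)^{-2m-4}\ep\,\eta$. In particular, the mixed cross terms of intermediate order require simultaneously the sharpened induction hypothesis, the exponential decay of the transversal components, and repeated invocations of~\eqref{eq:trading_factors}, so as to trade the naturally occurring $\ep$-powers for $(\sqrt\mu\rho)$-powers without sacrificing the overall $\ep^{1/2}\eta^{1/2}$-scaling that the proposition demands.
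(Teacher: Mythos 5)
Your overall architecture — a double induction on a pointwise bound for $\Psi_k$ and an $L^2$ bound for $\Psi_{k+1}$ on shrinking radii, an improved differential inequality with coefficient of order $\rho^{-2}$ rather than $\ep^{-2}$, Moser iteration at scale $\sim\rho$, and a cutoff/integration-by-parts step for the $L^2$ bound — is exactly the paper's. However, there is a concrete gap in how you propose to tame the two families of terms that carry a bare factor of $\ep^{-1}$.

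First, you claim that the leading cross term $\ep^{-1}\Psi_0\Psi_m|\nabla^m S|$ is ``controlled by the sharp base estimate~\eqref{eq:improved_estimates_base_point} after incorporating the standing smallness.'' This fails: \eqref{eq:improved_estimates_base_point} gives only $\ep^{-1}\Psi_0\lesssim \rho^{-3/2}\ep^{-1/2}\eta^{1/2}$, which blows up as $\ep\to 0$ and in particular is not dominated by $(\sqrt{\mu}\rho)^{-2}$; neither the hypothesis $\eta\leqslant\theta_1\mu$ nor $\ep\leqslant\tau_1\sqrt{\mu}\rho$ supplies the missing positive power of $\ep$ (the latter points the wrong way). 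The same defect afflicts the intermediate cross terms $\ep^{-1}\Psi_i\Psi_{m-i}$, which you propose to absorb ``using the induction hypothesis and Young's inequality'': feeding in the inductive pointwise bounds still leaves an uncompensated $\ep^{-1/2}$, and the resulting inhomogeneity is $\rho\eta/(\sqrt{\mu}\,\ep)$ times too large. The correct mechanism, and the one the paper uses, is to never let a bare $\ep^{-1}\Psi_j$ appear: one invokes the alternative form Lemma~\ref{lemm:bochner_for_derivatives}(ii) (so the leading term becomes $\ep^{-1}\Psi_0^{\perp}\Psi_m|\nabla^m S|$ and $(III)$ becomes $(III)_{M\setminus Z}=\ep^{-1}\sum\Psi_{m-i}^{\perp}\Psi_i$) and, in the commutator estimate, bounds $|[\nabla^{m-i}F,\nabla^iS]|$ via~\eqref{eq:tensor-bracket-norm-with-decomp-2} so that each factor of $\ep^{-1}$ is paired with a transversal component $\Psi_j^{\perp}$. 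Only then do the exponential decay estimates of Lemma~\ref{lemm:exp_decay_base} and Proposition~\ref{prop:exp_decay}, combined with~\eqref{eq:trading_factors}, convert $\ep^{-1}\Psi_j^{\perp}\lesssim \ep^{-j-2}e^{-a\rho/\ep}$ into a quantity bounded by $C\rho^{-j-2}$. Your plan does apply this exponential-decay argument to the ``$\Theta_i$- and $\Psi_i^{\perp}$-type contributions,'' but those contributions only arise after one has committed to the transversal refinements in the first place; as written, the leading and cross terms are routed to estimates that cannot deliver the required coefficient, and the Moser constant would then depend badly on $\ep/\rho$.
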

\begin{proof}
Define 
\[
\rho_k = (2^{-2} + 2^{-k-1})\rho. 
\]
We shall prove by induction that for all $k \in \NN \cup \{0\}$, we have
\begin{subequations}
\begin{align}
\|\Psi_k\|_{\infty; B_{\rho_{k + 1}}(x_0)} \leqslant\ & C(\sqrt{\mu}\rho)^{-k-\frac{3}{2}}\ep^{\frac{1}{2}}\eta^{\frac{1}{2}},\label{eq:improved_estimate_induction_pointwise}\\
\int_{B_{\rho_{k + 1}}(x_0)} \Psi_{k +1}^2 \leqslant\ & C(\sqrt{\mu}\rho)^{-2k-2}\ep\eta,\label{eq:improved_estimate_induction_L2}
\end{align}
\end{subequations}
where $C = C(k, \lambda_0, A_1, \cdots, A_{k + 1})$ and $a = a(k)$. The base case follows from the previous lemma. Next, suppose that the two above estimates hold for $k = 0, \cdots, m-1$. Applying Lemma~\ref{lemm:commutator-estimate} as in the first line of the string of inequalities~\eqref{eq:coarse_estimate_induction_commutator_1} from the proof of Proposition~\ref{prop:coarse_estimate}, but instead using~\eqref{eq:tensor-bracket-norm-with-decomp-2} to estimate the first summation on the right-hand side, we obtain on $B_{3\rho}(x_0)$ that
\begin{equation}\label{eq:improved_estimate_induction_commutator1}
\begin{split}
\big| [\nabla^*\nabla, \nabla^m]S \big| \leqslant\ & \frac{C_{m}}{\ep}\sum_{i=0}^{m}  \Psi_{m-i}^\perp \Psi_i  + C_{m}\sum_{i=0}^{m}\rho_{0}^{i-m-2}A_{m-i}\Psi_i\\
\leqslant\ & C_{m}\big(\frac{\Psi_0^\perp}{\ep} + \rho_0^{-2} A_0 \big)\Psi_m + C_{m}\sum_{i = 0}^{m-1}\big( \frac{\Psi_{m-i}^\perp}{\ep} + \rho_0^{i-m-2}A_{m-i} \big)\Psi_i,
\end{split}
\end{equation}
where as before we let $S$ stand for either $\nabla\Phi$ or $\ep F_{\nabla}$. By Lemma~\ref{lemm:exp_decay_base} together with~\eqref{eq:curvature_bound_usage} and~\eqref{eq:trading_factors}, we have 
\begin{equation}\label{eq:improved_estimate_induction_commutator_leading}
\frac{\Psi_0^\perp}{\ep} + \rho_0^{-2}A_0 \leqslant C_{\lambda_0}\ep^{-2}e^{-a\frac{\rho}{\ep}} + \rho^{-2}c_0 \leqslant C_{\lambda_0}'\rho^{-2}\  \text{ on }B_{\frac{3\rho}{2}}(x_0).
\end{equation}
Using also Proposition~\ref{prop:exp_decay}, we have for $i = 0, \cdots, m-1$ that
\[
\big( \frac{\Psi_{m-i}^\perp}{\ep} + \rho_0^{i-m-2}A_{m-i} \big) \leqslant C\ep^{i-m-2}e^{-a\frac{\rho}{\ep}} + \rho^{i-m-2}A_{m-i}, \text{ on }B_{\frac{\rho}{2}}(x_0),
\]
where $C = C(m, \lambda_0, A_1, \cdots, A_{m + 1})$ and $a = a(m)$. Substituting the two previous estimates into~\eqref{eq:improved_estimate_induction_commutator1} and using~\eqref{eq:trading_factors} again gives
\begin{equation}\label{eq:improved_estimate_induction_commutator}
\begin{split}
\langle[\nabla^*\nabla, \nabla^m]S, \nabla^m S \rangle \leqslant\ & C\rho^{-2}\Psi_m^2 + C\sum_{i=  0}^{m-1}\rho^{i-m-2}\Psi_i \Psi_m, \text{ on }B_{\frac{\rho}{2}}(x_0),
\end{split}
\end{equation}
where $C$ depends only on $m, \lambda_0, A_1, \cdots, A_{m + 1}$. On the other hand, by Lemma~\ref{lemm:bochner_for_derivatives}(ii), we have
\begin{equation}\label{eq:improved_estimate_induction_derivative1}
\begin{split}
\bangle{\nabla^m\nabla^*\nabla S, \nabla^m S} \leqslant\ & C\big(\frac{\lambda |w|}{\ep^2} + \frac{\Psi_0^{\perp}}{\ep} + \rho_0^{-2}A_0\big)\Psi_{m}^2 \\
&+ \big( \lambda \cdot(I) + (1 + \lambda)\cdot(II) + (III)_{M \setminus Z} + (IV) \big) \Psi_m.
\end{split}
\end{equation}
By Lemma~\ref{lemm:exp_decay_base} and Proposition~\ref{prop:exp_decay}, we have on $B_{\frac{\rho}{2}}(x_0)$ that
\begin{equation}\label{eq:improved_estimate_bound_for_II_III}
\begin{split}
(II) + (III)_{M \setminus Z} \leqslant\ & C_m\ep^{-2}\sum_{i = 0}^{m-1} (\Theta_{m-1-i} + |\Phi|\Psi_{m-1-i}^{\perp}) \Psi_i  +  C_m\ep^{-1}\sum_{i = 0}^{m-1}\Psi_{m-i}^{\perp}\Psi_i\\
\leqslant\ & C\sum_{i = 0}^{m-1} \ep^{i-m-2}e^{-a\frac{\rho}{\ep}} \Psi_i\leqslant C\rho^{-2}\sum_{i = 0}^{m-1} \rho^{i-m}\Psi_i,
\end{split}
\end{equation}
for some constant $C$ depending only on $m, \lambda_0, A_1, \cdots, A_{m + 1}$, where for the last inequality we used~\eqref{eq:trading_factors}. Similarly, in the inner summation in term $(I)$, since at least one of $j, k$ is non-zero, we get upon using Lemma~\ref{lemm:nablaPhi_exp_decay_base} and Proposition~\ref{prop:nablaPhi_exp_decay} that, on $B_{\rho}(x_0)$,
\begin{equation}\label{eq:improved_estimate_bound_for_I}
\begin{split}
\lambda \cdot (I) \leqslant C\lambda \ep^{-2}\sum_{i = 0}^{m-1} (\sqrt{\mu}\rho)^{i-m}e^{-a\frac{\sqrt{\mu}\rho}{\ep}}\Psi_i \leqslant\ & C \lambda \sum_{i = 0}^{m-1}(\sqrt{\mu}\rho)^{i-m-2} \Psi_i\\
\leqslant\ & C\rho^{-2}\sum_{i = 0}^{m-1}(\sqrt{\mu}\rho)^{i-m} \Psi_i.
\end{split}
\end{equation}
for some $C = C(m, \lambda_0, A_1, \cdots, A_m)$, where we used~\eqref{eq:trading_factors} and~\eqref{eq:lambda_mu_ratio}, respectively, in getting the second and third inequalities. Next, as in the previous proof, by Lemma~\ref{lemm:nablaPhi_exp_decay_base}, and again using~\eqref{eq:lambda_mu_ratio}, we have
\begin{equation}\label{eq:improved_estimate_induction_w}
\begin{split}
\frac{\lambda |w|}{\ep^2} \leqslant\ & C_{\lambda_0} \frac{\lambda}{\mu \rho^2} \big( \frac{\sqrt{\mu}\rho}{\ep} \big)^{2}e^{-a\frac{\sqrt{\mu}\rho}{\ep}} \leqslant C'_{\lambda_0}\rho^{-2} \text{ on }B_{2\rho}(x_0).
\end{split}
\end{equation}
To continue, we estimate $(IV)$ in the straightforward way using~\eqref{eq:curvature_bound_for_estimates}, substitute it along with~\eqref{eq:improved_estimate_induction_w},~\eqref{eq:improved_estimate_bound_for_I} and~\eqref{eq:improved_estimate_bound_for_II_III} back into~\eqref{eq:improved_estimate_induction_derivative1}, and also recall~\eqref{eq:improved_estimate_induction_commutator_leading}. The result is
\begin{equation}\label{eq:improved_estimate_induction_derivative}
\begin{split}
\bangle{\nabla^m\nabla^*\nabla S, \nabla^m S} \leqslant\ & C\rho^{-2}\Psi_m^2 + C\rho^{-2}\sum_{i = 0}^{m-1} (\sqrt{\mu}\rho)^{i-m}\Psi_i\Psi_m\ \  \text{ on }B_{\frac{\rho}{2}}(x_0),
\end{split}
\end{equation}
with $C = C(m, \lambda_0, A_1, \cdots, A_{m + 1})$. Summing this with~\eqref{eq:improved_estimate_induction_commutator} yields
\begin{equation}\label{eq:improved_estimate_induction_diff_ineq_1}
\begin{split}
\Psi_{m + 1}^2 + \Delta\big( \frac{\Psi_m^2}{2} \big) \leqslant\ & C\rho^{-2}\Psi_m^2 + C\rho^{-2}\sum_{i = 0}^{m-1} (\sqrt{\mu}\rho)^{i-m}\Psi_i\Psi_m\ \  \text{ on }B_{\frac{\rho}{2}}(x_0).
\end{split}
\end{equation}
Invoking the induction hypothesis, we deduce that
\begin{equation}\label{eq:improved_estimate_induction_diff_ineq_2}
\begin{split}
\Delta\big( \frac{\Psi_m^2}{2} \big) \leqslant\ & C\rho^{-2}\Psi_m^2 + C\rho^{-2}(\sqrt{\mu}\rho)^{-m-\frac{3}{2}}\ep^{\frac{1}{2}}\eta^{\frac{1}{2}}\Psi_m\\
\leqslant\ & C\rho^{-2}\Psi_m^2 + C\rho^{-2}(\sqrt{\mu}\rho)^{-2m-3}\ep \eta, \text{ on }B_{\rho_m}(x_0),
\end{split}
\end{equation}
where in getting the second line we used Young's inequality. Given $x \in B_{\rho_{m + 1}}(x_0)$, we apply Lemma~\ref{lemm:moser-improve}(b) with $q = \infty$ and $r = \frac{1}{2}(\rho_m - \rho_{m + 1})$ to the differential inequality~\eqref{eq:improved_estimate_induction_diff_ineq_2}. Then since $r \sim_{m} \rho$ and $B_r(x) \subset B_{\rho_m}(x_0)$, we have
\[
\begin{split}
\|\Psi_m^2\|_{\infty; B_{\frac{r}{2}}(x)} \leqslant \ & C\rho^{-3}\int_{B_{\rho_m}(x_0)} \Psi_m^2 \vol_g + C(\sqrt{\mu}\rho)^{-2m-3}\ep \eta\\
\leqslant\ & C(\sqrt{\mu}\rho)^{-2m-3}\ep \eta,
\end{split}
\]
where the second inequality follows from~\eqref{eq:improved_estimate_induction_L2} for $k = m-1$. Since $x \in B_{\rho_{m + 1}}(x_0)$ is arbitrary, we get~\eqref{eq:improved_estimate_induction_pointwise} for $k = m$. To finish, we take a cutoff function $\zeta$ such that
\[
\zeta = 1 \text{ on }B_{\rho_{m + 1}}(x_0),\ \ \zeta = 0 \text{ outside of }B_{\rho_m}(x_0),\ \ |\nabla\zeta|\leqslant C_m\rho^{-1}.
\]
Similar to the last part of the proof of Proposition~\ref{prop:coarse_estimate}, we test~\eqref{eq:improved_estimate_induction_diff_ineq_1} against $\zeta^2$ and use~\eqref{eq:Psi_Theta_schwarz} as well as H\"older's inequality to get
\[
\begin{split}
\int_{M}\zeta^2 \Psi_{m + 1}^2 \leqslant\ & \int_{M} \big[ C\rho^{-2}\zeta^2 \Psi_m^2 + 2\zeta|\nabla\zeta|\Psi_m \Psi_{m + 1} \big] \\
&+ C\rho^{-2}\sum_{i = 0}^{m-1}(\sqrt{\mu}\rho)^{i-m} \|\Psi_i\|_{2; B_{\rho_m}(x_0)} \|\Psi_m\|_{2; B_{\rho_m}(x_0)},
\end{split}
\]
Applying Young's inequality to the term $\zeta |\nabla\zeta| \Psi_m\Psi_{m + 1}$ and rearranging, and also using the induction hypothesis and the assumption~\eqref{eq:small_energy_for_improved_estimate}, we obtain
\begin{equation}\label{eq:improved_estimate_L2_penultimate}
\begin{split}
&\int_{B_{\rho_{m + 1}}(x_0)} \Psi_{m + 1}^2\\
&\leqslant C\rho^{-2}\int_{B_{\rho_m}(x_0)} \Psi_m^2 +C\rho^{-2}\sum_{i = 0}^{m-1}(\sqrt{\mu}\rho)^{i-m}\cdot [(\sqrt{\mu}\rho)^{-2i}\ep \eta]^{\frac{1}{2}} \cdot [(\sqrt{\mu}\rho)^{-2m}\ep\eta]^{\frac{1}{2}}\\
&\leqslant C\rho^{-2}(\sqrt{\mu}\rho)^{-2m}\ep \eta + C\rho^{-2}(\sqrt{\mu}\rho)^{-2m}\ep \eta.
\end{split}
\end{equation}
From~\eqref{eq:improved_estimate_L2_penultimate} we conclude that~\eqref{eq:improved_estimate_induction_L2} holds for $k = m$ upon noting that $\rho^{-2} \leqslant (\sqrt{\mu}\rho)^{-2}$.
\end{proof}
\subsection{A local convergence result}\label{subsec:local-convergence}
As an application of the estimates obtained thus far, in this section, we first prove a local convergence result when the smallness conditions of the previous section are in effect (Proposition~\ref{prop:local_convergence}). Then, assuming in addition that $M$ is closed, we establish a corollary involving the Hodge decomposition of the longitudinal component of the curvature (Proposition~\ref{prop:convergence_nonharmonic_parts}). Both results play a role in the analysis in Section~\ref{sec:asymptotic}. We first derive some relevant identities.
\begin{lemm}\label{lemm:identities_for_local_convergence}
Let $(\nabla, \Phi)$ be a smooth solution of~\eqref{eq: 2nd_order_crit_pt_intro}. Define the functions 
\[
\xi = \ep^{-1}e_{\ep}(\nabla, \Phi) - \ep \big|  \bangle{F_{\nabla}, \Phi} \big|^2,\ \ \ \ q = 2\bangle{\ast F_{\nabla},\nabla \Phi},
\]
and the real-valued $1$-form
\[
h = \ep^{\frac{1}{2}}\bangle{\ast F_{\nabla}, \Phi}.
\]
Then the following hold.
\vskip 1mm
\begin{enumerate}
\item[(a)] $\xi = 2w \cdot \ep |F_{\nabla}|^2 + \ep^{-1}|\nabla\Phi|^2 + \lambda\ep^{-3}w^2 + \ep |[F_{\nabla}, \Phi]|^2$.
\vskip 1mm
\item[(b)] In terms of a local orthonormal frame, we have
\[
(dh)_{ij} = - \bangle{\ep^{\frac{1}{2}}(\ast F)_i, \nabla_j\Phi} + \bangle{\ep^{\frac{1}{2}}(\ast F)_j, \nabla_i\Phi},\ \ \ \ d^*h = -\bangle{\ep^{\frac{1}{2}}\ast F_{\nabla}, \nabla\Phi}.
\]
\vskip 1mm
\item[(c)] $q = -2\ep^{-\frac{1}{2}}d^*h$.
\end{enumerate}
\end{lemm}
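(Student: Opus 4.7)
\smallskip

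\noindent\emph{Proof plan.} All three parts are direct computations, with part (a) being algebraic and parts (b)--(c) following from the Yang--Mills--Higgs equations plus the Bianchi identity. The main points to verify are two cancellations, both driven by $\mathrm{Ad}$-invariance of $\langle\cdot,\cdot\rangle$.

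For part (a), I would first unpack $\ep^{-1}e_{\ep}$ using~\eqref{eq:e-definition} and $w=\frac{1-|\Phi|^2}{2}$ to obtain
\[
\ep^{-1}e_{\ep}(\nabla,\Phi) = \ep|F_{\nabla}|^2 + \ep^{-1}|\nabla\Phi|^2 + \lambda\ep^{-3}w^2.
\]
The only non-trivial step is rewriting $\ep|F_{\nabla}|^2 - \ep|\langle F_{\nabla},\Phi\rangle|^2$. Applying the orthogonality identity~\eqref{eq: bracket_norm} componentwise to each $F_{ij}\in\mathfrak{su}(E)$ and summing, one gets
\[
|[F_{\nabla},\Phi]|^2 + |\langle F_{\nabla},\Phi\rangle|^2 = |F_{\nabla}|^2|\Phi|^2 = |F_{\nabla}|^2(1-2w),
\]
so $|F_{\nabla}|^2 - |\langle F_{\nabla},\Phi\rangle|^2 = 2w|F_{\nabla}|^2 + |[F_{\nabla},\Phi]|^2$, yielding the claimed formula for $\xi$.

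For part (b), I would compute directly using $h_{i} = \ep^{1/2}\langle(\ast F_{\nabla})_i,\Phi\rangle$ and the metric-compatibility of $\nabla$:
\[
(dh)_{ij} = \nabla_i h_j - \nabla_j h_i = \ep^{1/2}\bigl\langle \nabla_i(\ast F_{\nabla})_j - \nabla_j(\ast F_{\nabla})_i,\ \Phi\bigr\rangle + \ep^{1/2}\bigl[\langle(\ast F_{\nabla})_j,\nabla_i\Phi\rangle - \langle(\ast F_{\nabla})_i,\nabla_j\Phi\rangle\bigr].
\]
The first bracket is $\ep^{1/2}\langle(d_{\nabla}\ast F_{\nabla})_{ij},\Phi\rangle$. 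Since on $2$-forms $d_{\nabla}^* = \ast d_{\nabla}\ast$ (by~\eqref{eq: d_star}), the first equation in~\eqref{eq: 2nd_order_crit_pt_intro} rewrites as $\ep^2 d_{\nabla}\ast F_{\nabla} = \ast[\nabla\Phi,\Phi]$, and since $\langle[\nabla_k\Phi,\Phi],\Phi\rangle = 0$ by $\mathrm{Ad}$-invariance, this term vanishes, giving the stated formula for $(dh)_{ij}$. For $d^* h$, I would write $d^*h = -\nabla^i h_i$ and expand by Leibniz again; the contribution $\ep^{1/2}\langle\nabla^i(\ast F_{\nabla})_i,\Phi\rangle$ vanishes because $\nabla^i(\ast F_{\nabla})_i = -d_{\nabla}^*(\ast F_{\nabla}) = \ast d_{\nabla}\ast\ast F_{\nabla} = \ast d_{\nabla}F_{\nabla} = 0$ by the Bianchi identity (using $\ast\ast = \mathrm{id}$ on $2$-forms in dimension $3$). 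What remains is exactly $-\langle\ep^{1/2}\ast F_{\nabla},\nabla\Phi\rangle$.

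Part (c) is then immediate from the second formula in (b) and the definition of $q$. I do not anticipate any serious obstacle; the only care needed is in the sign conventions for $\ast$, $d_{\nabla}^*$ on $1$- versus $2$-forms, and keeping track of which vanishings come from the Yang--Mills--Higgs equation versus from the Bianchi identity.
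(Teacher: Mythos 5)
Your proposal is correct and follows essentially the same route as the paper: part (a) via the componentwise identity~\eqref{eq: bracket_norm} together with $|\Phi|^2 = 1-2w$, part (b) via Leibniz plus the rewriting $d_{\nabla}(\ast F_{\nabla}) = \ep^{-2}[\ast\nabla\Phi,\Phi]$ (killed by $\mathrm{Ad}$-invariance) for $dh$ and $d_{\nabla}^*(\ast F_{\nabla})=0$ from Bianchi for $d^*h$, and part (c) as an immediate consequence. The sign bookkeeping with the convention~\eqref{eq: d_star} checks out, so there is nothing to add.
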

\begin{proof}
Part (a) follows from a simple calculation using~\eqref{eq: bracket_norm} and the relation $2w + |\Phi|^2 = 1$. Next, from the Yang--Mills--Higgs equations~\eqref{eq: 2nd_order_crit_pt_intro} and the Bianchi identity we get
\[
d_\nabla (\ast F_{\nabla}) = \ep^{-2}[*\nabla\Phi, \Phi],\ \ \ d_{\nabla}^* (* F_{\nabla}) = 0.
\]
Consequently, letting $e_1, e_2, e_3$ be a local geodesic frame on $M$, we compute
\[
\begin{split}
\ep^{-\frac{1}{2}}d^*h =\ & -e_i\bangle{(\ast F)_i, \Phi}= \bangle{d_{\nabla}^* \ast F, \Phi} - \bangle{\ast F_{\nabla}, \nabla\Phi} = -\bangle{\ast F_{\nabla}, \nabla\Phi},
\end{split}
\]
and that
\[
\begin{split}
\ep^{-\frac{1}{2}}(dh)_{ij} =\ & e_{i}\bangle{(\ast F)_j, \Phi} -  e_{j}\bangle{(\ast F)_i, \Phi}\\
=\ & \bangle{ (d_{\nabla}\ast F)_{ij}, \Phi}  - \bangle{(\ast F)_i, \nabla_j\Phi} + \bangle{(\ast F)_j, \nabla_i\Phi}\\
=\ & \ep^{-2}\bangle{[(*\nabla\Phi)_{ij}, \Phi], \Phi} - \bangle{(\ast F)_i, \nabla_j\Phi} + \bangle{(\ast F)_j, \nabla_i\Phi}\\
=\ &  - \bangle{(\ast F)_i, \nabla_j\Phi} + \bangle{(\ast F)_j, \nabla_i\Phi},
\end{split}
\]
where to get the last line we used the fact that $\bangle{[\ \cdot\ , \Phi], \Phi} = 0$. This proves (b), from which we immediately get part (c).
\end{proof}

We proceed to describe the setup of the convergence result mentioned above. As before we take $\Omega$ to be an open subset of $M^3$, but suppose that we have a sequence of Riemannian metrics $(g_i)$ that converges smoothly on compact subsets of $\Omega$ to some metric $g$, and that there exist constants $\rho_0, A_0, A_1, \cdots$ such that~\eqref{eq:injectivity_radius_for_estimates} and~\eqref{eq:curvature_bound_for_estimates} hold for $g$ and all $g_i$. These constants in turn determine $\mu_1$ as in~\eqref{eq:radius_rel_curvature}, and we again define
\[
\rho_1 = \mu_1 \rho_0.
\]
The dependence of various quantities on the choice of metric, when we want to emphasize it, will be marked by superscripts or subscripts. Thus, for instance, $B_r^{g_i}(x)$ denotes a geodesic ball with respect to $g_i$. Also, expressions of the form $|\nabla^{k}(\ \cdot\ )|_{g_i}$ signify that the covariant derivative and the norm are taken with respect to $g_i$. For another example, given a configuration $(\nabla, \Phi)$, we use $e_\ep^{g_i}(\nabla,\Phi)$ to denote the quantity defined by~\eqref{eq:e-definition} where the tensor norms are computed using $g_i$, and $\cY_{\ep}^{g_i}(\nabla, \Phi)$ denotes its integral over $M$ with respect to $\vol_{g_i}$.

Now, fix $\lambda > 0$ and suppose $(\ep_i)$ is a sequence of positive numbers converging to $0$ such that for each $i$ we have a solution $(\nabla_i, \Phi_i)$ of~\eqref{eq: 2nd_order_crit_pt_intro} on $(\Omega, g_i)$ with $\ep = \ep_i$. For brevity, we denote $e_{\ep_i}^{g_i}(\nabla_i, \Phi_i)$ simply by $e_{\ep_i}^{g_i}$, and also introduce the functions
\[
w_i = \frac{1}{2}(1 - |\Phi_i|^2),\ \ \ \ \xi_i = \ep_i^{-1}e_{\ep_i}^{g_i} - \ep_i \big|  \bangle{F_{\nabla_i}, \Phi_i} \big|_{g_i}^2,\ \ \ \ q_i = 2\bangle{\ast_{g_i}F_{\nabla_i},\nabla_i \Phi_i}_{g_i},
\]
along with the real-valued $1$-form
\[
h_i = \ep_{i}^{\frac{1}{2}}\bangle{\ast_{g_i}F_{\nabla_i}, \Phi_i},
\]
where $\ast_{g_i}$ denotes the Hodge star operator with respect to $g_i$. 
\begin{prop}\label{prop:local_convergence}
In the above setting, suppose $\lambda \in (0, \lambda_0]$ and let $\overline{\eta}$, $(\eta_0, \tau_0)$, and $(\theta_1, \tau_1)$ denote, respectively, the thresholds given by Lemma~\ref{lemm:improved_coarse_estimate_base}, Lemma~\ref{lemm:exp_decay_base} and Lemma~\ref{lemm:nablaPhi_exp_decay_base}. Assume further that for some pre-compact subset $\Omega'\subset \Omega$ and some geodesic ball $B_{96\rho}^{g}(x_0)\subset \Omega'$ with $\rho \in (0, \frac{\rho_1}{96})$, there holds for all $i$ that
\begin{equation}\label{eq:small_energy_for_convergence}
\int_{B_{96\rho}^g(x_0)} \ep_i^2 |F_{\nabla_i}|_{g_i}^2 + |\nabla_i\Phi_i|_{g_i}^2 \vol_{g_i} \leqslant \ep_i \cdot \min\{\overline{\eta}, \eta_0, \theta_1 \mu\}, 
\end{equation}
where $\mu = \min\{\lambda, 1\}$, and that
\begin{equation}\label{eq:small_w_for_convergence}
\|w_i\|_{\infty; B_{72\rho}^{g}(x_0)} \leqslant \beta,
\end{equation}
for some $\beta \in (0, \frac{1}{4(\lambda_0 + 2)})$. Then we have the following.
\vskip 1mm
\begin{enumerate}
\item[(a)] $\xi_i \to 0$ smoothly on $B_{3\rho}^{g}(x_0)$.
\vskip 1mm
\item[(b)] Up to taking a subsequence, the $1$-forms $h_i$ converge smoothly on $B_{2\rho}^{g}(x_0)$, and the limit $h$ is harmonic with respect to $g$ in the sense that $dh = 0$ and $d^*_g h = 0$.
\vskip 1mm
\item[(c)] $q_i \to 0$ smoothly on $B_{3\rho}^{g}(x_0)$.
\end{enumerate}
\end{prop}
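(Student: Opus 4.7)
The plan is to treat all three parts as consequences of the exponential decay estimates established in Section~\ref{subsec:exp-decay} together with the improved polynomial bounds in Section~\ref{subsec:improved-estimates}. The hypotheses~\eqref{eq:small_energy_for_convergence} and~\eqref{eq:small_w_for_convergence}, posed on the enlarged ball $B_{96\rho}^g(x_0)$, are exactly what is needed so that, after absorbing the small fluctuation of $g_i$ into $g$ for $i$ large (the metric balls $B^{g_i}$ and $B^g$ are comparable up to arbitrarily small error, and all geometric quantities of $g_i$ satisfy the same uniform bounds $A_k$ for $i$ large), Lemma~\ref{lemm:exp_decay_base}, Proposition~\ref{prop:exp_decay}, Lemma~\ref{lemm:nablaPhi_exp_decay_base}, Proposition~\ref{prop:nablaPhi_exp_decay}, and Proposition~\ref{prop:improved_estimates} are simultaneously applicable on the smaller geodesic balls nested inside $B_{96\rho}^g(x_0)$, with constants independent of $i$. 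Throughout, write $\eta_i := \ep_i^{-1}\int_{B_{96\rho}^g(x_0)}\Psi_0^2 \vol_{g_i}$ for the energy on the big ball, which is bounded above by a constant depending only on $\lambda_0$.

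\smallskip
\noindent\emph{Part (a).} Using the identity in Lemma~\ref{lemm:identities_for_local_convergence}(a), we have
\[
\xi_i = 2w_i\,\ep_i|F_{\nabla_i}|_{g_i}^2 + \ep_i^{-1}|\nabla_i\Phi_i|_{g_i}^2 + \lambda\ep_i^{-3}w_i^2 + \ep_i\big|[F_{\nabla_i},\Phi_i]\big|_{g_i}^2.
\]
Each of the four summands and each of their covariant derivatives goes to $0$ uniformly on $B_{3\rho}^g(x_0)$: the term $\ep_i|[F_{\nabla_i},\Phi_i]|^2 = \Theta_0^2/\ep_i$ decays exponentially by Proposition~\ref{prop:exp_decay}; the terms $\ep_i^{-1}|\nabla_i\Phi_i|^2$ and $\ep_i^{-3}w_i^2$ decay exponentially by Lemma~\ref{lemm:nablaPhi_exp_decay_base} and Proposition~\ref{prop:nablaPhi_exp_decay}; and the mixed term $2w_i\ep_i|F_{\nabla_i}|^2$ is bounded above by $C\,e^{-a\sqrt{\mu}\rho/\ep_i}\cdot \Psi_0^2$, where the pointwise bound $\Psi_0^2 \lesssim \rho^{-3}\ep_i\eta_i$ from Proposition~\ref{prop:improved_estimates} ensures that this too tends to $0$ (in fact polynomially fast after cancellation). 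Higher derivatives are handled the same way, since derivatives of $F_{\nabla_i}$ that escape the exponential factors are controlled by $\Psi_k$ through Propositions~\ref{prop:coarse_estimate} and~\ref{prop:improved_estimates}, and are thus bounded by negative powers of $\ep_i$ that are easily absorbed by the exponential. The derivatives of $g_i$ cause no trouble as $g_i \to g$ smoothly.

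\smallskip
\noindent\emph{Part (b).} The uniform $C^k$ estimates for $h_i = \ep_i^{1/2}\langle \ast_{g_i}F_{\nabla_i},\Phi_i\rangle$ on $B_{2\rho}^g(x_0)$ follow directly from the Leibniz rule, Proposition~\ref{prop:improved_estimates}, and Proposition~\ref{prop:coarse_estimate}: for example the leading contribution gives $|h_i|\lesssim \ep_i^{1/2}|F_{\nabla_i}|\,|\Phi_i|\lesssim \ep_i^{-1/2}\Psi_0 \lesssim \rho^{-3/2}\eta_i^{1/2}$, and higher derivatives scale analogously with powers of $(\sqrt{\mu}\rho)^{-1}$ that are uniform in $i$. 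By a diagonal Arzel\`a--Ascoli argument, a subsequence of $h_i$ converges in $C^k$ for every $k$ to a limit $h$. To identify $h$ as harmonic with respect to $g$, apply Lemma~\ref{lemm:identities_for_local_convergence}(b), which shows that both $dh_i$ and $d^*_{g_i}h_i$ are, up to sign, pairings of $\ep_i^{1/2}\ast_{g_i}F_{\nabla_i}$ with $\nabla_i\Phi_i$; by the pointwise bound $\ep_i^{1/2}|F_{\nabla_i}|\lesssim \rho^{-3/2}\eta_i^{1/2}$ (again from Proposition~\ref{prop:improved_estimates}) and the exponential decay of $|\nabla_i\Phi_i|$ from Lemma~\ref{lemm:nablaPhi_exp_decay_base}, both quantities go to $0$ smoothly on compact subsets of $B_{2\rho}^g(x_0)$. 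Passing to the limit with $g_i \to g$ yields $dh = 0$ and $d^*_g h = 0$.

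\smallskip
\noindent\emph{Part (c).} By Lemma~\ref{lemm:identities_for_local_convergence}(c), we have $q_i = -2\ep_i^{-1/2}d^*_{g_i}h_i = 2\langle \ast_{g_i}F_{\nabla_i},\nabla_i\Phi_i\rangle_{g_i}$. The pointwise estimate $|q_i| \leqslant 2|F_{\nabla_i}||\nabla_i\Phi_i| \lesssim \ep_i^{-1/2}\eta_i^{1/2}\cdot e^{-a\sqrt{\mu}\rho/\ep_i}\eta_i^{1/2}$ combines Proposition~\ref{prop:improved_estimates} with the exponential decay of $|\nabla_i\Phi_i|$ from Lemma~\ref{lemm:nablaPhi_exp_decay_base}, and the exponential factor defeats the $\ep_i^{-1/2}$ divergence. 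For covariant derivatives of $q_i$, expand by the Leibniz rule; in each summand at least one factor is a covariant derivative of $\nabla_i\Phi_i$, which decays exponentially by Proposition~\ref{prop:nablaPhi_exp_decay}, while the complementary factor involves derivatives of $F_{\nabla_i}$ controlled by negative powers of $\ep_i$ via Proposition~\ref{prop:coarse_estimate}. The exponential therefore dominates, and the smooth convergence $q_i \to 0$ on $B_{3\rho}^g(x_0)$ follows.

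\smallskip
The main technical point is ensuring the various smallness thresholds (on $\eta_i$, on $\beta$, and on $\ep_i/\rho$ relative to $\sqrt{\mu}$) are satisfied simultaneously on a nested sequence of balls of radii decreasing from $96\rho$ down to $3\rho$, and that the uniform geometric bounds transferring from $g$ to $g_i$ (for $i$ large) do not degrade the constants in the decay estimates; once this bookkeeping is set up, the argument is essentially a systematic application of the estimates of Section~\ref{sec:estimates}.
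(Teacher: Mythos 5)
Your proposal is correct and follows essentially the same route as the paper: both derive the uniform exponential-decay and improved polynomial estimates from Lemmas~\ref{lemm:exp_decay_base},~\ref{lemm:nablaPhi_exp_decay_base} and Propositions~\ref{prop:exp_decay},~\ref{prop:nablaPhi_exp_decay},~\ref{prop:improved_estimates} on nested balls (the paper handles the $g_i$-versus-$g$ ball comparison via the explicit inclusions~\eqref{eq:local_convergence_ball_inclusion}), and then read off (a), (b), (c) from the identities of Lemma~\ref{lemm:identities_for_local_convergence} exactly as you do.
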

\begin{proof}
Without loss of generality we may assume further that, for all $i$, we have 
\begin{equation}\label{eq:small_ep_for_local_convergence}
\frac{\ep_i}{\rho} \leqslant \min\{\tau_0, \tau_1\sqrt{\mu}\},
\end{equation}
and that 
\[
B_{3r/4}^{g_i}(x) \subset B_{r}^{g}(x) \subset B_{5r/4}^{g_i}(x), \text{ whenever }B_r^{g}(x) \subset \Omega' \text{ with }r \in (0, \rho_1).
\]
In particular, 
\begin{equation}\label{eq:local_convergence_ball_inclusion}
B_{54\rho}^{g_i}(x_0) \subset B_{72\rho}^{g}(x_0),\ \ \ B_{72\rho}^{g_i}(x_0) \subset B_{96\rho}^{g}(x_0),\ \ \ B_{3\rho}^{g}(x_0) \subset B_{72\rho/16}^{g_i}(x_0).
\end{equation}
Next we derive the estimates from which the desired conclusions are drawn. To simplify notation, we temporarily drop the subscript $i$ in $\ep_i$, $(\nabla_i, \Phi_i)$, $w_i$, $\xi_i$, $h_i$ and $q_i$. We also drop the reference to the metric $g_i$ in tensor norms, covariant derivatives, and the Hodge star operator. That said, we still denote by $B_r^{g}(x)$ geodesic balls with respect to the limiting metric $g$. As before, the assumption~\eqref{eq:small_w_for_convergence} immediately gives
\begin{equation}\label{eq:Phi_bound_for_local_convergence}
\frac{1}{2} \leqslant |\Phi|^2 \leqslant \frac{3}{2} \text{ on }B^{g}_{72\rho}(x_0).
\end{equation}
Moreover, combining~\eqref{eq:small_w_for_convergence} with the no-concentration assumption~\eqref{eq:small_energy_for_convergence}, the smallness~\eqref{eq:small_ep_for_local_convergence} of $\ep$, and the inclusions~\eqref{eq:local_convergence_ball_inclusion}, we may invoke Lemma~\ref{lemm:exp_decay_base}, Proposition~\ref{prop:exp_decay}, Lemma~\ref{lemm:nablaPhi_exp_decay_base}, Proposition~\ref{prop:nablaPhi_exp_decay}, and Proposition~\ref{prop:improved_estimates} on $B_{72\rho}^{g_i}(x_0)$ to get the following estimates on $B_{3\rho}^{g}(x_0)$ for all $m \in \NN \cup \{0\}$:
\begin{equation}\label{eq:estimates_for_local_convergence}
\begin{split}
\big|\nabla^m[\ep F_{\nabla}, \Phi]\big| + \big|\nabla^m[\nabla \Phi, \Phi]\big| \leqslant\ & C \ep^{-m-1}e^{-a\frac{\rho}{\ep}},\\
w \leqslant\ & C e^{-a\frac{\sqrt{\mu}\rho}{\ep}},\\
|\nabla^{m + 1}\Phi| \leqslant\ & C(\sqrt{\mu}\rho)^{-m-1}e^{-a\frac{\sqrt{\mu}\rho}{\ep}},\\
|\nabla^m (\ep F_{\nabla})| + |\nabla^{m + 1}\Phi| \leqslant\ & C (\sqrt{\mu}\rho)^{-m-\frac{3}{2}}\ep^{\frac{1}{2}},
\end{split}
\end{equation}
where $C = C(m, \lambda_0, A_1, \cdots, A_{m + 1})$ and $a = a(m)$. In any case, $C$ and $a$ do not depend on $i$. From the above estimates together with Lemma~\ref{lemm:identities_for_local_convergence}(a) and~\eqref{eq:trading_factors}, we see that 
\begin{equation}\label{eq:local_convergence_xi_bound}
\begin{split}
\|\xi\|_{\infty; B^g_{3\rho}(x_0)} \leqslant\ & C(\sqrt{\mu}\rho)^{-3} e^{-a \frac{\sqrt{\mu}\rho}{\ep}}.
\end{split}
\end{equation}
Similarly, differentiating $\xi$ and noting that $\nabla^m w = -\frac{1}{2}\nabla^m \big(|\Phi|^2\big)$ for $m \in \NN$, we have
\begin{equation}\label{eq:local_convergence_xi_derivative_bound}
\|\nabla^{m} \xi\|_{\infty; B^g_{3\rho}(x_0)} \leqslant C(\sqrt{\mu}\rho)^{-m-3}e^{-a\frac{\sqrt{\mu}\rho}{\ep}}.
\end{equation}
Next, expressing $h$ as $\ep^{-\frac{1}{2}}\bangle{\ep \ast F_{\nabla}, \Phi}$, we see from~\eqref{eq:Phi_bound_for_local_convergence} and~\eqref{eq:estimates_for_local_convergence} that
\begin{equation}\label{eq:local_convergence_h_bound}
\|h\|_{\infty; B_{3\rho}^{g}(x_0)} \leqslant C(\sqrt{\mu}\rho)^{-\frac{3}{2}}.
\end{equation}
For the covariant derivatives of $h$ we have
\begin{equation}\label{eq:local_convergence_h_derivative_bound}
\begin{split}
\|\nabla^m h\|_{\infty; B_{3\rho}^{g}(x_0)} \leqslant\ & C_m\ep^{-\frac{1}{2}} |\bangle{\nabla^m(\ep F_{\nabla}), \Phi}|  + C_m \sum_{k = 1}^{m} \ep^{-\frac{1}{2}} |\bangle{\nabla^{m-k}(\ep F_{\nabla}), \nabla^{k}\Phi}| \\
\leqslant\ & C(\sqrt{\mu}\rho)^{-m-\frac{3}{2}}.
\end{split}
\end{equation}
Finally, from Lemma~\ref{lemm:identities_for_local_convergence}(b) and~\eqref{eq:estimates_for_local_convergence} we easily get
\begin{equation}\label{eq:local_convergence_hodge_estimates}
\begin{split}
|dh| + |d^*h| \leqslant\ & C\ep^{-\frac{1}{2}} |\ep F_{\nabla}|\cdot|\nabla\Phi|\leqslant C(\sqrt{\mu}\rho)^{-\frac{5}{2}}e^{-a\frac{\sqrt{\mu}\rho}{\ep}}\  \text{ on }B_{3\rho}^{g}(x_0).
\end{split}
\end{equation}
Likewise, differentiating the formulas for $dh$ and $d^*h$ in Lemma~\ref{lemm:identities_for_local_convergence}(b), we estimate
\begin{equation}\label{eq:local_convergence_hodge_derivative_estimates}
\begin{split}
|\nabla^m dh| + |\nabla^m d^*h| \leqslant\ & C_m\sum_{k = 0}^{m} \ep^{-\frac{1}{2}} |\nabla^{m-k}(\ep F_{\nabla})| |\nabla^{k + 1}\Phi|\\
\leqslant\ & C(\sqrt{\mu}\rho)^{-m-\frac{5}{2}}e^{-a\frac{\sqrt{\mu}\rho}{\ep}}, \text{ on }B_{3\rho}^{g}(x_0).
\end{split}
\end{equation}
We now put back the subscript $i$ and notice that $\xi_i \to 0$ uniformly on $B_{3\rho}^{g}(x_0)$ by~\eqref{eq:local_convergence_xi_bound}. Moreover, with $\nabla^{g_i}$ denoting the Levi--Civita connection of $g_i$, we see by~\eqref{eq:local_convergence_xi_derivative_bound} that $|(\nabla^{g_i})^{m}\xi_i|_{g_i} \to 0$ uniformly on $B_{3\rho}^{g}(x_0)$, for all $m \in \NN$. Since $g_i$ converges smoothly to $g$ on $\Omega'$, we conclude that, for all $m \in \NN \cup \{0\}$, 
\[
|(\nabla^{g})^{m}\xi_i|_{g} \to 0 \text{ uniformly on }B_{3\rho}^{g}(x_0).
\]
This proves (a). Similarly, we infer from~\eqref{eq:local_convergence_h_bound} and~\eqref{eq:local_convergence_h_derivative_bound} that $h_i$, together with its covariant derivatives with respect to $g$ of all orders, are bounded uniformly in $i$ on $B_{3\rho}^{g}(x_0)$. From this and a diagonal argument we get a subsequence of $(h_i)$, which we do not relabel, that converges smoothly on $B_{2\rho}^{g}(x_0)$. Taking into account also~\eqref{eq:local_convergence_hodge_estimates} and~\eqref{eq:local_convergence_hodge_derivative_estimates}, and using again the smooth convergence $g_i \to g$, we deduce that the limit must be harmonic with respect to $g$. This proves (b). Finally, recalling the identity in Lemma~\ref{lemm:identities_for_local_convergence}(c) and again using the estimates~\eqref{eq:local_convergence_hodge_estimates} and~\eqref{eq:local_convergence_hodge_derivative_estimates}, we get for all $m \in \NN \cup \{0\}$ that
\[
|(\nabla^{g_i})^{m} q_i|_{g_i} \leqslant C \ep_i^{-\frac{1}{2}}(\sqrt{\mu}\rho)^{-m-\frac{5}{2}} e^{-a\frac{\sqrt{\mu}\rho}{\ep_i}}  \ \text{ on }B_{3\rho}^{g}(x_0).
\]
Using once again the convergence of $g_i$ to $g$ on $\Omega'$, we conclude that $q_i \to 0$ smoothly on $B_{3\rho}^{g}(x_0)$. This finishes the proof.
\end{proof}
\begin{rmk}\label{rmk:smallness_condition_relation}
For later use, we make two simple observations about Proposition~\ref{prop:local_convergence}.
\begin{enumerate}
\item[(i)] Letting $\theta_0 = \theta_0(\lambda_0, \beta)$ denote the constant from Remark~\ref{rmk:clearing_out}(i), with the parameters $\lambda_0, \beta$ being the ones in Proposition~\ref{prop:local_convergence}, we see that if for all $i$ there holds
\[
\int_{B_{96\rho}^{g}(x_0)} e_{\ep_i}^{g_i}(\nabla_i, \Phi_i) \vol_{g_i} \leqslant \ep_i \cdot  \min\{\overline{\eta}, \eta_0,  \theta_0 \mu, \theta_1 \mu\},
\]
then eventually the assumptions~\eqref{eq:small_energy_for_convergence} and~\eqref{eq:small_w_for_convergence} are fulfilled.
\vskip 1mm
\item[(ii)] Under the hypotheses of Proposition~\ref{prop:local_convergence}, upon combining conclusions (a) and (b), we infer that
\[
\ep_i^{-1}e_{\ep_i}^{g_i} \rightarrow |h|_g^2 \ \text{ smoothly on }B_{2\rho}^{g}(x_0),
\]
where $h$ is the limiting harmonic $1$-form from part (b). 
\end{enumerate}
\end{rmk}
Below we continue to work in the setting described just before the statement of Proposition~\ref{prop:local_convergence}, but specialize to the case where $M$ is closed and $g_i = g$ for all $i$. Further, we assume there exists some $\Lambda > 0$ such that
\begin{equation}\label{eq:energy_bound_for_nonharmonic}
\cY_{\ep_i}(\nabla_i, \Phi_i) \leqslant \ep_i \cdot \Lambda, \text{ for all }i.
\end{equation}
In particular, fixing any $\rho \in (0, \frac{\rho_1}{4})$, we see that the assumptions~\eqref{eq:e_estimate_energy_bound} and~\eqref{eq:e_estimate_potential_bound} from Lemma~\ref{lemm:coarse_estimate_base} and Lemma~\ref{lemm:w_mean_value_estimate}, respectively, are fulfilled at any $x_0 \in M$, and hence provided $i$ is so large that $\ep_i \leqslant \rho$, we may invoke~\eqref{eq:w_estimate} and conclude that 
\begin{equation}\label{eq:Phi_global_bound_for_nonharmonic}
\|\Phi_i\|_{\infty; M} \leqslant K_0 = K_0(\Lambda, \lambda, \lambda_0),
\end{equation}
where recall that $\lambda_0$ is the upper bound for $\lambda$ from Proposition~\ref{prop:local_convergence}. Next, we consider the Hodge decomposition of the $1$-forms $h_i = \ep_i^{\frac{1}{2}}\bangle{* F_{\nabla_i}, \Phi_i}$, namely
\begin{equation}\label{eq:Hodge_decomposition_longitudinal_part}
\ep_i^{\frac{1}{2}}\bangle{* F_{\nabla_i}, \Phi_i} = \widetilde{h}_i + df_i + d^*\alpha_i,
\end{equation}
where $f_i \in C^{\infty}(M)$, $\alpha_i \in \Omega^2(M)$, and $\widetilde{h}_i$ is a harmonic $1$-form on $M$. By subtracting from $f_i$ its average over $M$ and also dropping the components in $\ker(d^*)$ from the Hodge decomposition of $\alpha_i$, we can further assume that 
\[
\int_M f_i = 0,\quad d\alpha_i = 0,
\]
and that 
\[
\int_{M}\langle \alpha_i, h\rangle = 0,\quad \text{for all harmonic $2$-forms }h \text{ on }M.
\]
Finally, given $\beta \in (0, \frac{1}{4(\lambda_0 + 2)})$, we let $\theta_0 = \theta_0(\lambda_0, \beta)$ denote the constant in Remark~\ref{rmk:clearing_out}(i).
\begin{prop}\label{prop:convergence_nonharmonic_parts}
Assume that $(M^3, g)$ is closed. Then, in the above setting, we have:
\vskip 1mm
\begin{enumerate}
\item[(a)] Up to taking a subsequence, there exist a function $f$ and a $2$-form $\alpha$, both of class $W^{1, 2} \cap L^6$ on $M$, such that 
\begin{equation}\label{eq:convergence_nonharmonic_parts}
f_i \to f,\ \ \alpha_i \to \alpha, \text{ weakly in $W^{1, 2}$ and strongly in $L^2$}.
\end{equation}
\vskip 1mm
\item[(b)] If, in addition, for some geodesic ball $B_{48\rho}(x_0)$ in $M$ with $\rho \in (0, \frac{\rho_1}{48})$, we have 
\begin{equation}\label{eq:small_energy_for_nonharmonic}
\int_{B_{48\rho}(x_0)} e_{\ep_i}(\nabla_i, \Phi_i) \vol_{g} \leqslant \ep_i \cdot \min\{\overline{\eta}, \eta_0, \theta_0\mu, \theta_1 \mu\}, \text{ for all }i,
\end{equation}
then, along a subsequence, $(f_i)$ and $(\alpha_i)$ converge smoothly on compact subsets of $B_{2\rho}(x_0)$.
\end{enumerate}
\end{prop}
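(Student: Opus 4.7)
For part (a), the key step is to derive uniform $W^{1,2}$-bounds on $f_i$ and $\alpha_i$ from the energy hypothesis~\eqref{eq:energy_bound_for_nonharmonic}. Using the global $L^\infty$-bound~\eqref{eq:Phi_global_bound_for_nonharmonic} on $|\Phi_i|$ together with the $\cY_{\ep_i}$-bound, I would first estimate
\[
\|h_i\|_{L^2(M)}^2 \leqslant K_0^2 \int_M \ep_i |F_{\nabla_i}|^2 \vol_g \leqslant K_0^2 \ep_i^{-1}\cY_{\ep_i}(\nabla_i, \Phi_i) \leqslant K_0^2 \Lambda.
\]
The three summands on the right-hand side of~\eqref{eq:Hodge_decomposition_longitudinal_part} are mutually $L^2$-orthogonal on the closed manifold $M$, so each of $\widetilde{h}_i$, $df_i$ and $d^*\alpha_i$ is uniformly bounded in $L^2$. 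The normalizations $\int_M f_i = 0$, $d\alpha_i = 0$ and $\alpha_i \perp \mathscr{H}^2(M)$ then combine with the standard Hodge-theoretic Poincar\'e inequalities (expressing invertibility of $\Delta$ on the complement of its kernel in each degree) to produce uniform $W^{1,2}$-bounds on $f_i$ and $\alpha_i$. Part (a) then follows by extracting a common weakly convergent subsequence, invoking the $3$-dimensional Sobolev embedding $W^{1,2}\hookrightarrow L^6$ for the $L^6$-membership of the limits, and Rellich--Kondrachov for strong $L^2$-convergence.

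For part (b), I would begin with the harmonic parts $\widetilde{h}_i$: since $\mathscr{H}^1(M)$ is finite-dimensional on a closed manifold, the $L^2$-bound above forces subsequential $C^\infty$-convergence $\widetilde{h}_i \to \widetilde{h}_\infty \in \mathscr{H}^1(M)$ on all of $M$. Next, given any compact $K \subset B_{2\rho}(x_0)$, choose $r_K > 0$ small enough that $B_{96 r_K}(x) \subset B_{48\rho}(x_0)$ for every $x \in K$. Then~\eqref{eq:small_energy_for_nonharmonic} and Remark~\ref{rmk:smallness_condition_relation}(i) guarantee that, once $i$ is large, Proposition~\ref{prop:local_convergence} applies on $B_{96 r_K}(x)$ for each $x \in K$. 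Covering $K$ by finitely many balls $B_{r_K}(x_\alpha)$ and performing a diagonal extraction yield a subsequence along which $h_i$ converges smoothly on a neighbourhood $U \supset K$ to a $g$-harmonic $1$-form; in particular, $dh_i$ and $d^*h_i$ converge smoothly to $0$ on $U$.

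Finally, to separate $df_i$ from $d^*\alpha_i$ and recover convergence of $f_i$ and $\alpha_i$ individually, I would use the global identities
\[
\Delta f_i = d^*h_i, \qquad \Delta \alpha_i = dh_i,
\]
which follow by applying $d^*$ and $d$, respectively, to~\eqref{eq:Hodge_decomposition_longitudinal_part} and invoking $d\widetilde{h}_i = d^*\widetilde{h}_i = 0$, $d\alpha_i = 0$, and $d^2 = (d^*)^2 = 0$. By the previous paragraph, the right-hand sides converge smoothly to zero on $U$. Combined with the global $L^2$-bounds on $f_i$ and $\alpha_i$ from part~(a), interior elliptic regularity for the scalar and the Hodge Laplacians yields uniform $C^k$-bounds on $f_i$ and $\alpha_i$ on a slightly smaller neighbourhood of $K$ for every $k \in \NN$. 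A final diagonal extraction as $K$ exhausts $B_{2\rho}(x_0)$ produces the required smoothly convergent subsequence. The only technical care needed is the bookkeeping of the normalization conditions when invoking the Poincar\'e-type inequalities for $f_i$ and $\alpha_i$, and the uniform choice of radius $r_K$ in the covering argument; no step uses machinery beyond what has already been established.
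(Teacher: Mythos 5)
Your proposal is correct and follows essentially the same route as the paper: part (a) via $L^2$-orthogonality of the Hodge summands, the energy and $\|\Phi_i\|_\infty$ bounds, and Poincar\'e-type inequalities under the stated normalizations; part (b) by applying $d^*$ and $d$ to the decomposition so that $\Delta f_i$ and $\Delta\alpha_i$ are controlled by $d^*h_i$ and $dh_i$, which Proposition~\ref{prop:local_convergence} (via Remark~\ref{rmk:smallness_condition_relation}(i)) shows tend to zero smoothly, and then elliptic bootstrapping against the global $L^2$-bounds. The only cosmetic differences are that the paper works directly on the nested balls $B_{3\rho}\supset B_{5\rho/2}\supset B_{2\rho}$ using first-order interior estimates for $d+d^*$ on $df_i$ and $d^*\alpha_i$ (quoting the explicit exponential-decay estimates from the proof of Proposition~\ref{prop:local_convergence}) rather than your covering argument and second-order Laplacian regularity; both are equivalent.
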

\begin{proof}
As before, we assume, without loss of generality, that $\frac{\ep_i}{\rho} \leqslant \min\{\tau_0, \tau_1\sqrt{\mu}\}$ for all $i$, where $\mu = \min\{\lambda, 1\}$. Since the terms on the right-hand side of~\eqref{eq:Hodge_decomposition_longitudinal_part} are mutually $L^2$-orthogonal, we have 
\begin{equation}\label{eq:longitudinal_hodge_orthogonality_bound}
\|\widetilde{h}_i\|_{2; M}^2 + \|df_i\|_{2; M}^2 + \|d^*\alpha_i\|_{2; M}^2 = \|\ep_i^{\frac{1}{2}}\bangle{* F_{\nabla_i}, \Phi_i}\|_{2;M}^2 \leqslant C_{\lambda, \lambda_0, \Lambda},
\end{equation}
where the inequality follows from~\eqref{eq:energy_bound_for_nonharmonic} and~\eqref{eq:Phi_global_bound_for_nonharmonic}. Applying the Poincar\'e inequality to $f_i$, and basic $L^2$-estimate for $d + d^*$ to $\alpha_i$, we get
\begin{equation}\label{eq:longitudinal_hodge_W12_bound}
\begin{split}
\|f_i\|_{1, 2; M} \leqslant\ & C_{M}\|df_i\|_{2; M}\leqslant C_{M,\lambda, \lambda_0, \Lambda},\\
\|\alpha_i\|_{1, 2; M} \leqslant\ & C_M \|d^*\alpha_i\|_{2; M}\leqslant C_{M, \lambda, \lambda_0, \Lambda},
\end{split}
\end{equation}
from which part (a) follows, the fact that $f$ and $\alpha$ are of class $L^6$ being a consequence of the $3$-dimensional Sobolev embedding $W^{1,2} \hookrightarrow L^6$. For part (b), as mentioned in Remark~\ref{rmk:smallness_condition_relation}(i), the assumption~\eqref{eq:small_energy_for_nonharmonic} puts us in the setting of Proposition~\ref{prop:local_convergence}. In particular, noting that
\[
d^* df_i = d^*(\ep_i^{\frac{1}{2}}\bangle{* F_{\nabla_i}, \Phi_i}), \ \ \ dd^*\alpha_i = d(\ep_i^{\frac{1}{2}}\bangle{* F_{\nabla_i}, \Phi_i}),
\]
and following the argument leading to the estimates~\eqref{eq:local_convergence_hodge_estimates} and~\eqref{eq:local_convergence_hodge_derivative_estimates} in the proof of Proposition~\ref{prop:local_convergence}, we deduce that 
\[
\lim_{i \to \infty}\big(\|d^* df_i\|_{m, 2; B_{3\rho}(x_0)} + \|d d^*\alpha_i\|_{m, 2; B_{3\rho}(x_0)} \big) = 0, \text{ for all }m \in \NN \cup \{0\}.
\]
Combining this with standard interior estimates for $d + d^*$ applied to the forms $df_i$ and $d^*\alpha_i$, and recalling the bound~\eqref{eq:longitudinal_hodge_orthogonality_bound} on their $L^2$-norms, we get for all $m \in \NN \cup \{0\}$ that
\[
\limsup_{i \to \infty}\big( \|df_i\|_{m + 1, 2; B_{\frac{5\rho}{2}}(x_0)} + \|d^*\alpha_i\|_{m + 1, 2; B_{\frac{5\rho}{2}}(x_0)} \big) < \infty.
\]
Then, by the $L^2$-bound on $f_i$ and $\alpha_i$ in~\eqref{eq:longitudinal_hodge_W12_bound}, and the fact that $f_i$ are functions while $d\alpha_i = 0$, we get
\[
\limsup_{i \to \infty}\big(\|f_i\|_{m +2, 2; B_{2\rho}(x_0)} + \|\alpha_i\|_{m +2, 2; B_{2\rho}(x_0)} \big) < \infty, \text{ for all }m \in \NN \cup \{0\}.
\]
Standard arguments then yield a subsequence of $i$'s along which the asserted smooth convergence holds.
\end{proof}
\subsection{Some other global consequences of the estimates}\label{subsec:consequences-estimates}
For use later in the paper, in this section we collect two other consequences of the previous estimates. Below we assume either that $(M^3, g)$ is closed or that it is non-compact with bounded geometry, with further restrictions specified when needed. In particular, there exist constants $\rho_0, A_0, A_1, \cdots$ such that~\eqref{eq:injectivity_radius_for_estimates} and~\eqref{eq:curvature_bound_for_estimates} hold with $\Omega = M$, and we define $\rho_1 = \mu_1 \rho_0$, where $\mu_1$ is given by~\eqref{eq:radius_rel_curvature}.

We begin with a decay property at infinity of solutions to~\eqref{eq: 2nd_order_crit_pt_intro} with finite energy. For our purposes in this paper, we only need to establish the decay qualitatively. See~\cite{fadel2023asymptotics} for more quantitative decay estimates, over asymptotically conical $3$-manifolds, for finite energy critical points of the $SU(2)$ Yang--Mills--Higgs functional without the self-interaction term.
\begin{prop}\label{prop:finite_action_decay}
Suppose $M$ is non-compact with bounded geometry, and let $(\nabla, \Phi)$ be a smooth solution of~\eqref{eq: 2nd_order_crit_pt_intro} on $M$ satisfying 
\begin{equation}\label{eq:finite_action_for_decay}
\cY_{\ep}(\nabla, \Phi) < \infty.
\end{equation}
Assume also that $\ep < \frac{\rho_1}{4}$. Then for all $m \in \NN \cup \{0\}$, we have that $|\nabla^m w|$, $|\nabla^{m + 1}\Phi|$ and $|\nabla^m F_{\nabla}|$ decay uniformly to zero at infinity, in the sense that for all $\alpha > 0$ there exists a compact subset $K$ of $M$ such that
\[
|\nabla^m w| + |\nabla^{m + 1}\Phi| + |\nabla^m F_{\nabla}| < \alpha, \text{ on }M \setminus K.
\]
\end{prop}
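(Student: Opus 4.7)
My plan is to reduce the statement to two ingredients: the tail vanishing of the energy density forced by finite action, and the coarse local estimates of Lemma~\ref{lemm:w_mean_value_estimate} and Proposition~\ref{prop:coarse_estimate}. Fix once and for all a scale $\rho \in [\ep,\rho_1/4)$, which exists because $\ep < \rho_1/4$. From the finite-energy hypothesis and absolute continuity of the Lebesgue integral, for every $\delta > 0$ there is a compact $K_\delta \subset M$ with $\int_{M\setminus K_\delta} e_{\ep}\,\vol_g < \delta$. Hence both
\[
\eta(y) := \ep^{-1}\!\int_{B_{4\rho}(y)}\!\!\!\bigl(\ep^2|F_\nabla|^2 + |\nabla\Phi|^2\bigr)\vol_g, \qquad \Lambda'(y) := \ep^{-1}\!\int_{B_{4\rho}(y)}\!\!\!\frac{\lambda w^2}{\ep^2}\vol_g
\]
tend to $0$ uniformly as $y\to\infty$ (i.e., as $d(y,K_\delta)\to\infty$, with $\delta$ shrinking).

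The next step is to deduce the decay of $|w|$ itself. Once $y$ is sufficiently far out that $\eta(y) \leqslant 1$, Lemma~\ref{lemm:w_mean_value_estimate} applies with $\Lambda = 1$ and the above $\Lambda'(y)$, giving $\|1-|\Phi|\|_{\infty;B_{13\rho/4}(y)} \to 0$ uniformly in $y$. Since $|w| = \tfrac12|1-|\Phi||\cdot(1+|\Phi|)$, this simultaneously proves the uniform decay of $|w|$ and, once $y$ lies outside a sufficiently large compact set, provides the global-in-$B_{3\rho}(y)$ bound $|\Phi|\leqslant 2$.

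Armed with this bound I would invoke Proposition~\ref{prop:coarse_estimate} on each such $B_{4\rho}(y)$ with $\Lambda = 1$ and $K_0 = 2$, yielding
\[
\|\Psi_k\|_{\infty;B_{2\rho}(y)} \leqslant C_k\,\ep^{-k-1}\eta(y)^{1/2},
\]
with $C_k$ depending only on $k$, $\lambda$, $K_0$, and $A_1,\dots,A_k$, hence independent of $y$. Because $\ep$ is fixed and $\eta(y)\to 0$, and since $|\nabla^{m+1}\Phi|\leqslant \Psi_m$ and $|\nabla^m F_\nabla|\leqslant \ep^{-1}\Psi_m$, the required uniform decay of $|\nabla^{m+1}\Phi|$ and $|\nabla^m F_\nabla|$ follows. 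The remaining quantities $|\nabla^m w|$ for $m\geqslant 1$ are then just bookkeeping: the Leibniz rule applied to $w = \tfrac12(1-|\Phi|^2)$ gives
\[
\nabla^m w = -\sum_{j=0}^{m-1}\binom{m-1}{j}\bangle{\nabla^{j+1}\Phi,\nabla^{m-1-j}\Phi},
\]
and every term on the right contains at least one factor $|\nabla^k\Phi|$ with $k\geqslant 1$, which is controlled by some $\Psi_{k-1}$ and so tends to zero, while the remaining factor is either another such derivative or $|\Phi|$, bounded uniformly by $K_0=2$ in the far-out region.

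I do not expect a serious obstacle. The only step that is not a direct invocation of a previous estimate is the decay of $w$, and that reduces to the observation that both quantities $\eta(y)$ and $\Lambda'(y)$ supplied to Lemma~\ref{lemm:w_mean_value_estimate} vanish as $y\to\infty$; the one mild nuisance is the need to ensure that the fixed scale $\rho$ can simultaneously satisfy $\ep \leqslant \rho < \rho_1/4$, which is precisely the quantitative content of the standing hypothesis $\ep < \rho_1/4$.
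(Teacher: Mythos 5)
Your proposal is correct and follows essentially the same route as the paper: use finite action to make the tail energy on fixed-size balls vanish uniformly at infinity, feed this into Lemma~\ref{lemm:w_mean_value_estimate} to get decay of $|w|$ and a uniform bound $|\Phi|\leqslant K_0$ far out, then invoke Proposition~\ref{prop:coarse_estimate} to control $\Psi_k$ and finish $|\nabla^m w|$ by the Leibniz rule. The only cosmetic differences are that the paper works at the scale $\rho=\ep$ and phrases the tail-vanishing via $\mu(R)=\ep^{-1}\int_{M\setminus B_R(p_0)}e_\ep$ for a fixed basepoint, whereas you fix $\rho\in[\ep,\rho_1/4)$ and appeal to absolute continuity of the integral; both are valid.
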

\begin{proof} 
Fixing a reference point $p_0 \in M$ and letting
\[
\mu(R) = \ep^{-1}\int_{M \setminus B_R(p_0)}e_{\ep}(\nabla, \Phi)\vol_{g},
\]
we deduce from the finite energy assumption~\eqref{eq:finite_action_for_decay} that $\lim_{R \to \infty}\mu(R) = 0$. Next we choose some $R_0 > 4\ep$ such that $\mu(R_0) < \lambda$, and observe that for all $x_0 \in M \setminus B_{2R_0}(p_0)$ we have 
\[
d(x_0, p_0) - 4\ep \geqslant \frac{1}{2}d(x_0, p_0) \geqslant R_0.
\]
From this we deduce that $B_{4\ep}(x_0) \subset M \setminus B_{\frac{d(x_0, p_0)}{2}}(p_0) \subset M \setminus B_{R_0}(p_0)$, and hence
\begin{equation}\label{eq:small_energy_far_out}
\int_{B_{4\ep}(x_0)}e_{\ep}(\nabla, \Phi) \vol_g \leqslant \ep \cdot \mu(\frac{d(x_0, p_0)}{2}) < \ep \cdot \lambda.
\end{equation}
Lemma~\ref{lemm:coarse_estimate_base} and Lemma~\ref{lemm:w_mean_value_estimate} (with $\lambda_0 = \lambda$ and $\rho = \ep$) then gives, respectively,
\begin{equation}\label{eq:finite_action_e_bound}
\ep^2 |F_{\nabla}|^2 + |\nabla \Phi|^2  \leqslant C\ep^{-2}\mu(\frac{d(x_0, p_0)}{2}) < C\ep^{-2}\lambda \text{ on }B_{3\ep}(x_0),
\end{equation}
and that 
\begin{equation}\label{eq:finite_action_Phi_bound}
|1 - |\Phi|| \leqslant C \lambda^{-\frac{1}{7}} \big[ \mu(\frac{d(x_0, p_0)}{2}) \big]^{\frac{1}{7}} \leqslant C \text{ on }B_{3\ep}(x_0),
\end{equation}
where the constants $C$ depend only on $\lambda$. The latter estimate implies~\eqref{eq:Phi_K_0}, and hence we are permitted to apply Proposition~\ref{prop:coarse_estimate} (again with $\rho = \ep$) to get for all $k \in \NN$ that
\begin{equation}\label{eq:finite_action_coarse_bound}
\ep^2 |\nabla^{k} F_{\nabla}|^2 + |\nabla^{k + 1}\Phi|^2 \leqslant C_{k, \lambda, A_1, \cdots, A_k}\ep^{-2k-2}\mu(\frac{d(x_0, p_0)}{2}), \text{ on }B_{2\ep}(x_0).
\end{equation}
Combining~\eqref{eq:finite_action_coarse_bound},~\eqref{eq:finite_action_Phi_bound}, and~\eqref{eq:finite_action_e_bound}, we get for all $k \in \NN$ that
\begin{equation}\label{eq:finite_action_w_derivative_bound}
|\nabla^k w| \leqslant C_{k, \lambda, A_1, \cdots, A_k} \ep^{-k} [\mu(\frac{d(x_0, p_0)}{2})]^{\frac{1}{2}}, \text{ on }B_{2\ep}(x_0),
\end{equation}
while for $w$ itself we have from~\eqref{eq:finite_action_Phi_bound} that
\[
|w| = |1 - |\Phi|| \cdot \frac{1 + |\Phi|}{2} \leqslant C_{\lambda} \cdot \lambda^{-\frac{1}{7}} \big[ \mu(\frac{d(x_0, p_0)}{2}) \big]^{\frac{1}{7}}, \text{ on }B_{3\ep}(x_0).
\]
Since $\ep$ and $\lambda$ are fixed while $x_0 \in M \setminus B_{2R_0}(p_0)$ is arbitrary, we obtain the desired conclusion from this last estimate together with~\eqref{eq:finite_action_w_derivative_bound},~\eqref{eq:finite_action_coarse_bound}, and~\eqref{eq:finite_action_e_bound}, upon recalling that $d(x_0, p_0) \to \infty$ as $x_0$ escapes compact subsets by the completeness of $M$, and that $\lim_{R \to \infty}\mu(R) = 0$.
\end{proof}

The next result is a direct consequence of the identity~\eqref{eq:w_laplacian_with_lambda} and the strong maximum principle. 
\begin{prop}\label{prop:maximum_principle_for_w}
Suppose either that $(M, g)$ is closed or that it is non-compact with bounded geometry. Let $(\nabla, \Phi)$ be a smooth solution of~\eqref{eq: 2nd_order_crit_pt_intro} on $M$, with $\ep < \frac{\rho_1}{4}$. In the case $M$ is non-compact, we assume in addition that $\cY_{\ep}(\nabla, \Phi) < \infty$. Then we have 
\begin{equation}\label{eq:Phi_bound_from_max_principle}
|\Phi(x)| \leqslant 1, \text{ for all }x \in M.
\end{equation}
Moreover, if equality holds at some point, then $|\Phi| \equiv 1$, in which case necessarily $\nabla\Phi \equiv 0$.
\end{prop}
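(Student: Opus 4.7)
The starting point is the Bochner identity~\eqref{eq:w_laplacian_with_lambda}, which rewrites as
\begin{equation*}
\Delta w + \frac{\lambda |\Phi|^2}{\ep^2}\, w \;=\; |\nabla\Phi|^2 \;\geqslant\; 0,
\end{equation*}
where $w = (1-|\Phi|^2)/2$. My plan is to view this as a linear elliptic equation for the scalar $w$ with non-negative zeroth order coefficient $\lambda|\Phi|^2/\ep^2$, and to apply the maximum principle twice: once to establish $w \geqslant 0$ on $M$, and once to treat the rigidity statement when equality holds at a point.

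For the bound $|\Phi| \leqslant 1$, I will argue by contradiction. Suppose $w(x_0) < 0$ at some $x_0 \in M$. In the closed case, $w$ attains a strictly negative minimum at some interior point $x_*$ by compactness. In the non-compact case, I would invoke Proposition~\ref{prop:finite_action_decay} (applicable thanks to the finite-energy hypothesis) to conclude that $w$ decays uniformly to zero at infinity, so $\inf_M w < 0$ is still attained at an interior point $x_*$. At such a minimum, the sign convention of the paper---whereby $\Delta = \nabla^*\nabla$ is non-negative definite on functions, so $\Delta f \leqslant 0$ at an interior minimum---forces $\Delta w(x_*) \leqslant 0$. On the other hand, $|\Phi(x_*)|^2 = 1 - 2w(x_*) > 1$ and $w(x_*) < 0$ together yield that the right-hand side of~\eqref{eq:w_laplacian_with_lambda} at $x_*$ equals $|\nabla\Phi(x_*)|^2 + \frac{\lambda |\Phi(x_*)|^2}{\ep^2}|w(x_*)|$, which is strictly positive since $\lambda > 0$. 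This contradicts $\Delta w(x_*) \leqslant 0$, proving $w \geqslant 0$, i.e., $|\Phi|\leqslant 1$.

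For the equality statement, suppose $|\Phi(x_0)| = 1$ at some $x_0 \in M$, so $w(x_0) = 0$ is a global minimum. Reading the Bochner identity in the analyst's sign convention $\widetilde{\Delta} = -\Delta$, it becomes
\begin{equation*}
\widetilde{\Delta} w - \frac{\lambda|\Phi|^2}{\ep^2}w \;=\; -|\nabla\Phi|^2 \;\leqslant\; 0,
\end{equation*}
so $w$ is a non-negative supersolution of a uniformly elliptic linear operator with non-positive zeroth-order coefficient. The strong minimum principle (e.g.~Gilbarg--Trudinger, Theorem~3.5) applied in a small ball around $x_0$ then forces $w$ to vanish identically on that ball; by connectedness of $M$, the standard open-closed continuation argument gives $w \equiv 0$ on all of $M$, i.e., $|\Phi| \equiv 1$. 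Feeding $w \equiv 0$ back into~\eqref{eq:w_laplacian_with_lambda} then gives $|\nabla\Phi|^2 \equiv 0$, hence $\nabla\Phi \equiv 0$.

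The only genuine subtlety is in the first step for the non-compact case, where Proposition~\ref{prop:finite_action_decay} is essential to guarantee that any negative infimum of $w$ is actually attained at an interior point; once this is in hand, the argument is a direct application of the scalar linear elliptic maximum principle, made possible by the positivity of $\lambda$.
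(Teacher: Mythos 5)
Your proof is correct and follows essentially the same route as the paper: the Bochner identity~\eqref{eq:w_laplacian_with_lambda}, Proposition~\ref{prop:finite_action_decay} to guarantee the infimum of $w$ is attained in the non-compact case, and the maximum principle for both the bound and the rigidity. The only cosmetic difference is in the first step, where you use the second-derivative test at the interior minimum directly, while the paper invokes the strong maximum principle to force $w$ constant and then derives the same algebraic contradiction; both are fine.
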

\begin{proof}
From~\eqref{eq:w_laplacian_with_lambda} we have
\begin{equation}\label{eq:w_diff_ineq}
\big( \Delta + \frac{\lambda|\Phi|^2}{\ep^2} \big)w = |\nabla\Phi|^2 \geqslant 0\  \text{ on }M.
\end{equation}
Assume by contradiction that $w$ becomes negative somewhere. Then thanks to Proposition~\ref{prop:finite_action_decay}, in both the compact and non-compact cases, we get some $x_0 \in M$ such that
\[
0 > w(x_0) = \inf_{x \in M} w(x).
\]
Since $M$ is connected, by~\eqref{eq:w_diff_ineq} and the strong maximum principle we conclude that $w \equiv w(x_0)$. Substituting this back into~\eqref{eq:w_diff_ineq} leads to
\[
(1-2w(x_0))\cdot w(x_0) \geqslant 0,
\]
a contradiction since $w(x_0) < 0$, and hence we must have $w \geqslant 0$ everywhere on $M$. Applying the strong maximum principle once more, we see that if $w$ vanishes somewhere, then $w \equiv 0$, in which case~\eqref{eq:w_diff_ineq} gives $|\nabla\Phi| \equiv 0$. The proof is complete.
\end{proof}

\section{Gap theorems}\label{sec:gap}
This section is dedicated to the proofs of the gap results stated as Theorems~\ref{thm: gap} and \ref{thm: gap_for_R3} in the introduction, as well as to giving a construction of reducible solutions of equation~\eqref{eq: 2nd_order_crit_pt_intro} with non-zero energy, at least in the case where $(M^3,g)$ is a closed $3$-manifold admitting non-zero harmonic $2$-forms; see Lemma~\ref{lem: counter-examp} and Example~\ref{example: counter-examp}.
\vspace*{0.5cm}
\subsection{Proofs of gap theorems}\label{subsec:proofs_of_gap}
Here we adopt the setting of \S\ref{subsec:consequences-estimates}, so that both~\eqref{eq:injectivity_radius_for_estimates} and~\eqref{eq:curvature_bound_for_estimates} hold with $\Omega = M$, and $\rho_1$ is given by~\eqref{ineq: rho_1_def}. 
We continue to define $\mu = \min\{\lambda, 1\}$, and let $\lambda_0$ be an upper bound for $\lambda$. 
Then Lemma~\ref{lemm:improved_coarse_estimate_base} and Lemma~\ref{lemm:nablaPhi_exp_decay_base} produce, respectively, thresholds $\overline{\eta}$ and $(\theta_1,\tau_1)$, all depending only on $\lambda_0$. Also, given $\beta \in (0, \min\{\frac{1}{6}, \frac{1}{2(\lambda_0 + 2)}\})$, we let $\theta_0 = \theta_0(\lambda_0, \beta)$ denote the constant from Remark~\ref{rmk:clearing_out}(i). The next result shows that if a certain non-concentration condition for a critical point $(\nabla,\Phi)$ holds throughout the manifold on balls of a fixed radius, then in fact $(\nabla,\Phi)$ is reducible satisfying \eqref{eq: reducible_pair_intro}.
\begin{prop}\label{prop:concentration_scale}
Suppose that $(M^3, g)$ has bounded geometry, 
and let $(\nabla, \Phi)$ be a smooth solution of~\eqref{eq: 2nd_order_crit_pt_intro} such that for some $\sigma \in (0, \frac{\rho_1}{4})$ there holds
\begin{equation}\label{eq:small_energy_everywhere}
\sup_{x \in M} \int_{B_{4\sigma}(x)}e_{\ep}(\nabla, \Phi) vol_g < \ep\cdot \min\{\overline{\eta}, \theta_0\mu, \theta_1\mu\}.
\end{equation}
When $M$ is non-compact, we assume also that $\cY_{\ep}(\nabla, \Phi) < \infty$. Then, provided $\frac{\ep}{\sigma} \leqslant \tau_1\sqrt{\mu}$, we have that $(\nabla,\Phi)$ is a reducible solution satisfying \eqref{eq: reducible_pair_intro}.
\end{prop}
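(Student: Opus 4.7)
The plan is to leverage the uniform no-concentration assumption \eqref{eq:small_energy_everywhere} to apply Lemma~\ref{lemm:nablaPhi_exp_decay_base} at every point of $M$, thereby upgrading the pointwise differential inequality \eqref{eq:nablaPhi_diff_ineq_for_gap} for $|\nabla\Phi|^2$ to a global one, and then invoke a maximum principle to force $\nabla\Phi\equiv 0$; from this, the full list of conditions in \eqref{eq: reducible_pair_intro} follows by algebraic manipulations of the Yang--Mills--Higgs equations \eqref{eq: 2nd_order_crit_pt_intro}.

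First, I would fix $\beta\in(0,\min\{\tfrac{1}{6},\tfrac{1}{2(\lambda_0+2)}\})$ so that the thresholds $\theta_0=\theta_0(\lambda_0,\beta)$ from Remark~\ref{rmk:clearing_out}(i) and $\theta_1=\theta_1(\lambda_0)$, $\tau_1=\tau_1(\lambda_0)$ from Lemma~\ref{lemm:nablaPhi_exp_decay_base} all make sense with the parameters prescribed by the proposition. Since $\mu=\min\{\lambda,1\}\leqslant\lambda$, the energy bound \eqref{eq:small_energy_everywhere} in particular yields
\[
\int_{B_{4\sigma}(x)}\frac{\lambda w^2}{\ep^2}\vol_g<\ep\cdot\theta_0\lambda,\qquad\text{for every }x\in M,
\]
and, since $\sigma<\rho_1/4$ and $\ep\leqslant\tau_1\sigma\sqrt{\mu}\leqslant\sigma$, Remark~\ref{rmk:clearing_out}(i) gives $\|1-|\Phi|\|_{\infty;B_{13\sigma/4}(x)}<\beta/2$ for all $x$; consequently $\|w\|_{\infty;B_{3\sigma}(x)}<\beta$ everywhere on $M$, which is exactly the hypothesis \eqref{eq:nablaPhi_exp_decay_Phi_two_sided} needed at every point. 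Combining the remaining bound $\ep\cdot\theta_1\mu$ in \eqref{eq:small_energy_everywhere} with the hypothesis $\ep/\sigma\leqslant\tau_1\sqrt{\mu}$, the full set of hypotheses of Lemma~\ref{lemm:nablaPhi_exp_decay_base} is satisfied with $\rho=\sigma$ and center $x$, for every $x\in M$. Applying conclusion \eqref{eq:nablaPhi_diff_ineq_for_gap} at each point and using that the balls $B_{11\sigma/4}(x)$ cover $M$, I obtain the global differential inequality
\[
\Delta|\nabla\Phi|^2\leqslant-\frac{\mu}{2\ep^2}|\nabla\Phi|^2\qquad\text{on all of }M.
\]

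Next, I would argue that $|\nabla\Phi|^2$ attains its supremum on $M$ (in the closed case this is automatic; in the non-compact case, by Proposition~\ref{prop:finite_action_decay} together with the finite-energy assumption, $|\nabla\Phi|$ decays uniformly at infinity, so the supremum is realized at some $x_0\in M$). At $x_0$ the Laplacian $\Delta|\nabla\Phi|^2\geqslant 0$ in our convention, combined with the displayed inequality forces $|\nabla\Phi|^2(x_0)\leqslant 0$; hence $\nabla\Phi\equiv 0$ on $M$. Substituting this back into the second equation of \eqref{eq: 2nd_order_crit_pt_intro} gives $(1-|\Phi|^2)\Phi\equiv 0$; since the uniform bound $w<\beta<1/2$ forces $|\Phi|^2>0$ everywhere, and $|\Phi|$ is constant (being parallel and continuous), we conclude $|\Phi|\equiv 1$.

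Finally, differentiating $\nabla\Phi\equiv 0$ yields $[F_\nabla,\Phi]=d_\nabla(\nabla\Phi)\equiv 0$; since $|\Phi|\equiv 1$, the adjoint $\mathrm{ad}(\Phi)$ is invertible on the transversal bundle $\fsu(E)^\perp$, so this forces $F_\nabla$ to be valued in $\langle\Phi\rangle$, that is, $F_\nabla=\langle F_\nabla,\Phi\rangle\Phi$. Writing $\eta=\langle F_\nabla,\Phi\rangle$, since $\Phi$ is parallel we have $d\eta=\langle d_\nabla F_\nabla,\Phi\rangle=0$ by the Bianchi identity, and $d^*\eta=-\langle d_\nabla^* F_\nabla,\Phi\rangle=-\ep^{-2}\langle[\nabla\Phi,\Phi],\Phi\rangle=0$ by the first equation of \eqref{eq: 2nd_order_crit_pt_intro}; thus $\eta\in\ker(d\oplus d^*)$, completing \eqref{eq: reducible_pair_intro}. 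The main technical point is really the verification step that places us in the hypotheses of Lemma~\ref{lemm:nablaPhi_exp_decay_base} \emph{uniformly} in the center $x$; once this is in place, the argument is a clean combination of the maximum principle with standard algebraic manipulations. The only mild subtlety is ensuring in the non-compact case that $|\nabla\Phi|$ actually attains its supremum, for which the finite-energy hypothesis (via Proposition~\ref{prop:finite_action_decay}) is essential.
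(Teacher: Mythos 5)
Your proof is correct and follows essentially the same route as the paper: verify the hypotheses of Lemma~\ref{lemm:nablaPhi_exp_decay_base} uniformly in the center via Remark~\ref{rmk:clearing_out}(i), obtain the global differential inequality \eqref{eq:nablaPhi_diff_ineq_for_gap}, force $\nabla\Phi\equiv 0$ by a maximum-principle argument (using Proposition~\ref{prop:finite_action_decay} in the non-compact case), and conclude with the algebra of Remark~\ref{rmk: reducible_sols}. The only immaterial difference is that you use the second-derivative test at an attained supremum in both cases, whereas the paper integrates the inequality over $M$ in the compact case and invokes the strong maximum principle for a contradiction in the non-compact case.
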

\begin{proof}
Thanks to~\eqref{eq:small_energy_everywhere} and our assumption on $\frac{\ep}{\sigma}$, at any given $x_0 \in M$ we may apply Remark~\ref{rmk:clearing_out}(i) and Lemma~\ref{lemm:nablaPhi_exp_decay_base} in succession. In particular, everywhere on $M$ we have
\begin{equation}\label{eq:Phi_bound_for_concentration_scale}
\frac{1}{2} \leqslant |\Phi|^2 \leqslant \frac{3}{2}, 
\end{equation}
\begin{equation}\label{eq:nablaPhi_diff_ineq_for_concentration_scale}
\Delta |\nabla\Phi|^2 + \frac{\mu}{2\ep^2} |\nabla\Phi|^2 \leqslant 0.
\end{equation}
Since $\Phi\not\equiv 0$, by Remark \ref{rmk: reducible_sols} we are left to prove that $\nabla\Phi\equiv 0$. We consider separately the compact and the non-compact cases. When $M$ is compact, we simply integrate~\eqref{eq:nablaPhi_diff_ineq_for_concentration_scale} over $M$ to find that $|\nabla\Phi| \equiv 0$. If $M$ is non-compact, we assume by contradiction that $|\nabla\Phi|$ is positive somewhere in $M$. Then since $\cY_{\ep}(\nabla, \Phi) < \infty$ and $\ep \leqslant \sigma < \frac{\rho_1}{4}$, we may invoke Proposition~\ref{prop:finite_action_decay} to deduce the existence of some $x_0 \in M$ such that
\[
0 < |\nabla\Phi(x_0)| = \sup_{x \in M}|\nabla\Phi(x)|.
\]
But then~\eqref{eq:nablaPhi_diff_ineq_for_concentration_scale} and the strong maximum principle forces $|\nabla\Phi| \equiv |\nabla\Phi(x_0)|$, in which case~\eqref{eq:nablaPhi_diff_ineq_for_concentration_scale} gives
\[
\frac{\mu}{2\ep^2}|\nabla\Phi(x_0)|^2 \leqslant 0,
\]
a contradiction. Thus we conclude that $|\nabla\Phi| \equiv 0$ in the non-compact case as well, as we wanted.
\end{proof}
We are now ready to prove:
\begin{thm}[Theorem \ref{thm: gap}]
Suppose $(M^3,g)$ has bounded geometry, and let $\lambda_0$ be an upper bound for $\lambda$. 
Then, there exist constants $\theta_{\mathrm{gap}} = \theta_{\mathrm{gap}}(\lambda_0)$ and $\tau_{\mathrm{gap}} = \tau_{\mathrm{gap}}(\lambda_0, M, g)$ 
such that if 
\[
\ep < \tau_{\mathrm{gap}} \cdot \min\{\sqrt{\lambda}, 1\}
\]
and if $(\nabla, \Phi) \in \mathscr{C}(E)$ is a 
solution of~\eqref{eq: 2nd_order_crit_pt_intro} satisfying 
    \[
    \ep^{-1}\cY_{\ep}(\nabla,\Phi)\leqslant \theta_{\mathrm{gap}} \cdot \min\{\lambda, 1\},
    \] then $(\nabla,\Phi)$ is reducible as in \eqref{eq: reducible_pair_intro}. In particular, if $(M^3,g)$ admits no non-zero $L^2$-bounded harmonic $2$-forms (or, equivalently, $1$-forms), for instance if furthermore we impose either of the following conditions:
\vskip 1mm
    \begin{itemize} {\normalsize 
        \item[(i)] $M$ is closed and $b_1(M)=0$, 
        \vskip 1mm
        \item[(ii)] $M$ is noncompact and $\mathrm{Ric}(g)\geqslant 0$,}
    \end{itemize}  
\vskip 1mm
then in fact $(\nabla,\Phi)$ is trivial, that is, $\cY_{\ep}(\nabla,\Phi)=0$.
\end{thm}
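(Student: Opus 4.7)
The plan is to deduce the reducibility assertion as a direct application of Proposition~\ref{prop:concentration_scale} with a fixed concentration radius $\sigma$ depending only on $(M,g)$, and then to derive the $\cY_{\ep}=0$ statement from the explicit description of reducible solutions in Remark~\ref{rmk: reducible_sols} combined with the vanishing of $L^{2}$-bounded harmonic $1$-forms. The key observation is that the \emph{global} hypothesis $\ep^{-1}\cY_{\ep}(\nabla,\Phi)\leqslant \theta_{\mathrm{gap}}\,\mu$ (with $\mu:=\min\{\lambda,1\}$) upgrades automatically to the \emph{local} no-concentration condition~\eqref{eq:small_energy_everywhere} required by Proposition~\ref{prop:concentration_scale}, provided $\theta_{\mathrm{gap}}$ is chosen below a suitable threshold depending only on $\lambda_{0}$.

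Concretely, let $\overline{\eta}(\lambda_0)$, $\theta_1(\lambda_0)$, $\tau_1(\lambda_0)$ be the thresholds of Lemma~\ref{lemm:improved_coarse_estimate_base} and Lemma~\ref{lemm:nablaPhi_exp_decay_base}, and let $\theta_0=\theta_0(\lambda_0)$ denote the constant from Remark~\ref{rmk:clearing_out}(i) associated with some fixed $\beta_0\in(0,\min\{\tfrac16,\tfrac1{2(\lambda_0+2)}\})$. I would set
\[
\sigma:=\tfrac{\rho_1}{8},\qquad \theta_{\mathrm{gap}}:=\tfrac12\min\{\overline{\eta},\theta_0,\theta_1\},\qquad \tau_{\mathrm{gap}}:=\tfrac{\tau_1\,\rho_1}{8}.
\]
All the hypotheses of Proposition~\ref{prop:concentration_scale} then follow at once: since $\mu\leqslant 1$, for any $x\in M$ one has
\[
\int_{B_{4\sigma}(x)} e_{\ep}\,\vol_g\leqslant \cY_{\ep}(\nabla,\Phi)\leqslant \ep\,\theta_{\mathrm{gap}}\,\mu\leqslant \tfrac{\ep}{2}\,\min\{\overline{\eta},\,\theta_0\mu,\,\theta_1\mu\};
\]
the bound $\ep<\tau_{\mathrm{gap}}\sqrt{\mu}$ unpacks to $\ep/\sigma<\tau_1\sqrt{\mu}$; and, in the non-compact case, $\cY_{\ep}(\nabla,\Phi)\leqslant \ep\,\theta_{\mathrm{gap}}\,\mu<\infty$ supplies the additional finite-energy hypothesis. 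Proposition~\ref{prop:concentration_scale} then yields the reducibility relations~\eqref{eq: reducible_pair_intro}.

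For the final conclusion under the $L^{2}$-harmonic-form hypothesis, note that~\eqref{eq: reducible_pair_intro} writes $F_{\nabla}=\omega\cdot\Phi$ with $\omega:=\langle F_{\nabla},\Phi\rangle$ satisfying $d\omega=0=d^{*}\omega$. Because $|\Phi|\equiv 1$, we have $|\omega|=|F_{\nabla}|$ pointwise, and hence $\ep^{2}\|\omega\|_{L^{2}}^{2}=\ep^{2}\|F_{\nabla}\|_{L^{2}}^{2}\leqslant \cY_{\ep}(\nabla,\Phi)<\infty$, so $\ast\omega$ is an $L^{2}$-bounded harmonic $1$-form on $(M,g)$. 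Under the non-existence assumption---in particular under either (i), via the Hodge isomorphism on closed manifolds, or (ii), via the Bochner--Greene vanishing cited in the footnote---this forces $\omega\equiv 0$, hence $F_{\nabla}\equiv 0$; combined with $\nabla\Phi\equiv 0$ and $|\Phi|\equiv 1$ this gives $e_{\ep}(\nabla,\Phi)\equiv 0$ and therefore $\cY_{\ep}(\nabla,\Phi)=0$. No substantive new obstacle arises in this argument: the hard analytic input---the exponential decay estimates forcing $|\Phi|\equiv 1$ and $\nabla\Phi\equiv 0$ from a no-concentration assumption---has already been absorbed into Proposition~\ref{prop:concentration_scale}. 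The only point requiring care is that $\sigma$ is chosen to be a \emph{fixed} multiple of $\rho_{1}$, which is precisely what produces $\tau_{\mathrm{gap}}$ with the asserted dependence on $(\lambda_{0},M,g)$ and $\theta_{\mathrm{gap}}$ with dependence on $\lambda_{0}$ alone.
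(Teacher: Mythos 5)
Your proposal is correct and follows essentially the same route as the paper: the same choice $\sigma=\rho_1/8$, the same thresholds $\theta_{\mathrm{gap}}\sim\min\{\overline{\eta},\theta_0,\theta_1\}$ and $\tau_{\mathrm{gap}}=\tau_1\rho_1/8$, an application of Proposition~\ref{prop:concentration_scale} to get reducibility, and then the Remark~\ref{rmk: reducible_sols} argument identifying $\ast\langle F_{\nabla},\Phi\rangle$ as an $L^2$ harmonic $1$-form to conclude triviality. Your extra factor of $\tfrac12$ in $\theta_{\mathrm{gap}}$ (to secure the strict inequality in~\eqref{eq:small_energy_everywhere}) and the explicit verification of the hypotheses are fine refinements of the same argument.
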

\begin{proof}[Proof of Theorem~\ref{thm: gap}]
            In the notation preceding Proposition \ref{prop:concentration_scale}, we take $\beta = \frac{1}{8(\lambda_0 + 2)}$ and set
            \[
            \theta_{\text{gap}}:=\min\{\overline{\eta}, \theta_0, \theta_1\}\quad\text{and}\quad \tau_{\text{gap}}:=\frac{\rho_1\tau_1}{8}.
            \] Then, whenever $\ep$ and $(\nabla,\Phi)$ satisfy the assumptions of the statement, the smallness bound \eqref{eq:small_energy_everywhere} is satisfied for $\sigma=\frac{\rho_1}{8}$ and by Proposition~\ref{prop:concentration_scale} we conclude that $(\nabla,\Phi)$ satisfies \eqref{eq: reducible_pair_intro}. By Remark \ref{rmk: reducible_sols}, the final conclusion follows.
            
            
           Although it is well known that if $(M,g)$ is complete and $\mathrm{Ric}(g)\geqslant 0$ then $(M,g)$ admits no non-zero $L^2$-bounded harmonic $1$-forms, for completeness purposes we include here a direct proof of the final conclusion under assumption (ii). Assume that $\mathrm{Ric}(g) \geqslant 0$ and $M$ is noncompact. Since $\nabla$ is Yang--Mills and the $2$-form $F_{\nabla}$ takes values in the abelian subbundle $\langle\Phi\rangle\subset\mathfrak{su}(E)$, we can use the Bochner formula~\eqref{eq: 1form_Weitzenbock} to obtain that 
            \begin{equation*}
                \nabla^\ast\nabla (\ast F_\nabla) = - \ast [\ast F_\nabla,\ast F_\nabla ] - \mathrm{Ric}_g(\ast F_\nabla) = - \mathrm{Ric}_g(\ast F_\nabla).
            \end{equation*}
            Taking the inner product of this with $\ast F_\nabla$, we have
            \begin{equation}\label{eq: bochner-gap}
                \frac{1}{2}\Delta|F_\nabla|^2 = - \mathrm{Ric}_g(\ast F,\ast F) - |\nabla \ast F_\nabla|^2\leqslant - |\nabla \ast F_\nabla|^2,
            \end{equation} where in the later inequality we used the assumption $\mathrm{Ric}(g) \geqslant 0$. Thus, by the maximum principle, either $|F_\nabla|$ has no local maxima or it is constant.
            Since $M$ is noncompact, Proposition~\ref{prop:finite_action_decay} applies to show that $|F_\nabla|$ decays uniformly to zero at infinity, so it must attain, and hence be constantly equal to, its maximum value in the interior of $M$. 
            This together with the decay property just noted forces $|F_{\nabla}|$ to be identically equal to $0$.
\end{proof}
\begin{rmk}
    Closed, oriented $3$-manifolds with vanishing first (and second) Betti number(s) are exactly the \emph{rational homology $3$-spheres}. 
    A class of examples are the 
    Lens spaces $L(p,q)$, which include the $3$-sphere $\mathbb{S}^3$ and the real projective $3$-space $\mathbb{RP}^3$.
\end{rmk}

 In the case where $(M, g)$ is the standard Euclidean $3$-space, it turns out that the scaling invariance of the Euclidean metric allows us to drop the smallness 
 condition on the parameter $\ep$ 
 from the hypotheses of Theorem \ref{thm: gap}. To prove this, we first 
 recall a few standard facts about scalings, some of which will only be used later in Section~\ref{sec:asymptotic}.  
 
 Given a scale $\delta>0$, we will write $|\cdot|_{\delta}$, $\vol_{\delta}$ and $\ast_{\delta}$ for the norm, volume form and Hodge star operator, respectively, associated to $\delta^2 g$. Also, we let  $B_r^{\delta}(x)$ and $\cY_{\ep}^{\delta}$ denote the ball and the Yang--Mills--Higgs energy, respectively, with respect to $\delta^2 g$. With these notations, it is straightforward to check the following identities. The details are omitted.
 
 \begin{lemm}\label{lemm:scaling_g_to_delta_g}
     For each $\ep, \delta\in \RR_+$ and $(\nabla,\Phi)\in \mathscr{C}(E)$ we have:
     \begin{enumerate}
         \setlength\itemsep{0.5em}
         \item[(a)] $|F_{\nabla}|_{\delta}^2 = \delta^{-4}|F_{\nabla}|^2$;
         \item[(b)] $|\nabla\Phi|_{\delta}^2 = \delta^{-2}|\nabla\Phi|^2$;
         \item[(c)] $\vol_{\delta} = \delta^3 \vol$;
         \item[(d)] $e_{\delta\ep}^{\delta}(\nabla,\Phi) = \delta^{-2} e_{\ep}(\nabla,\Phi)$;
         \item[(e)] $B_{\delta r}^{\delta}(x) = B_r(x)$;
         \item[(f)] $\displaystyle (\delta\ep)^{-1}\int_{B_{\delta r}^{\delta}(x)} e_{\delta\ep}^{\delta}(\nabla,\Phi)\vol_{\delta} = \ep^{-1}\int_{B_r(x)} e_{\ep}(\nabla,\Phi) \vol$;
         \item[(g)] $\cY_{\delta\ep}^{\delta}(\nabla,\Phi) = \delta\cY_{\ep}(\nabla,\Phi)$.
    \end{enumerate}
\end{lemm}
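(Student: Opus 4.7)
The proof amounts to direct computation from the definitions of the rescaled norms, volume form, and distance. The plan is to establish (a)--(c) first by recording how tensor norms and volume transform under conformal scaling, and then to derive (d), (e), (f), (g) as bookkeeping consequences.

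First I would observe that if $g_\delta := \delta^2 g$, then the inverse metric satisfies $g_\delta^{-1} = \delta^{-2}g^{-1}$, while the fibre metric on $\fsu(E)$ (which comes from the Ad-invariant inner product on $\fsu(2)$) is untouched by the rescaling. For a $p$-form with values in $\fsu(E)$, contracting $p$ times with $g_\delta^{-1}$ gives an overall factor of $\delta^{-2p}$ in the pointwise norm squared. Applying this with $p = 2$ to $F_\nabla \in \Omega^2(M,\fsu(E))$ yields (a), and with $p = 1$ to $\nabla\Phi \in \Omega^1(M,\fsu(E))$ yields (b). For (c), the volume form of a conformal rescaling in dimension $n$ satisfies $\vol_{\delta^2 g} = \delta^n \vol_g$; with $n = 3$ this gives $\vol_\delta = \delta^3 \vol$.

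Next, for (d), I would combine (a), (b), and the observation that $|\Phi|^2$ is a scalar function built from the untouched fibre metric (and therefore independent of the choice between $g$ and $\delta^2 g$). Substituting into the definition~\eqref{eq:e-definition} with parameter $\delta\ep$:
\[
e_{\delta\ep}^{\delta}(\nabla,\Phi) = (\delta\ep)^2 \delta^{-4}|F_\nabla|^2 + \delta^{-2}|\nabla\Phi|^2 + \frac{\lambda}{4(\delta\ep)^2}(1-|\Phi|^2)^2 = \delta^{-2}\, e_{\ep}(\nabla,\Phi),
\]
as claimed. For (e), the geodesics of $\delta^2 g$ are the same as those of $g$ up to reparametrization, so distances simply scale: $d_{\delta^2 g}(x,y) = \delta\, d_g(x,y)$, and hence $d_{\delta^2 g}(x,y) < \delta r$ is equivalent to $d_g(x,y) < r$.

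For (f), I would combine (c), (d), and (e):
\[
(\delta\ep)^{-1}\int_{B_{\delta r}^{\delta}(x)} e_{\delta\ep}^{\delta}(\nabla,\Phi)\,\vol_{\delta} = (\delta\ep)^{-1}\int_{B_{r}(x)} \delta^{-2} e_{\ep}(\nabla,\Phi)\cdot \delta^3 \vol = \ep^{-1}\int_{B_r(x)} e_{\ep}(\nabla,\Phi)\vol,
\]
with the three powers of $\delta$ cancelling exactly. Finally, (g) is the special case of (f) obtained by integrating the pointwise identity (d) against $\vol_\delta = \delta^3 \vol$ over all of $M$, which gives the factor $\delta^{-2}\cdot \delta^3 = \delta$. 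There is no real obstacle here beyond keeping track of the exponents of $\delta$ contributed by the metric, its inverse, and the volume form; the only point worth emphasizing is that the fibre inner product on $\fsu(E)$ plays no role in the scaling, so $|\Phi|^2$ and the potential term $(1-|\Phi|^2)^2$ are unaffected, which is precisely what makes the exponents balance in (d).
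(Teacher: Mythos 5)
Your proof is correct, and it is exactly the "straightforward check" the paper alludes to (the paper omits the details entirely). The exponent bookkeeping in (a)--(d) and the cancellation $\delta^{-1}\cdot\delta^{-2}\cdot\delta^{3}=1$ in (f) are all verified correctly.
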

In particular, it follows that the set $\mathscr{C}(E)$ remains unchanged upon replacing the metric $g$ by $\delta^2 g$. In accordance with the remarks immediately following~\eqref{eq: 2nd_order_crit_pt_intro}, we say a configuration $(\nabla, \Phi) \in \mathscr{C}(E)$ is a critical point of $\cY_{\delta\ep}^{\delta}$ on $(M, \delta^2 g)$ if it satisfies 
\begin{equation}\label{eq: 2nd_order_crit_pt_scaled}
        \begin{cases}
            (\delta\varepsilon)^2 (d_{\nabla})^{\ast_{\delta}}F_{\nabla} = [\nabla\Phi,\Phi],\\
            \nabla^{\ast_{\delta}}\nabla\Phi = \frac{\lambda}{2(\delta\varepsilon)^2}(1-|\Phi|^2)\Phi,
        \end{cases}
    \end{equation} 
    where $(\cdot )^{\ast_{\delta}}$ denotes the formal $L^2$-adjoint computed with respect to $\delta^2 g$ instead.
We then have the following scaling property for the Yang--Mills--Higgs equations: 
\begin{lemm}[Scaling of critical points]\label{lem: scaling}
$(\nabla,\Phi)$ is a critical point for $\cY_{\ep}$ on $(M^3,g)$ if and only if $(\nabla,\Phi)$ is a critical point for $\cY_{\delta\ep}^{\delta}$ on $(M^3,\delta^2 g)$.
\end{lemm}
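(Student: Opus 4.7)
The plan is to reduce the statement to the scaling identity $\cY_{\delta\ep}^{\delta}(\nabla,\Phi) = \delta \cdot \cY_{\ep}(\nabla,\Phi)$ already established in Lemma~\ref{lemm:scaling_g_to_delta_g}(g). First I would emphasize that the configuration space $\mathscr{C}(E)$, together with the space of smooth, compactly supported variations $(a,\phi) \in \Omega_c^1 \oplus \Omega_c^0(M, \fsu(E))$, is intrinsically defined and does not depend on the choice of Riemannian metric on $M$. This is the essential observation, because it allows a common class of test variations to be used on both sides of the equivalence. Next I would apply Lemma~\ref{lemm:scaling_g_to_delta_g}(g) pointwise in $t$ to any such variation to deduce
\[
\frac{d}{dt}\Big|_{t = 0}\cY_{\delta\ep}^{\delta}(\nabla + ta, \Phi + t\phi) = \delta \cdot \frac{d}{dt}\Big|_{t = 0}\cY_{\ep}(\nabla + ta, \Phi + t\phi).
\]
Since $\delta > 0$, the two first variations vanish simultaneously on all such variations, which immediately yields the asserted equivalence of critical points.

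As a sanity check, and to reassure the reader who prefers the viewpoint of the Euler--Lagrange systems~\eqref{eq: 2nd_order_crit_pt_intro} and~\eqref{eq: 2nd_order_crit_pt_scaled}, I would then verify the equivalence directly from these equations. On a $3$-manifold, the Hodge star $*_{\delta}$ of $\delta^2 g$ is related to $*$ by $*_{\delta} = \delta^{3-2p}\,*$ on $p$-forms, so the formula~\eqref{eq: d_star} yields $d_\nabla^{*_{\delta}} F_\nabla = \delta^{-2} d_\nabla^* F_\nabla$ on the $2$-form $F_\nabla$. Analogously, computing the rough Laplacian in the orthonormal frame $\{\delta^{-1} e_i\}$ adapted to $\delta^2 g$ gives $\nabla^{*_{\delta}}\nabla\Phi = \delta^{-2}\nabla^*\nabla\Phi$. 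Since the brackets $[\nabla\Phi,\Phi]$ and $(1-|\Phi|^2)\Phi$ are metric-independent, substituting these two relations into~\eqref{eq: 2nd_order_crit_pt_scaled} and clearing the common factor $\delta^{-2}$ reduces the scaled system to~\eqref{eq: 2nd_order_crit_pt_intro}, and conversely.

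There is no substantive obstacle here; the argument ultimately reduces to Lemma~\ref{lemm:scaling_g_to_delta_g}(g) combined with the elementary fact that positive scalar multiples of a functional share the same critical points. The only care required is the observation that the space of admissible variations is itself metric-independent, so that the first variation of $\cY_{\delta\ep}^{\delta}$ transforms by a single overall factor of $\delta$ rather than by a more complicated metric-dependent factor.
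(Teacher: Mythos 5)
Your proposal is correct, and your second paragraph is essentially the paper's proof verbatim: the authors compute $*_{\delta} = \delta^{3-2k}*$ on $k$-forms, deduce $(d_{\nabla})^{*_{\delta}}F_{\nabla} = \delta^{-2}d_{\nabla}^{*}F_{\nabla}$ and $\nabla^{*_{\delta}}\nabla\Phi = \delta^{-2}\nabla^{*}\nabla\Phi$ from~\eqref{eq: d_star}, and observe that~\eqref{eq: 2nd_order_crit_pt_scaled} and~\eqref{eq: 2nd_order_crit_pt_intro} then coincide after clearing the factor $\delta^{-2}$. Your first, variational argument via Lemma~\ref{lemm:scaling_g_to_delta_g}(g) is also sound and arguably more conceptual, but note one small point of bookkeeping: the paper \emph{defines} ``critical point of $\cY_{\delta\ep}^{\delta}$ on $(M,\delta^2 g)$'' as a solution of the scaled Euler--Lagrange system~\eqref{eq: 2nd_order_crit_pt_scaled}, not as a configuration with vanishing first variation. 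So to make the variational route self-contained you would either have to re-derive that~\eqref{eq: 2nd_order_crit_pt_scaled} is indeed the Euler--Lagrange system of $\cY_{\delta\ep}^{\delta}$ with respect to $\delta^2 g$ (the first-variation computation in the introduction goes through for any metric), or fall back on the direct operator identities — which is exactly what your second paragraph, and the paper, do. In short, keep the direct computation as the proof and regard the variational identity as the motivating remark rather than the other way around.
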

\begin{proof}
    Notice that $\ast_{\delta} = \delta^{3-2k}\ast$ acting on $k$-forms. Thus, by~\eqref{eq: d_star},
    \begin{align*}
    (d_{\nabla})^{\ast_{\delta}} F_{\nabla} &= \ast_{\delta} d_{\nabla} \ast_{\delta} F_{\nabla} = \delta^{-2}\ast d_{\nabla}\ast F_{\nabla}=\delta^{-2}d_{\nabla}^{\ast}F_{\nabla}.
    \end{align*} Likewise, 
    \begin{align*}
        \nabla^{\ast_{\delta}}\nabla\Phi &= -\ast_{\delta}d_{\nabla}\ast_{\delta}\nabla\Phi =-\delta^{-2}\ast d_{\nabla}\ast\nabla\Phi =\delta^{-2}\nabla^{\ast}\nabla\Phi.
    \end{align*} 
   Putting these together, we easily see that~\eqref{eq: 2nd_order_crit_pt_scaled} is equivalent to~\eqref{eq: 2nd_order_crit_pt_intro}, and the result follows.
\end{proof}
Finally, we come to the gap result on $\RR^3$ promised earlier:
\begin{thm}[Theorem \ref{thm: gap_for_R3}]
Suppose $\lambda \in (0, \lambda_0]$. For any $\ep > 0$, if $(\nabla,\Phi)$ is a smooth solution to~\eqref{eq: 2nd_order_crit_pt_intro} on an $SU(2)$-bundle $E\to\mathbb{R}^3$ over the Euclidean space $(\mathbb{R}^3,g_{\mathbb{R}^3})$, satisfying in addition that
    \[
    \ep^{-1}\cY_{\ep}(\nabla,\Phi)\leqslant \theta_{\mathrm{gap}}\cdot \min\{\lambda, 1\}, 
    \] then in fact
    \[
    \cY_{\ep}(\nabla,\Phi) = 0.
    \] That is, $F_{\nabla}\equiv 0$, $\nabla\Phi \equiv 0$ and $|\Phi|\equiv 1$.
\end{thm}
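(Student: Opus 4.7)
The plan is to reduce the statement to Theorem \ref{thm: gap} by exploiting the scale invariance of the Euclidean metric, which will allow us to effectively bypass the smallness condition on $\varepsilon$.

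First, since $(\mathbb{R}^3, g_{\mathbb{R}^3})$ is flat with infinite injectivity radius, all the bounded-geometry constants $A_k$ from~\eqref{eq:curvature_bound_for_estimates} vanish and $\rho_0$ may be chosen arbitrarily; fixing, say, $\rho_0 = 1$ yields a threshold $\tau^* = \tau^*(\lambda_0)$ (playing the role of $\tau_{\mathrm{gap}}$ for $(\mathbb{R}^3, g_{\mathbb{R}^3})$) that depends only on $\lambda_0$. Now, given $\varepsilon > 0$ and a smooth solution $(\nabla, \Phi)$ of~\eqref{eq: 2nd_order_crit_pt_intro} with $\varepsilon^{-1}\cY_\varepsilon(\nabla, \Phi) \leqslant \theta_{\mathrm{gap}}\cdot \min\{\lambda, 1\}$, the finite-energy assumption places $(\nabla,\Phi) \in \mathscr{C}(E)$. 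Choose any $\delta \in (0, 1]$ so small that $\delta \varepsilon < \tau^{*} \cdot \min\{\sqrt{\lambda}, 1\}$.

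By Lemma \ref{lem: scaling}, the pair $(\nabla, \Phi)$ is a critical point of the rescaled functional $\cY_{\delta\varepsilon}^{\delta}$ on $(\mathbb{R}^3, \delta^2 g_{\mathbb{R}^3})$. Since $(\mathbb{R}^3, \delta^2 g_{\mathbb{R}^3})$ is isometric to $(\mathbb{R}^3, g_{\mathbb{R}^3})$ via the dilation $x \mapsto x/\delta$, the same threshold $\tau^{*}$ serves as $\tau_{\mathrm{gap}}$ for the rescaled manifold. Moreover, by Lemma \ref{lemm:scaling_g_to_delta_g}(g), $\cY_{\delta\varepsilon}^{\delta}(\nabla,\Phi) = \delta \cY_{\varepsilon}(\nabla,\Phi)$, hence the normalized energy is scale invariant:
\[
(\delta\varepsilon)^{-1}\cY_{\delta\varepsilon}^{\delta}(\nabla,\Phi) = \varepsilon^{-1}\cY_{\varepsilon}(\nabla,\Phi) \leqslant \theta_{\mathrm{gap}}\cdot \min\{\lambda, 1\}.
\]
Therefore Theorem \ref{thm: gap} applies to $(\nabla, \Phi)$ viewed on $(\mathbb{R}^3, \delta^2 g_{\mathbb{R}^3})$, forcing $(\nabla,\Phi)$ to be reducible as in~\eqref{eq: reducible_pair_intro}.

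Finally, since $\mathrm{Ric}(g_{\mathbb{R}^3}) \equiv 0$, case (ii) of Theorem \ref{thm: gap} guarantees there are no non-zero $L^2$-bounded harmonic $1$-forms (or, equivalently, $2$-forms) on $(\mathbb{R}^3, g_{\mathbb{R}^3})$. Since $F_{\nabla} \in L^2$ by the finite-energy assumption, the harmonic $2$-form $\langle F_{\nabla}, \Phi\rangle$ arising from the reducibility relation must vanish, giving $F_{\nabla} \equiv 0$ together with $\nabla\Phi \equiv 0$ and $|\Phi| \equiv 1$; that is, $\cY_{\varepsilon}(\nabla,\Phi) = 0$. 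There is no real obstacle in this argument; the entire content lies in recognizing that the scale invariance of $\mathbb{R}^3$ lets one absorb any $\varepsilon$ into the geometry.
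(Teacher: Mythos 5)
Your proof is correct and follows essentially the same route as the paper: both exploit scale invariance (via Lemmas \ref{lemm:scaling_g_to_delta_g} and \ref{lem: scaling}) to push $\ep$ below the threshold of Theorem~\ref{thm: gap} and then invoke the $\mathrm{Ric}\geqslant 0$ case to pass from reducibility to triviality. The only cosmetic difference is that you rescale the metric and appeal to the isometry $(\mathbb{R}^3,\delta^2 g_{\mathbb{R}^3})\cong(\mathbb{R}^3,g_{\mathbb{R}^3})$, whereas the paper pulls the configuration back by the dilation so as to remain on the fixed Euclidean metric.
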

\begin{proof}
    By Theorem~\ref{thm: gap}, we just need to prove that, in this case, we do not need to assume $\ep <\tau_{\mathrm{gap}}\cdot\min\{\lambda,1\}$. To do this, we first take $\delta_i \downarrow  0$,  let $m_i = \delta_i^{-1}$ and define
    \begin{equation*}
        (\nabla_i,\Phi_i) \coloneqq s^\ast_{m_i}(\nabla,\Phi),
    \end{equation*}
    where $s_{m_i}: \RR^3 \to \RR^3$ denotes the map $x \mapsto m_i x$. By scale invariance we have $g_{\RR^3} = m_i^{-2}s_{m_i}^\ast g_{\RR^3}$, so it follows from Lemma~\ref{lem: scaling} that $(\nabla_i,\Phi_i)$ is a critical point of $\cY_{\delta_i \ep}$ with respect to the Euclidean metric $g_{\RR^3}$. Now, Lemma~\ref{lemm:scaling_g_to_delta_g} and the hypothesis on the energy imply that
    \begin{equation*}
        (\delta_i\ep)^{-1}\cY_{\delta_i\ep}(\nabla_i,\Phi_i) = \ep^{-1}\cY_{\ep}(\nabla,\Phi) \leqslant \theta_{\mathrm{gap}}\cdot \min\{\lambda, 1\}, 
    \end{equation*}
    and hence upon choosing $i\gg 0$ so that $\delta_i\ep < \tau_{\mathrm{gap}}\cdot\min\{\sqrt{\lambda},1\}$, 
    we can invoke Theorem~\ref{thm: gap} to conclude that $\cY_{\delta_i \ep}(\nabla_i,\Phi_i)$ must vanish, 
    which is equivalent to $\cY_\ep(\nabla,\Phi) = 0$. 
\end{proof}
\subsection{Reducible solutions}\label{subsec:reducible_solutions}
In this section we describe a construction of reducible solutions which in particular shows that, at least when $M$ is closed, the requirement that there be no non-zero $L^2$ harmonic $2$-forms on $(M, g)$ is necessary for the last conclusion of Theorem~\ref{thm: gap}. The ingredients involved are all more or less standard, but we believe that putting them together as we do below sheds further light on the gap results proved in the previous section.

As motivation, we recall from Remark \ref{rmk: reducible_sols} that if~\eqref{eq: 2nd_order_crit_pt_intro} admits a reducible solution $(\nabla,\Phi\not\equiv 0)$, then the bundle $E$ must split orthogonally as
            \[
            E = L\oplus L^{\ast},
            \]
            and $\nabla$ reduces to a $U(1)$ Yang--Mills connection on the Hermitian line bundle $L = \ker(\Phi - \frac{\sqrt{-1}}{2}\text{id})$. In a local trivialization of $E$ that respects the above splitting, we have 
            \begin{equation}\label{eq: reducible_pair_in_good_gauge}
            F_{\nabla}=\mathrm{diag}(F_L,-F_L),\ \ \ \Phi = \mathrm{diag}(\frac{\sqrt{-1}}{2},-\frac{\sqrt{-1}}{2}),    
            \end{equation}
            where $F_L$ is the curvature of the reduced connection; in particular, $\frac{\sqrt{-1}}{2}\bangle{F_{\nabla}, \Phi} = F_L$. 
            Next we note that we can reverse this process to construct reducible solutions to \eqref{eq: 2nd_order_crit_pt_intro}.
\begin{lemm}\label{lem: counter-examp}
    Let $L\to M$ be a Hermitian line bundle and let $D$ be a $U(1)$ Yang--Mills connection on $L$. Then there is an $SU(2)$ Yang--Mills connection $\nabla$ on $E \coloneqq L\oplus L^\ast$, and 
    an endomorphism $\Phi\in\mathfrak{su}(E)$, such that $|\Phi|\equiv 1$, $\nabla\Phi \equiv 0$ and $\frac{\sqrt{-1}}{2}F_\nabla = F_D\otimes \Phi$. In particular, the pair $(\nabla,\Phi)$ is a reducible solution satisfying~\eqref{eq: reducible_pair_intro}. 
\end{lemm}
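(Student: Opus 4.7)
The natural approach is to reverse-engineer the structure described in~\eqref{eq: reducible_pair_in_good_gauge}. Specifically, I would set $E = L \oplus L^*$ with its induced Hermitian metric, and take $\nabla := D \oplus D^*$, where $D^*$ denotes the connection on $L^*$ dual to $D$ (so that locally, if $D = d + \alpha$ on $L$ with $\alpha$ an imaginary-valued $1$-form, then $D^* = d - \alpha$ on $L^*$). For the Higgs field, I would define $\Phi$ to be the section of $\operatorname{End}(E)$ which, in the $L \oplus L^*$ splitting, is given by
\[
\Phi = \mathrm{diag}\Big(\tfrac{\sqrt{-1}}{2},\, -\tfrac{\sqrt{-1}}{2}\Big).
\]
The first routine verification is that $(\nabla, \Phi)$ is a legitimate $SU(2)$ configuration: $\nabla$ is unitary since both $D$ and $D^*$ are, and the induced connection on $\det E = L \otimes L^* \cong \underline{\CC}$ is the sum $D + D^*$, which is the trivial flat connection; hence $\nabla$ is $SU(2)$-compatible. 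Similarly, $\Phi$ is traceless and skew-Hermitian, so $\Phi \in \Gamma(\mathfrak{su}(E))$, and a direct computation using $\langle a, b\rangle = -2\operatorname{tr}(ab)$ gives $|\Phi|^2 \equiv -2\operatorname{tr}(\Phi^2) = 1$.

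Next, I would verify the three conditions in~\eqref{eq: reducible_pair_intro}. Since $\Phi$ is constant (as a diagonal matrix) in any local frame adapted to the $L \oplus L^*$ decomposition, and since $\nabla = D \oplus D^*$ preserves this decomposition, the covariant derivative $\nabla \Phi$ vanishes identically. For the curvature, I would compute
\[
F_{\nabla} = \mathrm{diag}(F_D,\, F_{D^*}) = F_D \cdot \mathrm{diag}(1, -1) = -2\sqrt{-1}\, F_D \otimes \Phi,
\]
using the standard fact that $F_{D^*} = -F_D$. Multiplying by $\tfrac{\sqrt{-1}}{2}$ gives the claimed identity $\tfrac{\sqrt{-1}}{2} F_{\nabla} = F_D \otimes \Phi$. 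Pairing with $\Phi$ and using $|\Phi|^2 = 1$ then yields $\langle F_{\nabla}, \Phi\rangle = -2\sqrt{-1}\, F_D$ (a real-valued $2$-form since $F_D$ is imaginary-valued), and consequently $F_{\nabla} = \langle F_{\nabla}, \Phi\rangle \Phi$, as required.

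Finally, I would verify that $\langle F_{\nabla}, \Phi\rangle$ lies in $\ker(d \oplus d^*)$. The $d$-closedness is the Bianchi identity for the abelian connection $D$, namely $dF_D = 0$, while the $d^*$-closedness is precisely the $U(1)$ Yang--Mills equation $d^*F_D = 0$ satisfied by $D$ by hypothesis. Both transfer directly to $\langle F_{\nabla}, \Phi\rangle = -2\sqrt{-1}\,F_D$. This also immediately shows that $\nabla$ itself is $SU(2)$ Yang--Mills: since $F_{\nabla}$ takes values in the abelian line spanned by $\Phi$ and $\nabla\Phi = 0$, the nonlinear operator $d_{\nabla}^*$ reduces to the ordinary $d^*$ on each component, so $d_{\nabla}^* F_{\nabla} = 0$ follows from $d^*F_D = 0$. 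There is no real obstacle in the argument: the entire proof is a direct unwinding of the structural description in Remark~\ref{rmk: reducible_sols}, with the only mildly subtle point being the bookkeeping that confirms $\nabla$ has trivial determinant connection (hence is $SU(2)$, not merely $U(2)$).
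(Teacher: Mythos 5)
Your proposal is correct and follows essentially the same route as the paper: take $\nabla = D \oplus D^*$ on $E = L \oplus L^*$ and $\Phi = \mathrm{diag}(\tfrac{\sqrt{-1}}{2}, -\tfrac{\sqrt{-1}}{2})$, then verify $|\Phi| \equiv 1$, $\nabla\Phi \equiv 0$, and $F_{\nabla} = \mathrm{diag}(F_D, -F_D) = -2\sqrt{-1}\,F_D \otimes \Phi$ by direct computation. Your additional checks (triviality of the determinant connection and the explicit constant in $\langle F_{\nabla}, \Phi\rangle = -2\sqrt{-1}\,F_D$) are correct and only make explicit what the paper leaves implicit.
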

\begin{proof}
     Denoting by $D^\ast$ the dual connection on $L^\ast$, we take $\nabla$ to be the $SU(2)$-connection on $E = L\oplus L^\ast$ given by $D\oplus D^\ast$, that is,
    \begin{equation*}
        \nabla\begin{pmatrix}
            u\\
            \theta
        \end{pmatrix} := \begin{pmatrix}
            D u\\
            D^\ast \theta
        \end{pmatrix},
    \end{equation*} and we define $\Phi\in\mathfrak{su}(E)$ by
    \begin{equation*}
        \Phi \begin{pmatrix}
            u \\ \theta
        \end{pmatrix} := \begin{pmatrix}
            \frac{\sqrt{-1}}{2}u\\-\frac{\sqrt{-1}}{2}\theta
        \end{pmatrix}.
    \end{equation*}
    Note that, by hypothesis, $F_{D} = \sqrt{-1}\omega$ for some harmonic $2$-form $\omega\in\Omega^2(M)$, and thus $\nabla$ is Yang--Mills with
    \begin{equation*}
        F_{\nabla} = \begin{pmatrix}
            F_D & 0\\
            0& -F_D
        \end{pmatrix}.
    \end{equation*}
   Moreover, by construction $|\Phi| \equiv 1$, and we have
    \begin{equation*}
        \nabla\Phi\begin{pmatrix}
            u\\ \theta 
        \end{pmatrix} = \nabla\begin{pmatrix}
            \frac{\sqrt{-1}}{2}u\\-\frac{\sqrt{-1}}{2}\theta
        \end{pmatrix} - \Phi\left(\nabla\begin{pmatrix}
            u\\\theta
        \end{pmatrix}\right) = 0.
    \end{equation*}
    Finally, it is clear that 
    $\frac{\sqrt{-1}}{2}F_\nabla  = F_D\otimes \Phi$.
\end{proof}
\begin{rmk}
    Suppose that $(\nabla,\Phi\not\equiv 0)$ is a reducible solution of \eqref{eq: 2nd_order_crit_pt_intro}. From equation \eqref{eq: reducible_pair_in_good_gauge}, we already noted that  $\sqrt{-1}\bangle{F_{\nabla}, \Phi}$ is equal to two times the curvature $F_L$ of the $U(1)$ Yang--Mills connection to which $\nabla$ reduces on $L$. We now show that we can also explicitly recognize $\sqrt{-1}\bangle{F_{\nabla}, \Phi}$ as the curvature of a $U(1)$ Yang--Mills connection induced by $\nabla$ on a Hermitian line bundle $\cL\cong L\otimes L$.
    
    Recall that the condition $|\Phi| \equiv 1$ implies that we have a global splitting $\mathfrak{su}(E) = \bangle{\Phi}\oplus \bangle{\Phi}^\perp$. Moreover, with the help of~\eqref{eq:transv_part} we see that the operator $J = \mathrm{ad}(\Phi)\rvert_{\bangle{\Phi}^\perp}$ satisfies
            \[
            \bangle{J\xi, J\eta} = \bangle{\xi, \eta},\ \ \   J^2 = -\mathrm{Id},
            \]
            and together with the metric $(\xi, \eta) = \bangle{\xi, \eta} + \sqrt{-1}\bangle{\xi, J\eta}$ turns $\bangle{\Phi}^{\perp}$ into a Hermitian line bundle that we henceforth denote by $\cL$. 
            Next, we use $\nabla$ to define a connection $\nabla^\perp$ on 
            $\cL$ via orthogonal projection:
            \begin{equation*}
                \nabla^\perp\xi \coloneqq \nabla\xi - \bangle{\Phi,\nabla\xi}\Phi. 
            \end{equation*}
            Using the fact that $\nabla\Phi \equiv 0$, we conclude that $\nabla^{\perp}$ respects the $U(1)$-structure on $\cL$. Moreover, a straightforward computation, using again the assumption $\nabla\Phi \equiv 0$, shows that its curvature is given by 
            \begin{equation}
            F_{\cL}(\xi) = \bangle{F_{\nabla},\Phi}[\Phi,\xi] = \sqrt{-1}\bangle{F_{\nabla}, \Phi} \cdot \xi,
            \end{equation} as desired.
            
            Finally, we give a direct proof that $\cL \cong L\otimes L$. First, note that since the eigenvalues of $\Phi$ are $\pm\frac{\sqrt{-1}}{2}$, we have that $\Phi^2 = -\frac{1}{4}\id$, which together with~\eqref{eq:transv_part} 
            gives 
            \[
            \Phi \xi = - \xi \Phi,\quad \text{for all }\xi \in \cL.
            \]
            Recalling the identification $L^\ast \cong \ker(\Phi + \frac{\sqrt{-1}}{2}\id)$, we deduce that $\xi$ maps $L^\ast$ to $L$ and $L$ to $L^\ast$. In particular, the assignment
            $\xi \mapsto \xi\rvert_{L^\ast}$ defines a bundle map $\alpha : \cL \to \Hom(L^\ast, L)$. 
            As $\mathrm{rank} (\cL) = \mathrm{rank}(\Hom(L^\ast,L)) = 1$, to conclude that $\alpha$ is an isomorphism, we are left to prove that $\alpha$ is injective. To that end, suppose $\xi \in \cL$ and that $\xi|_{L^*} = 0$. Since $\xi$ interchanges $L$ and $L^*$ as shown in 
            the discussion above, 
            we infer that $(\xi^2)|_{L^*} = 0$ and $(\xi^2)|_{L} = 0$. Thus $|\xi|^2 = -2\mathrm{tr}(\xi^2) =  0$, proving the injectivity of $\alpha$.
\end{rmk}    

Returning to Lemma~\ref{lem: counter-examp}, it is now straightforward to obtain \emph{non-trivial} reducible solutions $(\nabla,\Phi\not\equiv 0)$ to \eqref{eq: 2nd_order_crit_pt_intro} when our Riemannian $3$-manifold $(M, g)$ is closed, oriented, 
and $b_1(M)=b_2(M)\neq 0$. Indeed, in view of the equivalent interpretations of the Betti numbers on closed manifolds, we get in this case a non-zero 
harmonic $2$-form $\frac{\sqrt{-1}}{2\pi}\omega$ on $M$ which is \emph{integral} in the sense that 
its cohomology class lies in the image of the natural homomorphism $f: H^2(M,\mathbb{Z})\to H^2(M,\mathbb{R})$. As such, there is a complex line bundle $L\to M$ so that $f(c_1(L)) = [\frac{\sqrt{-1}}{2\pi}\omega]$, and by the Chern--Weil approach to characteristic classes along with Hodge theory, we obtain a $U(1)$ connection $D$ whose curvature is $F_D=\omega$. In other words, $D$ is a non-flat, $U(1)$ Yang--Mills connection on $L$, and thus generates via Lemma \ref{lem: counter-examp} a non-trivial reducible solution $(\nabla,\Phi\not\equiv 0)$ to \eqref{eq: 2nd_order_crit_pt_intro}.

\begin{example}
\label{example: counter-examp}


For a concrete example, take $(M, g) = (\mathbb{S}^2\times\mathbb{S}^1, g_{\mathbb{S}^2} + d\theta^2)$, where $g_{\mathbb{S}^2}$ is the standard round metric on $\mathbb{S}^2$, and let $p_1: M \to \mathbb{S}^2$ denote projection onto the first factor. 
Then all integral harmonic $2$-forms on $M$ arise as pullbacks via $p_1$ of integral harmonic $2$-forms on $\mathbb{S}^2$, the latter given by
\[
\frac{\sqrt{-1}}{2\pi}\omega_m := \frac{m}{4\pi}\vol_{\mathbb{S}^2},\quad m\in\ZZ.
\] 
Moreover, thanks to the fact that $\mathbb{S}^2$ is simply-connected, for each $m \in \ZZ$, there exists, up to gauge, a unique $U(1)$ Yang--Mills connection whose curvature is $\omega_m$, which can be constructed explicitly as follows (see also~\cite{Kuwabara1982}). Letting $\pi: \mathbb{S}^3\to \mathbb{S}^2$ denote the Hopf fibration, which is a principal $U(1)$-bundle, we write $\Xi\in\Omega^1(\mathbb{S}^3,\mathfrak{u}(1))$ for the canonical connection induced by the orthogonal splitting 
\[
T\mathbb{S}^3 = \ker(d\pi) \oplus (\ker(d\pi) )^{\perp}, 
\]
and consider the irreducible unitary representation $\rho_m: \mathrm{U}(1)\to \mathrm{U(1)}$ given by $z\mapsto z^m$. Then 
$\Xi$ induce a connection $\cD_m$ on the associated line bundle $L_m = S^3\times_{\rho_m}\CC$, and it is straightforward to verify that $F_{\cD_m} = \omega_m$. For $m \neq 0$, the pullback $p_1^* \cD_m$ is then a non-flat, $U(1)$ Yang--Mills connection on $p_1^* L_m \to \mathbb{S}^2 \times \mathbb{S}^1$, and can thus be used in the procedure of Lemma~\ref{lem: counter-examp} to yield a non-trivial, reducible solution $(\nabla,\Phi)$ satisfying \eqref{eq: reducible_pair_intro}. 

\end{example}
\section{Asymptotic analysis of critical points}\label{sec:asymptotic}

In this section, we consider our base manifold $(M^3,g)$ to be an oriented $3$-manifold with bounded geometry (possibly noncompact). In particular, there exist positive constants $\rho_0$ and $A_0, A_1, \cdots$ such that~\eqref{eq:injectivity_radius_for_estimates} and~\eqref{eq:curvature_bound_for_estimates} hold with $\Omega = M$. Henceforth, we let $r_0:=\frac{\rho_1}{4}$, where $\rho_1$ is as in \eqref{ineq: rho_1_def}, so that on any geodesic ball $B_{\rho}(x_0)\subset (M,g)$ with $\rho<\rho_1$ we have the geometric control given by \eqref{eq:Hessian_comparison_for_estimates}, \eqref{eq:metric_bounds_for_estimates} and \eqref{eq:volume_bounds_for_estimates}.

We shall work with a family $(\nabla_{\ep},\Phi_{\ep})$ of critical points for $\cY_{\ep}:\mathscr{C}(E)\to\mathbb{R}$, on a $SU(2)$-bundle $E\to M$, satisfying a uniform (normalized-)energy bound
\begin{equation}\label{ineq: uniform_bound_assumption}
\varepsilon^{-1}\cY_{\ep}(\nabla_{\ep},\Phi_{\ep}) \leqslant \Lambda<\infty,
\end{equation} for some constant $\Lambda>0$, possibly depending on $(M^3,g)$ and the coupling constant $\lambda$ in \eqref{eq: YMH_energy}. For example, the critical points produced by Theorem \ref{thm: existence}, in the case where $M$ is closed, satisfy \eqref{ineq: uniform_bound_assumption} with $\Lambda$ of the form $\max\{1,\lambda\}\cdot\Lambda_M$, where $\Lambda_M$ depends only on $(M^3,g)$.

The purpose of this section is to analyze the family $(\nabla_{\ep},\Phi_{\ep})$ in the limit as $\ep\to 0$, and to prove Theorems \ref{thm: asymptotic} and \ref{thm: bubbling}. In \S\S\ref{subsec:blow-up}-\ref{subsec:assigning_charges} we introduce the energy and charge measures, $\mu_{\ep}$ and $\kappa_{\ep}$, 
associated with the family $(\nabla_{\ep},\Phi_{\ep})$, together with the sets that, loosely speaking, capture their respective concentration behavior, namely the blow-up set $S$ and the asymptotic zero set $Z$. Using various a priori estimates from Section \ref{sec:estimates}, as well as the local convergence result of Proposition \ref{prop:local_convergence}, and some standard measure theory, we prove parts (a) and (b) of Theorem \ref{thm: asymptotic}. Furthermore, using Proposition \ref{prop:concentration_scale}, we also show that, when $M^3$ is closed, a family $(\nabla_{\ep},\Phi_{\ep})$ produced by Theorem \ref{thm: existence} has $S=\emptyset$ if and only if $(\nabla_{\ep},\Phi_{\ep})$ is reducible for all but finitely many $\ep$; in particular, in the situation of Theorem \ref{thm: irred}, we have $S\neq\emptyset$ (see Lemma \ref{lemm: reducible_is_boring} and Example \ref{ex:irred_families_non_trivial_S}). Next, in \S\ref{subsec:hodge_decomp_longit_part}, using mainly Hodge theory and Proposition \ref{prop:convergence_nonharmonic_parts}, together with Lemma \ref{lemm:nice_convergence_off_of_S}, we prove part (c) of Theorem \ref{thm: asymptotic}.

Moving forward, \S\ref{subsec:rescaling} is concerned with the proof that, at every point $x\in S$, the family $(\nabla_{\ep},\Phi_{\ep})$ bubbles off a non-trivial critical point for $\cY_{1}^{g_{\RR^3}}$ over $\RR^3$ with energy bounded by $\Theta(x)$, which proves the first part of Theorem \ref{thm: bubbling}. Here the bulk of the work goes into comparing $\ep$ with the rate of rescaling, which again uses the estimates from Section \ref{sec:estimates} and also the gap result of Proposition \ref{prop:concentration_scale}. Finally, \S\S\ref{subsec:regions}-\ref{subsec:energy_identity} contain the core of the bubbling analysis leading to the final conclusions of Theorem \ref{thm: bubbling}. In this part, we first identify the neck regions between bubbles via a standard procedure, and then, primarily by integrating the exponential decay estimates on $|\nabla_{\ep}\Phi_{\ep}|$ and $1 - |\Phi_{\ep}|$ from \S\ref{subsec:exp-decay} along the radial direction, and combining the result with a local conservation law (Lemma~\ref{lem:conservation_law}) reminiscent of \cite[Corollary II.2.2]{Jaffe-Taubes} to control $|F_{\nabla_{\ep}}|$, we manage to show that eventually the neck regions carry no energy (Proposition~\ref{prop:no_neck}). From there, an iterative argument, guaranteed by the energy gap from Theorem~\ref{thm: gap_for_R3} to end after finitely many steps, is used to express $\Theta(x)$ and $\Xi(x)$ in terms of the energies and charges, respectively, of the bubbles extracted at each stage.

\subsection{The blow-up set and decomposition of the limit measure}\label{subsec:blow-up}
By~\eqref{ineq: uniform_bound_assumption}, the associated Radon measures
\[
\mu_{\ep}:=\ep^{-1} e_{\ep}(\nabla_{\ep},\Phi_{\ep})\mathcal{H}^3
\] have uniformly bounded mass: $\mu_{\ep}(M)\leqslant\Lambda$.
Combining~\eqref{ineq: uniform_bound_assumption} with Proposition~\ref{prop:maximum_principle_for_w}, we also get
\[
\int_{M}\ep \big| \bangle{* F_{\nabla_{\ep}}, \Phi_{\ep}} \big|^2\leqslant \Lambda,\quad \text{ for all $\ep < r_0$}.
\]
Next, letting 
\begin{equation}\label{eq: kappa_definition_Sec_5}
\kappa_{\ep}: = 2\bangle{* F_{\nabla_{\ep}}, \nabla_{\ep}\Phi_{\ep}} \cH^3,
\end{equation}
we have by Schwarz's inequality that
\begin{equation}\label{eq:charge_density_by_energy_density}
\begin{split}
2|\bangle{* F_{\nabla_{\ep}}, \nabla_{\ep}\Phi_{\ep}}| \leqslant 2 |\ep^{\frac{1}{2}}F_{\nabla_{\ep}}||\ep^{-\frac{1}{2}}\nabla_{\ep}\Phi_{\ep}| 
\leqslant \ & \ep^{-1}e_{\ep}(\nabla_{\ep},\Phi_{\ep}),
\end{split}
\end{equation}
which together with~\eqref{ineq: uniform_bound_assumption} implies that $(\kappa_{\ep})$ is a bounded sequence in $\big(C^0_c(M)\big)^*$.

Therefore, after passing to a subsequence, which we do not relabel, we have, first of all, that $(\mu_{\ep})$ weakly* converges to a non-negative Radon measure $\mu$ on $M$ as $\ep\to 0$. That is, 
\begin{equation}\label{eq: weak*_convergence}
\lim_{\ep\to 0}\int_M fd\mu_{\ep} = \int_M f d\mu,\quad \text{ for all } f\in C_c^0(M).
\end{equation}
Secondly, there exists some $1$-form $h$ on $M$ of class $L^2$ such that, as $\ep \to 0$,
\begin{equation}\label{eq: weak_L2_convergnece}
\ep^{\frac{1}{2}}\bangle{* F_{\nabla_{\ep}}, \Phi_{\ep}} \rightharpoonup h, \quad \text{ weakly in $L^2(M)$},
\end{equation}
Thirdly, there exists a Radon measure $\kappa$ on $M$ satisfying
\begin{equation}\label{eq:kappa_by_mu}
\Big|\int_{M} f d\kappa\Big| \leqslant \int_M |f| d\mu,\quad\text{for all $f \in C^0_{c}(M)$},
\end{equation}
such that, as $\ep \to 0$, 
\begin{equation}\label{eq: kappa_weak*_convergence}
\kappa_{\ep} \rightharpoonup \kappa, \quad \text{ in weak* sense on $M$}.
\end{equation}
Hereafter, we reserve the notation $(\nabla_{\ep}, \Phi_{\ep})$ for the subsequence we have just extracted, while further subsequences are typically denoted $(\nabla_{\ep_i}, \Phi_{\ep_i})$, or simply $(\nabla_i,\Phi_i)$. For each $x\in M$, we let
\[
\cR_x:=\{r\in (0,r_0]:\mu(\partial B_r(x))>0\},
\] and note that, since $\mu$ is locally finite, $\cR_x$ is at most countable. 
More importantly, we have
\begin{equation}\label{eq: weak_limit_measure_balls}
\mu(B_r(x)) = \lim_{\ep\to 0} \mu_{\ep}(B_r(x)),\quad \text{for all $r\in (0,r_0]\setminus\cR_x$,}
\end{equation} while for a general open set $U\subset M$ we only have $\mu(U)\leqslant \liminf_{\ep\to 0}\mu_{\ep}(U)$.

Next, let $\lambda_0$ be an upper bound for $\lambda>0$. Then from Lemmas~\ref{lemm:improved_coarse_estimate_base},~\ref{lemm:exp_decay_base} and~\ref{lemm:nablaPhi_exp_decay_base}, as well as Remark~\ref{rmk:clearing_out} with $\beta$ taken to be $\frac{1}{8(\lambda_0 + 2)}$, we get thresholds $\overline{\eta}$, $(\eta_0, \tau_0)$, $(\theta_1, \tau_1)$, and  $\theta_0$, which depend only on $\lambda_0$ and $A_1$. In view of these results, along with Proposition~\ref{prop:local_convergence} and Remark~\ref{rmk:smallness_condition_relation}, we then set 
\begin{equation}\label{def: eta_star}
    \eta_{*} := \min\big\{\overline{\eta}, \eta_0, \theta_0\cdot\min\{\lambda, 1\}, \theta_1\cdot\min\{\lambda, 1\}\big\},
\end{equation}
and define the \textbf{blow-up set} or \textbf{energy concentration set} of the sequence $(\nabla_{\ep},\Phi_{\ep})$ by
\begin{equation}\label{eq: blow-up_set_S}
S:=\bigcap_{0<r\leqslant  r_0}\left\{x\in M: \liminf_{\ep\to 0}\ep^{-1}\int_{B_r(x)}e_{\ep}(\nabla_{\ep},\Phi_{\ep})\geqslant \eta_{\ast} \right\}.
\end{equation} 
For later purposes, it is also convenient to let
\[
\tau_{\ast}:=\min\{\tau_0,\tau_1,\tau_1\sqrt{\lambda}\}.
\]
\begin{lemm}\label{lem: S_finite}
    The following hold:
    \begin{itemize}
        \item[(a)] $S$ is closed.
        
        \item[(b)] In fact $\cH^0(S)\leqslant \eta_{*}^{-1}\Lambda$; in particular, $S$ is finite.
    \end{itemize}
\end{lemm}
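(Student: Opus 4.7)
The plan is to handle (a) and (b) in turn, both by elementary arguments that exploit the monotonicity of $r \mapsto \mu_\ep(B_r(x))$ and the ball-inclusion properties in a Riemannian manifold of bounded geometry.

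For part (a), I would first rewrite
\[
S = \bigcap_{0 < r \leqslant r_0} A_r, \qquad A_r := \Big\{ x \in M : \liminf_{\ep \to 0} \mu_\ep(B_r(x)) \geqslant \eta_\ast \Big\},
\]
and observe that $A_r$ is non-increasing in $r$, so it suffices to take the intersection over a countable sequence $r_k \downarrow 0$. To show $S$ is closed, I would take $x_n \in S$ converging to some $x \in M$ and fix $r \in (0, r_0]$. Given any $r' \in (0, r)$, for all sufficiently large $n$ one has $d(x_n, x) < r - r'$, so $B_{r'}(x_n) \subset B_r(x)$ and therefore $\mu_\ep(B_r(x)) \geqslant \mu_\ep(B_{r'}(x_n))$ for every $\ep$. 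Taking $\liminf_{\ep \to 0}$ on both sides and using that $x_n \in S \subset A_{r'}$ yields $\liminf_{\ep \to 0}\mu_\ep(B_r(x)) \geqslant \eta_\ast$, so $x \in A_r$; since $r$ was arbitrary, $x \in S$.

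For part (b), I would argue by contradiction (or simply by bounding the cardinality directly). Suppose $x_1, \dots, x_N$ are distinct points in $S$. Since $M$ is Hausdorff and the points are finitely many, there exists $r \in (0, r_0]$ sufficiently small that the geodesic balls $B_r(x_j)$ are pairwise disjoint. Using the energy bound \eqref{ineq: uniform_bound_assumption} together with disjointness, one has
\[
\Lambda \geqslant \ep^{-1} \cY_\ep(\nabla_\ep, \Phi_\ep) = \mu_\ep(M) \geqslant \sum_{j=1}^N \mu_\ep(B_r(x_j)),
\]
for every $\ep$. Taking $\liminf_{\ep \to 0}$ and applying the superadditivity of $\liminf$ along with $x_j \in S$ (so that $\liminf_{\ep \to 0}\mu_\ep(B_r(x_j)) \geqslant \eta_\ast$), I obtain $\Lambda \geqslant N \eta_\ast$, hence $N \leqslant \eta_\ast^{-1}\Lambda$. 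Taking the supremum over all such $N$ gives $\cH^0(S) \leqslant \eta_\ast^{-1}\Lambda < \infty$, proving finiteness.

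Neither step presents a substantive obstacle: no PDE estimates from Section~\ref{sec:estimates} are needed, only the definition of $S$, the monotonicity of $\mu_\ep$ in the radius, and the uniform energy bound. The only mild care required is in (a), where one must choose $r' < r$ rather than working with $r$ directly, so that the ball inclusion $B_{r'}(x_n) \subset B_r(x)$ is available for all large $n$.
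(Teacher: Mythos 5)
Your proposal is correct and follows essentially the same argument as the paper: part (a) uses exactly the same ball-inclusion $B_{r'}(x_n)\subset B_r(x)$ for a smaller radius, and part (b) uses disjoint balls, superadditivity of $\liminf$, and the uniform bound $\mu_\ep(M)\leqslant\Lambda$, just as in the paper's proof. No issues.
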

\begin{proof}
    (a) Let $(x_j)\subset S$ be a sequence converging to a point $x\in M$. Given $r\in (0,r_0]$ and $0<s<r$ arbitrary, there is some $j_{s,r} \in \NN$ such that $B_s(x_j)\subset B_r(x)$ for all $j\geqslant j_{s,r}$. Therefore, since $(x_j)\subset S$, for all $j\geqslant j_{s,r}$ we have
    \[
    \liminf_{\ep\to 0} \ep^{-1}\int_{B_{r}(x)} e_{\ep} \geqslant \liminf_{\ep\to 0} \ep^{-1}\int_{B_{s}(x_j)} e_{\ep}\geqslant\eta_{*}.
    \] Since $r\in (0,r_0]$ is arbitrary, it follows that $x\in S$. Thus, $S$ is closed.
    \vspace*{0.3cm}

    (b) If $S$ is empty there is nothing to prove. Otherwise, take any finite collection of points $x_1, \ldots, x_N$ in $S$. Then there exists $r \in (0, r_0]$ so that the balls $\{B_{r}(x_j)\}_{j = 1}^{N}$ are disjoint, and we can compute:
    \begin{align*}
        N &\leqslant \sum\limits_{j=1}^{N}  \eta_{*}^{-1}\liminf_{\ep\to 0}\ep^{-1}\int_{B_{r}(x_j)} e_{\ep}\quad\text{(each $x_j$ is in $S$)}\\
        &\leqslant \eta_{*}^{-1}\liminf_{\ep\to 0} \sum\limits_{j=1}^{N} \ep^{-1}\int_{B_{r}(x_j)} e_{\ep}\\
        &= \eta_{*}^{-1}\liminf_{\ep\to 0} \ep^{-1}\int_{\bigcup\limits_{j=1}^{N} B_{r}(x_j)} e_{\ep} \quad\text{(the balls $B_r(x_j)$ are disjoint)}\\
        &\leqslant \eta_{*}^{-1}\Lambda,
    \end{align*} where in the last line we used the uniform bound \eqref{ineq: uniform_bound_assumption}. Having shown that any finite subset of $S$ has cardinality at most $\eta_{*}^{-1}\Lambda$, we conclude that $\cH^0(S)\leqslant\eta_{*}^{-1}\Lambda$, 
    as desired.
\end{proof}

To begin our analysis of the limiting measure $\mu$ in~\eqref{eq: weak*_convergence}, we note the following consequence of Proposition~\ref{prop:local_convergence}, Remark~\ref{rmk:smallness_condition_relation}, and our choice of $\eta_*$.
\begin{lemm}\label{lemm:nice_convergence_off_of_S}
The $1$-form $h$ in~\eqref{eq: weak_L2_convergnece} is smooth and harmonic on $M \setminus S$. Moreover, as $\ep\to 0$ we have the following convergences in $C^{\infty}_{\loc}(M \setminus S)$:
\begin{subequations}
\begin{align}
(1 - |\Phi_{\ep}|^2) \cdot \ep |F_{\nabla_{\ep}}|^2 + \ep^{-1}|\nabla_{\ep}\Phi_{\ep}|^2 +  \lambda\ep^{-3}(1 - |\Phi_{\ep}|^2)^2 + \ep|[F_{\nabla_{\ep}}, \Phi_{\ep}]|^2 & \to 0, \label{eq: xi_converges_to_zero}\\
\ep^{\frac{1}{2}}\bangle{* F_{\nabla_{\ep}}, \Phi_{\ep}} & \to h, \label{eq: convergence_to_h}\\
\bangle{* F_{\nabla_{\ep}}, \nabla_{\ep}\Phi_{\ep}} & \to 0. \label{eq: q_converges_to_zero}
\end{align}
\end{subequations}
Consequently, we also have as $\ep \to 0$ that
\begin{equation}\label{eq: Cinfty_loc_convergence}
\ep^{-1}e_{\ep}(\nabla_{\ep},\Phi_{\ep})  \to |h|^2, \quad \text{ in $C^{\infty}_{\loc}(M \setminus S)$}. 
\end{equation}
\end{lemm}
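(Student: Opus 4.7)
The strategy is to derive every assertion from Proposition~\ref{prop:local_convergence} applied at a suitable scale around an arbitrary point $x_0 \in M\setminus S$, and then to patch the local conclusions together. First, by the definition~\eqref{eq: blow-up_set_S} of $S$, for each such $x_0$ there exists some $r_{x_0} \in (0, r_0]$ with
\[
\liminf_{\ep \to 0}\ep^{-1}\int_{B_{r_{x_0}}(x_0)}e_{\ep}(\nabla_{\ep},\Phi_{\ep})\,\vol_g < \eta_{\ast}.
\]
Choosing $\rho \in (0, r_{x_0}/96)$ and passing to a subsequence $\ep_i \to 0$ along which the liminf is essentially attained, the assumption
\[
\int_{B_{96\rho}(x_0)}e_{\ep_i}(\nabla_{\ep_i},\Phi_{\ep_i})\,\vol_g \leqslant \ep_i\cdot \min\{\overline{\eta},\eta_0,\theta_0\mu,\theta_1\mu\}
\]
from Remark~\ref{rmk:smallness_condition_relation}(i) holds for $i$ large, since $\eta_{\ast}$ is defined in~\eqref{def: eta_star} as the minimum of precisely these thresholds. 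Together with the bounded geometry of $(M, g)$ (so that~\eqref{eq:injectivity_radius_for_estimates}--\eqref{eq:curvature_bound_for_estimates} hold with $g_i \equiv g$), this makes Proposition~\ref{prop:local_convergence} directly applicable on $B_{96\rho}(x_0)$.

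The three conclusions of Proposition~\ref{prop:local_convergence}(a)(b)(c) then give, respectively: the smooth convergence $\xi_{\ep_i} \to 0$ on $B_{3\rho}(x_0)$, which is exactly~\eqref{eq: xi_converges_to_zero} in view of Lemma~\ref{lemm:identities_for_local_convergence}(a); the smooth subsequential convergence $h_{\ep_i} \to \tilde h$ on $B_{2\rho}(x_0)$ to some form $\tilde h$ satisfying $d\tilde h = 0$ and $d^{*}\tilde h = 0$; and the smooth convergence $q_{\ep_i} \to 0$ on $B_{3\rho}(x_0)$, which is~\eqref{eq: q_converges_to_zero}. To identify $\tilde h$ with the global weak $L^{2}$-limit $h$ from~\eqref{eq: weak_L2_convergnece} on $B_{2\rho}(x_0)$, I invoke uniqueness of weak $L^{2}$-limits: the smooth (hence $L^{2}$) convergence along the subsequence forces $\tilde h = h$ almost everywhere, and smoothness of $\tilde h$ upgrades this to a pointwise identification. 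This uniqueness argument also upgrades subsequential smooth convergence to convergence of the full sequence $\ep \to 0$ on $B_{2\rho}(x_0)$, since any subsequence admits a further sub-subsequence converging smoothly to the same limit $h$.

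Finally, covering an arbitrary compact $K \subset M\setminus S$ by finitely many balls of the form $B_{2\rho}(x_0)$ yields the asserted $C^{\infty}_{\loc}$ convergences on all of $M\setminus S$, as well as the smoothness and harmonicity of $h$ there. The concluding statement~\eqref{eq: Cinfty_loc_convergence} follows by combining~\eqref{eq: xi_converges_to_zero} and~\eqref{eq: convergence_to_h}: writing
\[
\ep^{-1}e_{\ep}(\nabla_{\ep},\Phi_{\ep}) = \xi_{\ep} + \big|\ep^{\frac{1}{2}}\bangle{* F_{\nabla_{\ep}}, \Phi_{\ep}}\big|^{2},
\]
(cf.\ the definition of $\xi$ in Lemma~\ref{lemm:identities_for_local_convergence}), smooth convergence of each summand yields smooth convergence of the whole. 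The only delicate point is the aforementioned identification of the local smooth limit with the global weak limit $h$; everything else is routine packaging of Proposition~\ref{prop:local_convergence}.
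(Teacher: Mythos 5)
Your overall strategy matches the paper's: reduce everything to Proposition~\ref{prop:local_convergence} via Remark~\ref{rmk:smallness_condition_relation}(i) on small balls around points of $M\setminus S$, identify the local smooth limit with the global weak $L^2$-limit $h$, and cover compact sets. However, there is a genuine gap in how you pass from the $\liminf$ in the definition of $S$ to the hypothesis of Proposition~\ref{prop:local_convergence}, and this gap propagates into your upgrade from subsequential to full-sequence convergence.

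Concretely: from $x_0\notin S$ you only know that \emph{some} $r$ satisfies $\liminf_{\ep\to 0}\mu_{\ep}(B_r(x_0))<\eta_{\ast}$, so your ``subsequence along which the liminf is essentially attained'' is the only subsequence on which the smallness hypothesis~\eqref{eq:small_energy_for_convergence} is verified. Your final step — ``any subsequence admits a further sub-subsequence converging smoothly to the same limit $h$'' — is exactly where this fails: for an arbitrary subsequence of the (already fixed) family $(\nabla_\ep,\Phi_\ep)$, the energy on $B_{96\rho}(x_0)$ need not be $\leqslant \ep\cdot\eta_{\ast}$, so Proposition~\ref{prop:local_convergence} cannot be applied to it and no convergent sub-subsequence can be extracted. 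As written, your argument only yields subsequential convergence, whereas the lemma asserts convergence of the whole family. The missing ingredient is the one the paper uses: since $\mu_{\ep}\rightharpoonup\mu$ for the \emph{full} sequence fixed in \S\ref{subsec:blow-up}, and since $\cR_{x_0}$ is at most countable, one can use the monotonicity of $r\mapsto\mu_{\ep}(B_r(x_0))$ to shrink $r$ to some $r'\in(0,r_0]\setminus\cR_{x_0}$; then $\lim_{\ep\to 0}\mu_{\ep}(B_{r'}(x_0))$ exists by~\eqref{eq: weak_limit_measure_balls} and equals the liminf, hence is $<\eta_{\ast}$, so the smallness holds for \emph{all} sufficiently small $\ep$. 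With that, Proposition~\ref{prop:local_convergence} applies along the full sequence, and your sub-subsequence/uniqueness argument for identifying the limit with $h$ becomes valid. The remainder of your proof (the covering argument and the derivation of~\eqref{eq: Cinfty_loc_convergence} from $\ep^{-1}e_{\ep}=\xi_{\ep}+|h_{\ep}|^2$) is correct and coincides with the paper's.
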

\begin{proof}
If $x_0 \in M \setminus S$, then from the definition of $S$ and the monotonicity of $r \mapsto \mu(B_{r}(x_0))$, we can find $r \in (0, r_0] \setminus \cR_{x_0}$ such that
\[
\lim_{\ep \to 0} \mu_{\ep}(B_{r}(x_0)) < \eta_*.
\]
Remark~\ref{rmk:smallness_condition_relation}(i) then allows us to invoke Proposition~\ref{prop:local_convergence}, part (b) of which upgrades the weak $L^2$ convergence~\eqref{eq: weak_L2_convergnece} to smooth convergence on $B_{\frac{r}{48}}(x_0)$, and also shows that $h$ is harmonic on $B_{\frac{r}{48}}(x_0)$. On the other hand, conclusions (a) and (c) of Proposition~\ref{prop:local_convergence} yields respectively that~\eqref{eq: xi_converges_to_zero} and~\eqref{eq: q_converges_to_zero} take place, in $C^{\infty}(B_{\frac{r}{48}}(x_0))$. A routine covering argument extends the smooth convergences~\eqref{eq: xi_converges_to_zero},~\eqref{eq: convergence_to_h} and~\eqref{eq: q_converges_to_zero}, as well as the harmonicity of $h$, to compact subsets of $M \setminus S$. Finally,~\eqref{eq: Cinfty_loc_convergence} is a consequence of~\eqref{eq: xi_converges_to_zero},~\eqref{eq: convergence_to_h} and~\eqref{eq: bracket_norm}.
\end{proof}
In view of~\eqref{eq: Cinfty_loc_convergence}, and taking into account that $S$ has zero $\cH^3$-measure (thanks to Lemma \ref{lem: S_finite}), we get by Fatou's lemma and the weak* convergence \eqref{eq: weak*_convergence} that
\begin{equation}\label{ineq: fatou_h_mu}
\int_M f|h|^2 \leqslant \liminf_{\ep\to 0}\int_M f \ep^{-1}e_{\ep}(\nabla_{\ep},\Phi_{\ep}) = \int_M fd\mu,
\end{equation} for all non-negative $f\in C_c^0(M)$. Thus, the linear functional $I:C_c^0(M)\to\mathbb{R}$ given by
\[
I(f):=\int_M fd\mu - \int_M f|h|^2
\] is positive and by the Riesz representation theorem there is a unique nonnegative Radon measure $\nu$ on $M$ such that $I(f)=\int fd\nu$ for all $f\in C_c^0(M)$, so we can write
\begin{equation}\label{eq: decomp_mu}
	\mu = |h|^2 \mathcal{H}^3 + \nu,
\end{equation} where $\nu$ is called the \textbf{defect measure} of the sequence $(\nabla_{\ep},\Phi_{\ep})$. By standard measure theory, we have (see for instance \cite[Theorem 7.2, p.212]{folland2013real}):
\begin{equation}\label{eq: property_nu}
\nu(U) = \sup\{I(f):f\in C_c^0(M)\text{ with $\mathrm{supp}(f)\subset U$ and }\|f\|_{\infty}\leqslant 1\},
\end{equation} for any open subset $U\subset M$. Moreover, we can show the following:
\begin{lemm}\label{lem: h_on_M_supp_nu_in_S}
    The harmonic $1$-form $h$ is smooth on all of $M$, while $\nu$ satisfies $\nu(M\setminus S)=0$. In particular, $\mathrm{supp}(\nu)\subset S$ and $\nu$ is singular with respect to $\cH^3$.
\end{lemm}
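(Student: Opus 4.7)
The plan is twofold: first, to confine the defect measure $\nu$ to $S$ using the local smooth convergence from Lemma~\ref{lemm:nice_convergence_off_of_S}; second, to extend $h$ across the finite set $S$ via a removable singularity argument.

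For the statement on $\nu$, fix an arbitrary $x_0\in M\setminus S$. By the definition of $S$ together with the monotonicity of $r\mapsto \liminf_{\ep\to 0}\mu_{\ep}(B_r(x_0))$, I can choose $r\in (0,r_0]\setminus \cR_{x_0}$ with $\lim_{\ep\to 0}\mu_{\ep}(B_r(x_0))<\eta_{\ast}$. Remark~\ref{rmk:smallness_condition_relation} then triggers Proposition~\ref{prop:local_convergence}, so that Lemma~\ref{lemm:nice_convergence_off_of_S} yields the $C^\infty_{\loc}$ convergence $\ep^{-1}e_{\ep}\to |h|^2$ on, say, $U:=B_{r/48}(x_0)$. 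For any non-negative $f\in C^0_c(U)$, dominated convergence gives $\int f\,d\mu_{\ep}\to \int f|h|^2$, and the weak$^*$ convergence $\mu_{\ep}\rightharpoonup \mu$ combined with the decomposition~\eqref{eq: decomp_mu} forces $\int f\,d\nu =0$. Applying~\eqref{eq: property_nu} to open subsets of $U$, I conclude that $\nu(U)=0$, and covering $M\setminus S$ by such neighborhoods gives $\nu(M\setminus S)=0$. Since $S$ is finite by Lemma~\ref{lem: S_finite} (so $\cH^3(S)=0$), it follows that $\mathrm{supp}(\nu)\subset S$ and $\nu$ is singular with respect to $\cH^3$.

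For the smoothness of $h$, note first that $h\in L^2(M)$, since Proposition~\ref{prop:maximum_principle_for_w} together with~\eqref{ineq: uniform_bound_assumption} bounds the $L^2$-norms of the approximants $\ep^{1/2}\langle * F_{\nabla_{\ep}},\Phi_{\ep}\rangle$ uniformly by $\sqrt{\Lambda}$. Lemma~\ref{lemm:nice_convergence_off_of_S} already tells us that $h$ is smooth on $M\setminus S$ with $dh=0$ and $d^\ast h=0$ there. To promote this to $M$, I will introduce, for each $x_j\in S$ and each small $\delta>0$, a cutoff $\zeta_{\delta,j}\in C_c^\infty(M)$ that vanishes on $B_\delta(x_j)$, equals $1$ outside $B_{2\delta}(x_j)$, and satisfies $|\nabla \zeta_{\delta,j}|\leqslant C/\delta$; then in dimension three,
\begin{equation*}
\|\nabla \zeta_{\delta,j}\|_{L^2(M)}^2 \lesssim \delta^{-2}\cdot\mathrm{vol}(B_{2\delta}(x_j))\lesssim \delta\longrightarrow 0 \quad\text{as }\delta\to 0.
\end{equation*}
Setting $\zeta_\delta:=\prod_{x_j\in S}\zeta_{\delta,j}$ (finitely many factors), the same bound persists. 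Testing the identities $dh=0$ and $d^\ast h=0$ against $\zeta_\delta\varphi$ for an arbitrary smooth compactly supported test form $\varphi$, integrating by parts, and applying the Cauchy--Schwarz inequality to the commutator terms $\nabla\zeta_\delta\wedge \varphi\otimes h$, the error terms are bounded by $\|h\|_{L^2(M)}\|\nabla \zeta_\delta\|_{L^2(M)}\|\varphi\|_{\infty}\to 0$. Hence $dh=0$ and $d^\ast h=0$ distributionally on $M$, and elliptic regularity for the Hodge Laplacian $\Delta=dd^\ast+d^\ast d$ forces $h$ to be smooth and harmonic on all of $M$.

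The main technical subtlety is the removable singularity step, where the $L^2$ integrability of $h$ has to be played against the vanishing $W^{1,2}$-capacity of points in dimension three. The advantage of dimension $n=3>2$ is precisely that a simple linear cutoff suffices (no logarithmic scaling is required), so once the bound $\|\nabla\zeta_\delta\|_{L^2}\lesssim \sqrt{\delta}$ is in hand, the passage to the distributional equations on $M$ is routine. I expect no further obstacles.
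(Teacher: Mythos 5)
Your proposal is correct and follows essentially the same route as the paper: the defect measure is killed off $S$ by the local smooth convergence $\ep^{-1}e_\ep\to|h|^2$ from Lemma~\ref{lemm:nice_convergence_off_of_S} together with~\eqref{eq: property_nu}, and the harmonicity of $h$ is extended across the finite set $S$ by the same capacity-type cutoff argument (gradient $\lesssim\delta^{-1}$ against volume $\lesssim\delta^3$, paired with $h\in L^2$ via Cauchy--Schwarz). The only cosmetic difference is that the paper tests $d^*h=0$ and $dh=0$ separately against functions and $2$-forms rather than a generic test form, which amounts to the same computation.
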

\begin{proof}
    We already know that $h \in L^2(M)$, and that it is smooth and harmonic on $M \setminus S$. We next show that
    \begin{equation}\label{eq: h_distributionally_harmonic}
    dh = 0 \text{ and } d^*h = 0 \text{ in the distributional sense on }M.
    \end{equation}
    If $S = \emptyset$, there is nothing to prove. Otherwise, take $x_0 \in S$ and let $U$ be a neighborhood of $x_0$ whose closure contains no other points of $S$. Letting $\zeta: \RR \to [0, 1]$ be a standard cutoff function such that
    \[
    \zeta(t) = 1\quad\text{if }t \leqslant 0,\quad\text{and}\quad  \zeta(t) = 0\quad\text{if }t \geqslant 1, 
    \]
    for $\delta < r_0$ sufficiently small such that $B_{4\delta}(x_0) \subset U$, where $r_0$ is the radius introduced at the start of this section, we define
    \[
    \varphi_{\delta}(x) = 1 - \zeta\big( \frac{d(x, x_0) - \delta}{\delta} \big).
    \]
    Then, given $f \in \Omega^0_{c}(U)$, we have from the smooth harmonicity of $h$ on $U \setminus \{x_0\}$ that
    \[
    0 = \int_{U} \bangle{h, d(\varphi_{\delta} f)} = \int_{U} f \bangle{h, d\varphi_{\delta}} + \int_{U} \varphi_{\delta}\bangle{h, df}.
    \]
    Using the definition of $\varphi_{\delta}$ and the integrability of $|h|^2$, we get upon rearranging the above equation and applying H\"older's inequality that
    \[
    \begin{split}
    \Big| \int_{U} \varphi_{\delta} \bangle{h, df} \Big| =\ & \Big| \int_{U}f\bangle{h, d\varphi_{\delta}}  \Big|\\
    \leqslant\ & \|f\|_{L^{\infty}} \|h\|_{L^2}  \cdot \Big( \int_{B_{2\delta}(x_0) \setminus B_{\delta}(x_0)} |d\varphi_{\delta}|^2 \Big)^{\frac{1}{2}}\leqslant C \delta^{\frac{1}{2}},
    \end{split}
    \]
    for some constant $C$ which does not depend on $\delta$, where in getting the last inequality we also used~\eqref{eq:volume_bounds_for_estimates} and the fact that $|d\varphi_{\delta}| \leqslant \|\zeta'\|_{L^{\infty}}\cdot \delta^{-1}$. Letting $\delta \to 0$ and using the dominated convergence theorem gives
    \[
    \int_{U} \bangle{h, df} = 0, \text{ for all }f \in \Omega^0_{c}(U).
    \]
    Similarly, we have that
    \[
    \int_{U} \bangle{h, d^*\alpha} = 0, \text{ for all }\alpha \in \Omega^2_c(U).
    \]
    Repeating this argument near each point of $S$, and recalling again that $h$ is already smooth and harmonic away from $S$, we get~\eqref{eq: h_distributionally_harmonic} as asserted, and it follows from standard elliptic theory that $h$ extends to a smooth harmonic $1$-form over all of $M$.
    
    Next we show the assertion about $\nu$. Since $M\setminus S$ is an open set, by \eqref{eq: property_nu} it suffices to prove that for every $f\in C_c^0(M)$ with $\mathrm{supp}(f)\subseteq M\setminus S$ and $\|f\|_{\infty}\leqslant 1$, we have
                        \begin{equation}\label{eq: h_on_M_supp_nu_in_S}
                        \lim_{\ep\to 0}\int_M f\ep^{-1}e_{\ep}(\nabla_{\ep},\Phi_{\ep}) = \int_M f \lvert h\rvert^2.
                        \end{equation} Now, by the $C_{\text{loc}}^{\infty}$-convergence \eqref{eq: Cinfty_loc_convergence} on $M\setminus S$, and the fact that $\mathrm{supp}(f)\subseteq M\setminus S$ is compact, when $\ep\to 0$ we have
                        \[
                        f\ep^{-1}e_{\ep}(\nabla_{\ep},\Phi_{\ep})\to f\lvert h\rvert^2\quad\text{uniformly on $M$}.
                        \] Therefore, equation \eqref{eq: h_on_M_supp_nu_in_S} follows.
\end{proof}
The next lemma shows that one can replace $\eta_{\ast}$ by any $\eta\in(0,\eta_{\ast})$ in the definition of $S$:
\begin{lemm}\label{lem: S_etas}
For all $\eta\in (0,\eta_{*})$, we have
        \begin{equation}\label{eq: alternative_S}
        S = \bigcap_{0<r\leqslant  r_0}\left\{x\in M: \liminf_{\ep\to 0}\ep^{-1}\int_{B_r(x)}e_{\ep}(\nabla_{\ep},\Phi_{\ep})\geqslant\eta\right\}=:S_{\eta}.
        \end{equation}
\end{lemm}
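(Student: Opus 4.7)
The inclusion $S \subseteq S_\eta$ is immediate from $\eta < \eta_*$, so the substance is the reverse inclusion $S_\eta \subseteq S$. I will argue this by contradiction: suppose $x_0 \in S_\eta$ but $x_0 \notin S$. By the definition \eqref{eq: blow-up_set_S} of $S$, the failure of $x_0 \in S$ provides some $r \in (0, r_0]$ for which
\[
\liminf_{\ep \to 0} \mu_\ep(B_r(x_0)) < \eta_*.
\]
Shrinking $r$ if necessary (and using monotonicity in $r$), I may additionally arrange that $r \in (0,r_0] \setminus \cR_{x_0}$, so that $\mu_\ep(B_r(x_0)) \to \mu(B_r(x_0)) < \eta_*$ along the full chosen subsequence, and in particular one can extract a further subsequence $\ep_i \to 0$ with $\mu_{\ep_i}(B_r(x_0)) \leqslant \eta_* - \delta$ for some $\delta > 0$.

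The key input is now Proposition \ref{prop:local_convergence} (applied with the constant family $g_i \equiv g$) together with Remark \ref{rmk:smallness_condition_relation}. Setting $\rho = r/96$, which satisfies $\rho < \rho_1/96$ because $r \leqslant r_0 = \rho_1/4$, the smallness condition $\mu_{\ep_i}(B_{96\rho}(x_0)) < \eta_*$ is precisely the hypothesis of Remark \ref{rmk:smallness_condition_relation}(i) given our choice \eqref{def: eta_star} of $\eta_*$. I then obtain, for large $i$, the smooth convergence
\[
\ep_i^{-1} e_{\ep_i}(\nabla_i, \Phi_i) \;\longrightarrow\; |h|_g^2 \quad \text{in } C^\infty(B_{2\rho}(x_0)),
\]
by Remark \ref{rmk:smallness_condition_relation}(ii), where the limiting $1$-form is harmonic and, by uniqueness of weak limits, coincides with the restriction of the global $h$ from \eqref{eq: weak_L2_convergnece}.

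To derive the contradiction, note that $|h|_g^2$ is continuous on $B_{2\rho}(x_0)$, hence $\lim_{s \to 0}\int_{B_s(x_0)} |h|^2 \,\vol_g = 0$. Pick $s \in (0, 2\rho)$ small enough that this integral is strictly less than $\eta/2$. The $C^\infty_{\loc}$ convergence just established then gives
\[
\lim_{i \to \infty} \ep_i^{-1} \int_{B_s(x_0)} e_{\ep_i}(\nabla_i, \Phi_i)\,\vol_g \;=\; \int_{B_s(x_0)} |h|^2\,\vol_g \;<\; \frac{\eta}{2},
\]
so eventually $\mu_{\ep_i}(B_s(x_0)) < \eta$. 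This forces $\liminf_{\ep \to 0} \mu_\ep(B_s(x_0)) < \eta$, contradicting the assumption $x_0 \in S_\eta$ applied at radius $s$. I expect the main (mild) obstacle to be bookkeeping the two-stage subsequence extraction and verifying that the radii align correctly with the $\rho \in (0,\rho_1/96)$ hypothesis of Proposition \ref{prop:local_convergence}; once this is in place, the conclusion follows cleanly from Remark \ref{rmk:smallness_condition_relation}(ii) and the continuity of $|h|^2$.
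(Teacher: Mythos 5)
Your proof is correct and follows essentially the same route as the paper's: both arguments show that at a point off $S$ the normalized energy density converges smoothly to $|h|^2$ near that point (via Proposition~\ref{prop:local_convergence} / Remark~\ref{rmk:smallness_condition_relation}), and then use that $\int_{B_s(x_0)}|h|^2$ is small for small $s$ to violate the defining inequality of $S_\eta$. The only cosmetic difference is that the paper bounds $\int_{B_s}|h|^2 \lesssim_M \|h\|_{L^2}^2 s^3$ via a global Bochner--Moser sup estimate on $h$, whereas you use the local continuity of $|h|^2$ on the ball of convergence, which suffices equally well.
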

\begin{proof}
    By definition, since $\eta<\eta_{*}$, we have $S\subset S_{\eta}$. So to prove \eqref{eq: alternative_S} it suffices to show that if $x_0\in M\setminus S$ then $x_0\in M\setminus S_{\eta}$. To do so, note that $x_0\in M\setminus S$ implies the existence of both $\rho\in (0,r_0]$ and a subsequence $\ep_i\to 0$ such that
    \[
    \ep_i^{-1}e_{\ep_i}(\nabla_{\ep_i},\Phi_{\ep_i}) \to |h|^2\quad\text{uniformly (with all derivatives) in $B_{\rho}(x_0)$}.
    \]
    Now, since $h$ is a harmonic $1$-form, the Bochner formula implies
    \[
    \Delta |h|^2\leqslant 2\|\mathrm{Ric}\|_{\infty}\cdot|h|^2\quad \text{ holds pointwise on }M.
    \] Combining this differential inequality with the fact that $h$ is $L^2$-bounded and applying Lemma \ref{lemm:moser-improve} (b) to the function $|h|^2$ gives $\|h\|_{\infty}^2\leqslant C_M\|h\|_{L^2}^2$. Thus, for all $0<r<\rho$ we have
    \[
    \lim_{i\to\infty}\ep_i^{-1}\int_{B_r(x_0)}e_{\ep_i}=\int_{B_{r}(x_0)}|h|^2\leqslant \|h\|_{\infty}^2\cdot\mathrm{vol}(B_r(x_0))\leqslant C_M\|h\|_{L^2}^2\cdot r^3,
    \] and this can be made as small as one wants by decreasing $r$; in particular, taking $r=r(M,\|h\|_{L^2}, \rho ,\eta)>0$ small enough we get 
    \[
    \liminf_{\ep\to 0} \ep^{-1}\int_{B_r(x_0)}e_{\ep}<\eta,
    \] therefore $x_0\in M\setminus S_{\eta}$ as we wanted.\\
\end{proof}
As a consequence of Lemmas \ref{lem: S_finite}, \ref{lem: h_on_M_supp_nu_in_S} and \ref{lem: S_etas} we get:
\begin{coro}\label{cor: Theta}
    The zero-dimensional densities of the measures $\mu$ and $\nu$ exist and coincide everywhere, defining the function
    \[
    \Theta(x):=\lim_{r\downarrow 0}\mu(B_r(x))=\lim_{r\downarrow 0}\nu(B_r(x))
    \] for all $x\in M$, which satisfies $0\leqslant\Theta(x)\leqslant\Lambda$.  Moreover, $\Theta\colon M\to[0,\infty)$ is upper semicontinuous and $\mathrm{supp}(\Theta)={S}$, with $\Theta(x)\geqslant\eta_{*}$ for all $x\in S$.
\end{coro}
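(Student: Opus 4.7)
The plan is to establish each clause separately, leveraging standard measure-theoretic facts together with the structural results already collected in Lemmas \ref{lem: S_finite}, \ref{lem: h_on_M_supp_nu_in_S}, and \ref{lem: S_etas}. First I would handle the existence and coincidence of the densities. Since $\mu$ is locally finite, it is finite on any $\overline{B_r(x)}$ with $r$ small, and because the sets $\{B_r(x)\}_{r>0}$ decrease to $\{x\}$, downward continuity of the measure gives
\[
\lim_{r\downarrow 0}\mu(B_r(x))=\mu(\{x\})\in[0,\infty),
\]
and analogously for $\nu$. The decomposition $\mu=|h|^2\mathcal{H}^3+\nu$ of \eqref{eq: decomp_mu}, combined with the fact that $|h|^2\mathcal{H}^3$ is absolutely continuous with respect to $\mathcal{H}^3$ and hence vanishes on singletons, forces $\mu(\{x\})=\nu(\{x\})$, so the common limit $\Theta(x)$ is well-defined.

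Next I would verify the uniform bound. By the weak* convergence $\mu_{\ep}\rightharpoonup \mu$ on $M$ together with lower semicontinuity on open sets, $\mu(M)\leqslant\liminf_{\ep\to 0}\mu_{\ep}(M)\leqslant\Lambda$, and hence $\Theta(x)=\mu(\{x\})\leqslant\mu(M)\leqslant\Lambda$. For upper semicontinuity, suppose $x_n\to x$ and fix any $r\in(0,r_0]$. For $n$ large enough, $d(x_n,x)<r/2$, so $B_{r/2}(x_n)\subset B_r(x)$ by the triangle inequality, and thus
\[
\Theta(x_n)\leqslant\mu\bigl(B_{r/2}(x_n)\bigr)\leqslant\mu(B_r(x)).
\]
Letting $n\to\infty$ first and then $r\downarrow 0$ gives $\limsup_{n\to\infty}\Theta(x_n)\leqslant\Theta(x)$.

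For the lower bound on $S$, I would proceed as follows. If $x\in S$, then by definition $\liminf_{\ep\to 0}\mu_{\ep}(B_r(x))\geqslant\eta_{\ast}$ for every $r\in(0,r_0]$. Since $\mathcal{R}_x$ is at most countable, I can choose a sequence $r_k\downarrow 0$ with $r_k\in(0,r_0]\setminus\mathcal{R}_x$, and \eqref{eq: weak_limit_measure_balls} yields $\mu(B_{r_k}(x))=\lim_{\ep\to 0}\mu_{\ep}(B_{r_k}(x))\geqslant\eta_{\ast}$; passing to the limit in $k$ gives $\Theta(x)\geqslant\eta_{\ast}$.

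Finally, for $\mathrm{supp}(\Theta)=S$, the just-proved inequality shows $S\subseteq\{\Theta\geqslant\eta_{\ast}\}\subseteq\mathrm{supp}(\Theta)$. For the reverse inclusion, if $\Theta(y)>0$ then $\nu(\{y\})=\Theta(y)>0$, so every open neighborhood of $y$ has positive $\nu$-mass, meaning $y\in\mathrm{supp}(\nu)\subseteq S$ by Lemma \ref{lem: h_on_M_supp_nu_in_S}. Since $S$ is closed (Lemma \ref{lem: S_finite}(a)), taking closures yields $\mathrm{supp}(\Theta)=\overline{\{\Theta>0\}}\subseteq S$, completing the proof. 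The whole argument is essentially bookkeeping; the only mildly delicate point is the use of the countable exceptional set $\mathcal{R}_x$ to convert the $\liminf$-type defining condition of $S$ into a genuine lower bound on $\mu$, but this is routine.
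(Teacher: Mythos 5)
Your proof is correct, and its overall architecture matches the paper's: monotonicity plus the decomposition \eqref{eq: decomp_mu} for existence of the densities, ball inclusions for upper semicontinuity, and \eqref{eq: weak_limit_measure_balls} together with the definition of $S$ for the lower bound $\Theta\geqslant\eta_{\ast}$ on $S$. The one place where you genuinely deviate is the inclusion $\mathrm{supp}(\Theta)\subset S$: by identifying $\Theta(x)=\mu(\{x\})=\nu(\{x\})$ at the outset via downward continuity of the measure, you reduce $\Theta(y)>0$ to $\nu(\{y\})>0$, hence $y\in\mathrm{supp}(\nu)\subset S$ by Lemma \ref{lem: h_on_M_supp_nu_in_S}. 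The paper instead argues directly from the definition of $S$, showing that $\Theta(x)>0$ forces $\liminf_{\ep\to 0}\mu_{\ep}(B_{r}(x))>\eta$ for some $\eta\in(0,\eta_{\ast})$ and all small $r$, and then invokes the stability $S=S_{\eta}$ of Lemma \ref{lem: S_etas}. Your route is slightly shorter and makes this corollary independent of Lemma \ref{lem: S_etas}; the paper's route has the mild advantage of not presupposing the pointwise identification $\Theta(x)=\nu(\{x\})$ (which it only records later, when $\nu$ is shown to be a finite sum of Dirac masses), but both arguments are complete and rest only on results established before the corollary.
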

\begin{proof}
Since $\nu(M)\leqslant\mu(M)\leqslant\Lambda<\infty$ and the functions $r\mapsto \nu(B_r(x))$, $r\mapsto\mu(B_r(x))$ are monotone increasing, the decomposition \eqref{eq: decomp_mu} and the fact that $|h|^2\in C^{\infty}(M)$ all together imply the first part of the statement.

 The proof of the upper semicontinuity of $\Theta$ is standard, but we include it here for convenience. Let $(x_j)\subset M$ with $x_j\to x\in M$. Given $r\in (0,r_0]\setminus\cR_x$ and $\delta>0$ arbitrary, for $j\gg 1$ we have $B_r(x_j)\subset B_{r+\delta}(x)$ and then
 \[
 \Theta(x_j)\leqslant \mu(B_r(x_j))
 \leqslant\mu(B_{r+\delta}(x))\leqslant\Lambda<\infty.
 \] Thus,
 \[
 \limsup_{j\to\infty}\Theta(x_j)\leqslant\mu(B_{r+\delta}(x))
 \] and letting $\delta\downarrow 0$ we get
 \[
  \limsup_{j\to\infty}\Theta(x_j)\leqslant\mu(B_{r}(x)).
 \] Hence, letting $r\to 0$ we get
 \[ 
 \limsup_{j\to\infty}\Theta(x_j)\leqslant\Theta(x),
 \] as desired. 
 
 Next, let $x$ be a point where $\Theta(x) > 0$. Then, there is $(r_j)\subset (0,r_0]\setminus\cR_x$, $r_j\downarrow 0$, such that by \eqref{eq: weak_limit_measure_balls}
 \[
 0 < \Theta(x) = \lim_{j\to\infty}\liminf_{\ep\to 0}\mu_{\ep}(B_{r_j}(x)).
 \] Thus, for all sufficiently large $j$ there holds
 \[
\liminf_{\ep\to 0}\ep^{-1}\int_{B_{r_j}(x)}e_{\ep} 
 > \frac{1}{2}\min\{\Theta(x), \eta_{\ast}\} =: \eta \in (0, \eta_{\ast}).
 \] It follows from Lemma \ref{lem: S_etas} that $x\in S$. This proves that $\mathrm{supp}(\Theta)\subset S$, since $S$ is closed by Lemma~\ref{lem: S_finite}. To prove the reverse inclusion and the fact that $\Theta(x)\geqslant\eta_{*}$ for all $x\in S$, we let $x\in S$ and using again a sequence $(r_j)\subset (0,r_0]\setminus\cR_x$, $r_j\downarrow 0$, we see that
 \[
 \Theta(x) = \lim_{j\to\infty}\liminf_{\ep\to 0}\mu_{\ep}(B_{r_j}(x)) \geqslant\eta_{*}>0,
 \] since $\liminf_{\ep\to 0}\mu_{\ep}(B_{r_j}(x))\geqslant\eta_{*}$ for all $j$. This completes the proof.
\end{proof}
\begin{coro}
    $\mathrm{supp}(\nu)=S$ and writing $S=\{x_1,\ldots,x_l\}$ we have
    \begin{equation}\label{eq: nu_sum_deltas}
    \nu = \sum_{k=1}^l \Theta(x_k)\delta_{x_k}.
    \end{equation}
\end{coro}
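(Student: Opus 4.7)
The plan is to combine the finiteness of $S$ from Lemma~\ref{lem: S_finite}(b) with the inclusion $\mathrm{supp}(\nu) \subset S$ from Lemma~\ref{lem: h_on_M_supp_nu_in_S} and the density computation from Corollary~\ref{cor: Theta}. Since $S = \{x_1, \ldots, x_l\}$ is a finite set and $\nu$ is a Radon measure whose support is contained in $S$, standard measure theory forces $\nu$ to be a finite linear combination of Dirac masses supported on $S$:
\[
\nu = \sum_{k=1}^{l} c_k \delta_{x_k}, \quad \text{for some } c_k \geqslant 0.
\]

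The next step is to identify the coefficients $c_k$. For each fixed $k$, since $S$ is finite, there exists $r_k > 0$ such that $B_{r_k}(x_k) \cap S = \{x_k\}$. Hence, for all $r \in (0, r_k)$ we have $\nu(B_r(x_k)) = c_k$. Taking $r \downarrow 0$ and invoking the definition of $\Theta$ given in Corollary~\ref{cor: Theta}, we obtain
\[
c_k = \lim_{r \downarrow 0} \nu(B_r(x_k)) = \Theta(x_k),
\]
which establishes the formula~\eqref{eq: nu_sum_deltas}.

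Finally, for the support statement, it remains only to prove $S \subset \mathrm{supp}(\nu)$, since the reverse inclusion is already given by Lemma~\ref{lem: h_on_M_supp_nu_in_S}. By Corollary~\ref{cor: Theta}, we have $\Theta(x_k) \geqslant \eta_{\ast} > 0$ for every $k = 1, \ldots, l$, so every coefficient $c_k$ in the decomposition above is strictly positive. Consequently, each $x_k$ lies in the support of $\nu$, yielding $S \subset \mathrm{supp}(\nu)$ and completing the proof. No serious obstacle is anticipated, as all the essential analytic work has already been carried out in the preceding lemmas; this corollary is a bookkeeping consequence.
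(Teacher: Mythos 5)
Your proof is correct and relies on exactly the same ingredients as the paper's (Lemma~\ref{lem: S_finite}(b), Lemma~\ref{lem: h_on_M_supp_nu_in_S}, and Corollary~\ref{cor: Theta}); the only difference is cosmetic, in that you deduce the atomic decomposition directly from the elementary fact that a Radon measure supported on a finite set is a sum of point masses, whereas the paper routes this through $\nu \ll \cH^0\lfloor S$ and the Radon--Nikodym theorem. Your ordering (decomposition first, then $S \subset \mathrm{supp}(\nu)$ from $\Theta(x_k) \geqslant \eta_\ast > 0$) is equally valid.
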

\begin{proof}
    By Lemma \ref{lem: h_on_M_supp_nu_in_S} we already knew that $\mathrm{supp}(\nu)\subset S$. Now, by Corollary \ref{cor: Theta} we also have that $\Theta$ is the density of $\nu$ and $\mathrm{\supp}(\Theta)=S$; in particular, it follows that $S\subset \mathrm{supp}(\nu)$, and therefore we have the desired equality $\mathrm{supp}(\nu)=S$. Now, using Lemma \ref{lem: S_finite} (b) we can further write $\mathrm{supp}(\nu)=S=\{x_1,\ldots,x_l\}$, where $l:=\cH^0(S)\leqslant\eta_{*}^{-1}\Lambda<\infty$. Thus, for any $A\subset\mathrm{supp}(\nu)$ we have
    \[
    \nu(A) = \sum\limits_{j=1}^l \lim_{r\downarrow 0} \nu(A\cap B_r(x_j))\leqslant \Lambda\cH^0(S\cap A).
    \] Therefore, $\nu\ll\cH^0\lfloor S$ and by the Radon--Nikodym theorem it follows that $\nu=\Theta \cH^0\lfloor S$, that is, equation \eqref{eq: nu_sum_deltas} holds.
\end{proof}
To sum up, so far we showed that there is a harmonic $1$-form $h\in\Omega^1(M)$ such that, as $\ep\to 0$, we have $\ep^{-1}e_{\ep}(\nabla_{\ep},\Phi_{\ep})\to |h|^2$ in $C_{\mathrm{loc}}^{\infty}$ outside the energy concentration set $S$ of $(\nabla_{\ep},\Phi_{\ep})$ defined in~\eqref{eq: blow-up_set_S}, which in turn is a finite set of points with $\cH^0(S)\leqslant\eta_{*}^{-1}\Lambda<\infty$. Moreover, as $\ep\to 0$, we have the following weak* convergence of Radon measures
\begin{equation}\label{eq: convergence_of_energy_measures}
\mu_{\ep}=\ep^{-1}e_{\ep}(\nabla_{\ep},\Phi_{\ep})\cH^3\rightharpoonup \mu=|h|^2\cH^3 + \sum_{x\in S}\Theta(x)\delta_x,
\end{equation} where
\begin{equation}\label{eq: Theta_lower_bound}
    \Theta(x)\geqslant \eta_{\ast},\quad\forall x\in S.
\end{equation}

We finish this subsection with a simple result clarifying when it is possible to have $S\neq\emptyset$ when $M$ is closed and the family of critical points under consideration are the ones produced by Theorem~\ref{thm: existence}. 
\begin{lemm}\label{lemm: reducible_is_boring}
    Suppose $M$ is closed and that we have a family of critical points $(\nabla_{\ep},\Phi_{\ep})$ for $\cY_{\ep}$, for $\ep$ sufficiently small, satisfying the energy regime
\begin{equation}\label{ineq: basic_energy_bounds}
0<\liminf_{\ep\to 0} \ep^{-1}\cY_{\ep}(\nabla_{\ep},\Phi_{\ep})\leqslant \limsup_{\ep\to 0} \ep^{-1}\cY_{\ep}(\nabla_{\ep},\Phi_{\ep})<\infty.
\end{equation}
Suppose further that a subsequence of $(\nabla_{\ep},\Phi_{\ep})$, which we do not relabel, has been chosen so that in particular we are in the situation summarized above.
\begin{itemize}
    \item[(a)] If $h\equiv 0$, then $S\neq\emptyset$.
    \vskip 1mm
    \item[(b)] $S = \emptyset$ if and only if $\nabla_{\ep}\Phi_{\ep}\equiv 0$ for all but finitely many $\ep$.
\end{itemize}
\end{lemm}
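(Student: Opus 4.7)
For part (a), the plan is to test the weak* convergence $\mu_\ep \rightharpoonup \mu$ against the constant function $1 \in C(M)$, which is admissible because $M$ is closed, so that $\ep^{-1}\cY_\ep(\nabla_\ep, \Phi_\ep) = \mu_\ep(M) \to \mu(M) = \int_M |h|^2 \vol_g + \sum_{x \in S}\Theta(x)$. If simultaneously $h \equiv 0$ and $S = \emptyset$, then $\mu(M) = 0$, contradicting the positivity of $\liminf_{\ep \to 0}\ep^{-1}\cY_\ep$ in \eqref{ineq: basic_energy_bounds}.

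For the easy implication of (b), I would suppose $\nabla_\ep\Phi_\ep \equiv 0$ for all but finitely many $\ep$ in the subsequence and invoke Remark \ref{rmk: reducible_sols} to deduce that such $(\nabla_\ep, \Phi_\ep)$ satisfy $|\Phi_\ep|\equiv 1$ and $F_{\nabla_\ep} = \langle F_{\nabla_\ep}, \Phi_\ep\rangle \Phi_\ep$ with $\langle F_{\nabla_\ep}, \Phi_\ep\rangle$ a harmonic $2$-form. A direct calculation then shows that the $1$-forms $h_\ep := \ep^{1/2}\langle \ast F_{\nabla_\ep}, \Phi_\ep\rangle$ are themselves harmonic and satisfy the pointwise identity $\ep^{-1}e_\ep(\nabla_\ep, \Phi_\ep) = |h_\ep|^2$, with the uniform $L^2$-bound $\|h_\ep\|_{L^2}^2 = \ep^{-1}\cY_\ep(\nabla_\ep, \Phi_\ep) \leqslant \Lambda$. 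Because the harmonic $1$-forms on the closed manifold $M$ form a finite-dimensional subspace of $L^2$, all norms on this subspace are equivalent; hence $\|h_\ep\|_\infty \leqslant C_M\sqrt{\Lambda}$, which yields $\ep^{-1}\int_{B_r(x)} e_\ep \leqslant C_M\Lambda\, r^3$ uniformly in $x \in M$. Choosing $r$ so small that $C_M\Lambda r^3 < \eta_{\ast}$ makes $S = \emptyset$ immediate from its definition.

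For the harder implication, I plan to assume $S = \emptyset$, so that $\mu = |h|^2\cH^3$ with $|h|^2 \in C^\infty(M)$ bounded on the compact manifold $M$. The core step is to upgrade the weak* convergence $\mu_\ep \to \mu$ into a concentration bound uniform in basepoint. Specifically, I would fix $r \in (0, \rho_1/4)$ small enough that
\[
\sup_{x \in M}\int_{\overline{B_{2r}(x)}} |h|^2\, \vol_g < \eta_{\ast}/2,
\]
which is possible by smoothness of $|h|^2$ and compactness of $M$. The claim is that $\sup_{x \in M}\mu_\ep(B_r(x)) < \eta_{\ast}$ for all sufficiently small $\ep$ in the subsequence; were this to fail, one would extract $\ep_i \to 0$ and $x_i \to x_\infty \in M$ with $\mu_{\ep_i}(B_r(x_i)) \geqslant \eta_{\ast}$, and then the inclusion $B_r(x_i) \subset \overline{B_{2r}(x_\infty)}$ for large $i$, together with the standard upper semicontinuity $\limsup_i \mu_{\ep_i}(K) \leqslant \mu(K)$ on compact $K$, would force $\mu(\overline{B_{2r}(x_\infty)}) \geqslant \eta_{\ast}$, contradicting the choice of $r$. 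Once this uniform smallness is in hand, I will apply Proposition \ref{prop:concentration_scale} with $\sigma = r/4 \in (0, \rho_1/16)$; the condition $\ep/\sigma \leqslant \tau_1\sqrt{\min\{\lambda,1\}}$ is automatic for $\ep$ small, and the proposition then forces $(\nabla_\ep, \Phi_\ep)$ to be reducible, i.e.\ $\nabla_\ep\Phi_\ep \equiv 0$, for all such $\ep$. The hard part will be precisely this uniform-in-basepoint upgrade of the measure convergence: weak* convergence alone gives only pointwise limits $\mu_\ep(B_r(x)) \to \mu(B_r(x))$ at individual basepoints (and only outside a countable set of radii), so the compactness-of-$M$ plus upper-semicontinuity argument, coupled with the absence of a singular part in $\mu$, is essential to pass from pointwise smallness at the limit to simultaneous smallness along the sequence.
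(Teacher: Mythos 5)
Your proposal is correct in substance, and your part (a) coincides with the paper's argument. In part (b) you take routes that differ from the paper's in both directions, though the endpoints are the same. For the implication $S=\emptyset\Rightarrow$ eventual reducibility, the paper uses the global smooth convergence $\ep^{-1}e_{\ep}(\nabla_\ep,\Phi_\ep)\to|h|^2$ on all of $M$ (Lemma~\ref{lemm:nice_convergence_off_of_S} with $S=\emptyset$) to get the pointwise bound $\ep^{-1}e_\ep\leqslant |h|^2+1$ everywhere and then simply integrates over small balls; you instead obtain the uniform-in-basepoint smallness by a purely measure-theoretic compactness/upper-semicontinuity contradiction. Both arguments then invoke Proposition~\ref{prop:concentration_scale}, and yours is valid: your route works at the level of Radon measures and avoids appealing to the $C^\infty$ convergence, at the cost of the extra subsequence-and-contradiction step, while the paper's is shorter given that Lemma~\ref{lemm:nice_convergence_off_of_S} is already in hand. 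For the converse, the paper extracts a smoothly convergent subsequence of the harmonic forms $h_\ep$ via elliptic theory and reads off the absence of concentration from the smooth convergence of $|h_\ep|^2$; you instead use equivalence of norms on the finite-dimensional space $\mathscr{H}^1(M)$ to get $\|h_\ep\|_{\infty}\lesssim_M \sqrt{\Lambda}$ uniformly and conclude $S=\emptyset$ directly from its definition, which is cleaner in that it requires no subsequence extraction.

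One small gap in your converse direction: Remark~\ref{rmk: reducible_sols} only tells you that $\nabla_\ep\Phi_\ep\equiv 0$ forces either $\Phi_\ep\equiv 0$ or $|\Phi_\ep|\equiv 1$, and you pass directly to $|\Phi_\ep|\equiv 1$ without excluding the first alternative. You need to rule out $\Phi_\ep\equiv 0$ by noting that it would give $\ep^{-1}\cY_{\ep}(\nabla_\ep,\Phi_\ep)\geqslant \frac{\lambda}{4\ep^{3}}\mathrm{vol}(M)\to\infty$, contradicting the upper bound in~\eqref{ineq: basic_energy_bounds}; this is precisely the observation with which the paper opens its argument for this implication.
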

\begin{proof}
    (a) Since $M$ is compact, using the weak* convergence \eqref{eq: convergence_of_energy_measures} and the assumption $h\equiv 0$, we see that 
    \[
    \lim_{\ep \to 0} \ep^{-1}\cY_{\ep}(\nabla_{\ep}, \Phi_{\ep}) = \sum\limits_{x\in S}\Theta(x).
    \] Combining this with the positive energy lower bound in \eqref{ineq: basic_energy_bounds} forces $S\neq\emptyset$.
    
    (b) 
    We first prove the forward implication. Thus, assume that $S = \emptyset$. Then we have 
    \[
    \ep^{-1}e_{\ep}(\nabla_{\ep}, \Phi_{\ep})  \to |h|^2 \quad \text{in $C^{\infty}(M)$},
    \]
    so that eventually there holds 
    \begin{equation}\label{eq: convegence_to_h_everywhere_if_no_S}
    \ep^{-1}e_{\ep}(\nabla_{\ep}, \Phi_{\ep}) \leqslant |h|^2 + 1,\quad \text{everywhere on }M.
    \end{equation}
    Since $M$ is compact, smoothness alone guarantees that $|h|$ is bounded on $M$. Letting
    \[
    \sigma = \frac{1}{8}\min\{\rho_1, \Big( \frac{\eta_{*}}{C(\|h\|_{\infty}^2 + 1)} \Big)^{\frac{1}{3}}\},
    \]
    where $C$ is the universal constant coming from~\eqref{eq:volume_bounds_for_estimates}, we infer from~\eqref{eq: convegence_to_h_everywhere_if_no_S} and~\eqref{eq:volume_bounds_for_estimates} that
    \[
    \sup_{x \in M}\int_{B_{4\sigma}(x)} \ep^{-1}e_{\ep}(\nabla_{\ep}, \Phi_{\ep}) \leqslant C(\|h\|_{\infty}^2 + 1)(4\sigma)^3 < \eta_{*}.
    \]
    Since eventually we also have $\ep \leqslant \tau_1\cdot \min\{\sqrt{\lambda}, 1\}\cdot\sigma$, the desired conclusion follows from Proposition~\ref{prop:concentration_scale}.
    
    To prove the reverse implication, note that the energy upper bound in \eqref{ineq: basic_energy_bounds}, together with the assumption $\nabla_{\ep}\Phi_{\ep}\equiv 0$ (for all but finitely many $\ep$), prevents $\Phi_{\ep_i}$ from vanishing identically for any sequence $\ep_i\to 0$ with $\sup_i \ep_i\ll 1$. Then, by Remark \ref{rmk: reducible_sols}, for any such sequence $\ep_i$, we get that $|\Phi_i|\equiv 1$ and $F_{\nabla_i} = \omega_i\otimes\Phi_i$, where $\omega_i:=\langle F_{\nabla_i},\Phi_i\rangle\in\mathscr{H}^2(M)$ is harmonic, for all $i$. So $h_i:=\ep_i^{\frac{1}{2}}\langle\ast F_{\nabla_i},\Phi_i\rangle$ defines a sequence in $\mathscr{H}^1(M)$ with
    \[
    \|h_i\|_{L^2(M)}^2 = \ep_i\|F_{\nabla_i}\|_{L^2(M)}^2 = \ep_i^{-1}\cY_{\ep_i}(\nabla_i,\Phi_i),
    \] and the energy regime \eqref{ineq: basic_energy_bounds} translates to
    \[
    0<\liminf_{i\to\infty}\|h_i\|_{L^2(M)}^2\leqslant \limsup_{i\to\infty}\|h_i\|_{L^2(M)}^2<\infty.
    \] Thus, by Hodge theory/standard elliptic theory, the $h_i$ subconverges smoothly on $M$ to a non-trivial harmonic limit $0\neq h\in\mathscr{H}^1(M)$. In particular, $\ep_i^{-1}e_{\ep_i}(\nabla_i,\Phi_i) = \ep_i|F_{\nabla_i}|^2=|h_i|^2$ converges smoothly to $|h|^2$ on all of $M$. Thus, in this reducible case there is no normalized-energy concentration; that is, $S=\emptyset$. 
\end{proof}
\begin{example}[Irreducible families with $S\neq\emptyset$ on rational homology $3$-spheres]\label{ex:irred_families_non_trivial_S}
    By Theorem \ref{thm: irred}, if $(M^3,g)$ is closed and $b_1(M)=0$ (that is, when $M$ is a rational homology $3$-sphere, for instance $M=\mathbb{S}^3$ or $\mathbb{RP}^3$) then we can produce a family of critical points $(\nabla_{\ep},\Phi_{\ep})$ for $\cY_{\ep}$, for $\ep$ small enough, satisfying the energy regime \eqref{ineq: basic_energy_bounds}, and which furthermore consists of \emph{irreducible} pairs, $\nabla_{\ep}\Phi_{\ep}\neq 0$. Now, the condition $b_1(M)=0$ immediately forces $h\equiv 0$. Thus, by Lemma \ref{lemm: reducible_is_boring} (a), we conclude that we must have $S\neq\emptyset$ for such families.
\end{example}

\begin{rmk}
    An interesting question is whether it is possible to produce \emph{irreducible} critical points for $\cY_{\ep}$ in contexts where $b_1(M)\neq 0$, and if, moreover, one is able to construct a family of irreducible critical points $(\nabla_{\ep},\Phi_{\ep})$, for $\ep$ sufficiently small, so that both $h\neq 0$ \emph{and} $S\neq\emptyset$. The techniques employed by Stern \cite{Stern2021} in Ginzburg--Landau theory might be helpful to answer these questions.
\end{rmk} 

\subsection{The asymptotic zero set}\label{subsec:zero_set}

We define the \textbf{asymptotic zero set} of $(\nabla_{\ep},\Phi_{\ep})$ to be the accumulation point set of the zeros of the Higgs fields $\Phi_{\varepsilon}$ as $\ep\to 0$: 
\begin{equation}\label{eq: zero_set_Z}
Z:=\bigcap_{\kappa>0}\overline{\bigcup_{0<\ep<\kappa}\Phi_{\ep}^{-1}(0)}.
\end{equation} Recalling $w_{\ep}=\frac{1}{2}(1-|\Phi_{\ep}|^2)$, for any $\beta\in(0,\frac{1}{2})$ we define also the sets
\[
Z_{\beta}(\Phi_{\ep}):=\{x\in M: w_{\ep}(x)\geqslant \beta\},
\] and let 
\begin{equation}\label{eq: zero_set_beta}
Z_{\beta}:= \bigcap_{\kappa>0}\overline{\bigcup_{0<\ep<\kappa} Z_{\beta}(\Phi_{\ep})}.    
\end{equation}
It is immediate from the definitions that both $Z$ and $Z_{\beta}$ are closed in $M$, and that
\begin{equation}\label{eq: Z_betas_monotone}
\Phi_{\ep}^{-1}(0)\subseteq Z_{\beta'}(\Phi_{\ep})\subseteq Z_{\beta}(\Phi_{\ep}),\quad\text{ for all }0<\beta<\beta'<\frac{1}{2}.
\end{equation} In particular, we have $Z\subseteq Z_{\beta}$ for all $\beta\in (0,\frac{1}{2})$. Furthermore, we can prove the following:
\begin{lemm}\label{lem: Z_contained_in_S}
For any $\beta\in(0,\frac{1}{2})$, one has
\[
Z_{\beta}\subset S.
\] In particular, $Z$ and $Z_{\beta}$ are finite sets.
\end{lemm}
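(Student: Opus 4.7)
The plan is to argue by contraposition, showing that $M \setminus S \subset M \setminus Z_\beta$, and then to deduce the finiteness statement from Lemma~\ref{lem: S_finite}. Specifically, I would fix an arbitrary $x_0 \in M \setminus S$ and produce a neighborhood $U$ of $x_0$ and a threshold $\kappa > 0$ such that $w_\ep < \beta$ pointwise on $U$ for every $\ep < \kappa$; this immediately gives $Z_\beta(\Phi_\ep) \cap U = \emptyset$ for such $\ep$, and since $U$ is open, taking the closure of the union $\bigcup_{\ep < \kappa}Z_\beta(\Phi_\ep)$ cannot reintroduce $x_0$, so $x_0 \notin Z_\beta$.

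The key input for producing such a $U$ will be Lemma~\ref{lemm:nice_convergence_off_of_S}, which I would apply as follows. Since by Lemma~\ref{lem: S_finite} the set $S$ is finite, the complement $M \setminus S$ is open, so I can select $r > 0$ with $\overline{B_r(x_0)} \subset M \setminus S$. The $C^\infty_{\mathrm{loc}}$ convergence in~\eqref{eq: xi_converges_to_zero} then implies in particular that $\lambda\ep^{-3}(1 - |\Phi_\ep|^2)^2 \to 0$ uniformly on the compact set $\overline{B_r(x_0)}$; equivalently, $|\Phi_\ep| \to 1$ uniformly there, so $w_\ep \to 0$ uniformly on $\overline{B_r(x_0)}$, and hence I can find $\kappa > 0$ with $w_\ep < \beta$ on $\overline{B_r(x_0)}$ for every $\ep < \kappa$. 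Taking $U = B_r(x_0)$ completes the contrapositive.

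Once $Z_\beta \subset S$ is established for every $\beta \in (0, \tfrac{1}{2})$, the inclusion $Z \subset Z_\beta$ is automatic from the definitions, since at a zero of $\Phi_\ep$ one has $w_\ep = \tfrac{1}{2} > \beta$, so $\Phi_\ep^{-1}(0) \subset Z_\beta(\Phi_\ep)$. The finiteness of $Z$ and $Z_\beta$ then follows at once from Lemma~\ref{lem: S_finite}(b).

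I do not anticipate any serious obstacle: the main content of the argument is simply to recognize that the smooth off-blow-up convergence recorded in Lemma~\ref{lemm:nice_convergence_off_of_S} forces $|\Phi_\ep|$ to approach $1$ uniformly on compact subsets of $M \setminus S$, so that no accumulation of $\{w_\ep \geq \beta\}$ can occur away from $S$. The only care needed is the standard set-theoretic point that for an open set $U$ and a set $A$ with $A \cap U = \emptyset$ one has $\overline{A} \cap U = \emptyset$, which justifies passing from the pointwise smallness of $w_\ep$ on $B_r(x_0)$ to the conclusion that $x_0$ lies outside the closure appearing in the definition~\eqref{eq: zero_set_beta} of $Z_\beta$.
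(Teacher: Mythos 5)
Your argument is correct, and it reaches the inclusion $Z_\beta\subset S$ by a different route than the paper. The paper's proof does not invoke Lemma~\ref{lemm:nice_convergence_off_of_S} at all: it instead combines Lemma~\ref{lem: S_etas} (the fact that $S=S_\eta$ for any smaller threshold $\eta\in(0,\eta_\ast)$) with the clearing-out statement of Remark~\ref{rmk:clearing_out}(i), choosing a $\beta$-dependent threshold $\eta(\beta)=\min\{\eta_\ast,\theta_0(\lambda_0,\beta)\cdot\min\{\lambda,1\}\}$ precisely because the clearing-out constant $\theta_0$ depends on $\beta$; once $x_0\notin S=S_{\eta(\beta)}$, the small-energy hypothesis of Remark~\ref{rmk:clearing_out}(i) holds on some ball for all small $\ep$, giving $\|w_\ep\|_{\infty;B_\rho(x_0)}<\beta$ directly. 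Your route uses the stronger, already-established qualitative statement that $\lambda\ep^{-3}(1-|\Phi_\ep|^2)^2\to 0$ in $C^\infty_{\loc}(M\setminus S)$, which handles every $\beta\in(0,\tfrac12)$ at once without tuning a threshold; the price is that you lean on the full machinery behind Proposition~\ref{prop:local_convergence}, whereas the paper's version only needs the elementary mean-value estimate of Lemma~\ref{lemm:w_mean_value_estimate}. One small point worth making explicit in your write-up: extracting the single term $\lambda\ep^{-3}(1-|\Phi_\ep|^2)^2\to 0$ from the convergence of the sum in~\eqref{eq: xi_converges_to_zero} uses that all four summands are nonnegative, which holds for $\ep<r_0$ because $|\Phi_\ep|\leqslant 1$ by Proposition~\ref{prop:maximum_principle_for_w}. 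The remaining set-theoretic step and the deduction of finiteness from Lemma~\ref{lem: S_finite}(b) are exactly as in the paper.
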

\begin{proof}
    Since $Z\subset Z_{\beta}$, and since $S$ is finite by Lemma \ref{lem: S_finite}, we only need to prove that $ Z_{\beta}\subset S$. Let $\lambda_0$ be an upper bound for $\lambda$. Given $\beta\in(0,\frac{1}{2})$, we shall take $\theta_0(\lambda_0,\beta)$ sufficiently small as in Remark~\ref{rmk:clearing_out}(i) and define
    \begin{equation}\label{def: eta_beta}
    \eta({\beta}):= \min\big\{\eta_{\ast},\theta_0(\lambda_0,\beta)\cdot\min\{\lambda, 1\}\big\}.
    \end{equation} (Note that in this notation we have $\eta_{\ast}=\eta(\frac{1}{8(\lambda_0+2)})$; see \eqref{def: eta_star}.) Since $\eta(\beta)\in(0,\eta_{\ast}]$, it follows from Lemma \ref{lem: S_etas} that $S=S_{\eta(\beta)}$. Thus, if $x_0\in M\setminus S$ then we can find $\rho\in (0,r_0]$ and $\kappa>0$ such that for all $\ep<\kappa$ we have $\ep \leqslant \rho$ and the energy bound \eqref{eq:bound_1_for_clearing_out}, so that by Remark~\ref{rmk:clearing_out} (i) we have $\|w_{\ep}\|_{\infty;B_{\rho}(x_0)}<\beta$ for all $\ep<\kappa$. Therefore, $x_0\in M\setminus Z_{\beta}$ as we wanted.
\end{proof}
Next we want to find the complement of $Z_{\beta}$ inside $S$. First, recall that
\[
e_{\ep}(\nabla_{\ep},\Phi_{\ep}) = \Psi_{0, \ep}^2 + \frac{\lambda w_{\ep}^2}{\ep^2}, 
\] where $\Psi_{0, \ep} = \big(\ep^2|F_{\nabla_\ep}|^2 + |\nabla_{\ep}\Phi_\ep|^2\big)^{\frac{1}{2}}$, as introduced in \eqref{eq:Psi_Theta_definition}. Next, define the energy concentration set of the $\Psi_{0,\ep}$-quantity by
\begin{equation}\label{eq: def_S_1}
    S_{\Psi} := \bigcap_{0<r\leqslant  r_0}\left\{x\in M: \liminf_{\ep\to 0}\ep^{-1}\int_{B_r(x)}\ep^2|F_{\nabla_{\ep}}|^2 + |\nabla_{\ep}\Phi_{\ep}|^2\geqslant\eta_{\ast}\right\}.
\end{equation} 
Since $e_{\ep}(\nabla_{\ep}, \Phi_{\ep})\geqslant \Psi_{0, \ep}^2$, it follows from the definition that $S_{\Psi}\subset S$. Combining this with Lemma \ref{lem: Z_contained_in_S}, we get $S_{\Psi}\cup Z_{\beta}\subset S$. We now prove that the reverse inclusion also holds, provided we restrict $\beta$ to a certain range depending on an upper bound $\lambda_0$ for $\lambda$:
\begin{prop}[Splitting of $S$]\label{prop: splitting_of_S}
    For any $\beta\in(0,\frac{1}{4(\lambda_0+2)})$, one has   
    \[
    S=S_{\Psi}\cup Z_{\beta}.
    \]
\end{prop}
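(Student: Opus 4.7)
The inclusion $S_\Psi \cup Z_\beta \subset S$ is immediate from the definitions, so the task is to prove the reverse containment $S \subset S_\Psi \cup Z_\beta$. My plan is a contrapositive argument: assuming $x_0 \in M \setminus (S_\Psi \cup Z_\beta)$, I will produce a radius $r > 0$ along which $\ep^{-1} \int_{B_r(x_0)} e_\ep$ stays strictly below $\eta_\ast$ on a subsequence, contradicting $x_0 \in S$.

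First, since $x_0 \notin Z_\beta$, the definition~\eqref{eq: zero_set_beta} together with the bound $0 \leq w_\ep \leq 1/2$ coming from Proposition~\ref{prop:maximum_principle_for_w} will supply some $\rho_\ast > 0$ such that $0 \leq w_\ep < \beta$ holds on $B_{4\rho_\ast}(x_0)$ for all sufficiently small $\ep$. Next, since $x_0 \notin S_\Psi$, the definition~\eqref{eq: def_S_1} will supply some $\rho' > 0$ and a subsequence $\ep_i \to 0$ along which $\ep_i^{-1} \int_{B_{\rho'}(x_0)} \Psi_{0,\ep_i}^2 < \eta_\ast$. I then set $\rho := \min\{\rho_\ast,\, \rho'/4,\, r_0/2\}$ and, by passing to the tail of the subsequence, arrange that $\ep_i/\rho \leq \tau_1 \sqrt{\mu}$ with $\mu := \min\{\lambda, 1\}$. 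With these choices, every hypothesis of Lemma~\ref{lemm:nablaPhi_exp_decay_base} is met on $B_{4\rho}(x_0)$: the smallness of $\int \Psi_{0,\ep_i}^2$ from $\eta_\ast \leq \theta_1 \mu$ (see~\eqref{def: eta_star}), the pointwise smallness of $w_{\ep_i}$ from the previous step, and the admissibility of $\beta$ from the elementary check $\tfrac{1}{4(\lambda_0 + 2)} < \min\{\tfrac{1}{6},\, \tfrac{1}{2(\lambda_0 + 2)}\}$, which is valid for every $\lambda_0 > 0$.

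Lemma~\ref{lemm:nablaPhi_exp_decay_base} will then yield the exponential decay
\[
\|w_{\ep_i}\|_{\infty; B_{2\rho}(x_0)} \leq C(\lambda_0)\, e^{-a\sqrt{\mu}\,\rho/\ep_i}
\]
for a universal $a > 0$. Substituting this into the potential part of $e_{\ep_i}$, for any fixed $r \in (0, 2\rho]$,
\[
\ep_i^{-1} \int_{B_r(x_0)} \frac{\lambda w_{\ep_i}^2}{\ep_i^2}\, \vol_g \leq C\lambda r^3\, \ep_i^{-3}\, e^{-2a\sqrt{\mu}\,\rho/\ep_i} \longrightarrow 0
\]
as $i \to \infty$, since exponentials dominate polynomials. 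Combining this with the assumed bound on $\int \Psi_{0,\ep_i}^2$ and taking $\liminf$,
\[
\liminf_{\ep \to 0} \ep^{-1} \int_{B_r(x_0)} e_\ep \leq \liminf_{i \to \infty} \ep_i^{-1} \int_{B_r(x_0)} e_{\ep_i} < \eta_\ast,
\]
which contradicts $x_0 \in S$. Thus no such $x_0$ exists, giving $S \subset S_\Psi \cup Z_\beta$.

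The main obstacle is not analytic but bookkeeping: one must pick a single radius $\rho$ and a single subsequence for which all the smallness thresholds in Lemma~\ref{lemm:nablaPhi_exp_decay_base} hold simultaneously, and in particular the stricter hypothesis $\beta < 1/(4(\lambda_0 + 2))$ in the statement is exactly what pushes $\beta$ into the admissible range of that lemma. Once the thresholds are aligned, the exponential decay of $w_{\ep_i}$ overwhelms the $\ep_i^{-3}$ prefactor in the potential and closes the argument.
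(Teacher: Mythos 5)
Your argument is correct in substance, but it takes a slightly different (and in fact leaner) route than the paper's. The paper, after securing the same two smallness hypotheses, invokes \emph{both} Lemma~\ref{lemm:nablaPhi_exp_decay_base} (to get $\|w_{\ep_i}/\ep_i\|_{\infty;B_\rho}^2\lesssim \ep_i^{-2}e^{-a'\sqrt{\mu}\rho/\ep_i}$) and Lemma~\ref{lemm:improved_estimates_base} (to get the pointwise bound $\|\Psi_{0,\ep_i}\|_{\infty;B_\rho}^2\lesssim \rho^{-3}\ep_i\eta_\ast$); it then integrates these pointwise bounds over $B_r(x)$ and shrinks $r$ so that $c_{\lambda_0,M}\rho^{-3}\eta_\ast r^3<\eta_\ast$. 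You dispense with Lemma~\ref{lemm:improved_estimates_base} altogether: you control the $\Psi_0$-part of the energy directly by the integral bound coming from $x_0\notin S_\Psi$, and only use the exponential decay of $w$ to kill the potential term. This buys a shorter proof; what the paper's route buys is that it never needs the $\Psi_0$-integral to sit \emph{strictly} below $\eta_\ast$ in the limit, since the final smallness is manufactured by shrinking $r$.

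That last point is the one place your write-up needs tightening. As literally stated, you extract a subsequence with $\ep_i^{-1}\int_{B_{\rho'}(x_0)}\Psi_{0,\ep_i}^2<\eta_\ast$ term by term and then conclude $\liminf_i\ep_i^{-1}\int_{B_r}e_{\ep_i}<\eta_\ast$; a sequence of terms each strictly below $\eta_\ast$ can of course have liminf equal to $\eta_\ast$. The repair is immediate from the definition~\eqref{eq: def_S_1}: $x_0\notin S_\Psi$ gives some $\rho'$ with $L:=\liminf_{\ep\to0}\ep^{-1}\int_{B_{\rho'}(x_0)}\Psi_{0,\ep}^2<\eta_\ast$, so you may choose the subsequence to realize this liminf and obtain a uniform bound $\leqslant\frac{L+\eta_\ast}{2}<\eta_\ast$ for all large $i$ (and for all smaller radii by monotonicity); adding the $o(1)$ potential term then closes the argument. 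With that one-line fix, and your (correct) use of Proposition~\ref{prop:maximum_principle_for_w} to convert $w_\ep<\beta$ into $\|w_\ep\|_\infty\leqslant\beta$, the proof is complete.
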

\begin{proof}
    We are left to prove that $M\setminus(S_{\Psi}\cup Z_{\beta})\subset M\setminus S$. Let $x\in M\setminus(S_{\Psi}\cup Z_{\beta})$. Then, we can find a sequence $\ep_i\downarrow 0$, and $\rho\in (0,r_0]$ with $\sup_i \ep_i \leqslant \tau_{\ast}\rho$, such that $B_{3\rho}(x)\subset M\setminus Z_{\beta}(\Phi_{\ep_i})$ and 
 \[
 \ep_i^{-1}\int_{B_{4\rho}(x)}\Psi_{0, \ep_i}^2<\eta_{\ast},
 \] for all $i$. Therefore, by Lemmas \ref{lemm:nablaPhi_exp_decay_base} and \ref{lemm:improved_estimates_base} we get
\begin{align*}
\left\|\frac{w_{\ep_i}}{\ep_i}\right\|_{\infty; B_{\rho}(x)}^2 &\leqslant c\ep_i^{-2} e^{-a'\frac{\sqrt{\mu}\rho}{\ep_i}},\\
 \|\Psi_{0, \ep_i}\|_{\infty; B_{\rho}(x)}^2 &\leqslant c\rho^{-3}\ep_i\eta_{\ast},
\end{align*} where $c=c(\lambda_0)$ while $a'$ is universal, in both estimates.
 Thus, for all $r<\rho$ we have
 \begin{align*}
 \liminf_{\ep\to 0} \ep^{-1}\int_{B_r(x)}e_{\ep}(\nabla_{\ep},\Phi_{\ep}) &\leqslant \liminf_{i\to\infty} \ep_i^{-1}\int_{B_r(x)}(\Psi_{0, \ep_i}^2 + \lambda\left|\frac{w_{\ep_i}}{\ep_i}\right|^2)\\
 &= \liminf_{i\to\infty} \ep_i^{-1}\int_{B_r(x)}\Psi_{0, \ep_i}^2\\
 &\leqslant c\rho^{-3}\eta_{\ast}\cdot\mathrm{vol}(B_r(x))\\
 &\leqslant c_{\lambda_0, M}\cdot\rho^{-3}\eta_{\ast}r^3.
 \end{align*} Thus, by taking $r=r(\lambda_0,M,\rho)>0$ sufficiently small we get
 \[
 \liminf_{\ep\to 0} \ep^{-1}\int_{B_r(x)}e_{\ep}(\nabla_{\ep},\Phi_{\ep})<\eta_{\ast}.
 \] That is, $x\in M\setminus S$ as we wanted. This completes the proof.
\end{proof}
\begin{rmk}
Define the energy concentration set of the nonlinear potential term by
\begin{equation}
    S_{w,\beta} := \bigcap_{0<r\leqslant  r_0}\left\{x\in M: \liminf_{\ep\to 0}\ep^{-1}\int_{B_r(x)}\frac{\lambda w_{\ep}^2}{\ep^2}\geqslant \eta'(\beta)\right\},
\end{equation} where for any given $\beta\in(0,\frac{1}{4(\lambda_0+2)})$, we let $\theta_0'=\theta_0'(\lambda_0,\Lambda,\beta)$ be as in part (ii) of Remark \ref{rmk:clearing_out}, and
    \[
    \eta'(\beta):=\min\{\eta_{\ast},4^{-1}\cdot\theta_0'(\lambda_0,\Lambda,\beta)\cdot\min\{\lambda, 1\}\}.
    \] Since $e_{\ep}(\nabla_{\ep},\Phi_{\ep}) \geqslant \frac{\lambda w_{\ep}^2}{\ep^2}$, it follows from Lemma \ref{lem: S_etas} that $S_{w,\beta}\subset S$. Moreover, it follows from part (ii) of Remark \ref{rmk:clearing_out} that $M\setminus S_{w,\beta}\subset M\setminus Z_{\beta}$. 
    These results combined with Proposition \ref{prop: splitting_of_S} yield $S=S_{\Psi}\cup S_{w,\beta}$. Nevertheless, it is not clear if we have 
    $Z_{\beta} =  S_{w,\beta}$; a priori, it could happen that $Z_{\beta}\subsetneq S_{w,\beta}$. Moreover, it is not clear whether $Z_{\beta}$ (or $S_{w,\beta}$) and $S_{\Psi}$ are disjoint or not in general. One could conjecture, for instance, that $S_{\Psi}\subseteq Z_{\beta}$, 
    which would imply that $S=Z_{\beta}$. From the bubbling analysis in \S\ref{subsec:rescaling}, this could be answered in the affirmative if one could prove that there are no non-trivial critical points for $\cY_1^{g_{\RR^3}}$ with $|\Phi|>0$ everywhere.
\end{rmk}


\subsection{Assigning charges to energy concentration points}\label{subsec:assigning_charges}
Given the results already proved in \S\S\ref{subsec:blow-up}-\ref{subsec:zero_set}, in this section we conclude the proof of parts (a) and (b) of Theorem~\ref{thm: asymptotic}. In what follows we show that, by the same sort of argument as in \S\ref{subsec:blow-up}, but applied to the sequence of measures defined by~\eqref{eq: kappa_definition_Sec_5}:
\[
\kappa_{\ep} = 2\bangle{\ast F_{\nabla_{\ep}}, \nabla_{\ep} \Phi_{\ep}} \cH^3,
\]
together with some standard computations, we can assign an integer (up to a factor of $8\pi$) charge to each point in $S$, thereby proving that the limiting measure $\kappa$ in~\eqref{eq: kappa_weak*_convergence} has the form asserted in Theorem~\ref{thm: asymptotic}(b). In accordance with the notation in Proposition~\ref{prop:local_convergence}, we write 
\[
q_{\ep} = 2\bangle{\ast F_{\nabla_{\ep}}, \nabla_{\ep} \Phi_{\ep}},
\]
and notice that
\begin{equation}\label{eq:q_star_exact}
* q_{\ep} = 2 \langle F_{\nabla_{\ep}} \wedge \nabla_{\ep}\Phi_{\ep} \rangle = 2 d(\bangle{F_{\nabla_{\ep}}, \Phi_{\ep}}).
\end{equation}
Also, letting 
\[
\widehat{\Phi}_{\ep} := \frac{\Phi_{\ep}}{|\Phi_{\ep}|} \text{ on }M \setminus Z(\Phi_{\ep}),
\]
writing\footnote{Recall from \S\ref{subsec:variational_properties} that, after choosing a trivialization, we can assume $E=M\times\mathbb{C}^2$.} $\nabla_{\ep} = d + A_{\ep}$, and then temporarily dropping the subscript $\ep$, we recall the following identity~\cite[Chapter II.5]{Jaffe-Taubes}:
\begin{equation}\label{eq:Taubes_formula_for_degree}
\bangle{\widehat{\Phi}, \frac{1}{2}[d\widehat{\Phi}, d\widehat{\Phi}]} = \bangle{\widehat{\Phi}, \frac{1}{2}[\nabla\widehat{\Phi}, \nabla\widehat{\Phi}]} + d(\bangle{\widehat{\Phi}, A}) - \bangle{F_{\nabla}, \widehat{\Phi}},
\end{equation}
which is a general computation about configurations and involves no critical point equations. 
\begin{prop}\label{prop:assign_charge}
There exists for each $x \in S$ some $\Xi(x) \in 8\pi \ZZ$, with $|\Xi(x)| \leqslant \Theta(x)$, such that, in the sense of Radon measures on $M$, we have as $\ep \to 0$ that
\begin{equation}\label{eq:convergence_kappa_i}
\kappa_{\ep} \rightharpoonup \kappa =\sum_{x \in S} \Xi(x) \delta_{x},
\end{equation}
where $\kappa$ is the weak*-limit from~\eqref{eq: kappa_weak*_convergence}. Moreover, $\Xi(x)\neq 0$ only if $x\in Z\subset S$.
\end{prop}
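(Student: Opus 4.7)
The plan is to work with $\kappa$ by localizing near points of $S$, applying Stokes' theorem to the exact form~\eqref{eq:q_star_exact}, and then converting the resulting boundary integral into a topological degree via the identity~\eqref{eq:Taubes_formula_for_degree}. First, by the smooth convergence~\eqref{eq: q_converges_to_zero} from Lemma~\ref{lemm:nice_convergence_off_of_S}, the function $q_{\ep}$ tends to zero on compact subsets of $M\setminus S$, so $\supp(\kappa)\subset S$. Since $S$ is finite by Lemma~\ref{lem: S_finite}(b), we may write $\kappa=\sum_{x\in S}\Xi(x)\delta_x$ with $\Xi(x)=\lim_{r\downarrow 0}\kappa(B_r(x))$. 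The inequality $|\Xi(x)|\leqslant\Theta(x)$ then follows from the pointwise bound~\eqref{eq:charge_density_by_energy_density}: for $r>0$ with $\partial B_r(x)\cap S=\emptyset$ we have $|\kappa_{\ep}(B_r(x))|\leqslant\mu_{\ep}(B_r(x))$, and weak-$*$ convergence of both sides (thanks to the choice of $r$) lets us pass to the limit in $\ep$ and then in $r\downarrow 0$.

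To identify $\Xi(x)$ as a multiple of $8\pi$, fix $x\in S$ and choose $r\in(0,r_0)$ with $\overline{B_{2r}(x)}\cap S=\{x\}$ and $\partial B_r(x)\cap S=\emptyset$. By~\eqref{eq:q_star_exact} and Stokes,
\begin{equation*}
\kappa_{\ep}(B_r(x))=\int_{B_r(x)}q_{\ep}\,\vol_g=2\int_{B_r(x)}d\bangle{F_{\nabla_{\ep}},\Phi_{\ep}}=2\int_{\partial B_r(x)}\bangle{F_{\nabla_{\ep}},\Phi_{\ep}},
\end{equation*}
and the left-hand side converges to $\Xi(x)$ by weak-$*$ convergence. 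On the annulus $B_{2r}(x)\setminus\overline{B_{r/2}(x)}\subset M\setminus S$, Remark~\ref{rmk:smallness_condition_relation} enables Proposition~\ref{prop:local_convergence} and Lemma~\ref{lemm:nablaPhi_exp_decay_base}, so $|\Phi_{\ep}|\to 1$ and $|\nabla_{\ep}\Phi_{\ep}|\to 0$ smoothly (with exponential rate for both $1-|\Phi_{\ep}|$ and the transverse part of $\nabla_{\ep}\Phi_{\ep}$). In particular, for small $\ep$ the normalized field $\widehat{\Phi}_{\ep}=\Phi_{\ep}/|\Phi_{\ep}|$ is well-defined and smooth on this annulus.

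The plan is then to apply~\eqref{eq:Taubes_formula_for_degree} to $\widehat{\Phi}_{\ep}$ restricted to $\partial B_r(x)\cong\SS^2$. Since the $d$-exact term integrates to zero,
\begin{equation*}
\int_{\partial B_r(x)}\bangle{F_{\nabla_{\ep}},\widehat{\Phi}_{\ep}}=-\int_{\partial B_r(x)}\bangle{\widehat{\Phi}_{\ep},\tfrac{1}{2}[d\widehat{\Phi}_{\ep},d\widehat{\Phi}_{\ep}]}+\int_{\partial B_r(x)}\bangle{\widehat{\Phi}_{\ep},\tfrac{1}{2}[\nabla_{\ep}\widehat{\Phi}_{\ep},\nabla_{\ep}\widehat{\Phi}_{\ep}]}.
\end{equation*}
The first term equals $\pm 4\pi k_{\ep,r,x}$, where $k_{\ep,r,x}\in\ZZ$ is the Brouwer degree of $\widehat{\Phi}_{\ep}\colon\partial B_r(x)\to\SS^2\subset\fsu(2)$. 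For the second term, the identity $\nabla\widehat{\Phi}=(\nabla\Phi)^{\perp}/|\Phi|$ together with~\eqref{ineq:eigen_ad} gives $|\nabla_{\ep}\widehat{\Phi}_{\ep}|\leqslant C|\nabla_{\ep}\Phi_{\ep}|$ on the annulus, so the integral vanishes in the limit by the smooth convergences in Lemma~\ref{lemm:nice_convergence_off_of_S}. Finally, writing $\bangle{F_{\nabla_{\ep}},\Phi_{\ep}}=|\Phi_{\ep}|\bangle{F_{\nabla_{\ep}},\widehat{\Phi}_{\ep}}$ and using the exponential decay of $(1-|\Phi_{\ep}|)$ from~\eqref{eq:w_exp_decay} against the $O(\ep^{-1/2})$ bound on $|F_{\nabla_{\ep}}|$ on the annulus (from~\eqref{eq: convergence_to_h} plus the exponential estimates on the transverse curvature in Proposition~\ref{prop:exp_decay}), the resulting discrepancy is negligible. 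Therefore $\Xi(x)=\mp 8\pi\lim_{\ep\to 0}k_{\ep,r,x}\in 8\pi\ZZ$, as the integer sequence $k_{\ep,r,x}$ must stabilize.

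The final claim is a standard homotopy argument: if $x\in S\setminus Z$, the definition of $Z$ yields $\kappa>0$ and a neighborhood $U\ni x$ on which $\Phi_{\ep}$ is nowhere zero for every $\ep\in(0,\kappa)$. Choosing $r>0$ so that $\overline{B_r(x)}\subset U$, we see that $\widehat{\Phi}_{\ep}$ extends continuously to the contractible set $\overline{B_r(x)}$, hence $\widehat{\Phi}_{\ep}|_{\partial B_r(x)}\colon\SS^2\to\SS^2$ is null-homotopic and $k_{\ep,r,x}=0$ for all $\ep\in(0,\kappa)$; consequently $\Xi(x)=0$. The principal technical obstacle in this plan is verifying that the boundary integral $\int_{\partial B_r(x)}\bangle{F_{\nabla_{\ep}},\Phi_{\ep}}$ can indeed be traded for the clean topological quantity $\int\bangle{F_{\nabla_{\ep}},\widehat{\Phi}_{\ep}}$ without losing the degree, which requires that the correction involving $(|\Phi_{\ep}|-1)$ decay faster than the $O(\ep^{-1/2})$ blow-up of $|F_{\nabla_{\ep}}|$; this is exactly the role played by the exponential decay estimates of~\S\ref{subsec:exp-decay}, once the annular region has been arranged to lie inside $M\setminus S$.
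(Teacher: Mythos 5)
Your proposal is correct and follows essentially the same route as the paper: support of $\kappa$ in $S$ via Lemma~\ref{lemm:nice_convergence_off_of_S}, the Schwarz bound $|\Xi|\leqslant\Theta$, Stokes' theorem on~\eqref{eq:q_star_exact} combined with Taubes' identity~\eqref{eq:Taubes_formula_for_degree} to extract a Brouwer degree on a sphere in the annulus, and vanishing of the correction terms from the decay estimates off $S$. The only cosmetic differences are that the paper uses a smooth cutoff and the coarea formula to average the degree over spheres (and an $L^2$--$L^2$ H\"older argument with the global energy bound for the $(1-|\Phi_i|)$ correction) where you use a single boundary sphere and pointwise exponential decay against the $O(\ep^{-1/2})$ curvature bound; both are valid.
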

\begin{proof}
Take any sequence $\ep_i \to 0$ and write 
\[
(\nabla_{i}, \Phi_{i}) := (\nabla_{\ep_i}, \Phi_{\ep_i}),\quad \widehat{\Phi}_{i} : =\widehat{\Phi}_{\ep_i},\quad d + A_i : =  d+ A_{\ep_i}.
\]
Given a compact subset $K \subset M \setminus S$, by Lemma~\ref{lemm:nice_convergence_off_of_S} we have that
\[
q_{\ep_{i}} \to 0 \text{ uniformly on $K$ as $i \to \infty$}.
\]
Consequently $\int_{M}f d\kappa = 0$ whenever $f \in C^0_{c}(M)$ with $\supp(f) \cap S = \emptyset$, which in turn implies that $\supp(\kappa) \subset S$, so that $\kappa$ must have the form
\[
\kappa = \sum_{x \in S} \Xi(x)\delta_{x},
\]
for some real numbers $\{\Xi(x)\}_{x \in S}$. 

To continue, let $d_0 \in (0, r_0]$ be so small that $d_0 < \frac{1}{4}d(x, y)$ for all distinct points $x, y$ in $S$. Given $z_0 \in S$ and $\delta \in (0, d_0)$, we let $\zeta$ be a smooth function such that 
\[
\zeta(z_0) = 1,\ \ 0 \leqslant \zeta \leqslant 1 \text{ on }M, \ \ \supp(\zeta) \subset B_{\delta}(z_0).
\]
Then by~\eqref{eq:kappa_by_mu} and \eqref{eq: convergence_of_energy_measures} we have
\[
\begin{split}
|\Xi(z_0)| = \Big| \int_M \zeta d\kappa \Big| \leqslant \int_M \zeta d\mu 
\leqslant\ & \int_{B_{\delta}(z_0)}|h|^2 \vol_g + \Theta(z_0).
\end{split}
\]
Letting $\delta$ tend to $0$ gives 
\[
 |\Xi(z_0)| \leqslant \Theta(z_0).
\]
Next, we prove that $\Xi(z_0) \in 8\pi \ZZ$ and that $\Xi(z_0)\neq 0$ only if $z_0\in Z$. Let $\varphi:\RR \to [0, 1]$ be a cutoff function such that 
\[
\varphi(t) = 1\quad\text{if }t \leqslant \frac{1}{2},\quad\text{and}\quad\varphi(t) = 0\quad\text{if }t \geqslant 1.
\]
Again taking some $\delta \in (0, d_0)$, we define 
\[
f(x) = \varphi\big(\frac{r(x)}{\delta} \big),
\]
where we have written $r$ for the distance function $d(\cdot, z_0)$. From the identity~\eqref{eq:q_star_exact}, we have after an integration by parts that
\begin{equation}\label{eq:assign_charge_1st_by_parts}
\begin{split}
\int_{M} f q_{\ep_i} \vol_g = 2 \int_{M} f \langle F_{\nabla_i}\wedge \nabla_i\Phi_i\rangle =\ & -2\int_{M} \bangle{F_{\nabla_i}, \Phi_i} \wedge df\\
=\ -2 \int_{B_{2\delta}(z_0)\setminus B_{\frac{\delta}{4}}(z_0)} \bangle{F_{\nabla_i}, \Phi_i} \wedge df,
\end{split}
\end{equation}
where the last equality holds since $f$ takes constant values except on $B_{\delta}(z_0)\setminus B_{\frac{\delta}{2}}(z_0)$. The closure of $B_{2\delta}(z_0)\setminus B_{\frac{\delta}{4}}(z_0)$ being a compact subset of $M \setminus S$, we have by Lemma~\ref{lemm:nice_convergence_off_of_S} that, as $i \to \infty$,
\begin{equation}\label{eq:assign_charge_estimate_on_annulus}
\ep_i^{-\frac{1}{2}}|\nabla_i\Phi_i| + \ep_i^{-\frac{3}{2}}(1 - |\Phi_i|^2) \to 0, \text{ uniformly on }B_{2\delta}(z_0)\setminus B_{\frac{\delta}{4}}(z_0).
\end{equation}
In particular, eventually $B_{2\delta}(z_0)\setminus B_{\frac{\delta}{4}}(z_0)$ is contained in the domain of $\widehat{\Phi}_i$. Also recalling that $|\nabla_i \widehat{\Phi}_i| = |\Phi_i|^{-1}|(\nabla_i\Phi_i)^{\perp}|$, we infer from~\eqref{eq:assign_charge_estimate_on_annulus} that
\begin{equation}\label{eq:assign_charge_Phi_normalized}
\ep_i^{-\frac{1}{2}} |\nabla_i\widehat{\Phi}_i| \to 0 \text{ uniformly on }B_{2\delta}(z_0)\setminus B_{\frac{\delta}{4}}(z_0).
\end{equation}
We now apply~\eqref{eq:Taubes_formula_for_degree} to get from~\eqref{eq:assign_charge_1st_by_parts} that
\begin{small}
\begin{equation}\label{eq:assign_charge_splitting}
\begin{split}
\int_{M} f q_{\ep_i} \vol_g =\ & -2\int_{B_{2\delta}(z_0)\setminus B_{\frac{\delta}{4}}(z_0)} \bangle{F_{\nabla_i}, \widehat{\Phi}_i} \wedge df + 2\int_{B_{2\delta}(z_0)\setminus B_{\frac{\delta}{4}}(z_0)} \bangle{F_{\nabla_i}, (1 - |\Phi_i|)\widehat{\Phi}_i } \wedge df\\
=\ & 2\int_{B_{2\delta}(z_0)\setminus B_{\frac{\delta}{4}}(z_0)} \bangle{\widehat{\Phi}_i, \frac{1}{2}[d\widehat{\Phi}_i, d\widehat{\Phi}_i]} \wedge df - 2\int_{B_{2\delta}(z_0)\setminus B_{\frac{\delta}{4}}(z_0)} \bangle{\widehat{\Phi}_i, \frac{1}{2}[\nabla_i\widehat{\Phi}_i, \nabla_i\widehat{\Phi}_i]} \wedge df\\
& - 2\int_{B_{2\delta}(z_0)\setminus B_{\frac{\delta}{4}}(z_0)} d(\bangle{\widehat{\Phi}_i, A_i}) \wedge df + 2\int_{B_{2\delta}(z_0)\setminus B_{\frac{\delta}{4}}(z_0)} \bangle{F_{\nabla_i}, (1 - |\Phi_i|)\widehat{\Phi}_i } \wedge df\\
=:\ & 2 \cdot \big((I) + (II) + (III) + (IV)\big).
\end{split}
\end{equation}
\end{small}
For $(IV)$ we have by H\"older's inequality and~\eqref{ineq: uniform_bound_assumption} that
\begin{equation}\label{eq:assign_charge_IV}
\begin{split}
|(IV)| \leqslant\ & C\|df\|_{\infty}\int_{B_{2\delta}(z_0)\setminus B_{\frac{\delta}{4}}(z_0)} \ep_i^{\frac{1}{2}}|F_{\nabla_i}| \cdot \ep_i^{-\frac{1}{2}}|1 - |\Phi_i||\\
\leqslant\ & C\|df\|_{\infty} \cdot \Lambda^{\frac{1}{2}} \cdot \Big( \int_{B_{2\delta}(z_0)\setminus B_{\frac{\delta}{4}}(z_0)} \ep_i^{-1}(1 - |\Phi_i|)^2 \vol_{g} \Big)^{\frac{1}{2}} \to 0 \text{ as }i \to \infty,
\end{split}
\end{equation} where the latter convergence follows by first recalling that eventually $|\Phi_i|\leqslant 1$ (by Proposition \ref{prop:maximum_principle_for_w}), thus $1-|\Phi_i|\leqslant 1-|\Phi_i|^2$, and then using \eqref{eq:assign_charge_estimate_on_annulus}.
The integral $(III)$ simply vanishes by Stokes' theorem, since $d(\bangle{\widehat{\Phi}_i, A_i}) \wedge df$ is exact and since $df$ vanishes on the boundary of $B_{2\delta}(z_0)\setminus B_{\frac{\delta}{4}}(z_0)$. Next, for $(II)$ we use~\eqref{eq:assign_charge_Phi_normalized} to get
\begin{equation}\label{eq:assign_charge_II}
\begin{split}
|(II)| \leqslant\ & C\|df\|_{\infty} \int_{B_{2\delta}(z_0)\setminus B_{\frac{\delta}{4}}(z_0)} |\nabla_i\widehat{\Phi}_i|^2\vol_{g}\\
\leqslant\ & C\|df\|_{\infty} \ep_i \cdot \int_{B_{2\delta}(z_0)\setminus B_{\frac{\delta}{4}}(z_0)} \ep_i^{-1}|\nabla_i\widehat{\Phi}_i|^2 \vol_{g} \to 0 \text{ as }i \to \infty.
\end{split}
\end{equation}
To treat $(I)$, we notice from our choice of $f$ that $df = \varphi'(\frac{r}{\delta}) \frac{d r}{\delta}$. Thus, the integrand in $(I)$ becomes
\[
\frac{1}{\delta}\varphi'(\frac{r}{\delta})\bangle{\widehat{\Phi}_i, \frac{1}{2}[d\widehat{\Phi}_i, d\widehat{\Phi}_i]} \wedge d r = \frac{1}{\delta}\varphi'(\frac{r}{\delta})\bangle{\bangle{\widehat{\Phi}_i, \frac{1}{2}[d\widehat{\Phi}_i, d\widehat{\Phi}_i]}, \ast dr} \vol_g.
\]
Noting that $\ast d r$ restricts to the volume form on the geodesic sphere $\partial B_{r}(z_0)$, and applying the coarea formula, we get, since $|\nabla r| = 1$, that
\[
\begin{split}
(I) =\ & \int_{\frac{\delta}{4}}^{2\delta} \frac{1}{\delta}\varphi'(\frac{r}{\delta}) \Big(\int_{\partial B_{r}(z_0)}\bangle{\widehat{\Phi}_i, \frac{1}{2} [d\widehat{\Phi}_i, d\widehat{\Phi}_i]}\Big) dr.
\end{split}
\]
By the mapping degree formula (see for instance~\cite[Chapter 4, \S8]{Guillemin-Pollack}), there exists $N_i \in \ZZ$ such that 
\[
\int_{\partial B_{r}(z_0)}\bangle{\widehat{\Phi}_i, \frac{1}{2}[d\widehat{\Phi}_i, d\widehat{\Phi}_i]} = -4\pi N_i \text{ for all }r \in [\frac{\delta}{4}, 2\delta].
\]
Substituting this back into the above computation for $(I)$ gives
\begin{equation}\label{eq:assign_charge_I}
(I) = -4\pi N_i \cdot (\varphi(2) - \varphi(\frac{1}{4})) = 4\pi N_i.
\end{equation}
Combining this with~\eqref{eq:assign_charge_IV},~\eqref{eq:assign_charge_II} and recalling that $(III)$ vanishes, we get from~\eqref{eq:assign_charge_splitting} that 
\[
\begin{split}
\big| \int_{M} f q_{\ep_i} \vol_g - 8\pi N_i \big| \to 0 \text{ as }i \to \infty.
\end{split}
\]
On the other hand, since $f(z_0) = 1$ and $f$ vanishes outside of $B_{\delta}(z_0)$, we have 
\[
\lim_{i \to \infty} \int_{M} fq_{\ep_i} \vol_g = \Xi(z_0).
\]
It follows that $(N_i)\subset\ZZ$ is eventually constant, and we actually have $\Xi(z_0)=8\pi N_i$ for all $i\gg 1$. This immediately gives $\Xi(z_0) \in 8\pi \ZZ$. Finally, observe from the above argument that we have in fact proved, for any $\delta\in (0,d_0)$, that
\[
\Xi(z_0) = -2\int_{\partial B_{\delta}(z_0)}\bangle{\widehat{\Phi}_i, \frac{1}{2}[d\widehat{\Phi}_i, d\widehat{\Phi}_i]},\quad \text{for all sufficiently large $i$.}
\] In particular, if $\Xi(z_0)\neq 0$, then for any $\delta\ll 1$, we can find a zero of $\Phi_i$ inside the ball $B_\delta(z_0)$ for all $i\gg 1$; hence, $z_0\in Z$. This completes the proof.
\end{proof}
In view of Lemma~\ref{lem: S_finite}, Lemma~\ref{lem: Z_contained_in_S}, the convergence~\eqref{eq: convergence_of_energy_measures}, and Proposition~\ref{prop:assign_charge}, we have established parts (a) and (b) of Theorem~\ref{thm: asymptotic}.
\begin{rmk}
   In the case $(M^3,g)$ is closed, it follows from equation \eqref{eq:q_star_exact} and Stokes' theorem that $\kappa_{\ep}(M)=0$, in which case Proposition \ref{prop:assign_charge} allows us to conclude that
    \[
    \sum\limits_{x\in Z}\Xi(x) = 0.
    \]
\end{rmk}

\subsection{Hodge decomposition of the longitudinal part}\label{subsec:hodge_decomp_longit_part}
For this section, we assume that $M^3$ is closed and prove Theorem~\ref{thm: asymptotic}(c). Specifically, the goal is to use the ideas at the end of \S\ref{subsec:local-convergence}, leading to Proposition \ref{prop:convergence_nonharmonic_parts}, in order to generalize, in some sense, what happens in the proof of Lemma \ref{lemm: reducible_is_boring} (b), and show that the $1$-form $h$ in the convergence of measures~\eqref{eq: convergence_of_energy_measures} is accounted for solely by the harmonic component in the Hodge decomposition of $\ep^{\frac{1}{2}}\langle \ast F_{\nabla_{\ep}}, \Phi_{\ep}\rangle$. 

Let us first recall the Hodge decomposition \eqref{eq:Hodge_decomposition_longitudinal_part}:
\[
\ep^{\frac{1}{2}}\bangle{* F_{\nabla_{\ep}}, \Phi_{\ep}} = \widetilde{h}_{\ep} + df_{\ep} + d^*\alpha_{\ep},
\]
where $f_{\ep} \in C^{\infty}(M)$, $\alpha_{\ep} \in \Omega^2(M)$, $\widetilde{h}_{\ep}\in \mathscr{H}^1(M)$, and we can further assume that 
\[
\int_M f_{\ep} = 0,\quad d\alpha_{\ep} = 0,\quad\text{and}\quad\alpha_{\ep}\in\left(\mathscr{H}^2(M)\right)^{\perp_{L^2}}.
\] 
Next we choose an arbitrary sequence $\ep_i \to 0$ and write
\[
(\nabla_i, \Phi_i) : = (\nabla_{\ep_i}, \Phi_{\ep_i}),\quad \widetilde{h}_{i}: = \widetilde{h}_{\ep_i}, \quad f_{i}: = f_{\ep_i}, \quad \alpha_{i}: = \alpha_{\ep_i}.
\]
Using the uniform normalized-energy bound assumption \eqref{ineq: uniform_bound_assumption}, and the $L^{\infty}$-bound on $\Phi_{i}$ given by Proposition \ref{prop:maximum_principle_for_w} as soon as $\ep_i < r_0$, we have for sufficiently large $i$ that
\begin{equation}\label{eq:longitudinal_hodge_orthogonality_bound_2}
\|\widetilde{h}_i\|_{2; M}^2 + \|df_i\|_{2; M}^2 + \|d^*\alpha_i\|_{2; M}^2 = \|\ep_i^{\frac{1}{2}}\bangle{* F_{\nabla_i}, \Phi_i}\|_{2;M}^2 \leqslant \Lambda.
\end{equation}
Moreover, in the language of the blow-up set $S$ introduced in \S\ref{subsec:blow-up}, we can recast Proposition \ref{prop:convergence_nonharmonic_parts} as follows:
\begin{prop}\label{prop:convergence_nonharmonic_parts_2}
Assume that $(M^3, g)$ is closed. Then, in the above notation, upon passing to a subsequence if needed, we have:
\vskip 1mm
\begin{enumerate}
\item[(a)] There exist a function $f$ and a $2$-form $\alpha$, both of class $W^{1, 2} \cap L^6$ on $M$, such that 
\begin{equation}
f_i \to f,\ \ \alpha_i \to \alpha, \text{ weakly in $W^{1, 2}$ and strongly in $L^2$}.
\end{equation}
\vskip 1mm
\item[(b)] Both $(f_i)$ and $(\alpha_i)$ converge smoothly on compact subsets of $M\setminus S$.
\end{enumerate}
\end{prop}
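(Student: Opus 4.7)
The plan is to deduce both parts directly from Proposition \ref{prop:convergence_nonharmonic_parts} by verifying that its hypotheses are met on a suitable covering of compact subsets of $M \setminus S$. Part (a) is essentially immediate: thanks to~\eqref{eq:longitudinal_hodge_orthogonality_bound_2} together with the Poincar\'e inequality for $f_i$ (normalized to have zero average) and the standard $L^2$ Hodge estimate for $\alpha_i$ (which lies in the $L^2$-orthogonal complement of the harmonic $2$-forms and satisfies $d\alpha_i = 0$), we obtain a uniform $W^{1,2}$ bound on both sequences. The conclusion then follows by the Banach--Alaoglu theorem and the Rellich--Kondrachov compactness theorem, with the additional $L^6$ integrability of the limits coming from the three-dimensional Sobolev embedding $W^{1,2} \hookrightarrow L^6$. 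This is exactly the argument already carried out in Proposition \ref{prop:convergence_nonharmonic_parts}(a), so the only point to verify is that~\eqref{eq:energy_bound_for_nonharmonic} holds along $(\ep_i)$, which is precisely the standing assumption~\eqref{ineq: uniform_bound_assumption}.

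For part (b), the main task is to localize: for each $x_0 \in M \setminus S$, the definition~\eqref{eq: blow-up_set_S} gives some $r_{x_0} \in (0, r_0]$ such that
\[
\liminf_{\ep \to 0} \ep^{-1}\int_{B_{r_{x_0}}(x_0)} e_{\ep}(\nabla_{\ep}, \Phi_{\ep}) \vol_g < \eta_{*},
\]
where $\eta_*$ was chosen in~\eqref{def: eta_star} to equal the minimum appearing in the smallness condition~\eqref{eq:small_energy_for_nonharmonic}. Choosing $\rho_{x_0} \in (0, r_{x_0}/48) \cap (0, \rho_1/48)$ and passing to a subsequence of $(\ep_i)$ (still denoted the same), we ensure that~\eqref{eq:small_energy_for_nonharmonic} holds on $B_{48\rho_{x_0}}(x_0)$ for all $i$; Proposition \ref{prop:convergence_nonharmonic_parts}(b) then delivers a further subsequence along which $(f_i)$ and $(\alpha_i)$ converge smoothly on compact subsets of $B_{2\rho_{x_0}}(x_0)$.

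To globalize, fix an exhaustion $K_1 \subset K_2 \subset \cdots$ of $M \setminus S$ by compact sets. Cover each $K_\ell$ by finitely many balls of the form $B_{\rho_{x_j}}(x_j)$ as above, and apply the previous step successively on each ball, extracting subsequences at each stage. A standard diagonal argument produces a single subsequence along which $(f_i)$ and $(\alpha_i)$ converge smoothly on each $K_\ell$, and hence on every compact subset of $M \setminus S$. By uniqueness of limits, the smooth limits must agree with the weak $W^{1,2}$-limits $f$ and $\alpha$ obtained in part (a) when restricted to $M \setminus S$. No step here is truly delicate once the thresholds have been aligned; the only thing to be careful about is keeping track of the successive subsequence extractions, which is handled by the diagonal procedure.
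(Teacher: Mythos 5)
Your proof is correct and follows exactly the route the paper intends: the paper states this proposition without proof as a "recasting" of Proposition~\ref{prop:convergence_nonharmonic_parts}, and your argument — verifying \eqref{eq:energy_bound_for_nonharmonic} via \eqref{ineq: uniform_bound_assumption} for part (a), and for part (b) using the definition of $S$ together with the alignment of $\eta_*$ in \eqref{def: eta_star} with the threshold in \eqref{eq:small_energy_for_nonharmonic}, followed by a covering and diagonal extraction — is precisely the omitted justification. The only cosmetic remark is that the pointwise subsequence extraction for the smallness condition is not strictly needed (the weak* convergence of $\mu_{\ep}$ as in Lemma~\ref{lemm:nice_convergence_off_of_S} gives it along the full sequence), but your diagonal argument handles it correctly in any case.
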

Along the subsequence produced by Proposition~\ref{prop:convergence_nonharmonic_parts_2}, we turn to the harmonic components $\widetilde{h}_i$ and notice that, as in the proof of Lemma~\ref{lemm: reducible_is_boring} (b), from the uniform $L^2$-bound in~\eqref{eq:longitudinal_hodge_orthogonality_bound_2} and standard elliptic estimates, we may also assume, taking a further subsequence if needed, that there exists some $\widetilde{h} \in \mathscr{H}^1(M)$ such that
\begin{equation}\label{eq: harmonic_component_subsequential_limit}
\widetilde{h}_i \to \widetilde{h} \text{ smoothly on }M.
\end{equation}
We may now state the main result of this section.
\begin{prop}\label{prop:convergence_of_nonharmonic_part}
In the above notation, we have that 
\vskip 1mm
\begin{enumerate}
\item[(a)] $\widetilde{h}$ coincides on $M$ with the harmonic $1$-form $h$ in~\eqref{eq: convergence_of_energy_measures}. Consequently $\widetilde{h}_i$ converges smoothly to $h$ on all of $M$.
\vskip 1mm
\item[(b)] Up to taking a subsequence, $d{f_i}$ and $d^*\alpha_i$ both converge smoothly to $0$ on compact subsets of $M\setminus S$.
\vskip 1mm
\item[(c)] In the sense of Radon measures,
\[
\big(\ep_i^{-1}e_{\ep_i}(\nabla_i, \Phi_i) - |\widetilde{h}_i|^2\big) \cH^3 \rightharpoonup \sum_{x \in S}\Theta(x)\delta_{x}.
\]
\end{enumerate}
\end{prop}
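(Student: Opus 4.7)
The plan is to deduce all three parts from the uniqueness of the Hodge decomposition, together with the smooth convergence of the harmonic components on $M$ already in hand from~\eqref{eq: harmonic_component_subsequential_limit}. First, I would tackle part (a). The form $\ep_i^{\frac{1}{2}}\langle *F_{\nabla_i},\Phi_i\rangle$ converges to $h$ weakly in $L^2(M)$ by~\eqref{eq: weak_L2_convergnece}, while the right-hand side of the Hodge decomposition~\eqref{eq:Hodge_decomposition_longitudinal_part} converges weakly in $L^2$ to $\widetilde{h} + df + d^*\alpha$, using the smooth convergence of $\widetilde{h}_i$ together with the weak $W^{1,2}$-convergence of $f_i$ and $\alpha_i$ from Proposition~\ref{prop:convergence_nonharmonic_parts_2}(a). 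By uniqueness of weak $L^2$-limits, this yields the identity $h = \widetilde{h} + df + d^*\alpha$ in $L^2(M)$. Since $f \in W^{1,2}(M)$ and $\alpha$ is of class $W^{1,2}$ with $d\alpha = 0$ and $\alpha \perp_{L^2} \mathscr{H}^2(M)$, a routine integration by parts shows that both $df$ and $d^*\alpha$ are $L^2$-orthogonal to $\mathscr{H}^1(M)$; projecting the identity above onto $\mathscr{H}^1(M)$ and using that $h$ and $\widetilde{h}$ are themselves harmonic then forces $\widetilde{h} = h$, upgrading~\eqref{eq: harmonic_component_subsequential_limit} to smooth convergence $\widetilde{h}_i \to h$ on all of $M$.

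For part (b), once (a) is established, the $L^2$-identity just obtained reduces to $df + d^*\alpha = 0$ in $L^2(M)$. Another integration by parts gives $\langle df, d^*\alpha\rangle_{L^2} = \int_M \langle d(df),\alpha\rangle = 0$, so $df$ and $d^*\alpha$ are mutually orthogonal and must therefore both vanish identically on $M$. Proposition~\ref{prop:convergence_nonharmonic_parts_2}(b) then supplies, after passing to a further subsequence if needed, the asserted smooth convergence of $df_i$ and $d^*\alpha_i$ to their zero limits on compact subsets of $M\setminus S$. Part (c) is then immediate: smooth convergence $\widetilde{h}_i \to h$ on the closed manifold $M$ implies $|\widetilde{h}_i|^2\mathcal{H}^3 \to |h|^2\mathcal{H}^3$ as Radon measures on $M$, and subtracting this from~\eqref{eq: convergence_of_energy_measures} gives exactly the desired weak* convergence to $\sum_{x\in S}\Theta(x)\delta_x$.

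The main obstacle I anticipate is the bookkeeping around combining three different modes of convergence---smooth convergence on $M$ of the harmonic parts, weak $W^{1,2}$ (hence strong $L^2$) convergence of $f_i$ and $\alpha_i$, and weak $L^2$ convergence of the full sum---into a single clean $L^2$-identity involving the limits. Once that identity is in place, the $L^2$-orthogonality relations intrinsic to the Hodge decomposition close the argument with essentially no further work.
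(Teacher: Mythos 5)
Your proof is correct, and it takes a genuinely different route from the paper's at the crucial step, namely showing $h=\widetilde{h}$ and $df = d^*\alpha = 0$. The paper only passes to the limit in the Hodge decomposition on $M\setminus S$ (using the \emph{smooth} convergence $\ep_i^{1/2}\langle *F_{\nabla_i},\Phi_i\rangle\to h$ off the blow-up set from Lemma~\ref{lemm:nice_convergence_off_of_S}), obtaining $df+d^*\alpha = h-\widetilde{h}$ only away from $S$; it must then run a cutoff argument with functions $\zeta_\delta$ vanishing on $\delta$-balls around $S$, controlling the error terms $\int f\langle df,d\zeta_\delta\rangle$ and $\int\langle d^*\alpha, d\zeta_\delta\lrcorner\alpha\rangle$ via H\"older with exponents $\tfrac12+\tfrac13+\tfrac16$ and the $L^6$-integrability of $f$ and $\alpha$, to conclude $\int_M|df|^2+|d^*\alpha|^2=0$. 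You instead exploit the \emph{global} weak $L^2$ convergence~\eqref{eq: weak_L2_convergnece} (which persists along subsequences) together with the weak continuity of $d$ and $d^*$ as bounded maps $W^{1,2}\to L^2$, so that the identity $h=\widetilde{h}+df+d^*\alpha$ holds in $L^2$ on all of $M$ at once; the standard mutual $L^2$-orthogonality of the Hodge components (already invoked by the paper in~\eqref{eq:longitudinal_hodge_orthogonality_bound}) then closes the argument with no capacity estimate. Your route is shorter and cleaner given what is already established; the paper's route is more local and would survive in settings where only the off-$S$ convergence of the longitudinal curvature component is available. Two minor points of hygiene: the identity $\langle df,d^*\alpha\rangle_{L^2}=0$ should be justified by approximating $f$ in $W^{1,2}$ by smooth functions rather than by writing $d(df)$ for a $W^{1,2}$ function, and in part (b) one should note that the smooth limits of $f_i,\alpha_i$ on $M\setminus S$ from Proposition~\ref{prop:convergence_nonharmonic_parts_2}(b) coincide with $f,\alpha$ by uniqueness of $L^2$-limits; both are routine.
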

\begin{proof}
As the conclusions of Proposition \ref{prop:convergence_nonharmonic_parts_2} are in effect, we have
\begin{equation}\label{eq:longitudinal_non_harmonic_smooth_convergence_off_S}
{f_i} \to {f}, \ \ \alpha_i \to \alpha \text{ smoothly locally on  }M \setminus S.
\end{equation}
Recalling from Lemma~\ref{lemm:nice_convergence_off_of_S} that $\ep_i^{\frac{1}{2}}\bangle{*F_{\nabla_i}, \Phi_i}$ converge to $h$ smoothly on $M \setminus S$, we get, upon passing to the limit in~\eqref{eq:Hodge_decomposition_longitudinal_part} and rearranging,
\begin{equation}\label{eq:longitudinal_hodge_limit_off_S}
d{f}+ d^*\alpha = h - \widetilde{h} \text{ on }M \setminus S.
\end{equation}
We next use this to prove that 
\begin{equation}\label{eq:d_alpha_d*_beta_vanish}
\int_{M}|d f|^2 + |d^*\alpha|^2 = 0,
\end{equation}
which would give statements (a) and (b) immediately. To that end, using Lemma \ref{lem: S_finite}, we label the (finitely many) points in $S$ by 
\[
S = \{z_1, \cdots, z_N\},
\]
and choose $d_0 \in (0, r_0]$ such that the balls $\{B_{4d_0}(z_i)\}$ are mutually disjoint. Take a cutoff function $\varphi: \RR \to [0, 1]$ such that 
\[
\varphi(t) = 0\quad\text{if }t \leqslant \frac{1}{2},\quad\text{and}\quad \varphi(t) = 1 \text{ if }t \geqslant 1.
\]
Given $\delta \in (0, d_0)$, we let
\[
\zeta(x) = \zeta_{\delta}(x) =  \prod_{i = 1}^{N}\varphi(\frac{d(x, z_i)}{\delta}).
\]
Multiplying~\eqref{eq:longitudinal_hodge_limit_off_S} by $d(\zeta_{\delta} {f})$, which is supported outside of $S$, and integrating over $M$, we get
\begin{equation}\label{eq:longitudinal_d_alpha_test}
0 = \int_{M}\bangle{d{f}, d(\zeta_{\delta} {f})} = \int_{M} \zeta_{\delta} |d{f}|^2 + \sum_{i = 1}^N\int_{B_{\delta}(z_i)\setminus B_{\frac{\delta}{2}}(z_i)} \bangle{d{f}, fd\zeta_{\delta}}.
\end{equation}
For each $i \in \{1, \cdots, N\}$, notice that, since $|d\zeta| \leqslant C\delta^{-1}$, we have by H\"older's inequality that
\begin{equation}\label{eq:d_alpha_cutoff_term}
\begin{split}
\Big|\int_{B_{\delta}(z_i)\setminus B_{\frac{\delta}{2}}(z_i)} \bangle{d{f}, {f}d\zeta}\Big| \leqslant\ & C\delta^{-1} \Big( \int_{B_{\delta}(z_i)\setminus B_{\frac{\delta}{2}}(z_i)} |d{f}|^2 \Big)^{\frac{1}{2}}\Big( \int_{B_{\delta}(z_i)\setminus B_{\frac{\delta}{2}}(z_i)} |{f}|^2 \Big)^{\frac{1}{2}}.
\end{split}
\end{equation}
To bound the integral of $|d{f}|^2$ we simply enlarge the domain to $M$ and use~\eqref{eq:longitudinal_hodge_orthogonality_bound_2} to get
\[
\int_{B_{\delta}(z_i)\setminus B_{\frac{\delta}{2}}(z_i)} |d{f}|^2 \leqslant \int_{M} |d{f}|^2 \leqslant \Lambda.
\] 
On the other hand, by H\"older's inequality applied to the integral of $|{f}|^2$, we get
\[
 \int_{B_{\delta}(z_i)\setminus B_{\frac{\delta}{2}}(z_i)} |{f}|^2 \leqslant C\delta^2 \Big( \int_{B_{\delta}(z_i)\setminus B_{\frac{\delta}{2}}(z_i)} |{f}|^6 \Big)^{\frac{1}{3}}.
\]
Putting these back into~\eqref{eq:d_alpha_cutoff_term} gives
\[
\Big|\int_{B_{\delta}(z_i)\setminus B_{\frac{\delta}{2}}(z_i)} \bangle{d{f}, {f}d\zeta}\Big|  \leqslant C\Lambda^{\frac{1}{2}}\Big( \int_{B_{\delta}(z_i)\setminus B_{\frac{\delta}{2}}(z_i)} |{f}|^6 \Big)^{\frac{1}{6}},
\]
which tends to $0$ as $\delta \to 0$ since $|{f}|^6$ is integrable on all of $M$ by Proposition \ref{prop:convergence_nonharmonic_parts_2} (a). Consequently we deduce upon letting $\delta \to 0$ in~\eqref{eq:longitudinal_d_alpha_test} that 
\[
\int_{M}|d{f}|^2 = 0.
\]
Testing~\eqref{eq:longitudinal_hodge_limit_off_S} by $d^*(\zeta_{\delta}\alpha)$ instead, we get
\[
0 = \int_{M}\zeta_{\delta}|d^*\alpha|^2 - \sum_{i = 1}^N \int_{B_{\delta}(z_i) \setminus B_{\frac{\delta}{2}}(z_i)}\bangle{d^*\alpha, d\zeta_{\delta} \lrcorner \alpha}.
\]
Again using H\"older's inequality with exponents given by $\frac{1}{2} + \frac{1}{3} + \frac{1}{6} = 1$, but this time noting that, by \eqref{eq:longitudinal_hodge_orthogonality_bound_2} and Proposition \ref{prop:convergence_nonharmonic_parts_2} (a), we have
\[
\int_{M}|d^*\alpha|^2 \leqslant \Lambda,\ \ \int_{M}|\alpha|^6 < \infty,
\]
we obtain 
\[
\lim_{\delta \to 0}\int_{M}\zeta_{\delta}|d^*\alpha|^2 = 0.
\]
That is, we have established~\eqref{eq:d_alpha_d*_beta_vanish}.  Going back to~\eqref{eq:longitudinal_hodge_limit_off_S}, we see that consequently $\widetilde{h}$ agrees with $h$ on $M\setminus S$, and thus on all of $M$ by continuity. This proves (a). From the vanishing of $df$ and $d^*\alpha$, and the convergence~\eqref{eq:longitudinal_non_harmonic_smooth_convergence_off_S}, we obtain conclusion (b). Finally, since $\widetilde{h}_i$ converges smoothly to $h$ on all of $M$ by part (a), we get part (c) upon recalling~\eqref{eq: convergence_of_energy_measures}.
\end{proof}
In view of parts (a) and (b) of Proposition~\ref{prop:convergence_of_nonharmonic_part}, we obtain Theorem~\ref{thm: asymptotic}(c). Note that we have in fact demonstrated that for any $\ep_i \to 0$, the sequence $(\widetilde{h}_{\ep_i})$ admits a further subsequence converging smoothly on $M$ to $h$. Thus the full sequence $(\widetilde{h}_{\ep})$ must converge to $h$ in $C^{\infty}(M)$, which combines with~\eqref{eq: convergence_of_energy_measures} to give
\[
\big(\ep^{-1}e_{\ep}(\nabla_{\ep}, \Phi_{\ep}) - |\widetilde{h}_{\ep}|^2\big) \cH^3 \rightharpoonup \sum_{x \in S}\Theta(x)\delta_{x},
\]
as $\ep \to 0$ along the original sequence extracted at the start of \S\ref{subsec:blow-up}. 
\subsection{The rate of rescaling and bubbling}\label{subsec:rescaling}
The remainder of this paper is devoted to the proof of Theorem~\ref{thm: bubbling}. As in the previous subsections, we choose a sequence $\ep_i \to 0$ and set
\[
(\nabla_i, \Phi_i) : = (\nabla_{\ep_i}, \Phi_{\ep_i}).
\]
Also, we assume that $S \neq \emptyset$. If $S$ consists of exactly one point, we let $d_0 := r_0$, while if $S$ contains at least two points, we let $d_0 := \min\{r_0, \frac{d(x_i, x_j)}{16}\}$, where $(x_i, x_j)$ runs through all pairs of distinct points in $S$. 

Now fix a point $z_0\in S$. For all $i \in \NN$ and $t  \in [0, d_0]$ we define
\[
Q_i(t) = \sup_{x \in B_{d_0}(z_0)} \ep_i^{-1}\int_{B_{t}(x)} e_{\ep_i}(\nabla_i, \Phi_i) \vol_{g}.
\]
Notice the following two properties of $Q_i$. First, for each fixed $t$, we have
\[
\liminf_{i \to \infty}Q_{i}(t) \geqslant \liminf_{i \to \infty}\ep_i^{-1}\int_{B_t(z_0)} e_{\ep_i}(\nabla_i, \Phi_i) \vol_g \geqslant \Theta(z_0) \geqslant \eta_{\ast},
\] where in the second and third inequalities we have used \eqref{eq: convergence_of_energy_measures} and \eqref{eq: Theta_lower_bound}, respectively. Secondly, for each fixed $i$, we have
\[
\lim_{t \to 0}Q_i(t) = 0.
\]
Thus we may construct, by a standard argument, a sequence of scales $(t_k)$ converging to zero, a subsequence $(i_k)$ of the $i$'s, and a sequence $(x_k)$ converging in $M$ to $z_0$, such that
\begin{equation}\label{eq:center_and_rate}
\ep_{i_k}^{-1}\int_{B_{t_k}(x_{k})}e_{\ep_{i_k}}(\nabla_{i_k}, \Phi_{i_k})\vol_g = Q_{i_k}(t_k) = \frac{\eta_{\ast}}{2}.
\end{equation}
Below, for brevity we write, by some abuse of notation, 
\[
(\nabla_k, \Phi_k) := (\nabla_{i_k}, \Phi_{i_k}),\quad \ep_k: = \ep_{i_k},
\]
and also assume, dropping finitely many initial terms if needed, that 
\begin{equation}\label{eq: convenient_smallness_conditions}
\ep_k + t_k + d(x_k, z_0) < \frac{d_0}{4},\quad \text{ for all }k.
\end{equation}
Finally, we define the following affine maps from $\RR^3$ to $\RR^3$:
\[
s_{x, t}: y \mapsto x + ty.
\]
\begin{lemm}\label{lemm:rate_of_rescaling}
In the above notation, we have that $\ep_k$ and $t_k$ are comparable. That is, the following hold:
\vskip 1mm
\begin{enumerate}
\item[(a)] $\limsup_{k \to \infty}\frac{\ep_k}{t_k} < \infty$.
\vskip 1mm
\item[(b)] $\liminf_{k \to \infty}\frac{\ep_k}{t_k} > 0$.
\end{enumerate}
\end{lemm}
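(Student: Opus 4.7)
For part (a), my plan is to produce a uniform pointwise bound $e_{\ep_k}(\nabla_k,\Phi_k) \leqslant C_{\lambda,\Lambda,M}\,\ep_k^{-2}$ and insert it into the normalization \eqref{eq:center_and_rate}. Indeed, Proposition~\ref{prop:maximum_principle_for_w} (applicable for large $k$ since $\cY_{\ep_k}(\nabla_k,\Phi_k) \leqslant \Lambda\ep_k < \infty$) gives $|\Phi_k|\leqslant 1$ and hence $\tfrac{\lambda w_k^2}{\ep_k^2} \leqslant \tfrac{\lambda}{4\ep_k^2}$; separately, at any fixed scale $\rho \in (0,r_0)$, Lemma~\ref{lemm:coarse_estimate_base} applied with the a priori bound $\int_{B_{4\rho}(x)}\Psi_{0,\ep_k}^2 \leqslant \cY_{\ep_k}(\nabla_k,\Phi_k) \leqslant \Lambda\ep_k$ yields $\|\Psi_{0,\ep_k}\|_{\infty;B_{7\rho/2}(x)} \leqslant C_{\lambda,\Lambda}\ep_k^{-1}$ once $\ep_k < \rho$. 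Substituting the resulting pointwise bound into \eqref{eq:center_and_rate} and using the volume estimate \eqref{eq:volume_bounds_for_estimates} then gives $\eta_\ast/2 \leqslant C_{\lambda,\Lambda,M}\,(t_k/\ep_k)^3$, so $\ep_k/t_k$ is uniformly bounded above.

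For part (b), suppose toward contradiction that $\tilde\ep_k := \ep_k/t_k \to 0$ along a subsequence. The plan is to rescale at the centres $x_k$ and extract in the limit a smooth $L^2$ harmonic $1$-form on $(\RR^3,g_{\RR^3})$ carrying a unit-ball mass of $\eta_\ast/2 > 0$, which is forbidden by the $L^2$-Liouville theorem on $\RR^3$. Specifically, setting $\phi_k(y):=\exp_{x_k}(t_k y)$, $\tilde g_k:=\phi_k^*(t_k^{-2}g)$, and $(\nabla_k',\Phi_k'):=\phi_k^*(\nabla_k,\Phi_k)$, Lemma~\ref{lem: scaling} together with the diffeomorphism invariance of $\cY$ makes $(\nabla_k',\Phi_k')$ a critical point of $\cY^{\tilde g_k}_{\tilde\ep_k}$ on $(\RR^3,\tilde g_k)$, while the bounded geometry of $g$ and $t_k\to 0$ ensure $\tilde g_k \to g_{\RR^3}$ smoothly on compact subsets of $\RR^3$, and supply uniform bounds of the form \eqref{eq:injectivity_radius_for_estimates}--\eqref{eq:curvature_bound_for_estimates} (the rescaled injectivity radius grows and the rescaled curvature vanishes in the limit). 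Via Lemma~\ref{lemm:scaling_g_to_delta_g}(f), the identity $Q_k(t_k) = \eta_\ast/2$ and the supremum property of $Q_k$ translate into
\[
\tilde\ep_k^{-1}\int_{B^{\tilde g_k}_1(0)} e^{\tilde g_k}_{\tilde\ep_k}(\nabla_k',\Phi_k')\vol_{\tilde g_k} = \frac{\eta_\ast}{2}, \qquad \sup_{y}\tilde\ep_k^{-1}\int_{B^{\tilde g_k}_1(y)} e^{\tilde g_k}_{\tilde\ep_k}(\nabla_k',\Phi_k')\vol_{\tilde g_k} \leqslant \frac{\eta_\ast}{2},
\]
with $y$ ranging over a set that eventually contains any fixed compact subset of $\RR^3$.

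I would then apply Proposition~\ref{prop:local_convergence} at the scale $\rho = \tfrac{1}{96}$ around each $y_0 \in \RR^3$. Since $\eta_\ast/2 < \min\{\overline{\eta},\eta_0,\theta_0\mu,\theta_1\mu\}$ by the definition of $\eta_\ast$ in \eqref{def: eta_star}, Remark~\ref{rmk:smallness_condition_relation}(i) verifies both the small-energy and the small-$w$ hypotheses on $B^{\tilde g_k}_{96\rho}(y_0)$ for all sufficiently large $k$; Remark~\ref{rmk:smallness_condition_relation}(ii) then yields, along a subsequence, the smooth convergence $\tilde\ep_k^{-1}e^{\tilde g_k}_{\tilde\ep_k}(\nabla_k',\Phi_k') \to |h_{y_0}|^2_{g_{\RR^3}}$ on $B^{g_{\RR^3}}_{2\rho}(y_0)$, with $h_{y_0}$ harmonic with respect to $g_{\RR^3}$. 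A standard diagonal/covering argument over a countable dense set of base points extracts a single subsequence along which the local limits glue to a smooth harmonic $1$-form $h$ on $(\RR^3,g_{\RR^3})$, and Fatou's lemma combined with $\tilde\ep_k^{-1}\cY^{\tilde g_k}_{\tilde\ep_k}(\nabla_k',\Phi_k') = \ep_k^{-1}\cY_{\ep_k}(\nabla_k,\Phi_k) \leqslant \Lambda$ forces $\int_{\RR^3}|h|^2_{g_{\RR^3}} \leqslant \Lambda$. Since on $(\RR^3,g_{\RR^3})$ the conditions $dh = 0 = d^*h$ make each component of $h$ an ordinary harmonic function, the mean value property gives $h\equiv 0$, contradicting $\int_{B_1(0)}|h|^2_{g_{\RR^3}} = \eta_\ast/2 > 0$ and completing the proof.

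The main obstacle I anticipate is the rescaling/covering step in part (b): one must ensure that the smooth convergence $\tilde g_k \to g_{\RR^3}$ and the uniform non-concentration coexist on domains exhausting $\RR^3$, and that a single diagonal subsequence produces a globally defined harmonic limit $h$ to which Fatou can be applied. The subsequent $L^2$-Liouville step on $\RR^3$ is elementary, and part (a) is a direct consequence of the coarse estimate.
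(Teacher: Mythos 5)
Your proposal is correct and follows essentially the same route as the paper: part (a) via the pointwise density bound $e_{\ep_k}\lesssim \ep_k^{-2}$ (from Lemma~\ref{lemm:coarse_estimate_base} together with a bound on $w$) combined with the volume of small balls and the normalization \eqref{eq:center_and_rate}, and part (b) via rescaling by $t_k$, invoking Proposition~\ref{prop:local_convergence} with Remark~\ref{rmk:smallness_condition_relation} on an exhaustion of $\RR^3$, and contradicting the nonexistence of nontrivial $L^2$ harmonic $1$-forms on $\RR^3$. The only cosmetic differences (using Proposition~\ref{prop:maximum_principle_for_w} instead of Lemma~\ref{lemm:w_mean_value_estimate} for the potential term, and centering the rescaling at $x_k$ rather than at $z_0$ shifted by $y_k$) do not affect the argument.
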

\begin{proof}
To see part (a), notice that since $\ep_k^{-1}\cY_{\ep_k}(\nabla_k, \Phi_k) \leqslant \Lambda$ by the assumption~\eqref{ineq: uniform_bound_assumption}, we may eventually (as soon as $\ep_k$ becomes small enough) invoke Lemma~\ref{lemm:coarse_estimate_base} and Lemma~\ref{lemm:w_mean_value_estimate} to obtain the following pointwise bound on the energy density:
\[
\|e_{\ep_k}(\nabla_k, \Phi_k)\|_{\infty; M} \leqslant C_{\Lambda,\lambda_0}\ep_k^{-2}.
\]
In particular, for all $\theta \in (0, 1)$ we have
\[
\begin{split}
\ep_k^{-1}\int_{B_{\theta \ep_k}(x_k)} e_{\ep_k}(\nabla_k, \Phi_k) \vol_g \leqslant C_{\Lambda,\lambda_0}\ep_k^{-3} (\theta \ep_k)^3 = C_{\Lambda,\lambda_0}\theta^3.
\end{split}
\]
Choosing $\theta \in (0, 1)$ so that $C_{\Lambda,\lambda_0} \theta^3 < \frac{\eta_{\ast} }{2}$, we deduce from the above and~\eqref{eq:center_and_rate} that $t_k \geqslant \theta\ep_k$. In other words, for sufficiently large $k$ we have
\[
\frac{\ep_k}{t_k} \leqslant \theta^{-1}.
\]
This proves (a). To see part (b), we argue by contradiction and suppose that along a subsequence of $k$'s, which we do not relabel, there holds 
\[
\lim_{k \to\infty}\frac{\ep_k}{t_k} = 0.
\]
Letting $y_k := (\exp_{z_0}|_{B_{2d_0}(0)})^{-1}(x_k)$, so that in particular $|y_k| = d(x_k, z_0) \to 0$ as $k \to \infty$, we define
\[
\widetilde{\ep}_k := \frac{\ep_k}{t_k},\quad \widetilde{g}_k := t_k^{-2}(\exp_{z_0} \circ s_{y_k, t_k})^*g,
\]
and also introduce the rescaled configurations
\[
 (\widetilde{\nabla}_k, \widetilde{\Phi}_k) := (\exp_{z_0} \circ s_{y_k, t_k})^* (\nabla_k, \Phi_k).
\] 
Notice that $\widetilde{g}_k$ is a Riemannian metric on the ball $B_{\frac{2d_0}{t_k}}(-\frac{y_k}{t_k})$ (which contains $B_{\frac{7d_0}{4t_k}}(0)$ by~\eqref{eq: convenient_smallness_conditions}), and that it converges smoothly to $g_{\RR^3}$ on compact subsets as $k \to \infty$. For later use we also note that, by our definition of $d_0$, we have 
\[
\inj_{(\exp_{z_0})^*g}(x)  \geqslant d_0, \text{ for all $x$ in $B_{d_0}(0)$},
\]
and hence for all $k \in \NN$ and $y$ lying in the ball $B_{\frac{d_0}{t_k}}(-\frac{y_k}{t_k})$ (which contains $B_{\frac{3d_0}{4t_k}}(0)$, again by~\eqref{eq: convenient_smallness_conditions}), we see after a straightforward computation that
\begin{equation}\label{eq:g_k_inj}
\inj_{\widetilde{g}_k}(y) \geqslant \frac{d_0}{t_k}, 
\end{equation}
and that 
\begin{equation}\label{eq:g_k_curvature}
\big(\frac{d_0}{t_k}\big)^{m + 2}|(\nabla^{\widetilde{g}_k})^{m}R_{\widetilde{g}_k}|_{\widetilde{g}_k}(y) \leqslant d_0^{m + 2} \|(\nabla^g)^{m}R_g\|_{\infty; B_{2d_0}(z_0)} \leqslant A_m,
\end{equation}
where the second inequality follows since $d_0 \leqslant r_0 < \frac{\rho_0}{4}$. Thus, for each sufficiently large $k$, we see that~\eqref{eq:injectivity_radius_for_estimates} and~\eqref{eq:curvature_bound_for_estimates} are fulfilled with $\Omega = B_{\frac{3d_0}{4t_k}}(0)$ and $\rho_0 = \frac{d_0}{2t_k}$, and with the same curvature bounds $A_0, A_1, \cdots$ as in the beginning of Section~\ref{sec:asymptotic}. On the other hand, by Lemma~\ref{lem: scaling}, we see that $(\widetilde{\nabla}_k, \widetilde{\Phi}_k)$ is a critical point of $\cY_{\widetilde{\ep}_k}^{\widetilde{g}_k}$ on $B_{\frac{2d_0}{t_k}}(-\frac{y_k}{t_k}) \supset B_{\frac{7d_0}{4t_k}}(0)$. Now, notice that by~\eqref{eq:center_and_rate} suitably rescaled (see Lemma \ref{lemm:scaling_g_to_delta_g}), we have that 
\begin{equation}\label{eq:center_and_rate_rescaled}
(\widetilde{\ep}_k)^{-1}\cY_{\widetilde{\ep}_k}^{\widetilde{g}_k}(\widetilde{\nabla}_k, \widetilde{\Phi}_k; B_{1}(y)) \leqslant \frac{\eta_{\ast}}{2}, \text{ for all }y \in B_{\frac{3d_0}{4t_k}}(0),
\end{equation}
with equality holding at $y = 0$. Consequently, for all $R > 0$, by our choice of $\eta_{\ast}$, and since $\widetilde{\ep}_k, t_k \to 0$, eventually we may invoke Proposition~\ref{prop:local_convergence} and Remark~\ref{rmk:smallness_condition_relation} everywhere on $B_R(0)\subset \RR^3$. Taking a sequence of radii $R_i$ tending to infinity and applying a diagonal argument, we obtain a subsequence of $(\widetilde{\nabla}_k, \widetilde{\Phi}_k)$, which we do not relabel, such that 
\[
(\widetilde{\ep}_k)^{-1} e_{\widetilde{\ep}_k}^{\widetilde{g}_k}(\widetilde{\nabla}_k, \widetilde{\Phi}_k) \to |h|_{g_{\RR^3}}^2,\quad \text{ in $C^{\infty}_{\loc}(\RR^3)$},
\]
where $h$ is a harmonic $1$-form with respect to the Euclidean metric. From this, we draw two consequences. First, for all $R > 0$ we have by~\eqref{ineq: uniform_bound_assumption} that
\[
\int_{B_R(0)}|h|^2 \vol_{g_{\RR^3}} = \lim_{k \to \infty} (\widetilde{\ep}_k)^{-1}\cY_{\widetilde{\ep}_k}^{\widetilde{g}_k}(\widetilde{\nabla}_k, \widetilde{\Phi}_k; B_R(0)) \leqslant \Lambda,
\]
so that $h \in L^2(\RR^3)$. Secondly, since equality is attained in~\eqref{eq:center_and_rate_rescaled} at $y =0$,
\[
\int_{B_1(0)}|h|^2 \vol_{g_{\RR^3}} = \lim_{k \to \infty} (\widetilde{\ep}_k)^{-1}\cY_{\widetilde{\ep}_k}^{\widetilde{g}_k}(\widetilde{\nabla}_k, \widetilde{\Phi}_k; B_1(0)) = \frac{\eta_{\ast}}{2} > 0.
\]
As there are no non-trivial harmonic $1$-forms on $\RR^3$ with finite $L^2$-norm, we have arrived at a contradiction.
\end{proof}

\begin{lemm}\label{lemm:rate_of_rescaling_better}
In the setting of Lemma~\ref{lemm:rate_of_rescaling}, we in fact have
\[
\liminf_{k \to \infty}\frac{\ep_k}{t_k} \geqslant \frac{\tau_1}{4}\cdot\min\{1,\sqrt{\lambda}\},
\]
where $\tau_1=\tau_1(\lambda_0)\in (0,1)$ is the threshold given by Lemma \ref{lemm:nablaPhi_exp_decay_base}.
\end{lemm}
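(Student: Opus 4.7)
The plan is to argue by contradiction, refining the rescaling scheme of Lemma~\ref{lemm:rate_of_rescaling}(b). Write $\mu := \min\{\lambda,1\}$ and suppose that, along a further subsequence which we do not relabel, $\widetilde{\ep}_k := \ep_k/t_k \to \widetilde{\ep}_\infty$ with $\widetilde{\ep}_\infty \in (0, \tfrac{\tau_1\sqrt{\mu}}{4})$; here the positive lower bound is provided by Lemma~\ref{lemm:rate_of_rescaling}(b). As in the proof of that lemma, define the rescaled configurations $(\widetilde{\nabla}_k, \widetilde{\Phi}_k) := (\exp_{z_0}\circ s_{y_k, t_k})^*(\nabla_k, \Phi_k)$ on $(B_{2d_0/t_k}(-y_k/t_k), \widetilde{g}_k)$ with $\widetilde{g}_k := t_k^{-2}(\exp_{z_0}\circ s_{y_k, t_k})^*g$; by Lemma~\ref{lem: scaling} each $(\widetilde{\nabla}_k, \widetilde{\Phi}_k)$ is a critical point of $\cY_{\widetilde{\ep}_k}^{\widetilde{g}_k}$, while $\widetilde{g}_k \to g_{\RR^3}$ smoothly on compact subsets of $\RR^3$. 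The rescaled version of \eqref{eq:center_and_rate} yields
\[
\widetilde{\ep}_k^{-1}\,\cY_{\widetilde{\ep}_k}^{\widetilde{g}_k}(\widetilde{\nabla}_k, \widetilde{\Phi}_k; B_1(y)) \leqslant \tfrac{\eta_*}{2}\quad\text{for all } y \in B_{3d_0/(4t_k)}(0),
\]
with equality at $y = 0$, while the scaling identities and \eqref{ineq: uniform_bound_assumption} bound the total energy on the rescaled domain by $\widetilde{\ep}_k\Lambda$.

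Since $\widetilde{\ep}_k$ will stay in a compact subinterval of $(0, \infty)$, the uniform a priori estimates from Section~\ref{sec:estimates} (in particular Lemma~\ref{lemm:coarse_estimate_base}, Lemma~\ref{lemm:w_mean_value_estimate}, and Proposition~\ref{prop:coarse_estimate}) furnish uniform $C^\infty_{\loc}$ bounds on the gauge-invariant quantities $\Psi_m$, $\Theta_m$, $w$, and on $|\widetilde{\Phi}_k|$ on balls exhausting $\RR^3$. A standard Uhlenbeck-type Coulomb gauge-fixing and patching argument (drawing on Appendix~\ref{sec:Coulomb}) applied on such an exhaustion, combined with a diagonal extraction, will then produce smooth gauge transformations $\rg_k$ and a further subsequence for which
\[
\rg_k \cdot (\widetilde{\nabla}_k, \widetilde{\Phi}_k) \longrightarrow (\widehat{\nabla}, \widehat{\Phi})
\]
smoothly on compact subsets of $\RR^3$, with $(\widehat{\nabla}, \widehat{\Phi})$ a smooth critical point of $\cY_{\widetilde{\ep}_\infty}^{g_{\RR^3}}$. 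Passing to the limit in the above bounds, one obtains $\cY_{\widetilde{\ep}_\infty}^{g_{\RR^3}}(\widehat{\nabla}, \widehat{\Phi}; \RR^3) \leqslant \widetilde{\ep}_\infty\Lambda < \infty$, the uniform non-concentration
\[
\sup_{y \in \RR^3}\widetilde{\ep}_\infty^{-1}\,\cY_{\widetilde{\ep}_\infty}^{g_{\RR^3}}(\widehat{\nabla}, \widehat{\Phi}; B_1(y)) \leqslant \tfrac{\eta_*}{2},
\]
and the non-triviality $\cY_{\widetilde{\ep}_\infty}^{g_{\RR^3}}(\widehat{\nabla}, \widehat{\Phi}; B_1(0)) = \widetilde{\ep}_\infty \cdot \tfrac{\eta_*}{2} > 0$.

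With this limit in hand, I will invoke Proposition~\ref{prop:concentration_scale} on $(\RR^3, g_{\RR^3})$, which has bounded geometry with $\rho_1 = \infty$. Taking $\sigma = 1/4$, the non-concentration hypothesis~\eqref{eq:small_energy_everywhere} is fulfilled since $\eta_*/2 < \min\{\overline{\eta}, \theta_0\mu, \theta_1\mu\}$ by the very definition of $\eta_*$; the smallness condition $\widetilde{\ep}_\infty/\sigma \leqslant \tau_1\sqrt{\mu}$ reduces to $4\widetilde{\ep}_\infty < \tau_1\sqrt{\mu}$, which holds by the contradiction assumption; and the finite-energy requirement is met. Consequently $(\widehat{\nabla}, \widehat{\Phi})$ must be reducible as in~\eqref{eq: reducible_pair_intro}: in particular $|\widehat{\Phi}| \equiv 1$, $\widehat{\nabla}\widehat{\Phi} \equiv 0$, and $\bangle{F_{\widehat{\nabla}}, \widehat{\Phi}}$ is a harmonic $2$-form on $\RR^3$. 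Finite total energy confines this harmonic $2$-form to $L^2(\RR^3)$, where no nonzero $L^2$ harmonic $2$-forms exist (equivalently, no nonzero $L^2$ harmonic $1$-forms), forcing $F_{\widehat{\nabla}} \equiv 0$. Thus $(\widehat{\nabla}, \widehat{\Phi})$ is trivial, contradicting the positive lower bound $\widetilde{\ep}_\infty \cdot \eta_*/2 > 0$ on its $B_1(0)$-energy.

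The main obstacle will be the extraction step: while Section~\ref{sec:estimates} already supplies all the gauge-invariant $C^\infty_{\loc}$ control one could want, producing a smooth subsequential limit of the pairs $(\widetilde{\nabla}_k, \widetilde{\Phi}_k)$ themselves — rather than just of their curvatures — requires Coulomb gauge patching on an exhaustion of $\RR^3$, complicated somewhat by the fact that the background metrics $\widetilde{g}_k$ are themselves varying (though converging smoothly to $g_{\RR^3}$).
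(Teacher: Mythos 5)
Your proposal is correct and follows essentially the same route as the paper: rescale by $t_k$, use the uniform coarse estimates and a Coulomb-gauge patching argument to extract a smooth limiting critical point of $\cY_{\widetilde{\ep}_\infty}^{g_{\RR^3}}$ with finite action and uniform non-concentration at scale $1$, then invoke Proposition~\ref{prop:concentration_scale} with $\sigma = \tfrac{1}{4}$ to force reducibility, and finally rule out non-zero $L^2$ harmonic $2$-forms on $\RR^3$ to contradict the positive energy on $B_1(0)$ inherited from \eqref{eq:center_and_rate}. The gauge-extraction step you flag as the main obstacle is handled in the paper exactly as you anticipate, by the standard local Coulomb gauge plus patching argument on an exhaustion, with the varying metrics $\widetilde{g}_k$ causing no difficulty since they converge smoothly to $g_{\RR^3}$.
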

\begin{proof}
Having shown in Lemma~\ref{lemm:rate_of_rescaling} that $(\frac{\ep_k}{t_k})$ is a bounded sequence, to prove the assertion, it suffices to show that each of its convergent subsequences has limit at least $\frac{\tau_1}{4}\cdot\min\{1,\sqrt{\lambda}\}$. Suppose that is not the case. Then, up to choosing a subsequence, and also recalling Lemma~\ref{lemm:rate_of_rescaling}(b), we have 
\[
\frac{\ep_k}{t_k} \to \alpha \in (0, \frac{\tau_1}{4}\cdot\min\{1,\sqrt{\lambda}\}) \text{ as }k \to \infty.
\]
Dropping finitely many initial terms we may further assume that
\[
\frac{\ep_k}{t_k} < \frac{\tau_1}{4}\cdot\min\{1,\sqrt{\lambda}\}, \text{ for all }k.
\]
Now define $y_k$, $\widetilde{\ep}_k$, $\widetilde{g}_k$, and the pair $(\widetilde{\nabla}_k, \widetilde{\Phi}_k)$ as in the proof of Lemma~\ref{lemm:rate_of_rescaling}(b). For all $y \in B_{\frac{3d_0}{4t_k}}(0) \subset B_{\frac{d_0}{t_k}}(-\frac{y_k}{t_k})$, by the global energy bound~\eqref{ineq: uniform_bound_assumption}, together with the Higgs field bound given by Proposition \ref{prop:maximum_principle_for_w}, and the coarse estimates in Lemma~\ref{lemm:coarse_estimate_base} and Proposition~\ref{prop:coarse_estimate} applied to the ball $B_{t_k}(y_k + t_k y)$, which is permitted since 
\[
\ep_k < \frac{\tau_1}{4}\cdot\min\{1,\sqrt{\lambda}\}t_k \leqslant \frac{t_k}{4},
\]
we obtain for all $m \in \NN \cup \{0\}$ that, on $B_{\frac{t_k}{2}}(y_k + t_k y)$, there holds
\[
|\Phi_k|\leqslant 1,\ \ \ |(\nabla_{k})^{m + 1}\Phi_k|_{g} \leqslant C_{m, \Lambda,\lambda_0}\ep_k^{-m-1},\ \ \  |(\nabla_k)^m F_{\nabla_k}|_{g} \leqslant  C_{m, \Lambda,\lambda_0}\ep_k^{-m-2}, 
\]
where by abuse of notation we have written $g$ for its pullback under $\exp_{z_0}$. Under scaling, these translate into the following estimates on $B_{\frac{1}{2}}(y)$:
\[
|\widetilde{\Phi}_k|\leqslant 1,\ \ \ |(\widetilde{\nabla}_k)^{m + 1}\widetilde{\Phi}_k|_{\widetilde{g}_k} \leqslant  C_{m, \Lambda,\lambda_0}(\widetilde{\ep}_k)^{-m-1},\ \ \   |(\widetilde{\nabla}_k)^m F_{\widetilde{\nabla}_k}|_{\widetilde{g}_k} \leqslant  C_{m, \Lambda,\lambda_0}(\widetilde{\ep}_k)^{-m-2}.
\]
Since $\widetilde{\ep}_k$ converges to a positive limit, for each $m$ the right-hand side of the latter two estimates are bounded independently of $k$. As $y \in B_{\frac{3d_0}{4t_k}}(0)$ is arbitrary and $\widetilde{g}_k$ is converging smoothly locally to $g_{\RR^3}$, it is standard to deduce from these estimates that each point in $\RR^3$ possesses a neighborhood on which, after locally changing into Coulomb gauges, a convergent subsequence of $(\widetilde{\nabla}_k, \widetilde{\Phi}_k)$ can be extracted. An equally standard patching argument (see for example~\cite[Section 4.4.2]{DK} or~\cite[Chapter 7]{wehrheim2004uhlenbeck}) then yields global gauge transformations over $\RR^3$ so that afterwards $(\widetilde{\nabla}_k, \widetilde{\Phi}_k)$ subconverge smoothly on compact subsets of $\RR^3$ to a limiting configuration $(\nabla, \Phi)$, which must be a solution of~\eqref{eq: 2nd_order_crit_pt_intro} on $(\RR^3, g_{\RR^3})$ with $\ep = \alpha$.

We next verify that the hypotheses of Proposition~\ref{prop:concentration_scale} are fulfilled. First, by the global energy bound~\eqref{ineq: uniform_bound_assumption} again, we have for all $R > 0$ that
\begin{equation}\label{eq:rate_lower_bound_limit_bound}
\alpha^{-1}\cY_{\alpha}^{\RR^3}(\nabla, \Phi; B_R(0)) = \lim_{k \to \infty}(\widetilde{\ep}_k)^{-1}\cY_{\widetilde{\ep}_k}^{\widetilde{g}_k}(\widetilde{\nabla}_k, \widetilde{\Phi}_k; B_R(0)) \leqslant  \Lambda.
\end{equation}
In particular $(\nabla, \Phi)$ has finite $\cY_\alpha^{\RR^3}$-action. Next, for the flat metric $g_{\RR^3}$, the radius $\rho_0$ in~\eqref{eq:injectivity_radius_for_estimates} can be arbitrarily chosen, and we fix, say, $\rho_0 = 4$. The factor $\mu_1$ in~\eqref{eq:radius_rel_curvature}, on the other hand, can be taken to be $1$. Finally, the estimate~\eqref{eq:center_and_rate_rescaled}, passed to the limit as $k \to \infty$, implies that 
\begin{equation}\label{eq:rate_lower_bound_limit_small}
\alpha^{-1}\cY_{\alpha}^{g_{\RR^3}}(\nabla, \Phi; B_1(y)) \leqslant  \frac{\eta_{\ast}}{2} < \eta_{\ast}, \text{ for all }y \in \RR^3,
\end{equation}
with the first inequality becoming an equality at $y = 0$. In particular, we obtain~\eqref{eq:small_energy_everywhere} with $\sigma = \frac{1}{4}$. Since $\alpha < \frac{\tau_1}{4}\cdot\min\{1,\sqrt{\lambda}\}$, we conclude from Proposition~\ref{prop:concentration_scale} that $(\nabla,\Phi)$ satisfies \eqref{eq: reducible_pair_intro}.
Combining this with~\eqref{eq:rate_lower_bound_limit_bound} and the fact that equality holds at $y = 0$ in~\eqref{eq:rate_lower_bound_limit_small}, we infer that $\bangle{F_{\nabla}, \Phi}$ is a non-zero harmonic $2$-form in $\mathbb{R}^3$ with finite $L^2$-norm, which is a contradiction.
\end{proof}

As a consequence of the upper and lower bounds on $\frac{\ep_k}{t_k}$ just established, we can relate the multiplicity $\Theta(x)$ at each point $x \in S$ to the energy threshold provided by our gap theorem. In fact, we have the following result, which in particular gives the first conclusion of Theorem~\ref{thm: bubbling}. 
\begin{prop}\label{prop:multiplicity_lower_bound}
For all $x \in S$ there exists a critical point $(\nabla, \Phi)$ of $\cY_1^{g_{\RR^3}}$ on $\RR^3$ such that 
\begin{equation}\label{eq:non_trivial_first_bubble}
0 < \cY_1^{g_{\RR^3}}(\nabla, \Phi; \RR^3) \leqslant  \Theta(x).
\end{equation}
In particular,
\begin{equation}\label{eq:multiplicity_lower_bound}
\Theta(x) \geqslant \theta_{\mathrm{gap}}\cdot\min\{\lambda,1\},
\end{equation} where $\theta_{\mathrm{gap}}$ is the threshold given by Theorem~\ref{thm: gap}, also appearing in Theorem \ref{thm: gap_for_R3}.
\end{prop}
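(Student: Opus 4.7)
Fix $z_0 := x \in S$, and consider the rescaled sequence $(\widetilde{\nabla}_k, \widetilde{\Phi}_k)$, the metrics $\widetilde{g}_k$, and the parameters $\widetilde{\ep}_k = \ep_k/t_k$ constructed in \S\ref{subsec:rescaling}. By Lemma \ref{lemm:rate_of_rescaling} and Lemma \ref{lemm:rate_of_rescaling_better}, the sequence $(\widetilde{\ep}_k)$ is bounded from above and bounded away from zero, so after passing to a subsequence we may assume $\widetilde{\ep}_k \to \alpha > 0$. The coarse estimates of Lemma \ref{lemm:coarse_estimate_base} and Proposition \ref{prop:coarse_estimate}, applied in a $t_k$-neighbourhood of $x_k$ (valid once $\ep_k < t_k$), rescale to give bounds on $|(\widetilde{\nabla}_k)^m F_{\widetilde{\nabla}_k}|_{\widetilde{g}_k}$ and $|(\widetilde{\nabla}_k)^{m+1}\widetilde{\Phi}_k|_{\widetilde{g}_k}$ that are uniform in $k$ on every compact subset of $\RR^3$, precisely because $\widetilde{\ep}_k$ converges to the positive number $\alpha$. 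Combined with the smooth convergence $\widetilde{g}_k \to g_{\RR^3}$ on compact sets, a standard argument producing local Coulomb gauges and patching them globally (executed as indicated in the proof of Lemma \ref{lemm:rate_of_rescaling_better}) yields a further subsequence along which, after global gauge transformations, $(\widetilde{\nabla}_k, \widetilde{\Phi}_k)$ converges smoothly on compact subsets of $\RR^3$ to a configuration $(\widetilde{\nabla}, \widetilde{\Phi})$ solving \eqref{eq: 2nd_order_crit_pt_intro} on $(\RR^3, g_{\RR^3})$ with parameter $\alpha$.

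Next I establish the two-sided bound
\begin{equation*}
0 < \alpha^{-1}\cY_\alpha^{g_{\RR^3}}(\widetilde{\nabla}, \widetilde{\Phi}; \RR^3) \leqslant \Theta(z_0).
\end{equation*}
Passing to the limit in \eqref{eq:center_and_rate} and using the local smooth convergence gives $\alpha^{-1}\cY_\alpha^{g_{\RR^3}}(\widetilde{\nabla}, \widetilde{\Phi}; B_1(0)) = \eta_\ast/2 > 0$, which settles the lower bound. For the upper bound, item (f) of Lemma \ref{lemm:scaling_g_to_delta_g} together with the change of variables through $\exp_{z_0}\circ s_{y_k, t_k}$ gives, for every $R > 0$ and all sufficiently large $k$,
\begin{equation*}
\widetilde{\ep}_k^{-1}\int_{B_R^{\widetilde{g}_k}(0)} e_{\widetilde{\ep}_k}^{\widetilde{g}_k}(\widetilde{\nabla}_k, \widetilde{\Phi}_k)\vol_{\widetilde{g}_k} = \ep_k^{-1}\int_{B_{Rt_k}(x_k)} e_{\ep_k}(\nabla_k, \Phi_k)\vol_g.
\end{equation*}
Choosing $\delta > 0$ so small that $\overline{B_\delta(z_0)}$ meets $S$ only at $z_0$, and recalling that $x_k \to z_0$ and $t_k \to 0$, we have $B_{Rt_k}(x_k) \subset B_\delta(z_0)$ for all large $k$. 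Taking $\limsup_{k \to \infty}$ in the displayed identity and invoking the weak$^\ast$ convergence \eqref{eq: convergence_of_energy_measures}, then letting $\delta \downarrow 0$ and finally $R \to \infty$, completes the upper bound.

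To conclude, the critical point $(\widetilde{\nabla}, \widetilde{\Phi})$ of $\cY_\alpha^{g_{\RR^3}}$ can be converted, through the Euclidean dilation $s_\alpha: y \mapsto \alpha y$ (which satisfies $s_\alpha^\ast (\alpha^{-2}g_{\RR^3}) = g_{\RR^3}$) together with Lemma \ref{lem: scaling} and item (g) of Lemma \ref{lemm:scaling_g_to_delta_g}, into a critical point $(\nabla, \Phi)$ of $\cY_1^{g_{\RR^3}}$ on $(\RR^3, g_{\RR^3})$ satisfying $\cY_1^{g_{\RR^3}}(\nabla, \Phi; \RR^3) = \alpha^{-1}\cY_\alpha^{g_{\RR^3}}(\widetilde{\nabla}, \widetilde{\Phi}; \RR^3)$. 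This, together with the bounds from the previous paragraph, yields \eqref{eq:non_trivial_first_bubble}. Finally, since $\cY_1^{g_{\RR^3}}(\nabla, \Phi; \RR^3) > 0$, the contrapositive of Theorem \ref{thm: gap_for_R3} applied with $\ep = 1$ forces $\cY_1^{g_{\RR^3}}(\nabla, \Phi; \RR^3) > \theta_{\mathrm{gap}}\cdot\min\{\lambda, 1\}$, and \eqref{eq:multiplicity_lower_bound} follows from the upper bound $\cY_1^{g_{\RR^3}}(\nabla, \Phi; \RR^3) \leqslant \Theta(x)$. The only genuinely delicate step is the global gauge patching in the first paragraph; however, this is by now standard given the uniform higher-derivative bounds guaranteed by the positivity of $\liminf_k \widetilde{\ep}_k$, and proceeds in the same manner already sketched inside the proof of Lemma \ref{lemm:rate_of_rescaling_better}.
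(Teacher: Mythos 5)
Your argument is correct and reaches the same conclusion, but it runs the blow-up at a different scale than the paper does. The paper's proof rescales $(\nabla_k,\Phi_k)$ by $\ep_k$ (not $t_k$), so the limit is \emph{directly} a critical point of $\cY_1^{g_{\RR^3}}$, and the lower bound $\cY_1 \geqslant \eta_\ast/2$ comes from the normalization \eqref{eq:center_and_rate} together with the fact that $t_k/\ep_k$ stays in a compact subset of $(0,\infty)$ (Lemma~\ref{lemm:rate_of_rescaling}); no second dilation is needed, and Lemma~\ref{lemm:rate_of_rescaling_better} is not used at all. You instead rescale by $t_k$, obtain a $\cY_\alpha$-critical point with $\alpha=\lim \ep_k/t_k\in(0,\infty)$, and then convert it to a $\cY_1$-critical point via the dilation $s_\alpha$ and Lemmas~\ref{lem: scaling} and~\ref{lemm:scaling_g_to_delta_g}. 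Both routes are legitimate and of essentially the same length; the paper's choice avoids the extra conversion step, while yours makes the equality $\alpha^{-1}\cY_\alpha(\widetilde\nabla,\widetilde\Phi;B_1(0))=\eta_\ast/2$ cleaner since the normalized ball $B_{t_k}(x_k)$ becomes exactly $B_1(0)$ after rescaling.

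One small technical point to fix: you justify the uniform derivative bounds by applying Lemma~\ref{lemm:coarse_estimate_base} and Proposition~\ref{prop:coarse_estimate} "in a $t_k$-neighbourhood of $x_k$ (valid once $\ep_k<t_k$)". But Lemma~\ref{lemm:rate_of_rescaling}(a) only gives $t_k\geqslant\theta\ep_k$ for a possibly small $\theta$, so $\ep_k<t_k$ need not hold, and the hypothesis $\ep\leqslant\rho$ of those estimates would then fail at radius $\rho\sim t_k$. The remedy is what the paper itself does: apply the coarse estimates on balls of radius comparable to $\ep_k$ (where $\ep\leqslant\rho$ holds automatically), obtaining bounds of order $\ep_k^{-m-1}$, and then observe that under the $t_k$-rescaling these become bounds of order $\widetilde\ep_k^{-m-1}$, uniform in $k$ precisely because $\liminf_k\widetilde\ep_k>0$ — which is the reason you already cite. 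A finite covering of each compact set by balls of radius $\sim\widetilde\ep_k$ then completes the step. With this adjustment the rest of your proof, including the final appeal to Theorem~\ref{thm: gap_for_R3}, goes through exactly as written.
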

\begin{proof}
It suffices to prove this just for the blow-up point $z_0$ that we have been working with in this section. The argument at other points in $S$, if any, would be the same. Identifying $B_{2d_0}(z_0)$ with $B_{2d_0}(0) \subset \RR^3$ via the exponential map and writing $\exp_{z_0}^* g$ as $g$, we adopt the notation of the previous proof, except we rescale $(\nabla_k, \Phi_k)$ by $\ep_k$ as opposed to $t_k$. In other words, we set
\[
(\widetilde{\nabla}_k, \widetilde{\Phi}_k) := (s_{y_k, \ep_k})^*(\nabla_k, \Phi_k),\ \ \widetilde{g}_k := \ep_k^{-2}(s_{y_k, \ep_k})^*g.
\]
For all $y \in B_{\frac{3d_0}{4\ep_k}}(0) \subset B_{\frac{d_0}{\ep_k}}(-\frac{y_k}{\ep_k})$, again by 
\eqref{ineq: uniform_bound_assumption} and Proposition \ref{prop:maximum_principle_for_w}, along with Lemma~\ref{lemm:coarse_estimate_base} and Proposition~\ref{prop:coarse_estimate} applied to the original configuration $(\nabla_k, \Phi_k)$ on the ball $B_{4\ep_k}(y_k + \ep_k y)$ we obtain, for $m = 0, 1, \cdots$, the following bounds on $B_{2\ep_k}(y_k + \ep_k y)$:
\[
|\Phi_k|\leqslant 1,\ \ \ |(\nabla_{k})^{m + 1}\Phi_k|_{g} \leqslant C_{m, \Lambda,\lambda_0}\ep_k^{-m-1},\ \ \  |(\nabla_k)^m F_{\nabla_k}|_{g} \leqslant  C_{m, \Lambda,\lambda_0}\ep_k^{-m-2}.
\]
Upon scaling by $\ep_k$, these estimates become simply
\[
|\widetilde{\Phi}_k|\leqslant 1,\ \ \ |(\widetilde{\nabla}_k)^{m + 1}\widetilde{\Phi}_k|_{\widetilde{g}_k} +  |(\widetilde{\nabla}_k)^m F_{\widetilde{\nabla}_k}|_{\widetilde{g}_k} \leqslant  C_{m, \Lambda,\lambda_0}, \text{ on }B_{2}(y).
\]
Since $y \in B_{\frac{3d_0}{4\ep_k}}(0)$ is arbitrary, and again $\widetilde{g}_{k}$ converges in $C^{\infty}_{\loc}(\RR^3)$ to $g_{\RR^3}$, we can argue as in the previous proof to get, after taking a further subsequence and changing gauge if needed, that $(\widetilde{\nabla}_k, \widetilde{\Phi}_k)$ converge smoothly on compact subsets of $\RR^3$ to a solution of~\eqref{eq: 2nd_order_crit_pt_intro} on $(\RR^3, g_{\RR^3})$ with $\ep = 1$. Noting that 
\[
\cY_1^{\widetilde{g}_k}(\widetilde{\nabla}_k, \widetilde{\Phi}_k; B_{\frac{t_k}{\ep_k}}(0)) = \ep_{k}^{-1}\cY_{\ep_k}^g(\nabla_k, \Phi_k; B_{t_k}(y_k)) = \frac{\eta_{\ast}}{2},
\]
and recalling from Lemma~\ref{lemm:rate_of_rescaling} that eventually $\frac{t_k}{\ep_k}$ is bounded away from both $0$ and $\infty$, we infer that
\[
\cY_1^{g_{\RR^3}}(\nabla, \Phi; \RR^3) \geqslant \frac{\eta_{\ast}}{2} > 0.
\]
This proves the first inequality in~\eqref{eq:non_trivial_first_bubble}. On the other hand, given $R > 0$ and $\delta \in (0, d_0)$, since eventually $R\ep_k + |y_k| < \delta$, we see with the help of the local smooth convergence of $(\widetilde{\nabla}_k, \widetilde{\Phi}_k)$ to $(\nabla, \Phi)$ and the scaling relations in Lemmas~\ref{lemm:scaling_g_to_delta_g} and \ref{lem: scaling} that
\[
\begin{split}
\cY_1^{g_{\RR^3}}(\nabla, \Phi; B_R) = \ & \lim_{k \to \infty}\ep_k^{-1}\cY_{\ep_k}^{g}(\nabla_k, \Phi_k; B_{R\ep_k}(y_k))\\
\leqslant \ & \limsup_{k \to \infty}\mu_{\ep_k}(B_{\delta}(z_0)) \leqslant 
 \int_{B_{\delta}(z_0)}|h|^2 \vol_g + \Theta(z_0).
\end{split}
\]
Letting $R \to \infty$ and $\delta \to 0$ gives the second inequality in~\eqref{eq:non_trivial_first_bubble}. Finally, the positivity of $\cY_{1}^{g_{\RR^3}}(\nabla, \Phi)$ together with Theorem~\ref{thm: gap_for_R3} forces
\begin{equation}\label{eq:1st_bubble_nontrivial}
\cY_1^{g_{\RR^3}}(\nabla, \Phi; \RR^3) >\theta_{\mathrm{gap}}\cdot\min\{\lambda,1\},
\end{equation}
which gives~\eqref{eq:multiplicity_lower_bound} when combined with~\eqref{eq:non_trivial_first_bubble}.
\end{proof}
\begin{rmk}\label{rmk:improved_upper_bound_S}
    Let $r\in (0,r_0]\setminus\big(\bigcup_{x \in S} \cR_x \big)$ be small enough so that $4r<d(x_i,x_j)$ for all pairs $(x_i, x_j)$ of distinct points in $S$. Then, it follows from equation \eqref{eq: weak_limit_measure_balls} and the weak* convergence of measures \eqref{eq: convergence_of_energy_measures} that
    \[
    \lim_{\ep\to 0}\sum\limits_{x\in S}\ep^{-1}\int_{B_r(x)}e_{\ep}(\nabla_{\ep},\Phi_{\ep}) = \sum\limits_{x\in S}\int_{B_r(x)} |h|^2 + \sum\limits_{x\in S} \Theta(x)\geqslant \sum\limits_{x\in S} \Theta(x).
    \] Since the collection $\{B_r(x)\}_{x\in S}$ consists of pairwise disjoint balls, using the uniform energy upper bound assumption \eqref{ineq: uniform_bound_assumption} together with the multiplicity lower bound \eqref{eq:multiplicity_lower_bound} given by Proposition \ref{prop:multiplicity_lower_bound}, we deduce that
    \[
    \Lambda\geqslant \cH^0(S)\cdot \theta_{\mathrm{gap}}\cdot\min\{\lambda,1\},
    \] that is, we get the following improved, universal version of the upper bound in Lemma \ref{lem: S_finite}(b):
    \[
    \cH^0(S)\leqslant \theta_{\mathrm{gap}}^{-1}\cdot\max\{\lambda^{-1},1\}\cdot\Lambda.
    \]
\end{rmk}
Applying the existence part of Proposition~\ref{prop:multiplicity_lower_bound} to the families of irreducible critical points produced by Theorem~\ref{thm: irred} on rational homology $3$-spheres (see Example \ref{ex:irred_families_non_trivial_S}), we obtain critical points of $\cY_{1}^{g_{\RR^3}}$ on $\RR^3$ with positive energy. More precisely, we have the following result.
\begin{prop}\label{prop:existence_R3}
There exists a critical point $(\nabla, \Phi)$ of $\cY_1^{g_{\RR^3}}$ such that 
\[
0 < \cY_1^{g_{\RR^3}}(\nabla, \Phi) < \infty.
\]
\end{prop}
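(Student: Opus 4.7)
The plan is to combine the existence result for irreducible critical points on rational homology $3$-spheres with the bubbling analysis of \S\ref{subsec:rescaling}, following the strategy already outlined in Remark~\ref{rmk: existence_by_blowup}.

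First, I would take $(M^3, g) = (\mathbb{S}^3, g_{\mathrm{round}})$, which is closed, oriented, and satisfies $b_1(M) = b_2(M) = 0$. By Theorem~\ref{thm: irred}, for each $\ep \in (0, \ep_M)$ there exists an irreducible critical point $(\nabla_\ep, \Phi_\ep) \in \mathscr{C}(E)$ of $\cY_\ep$ satisfying the two-sided bound
\[
\min\{1,\lambda\}\cdot\mathcal{C}_0 \leqslant \ep^{-1}\cY_\ep(\nabla_\ep, \Phi_\ep) \leqslant \max\{1,\lambda\}\cdot \Lambda_{\mathbb{S}^3},
\]
and moreover $\nabla_\ep \Phi_\ep \not\equiv 0$. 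In particular, the normalized energies $\ep^{-1}\cY_\ep(\nabla_\ep, \Phi_\ep)$ are uniformly bounded, so we are in the setting of \S\ref{subsec:blow-up}: passing to a subsequence $\ep_i \to 0$, we obtain a weak $L^2$ limit $h$ of $\ep_i^{\frac{1}{2}} \langle * F_{\nabla_i}, \Phi_i\rangle$, a (possibly empty) blow-up set $S \subset \mathbb{S}^3$, and the measure decomposition~\eqref{eq: convergence_of_energy_measures}.

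Next, since $b_1(\mathbb{S}^3) = 0$, the space of harmonic $1$-forms on $\mathbb{S}^3$ is trivial, and therefore $h \equiv 0$. Moreover, because each $(\nabla_\ep, \Phi_\ep)$ is irreducible for arbitrarily small $\ep$, Lemma~\ref{lemm: reducible_is_boring}(b) forces $S \neq \emptyset$. Fix any point $z_0 \in S$.

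Finally, I would apply Proposition~\ref{prop:multiplicity_lower_bound} at $z_0$ to extract a critical point $(\nabla, \Phi)$ of $\cY_1^{g_{\RR^3}}$ on $\mathbb{R}^3 \cong T_{z_0}\mathbb{S}^3$ with
\[
0 < \cY_1^{g_{\RR^3}}(\nabla, \Phi; \mathbb{R}^3) \leqslant \Theta(z_0) \leqslant \max\{1,\lambda\}\cdot \Lambda_{\mathbb{S}^3} < \infty,
\]
which is exactly the conclusion we want. I do not anticipate a genuine obstacle here, since all of the work has already been done: the construction of irreducible critical points (Theorem~\ref{thm: irred}), the fact that irreducibility prevents $S$ from being empty in the absence of harmonic $1$-forms (Lemma~\ref{lemm: reducible_is_boring}), and the bubbling procedure producing a nontrivial finite-energy critical point on $\mathbb{R}^3$ at every point of $S$ (Proposition~\ref{prop:multiplicity_lower_bound}). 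The only minor point to check is consistency between the uniform energy bound hypothesis $\Lambda = \max\{1, \lambda\}\cdot \Lambda_{\mathbb{S}^3}$ needed to apply \S\ref{subsec:blow-up}--\S\ref{subsec:rescaling} and the bound actually produced by Theorem~\ref{thm: existence}, which is immediate.
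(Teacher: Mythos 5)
Your proposal is correct and follows essentially the same route as the paper: take a rational homology $3$-sphere (the paper also suggests the round $\mathbb{S}^3$), invoke Theorem~\ref{thm: irred} to get irreducible critical points in the right energy regime, conclude $S\neq\emptyset$ via Lemma~\ref{lemm: reducible_is_boring}, and apply Proposition~\ref{prop:multiplicity_lower_bound} at a point of $S$. The only (immaterial) difference is that the paper, through Example~\ref{ex:irred_families_non_trivial_S}, deduces $S\neq\emptyset$ from Lemma~\ref{lemm: reducible_is_boring}(a) using $h\equiv 0$ and the positive lower energy bound, whereas you use part (b) via irreducibility; both are valid.
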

\begin{proof}
Take any closed, oriented Riemannian $3$-manifold $(M^3,g)$ with $b_1(M) = 0$; for example, the round $3$-sphere. By Theorem~\ref{thm: irred}, we can find a sequence $\ep_i \to 0$ and, for each $i$, an irreducible critical point $(\nabla_i, \Phi_i)$ for $\cY_{\ep_i}$, on the trivial $SU(2)$-bundle over $M$, such that 
\begin{equation}\label{eq:existence_R3_two_sided}
1 \lesssim_{\lambda}\ep_i^{-1}\cY_{\ep_i}(\nabla_{i},\Phi_{i}) \lesssim_{\lambda, M} 1, \text{ for all }i.
\end{equation}
Then, as explained in Example \ref{ex:irred_families_non_trivial_S}, for such a sequence we must have $S\neq\emptyset$. Thus, the desired existence result now follows from the first conclusion of Proposition~\ref{prop:multiplicity_lower_bound}.
\end{proof}

\subsection{Extracting bubbles and identifying neck regions}\label{subsec:regions}
We continue to work in the setting of the previous section and adopt the same set of notation. For convenience, we recall the convergences of Radon measures \eqref{eq: convergence_of_energy_measures} and \eqref{eq:convergence_kappa_i}:
\begin{align*}
\mu_{\ep_i} = \ep_{i}^{-1}e_{\ep_i}(\nabla_i, \Phi_i) \vol_g &\rightharpoonup \mu=|h|^2 \cH^3 + \sum_{x \in S}\Theta(x)\delta_x,\\
\kappa_{\ep_i} = 2\bangle{* F_{\nabla_i}, \nabla_i\Phi_i}\vol_g &\rightharpoonup \kappa=\sum_{x \in S}\Xi(x)\delta_{x},
\end{align*}
where for all $x \in S$ there holds
\[
\Xi(x) \in 8\pi \ZZ,\ \ |\Xi(x)| \leqslant  \Theta(x).
\]
Also recall that, by the Schwarz inequality (see~\eqref{eq:charge_density_by_energy_density}), we have 
\begin{equation}\label{eq:charge_by_energy}
2|\bangle{* F_{\nabla_i}, \nabla_i\Phi_i}| \leqslant  \ep_{i}^{-1}e_{\ep_i}(\nabla_i, \Phi_i).
\end{equation}
Thanks to this bound, up to taking a subsequence, we can assume in addition that
\[
\kappa_{\ep_i}^{\pm} : = 2\bangle{* F_{\nabla_i}, \nabla_i\Phi_i}^{\pm} \vol_g \rightharpoonup \kappa^{\pm},\quad \text{ in weak* sense},
\]
with the limiting measures satisfying $\kappa^{\pm} \leqslant  \mu$. Standard theory for non-negative measures (see~\eqref{eq: weak_limit_measure_balls}) then shows that whenever $E \subset M$ is a Borel set with $\mu(\partial E) = 0$, there holds
\begin{equation}\label{eq:kappa_setwise_limit}
\kappa(E) = \kappa^+(E) - \kappa^{-}(E) = \lim_{i \to \infty}\kappa_{\ep_i}^{+}(E) - \kappa_{\ep_i}^{-}(E) = \lim_{i \to \infty}\kappa_{\ep_i}(E).
\end{equation}
Below, we assume that $S \neq\emptyset$ and fix a particular $z_0 \in S$. With $d_0$ defined as in the start of \S\ref{subsec:rescaling}, we again identify $B_{2d_0}(z_0)$ with $B_{2d_0}(0) \subset \RR^3$ via the exponential map, and write $\exp_{z_0}^*g$ simply as $g$. 
Note also the following scaling property pertaining to $\kappa_{\ep_i}$: letting 
\[
(\nabla', \Phi') = s_{y, r}^*(\nabla, \Phi),\ \ g' = r^{-2}s_{y, r}^* g,
\]
we have
\begin{equation}\label{eq:kappa_scaling}
\begin{split}
\langle F_{\nabla'} \wedge \nabla'\Phi'\rangle =\ &  \bangle{\ast_{g'} F_{\nabla'}, \nabla' \Phi'}_{g'} \vol_{g'} = s_{y, r}^*\big(\bangle{\ast_{g} F_{\nabla}, \nabla \Phi}_{g} \vol_{g}\big) = s_{y, r}^*\big( \langle F_{\nabla }\wedge \nabla\Phi\rangle\big).
\end{split}
\end{equation}
With the above preliminaries in mind, we consider
\[
(\widetilde{\nabla}_i, \widetilde{\Phi}_i) = (s_{y_i, \ep_i})^*(\nabla_i, \Phi_i),\ \ \widetilde{g}_i = \ep_i^{-2}(s_{y_i, \ep_i})^*g.
\]
Then, as in the proof of Proposition~\ref{prop:multiplicity_lower_bound}, passing to a further subsequence and changing gauge if necessary, we may assume that $(\widetilde{\nabla}_i, \widetilde{\Phi}_i)$ converges smoothly on compact subsets of $\RR^3$ to a critical point $(\nabla,\Phi)$ of $\cY_1^{g_{\RR^3}}$ which satisfies
\begin{equation}\label{eq:1st_bubble_action_bound}
\theta_{\mathrm{gap}}\cdot\min\{\lambda,1\} \leqslant  \cY_1^{g_{\RR^3}}(\nabla, \Phi; \RR^3) \leqslant  \Theta(z_0).
\end{equation}
We refer to the solution $(\nabla,\Phi)$ as the \emph{top bubble} and define the \emph{energy difference} to be 
\[
\tau(z_0) = \Theta(z_0) - \cY_1^{g_{\RR^3}}(\nabla, \Phi; \RR^3).
\]
Likewise, in view of Proposition~\ref{prop:assign_charge} we introduce what one might call the \emph{charge difference}:
\[
\tau_{\text{charge}}(z_0) = \Xi(z_0) - \cK(\nabla, \Phi),
\]
where $\cK(\nabla,\Phi)$ is ($8\pi$ times) the magnetic charge of the solution $(\nabla,\Phi)$ and is given by the well-known formula (see \eqref{eq: magnetic_charge}):
\[
\cK(\nabla,\Phi) = \lim_{R \to \infty} 2\int_{B_R} \langle F_{\nabla} \wedge \nabla\Phi\rangle.
\]
Notice that 
\begin{equation}\label{eq:tau_definite_drop}
0 \leqslant  \tau(z_0) \leqslant  \Theta(z_0) - \theta_{\mathrm{gap}}\cdot\min\{\lambda,1\}.
\end{equation}
To further analyze what $\tau(z_0)$ is composed of, we would like to realize it as the limit of the energy of $(\nabla_i, \Phi_i)$ on suitably chosen annuli. To that end, we make the following definition.
\begin{defi}\label{defi:transition_region}
A sequence of annuli $(B_{\delta_j}(y_j)\setminus B_{r_j}(y_j))$ in $\RR^3$ is said to determine a \emph{transition region} if the following hold.
\vskip 1mm
\begin{enumerate}
\item[(t1)] $\lim_{j \to \infty}\frac{1}{\delta_j}= \lim_{j \to \infty}\frac{\delta_j}{r_j} = \lim_{j \to \infty}\frac{r_j}{\ep_j} = \infty$.
\vskip 1mm
\item[(t2)] As $j \to \infty$, we have that 
\[
\mu_{\ep_j}(B_{\delta_j}(y_j) \setminus B_{r_{j}}(y_j)) \to \tau(z_0), \ \ \ \kappa_{\ep_j}(B_{\delta_j}(y_j) \setminus B_{r_{j}}(y_j)) \to \tau_{\text{charge}}(z_0).
\]
\vskip 1mm
\item[(t3)] For all $K > 1$, we have 
\[
\lim_{j \to \infty}\mu_{\ep_j}(B_{\delta_j}(y_j) \setminus B_{\frac{\delta_j}{K}}(y_j)) = \lim_{j \to \infty}\mu_{\ep_j}(B_{Kr_j}(y_j) \setminus B_{r_j}(y_j)) = 0.
\]
\end{enumerate}
\end{defi}

\begin{lemm}\label{lemm:neck_analysis_transition_region}
Passing to a subsequence of $(\nabla_i, \Phi_i)$ if necessary, there exist radii $(\delta_j), (r_j)$ such that the sequence of annuli $(B_{\delta_j}(y_j) \setminus B_{r_j}(y_j))$ determines a transition region.
\end{lemm}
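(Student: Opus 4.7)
The plan is to construct the radii by combining two one-scale extraction arguments via a diagonal procedure. On the inner side we exploit the smooth bubble convergence to capture the full action of $(\nabla,\Phi)$ by $\mu_{\ep_j}$ on balls $B_{R\ep_j}(y_j)$ with $R\to\infty$; on the outer side we exploit the weak$^*$ convergence $\mu_{\ep_j}\rightharpoonup \mu$ near $z_0$ to capture $\Theta(z_0)$ by $\mu_{\ep_j}$ on balls $B_{\delta}(y_j)$ with $\delta\to 0$. The defining property $\tau(z_0) = \Theta(z_0) - \cY_1^{g_{\RR^3}}(\nabla,\Phi;\RR^3)$ will then force the desired mass to be concentrated in the annular region between these scales.

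More precisely, the first step is to observe that, for any fixed $R>0$, the scaling identity in Lemma~\ref{lemm:scaling_g_to_delta_g} together with the local smooth convergences $\widetilde{g}_j\to g_{\RR^3}$ and $(\widetilde{\nabla}_j,\widetilde{\Phi}_j)\to (\nabla,\Phi)$ (extracted in \S\ref{subsec:rescaling}) yield
\[
\lim_{j\to\infty}\mu_{\ep_j}(B_{R\ep_j}(y_j)) = \cY_1^{g_{\RR^3}}(\nabla,\Phi;B_R(0)),\qquad \lim_{j\to\infty}\kappa_{\ep_j}(B_{R\ep_j}(y_j)) = \cK(\nabla,\Phi;B_R(0)),
\]
and that as $R\to\infty$ the right-hand sides converge respectively to $\cY_1^{g_{\RR^3}}(\nabla,\Phi;\RR^3)$ and $\cK(\nabla,\Phi)$. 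Second, since the set of radii $c>0$ with $\mu(\partial B_c(0))>0$ is at most countable, we pick a decreasing sequence $\delta_N\to 0$ with $\mu(\partial B_{c\delta_N}(0))=0$ for every positive rational $c$ and every $N$. The weak$^*$ convergence in the chart $B_{2d_0}(0)$, combined with $y_j\to 0$ and a standard sandwich between balls centered at $0$, gives
\[
\lim_{j\to\infty}\mu_{\ep_j}(B_{c\delta_N}(y_j)) = \mu(B_{c\delta_N}(0)) = \int_{B_{c\delta_N}(0)}|h|^2\vol_g + \Theta(z_0),
\]
which tends to $\Theta(z_0)$ as $N\to\infty$ for every fixed $c>0$. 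By \eqref{eq:kappa_setwise_limit} (which requires only $\mu(\partial E)=0$ since $|\kappa|\leqslant 2\mu$), the analogous statement holds for $\kappa_{\ep_j}$, with limit $\Xi(z_0)$ on any sufficiently small ball, because $z_0$ is isolated in $S$ by the choice of $d_0$.

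Third, we diagonalize. Choose $R_N\uparrow\infty$ so that $\cY_1^{g_{\RR^3}}(\nabla,\Phi;B_{R_N}(0))$ and $\cK(\nabla,\Phi;B_{R_N}(0))$ are within $1/N$ of their limits, and shrink the $\delta_N$ further if necessary so that $\mu(B_{\delta_N}(0))\leqslant \Theta(z_0)+1/N$. We then select $j_N\uparrow\infty$ large enough that for $j\geqslant j_N$ and every integer $K\in\{1,\dots,N\}$,
\begin{align*}
\big|\mu_{\ep_j}(B_{KR_N\ep_j}(y_j))-\cY_1^{g_{\RR^3}}(\nabla,\Phi;B_{KR_N}(0))\big| &< 1/N,\\
\big|\mu_{\ep_j}(B_{\delta_N/K}(y_j))-\mu(B_{\delta_N/K}(0))\big| &< 1/N,\\
R_N\ep_j &< \delta_N/N,
\end{align*}
together with the two analogous bounds for $\kappa_{\ep_j}$. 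Passing to the subsequence $(\nabla_{j_N},\Phi_{j_N})_{N\in\NN}$ and defining $r_N:=R_N\ep_{j_N}$ and $\delta_N$ as above, conditions (t1) and (t2) follow immediately by subtraction: $\mu_{\ep_{j_N}}(B_{\delta_N}(y_{j_N}))\to\Theta(z_0)$ while $\mu_{\ep_{j_N}}(B_{r_N}(y_{j_N}))\to\cY_1^{g_{\RR^3}}(\nabla,\Phi;\RR^3)$, and similarly for the charge. For (t3), given any fixed $K>1$, once $N\geqslant K$ both $\mu_{\ep_{j_N}}(B_{\delta_N/K}(y_{j_N}))$ and $\mu_{\ep_{j_N}}(B_{KR_N\ep_{j_N}}(y_{j_N}))$ have the same respective limits $\Theta(z_0)$ and $\cY_1^{g_{\RR^3}}(\nabla,\Phi;\RR^3)$ as the unshrunken/unexpanded versions, so the mass on the thin shell tends to zero.

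The main obstacle is condition (t3), since a casual choice of $\delta_j$ and $r_j$ could leave a residue of mass clinging to either boundary of the annulus. This is precisely what the two-parameter diagonal (over both the index $N$ of the scale and the shrinking factor $K\leqslant N$) is designed to overcome: at the cost of extracting one further subsequence, we gain uniform control at a countable dense family of intermediate scales, which suffices because the test in (t3) only involves one fixed $K$ at a time.
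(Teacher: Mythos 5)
Your proposal is correct and follows essentially the same strategy as the paper's proof: a two-sided diagonal extraction, using weak$^*$ convergence of $\mu_{\ep_j}$ and $\kappa_{\ep_j}$ (on balls whose boundaries carry no $\mu$-mass) to capture $\Theta(z_0)$ and $\Xi(z_0)$ at the outer scale, and the smooth convergence of the rescaled bubbles together with the scaling identities to capture $\cY_1(\nabla,\Phi;\RR^3)$ and $\cK(\nabla,\Phi)$ at the inner scale, with (t3) secured by arranging vanishing mass on shells of bounded ratio near both ends. The only cosmetic difference is that the paper enforces (t3) via single annuli of growing ratio ($B_{\delta_j}\setminus B_{\delta_j/j^2}$ and $B_{j^2 r_j}\setminus B_{r_j}$) rather than your countable family of fixed ratios $K\leqslant N$; both yield the same conclusion by monotonicity in the radius.
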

\begin{proof}
Since $\ep_i \to 0$ and $y_i \to 0$, and since we have the following convergence of measures:
\[
\kappa_{\ep_i}|_{B_{2d_0}(0)} \to  \Xi(z_0)\delta_0,\ \ \ \mu_{\ep_i}|_{B_{2d_0}(0)} \to |h|^2 \vol_g|_{B_{2d_0}(0)} + \Theta(z_0)\delta_0,
\]
we can find a subsequence $(i_k)$ of $(i)$ such that the following hold
\begin{subequations}
\begin{align}
|y_{i_k}| + \ep_{i_k} \leqslant \ & k^{-6}, \label{eq:annuli_will_degenerate}\\
\Big|\mu_{\ep_{i_k}}(B_{2k^{-1}}(0)) - (\Theta(z_0) + \int_{B_{2k^{-1}}(0)}|h|^2\vol_g)  \Big| \leqslant \ & k^{-1}, \label{eq:multiplicity_captured}\\
\Big|\mu_{\ep_{i_k}}(B_{2k^{-1}}(0)\setminus B_{2^{-1}k^{-3}}(0)) -  \int_{B_{2k^{-1}}(0)\setminus B_{2^{-1}k^{-3}}(0)}|h|^2\vol_g  \Big| \leqslant \ & k^{-1}, \label{eq:no_energy_at_outer_end}\\
\big|\kappa_{\ep_{i_k}}(B_{2k^{-1}}(0)) - \Xi(z_0)  \big| \leqslant \ & k^{-1}, \label{eq:charge_captured}
\end{align}
\end{subequations}
where to get~\eqref{eq:charge_captured} we used~\eqref{eq:kappa_setwise_limit}. 

To continue, we write $\ep_{i_k}, y_{i_k}, \cdots$ as $\ep_k, y_k, \cdots$. From~\eqref{eq:annuli_will_degenerate} we see that eventually 
\begin{equation}\label{eq:recenter_annuli}
B_{\frac{1}{2k^3}}(0) \subset B_{\frac{1}{k^3}}(y_{k}) \subset B_{\frac{1}{k}}(y_{k}) \subset B_{\frac{2}{k}}(0),
\end{equation}
and consequently we get from~\eqref{eq:multiplicity_captured},~\eqref{eq:no_energy_at_outer_end}, and the fact $|h|^2 \in L^1$, that 
\begin{equation}\label{eq:recentered_estimate}
\begin{split}
\mu_{\ep_{k}}(B_{k^{-1}}(y_k)) =\ & \Theta(z_0) + o(1), \\
\mu_{\ep_k}(B_{k^{-1}}(y_k) \setminus B_{k^{-3}}(y_k)) =\  & o(1).
\end{split}
\end{equation}
Further, from~\eqref{eq:charge_captured} and~\eqref{eq:charge_by_energy} we infer that 
\begin{equation}\label{eq:recentered_estimate_for_charge}
\begin{split}
|\kappa_{\ep_k}(B_{k^{-1}}(y_k)) - \Xi(z_0)| \leqslant \ & |\kappa_{\ep_k}(B_{2k^{-1}}(0)) - \Xi(z_0)| + |\kappa_{\ep_k}(B_{2k^{-1}}(0) \setminus B_{k^{-1}}(y_k))|\\
\to\ & 0, \text{ as }k \to \infty.
\end{split}
\end{equation}
This addresses the selection of the outer radii for the desired annuli. To determine the inner radii, we notice that since $(\widetilde{\nabla}_k, \widetilde{\Phi}_k)$ still converges smoothly to $(\nabla, \Phi)$ on compact subsets of $\RR^3$, we may choose a further subsequence $(k_j)$ such that 
\[
\big|\cY_{1}^{\widetilde{g}_{k_j}}(\widetilde{\nabla}_{k_j}, \widetilde{\Phi}_{k_j}; B_{j}(0)) - \cY_{1}^{g_{\RR^3}}(\nabla, \Phi; B_{j}(0)) \big|\leqslant \frac{1}{j},
\]
\[
\big|\cY_{1}^{\widetilde{g}_{k_j}}(\widetilde{\nabla}_{k_j}, \widetilde{\Phi}_{k_j}; B_{j^3}(0)\setminus B_{j}(0)) - \cY_{1}^{g_{\RR^3}}(\nabla, \Phi; B_{j^3}(0)\setminus B_{j}(0)) \big|\leqslant \frac{1}{j},
\]
\[
\Big| 2\int_{B_{j}(0)}\langle F_{\widetilde{\nabla}_{k_j}} \wedge \widetilde{\nabla}_{k_j}\widetilde{\Phi}_{k_j}\rangle - 2\int_{B_{j}(0)}\langle F_{\nabla}\wedge \nabla\Phi\rangle  \Big|\leqslant  \frac{1}{j}.
\]
Since, as $j \to \infty$,
\[
\cY_1^{g_{\RR^3}}(\nabla, \Phi; B_{j}(0)) \to \cY_1^{g_{\RR^3}}(\nabla, \Phi; \RR^3) ,\ \ \ 2\int_{B_{j}(0)}\langle F_{\nabla}\wedge \nabla\Phi\rangle  \to \cK(\nabla,\Phi),
\]
it follows from the above estimates that 
\begin{equation}\label{eq:bubble_captured}
\mu_{\ep_{k_j}}(B_{j \ep_{k_j}}(y_{k_j})) = \cY_1^{g_{\RR^3}}(\nabla, \Phi; \RR^3) + o(1),
\end{equation}
\begin{equation}\label{eq:no_energy_at_inner_end}
\mu_{\ep_{k_j}}(B_{j^3\ep_{k_j}}(y_{k_j}) \setminus B_{j\ep_{k_j}}(y_{k_j}))= o(1),
\end{equation}
and that, recalling also~\eqref{eq:kappa_scaling}, 
\begin{equation}\label{eq:bubble_charge_captured}
\kappa_{\ep_{k_j}}(B_{j\ep_{k_j}}(y_{k_j})) = \cK(\nabla,\Phi) + o(1).
\end{equation}
We now let
\[
\delta_j := \frac{1}{k_j},\ \ r_j := j\ep_{k_j}.
\]
Then it is obvious that
\[
\lim_{j \to \infty}\frac{1}{\delta_j} = \lim_{j \to \infty}\frac{r_j}{\ep_{k_j}} = \infty.
\]
On the other hand, by~\eqref{eq:annuli_will_degenerate} and the fact that $k_j \geqslant j$, we also have
\begin{equation}\label{eq:relation_radii}
\frac{\delta_j}{j^2} \geqslant \frac{1}{k_j^3},\quad \frac{\delta_j}{r_j} \geqslant j^4.
\end{equation}
The second estimate in~\eqref{eq:relation_radii} and the obvious fact mentioned above together verify (t1) from Definition~\ref{defi:transition_region}. Next, subtracting~\eqref{eq:bubble_captured} from the first estimate in~\eqref{eq:recentered_estimate} (with $k_j$ in place of $k$) gives
\[
\mu_{\ep_{k_j}}(B_{\delta_j}(y_{k_j}) \setminus B_{r_j}(y_{k_j})) = \tau(z_0) + o(1).
\]
Likewise, subtracting~\eqref{eq:bubble_charge_captured} from~\eqref{eq:recentered_estimate_for_charge} gives
\[
\kappa_{\ep_{k_j}}(B_{\delta_j}(y_{k_j}) \setminus B_{r_j}(y_{k_j})) = \tau_{\text{charge}}(z_0) + o(1).
\]
This establishes property (t2). Finally, from~\eqref{eq:recentered_estimate} and the first estimate in~\eqref{eq:relation_radii} we get
\[
\lim_{j \to \infty}\mu_{\ep_{k_j}}(B_{\delta_j}(y_{k_j}) \setminus B_{\frac{\delta_j}{j^2}}(y_{k_j})) = 0.
\]
This together with~\eqref{eq:no_energy_at_inner_end} yields (t3).
\end{proof}
Writing $(k_j)$ as $(j)$, our next task is to decompose the annuli $(B_{\delta_j}(y_j) \setminus B_{r_j}(y_j))$ into bubble regions and neck regions. It will be convenient at times to think of annuli as images of cylinders under the maps
\[
\begin{split}
\cE_{y}: \RR \times S^2 \to\ & \RR^3 \setminus \{0\}\\
(t, \xi) \mapsto\ & e^t \xi + y.
\end{split}
\]
Notice the following simple relation:
\begin{equation}\label{eq:dilation_to_translation}
(s_{y, r} \circ \cE_{z})([a, b] \times S^2) = \cE_{s_{y, r}(z)}([a + \log r, b + \log r] \times S^2).
\end{equation}
\begin{defi}\label{defi:bubble_region}
Let $(I_j)$ be a sequence of compact intervals such that $2I_j \subset [\log(r_j), \log(\delta_j)]$ for each $j$ (here $2I_j$ denotes the interval having the same midpoint as $I_j$ but twice the length) and that 
\[
\lim_{j \to \infty}\diam I_j = \infty.
\]
We say that the sequence of annuli $(\cE_{y_j}(I_j \times S^2))$ determines a \emph{bubble region} if there exists, for each $j$, a subinterval $I_j' \subset I_j$ having the same midpoint as $I_j$, such that the following hold:
\vskip 1mm
\begin{enumerate}
\item[(b1)] $\lim_{j \to \infty}\diam I_j' = \lim_{j \to \infty} \dist(I_j', \partial I_j) = \infty$.
\vskip 1mm
\item[(b2)] $\lim_{j \to \infty} \mu_{\ep_j}(\cE_{y_j}((2I_j \setminus I_j') \times S^2)) = 0$.
\vskip 1mm
\item[(b3)] Denote by $m_j$ the common midpoint of $I_j'$ and $I_j$ and set
\[
s_j = s_{y_j, e^{m_j}}: z \mapsto y_j + e^{m_j}z.
\]
Then, as $j \to \infty$, the rescaled measures $s_j^*\mu_{\ep_j}$ and $s_j^*\kappa_{\ep_j}$ converge on compact subsets of $\RR^3 \setminus \{0\}$ to limiting measures $\nu$ and $\gamma$, respectively, and moreover we have
\[
\lim_{j \to \infty}\mu_{\ep_j}(\cE_{y_j}(I_j' \times S^2)) = \nu(\RR^3 \setminus \{0\}),
\]
\[
\lim_{j \to \infty}\kappa_{\ep_j}(\cE_{y_j}(I_j' \times S^2)) = \gamma(\RR^3 \setminus \{0\}).
\]
\vskip 1mm
\item[(b4)] The limiting measures in (b3) have the following forms:
\[
\nu = \sum_{x \in S(\nu)} c(x) \delta_{x},\ \ \gamma = \sum_{x \in S(\nu)} k(x)\delta_{x}
\]
where $S(\nu)$ is a non-empty, finite set of points in $\RR^3 \setminus \{0\}$, and each $c(x)$ is at least $\theta_{\mathrm{gap}}\cdot\min\{\lambda,1\}$, while each $k(x)$ lies in $8\pi \ZZ$.
\end{enumerate}
\end{defi}
\begin{defi}\label{defi:neck_region}
Again let $(I_j = [a_j, b_j])$ be a sequence of compact intervals such that $I_j \subset [\log(r_j), \log(\delta_j)]$ and that
\[
\lim_{j \to \infty}\diam I_j = \infty.
\]
We say that $(\cE_{y_j}(I_j \times S^2))$ determines a \emph{neck region} if the following hold.
\vskip 1mm
\begin{enumerate}
\item[(n1)] For all $L > 0$, we have 
\[
\lim_{j \to \infty}\mu_{\ep_j}(\cE_{y_j}([a_j, a_j + L] \times S^2)) = \lim_{j \to \infty}\mu_{\ep_j}(\cE_{y_j}([b_j - L, b_j] \times S^2)) = 0.
\]
\vskip 1mm
\item[(n2)] We have
\[
\lim_{j \to \infty}\big(\sup_{a_j + \log 2 \leqslant  t \leqslant  b_j - \log 2}\mu_{\ep_j}(\cE_{y_j}([t - \log 2, t + \log 2] \times S^2)) \big)= 0.
\]
\end{enumerate}
One sees that property (n2) implies property (n1). We state (n1) separately only for the sake of convenience.
\end{defi}

The next lemma, consequence of a standard procedure that has appeared many times in the vast literature of bubbling analysis, asserts that either the annuli $(B_{\delta_j}(y_j) \setminus B_{r_j}(y_j))$ produced by Lemma~\ref{lemm:neck_analysis_transition_region} already determines a neck region, or that we can decompose it into bubble regions and neck regions. 
\begin{lemm}\label{lemm:bubble_neck_decomposition}
Up to taking a subsequence of $(j)$, either the sequence $(B_{\delta_j}(y_j) \setminus B_{r_j}(y_j))$ itself determines a neck region, or we can find $N \in \NN$ and, for each $\alpha \in \{1, \cdots, N\}$, a sequence of intervals $I_{\alpha, j} \subset [\log(r_j), \log(\delta_j)]$, such that
\vskip 1mm
\begin{enumerate}
\item[(i)] For all $\alpha \in \{1, \cdots, N\}$, 
\[
\lim_{j \to \infty}\diam I_{\alpha, j} = \lim_{j \to \infty}\dist(I_{\alpha, j}, \{\log r_j, \log \delta_j\}) = \infty.
\]
\vskip 1mm
\item[(ii)] If $\alpha \neq \beta$, then $I_{\alpha, j} \cap I_{\beta, j} = \emptyset$, and moreover 
\[
\lim_{j \to \infty}\dist(I_{\alpha, j}, I_{\beta, j}) = \infty.
\]
\vskip 1mm
\item[(iii)] $(I_{\alpha, j})_{j = 1}^{\infty}$ determines a bubble region, for each $\alpha \in \{1, \cdots, N\}$.
\vskip 1mm
\item[(iv)] Writing
\begin{equation}\label{eq:bubble_complement_neck}
[\log r_j, \log \delta_j] \setminus \big(\cup_{\alpha = 1}^N \Inte(I_{\alpha, j}) \big) = \cup_{\beta = 0}^{N} J_{\beta, j},
\end{equation}
then $(J_{\beta, j})_{j = 1}^{\infty}$ determines a neck region for each $\beta \in \{0, \cdots, N\}$.
\end{enumerate}
\end{lemm}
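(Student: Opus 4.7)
The plan is to establish the dichotomy via an iterated bubble-extraction procedure, with termination after finitely many rounds guaranteed by the gap result of Theorem~\ref{thm: gap_for_R3} (equivalently, the lower bound on multiplicities in Proposition~\ref{prop:multiplicity_lower_bound}).

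\emph{Detection of a first bubble.} If property (n2) of Definition~\ref{defi:neck_region} already holds on $I_j=[\log r_j,\log\delta_j]$, then $(B_{\delta_j}(y_j)\setminus B_{r_j}(y_j))$ itself is a neck region and we are done with $N=0$. Otherwise, after passing to a subsequence, there exist $c_0>0$ and $t_j\in[\log r_j+\log 2,\log\delta_j-\log 2]$ such that
\[
\mu_{\ep_j}\bigl(\cE_{y_j}([t_j-\log 2,\,t_j+\log 2]\times S^2)\bigr)\;\geqslant\; c_0.
\]
Property (t3) of the transition region then forces both $t_j-\log r_j\to\infty$ and $\log\delta_j-t_j\to\infty$, since otherwise the $\log 2$-shell around $t_j$ would be contained in an outer/inner collar carrying vanishing mass.

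\emph{Rescaling and structure of the limit.} Set $\lambda_j:=e^{t_j}$, $s_j:=s_{y_j,\lambda_j}$, $\widetilde{\ep}_j:=\ep_j/\lambda_j\to 0$, and $\widetilde{g}_j:=\lambda_j^{-2}s_j^{\ast}g$. By Lemma~\ref{lem: scaling}, $(s_j^{\ast}\nabla_j,s_j^{\ast}\Phi_j)$ are critical points of $\cY^{\widetilde g_j}_{\widetilde\ep_j}$ on open sets exhausting $\RR^3\setminus\{0\}$, with $\widetilde g_j\to g_{\RR^3}$ in $C^{\infty}_{\loc}$. Along a further subsequence, $s_j^{\ast}\mu_{\ep_j}$ and $s_j^{\ast}\kappa_{\ep_j}$ converge weakly$^{\ast}$ on compact subsets of $\RR^3\setminus\{0\}$ to Radon measures $\nu$ and $\gamma$ of total mass at most $\Lambda$. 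Repeating the analysis of Sections~\ref{subsec:blow-up}--\ref{subsec:assigning_charges} on the rescaled family---each compact subset of $\RR^3\setminus\{0\}$ satisfies~\eqref{eq:injectivity_radius_for_estimates}--\eqref{eq:curvature_bound_for_estimates} with uniform constants---we obtain
\[
\nu=|h|^2\cH^3+\sum_{x\in S(\nu)}c(x)\,\delta_x,\qquad \gamma=\sum_{x\in S(\nu)}k(x)\,\delta_x,
\]
with $h$ harmonic on $\RR^3\setminus\{0\}$, $c(x)\geqslant\eta_{\ast}$, $k(x)\in 8\pi\ZZ$, and $|k(x)|\leqslant c(x)$. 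Proposition~\ref{prop:multiplicity_lower_bound} applied to the rescaled family upgrades the lower bound to $c(x)\geqslant\theta_{\mathrm{gap}}\cdot\min\{\lambda,1\}$. Since $\int_{\RR^3\setminus\{0\}}|h|^2<\infty$, a removable-singularity argument modelled on Lemma~\ref{lem: h_on_M_supp_nu_in_S} extends $h$ to a harmonic $L^2$ $1$-form on all of $\RR^3$, which must vanish by Liouville. Hence $\nu$ is purely atomic, $S(\nu)$ is non-empty (by $c_0>0$) and finite (by the uniform atomic mass lower bound and $\nu(\RR^3\setminus\{0\})\leqslant\Lambda$).

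\emph{Bubble region, iteration, termination.} A standard diagonal extraction produces $L_j\to\infty$ with $4L_j\leqslant\min\{t_j-\log r_j,\log\delta_j-t_j\}$ such that, setting $I_{1,j}':=[t_j-L_j,t_j+L_j]$ and $I_{1,j}:=[t_j-2L_j,t_j+2L_j]$, one has
\[
\mu_{\ep_j}(\cE_{y_j}(I_{1,j}'\times S^2))\to\nu(\RR^3\setminus\{0\}),\quad \kappa_{\ep_j}(\cE_{y_j}(I_{1,j}'\times S^2))\to\gamma(\RR^3\setminus\{0\}),
\]
and $\mu_{\ep_j}(\cE_{y_j}((2I_{1,j}\setminus I_{1,j}')\times S^2))\to 0$; properties (b1)--(b4) for $(I_{1,j})$ then follow. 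The complement $[\log r_j,\log\delta_j]\setminus\Inte(I_{1,j})$ consists of at most two subintervals $J_{0,j},J_{1,j}$, each of which satisfies (n1) thanks to (t3) at the outer endpoint and (b2) at the inner endpoint. If both subintervals also satisfy (n2), we are done with $N=1$; otherwise, apply the entire procedure to the offending subinterval (after a further subsequential extraction), yielding a second bubble region $I_{2,j}$ disjoint from $I_{1,j}$ with $\dist(I_{1,j},I_{2,j})\to\infty$ by separation of scales. Each iteration captures at least $\theta_{\mathrm{gap}}\cdot\min\{\lambda,1\}$ of the total mass $\tau(z_0)\leqslant\Lambda$, so the process terminates after at most $N\leqslant\lfloor\Lambda/(\theta_{\mathrm{gap}}\cdot\min\{\lambda,1\})\rfloor$ rounds.

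\emph{Main obstacle.} The principal technical difficulty is verifying the vanishing of the harmonic part of $\nu$, which requires passing harmonicity across the origin in $\RR^3$ and invoking a Liouville-type argument in $L^2$. A secondary, bookkeeping challenge is the diagonal extraction of the scales $L_j$ and nested subsequences at each iteration, so that the resulting bubble intervals $I_{\alpha,j}$ and the neck intervals $J_{\beta,j}$ simultaneously fulfil (b1)--(b4) and (n1)--(n2) without destroying the already-established structure on previously extracted bubble regions.
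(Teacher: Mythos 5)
Your proposal is correct and follows essentially the same strategy as the paper: detect persistent concentration on a dyadic shell (otherwise the whole annulus is a neck), use (t3) to push the concentration scale away from both ends, rescale to obtain a limit measure whose harmonic part vanishes by removability across the finite singular set plus the absence of nontrivial $L^2$ harmonic $1$-forms on $\RR^3$, extract the bubble intervals by a diagonal choice of widths $L_j$, and terminate using the quantum $\theta_{\mathrm{gap}}\cdot\min\{\lambda,1\}$ from Proposition~\ref{prop:multiplicity_lower_bound}.

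The one organizational difference: you iterate greedily, re-running the extraction on each "offending" complementary subinterval, whereas the paper defines the set $\cD$ of integers $Q$ for which $Q$ mutually divergent bubble families exist, takes $N=\max\cD$ (finite by the energy quantum), and then proves (n2) for the complement by contradiction -- any surviving concentration interval $\widetilde I_l$ in the complement would yield an $(N+1)$-st bubble family. The two schemes are equivalent, but the paper's contradiction argument is precisely where the claim you dismiss as "separation of scales" gets proved: one must show that $\widetilde I_l$ is eventually disjoint from $\tfrac32 I_{\alpha,l}$ for every $\alpha$ (this uses (b2), since otherwise $\widetilde I_l\subset 2I_{\alpha,l}\setminus I'_{\alpha,l}$, which carries vanishing mass) and far from $\{\log r_l,\log\delta_l\}$ (this uses (t3)), so that the distance $d_l$ from the new center to all existing bubbles and endpoints diverges and the new rescaled family exhausts $\RR^3\setminus\{0\}$. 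In your iteration you would need the same verification to guarantee that properties (i) and (ii) hold for the enlarged collection and that the previously extracted bubble regions retain (b1)--(b4) along the nested subsequences; since all those properties are limit statements, they survive passage to subsequences, so your scheme closes up once that disjointness step is made explicit. No gap, just a point where the paper's write-up carries the load your sketch defers.
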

\begin{proof}
Throughout the proof, by property (i), property (ii), and so forth, we mean the properties listed in the conclusion of the lemma. To start, we notice that if 
\begin{equation}\label{eq:entire_annuli_no_concentration}
\lim_{j \to \infty}\big(\sup_{r \in [2r_j, 2^{-1}\delta_j]} \mu_{\ep_j}(B_{2r}(y_j)\setminus B_{2^{-1}r}(y_j))\big) = 0,
\end{equation}
then $(B_{\delta_j}(y_j) \setminus B_{r_j}(y_j))$ determines a neck region and we are done, so below we suppose~\eqref{eq:entire_annuli_no_concentration} does not hold. Then there exists some $c > 0$ so that, up to taking a subsequence, we can find, for each $j$, some $r_{1, j} \in [2r_j, 2^{-1}\delta_j]$ satisfying
\begin{equation}\label{eq:concentration_somewhere}
\mu_{\ep_j}(B_{2r_{1, j}}(y_j) \setminus B_{2^{-1}r_{1, j}}(y_j) \geqslant c.
\end{equation}
Then by property (t3) of transition regions (Definition~\ref{defi:transition_region}), we must have
\begin{equation}\label{eq:annuli_exhausts}
d_{1, j} := \min\{\log(\frac{\delta_j}{r_{1, j}}), \log(\frac{r_{1, j}}{r_j})\} \to \infty \text{ as } j \to \infty.
\end{equation}
Define $s_{1, j}$ to be the map $z \mapsto y_j + r_{1, j}z$ and let
\[
\ep_{1, j} = \frac{\ep_j}{r_{1, j}},\ \ (\nabla_{1, j}, \Phi_{1, j}) = s_{1, j}^*(\nabla_j, \Phi_j),\ \ g_{1, j} = r_{1, j}^{-2} s_{1, j}^* g.
\]
Then 
\[
\lim_{j \to \infty}\ep_{1, j} = \lim_{j \to \infty}r_{1, j} = 0,\ \ \ g_{1, j} \to g_{\RR^3} \text{ in }C^{\infty}_{\loc}(\RR^3 \setminus \{0\}).
\]
Moreover, $(\nabla_{1, j}, \Phi_{1, j})$ is a critical point of $\cY_{\ep_{1, j}}^{g_{1, j}}$ whose domain contains
\[
\cE_{0}([-d_{1, j}, d_{1, j}] \times S^2) 
= s_{1, j}^{-1}\big(B_{e^{d_{1, j}}r_{1, j}}(y_j) \setminus B_{e^{-d_{1, j}}r_{1, j}}(y_j) \big),
\]
which exhausts $\RR^3 \setminus \{0\}$ as $j \to \infty$. By Lemma~\ref{lem: scaling} and respectively~\eqref{eq:kappa_scaling}, we have
\begin{subequations}
\begin{align}
\ep_{1, j}^{-1}\cY_{\ep_{1, j}}^{g_{1, j}}(\nabla_{1, j}, \Phi_{1, j}; \ \cdot\ ) =\ & s_{1, j}^*\mu_{\ep_j}, \label{eq:rescale_measure_relation}\\
 \langle F_{\nabla_{1, j}} \wedge \nabla_{1, j}\Phi_{1, j}\rangle =\ & s_{1,j}^*(\langle F_{\nabla_j}\wedge \nabla_j\Phi_j\rangle). \label{eq:rescale_charge_relation}
\end{align}
\end{subequations}
From~\eqref{eq:rescale_measure_relation}, we deduce the following uniform energy upper bound:
\begin{equation}\label{eq:bubble_sequence_upper_bound}
\begin{split}
\ep_{1, j}^{-1}\cY_{\ep_{1, j}}^{g_{1, j}}(\nabla_{1, j}, \Phi_{1, j}; B_{d_{1,j}} \setminus B_{d_{1, j}^{-1}} ) \leqslant  \ &\ep_{j}^{-1}\cY_{\ep_j}^{g}(\nabla_j, \Phi_j; B_{\delta_j}(y_j) \setminus B_{r_j}(y_j))\\
=\ & \tau(z_0) + o(1),
\end{split}
\end{equation}
Recalling that $\ep_{1, j} \to 0$, and that $g_{1, j} \to g_{\RR^3}$  in $C^{\infty}_{\loc}(\RR^3 \setminus \{0\})$, we see from this uniform energy upper bound and the analysis in \S\ref{subsec:blow-up} leading to~\eqref{eq: convergence_of_energy_measures} that there exists a finite set $S_{1} \subset \RR^3 \setminus \{0\}$ and a $1$-form $h$ on $\RR^3 \setminus (\{0\} \cup S_1)$ such that 
\begin{equation}\label{eq:bubble_region_harmonic}
dh = 0,\ \ d^*h = 0 \text{ on }\RR^3 \setminus (\{0\} \cup S_1),\ \ \int_{\RR^3}|h|^2 \vol_{g_{\RR^3}} < \infty,
\end{equation}
and that, on compact subsets of $\RR^3 \setminus \{0\}$, up to taking a subsequence,
\[
s_{1, j}^*\mu_{\ep_j} \rightharpoonup |h|^2 \cH^3 + \sum_{x \in S_{1}}c_{x}\delta_x =: \mu_1,
\]
where, by the rescaling argument in \S\ref{subsec:rescaling} and particularly Proposition~\ref{prop:multiplicity_lower_bound}, each $c_x$ is at least $\theta_{\mathrm{gap}}\cdot\min\{\lambda,1\}$. Further, by Proposition~\ref{prop:assign_charge}, we can also arrange that 
\[
s_{1, j}^*\kappa_{\ep_j} \to \sum_{x \in S_1}k_x \delta_x =: \gamma_1,
\]
where $k_x \in 8\pi \ZZ$ for all $x \in S_1$. To continue, we infer from~\eqref{eq:bubble_sequence_upper_bound} that
\begin{equation}\label{eq:bubble_region_1_upper}
\mu_1(\RR^3\setminus \{0\}) \leqslant  \tau(z_0).
\end{equation}
On the other hand, thanks to~\eqref{eq:concentration_somewhere}, we also have 
\begin{equation}\label{eq:concentration_somewhere_scaled}
\mu_1(\RR^3 \setminus \{0\}) \geqslant \limsup_{j \to \infty}\ep_{1, j}^{-1}\cY_{\ep_{1, j}}^{g_{1, j}}(\nabla_{1, j}, \Phi_{1, j}; B_{2}(0) \setminus B_{2^{-1}}(0) ) \geqslant c > 0.
\end{equation}
Noting from~\eqref{eq:bubble_region_harmonic} that $h$ must vanish identically on $\RR^3$, and that consequently $S_{1}$ has to be non-empty by~\eqref{eq:concentration_somewhere_scaled}, we conclude that the limiting measures $\mu_1$ and $\gamma_1$ have the form required by (b4) of Definition~\ref{defi:bubble_region}. In particular, since $c_x \geqslant \theta_{\mathrm{gap}}\cdot\min\{\lambda,1\}$ for all $x \in S_1$, we obtain
\begin{equation}\label{eq:bubble_region_1_lower}
\mu_1(\RR^3 \setminus \{0\}) \geqslant \theta_{\mathrm{gap}}\cdot\min\{\lambda,1\}.
\end{equation}

Next we come to the selection of the intervals. In view of~\eqref{eq:annuli_exhausts}, the convergence of $s_{1, j}^*\mu_{\ep_j}$ and $s_{1, j}^*\kappa_{\ep_j}$ locally on $\RR^3\setminus \{0\}$, and the fact~\eqref{eq:kappa_setwise_limit} noted above, we can extract a subsequence $(j_l)$ of $(j)$ such that the following hold:
\begin{subequations}
\begin{align}
d_{1, j_l} \geqslant\ & l^3, \label{eq:subseq_far_from_boundary}\\
\Big|s_{1,j_l}^*\mu_{\ep_{j_l}}(\cE_0([-l, l] \times S^2)) -\mu_1(\cE_0([-l, l] \times S^2))\Big| \leqslant  \ &\frac{1}{l}, \label{eq:subseq_bubble_region}\\
\Big|s_{1,j_l}^*\mu_{\ep_{j_l}}(\cE_0(([-l^3, l^3] \setminus [-l, l]) \times S^2)) -\mu_1(\cE_0(([-l^3, l^3] \setminus [-l, l]) \times S^2))\Big| \leqslant \ & \frac{1}{l},\label{eq:subseq_negligible_region}\\
\Big|s_{1,j_l}^*\kappa_{\ep_{j_l}}(\cE_0([-l, l] \times S^2)) -\gamma_1(\cE_0([-l, l] \times S^2))\Big| \leqslant  \ &\frac{1}{l}.\label{eq:subseq_charge_bubble_region}
\end{align}
\end{subequations}
Writing $(j_l)$ simply as $(l)$ and recalling~\eqref{eq:dilation_to_translation}, we see that upon defining
\[
(I_1)_{l} = [-l^2 + \log r_{1, l}, l^2 + \log r_{1, l}],
\]
\[
(K_{1})_{l} = [-l^3 + \log r_{1, l}, l^3 + \log r_{1, l}],\ \  (I_1)_{l}' = [-l + \log r_{1, l}, l + \log r_{1, l}],
\]
there holds
\begin{subequations}
\begin{align}
\lim_{l \to \infty} \mu_{\ep_l}(\cE_{y_l}((I_1)_{l}'  \times S^2)) =\ & \mu_1(\RR^3 \setminus \{0\}), \label{eq:bubble_region_1}\\
\lim_{l \to \infty} \kappa_{\ep_l}(\cE_{y_l}((I_1)_{l}'  \times S^2)) =\ & \gamma_1(\RR^3 \setminus \{0\}), \label{eq:charge_bubble_region_1}\\
\lim_{l \to \infty}\mu_{\ep_{l}}(\cE_{y_l}(((K_1)_{l} \setminus (I_1)_{l}' ) \times S^2)) =\ & 0,\label{eq:negligible_region_1}
\end{align}
\end{subequations}
so that (b2) and (b3) in Definition~\ref{defi:bubble_region} are fulfilled. Since obviously 
\[
\lim_{l \to \infty} \diam (I_1)_l' = \lim_{l \to \infty} \dist((I_1)_l', \partial ((I_1)_l)) = \infty,
\]
we have verified property (iii), namely that $((I_1)_{l})$ determines a bubble region. From the definition of $d_{1, l}$ in~\eqref{eq:annuli_exhausts}, and the lower bound~\eqref{eq:subseq_far_from_boundary}, we get
\[
\dist((I_1)_{l}, \{\log r_l, \log \delta_l\}) \geqslant l^3 - l^2 \to \infty \text{ as }l \to \infty,
\]
so that property (i) holds as well. (Property (ii) holds vacuously at this point.)

With the above basic construction at hand, we define $\cD$ to be the set consisting of all $Q \in \NN$ for which there exists a subsequence of $(l)$ (not relabeled), along with sequences of intervals $(I_{1, l}), \cdots, (I_{Q, l})$ in $[\log r_l, \log \delta_l]$, such that properties (i), (ii) and (iii) are fulfilled. By the previous paragraph we see that 
\[
1 \in \cD,
\]
so that $\cD$ is non-empty. Moreover, $\cD$ is bounded from above. Indeed, for each $Q \in \cD$ and for each $l$, since $I'_{1, l}, \cdots, I'_{Q,l}$ are disjoint subsets of $[\log r_l, \log \delta_l]$, we have from properties (b3), (b4), and (t2) (see respectively Definitions~\ref{defi:bubble_region} and~\ref{defi:transition_region}) that 
\[
\begin{split}
Q \cdot \theta_{\mathrm{gap}}\cdot\min\{\lambda,1\} \leqslant \ & \sum_{\alpha = 1}^{Q} \lim_{l \to \infty}\mu_{\ep_l}(\cE_{y_l}(I'_{\alpha, l} \times S^2))\\
\leqslant \ & \limsup_{l \to \infty}\mu_{\ep_l}(B_{\delta_l}(y_l)\setminus B_{r_l}(y_l)) = \tau(z_0).
\end{split}
\]
In particular
\[
Q \leqslant  \frac{\tau(z_0)}{\theta_{\mathrm{gap}}\cdot\min\{\lambda,1\}}, \text{ for all }Q \in \cD.
\]
Since $\cD$ is a non-empty subset of $\NN$, this upper bound allows us to define $N:= \max \cD$. The fact that $N \in \cD$ yields a subsequence of $(l)$, which again we do not relabel, along with sequences of intervals $(I_{1, l}), \cdots, (I_{N, l})$, such that (i), (ii) and (iii) in the conclusion of Lemma~\ref{lemm:bubble_neck_decomposition} hold. We claim that the following sequence converges to $0$ as $l \to \infty$: 
\begin{equation}\label{eq:only_necks_leftover}
\sup\{\mu_{\ep_{l}}(\cE_{y_{l}}(I \times S^2))\ |\ I \subset [\log r_l, \log\delta_l] \setminus (\cup_{\alpha = 1}^N I_{\alpha, l}),\ \ \diam I = \log 4\}.
\end{equation}
Suppose not, then there exists some $b > 0$ so that, taking a further subsequence of $(l)$ if necessary, we get intervals 
\[
\widetilde{I}_{l} = [m_{l} - \log 2, m_{l} + \log 2] \subset [\log r_l, \log\delta_l] \setminus (\cup_{\alpha = 1}^N I_{\alpha, l}),
\]
such that 
\begin{equation}\label{eq:only_necks_leftover_contradiction}
\mu_{\ep_{l}}(\cE_{y_l}(\widetilde{I}_{l} \times S^2)) \geqslant b, \text{ for all }l.
\end{equation}
Then property (t3) (Definition~\ref{defi:transition_region}) forces
\begin{equation}\label{eq:concentration_poertion_far_from_ends}
\lim_{l \to \infty}\dist(m_{l}, \{\log r_l, \log\delta_l\}) = \infty.
\end{equation}
In particular
\[
\frac{e^{m_l}}{\ep_{l}} = \frac{e^{m_l}}{r_l} \cdot \frac{r_l}{\ep_l} \to \infty.
\]
On the other hand, we must also have for large enough $l$ that
\begin{equation}\label{eq:concentration_portion_far_from_bubble}
\widetilde{I}_{l} \cap \frac{3}{2}I_{\alpha, l} = \emptyset \text{ for all }\alpha \in \{1, \cdots, N\}.
\end{equation}
Indeed, if this were not the case, then since $\diam \widetilde{I}_l$ remains constant while
\[
\min_{\alpha \in \{1, \cdots, N\}}\diam I_{\alpha, l} \to \infty \text{ as }l \to \infty,
\]
and since $\widetilde{I}_l$ is disjoint from $\cup_{\alpha = 1}^N I_{\alpha, l}$ to begin with, along a subsequence we would have for each $l$ some $\alpha_l \in \{1, \cdots, N\}$ such that
\[
\widetilde{I}_{l} \subset 2I_{\alpha_l, l} \setminus I_{\alpha_l, l}.
\]
Combining this with~\eqref{eq:only_necks_leftover_contradiction} and (b2) gives
\[
\begin{split}
b \leqslant \ & \mu_{\ep_l}(\cE_{y_l}(\widetilde{I}_l \times S^2))\leqslant  \sum_{\alpha = 1}^{N} \mu_{\ep_l}(\cE_{y_l}((2I_{\alpha, l}\setminus I_{\alpha, l}) \times S^2)) \to 0 \text{ as }l \to \infty,
\end{split}
\]
a contradiction. Thus~\eqref{eq:concentration_portion_far_from_bubble} must hold, which implies that eventually
\[
\dist(\widetilde{I}_l, I_{\alpha, l}) \geqslant \frac{1}{8}\diam I_{\alpha, l}, \text{ for all }\alpha = 1, \cdots, N.
\]
Recalling~\eqref{eq:concentration_poertion_far_from_ends}, we get
\begin{equation}\label{eq:neck_contradiction_exhaustion}
d_{l} : = \dist(m_l, (\cup_{\alpha = 1}^N I_{\alpha, l}) \cup \{\log r_l, \log \delta_l\}) \to \infty \text{ as }l \to \infty.
\end{equation}
Next, in analogy with the steps we took after~\eqref{eq:annuli_exhausts}, we define 
\[
\widehat{\ep_l} = e^{-m_l}\ep_l,\ \ s_{l} = s_{y_l, e^{m_l}},
\]
and also let
\[
(\widehat{\nabla}_l, \widehat{\Phi}_l) = s_l^*(\nabla_{l}, \Phi_{l}),\ \ \widehat{g}_l = e^{-2m_l}s_{l}^*g,
\]
to obtain a sequence of critical points of $\cY_{\widehat{\ep_l}}^{\widehat{g}_l}$ defined at least on $\cE_0([-d_l, d_l] \times S^2)$. To obtain energy upper and lower bounds, notice that given $T > 0$, by~\eqref{eq:neck_contradiction_exhaustion}, we have eventually that
\[
[m_l - T, m_l + T] \subset [\log r_l, \log \delta_l] \setminus (\cup_{\alpha = 1}^N I_{\alpha, l}).
\]
Hence by property (b3) of bubble regions,
\[
\begin{split}
(\widehat{\ep}_l)^{-1}\cY_{\widehat{\ep}_l}^{\widehat{g}_l}(\widehat{\nabla}_l, \widehat{\Phi}_l; B_{e^T}(0) \setminus B_{e^{-T}}(0)) 
=\ & \mu_{\ep_l}(\cE_{y_l}([m_l - T, m_l + T] \times S^2)) \\
\leqslant \ & \mu_{\ep_l}(B_{\delta_l}(y_l) \setminus B_{r_l}(y_l)) - \sum_{\alpha = 1}^{N} \mu_{\ep_l}(\cE_{y_l}(I_{\alpha, l}' \times S^2))\\
\leqslant \ & \tau(z_0) - N\cdot \theta_{\mathrm{gap}}\cdot\min\{\lambda,1\} + o(1).
\end{split}
\]
On the other hand,~\eqref{eq:only_necks_leftover_contradiction} implies
\[
(\widehat{\ep}_l)^{-1}\cY_{\widehat{\ep}_l}^{\widehat{g}_l}(\widehat{\nabla}_l, \widehat{\Phi}_l; B_{2}(0) \setminus B_{2^{-1}}(0)) \geqslant b > 0.
\]
Using these bounds, and recalling that $\widehat{\ep}_l \to 0$ whereas $\widehat{g}_l \to g_{\RR^3}$ smoothly locally on $\RR^3 \setminus \{0\}$, we may apply exactly the same argument by which we treated the sequence $(\nabla_{1, j}, \Phi_{1, j})$ earlier in this proof, and get a subsequence of $(l)$, a non-empty finite subset $\widehat{S} \subset \RR^3\setminus \{0\}$, and for each $x \in \widehat{S}$ some $\widehat{c}_x \geqslant \theta_{\mathrm{gap}}\cdot\min\{\lambda,1\}$ and $\widehat{k}_x \in 8\pi \ZZ$, such that, locally on $\RR^3 \setminus \{0\}$,
\[
s_{l}^*\mu_{\ep_l} \rightharpoonup \sum_{x \in \widehat{S}} \widehat{c}_x \delta_x : = \widehat{\mu},\ \ \ s_{l}^*\kappa_{\ep_l} \rightharpoonup \sum_{x \in \widehat{S}} \widehat{k}_x \delta_x : = \widehat{\gamma}.
\]
In particular $\widehat{\mu}(\RR^3 \setminus \{0\}) \in [\theta_{\mathrm{gap}}\cdot\min\{\lambda,1\}, \infty)$. The reasoning leading to the definition of $(K_1)_{l}, (I_1)_{l}$ and $(I_1)_{l}'$ can then be repeated to yield a further subsequence of $(l)$ and intervals $I_{l}' \subset I_{l} \subset 2I_{l} \subset K_l$, all centered at $m_l$, such that the distance defined in~\eqref{eq:neck_contradiction_exhaustion} satisfies $d_l \geqslant l^3$, that 
\[
\diam I_l' = 2l, \ \ \dist(I_l', \partial I_l) = l^2 - l,\ \ \dist(I_l, (\cup_{\alpha = 1}^N I_{\alpha, l}) \cup \{\log r_l, \log \delta_l\}) \geqslant l^3 - l^2,
\]
and that 
\[
\begin{split}
\lim_{l \to \infty}\mu_{\ep_l}(\cE_{y_l}(I_l' \times S^2))  =\ & \widehat{\mu}(\RR^3\setminus \{0\}), \ \ \ \lim_{l \to \infty}\mu_{\ep_l}(\cE_{y_l}((K_l \setminus I_l') \times S^2))  = 0\\
\lim_{l \to \infty}\kappa_{\ep_l}(\cE_{y_l}(I_l' \times S^2))  =\ & \widehat{\gamma}(\RR^3\setminus \{0\}).
\end{split}
\]
Consequently $(I_{l})$ determines a bubble region, and moreover (i), (ii) are still satisfied after enlarging the collection $(I_{1, l}), \cdots, (I_{N, l})$ to include $(I_l)$. However this implies $N + 1 \in \cD$, which contradicts the maximality of $N$. We conclude that~\eqref{eq:only_necks_leftover} must hold.

Define $J_{0, l}, \cdots, J_{N, l}$ to be the components of $[\log r_l, \log\delta_l] \setminus \big(\cup_{\alpha = 1}^N \Inte(I_{\alpha, l})\big)$. By properties (i) and (ii) we have 
\[
\lim_{l \to \infty}J_{\beta, l} = \infty, \text{ for all }\beta.
\]
Moreover,~\eqref{eq:only_necks_leftover} shows that property (n2) from Definition~\ref{defi:neck_region} holds for $(J_{\beta, l})_{l = 1}^{\infty}$ for each $\beta \in \{0, \cdots, L\}$. Since (n2) implies (n1) as noted in that definition, we have shown that each sequence $(J_{\beta, l})_{l = 1}^{\infty}$ determines a neck region. The proof is complete.
\end{proof}
\subsection{Energy identity}\label{subsec:energy_identity}
In this section we finish the proof of Theorem~\ref{thm: bubbling} by showing that $\Theta(z_0)$ and $\Xi(z_0)$ are respectively equal to the sum of the energy and charge of a finite collection of non-trivial, finite-action critical points of $\cY_1$ on $\RR^3$. 

Returning to the situation right after we used Lemma~\ref{lemm:neck_analysis_transition_region} to produce a sequence $(B_{\delta_j}(y_j) \setminus B_{r_j}(y_j))$ of annuli that determines a transition region, if the first alternative in Lemma~\ref{lemm:bubble_neck_decomposition} prevails, that is, if $(B_{\delta_j}(y_j) \setminus B_{r_j}(y_j))$ already determines a neck region, then we simply use (t2) from Definition~\ref{defi:transition_region} to write 
\begin{equation}\label{eq:multiplicity_no_more_bubbles}
\big|\Theta(z_0) -  \cY_1(\nabla, \Phi; \RR^3) \big| \leqslant  \limsup_{j \to \infty}\mu_{\ep_j}(B_{\delta_j}(y_j) \setminus B_{r_j}(y_j)),
\end{equation}
and, also using~\eqref{eq:charge_by_energy},
\begin{equation}\label{eq:charge_no_more_bubbles}
\begin{split}
\big|\Xi(z_0) -  \cK(\nabla, \Phi) \big| \leqslant \ & \limsup_{j \to \infty}|\kappa_{\ep_j}(B_{\delta_j}(y_j) \setminus B_{r_j}(y_j))| \\
\leqslant \ & \limsup_{j \to \infty}\mu_{\ep_j}(B_{\delta_j}(y_j) \setminus B_{r_j}(y_j)).
\end{split}
\end{equation}
On the other hand, if the second alternative in Lemma~\ref{lemm:bubble_neck_decomposition} holds, then we write $(l)$ and $(I_{\alpha, l}), (J_{\beta, l})$, respectively, for the subsequence of $(j)$ and the intervals produced as a result. For later use, we recapitulate the conclusions of Lemma~\ref{lemm:bubble_neck_decomposition} in more detail than is immediately needed. Recalling that each $(I_{\alpha, l})_{l \in \NN}$ determines a bubble region, we denote by $(I_{\alpha, l}')$ and $(m_{\alpha, l})$ the subintervals and common midpoints in Definition~\ref{defi:bubble_region}, and also let 
\[
d_{\alpha, l} = \frac{1}{2}\diam I_{\alpha, l}.
\]
The limiting measures described in (b3) we write as $\nu_{\alpha}$ and $\gamma_{\alpha}$, while the set $S(\nu_{\alpha})$ and the weights $\{c(x)\}$ and $\{k(x)\}$ in (b4) we denote by $S_{\alpha}$, $\{\Theta_{\alpha}(x)\}$, and $\{\Xi_{\alpha}(x)\}$, respectively. Then, for each $\alpha \in \{1, \cdots, N\}$, letting 
\[
\ep_{\alpha, l} = \frac{\ep_l}{e^{m_{\alpha, l}}},\ \ s_{\alpha, l} = s_{y_l, e^{m_{\alpha, l}}},
\]
and also performing, as has already been done many times, the following rescaling:
\[
(\nabla_{\alpha, l}, \Phi_{\alpha, l}) = s_{\alpha, l}^*(\nabla_l, \Phi_l),\ \ g_{\alpha, l} = e^{-2m_{\alpha, l}}s_{\alpha, l}^* g,
\]
we see that, first of all,
\begin{equation}\label{eq:convergence_of_ep_and_metric}
\lim_{l \to \infty}\ep_{\alpha, l} = 0,\quad \text{ and } \ g_{\alpha, l} \to g \text{ in }C^{\infty}_{\loc}(\RR^3 \setminus \{0\}) \text{ as }l \to \infty.
\end{equation}
Secondly, each $(\nabla_{\alpha, l}, \Phi_{\alpha, l})$ is a critical point of $\cY_{\ep_{\alpha, l}}^{g_{\alpha, l}}$ defined at least on $B_{e^{d_{\alpha, l}}}(0) \setminus B_{e^{-d_{\alpha, l}}}(0)$, with energy bounded from above in the following way:
\begin{equation}\label{eq:energy_identity_bubble_seq_energy}
\begin{split}
(\ep_{\alpha, l})^{-1}\cY_{\ep_{\alpha, l}}^{g_{\alpha, l}}(\nabla_{\alpha, l}, \Phi_{\alpha, l}; B_{e^{d_{\alpha, l}}}(0) \setminus B_{e^{-d_{\alpha, l}}}(0) ) 
\leqslant \ & \tau(z_0) + o(1).
\end{split}
\end{equation}
Thirdly, the set $S_{\alpha}$ and multiplicities $\{\Theta_{\alpha}(x)\}$, $\{\Xi_{\alpha}(x)\}$ arise through the following convergences of measures locally on $\RR^3 \setminus \{0\}$
\begin{equation}\label{eq:group_alpha_bubbles}
s_{\alpha,l}^*\mu_{\ep_l} \rightharpoonup \sum_{x \in S_{\alpha}}\Theta_{\alpha}(x)\delta_{x}, \ \ \ s_{\alpha,l}^*\kappa_{\ep_l} \rightharpoonup \sum_{x \in S_{\alpha}}\Xi_{\alpha}(x)\delta_{x}.
\end{equation}
On the other hand, concerning the energy of the original sequence $(\nabla_l, \Phi_l)$ on the entire transition region, we have 
\[
\begin{split}
\mu_{\ep_l}(B_{\delta_l}(y_l) \setminus B_{r_l}(y_l))  =\ & \sum_{\beta = 0}^{N} \mu_{\ep_l}(\cE_{y_l}(J_{\beta, l} \times S^2)) + \sum_{\alpha = 1}^{N} \mu_{\ep_l}(\cE_{y_l}(I_{\alpha, l} \times S^2)) \\
=\ & \sum_{\beta = 0}^{N} \mu_{\ep_l}(\cE_{y_l}(J_{\beta, l} \times S^2)) + \sum_{\alpha = 1}^{N}\sum_{x \in S_{\alpha}}\Theta_{\alpha}(x) + o(1).
\end{split}
\]
In completely analogous fashion, and using also~\eqref{eq:charge_by_energy}, we have for the measures $\kappa_{\ep_l}$ that 
\[
\begin{split}
\kappa_{\ep_l}(B_{\delta_l}(y_l) \setminus B_{r_l}(y_l))  =\ & \sum_{\beta = 0}^{N} \kappa_{\ep_l}(\cE_{y_l}(J_{\beta, l} \times S^2)) + \sum_{\alpha = 1}^{N} \kappa_{\ep_l}(\cE_{y_l}(I_{\alpha, l} \times S^2)) \\
=\ & \sum_{\beta = 0}^{N} \kappa_{\ep_l}(\cE_{y_l}(J_{\beta, l} \times S^2)) + \sum_{\alpha = 1}^{N}\sum_{x \in S_{\alpha}}\Xi_{\alpha}(x) + o(1).
\end{split}
\]
Recalling property (t2) of transition regions (Definition~\ref{defi:transition_region}) and again using~\eqref{eq:charge_by_energy} gives
\begin{subequations}
\begin{align}
\big|\Theta(z_0) - \big( \cY_1(\nabla, \Phi; \RR^3) + \sum_{\alpha = 1}^N \sum_{x \in S_{\alpha}}\Theta_{\alpha}(x)\big)\big| \leqslant \ & \limsup_{l \to \infty} \sum_{\beta = 0}^{N} \mu_{\ep_l}(\cE_{y_l}(J_{\beta, l} \times S^2)), \label{eq:Theta_decomposition_up_to_neck}\\
\big|\Xi(z_0) - \big( \cK(\nabla, \Phi) + \sum_{\alpha = 1}^N \sum_{x \in S_{\alpha}}\Xi_{\alpha}(x)\big)\big| \leqslant \ & \limsup_{l \to \infty} \sum_{\beta = 0}^{N} \mu_{\ep_l}(\cE_{y_l}(J_{\beta, l} \times S^2)).\label{eq:Xi_decomposition_up_to_neck}
\end{align}
\end{subequations}
Our next task is to show that neck regions carry no energy, so to speak. More specifically, we have the following result.
\begin{prop}\label{prop:no_neck}
Let $(J_l = [a_l, b_l])$ be a sequence of intervals such that $J_l \subset [\log r_l, \log \delta_l]$, that $\lim_{l \to \infty}\diam J_l = \infty$, and that $(J_l)$ determines a neck region in the sense of Definition~\ref{defi:neck_region}. Then in fact
\begin{equation}\label{eq:no_neck}
\lim_{l \to \infty} \mu_{\ep_l}(\cE_{y_l}(J_l \times S^2)) = 0.
\end{equation}
\end{prop}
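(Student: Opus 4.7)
The plan is to split $\mu_{\ep_l}(\cE_{y_l}(J_l \times S^2))$ into its three energy components and show that each one is $o(1)$. The exponential decay results from \S\ref{subsec:exp-decay} deal with the Higgs and potential terms, while the curvature term is handled via the local conservation law of Lemma~\ref{lem:conservation_law}.

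For the first two terms, fix $t \in (a_l + 2, b_l - 2)$ and $\xi \in S^2$, and note that $B_{e^t/4}(y_l + e^t\xi) \subset \cE_{y_l}([t - \log 2, t + \log 2]\times S^2)$. By property (n2) of Definition~\ref{defi:neck_region} combined with the definition of $\eta_{\ast}$, the $\ep_l^{-1}$-normalized energy on this ball is eventually below any prescribed threshold, so Remark~\ref{rmk:clearing_out}(i) together with Lemma~\ref{lemm:nablaPhi_exp_decay_base} apply; this uses also $\ep_l/e^t \leqslant \ep_l/r_l \to 0$, which holds by property (t1) of Definition~\ref{defi:transition_region}. The upshot is the pointwise bounds
\[
|\nabla_l\Phi_l|(y_l + e^t\xi) \leqslant C(\sqrt{\mu}\, e^t)^{-1}e^{-a\sqrt{\mu} e^t/\ep_l}, \qquad 1-|\Phi_l(y_l + e^t\xi)| \leqslant C e^{-a\sqrt{\mu} e^t/\ep_l},
\]
uniform in $(t, \xi)$, with constants depending only on $\lambda_0$. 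Integrating these in polar coordinates around $y_l$ (writing $dv_g = r^2\, dr\, d\sigma$ up to metric errors bounded via~\eqref{eq:metric_bounds_for_estimates}, and substituting $u = \sqrt{\mu}\, r/\ep_l$) absorbs the geometric growth $r^2$ into the exponential and yields
\[
\int_{\cE_{y_l}(J_l \times S^2)}\Big(|\nabla_l\Phi_l|^2 + \frac{\lambda w_l^2}{\ep_l^2}\Big)\, dv_g = O\big(\ep_l/\mu^{3/2}\big) \to 0.
\]

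For the curvature contribution $\int\ep_l^2|F_{\nabla_l}|^2$, I would apply Lemma~\ref{lem:conservation_law} on a slightly shrunk annulus $U_l := \cE_{y_l}([t_{\mathrm{in},l}, t_{\mathrm{out},l}]\times S^2)$. Choosing $t_{\mathrm{in},l} \in [a_l + 1, a_l + 2]$ and $t_{\mathrm{out},l} \in [b_l - 2, b_l - 1]$ via Fubini so that $r\int_{\partial B_r(y_l)}e_{\ep_l}\, d\sigma$ at $r = e^{t_{\mathrm{in},l}}$ and $r = e^{t_{\mathrm{out},l}}$ is beaten by the average over the respective unit collars, property (n1) forces these boundary integrals to tend to zero. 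In dimension three, testing the Yang--Mills--Higgs system against the radial dilation vector field $X = (x - y_l)\cdot\partial_x$ on $U_l$ yields a Pohozaev-type identity of the schematic form
\[
\int_{U_l}\ep_l^2|F_{\nabla_l}|^2\, dv_g = \int_{U_l}|\nabla_l\Phi_l|^2\, dv_g + 3\int_{U_l}\frac{\lambda w_l^2}{\ep_l^2}\, dv_g + \mathcal{F}_l,
\]
with the boundary flux $\mathcal{F}_l$ bounded by the aforementioned quantities at $r = e^{t_{\mathrm{in},l}}, e^{t_{\mathrm{out},l}}$, plus metric-correction terms controllable by $A_0 d_0^2$. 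By the preceding paragraph and the choice of $t_{\mathrm{in},l}, t_{\mathrm{out},l}$, the right-hand side is $o(1)$; the parts of the neck outside $U_l$, being collars of length at most $2$ at either end, have total energy $o(1)$ by property (n1). Combining, $\mu_{\ep_l}(\cE_{y_l}(J_l \times S^2)) \to 0$, as claimed.

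The main technical hurdle is the derivation of Lemma~\ref{lem:conservation_law} in the non-abelian setting: the commutator terms $[F_{\nabla}, \Phi]$ and $[\nabla\Phi, \Phi]$ appearing in~\eqref{eq: 2nd_order_crit_pt_intro} must cancel appropriately against the contraction with $X$, leaving a clean local Pohozaev identity whose boundary flux admits the pointwise bound $|T(X, \nu)| \leqslant Cr \cdot e_{\ep_l}$ on $\partial B_r(y_l)$. The fact that $n = 3$ is essential, since only in this dimension do the scaling weights of $\ep^2|F|^2$, $|\nabla\Phi|^2$, and $V/\ep^2$ combine into a usable identity with the dilation vector field. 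Additionally, one must verify that the Riemannian-metric corrections to the flat Pohozaev argument (controlled via~\eqref{eq:Hessian_comparison_for_estimates}) contribute only $o(1)$ to $\mathcal{F}_l$.
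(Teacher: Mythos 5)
Your proposal follows essentially the same route as the paper: exponential decay estimates (via Remark~\ref{rmk:clearing_out}(i) and Lemma~\ref{lemm:nablaPhi_exp_decay_base}, justified on each dyadic slab by property (n2)) handle the $|\nabla_l\Phi_l|^2$ and potential terms, and the curvature term is recovered from the difference of the conservation law of Lemma~\ref{lem:conservation_law} at Fubini-selected inner and outer radii whose boundary fluxes are killed by (n1). One bookkeeping point: your stated bound $O(\ep_l/\mu^{3/2})$ for the un-normalized integral of the Higgs and potential terms is not quite enough, since the proposition concerns the \emph{normalized} measure $\mu_{\ep_l} = \ep_l^{-1}e_{\ep_l}\cH^3$, and dividing by $\ep_l$ would only give $O(\mu^{-3/2})$. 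The correct conclusion comes from the very substitution you describe: after $u = \sqrt{\mu}\,r/\ep_l$ the normalized integral is bounded by $C\int_{c\sigma_l/\ep_l}^{\infty}u^2e^{-au}\,du$, which vanishes precisely because the lower endpoint $\sigma_l/\ep_l \to \infty$ (a consequence of $r_l/\ep_l \to \infty$ from (t1), which you already invoke earlier). With that one-line correction the argument matches the paper's proof; also note that Lemma~\ref{lem:conservation_law} is already proved in the paper, so its derivation is not an outstanding hurdle for this proposition.
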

Before we dive into the proof of this result, we need an auxiliary variational identity, which is a local version of~\cite[Corollary II.2.2]{Jaffe-Taubes}:
\begin{lemm}[Local conservation law]\label{lem:conservation_law}
For any smooth solution $(\nabla, \Phi)$ to the second order equations \eqref{eq: 2nd_order_crit_pt_intro} on a $3$-manifold with bounded geometry, and any geodesic ball $B_r(x)$, we have
\begin{align}\label{eq:conservation_law}
r\int_{\partial B_r(x)}\left(2\varepsilon^2 |\partial_r\lrcorner F_{\nabla}|^2 + 2|\nabla_{\partial_r}\Phi|^2 - e_{\varepsilon}(\nabla,\Phi)\right) &= \int_{B_r(x)} \langle T,\mathrm{Hess}(\frac{1}{2}d(\cdot,x)^2) - g\rangle\\
&\quad + \int_{B_r(x)} \left(\varepsilon^2|F_{\nabla}|^2 - |\nabla\Phi|^2 - \frac{3\lambda w^2}{\ep^2}\right).\nonumber
\end{align}
\end{lemm}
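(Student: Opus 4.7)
The plan is to prove \eqref{eq:conservation_law} via the standard stress--energy tensor approach. Introduce the symmetric $(0,2)$-tensor
\begin{equation*}
T_{ij} := 2\ep^2 \langle F_{ik}, F_{jk}\rangle + 2\langle \nabla_i \Phi, \nabla_j \Phi\rangle - g_{ij}\, e_\ep(\nabla, \Phi),
\end{equation*}
written in a local orthonormal frame with $k$ summed and $\mathfrak{su}(E)$-indices contracted via $\langle\cdot,\cdot\rangle$. The key step is to verify that $\nabla^j T_{ij} = 0$ along any smooth solution of \eqref{eq: 2nd_order_crit_pt_intro}. This is a direct computation: expanding $\nabla^j T_{ij}$ and applying the Bianchi identity $\nabla_j F_{ik}+\nabla_i F_{kj}+\nabla_k F_{ji}=0$ to one half of the curvature term produces $\ep^2 \nabla_i |F_\nabla|^2$, while the other half, after invoking the Yang--Mills equation in the form $\ep^2 \nabla^j F_{jk}=[\Phi,\nabla_k\Phi]$, yields the cubic term $2\sum_k\langle F_{ik},[\Phi,\nabla_k\Phi]\rangle$. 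Differentiating the $|\nabla\Phi|^2$ part, and using $[\nabla_j,\nabla_i]\Phi=-[F_{ij},\Phi]$ together with the Higgs equation $\nabla^*\nabla\Phi = \frac{\lambda w}{\ep^2}\Phi$, gives $\nabla_i|\nabla\Phi|^2 + \frac{\lambda}{\ep^2}\nabla_i(w^2)$ together with an opposite cubic term $-2\sum_k\langle F_{ik},[\Phi,\nabla_k\Phi]\rangle$ (the sign coming from the $\mathrm{ad}$-invariance identity $\langle[A,B],C\rangle=\langle A,[B,C]\rangle$). The two cubic pieces cancel, the surviving derivatives sum to $\nabla_i e_\ep$, and this is exactly offset by $-\nabla_i e_\ep$ from the $-g_{ij}e_\ep$ summand of $T$.

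With $\nabla^j T_{ij} = 0$ established, set $\rho(y) := \tfrac{1}{2}d(y,x)^2$, which is smooth on $B_r(x)$ since $r$ is less than the injectivity radius by assumption, and choose the radial vector field $X := \nabla \rho$. Since $T$ is symmetric and divergence-free, and since $\nabla^i X^j = (\mathrm{Hess}\,\rho)^{ij}$, the divergence theorem gives
\begin{equation*}
\int_{\partial B_r(x)} T_{ij}\, X^j\, \nu^i = \int_{B_r(x)} T_{ij}\, \nabla^i X^j = \int_{B_r(x)} \langle T, \mathrm{Hess}\,\rho\rangle.
\end{equation*}
On $\partial B_r(x)$ one has $\nu = \partial_r$ and $X = r\partial_r$, so the boundary integrand equals $r\,T(\partial_r,\partial_r)$; a direct evaluation yields
\begin{equation*}
T(\partial_r,\partial_r) = 2\ep^2|\partial_r\lrcorner F_\nabla|^2 + 2|\nabla_{\partial_r}\Phi|^2 - e_\ep(\nabla,\Phi),
\end{equation*}
recovering the left-hand side of \eqref{eq:conservation_law}.

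To produce the asserted form of the right-hand side, decompose $\mathrm{Hess}\,\rho = g + (\mathrm{Hess}\,\rho - g)$ and split
\begin{equation*}
\int_{B_r(x)}\langle T,\mathrm{Hess}\,\rho\rangle = \int_{B_r(x)} \tr(T) + \int_{B_r(x)}\langle T, \mathrm{Hess}\,\rho - g\rangle.
\end{equation*}
Computing the trace directly, and bookkeeping the factor $g^{ij}\sum_k\langle F_{ik},F_{jk}\rangle = 2|F_\nabla|^2$ coming from $|F_\nabla|^2 = \tfrac12 F_{ij}F^{ij}$, one finds in dimension three that
\begin{equation*}
\tr(T) = 4\ep^2|F_\nabla|^2 + 2|\nabla\Phi|^2 - 3e_\ep(\nabla,\Phi) = \ep^2|F_\nabla|^2 - |\nabla\Phi|^2 - \frac{3\lambda w^2}{\ep^2},
\end{equation*}
which is precisely the second integrand on the right-hand side of \eqref{eq:conservation_law}. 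The sole genuinely non-trivial step is the divergence-freeness of $T$; once secured, the rest is a routine application of the divergence theorem on the compact ball $\overline{B_r(x)}$, where everything is smooth.
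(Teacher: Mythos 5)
Your proposal is correct and follows essentially the same route as the paper: the same stress--energy tensor $T$, the same verification that $D^*T=0$ via the Bianchi identity and the two equations in~\eqref{eq: 2nd_order_crit_pt_intro}, the same application of the divergence theorem with $X=\nabla(\tfrac12 d(\cdot,x)^2)$, and the same splitting $\mathrm{Hess}=g+(\mathrm{Hess}-g)$ with the trace identity $\tr(T)=\ep^2|F_\nabla|^2-|\nabla\Phi|^2-\tfrac{3\lambda w^2}{\ep^2}$. No gaps.
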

\begin{proof}
    Define the \emph{stress-energy tensor} $T=T_{\varepsilon}(\nabla,\Phi)\in \Gamma(S^2 T^\ast M)$ as the symmetric $2$-tensor given by
\begin{equation*}
    T(v,w):=2\varepsilon^2\langle v\lrcorner F_{\nabla}, w\lrcorner F_{\nabla}\rangle + 2\langle \nabla_v\Phi, \nabla_w\Phi\rangle - g(v,w)e_{\varepsilon}(\nabla,\Phi).
\end{equation*} We note right away that
\begin{align}\label{eq: trace_stress_tensor}
    \mathrm{tr}_g(T) = \langle T, g\rangle &= 4\varepsilon^2|F_{\nabla}|^2 + 2|\nabla\Phi|^2 - 3e_{\varepsilon}(\nabla,\Phi)\nonumber\\
    &= \ep^2|F_{\nabla}|^2 -|\nabla\Phi|^2 - \frac{3\lambda w^2}{\ep^2}. 
\end{align} Now, since $(\nabla,\Phi)$ is a solution to the second order equations \eqref{eq: 2nd_order_crit_pt_intro}, a quick computation, which in fact motivates the definition just given of $T$, shows that the divergence of the $2$-tensor $T$ vanishes:
\begin{equation*}
    D^{\ast}T = 0,
\end{equation*} 
where $D$ denotes the Levi--Civita connection of $g$. In particular, by the divergence theorem, if $\Omega\subseteq M$ is any precompact open set with smooth boundary $\partial\Omega$, oriented with the outward unit normal vector $\nu$, and if $X$ is a smooth vector field on $\overline{\Omega}$, then 
\[
\int_{\partial\Omega} T(X,\nu) = \int_{\Omega}\langle T,DX\rangle.
\] We use this equation with $\Omega$ equal to the geodesic ball $B_r(x)$ and with $X:=D(\frac{1}{2}r_x^2)$, where in normal coordinates $(x^1,\ldots,x^n)$ on $B_r(x)$, centred at $x$, we let $r_x:=(\sum_i (x^i)^2)^{1/2}=d(\cdot,x)$ be the distance function, whose gradient $Dr_x$ equals the unit radial vector field $\partial_r$, which in turn is the unit outward normal vector $\nu$ in this case, so we get 
\begin{equation*}
    r\int_{\partial B_r(x)}\left(2\varepsilon^2 |\partial_r\lrcorner F_{\nabla}|^2 + 2|\nabla_{\partial_r}\Phi|^2 - e_{\varepsilon}(\nabla,\Phi)\right) = \int_{B_r(x)} \langle T,\mathrm{Hess}(\frac{1}{2}r_x^2) - g\rangle + \int_{B_r(x)} \langle T,g\rangle.
\end{equation*} This together with \eqref{eq: trace_stress_tensor} immediately gives \eqref{eq:conservation_law}.
\end{proof}
\begin{proof}[Proof of Proposition \ref{prop:no_neck}]
Let $\sigma_l = e^{a_l}$ and $\rho_l = e^{b_l}$. Then since $\diam J_l \to \infty$ and $\frac{r_l}{\ep_l} \to \infty$, we have 
\begin{equation}\label{eq:scales}
\lim_{l \to \infty} \frac{\rho_l}{\sigma_l} = \lim_{l \to \infty} \frac{\sigma_l}{\ep_{l}} = \infty.
\end{equation}
Also, by (n1) in Definition~\ref{defi:neck_region} we have 
\begin{equation}\label{eq:no_energy_at_boundaries}
\lim_{l \to \infty} \mu_{\ep_l}(B_{\rho_l}(y_l) \setminus B_{\frac{\rho_l}{4}}(y_l)) = \lim_{l \to \infty} \mu_{\ep_l}(B_{4\sigma_l}(y_l) \setminus B_{\sigma_l}(y_l)) = 0.
\end{equation}
Then by combining the fact that $\delta_l \to 0$, the comparison~\eqref{eq:scales} between the scales, and property (n2) in Definition~\ref{defi:neck_region}, we see that for sufficiently large $l$ there holds
\[
\delta_l < r_0, \ \ \ep_l < \frac{\tau_\ast \sigma_l}{4}, 
\]
and that 
\[
\sup_{2\sigma_l \leqslant  r \leqslant  2^{-1}\rho_l} \mu_{\ep_l}(B_{2r}(y_l) \setminus B_{2^{-1}r}(y_l)) < \eta_{\ast}.
\]
For each such $l$, given $x \in B_{\frac{\rho_l}{2}}(y_l) \setminus B_{2\sigma_l}(y_l)$, we let $r = d(x, y_l)$ and observe that $\ep_l < \frac{\tau_\ast r}{8}$, and that, by the triangle inequality, 
\[
B_{\frac{r}{2}}(x) \subset B_{2r}(y_l) \setminus B_{\frac{r}{2}}(y_l).
\]
By what we have just arranged, this implies that eventually
\[
\ep_l^{-1}\cY_{\ep_l}(\nabla_{l}, \Phi_l; B_{\frac{r}{2}}(x)) < \eta_{\ast},
\]
and we may invoke Remark~\ref{rmk:clearing_out}(i) and Lemma~\ref{lemm:nablaPhi_exp_decay_base} (with $\rho = \frac{r}{8}$) to get that 
\[
\frac{\lambda(1 - |\Phi_l(x)|^2)^2}{\ep_l^2} + |\nabla_l \Phi_l(x)|^2 \leqslant  C_{\lambda}\ep_l^{-2}e^{-a_{\lambda}\frac{d(x, y_l)}{\ep_l}}.
\]
Introducing normal coordinates centered at $y_l$, which satisfy the bounds~\eqref{eq:metric_bounds_for_estimates} on $B_{\delta_l}(y_l)$ since $\delta_l < r_0$, we integrate the above estimate over $ B_{\frac{\rho_l}{2}}(y_l) \setminus B_{2\sigma_l}(y_l)$ to get
\[
\begin{split}
&\ep_l^{-1}\int_{ B_{\frac{\rho_l}{2}}(y_l) \setminus B_{2\sigma_l}(y_l)} \frac{\lambda(1 - |\Phi_l(x)|^2)^2}{\ep_l^2} + |\nabla_l \Phi_l(x)|^2  \vol_g\\
& \leqslant  C_{\lambda} \ep_l^{-1}\int_{2\sigma_l}^{\frac{\rho_l}{2}} r^2 \ep_l^{-2}e^{-a_{\lambda}\frac{r}{\ep_l}}dr = C_{\lambda}\int_{2\frac{\sigma_l}{\ep_l}}^{\frac{\rho_l}{2\ep_l}} s^2 e^{-a_{\lambda}s} ds.
\end{split}
\]
Since $s^2 e^{-a_{\lambda }s}$ is integrable on $[0, \infty)$ and since $\lim_{l \to \infty}\frac{\sigma_l}{\ep_l} = \infty$, we deduce that 
\begin{equation}\label{eq:good_part_of_energy_on_neck}
\ep_l^{-1}\int_{ B_{\frac{\rho_l}{2}}(y_l) \setminus B_{2\sigma_l}(y_l)} \frac{\lambda(1 - |\Phi_l(x)|^2)^2}{\ep_l^2} + |\nabla_l \Phi_l(x)|^2  \vol_g \to 0 \text{ as }l \to \infty.
\end{equation}
It remains to control the curvature term, for which we employ the conservation law~\eqref{eq:conservation_law} with center at $y_l$, which gives for all $r \leqslant  \rho_l$ that 
\begin{equation}\label{eq:conservation_law_for_neck}
\begin{split}
&r\int_{\partial B_r(y_l)}\left(2\ep_l^2 |\partial_r\lrcorner F_{\nabla_l}|^2 + 2|(\nabla_l)_{\partial_r}\Phi_l|^2 - e_{\ep_l}(\nabla_l,\Phi_l)\right)\\
=\ &  \int_{B_r(y_l)} \langle T_l,\mathrm{Hess}(\frac{d(\cdot, y_l)^2}{2}) - g\rangle+ \int_{B_r(y_l)} \ep_l^2 |F_{\nabla_l}|^2 - |\nabla_l\Phi_l|^2 - \frac{3\lambda(1 - |\Phi_l|^2)^2}{4\ep_l^2},
\end{split}
\end{equation} Now, by Fubini's theorem we get for each $l$ some $\rho_l' \in [\frac{\rho_l}{4}, \frac{\rho_l}{2}]$ and $\sigma_l' \in [2\sigma_l, 4\sigma_l]$ such that 
\begin{equation}\label{eq:Fubini_for_neck}
\begin{split}
\rho_l' \int_{\partial B_{\rho_l'}(y_l)} e_{\ep_l}(\nabla_l, \Phi_l) \leqslant  \ & C\int_{B_{\frac{\rho_l}{2}}(y_l) \setminus B_{\frac{\rho_l}{4}}(y_l)} e_{\ep_l}(\nabla_l, \Phi_l),\\
\sigma_l' \int_{\partial B_{\sigma_l'}(y_l)} e_{\ep_l}(\nabla_l, \Phi_l) \leqslant \ & C\int_{B_{4\sigma_l}(y_l) \setminus B_{2\sigma_l}(y_l)} e_{\ep_l}(\nabla_l, \Phi_l).
\end{split}
\end{equation}
Applying~\eqref{eq:conservation_law_for_neck} at $r = \rho_l'$ and $r = \sigma_l'$ and taking the difference, we get
\begin{small}
\[
\begin{split}
\int_{B_{\rho'_l}(y_l) \setminus B_{\sigma'_l}(y_l)} \ep_l^2 |F_{\nabla_l}|^2 =\ & \int_{B_{\rho'_l}(y_l) \setminus B_{\sigma'_l}(y_l)} |\nabla_l\Phi_l|^2 + \frac{3\lambda(1 - |\Phi_l|^2)^2}{4\ep_l^2}\\
& -  \int_{B_{\rho'_l}(y_l) \setminus B_{\sigma'_l}(y_l)}\bangle{T_l, \mathrm{Hess}(\frac{d(\cdot, y_l)^2}{2}) - g} \\
& + \Big(\rho_l' \int_{\partial B_{\rho_l'}(y_l)} -  \sigma_l' \int_{\partial B_{\sigma_l'}(y_l)}\Big)\left(2\ep_l^2 |\partial_r\lrcorner F_{\nabla_l}|^2 + 2|(\nabla_l)_{\partial_r}\Phi_l|^2 - e_{\ep_l}(\nabla_l,\Phi_l)\right).
\end{split}
\]
\end{small}
With the help of~\eqref{eq:Hessian_comparison_for_estimates},~\eqref{eq:Fubini_for_neck}, and the fact that $|\partial_r| = 1$, we infer that
\[
\begin{split}
\int_{B_{\rho'_l}(y_l) \setminus B_{\sigma'_l}(y_l)} \ep_l^2 |F_{\nabla_l}|^2 \leqslant \ & C\int_{B_{\frac{\rho_l}{2}}(y_l) \setminus B_{\frac{\rho_l}{4}}(y_l)} e_{\ep_l}(\nabla_l, \Phi_l) +C\int_{B_{4\sigma_l}(y_l) \setminus B_{2\sigma_l}(y_l)} e_{\ep_l}(\nabla_l, \Phi_l)\\
& + C\rho_{l}^2 \int_{B_{\rho'_l}(y_l) \setminus B_{\sigma'_l}(y_l)} e_{\ep_l}(\nabla_l, \Phi_l) \\
& + C\int_{B_{\frac{\rho_l}{2}}(y_l)\setminus B_{2\sigma_l}(y_l)} |\nabla_l\Phi_l|^2 + \frac{\lambda(1 -|\Phi_l|^2)^2}{\ep_l^2}.
\end{split}
\]
Thus, for sufficiently large $l$, the curvature term involved in the second line of the above estimate can be absorbed to the left-hand side, while the covariant derivative term and potential term can be combined with the third line, and we obtain
\begin{small}
\[
\begin{split}
\ep_l^{-1}\int_{B_{\frac{\rho_l}{4}}(y_l)\setminus B_{4\sigma_l}(y_l)}\ep_l^2 |F_{\nabla_l}|^2 \leqslant \ &  C\ep_l^{-1}\int_{B_{\frac{\rho_l}{2}}(y_l) \setminus B_{\frac{\rho_l}{4}}(y_l)} e_{\ep_l}(\nabla_l, \Phi_l) + C\ep_l^{-1}\int_{B_{4\sigma_l}(y_l) \setminus B_{2\sigma_l}(y_l)} e_{\ep_l}(\nabla_l, \Phi_l)\\
& +  C\ep_l^{-1}\int_{B_{\frac{\rho_l}{2}}(y_l)\setminus B_{2\sigma_l}(y_l)} |\nabla_l\Phi_l|^2 + \frac{\lambda(1 -|\Phi_l|^2)^2}{\ep_l^2}.
\end{split}
\]
\end{small}
Recalling~\eqref{eq:no_energy_at_boundaries} and~\eqref{eq:good_part_of_energy_on_neck}, we finally get
\[
\lim_{l \to \infty}\ep_l^{-1}\int_{B_{\frac{\rho_l}{4}}(y_l)\setminus B_{4\sigma_l}(y_l)}\ep_l^2 |F_{\nabla_l}|^2 = 0.
\]
Combining this with~\eqref{eq:good_part_of_energy_on_neck} and~\eqref{eq:no_energy_at_boundaries} again, we conclude that 
\[
\lim_{l \to \infty} \mu_{\ep_l}(B_{\rho_l}(y_l) \setminus B_{\sigma_l}(y_l)) = 0,
\]
which is exactly~\eqref{eq:no_neck}.
\end{proof}
We conclude the proof of Theorem~\ref{thm: bubbling} with the following result.
\begin{prop}\label{prop:energy_identity}
There exists a finite collection of non-trivial, finite-action critical points of $\cY_1^{g_{\RR^3}}$ on $\RR^3$ whose energy and charge sum up to $\Theta(z_0)$ and $\Xi(z_0)$, respectively.
\end{prop}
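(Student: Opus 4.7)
The plan is to argue by strong induction on $N(z_0) := \lceil \Theta(z_0)/(\theta_{\mathrm{gap}} \cdot \min\{\lambda, 1\}) \rceil$, which by Proposition~\ref{prop:multiplicity_lower_bound} is a positive integer, and which by \eqref{ineq: uniform_bound_assumption} is bounded above by $\lceil \Lambda/(\theta_{\mathrm{gap}} \cdot \min\{\lambda, 1\}) \rceil$. The base case $N(z_0) = 1$ is immediate: the two-sided bounds \eqref{eq:1st_bubble_action_bound} on the top bubble force $\cY_1^{g_{\RR^3}}(\nabla, \Phi; \RR^3) = \Theta(z_0)$, so $\tau(z_0) = 0$, and the pointwise domination \eqref{eq:charge_by_energy} propagates to give $\tau_{\mathrm{charge}}(z_0) = 0$ as well, so the top bubble alone realizes both identities.

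For the inductive step, I would first apply Lemma~\ref{lemm:neck_analysis_transition_region} to produce a transition region and then Lemma~\ref{lemm:bubble_neck_decomposition} to split it. If Alternative (a) of Lemma~\ref{lemm:bubble_neck_decomposition} holds, Proposition~\ref{prop:no_neck} combined with \eqref{eq:multiplicity_no_more_bubbles}--\eqref{eq:charge_no_more_bubbles} yields $\tau(z_0) = \tau_{\mathrm{charge}}(z_0) = 0$, finishing the argument with just the top bubble. Otherwise, applying Proposition~\ref{prop:no_neck} to each of the neck components $J_{\beta, l}$ and feeding the resulting vanishing into \eqref{eq:Theta_decomposition_up_to_neck}--\eqref{eq:Xi_decomposition_up_to_neck} yields the exact decompositions
\begin{equation*}
\Theta(z_0) = \cY_1^{g_{\RR^3}}(\nabla, \Phi; \RR^3) + \sum_{\alpha = 1}^{N} \sum_{x \in S_\alpha} \Theta_\alpha(x), \qquad \Xi(z_0) = \cK(\nabla, \Phi) + \sum_{\alpha = 1}^{N} \sum_{x \in S_\alpha} \Xi_\alpha(x).
\end{equation*}
Since $\cY_1^{g_{\RR^3}}(\nabla, \Phi; \RR^3) \geqslant \theta_{\mathrm{gap}} \cdot \min\{\lambda, 1\}$ by \eqref{eq:1st_bubble_action_bound}, and each $\Theta_\alpha(x) \geqslant \theta_{\mathrm{gap}} \cdot \min\{\lambda, 1\}$ by property (b4) of Definition~\ref{defi:bubble_region}, the corresponding quantities $N_\alpha(x) := \lceil \Theta_\alpha(x)/(\theta_{\mathrm{gap}} \cdot \min\{\lambda, 1\}) \rceil$ are each strictly smaller than $N(z_0)$.

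To actually invoke the inductive hypothesis at each $x \in S_\alpha$, I would apply the entire apparatus of \S\S\ref{subsec:rescaling}--\ref{subsec:energy_identity} to the rescaled sequences $(\nabla_{\alpha, l}, \Phi_{\alpha, l})$ from Definition~\ref{defi:bubble_region}, which by \eqref{eq:convergence_of_ep_and_metric} are critical points of $\cY_{\ep_{\alpha, l}}^{g_{\alpha, l}}$ with $\ep_{\alpha, l} \to 0$, $g_{\alpha, l} \to g_{\RR^3}$ smoothly on compact subsets of $\RR^3 \setminus \{0\}$, and energy measures concentrating at $x$ with multiplicity $\Theta_\alpha(x)$. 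Iterating this produces a finite rooted tree of bubbles, terminating at leaves where the first alternative of Lemma~\ref{lemm:bubble_neck_decomposition} occurs; summing energies and charges over the tree recovers $\Theta(z_0)$ and $\Xi(z_0)$, respectively. The hard part will be the bookkeeping of this varying-metric bubble extraction: although the a priori estimates of Section~\ref{sec:estimates}, together with Proposition~\ref{prop:local_convergence} and Proposition~\ref{prop:concentration_scale}, are formulated in a way that permits varying metrics converging smoothly on compact sets, one must carefully revisit the rate-of-rescaling Lemmas~\ref{lemm:rate_of_rescaling}--\ref{lemm:rate_of_rescaling_better} and Proposition~\ref{prop:multiplicity_lower_bound} to ensure every constant appearing there depends only on local bounded-geometry bounds shared uniformly by $g_{\alpha, l}$ and $g_{\RR^3}$. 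Once this is verified, the induction terminates in at most $\lceil \Lambda/(\theta_{\mathrm{gap}} \cdot \min\{\lambda, 1\}) \rceil$ steps, completing the proof.
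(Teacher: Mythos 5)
Your proposal is correct and follows essentially the same route as the paper: extract the top bubble, form a transition region (Lemma~\ref{lemm:neck_analysis_transition_region}), decompose into bubble and neck regions (Lemma~\ref{lemm:bubble_neck_decomposition}), kill the necks via Proposition~\ref{prop:no_neck}, and recurse on the rescaled sequences, with termination guaranteed because each level costs at least $\theta_{\mathrm{gap}}\cdot\min\{\lambda,1\}$ of energy. Packaging the paper's explicit iteration bound $\lfloor\Theta(z_0)/(\theta_{\mathrm{gap}}\cdot\min\{\lambda,1\})\rfloor+1$ as a strong induction on $\lceil\Theta(z_0)/(\theta_{\mathrm{gap}}\cdot\min\{\lambda,1\})\rceil$ is only a cosmetic reorganization, and you correctly flag the same point the paper leaves as a sketch, namely that the machinery of \S\S\ref{subsec:rescaling}--\ref{subsec:energy_identity} must be rerun for the rescaled sequences with metrics converging smoothly to $g_{\RR^3}$ on compact subsets of $\RR^3\setminus\{0\}$.
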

\begin{proof}
Thanks to Proposition~\ref{prop:no_neck}, in the case where $(B_{\delta_j}(y_j) \setminus B_{r_j}(y_j))$ already determines a neck region and no more bubbles occur, we get from~\eqref{eq:multiplicity_no_more_bubbles} and~\eqref{eq:charge_no_more_bubbles} that
\begin{equation}\label{eq:energy_identity_no_more_bubbles}
\Theta(z_0) = \cY_1(\nabla, \Phi; \RR^3),\ \ \Xi(z_0) = \cK(\nabla, \Phi),
\end{equation}
and we are done. On the other hand, if further bubble regions exist, then, writing $S_{\alpha}$ as $\{x_{\alpha\beta}\}_{\beta = 1}^{N_{\alpha}}$, we get from~\eqref{eq:Theta_decomposition_up_to_neck},~\eqref{eq:Xi_decomposition_up_to_neck}, and Proposition~\ref{prop:no_neck} that
\begin{subequations}
\begin{align}
\Theta(z_0) =\ & \cY_1(\nabla, \Phi) + \sum_{\alpha = 1}^N \sum_{\beta = 1}^{N_{\alpha}}\Theta_{\alpha}(x_{\alpha\beta}), \label{eq:Theta_decomposition}\\
\Xi(z_0) =\ & \cK(\nabla, \Phi) + \sum_{\alpha = 1}^N \sum_{\beta = 1}^{N_{\alpha}}\Xi_{\alpha}(x_{\alpha\beta}).\label{eq:Xi_decomposition}
\end{align}
\end{subequations}
Notice that 
\[
\theta_{\mathrm{gap}}\cdot\min\{\lambda,1\} \leqslant  \Theta_{\alpha}(x_{\alpha\beta}) \leqslant  \Theta(z_0) - \theta_{\mathrm{gap}}\cdot\min\{\lambda,1\},
\]
for all $\alpha \in \{1, \cdots, N\}$ and $\beta \in \{{1, \cdots, N_{\alpha}\}}$. Now fix a particular $\alpha$ and recall the convergence of measures~\eqref{eq:group_alpha_bubbles}. By the convergence of $\ep_{\alpha, l}$ and $g_{\alpha, l}$ noted in~\eqref{eq:convergence_of_ep_and_metric} and the uniform energy upper bound~\eqref{eq:energy_identity_bubble_seq_energy}, the rescaled sequence $(\nabla_{\alpha,l}, \Phi_{\alpha,l})$ enjoys the same type of a priori estimates satisfied by the original sequence. The idea is now to repeat the argument leading to~\eqref{eq:Theta_decomposition} for each $x_{\alpha\beta}$ ($\beta = 1, \cdots, N_{\alpha}$). We only provide a very rough sketch and omit the details to avoid repetition. 

First, arguing as in Section~\ref{subsec:rescaling}, we see that after rescaling $(\nabla_{\alpha, l}, \Phi_{\alpha, l})$ by $\ep_{\alpha, l}$ at suitably chosen centers $y_{\alpha\beta, l}$ that tend to $x_{\alpha\beta}$, we obtain in the subsequential limit a  critical point of $\cY_1$ on $\RR^3$, denoted $(\nabla_{\alpha\beta}, \Phi_{\alpha\beta})$, which satisfies
\[
\theta_{\mathrm{gap}}\cdot\min\{\lambda,1\} \leqslant  \cY_1^{g_{\RR^3}}(\nabla_{\alpha\beta}, \Phi_{\alpha\beta}; \RR^3) \leqslant  \Theta(x_{\alpha\beta}).
\]
We assign $(\nabla_{\alpha\beta}, \Phi_{\alpha\beta})$ to $x_{\alpha\beta}$ as the top bubble at that point, and define
\[
\tau(x_{\alpha\beta}) = \Theta(x_{\alpha\beta}) - \cY_1^{g_{\RR^3}}(\nabla_{\alpha\beta}, \Phi_{\alpha\beta}; \RR^3),\ \ \ \tau_{\text{charge}}(x_{\alpha\beta}) = \Xi(x_{\alpha\beta}) - \cK(\nabla_{\alpha\beta}, \Phi_{\alpha\beta}).
\]
Notice then that 
\begin{equation}\label{eq:energy_loss_drop_twice}
0 \leqslant  \tau(x_{\alpha\beta}) \leqslant  \Theta(x_{\alpha\beta}) - \theta_{\mathrm{gap}}\cdot\min\{\lambda,1\} \leqslant  \Theta(z_0) - 2\theta_{\mathrm{gap}}\cdot\min\{\lambda,1\},
\end{equation}
where the upper bound has dropped by $\theta_{\mathrm{gap}}\cdot\min\{\lambda,1\}$ compared to~\eqref{eq:tau_definite_drop}. 

Next, starting with the convergence of measures~\eqref{eq:group_alpha_bubbles}, and the local smooth convergence that gives rise to $(\nabla_{\alpha\beta}, \Phi_{\alpha\beta})$, by repeating the proof of Lemma~\ref{lemm:neck_analysis_transition_region} we obtain a sequence of annuli centered at $y_{\alpha\beta, l}$ that determines a transition region in the sense of Definition~\ref{defi:transition_region}, with $\tau(x_{\alpha\beta})$ and $\tau_{\text{charge}}(x_{\alpha\beta})$ in place of $\tau(z_0)$ and $\tau_{\text{charge}}(z_0)$. Following the proofs of Lemma~\ref{lemm:bubble_neck_decomposition} and Proposition~\ref{prop:no_neck}, we can partition the annuli into subannuli that determine respectively bubble regions and neck regions, and show that no energy is left in the limit on the neck regions. The result is that either
\begin{equation}\label{eq:2nd_floor_energy_identity_no_more_bubbles}
\Theta(x_{\alpha\beta}) = \cY_{1}^{g_{\RR^3}}(\nabla_{\alpha\beta}, \Phi_{\alpha\beta}; \RR^3),\ \ \ 
\Xi(x_{\alpha\beta}) = \cK(\nabla_{\alpha\beta}, \Phi_{\alpha\beta}),
\end{equation}
or that there exists some positive integer $N_{\alpha\beta} \leqslant  \frac{\tau(x_{\alpha\beta})}{\theta_{\mathrm{gap}}\cdot\min\{\lambda,1\}}$, and for each $\gamma \in \{1, \cdots, N_{\alpha\beta}\}$ some non-empty finite set $S_{\alpha\beta\gamma} \subset \RR^3 \setminus \{0\}$, such that
\[
\begin{split}
\Theta(x_{\alpha\beta}) = \ & \cY_{1}^{g_{\RR^3}}(\nabla_{\alpha\beta}, \Phi_{\alpha\beta}; \RR^3) + \sum_{\gamma = 1}^{N_{\alpha\beta}} \sum_{x \in S_{\alpha\beta\gamma}} \Theta_{\alpha\beta\gamma}(x),\\
\Xi(x_{\alpha\beta}) = \ & \cK(\nabla_{\alpha\beta}, \Phi_{\alpha\beta}) + \sum_{\gamma = 1}^{N_{\alpha\beta}} \sum_{x \in S_{\alpha\beta\gamma}} \Xi_{\alpha\beta\gamma}(x).
\end{split}
\]
where, for all $\gamma \in \{1, \cdots, N_{\alpha\beta}\}$,
\[
\Theta_{\alpha\beta\gamma}(x) \in [\theta_{\mathrm{gap}}\cdot\min\{\lambda,1\}, \Theta(z_0) - 2\theta_{\mathrm{gap}}\cdot\min\{\lambda,1\}],\ \ \ \Xi_{\alpha\beta\gamma}(x) \in 8\pi\ZZ
\] 
and the set $S_{\alpha\beta\gamma}$ consists of the points where suitable rescalings of $(\nabla_{\alpha, l}, \Phi_{\alpha, l})$ exhibit energy concentration. Repeating this for every $\alpha \in \{1, \cdots, N\}$ and $\beta \in \{{1, \cdots, N_{\alpha}\}}$, we refine~\eqref{eq:Theta_decomposition} and~\eqref{eq:Xi_decomposition} as 
\begin{subequations}
\begin{align}
\Theta(z_0) =\ & \cY_{1}(\nabla, \Phi) + \sum_{\alpha = 1}^N \sum_{\beta = 1}^{N_{\alpha}} \big( \cY_1(\nabla_{\alpha\beta}, \Phi_{\alpha\beta}) + \sum_{\gamma = 1}^{N_{\alpha\beta}}\sum_{x \in S_{\alpha\beta\gamma}}\Theta_{\alpha\beta\gamma}(x) \big), \label{eq:2nd_floor_Theta_decomposition}\\
\Xi(z_0) =\ & \cK(\nabla, \Phi) + \sum_{\alpha = 1}^N \sum_{\beta = 1}^{N_{\alpha}} \big( \cK(\nabla_{\alpha\beta}, \Phi_{\alpha\beta}) + \sum_{\gamma = 1}^{N_{\alpha\beta}}\sum_{x \in S_{\alpha\beta\gamma}}\Xi_{\alpha\beta\gamma}(x) \big).\label{eq:2nd_floor_Xi_decomposition}
\end{align}
\end{subequations}
where it is understood that the term involving $\sum_{\gamma = 1}^{N_{\alpha\beta}}(\cdots)$ is absent in the case that~\eqref{eq:2nd_floor_energy_identity_no_more_bubbles} hold for that particular choice of $\alpha$ and $\beta$. Continuing in this fashion, namely assigning to each $x \in S_{\alpha\beta\gamma}$ a top bubble $(\nabla_{\alpha\beta\gamma}, \Phi_{\alpha\beta\gamma})$ and constructing by Lemma~\ref{lemm:neck_analysis_transition_region} a sequence of annuli determining a transition region, with the energy difference this time satisfying
\[
\tau(x_{\alpha\beta\gamma}) : = \Theta(x_{\alpha\beta\gamma}) - \cY_1(\nabla_{\alpha\beta\gamma}, \Phi_{\alpha\beta\gamma}) \in [0, \Theta(z_0) - 3\theta_{\mathrm{gap}}\cdot\min\{\lambda,1\}],
\]
we see after applying Lemma~\ref{lemm:bubble_neck_decomposition} to the transition annuli and using Proposition~\ref{prop:no_neck} that we can improve~\eqref{eq:2nd_floor_Theta_decomposition} and~\eqref{eq:2nd_floor_Xi_decomposition} by splitting each $\Theta_{\alpha\beta\gamma}(x)$ and $\Xi_{\alpha\beta\gamma}(x)$ in a way similar to~\eqref{eq:Theta_decomposition} and~\eqref{eq:Xi_decomposition}. 

Each time the above argument is repeated, the energy difference is reduced by $\theta_{\mathrm{gap}}\cdot\min\{\lambda,1\}$. Thus after at most $k = \floor{\frac{\Theta(z_0)}{\theta_{\mathrm{gap}}\cdot\min\{\lambda,1\}}} + 1$ iterations we must find ourselves in the first alternative of Lemma~\ref{lemm:bubble_neck_decomposition}, where no further bubble regions arise from the transition annuli, and we get equalities analogous to~\eqref{eq:energy_identity_no_more_bubbles} as opposed to further splittings analogous to~\eqref{eq:Theta_decomposition} and~\eqref{eq:Xi_decomposition}. The end result is that we obtain a finite collection of non-trivial, finite-action critical points of $\cY_1^{g_{\RR^3}}$ on $\RR^3$ whose energy and charge sum up to $\Theta(z_0)$ and $\Xi(z_0)$, respectively.
\end{proof}

\appendix
\section{Standard facts on local Coulomb gauges}\label{sec:Coulomb}
In this appendix, we collect some facts concerning the uniqueness and continuous dependence of local Coulomb gauges that are used in Section~\ref{sec:existence}. All the results mentioned below follow essentially from the classical work of Uhlenbeck \cite{uhlenbeck1982connections} (see also \cite{wehrheim2004uhlenbeck}). 
To fix notation, we denote by $B$ the unit ball in $\RR^n$, and let $G$ be a compact Lie group with Lie algebra $\mathfrak{g}$. Given $p \in (1, \infty)$, recall that if $A \in W^{1, p}(B; \RR^n \otimes \mathfrak{g})$ is a $\mathfrak{g}$-valued $1$-form satisfying $A(\partial_r) = 0$ on $\partial B$ in the trace sense, then for some constant $C_{\text{hodge}} = C_{\text{hodge}}(n, p)$ there holds the a priori estimate
\begin{equation}\label{eq:hodge_Lp_estimate}
\|A\|_{1, p} \leqslant C_{\text{hodge}}\big( \|dA\|_{p} + \|d^* A\|_{p} \big),
\end{equation}
where the norms involved, as well as the $d^*$-oprator on the right-hand side, are taken with respect to the Euclidean metric. 

Next, fixing an exponent $p \in (\frac{n}{2}, n)$, we say that a given $\mathfrak{g}$-valued $1$-form $A \in W^{1, p}(B; \RR^n \otimes \mathfrak{g})$ satisfies \textbf{condition (U)} if 
\begin{equation}\label{eq:Uhlenbeck_gauge}
\left\{
\begin{array}{ll}
d^*A = 0, & \text{ in }B,\\
A(\partial_r ) = 0, & \text{ on }\partial B,\\
\|A\|_{1, q} \leqslant 2C_{\text{hodge}} \|F_{A}\|_{q}, & \text{ for }q = \frac{n}{2} \text{ and }q = p,
\end{array}
\right.
\end{equation}
where, of course, by $F_A$ we mean the $\mathfrak{g}$-valued $2$-form
\[
(F_A)_{ij} = (dA)_{ij} + [A_i, A_j].
\]
By the well-known Uhlenbeck rearrangement argument (using~\eqref{eq:hodge_Lp_estimate} and the Sobolev inequalities), there exists $\eta_{\text{rearr}} = \eta_{\text{rearr}}(n, p)$ such that if $A \in W^{1, p}(B; \RR^n \otimes \mathfrak{g})$ satisfies the first two conditions in~\eqref{eq:Uhlenbeck_gauge}, and if $\|A\|_{1, \frac{n}{2}} < \eta_{\text{rearr}}$, then $A$ also satisfies the last condition. The following uniqueness property is a standard fact, a version of which was left as an exercise in Donaldson--Kronheimer \cite[\S 2.3.9, p. 68]{DK}.
\begin{lemm}\label{lemm:coulomb_uniqueness}
There exists $\ep = \ep(n, p)$ such that given $A \in W^{1, p}(B; \RR^n \otimes \mathfrak{g})$ and $\mathrm{g}_1, \mathrm{g}_2 \in W^{2,p}(B; G)$, if $\|F_A\|_{\frac{n}{2}} < \ep$ and
\[
\mathrm{g}_i \cdot A := \mathrm{g}_i d(\mathrm{g}_i^{-1}) + \mathrm{g}_i A \mathrm{g}_i^{-1}\ (i = 1, 2)
\]
both satisfy the condition $(U)$, then $\mathrm{g}_2 \mathrm{g}_1^{-1}$ is constant on $B$.
\end{lemm}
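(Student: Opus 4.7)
The plan is to show that $u := \rg_2 \rg_1^{-1}$ is constant on $B$ by deriving a second-order elliptic equation for $u$ with zero Neumann data, and then using the smallness of $\|F_A\|_{n/2}$ to force $du \equiv 0$.

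First, setting $A_i := \rg_i \cdot A$ for $i = 1, 2$, the cocycle property of the gauge action gives $A_2 = u \cdot A_1$, which when unwound reads
\begin{equation*}
du = u A_1 - A_2 u \qquad \text{on } B.
\end{equation*}
Since $u \in W^{2, p}(B;G)$ and $p > n/2$, the embedding $W^{2,p} \hookrightarrow L^\infty$ guarantees that the products $u A_1$ and $A_2 u$ lie in $W^{1,p}$. Contracting the identity with $\partial_r$ along $\partial B$ and using $A_1(\partial_r) = A_2(\partial_r) = 0$ in the trace sense yields the Neumann condition $\partial_r u = 0$ on $\partial B$.

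Next, applying $d^*$ to both sides and using $d^* A_1 = d^* A_2 = 0$ from condition (U), a short computation in Euclidean coordinates gives
\begin{equation*}
d^* du = A_2 \cdot du - du \cdot A_1,
\end{equation*}
where the left-hand side equals the scalar Hodge Laplacian applied componentwise to $u$, viewed as a matrix-valued function through a faithful embedding $G \hookrightarrow M_N(\RR)$. Testing this equation against $u - \bar u$, with $\bar u := |B|^{-1}\int_B u \in M_N(\RR)$, and integrating by parts, with the boundary term vanishing by the Neumann condition just derived, produces
\begin{equation*}
\int_B |du|^2 \, dx = \int_B \bangle{u - \bar u,\ A_2 \cdot du - du \cdot A_1}\, dx.
\end{equation*}

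To close the argument, I would apply H\"older's inequality on the right-hand side with exponents $\tfrac{2n}{n-2}$, $2$, and $n$, then invoke the Sobolev-Poincar\'e bound $\|u - \bar u\|_{L^{2n/(n-2)}} \leq C \|du\|_{L^2}$ along with the Sobolev embedding $W^{1, n/2} \hookrightarrow L^n$ (valid for $n \geq 3$), and finally the $q = n/2$ clause of condition (U) together with the gauge invariance $\|F_{A_i}\|_{n/2} = \|F_A\|_{n/2} < \ep$, to reach
\begin{equation*}
\int_B |du|^2\, dx \leq C(n,p)\, \big(\|A_1\|_{L^n} + \|A_2\|_{L^n}\big) \int_B |du|^2\, dx \leq C'(n,p)\, \ep \int_B |du|^2\, dx.
\end{equation*}
Choosing $\ep = \ep(n, p)$ small enough that $C'(n, p)\, \ep < 1$ forces $du \equiv 0$ on $B$, so $u$ is constant. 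The main obstacle here is really bookkeeping rather than conceptual: one must carefully verify that the products and integration by parts are legitimate at the stated regularity (which is why the assumption $p > n/2$, giving $u \in C^0$, and the inclusion of both $q = n/2$ and $q = p$ in condition (U) are needed), and that all the Sobolev exponents land in the admissible range.
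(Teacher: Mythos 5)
Your proof is correct, but it takes a genuinely different route from the paper's. Both arguments start from the same transition relation $A_2 = u\cdot A_1$ with $u=\rg_2\rg_1^{-1}$ and both close by absorbing a term of size $\|F_A\|_{\frac{n}{2}}$ supplied by the $q=\frac{n}{2}$ clause of condition (U). The paper, however, works with the $\mathfrak g$-valued $1$-form $\alpha:=\rg\,d(\rg^{-1})$: it computes $d\alpha=-[\alpha,\alpha]$ and $d^*\alpha=[\rg\widetilde A_1\rg^{-1},\alpha]$, notes $\alpha(\partial_r)=0$ on $\partial B$, and feeds this into the boundary-value Hodge estimate~\eqref{eq:hodge_Lp_estimate} to get $\|\alpha\|_{1,p}\leqslant C\ep\,\|\alpha\|_{1,p}$, hence $\alpha\equiv 0$. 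You instead derive the single scalar-level identity $d^*du=A_2\cdot du-du\cdot A_1$ together with the Neumann condition $\partial_r u=0$, test against $u-\bar u$, and absorb in $L^2$ via Sobolev--Poincar\'e. Your version is more elementary in that it bypasses the $W^{1,p}$ Hodge estimate and only uses the divergence equation (not the curvature identity $d\alpha=-[\alpha,\alpha]$); the paper's version yields the vanishing directly in the stronger $W^{1,p}$ topology, which is the norm in which the rest of Appendix~\ref{sec:Coulomb} operates, but for the purpose of concluding constancy the two are equivalent. The only bookkeeping caveat in your route is that the exponent $\tfrac{2n}{n-2}$ presumes $n\geqslant 3$, which is the standing case in this paper (and the paper's own use of $W^{1,\frac{n}{2}}\hookrightarrow L^n$ carries the same implicit restriction), and that one should note $du\in L^2$ (which follows from $u\in W^{2,p}$ with $p>\frac n2$) before absorbing.
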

\begin{proof}
We let $\widetilde{A}_i = \mathrm{g}_i \cdot A$ to save space. Define $\mathrm{g} =\mathrm{g}_2  \mathrm{g}_1^{-1}$, which again lies in $W^{2, p}(B; G)$ since $p > \frac{n}{2}$, it is a standard fact that 
\begin{equation}\label{eq:coulomb_uniqueness_transition}
\widetilde{A}_2 = \mathrm{g}d(\mathrm{g}^{-1}) + \mathrm{g}\widetilde{A}_1 \mathrm{g}^{-1}.
\end{equation}
Since both $\widetilde{A}_1$ and $\widetilde{A}_2$ satisfies~\eqref{eq:Uhlenbeck_gauge}, we see that
\[
d^*(\mathrm{g} d(\mathrm{g}^{-1})) = -d^*(\mathrm{g}\widetilde{A}_1 \mathrm{g}^{-1}) = [\mathrm{g}\widetilde{A}_1 \mathrm{g}^{-1}, \mathrm{g} d(\mathrm{g}^{-1})],
\]
and that 
\[
\mathrm{g}d(\mathrm{g}^{-1})(\partial_r) = 0 \text{ on }\partial B.
\]
Combining these with
\[
d(\mathrm{g} d(\mathrm{g}^{-1}))_{i, j} = -[\mathrm{g}\partial_i (\mathrm{g}^{-1}), \mathrm{g}\partial_j (\mathrm{g}^{-1})],
\]
and recalling~\eqref{eq:hodge_Lp_estimate}, we get
\[
\begin{split}
\|\mathrm{g}d(\mathrm{g}^{-1})\|_{1, p} \leqslant\ & C_{n, p}\big( \||\mathrm{g}d(\mathrm{g}^{-1})| |\widetilde{A}_1| \|_{p} + \||\mathrm{g}d(\mathrm{g}^{-1})|^2\|_{p} \big)\\
\leqslant\ & C_{n, p}\big( \|\widetilde{A}_1\|_{n} + \|\mathrm{g}d(\mathrm{g}^{-1})\|_{n} \big) \cdot \|\mathrm{g}d(\mathrm{g}^{-1})\|_{\frac{np}{n-p}}.
\end{split}
\]
From the relation~\eqref{eq:coulomb_uniqueness_transition}, the Sobolev embedding $W^{1, \frac{n}{2}} \hookrightarrow L^n$, and condition (U), we have 
\[
\|\mathrm{g}d(\mathrm{g}^{-1})\|_{n} \leqslant  \|\widetilde{A}_1\|_{n} + \|\widetilde{A}_2\|_{n} \leqslant  C_n\|F_A\|_{\frac{n}{2}} \leqslant  C_n\ep.
\]
Substituting this back above and using the Sobolev embedding $W^{1, p} \hookrightarrow L^{\frac{np}{n-p}}$ gives
\[
\|\mathrm{g}d(\mathrm{g}^{-1})\|_{1, p} \leqslant  C_{n, p}\ep \cdot \|\mathrm{g}d(\mathrm{g}^{-1})\|_{1, p} \leqslant  \frac{1}{2} \|\mathrm{g}d(\mathrm{g}^{-1})\|_{1, p},
\]
provided $\ep$ is sufficiently small depending on $n$ and $p$. This immediately gives the desired conclusion.
\end{proof}
We next address the issue of locally changing gauge continuously. The following can be gathered from the proof of \cite[Theorem 6.3]{wehrheim2004uhlenbeck}. 
\begin{prop}\label{prop:continuous_change}
There exists $\ep_{\text{gauge}} \in (0, \frac{\eta_{\text{rearr}}}{4C_{\text{hodge}}(n, \frac{n}{2})})$ depending only on $n, p$ such that if $A \in W^{1, p}(B; \RR^n \otimes \mathfrak{g})$ is such that 
\[
\|F_A\|_{\frac{n}{2}} < \ep_{\text{gauge}},
\]
then there exists a unique $\underline{\mathrm{g}} = \underline{\mathrm{g}}_{A} \in W^{2, p}(B; G)$ such that $\underline{\mathrm{g}}(0) = \id$ and that $\underline{\mathrm{g}} \cdot A$ satisfies condition $(U)$. Moreover, the map $A\mapsto \underline{\mathrm{g}}_A$ is continuous from $\cU: = \{A \in W^{1, p}(B; \RR^n \otimes \mathfrak{g})\ |\ \|F_A\|_{\frac{n}{2}} < \ep_{\text{gauge}}\}$ into $W^{2, p}(B; G)$.
\end{prop}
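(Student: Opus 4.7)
\medskip

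My plan is to follow the strategy that is implicit in Uhlenbeck's original work and in \cite[Theorem 6.3]{wehrheim2004uhlenbeck}, treating the three assertions --- existence, uniqueness, continuity --- in turn.

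\emph{Existence.} The first step is the classical continuity method. Define
\[
\cS := \{ t \in [0,1] \mid tA \text{ admits some } \mathrm{g} \in W^{2,p}(B;G) \text{ with } \mathrm{g}\cdot(tA) \text{ satisfying } (U)\}.
\]
Clearly $0 \in \cS$ (take $\mathrm{g} = \id$). Openness of $\cS$ follows from the implicit function theorem applied to the map
\[
\Psi(\mathrm{g}, A) := \bigl( d^*(\mathrm{g}\cdot A),\ (\mathrm{g}\cdot A)(\partial_r)\big|_{\partial B} \bigr),
\]
whose $\mathrm{g}$-linearization at a solution is, after a bit of computation, a small perturbation of the Neumann Laplacian on $\mathfrak g$-valued functions, which is invertible on the subspace of functions with zero average. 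Closedness comes from the Uhlenbeck rearrangement: provided we arrange $\ep_{\text{gauge}}$ so small that the estimate~\eqref{eq:hodge_Lp_estimate} combined with Sobolev embedding gives $\|\mathrm{g}\cdot(tA)\|_{1,n/2} < \eta_{\text{rearr}}$ along the path, the third line of~\eqref{eq:Uhlenbeck_gauge} automatically upgrades; compactness of $G$ plus the resulting uniform $W^{2,p}$-bound on $\mathrm{g}$ then lets us pass to the limit. Hence $\cS = [0,1]$ and in particular $1 \in \cS$, yielding some $\mathrm{g}_0$ that puts $A$ into gauge $(U)$. Left-multiplying $\mathrm{g}_0$ by the constant $\mathrm{g}_0(0)^{-1}$ (which leaves $(U)$ invariant and keeps $\mathrm{g}_0$ in $W^{2,p}$) produces the desired $\underline{\mathrm{g}}_A$ with $\underline{\mathrm{g}}_A(0) = \id$.

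\emph{Uniqueness.} Shrinking $\ep_{\text{gauge}}$ further if necessary so that $\ep_{\text{gauge}} < \ep(n,p)$ (the threshold of Lemma~\ref{lemm:coulomb_uniqueness}), any two candidates $\mathrm{g}_1, \mathrm{g}_2$ satisfy $\mathrm{g}_2 \mathrm{g}_1^{-1} \equiv c$ for some constant $c \in G$; evaluating at $0$ forces $c = \id$, so $\mathrm{g}_1 = \mathrm{g}_2$.

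\emph{Continuity.} For the last claim I would run a compactness argument. Let $A_k \to A$ in $W^{1,p}$ with $A_k, A \in \cU$, and set $\mathrm{g}_k := \underline{\mathrm{g}}_{A_k}$, $\widetilde{A}_k := \mathrm{g}_k \cdot A_k$. Since condition $(U)$ gives $\|\widetilde A_k\|_{1,p} \leqslant 2 C_{\text{hodge}} \|F_{A_k}\|_p$, and since $\|F_{A_k}\|_p \to \|F_A\|_p$, the $\widetilde A_k$ are bounded in $W^{1,p}$. Combined with the relation $\mathrm{g}_k\, d(\mathrm{g}_k^{-1}) = \widetilde{A}_k - \mathrm{g}_k A_k \mathrm{g}_k^{-1}$ and the pointwise bound on $\mathrm{g}_k$ coming from compactness of $G$, a bootstrapping that exploits the multiplication $W^{2,p} \times W^{1,p} \hookrightarrow W^{1,p}$ (valid for $p > n/2$) yields a uniform $W^{2,p}$-bound on $\mathrm{g}_k$. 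Passing to a subsequence, $\mathrm{g}_k \rightharpoonup \mathrm{g}_\infty$ weakly in $W^{2,p}$ and strongly in $W^{1,p} \cap C^0$. The strong convergence allows us to pass to the limit in the defining relations to conclude that $\mathrm{g}_\infty \cdot A$ satisfies $(U)$ and $\mathrm{g}_\infty(0) = \id$, so by the uniqueness just proved $\mathrm{g}_\infty = \underline{\mathrm{g}}_A$. A final step upgrades weak to strong $W^{2,p}$-convergence by differentiating the equation $d^*(\mathrm{g}_k \cdot A_k) = 0$ and applying~\eqref{eq:hodge_Lp_estimate} to $\mathrm{g}_k\, d(\mathrm{g}_k^{-1}) - \mathrm{g}_\infty\, d(\mathrm{g}_\infty^{-1})$. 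Since every subsequence has a sub-subsequence converging to the same limit $\underline{\mathrm{g}}_A$, the full sequence $\mathrm{g}_k$ converges in $W^{2,p}$.

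The genuinely delicate point is controlling the nonlinearities $\mathrm{g} A \mathrm{g}^{-1}$ and $\mathrm{g}\, d(\mathrm{g}^{-1})$ in the right Sobolev spaces; this is where the restriction $p \in (n/2, n)$ enters through the embeddings $W^{1,p} \hookrightarrow L^{np/(n-p)}$ and $W^{2,p} \hookrightarrow C^0$, and where the thresholds $\eta_{\text{rearr}}$, $\ep(n,p)$, and $\ep_{\text{gauge}}$ must all be chosen simultaneously small enough so that all quadratic error terms can be absorbed in the linear estimates.
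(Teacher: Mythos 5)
Your argument is correct, and the existence and uniqueness parts coincide with the paper's (the paper simply cites Uhlenbeck's theorem for existence, while you sketch the continuity method behind it; uniqueness in both cases reduces to Lemma~\ref{lemm:coulomb_uniqueness} plus the normalization $\underline{\mathrm{g}}(0)=\id$). Where you genuinely diverge is the continuity of $A \mapsto \underline{\mathrm{g}}_A$. The paper proves this via the implicit function theorem: following~\cite[Theorem 6.3]{wehrheim2004uhlenbeck}, it introduces Banach spaces $\mathscr{V}$ (mean-zero $W^{2,p}$ maps into $\mathfrak g$) and $\mathscr{W}$, checks that the partial derivative of $N(u,A) = \bigl(d^*(e^u\cdot A),\ (e^u\cdot A)(\partial_r)|_{\partial B}\bigr)$ in $u$ is invertible at $(0,\widetilde A_0)$, and reads off a continuous local solution map, which is then matched with $\underline{\mathrm{g}}_A$ by uniqueness. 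You instead run a compactness argument: uniform $W^{2,p}$ bounds on the gauges via bootstrapping from $\mathrm{g}_k\,d(\mathrm{g}_k^{-1}) = \widetilde A_k - \mathrm{g}_k A_k \mathrm{g}_k^{-1}$, weak subsequential limits identified as $\underline{\mathrm{g}}_A$ through the uniqueness lemma (all the conditions in $(U)$ do pass to the limit, using $F_{A_k}\to F_A$ in $L^{n/2}\cap L^p$, which holds precisely because $p>n/2$), an upgrade to strong $W^{2,p}$ convergence via~\eqref{eq:hodge_Lp_estimate} applied to $\widetilde A_k - \widetilde A_\infty$, and the subsequence principle. Both routes are valid. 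Your approach avoids setting up the spaces $\mathscr{V},\mathscr{W}$ and verifying invertibility of the linearization, at the price of delivering only bare continuity with no modulus; the paper's IFT argument would in principle give local Lipschitz (indeed smooth) dependence, though only continuity is used downstream in Proposition~\ref{prop:gauge_fixing_family}, so nothing is lost.
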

\begin{proof}
That such an $\underline{\mathrm{g}}$ exists provided $\ep_{\text{gauge}}$ is sufficiently small is of course Uhlenbeck's theorem. Uniqueness upon decreasing $\ep_{\text{gauge}}$ if necessary is also standard. (See the previous lemma.) Continuous dependence follows from the implicit function theorem. Specifically, given an arbitrary $A_0 \in \cU$, we define
\[
\mathrm{g}_0 = \underline{\mathrm{g}}_{A_0},\ \ \widetilde{A}_0 = \mathrm{g}_0 \cdot A_0.
\]
Notice that by our choice of $\ep$ and condition (U), 
\[
\|\widetilde{A}_0\|_{1, \frac{n}{2}} < \eta_{\text{rearr}}. 
\]
Since $\mathrm{g}_0(0) = \id$, from the uniqueness property of $\underline{\mathrm{g}}_A$ it is easily seen that whenever $A \in \cU$, we have
\[
\underline{\mathrm{g}}_A = \underline{\mathrm{g}}_{\mathrm{g}_0 \cdot A} \mathrm{g}_0.
\]
Thanks to this relation and the fact that the left action $\mathrm{g}_0 \cdot (\cdot)$ and right multiplication $(\cdot) \mathrm{g}_0$ are, respectively, continuous maps on $\cU$ and $W^{2, p}(B; G)$, to prove that $A \mapsto \underline{\mathrm{g}}_A$ is continuous at $A = A_0$, it suffices to establish its continuity at $A = \widetilde{A}_0$. To that end we define, as in Step 3a of the proof of \cite[Theorem 6.3]{wehrheim2004uhlenbeck}, the Banach spaces
\[
\mathscr{V} = \{u\in W^{2, p}(B; \mathfrak{g})\ |\ \int_B u = 0\},
\]
\[
\mathscr{W} = \{(f, \varphi)\ |\ f \in L^p(B; \mathfrak{g}), \ \ \varphi = h|_{\partial B} \text{ for some } h \in W^{1, p}(B; \mathfrak{g}), \text{ and } \int_B f + \int_{\partial B} \varphi = 0\},
\]
where all the integrals are computed using $g_{\RR^3}$, and we have used $(\cdot)|_{\partial B}$ to denote the Sobolev trace. Also, the space $\mathscr{W}$ is normed by
\[
\|(f, \varphi)\|_{\mathscr{W}} = \|f\|_{p} + \inf\{\|h\|_{1, p}\ |\ h \in W^{1, p}(B; \mathfrak{g}),\ h|_{\partial B} = \varphi\}.
\]
Following the estimates in~\cite[p.100 to p.102]{wehrheim2004uhlenbeck}, we see that upon decreasing $\ep$ (depending only on $n, p$) if necessary, the partial derivative with respect to the first variable of the map
\[
\begin{split}
N:\ & \mathscr{V} \times \cU \to \mathscr{W}\\
\ & (u,A) \mapsto (d^*(e^{u} \cdot A), (e^u \cdot A)(\partial_r)\big|_{\partial B})
\end{split}
\]
is invertible at $(0, \widetilde{A}_0)$. Thus we may apply the implicit function theorem to get $\rho > 0$ and a continuous map $u:B_{\rho}^{1, p}(\widetilde{A}_0) \to \mathscr{V}$ such that $u(\widetilde{A}_0)$ is the constant map zero, and that, for all $A \in B_{\rho}(\widetilde{A}_0)$,
\[
\|e^{u(A)}\cdot A\|_{1, \frac{n}{2}} < \eta_{\text{rearr}},\ \ N(u(A), A) = 0.
\]
In particular $(e^{u(A)}) \cdot A$ satisfies condition (U). Using again the uniqueness property of $\underline{\mathrm{g}}_A$ it is not hard to see that 
\[
e^{-u(A)(0)} \cdot e^{u(A)} = \underline{\mathrm{g}}_A.
\]
Since the left-hand side varies continuously in $W^{2, p}$ as $A$ varies in $W^{1, p}$ near $\widetilde{A}_0$, the same is true of the right-hand side, and we are done.
\end{proof}
A direct consequence of the previous proposition is the following.
\begin{prop}\label{prop:gauge_fixing_family}
Let $Y$ be any metric space and suppose $A:Y \to \cU$ is a continuous map. Then there exists a continuous map $\mathrm{g}: Y \to W^{2, p}(B; G)$ such that $\mathrm{g}(y) \cdot A(y)$ satisfies condition (U) for all $y \in Y$, and that $\mathrm{g}(y)\equiv \id$ whenever $A(y) = 0$.
\end{prop}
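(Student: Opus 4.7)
The plan is to deduce this directly from Proposition~\ref{prop:continuous_change}, of which Proposition~\ref{prop:gauge_fixing_family} is essentially an immediate corollary. First I would set
\[
\mathrm{g}(y) := \underline{\mathrm{g}}_{A(y)}, \qquad y \in Y,
\]
where $A \mapsto \underline{\mathrm{g}}_A$ is the continuous map $\cU \to W^{2,p}(B;G)$ supplied by Proposition~\ref{prop:continuous_change}. Since $A$ is continuous from $Y$ into $\cU$ by hypothesis, the composition $\mathrm{g}: Y \to W^{2,p}(B;G)$ is continuous as the composition of two continuous maps between metric spaces. By construction $\mathrm{g}(y)\cdot A(y) = \underline{\mathrm{g}}_{A(y)}\cdot A(y)$ satisfies condition (U) for every $y \in Y$, so the first assertion follows with no extra work.

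To verify the normalization $\mathrm{g}(y) \equiv \id$ whenever $A(y) = 0$, I would invoke the uniqueness clause of Proposition~\ref{prop:continuous_change} at $A = 0 \in \cU$. The constant map $\id \in W^{2,p}(B;G)$ evaluates to $\id$ at $0 \in B$, and $\id \cdot 0 = 0$ trivially fulfils the three requirements comprising condition~\eqref{eq:Uhlenbeck_gauge}, since $d^* 0 = 0$, the normal trace vanishes, and the Hodge-type bound reduces to $0 \leqslant 0$. Uniqueness therefore forces $\underline{\mathrm{g}}_0 = \id$, and hence $\mathrm{g}(y) = \underline{\mathrm{g}}_{A(y)} = \id$ at every $y \in Y$ with $A(y) = 0$, as claimed.

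There is no genuine obstacle here: all the real analytic input—the Uhlenbeck rearrangement that produces a Coulomb representative in the first place, the Hodge estimate~\eqref{eq:hodge_Lp_estimate}, and the implicit function theorem argument in the proof of Proposition~\ref{prop:continuous_change} that yields continuous dependence on $A$—has already been packaged into Proposition~\ref{prop:continuous_change}. The present statement is just the observation that such a continuous selection pulls back under any continuous parameter map with values in $\cU$, together with the trivial fixed-point identification at $A=0$ via uniqueness.
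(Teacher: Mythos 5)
Your proof is correct and follows exactly the route the paper takes: the paper's own proof simply sets $\mathrm{g}(y) = \underline{\mathrm{g}}_{A(y)}$ and invokes Proposition~\ref{prop:continuous_change}, omitting the details. Your verification that $\underline{\mathrm{g}}_0 = \id$ via the uniqueness clause is the right way to fill in the normalization claim that the paper leaves implicit.
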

\begin{proof}
The result follows from Proposition~\ref{prop:continuous_change} upon taking $\mathrm{g}(y) = \underline{\mathrm{g}}_{A(y)}$. The details are omitted.
\end{proof}

\section{Moser iteration}\label{sec:Moser-iteration}
In this section we record a version of Moser's iteration that is used repeatedly in Section~\ref{sec:estimates}. Let $(M, g)$ be an $n$-dimensional Riemannian manifold with $n \geqslant 3$, and $B_{2r}(x_0)$ a geodesic normal ball on which 
\begin{equation}\label{eq:moser-metric-comparison}
\Lambda^{-1} g_{\RR^n} \leqslant \exp_{x_0}^* g \leqslant \Lambda g_{\RR^n},
\end{equation}
for some $\Lambda > 0$. Here $g_{\RR^n}$ denotes the standard flat metric. Suppose further that $u: B_{2r}(x_0) \to [0, \infty)$ is a non-negative, bounded, Lipschitz function satisfying, in the distributional sense, that
\begin{equation}\label{eq:moser-diff-ineq}
\Delta u \leqslant b u + c,
\end{equation}
where $b, c \in L^q(B_{2r}(x_0))$ for some $q \in (\frac{n}{2}, \infty]$. For all $p_0 \in [1, \infty)$, we define
\[
\gamma_{n, q, p_0} = \left\{
\begin{array}{ll}
\frac{nq}{p_0(2q - n)}, & \text{ if }q < \infty,\\
\frac{n}{2p_0}, & \text{ if }q = \infty.
\end{array}
\right.
\]
\begin{lemm}\label{lemm:moser}
In the above setting, suppose $p_0 > 1$ and let $\tau \in (0, 1)$ be a scaling factor. Then, we have 
\begin{equation}\label{eq:moser-local}
\begin{split}
\|{u}\|_{\infty; B_{\tau r}(x_0)} \leqslant\ & C_{\Lambda, n, q, p_0} (1 - \tau)^{-\frac{n}{p_0}} \big(1 + r^{2 - \frac{n}{q}}\|b\|_{q; B_{r}(x_0)} \big)^{\gamma_{n, q, p_0}}\\
& \times \big(r^{-\frac{n}{p_0}}\|{u}\|_{p_0; B_r(x_0)} + r^{2 - \frac{n}{q}}\|c\|_{q; B_r(x_0)}\big).
\end{split}
\end{equation}
\end{lemm}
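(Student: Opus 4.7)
The plan is to carry out a standard Moser iteration, with the main care going into the bookkeeping that produces the precise exponent $\gamma_{n,q,p_0}$ and the prefactor $(1-\tau)^{-n/p_0}$. First, by the change of variables $y \mapsto \exp_{x_0}(ry)$ and the comparison \eqref{eq:moser-metric-comparison}, I would reduce to the case $r = 1$, where all Sobolev and Poincar\'e constants on sub-balls of $B_2(0) \subset \RR^n$ depend only on $n$ and $\Lambda$. To eliminate the inhomogeneity, I would set $v := u + K$ with $K := \|c\|_{q;B_1(0)}$, so that $v \geqslant K > 0$ and $\Delta v \leqslant (|b| + |c|/v)\,v =: \tilde b\, v$ distributionally, with $\|\tilde b\|_{q;B_1(0)} \leqslant \|b\|_{q;B_1(0)} + 1$. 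It then suffices to prove the homogeneous estimate $\|v\|_{\infty;B_\tau(0)} \leqslant C(1-\tau)^{-n/p_0}(1+\|\tilde b\|_{q;B_1(0)})^{\gamma_{n,q,p_0}}\|v\|_{p_0;B_1(0)}$ and then undo the substitution.

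Next, I would derive the Caccioppoli-type inequality. Testing the distributional inequality $\Delta v \leqslant \tilde b\, v$ against $\eta^2 v^{p-1}$ for $p \geqslant p_0 > 1$ and a $C_c^\infty$-cutoff $\eta$ supported in a slightly larger ball, and using $|\nabla(v^{p/2})|^2 \sim p^2 v^{p-2}|\nabla v|^2$, yields
\begin{equation*}
\int |\nabla(\eta v^{p/2})|^2 \leqslant C_n\, p \int |\nabla \eta|^2 v^p + C_n\, p^2 \int \eta^2 |\tilde b|\, v^p.
\end{equation*}
H\"older's inequality with exponents $(q, q/(q-1))$ bounds the last integral by $\|\tilde b\|_{q}\,\|\eta v^{p/2}\|_{2q/(q-1)}^2$. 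Since $q > n/2$, one can interpolate $L^{2q/(q-1)}$ between $L^2$ and $L^{2n/(n-2)}$ with parameter $\theta = n/(2q)$, apply Young's inequality, and absorb the resulting $L^{2n/(n-2)}$-term into the left via the Sobolev embedding, arriving at
\begin{equation*}
\|\eta v^{p/2}\|_{2n/(n-2);B_1(0)}^{2} \leqslant C_{n,\Lambda,q}\, p^{2} \big(1 + \|\tilde b\|_{q;B_1(0)}\big)^{2q/(2q-n)} \int_{B_1(0)} (|\nabla \eta|^2 + \eta^2)\, v^p,
\end{equation*}
where the exponent $2q/(2q-n) = 1/(1-\theta)$ is dictated by the Young absorption step.

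Finally, I would iterate on a sequence of shrinking balls $B_{\rho_k}(0)$ with $\rho_k := \tau + (1-\tau)2^{-k}$, exponents $p_k := \chi^k p_0$ where $\chi := n/(n-2)$, and cutoffs $\eta_k$ equal to $1$ on $B_{\rho_{k+1}}(0)$ and satisfying $|\nabla \eta_k| \leqslant C(1-\tau)^{-1} 2^k$. Taking $1/p_k$-th roots at each stage and telescoping, the product converges to
\begin{equation*}
\|v\|_{\infty;B_\tau(0)} \leqslant C\cdot(1-\tau)^{-2\Sigma}\,\big(1+\|\tilde b\|_{q;B_1(0)}\big)^{(2q/(2q-n))\Sigma}\,\|v\|_{p_0;B_1(0)},
\end{equation*}
where $\Sigma := \sum_{k\geqslant 0} 1/p_k = p_0^{-1}\chi/(\chi-1) = n/(2p_0)$, and the remaining infinite products (involving $\prod p_k^{2/p_k}$ and $\prod 4^{k/p_k}$) converge to finite constants depending only on $n$ and $p_0$. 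The $(1-\tau)$-exponent then becomes $2\Sigma = n/p_0$, while the $\tilde b$-exponent equals $(2q/(2q-n))\cdot n/(2p_0) = nq/(p_0(2q-n)) = \gamma_{n,q,p_0}$, in agreement with the claimed value (and correctly specializing to $n/(2p_0)$ as $q \to \infty$). Reverting the substitution $v = u+K$ and the initial rescaling reinstates the scale-invariant combinations $r^{-n/p_0}\|u\|_{p_0;B_r}$ and $r^{2-n/q}\|c\|_{q;B_r}$ appearing in \eqref{eq:moser-local}. The principal technical step, and essentially the only non-mechanical one, is the interpolation-Young argument that forces the exponent $2q/(2q-n)$; everything else is routine bookkeeping.
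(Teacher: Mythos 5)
Your proposal is correct and follows essentially the same route as the paper's proof: the same Caccioppoli inequality, the same H\"older--interpolation--Young absorption with $\theta = n/(2q)$ producing the exponent $2q/(2q-n)$, and the same iteration on radii $\tau + (1-\tau)2^{-k}$ with exponents $p_0\chi^k$. The only cosmetic differences are that you rescale to $r=1$ and fix the shift $K=\|c\|_{q}$ at the outset, whereas the paper carries $r$ through and chooses $k = r^{2-n/q}\|c\|_q + \delta$ at the end.
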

\begin{proof}
Every step of the proof is standard, and we include the details only to keep track of how exactly the constants are affected by $\|b\|_{q; B_r(x_0)}$. Below, when the center of a geodesic ball is not specified, it is understood to be centered at $x_0$. Also, all the integrals are taken with respect to the volume form of $g$, which is comparable on $B_{2r}(x_0)$ to the Euclidean volume form due to the assumption~\eqref{eq:moser-metric-comparison}. To begin, let $\varphi:\RR \to [0, \infty)$ be a non-negative smooth function such that
\[
\varphi(t) =
\left\{
\begin{array}{ll}
0 &, \text{ if }t \geqslant  1,\\
1&, \text{ if }t \leqslant 0.
\end{array}
\right.
\]
Given $k \in (0, \infty)$, $\beta \in [p_0, \infty)$, as well as $0 < \sigma < \rho \leqslant r$, we set $p  = \beta - 1$ and define
\[
\widetilde{b} = |b| + \frac{|c|}{k}, \quad \zeta = \varphi(\frac{d(\cdot, x_0)-\sigma}{\rho - \sigma}),\quad v = \zeta^2 \cdot [({u} + k)^p - k^p],
\]
where $d$ denotes the geodesic distance. Testing~\eqref{eq:moser-diff-ineq} against $v$ gives
\begin{equation}\label{eq:moser-first}
\begin{split}
\int_{M} |b|uv + |c|v 
\geqslant\ & \int_{M}  p\zeta^2 ({u} + k)^{p-1} |\nabla u|^2 -  2\zeta [({u} + k)^p - k^p]|\nabla\zeta||\nabla u|\\
\geqslant \ & p\int_{M}  \zeta^2 ({u} + k)^{p-1} |\nabla u|^2 -  2\int_{M}\zeta ({u} + k)^p |\nabla\zeta||\nabla u|\\
\geqslant\ & \frac{p}{2}\int_{M}  \zeta^2 ({u} + k)^{p-1} |\nabla u|^2 - \frac{2}{p}\int_{M} (u + k)^{p + 1}|\nabla \zeta|^2,
\end{split}
\end{equation}
where the last line follows from Young's inequality. Noting that 
\[
(u+k)^{p-1}|\nabla u|^2 = \frac{4}{(p+1)^2} \big| \nabla \big[ (u + k)^{\frac{p + 1}{2}} \big] \big|^{2},
\]
and that
\[
|b|uv + |c|v \leqslant \widetilde{b}(u + k)v \leqslant \widetilde{b}(u + k)^{p + 1}\zeta^2,
\]
we deduce from~\eqref{eq:moser-first} that
\[
\frac{2p}{(p + 1)^2}\int_{M} \zeta^2  \big| \nabla \big[ (u + k)^{\frac{p + 1}{2}} \big] \big|^{2} \leqslant \frac{2}{p}\int_{M}(u + k)^{p + 1}|\nabla \zeta|^2  + \int_{M}\widetilde{b} (u + k)^{p + 1}\zeta^2,
\]
and hence
\begin{equation}\label{eq:moser-tested}
\begin{split}
\int_{M} |\nabla[\zeta({u} + k)^{\frac{p  +1}{2}}]|^2\leqslant \ & 2\int_{M} \zeta^2 \big| \nabla[ ({u} + k)^{\frac{p + 1}{2}} ] \big|^2  + 2\int_{M} |\nabla \zeta|^2 ({u} + k)^{p + 1} \\
\leqslant \ & 2\big[  \big(\frac{1 + p}{p} \big)^2 + 1 
 \big]  \int_{M}|\nabla\zeta|^2 ({u} + k)^{p + 1} + \frac{(1 +p)^2}{p}\int_{M}\widetilde{b} ({u} + k)^{p + 1}\zeta^2\\
\leqslant \ & C_{p_0}\int_{M}|\nabla\zeta|^2 ({u} + k)^{p + 1} + C_{p_0}\cdot p\int_{M}\widetilde{b}({u} + k)^{p + 1}\zeta^2,
\end{split}
\end{equation}
where in passing to the third line we used the fact that $\frac{1 + p}{p} \leqslant  \frac{p_0}{p_0 - 1}$ whenever $p \geqslant  p_0 -1$. 

Next, since the function $\zeta({u} + k)^{\frac{p + 1}{2}}$ is supported in the geodesic ball $B_{2r}(x_0)$, the assumption~\eqref{eq:moser-metric-comparison} allows us to invoke the Euclidean Sobolev inequality accompanying the embedding $W^{1, 2} \hookrightarrow L^{\frac{2n}{n-2}}$ to deduce that
\begin{equation}\label{eq:cpt-Sobolev}
\Big(\int_{M} [\zeta^2({u} + k)^{p + 1}]^{\frac{n}{n-2}}\Big)^{\frac{n-2}{n}} \leqslant  C_{n, \Lambda} \int_{M} |\nabla[\zeta({u} + k)^{\frac{p  +1}{2}}]|^2.
\end{equation}
Hence, upon letting
\[
w = ({u} + k)^{p + 1}\zeta^2,
\]
we get from~\eqref{eq:cpt-Sobolev} and~\eqref{eq:moser-tested} that
\begin{equation}\label{eq:moser-after-sobolev}
\begin{split}
\|w\|_{\frac{n}{n-2}} \leqslant \ & C_{n, \Lambda,p_0} \int_{M}|\nabla\zeta|^2 ({u} + k)^{p + 1} + C_{n, \Lambda, p_0}\cdot p \int_{M}\widetilde{b} w.
\end{split}
\end{equation}
To continue, we set 
\[
\theta = \left\{
\begin{array}{ll}
\frac{n}{2q}, & \text{ if }q < \infty,\\
0, & \text{ if }q = \infty.
\end{array}
\right.
\]
In the case $q < \infty$, by H\"older's inequality, the standard interpolation inequality between $L^p$-norms, and Young's inequality, we have that
\begin{equation}\label{eq:tilde-b-holder-plus-interpolation}
\begin{split}
\int_{M}\widetilde{b}w  \leqslant  \|\widetilde{b}\|_{q; B_r} \cdot \|w\|_{\frac{q}{q-1}} \leqslant \ & \|\widetilde{b}\|_{q; B_r} \cdot \|w\|_{\frac{n}{n-2}}^{\theta} \|w\|_{1}^{1 - \theta}\\
\leqslant\ &  \|\widetilde{b}\|_{q; B_r} \cdot \big( \theta \delta \|w\|_{\frac{n}{n-2}} + (1 - \theta) \delta^{-\frac{\theta}{1 - \theta}} \|w\|_{1} \big),
\end{split}
\end{equation}
with $\delta > 0$ to be determined momentarily. Substituting~\eqref{eq:tilde-b-holder-plus-interpolation} back into~\eqref{eq:moser-after-sobolev}, we obtain upon rearranging that
\[
\begin{split}
(1 - C_{n, \Lambda, p_0} \cdot p \theta \|\widetilde{b}\|_{q; B_r} \cdot\delta)\|w\|_{\frac{n}{n-2}} \leqslant \ & C_{n, \Lambda, p_0}\int_{M}|\nabla\zeta|^2({u} + k)^{p+1} \\
&+ C_{n, \Lambda, p_0} \cdot p \|\widetilde{b}\|_{q; B_r}(1 - \theta)\delta^{-\frac{\theta}{1 - \theta}}\|w\|_1.
\end{split}
\]
Making the choice
\[
\delta = \frac{1}{2C_{n, \Lambda, p_0} \cdot p \theta \cdot( \|\widetilde{b}\|_{q; B_r} + t)},
\]
and then letting $t \to 0^{+}$, we obtain
\[
\begin{split}
\|w\|_{\frac{n}{n-2}} \leqslant \ & 2C_{n, \Lambda, p_0}\int_{M}|\nabla\zeta|^2({u} + k)^{p+ 1} + \big( 2C_{n, \Lambda, p_0}\cdot p \|\widetilde{b}\|_{q; B_r}\big)^{\frac{1}{1 - \theta}}\theta^{\frac{\theta}{1 - \theta}}\|w\|_1. 
\end{split}
\]
Recalling the definition of $w$ and our choice of $\zeta$, and using the fact that $p^{\frac{1}{1 - \theta}} \geqslant (p_0 - 1)^{\frac{1}{1 - \theta}}$, we deduce that
\begin{equation}\label{eq:moser-almost-ready}
\begin{split}
\|({u} + k)^{p + 1}\|_{\frac{n}{n-2}; B_{\sigma}} 
\leqslant  \ & C_{\Lambda, n, q, p_0} \cdot p^{\frac{1}{1 - \theta}}(\rho - \sigma)^{-2}\\
& \times \big[ 1 + (\rho - \sigma)^2 \|\widetilde{b}\|_{q; B_r}^{\frac{1}{1 - \theta}} \big]\|({u} + k)^{p+1}\|_{1; B_{\rho}}.
\end{split}
\end{equation}
On the other hand, when $q = \infty$, we replace~\eqref{eq:tilde-b-holder-plus-interpolation} by 
\begin{equation}\label{eq:tilde-b-simple}
\int_{M}\widetilde{b}\cdot ({u} + k)^{p + 1} \zeta^2\leqslant  \|\widetilde{b}\|_{\infty; B_{r}} \cdot \|({u} + k)^{p + 1}\zeta^2\|_{1}
\end{equation}
to deduce from~\eqref{eq:moser-after-sobolev} that the inequality~\eqref{eq:moser-almost-ready} still holds. At any rate, recalling that $\beta = p + 1$ and letting $\chi = \frac{n}{n-2}$, we get upon taking the $\beta$-th root of both sides of~\eqref{eq:moser-almost-ready} that
\begin{equation}\label{eq:almost-ready-to-iterate}
\|{u} + k\|_{\beta\chi; B_\sigma} \leqslant  C_{\Lambda, n, q, p_0}^{\frac{1}{\beta}} \cdot \beta^{\frac{1}{(1 - \theta)\beta}}(\rho - \sigma)^{-\frac{2}{\beta}} \big[ 1 + (\rho - \sigma)^2 \|\widetilde{b}\|_{q; B_{r}}^{\frac{1}{1 - \theta}} \big]^{\frac{1}{\beta}} \|{u} + k\|_{\beta; B_\rho}.
\end{equation}
For $m \in \NN \cup \{0\}$, we now define 
\[
r_m =  (\tau + \frac{1-\tau}{2^m})r,
\]
and apply~\eqref{eq:almost-ready-to-iterate} with
\[
\beta = p_0\chi^{m},\quad \sigma = r_{m + 1},\quad \rho = r_m,
\]
to obtain, with perhaps a different $C_{\Lambda, n, q, p_0}$, 
\begin{equation}\label{eq:ready-to-iterate}
\begin{split}
\|{u} + k\|_{p_0\chi^{m + 1}; B_{r_{m + 1}}} \leqslant \  &\Big( (C_{\Lambda, n, q, p_0})^{\chi^{-m}} (2^{\frac{2}{p_0}}\chi^{\frac{1}{p_0(1 - \theta)}})^{m \chi^{-m}} [(1 - \tau)r]^{-\frac{2}{p_0}\chi^{-m}}\Big)\\
& \times \Big(\big[ 1 + r^2 \|\widetilde{b}\|_{q; B_{r}}^{\frac{1}{1 - \theta}} \big]^{\frac{1}{p_0}\chi^{-m}} \|{u} + k\|_{p_0\chi^m; B_{r_m}}\Big).
\end{split}
\end{equation}
Recalling the elementary inequality 
\[
1 + t^{\alpha} \leqslant  (1 + t)^{\alpha}, \text{ whenever }t \geqslant  0,\ \alpha \geqslant  1,
\]
we have
\[
1 + r^2 \|\widetilde{b}\|_{q; B_{r}}^{\frac{1}{1 - \theta}} 
\leqslant  \big( 1 + r^{2 - \frac{n}{q}} \|\widetilde{b}\|_{q; B_{r}} \big)^{\frac{1}{1 - \theta}}.
\]
Substituting this back into~\eqref{eq:ready-to-iterate} and iterating, we get for all $m \geqslant  1$ that
\[
\begin{split}
\|{u} + k\|_{p_0\chi^m; B_{r_m}}\leqslant  \ &\Big((C_{\Lambda, n, q, p_0})^{\sum_{i = 0}^{m-1}\chi^{-i}}  \cdot (C_{n, q, p_0})^{\sum_{i = 0}^{m-1}i\chi^{-i}} \cdot [(1-\tau)r]^{-\frac{2}{p_0}\sum_{i = 0}^{m-1}\chi^{-i}}\Big)\\
& \times \Big(\big[ 1 + r^{2-\frac{n}{q}}\|\widetilde{b}\|_{q; B_r} \big]^{\frac{1}{p_0(1-\theta)}\sum_{i = 0}^{m-1}\chi^{-i}}\|{u} + k\|_{p_0; B_r}\Big).
\end{split}
\]
Letting $m \to \infty$ gives
\[
\|{u} + k\|_{\infty; B_{\tau r}} \leqslant  C_{\Lambda, n, q, p_0} [(1 - \tau)r]^{-\frac{n}{p_0}}\big( 1 + r^{2 - \frac{n}{q}}\|\widetilde{b}\|_{q; B_r}  \big)^{\gamma_{n, q, p_0}}(\|{u}\|_{p_0; B_r} + k r^{\frac{n}{p_0}}),
\]
where we used~\eqref{eq:moser-metric-comparison} to estimate $\|k\|_{p_0; B_r}$. Taking $k = r^{2 - \frac{n}{q}}\|c\|_{q; B_r} + \delta$, with $\delta > 0$ to be sent to $0$ in a moment, we find that
\[
r^{2 - \frac{n}{q}}\|\widetilde{b}\|_{q; B_r} \leqslant  r^{2 - \frac{n}{q}}\|b\|_{q; B_r} + 1.
\]
Consequently, 
\[
\begin{split}
\|{u}\|_{\infty; B_{\tau r}} \leqslant \ &  C_{\Lambda, n, q, p_0}(1-\tau)^{-\frac{n}{p_0}}\big(1 + r^{2 - \frac{n}{q}}\|b\|_{q; B_r}\big)^{\gamma_{n, q, p_0}}\\
& \times \big(r^{-\frac{n}{p_0}}\|{u}\|_{p_0; B_{r}} + r^{2 - \frac{n}{q}}\|c\|_{q;B_r} + \delta \big).
\end{split}
\]
Letting $\delta \to 0$ gives the estimate~\eqref{eq:moser-local} we want. 
\end{proof}
As is well-known (see for example~\cite[Section 4.2]{Han-Lin}), with a little bit more work, we can in fact allow $p_0 = 1$ in Lemma~\ref{lemm:moser}. This leads to the next result, which again is entirely standard. We include the proof for the sake of completeness.
\begin{lemm}\label{lemm:moser-improve}
Under the assumptions of Lemma~\ref{lemm:moser}, we have the following.
\vskip 1mm
\begin{enumerate}
\item[(a)] There holds
\begin{equation}\label{eq:moser-local-improved}
\begin{split}
\|{u}\|_{\infty; B_{\tau r}(x_0)} \leqslant\ & C_{\Lambda, n, q, p_0}\cdot \big(1 + r^{2 - \frac{n}{q}}\|b\|_{q; B_r(x_0)} \big)^{\gamma_{n, q, p_0}}\\
&\ \times \big( [(1-\tau)r]^{-\frac{n}{p_0}}\|{u}\|_{p_0; B_r(x_0)} + r^{2 - \frac{n}{q}}\|c\|_{q; B_r(x_0)} \big).
\end{split}
\end{equation}
\vskip 1mm
\item[(b)] The exists a constant $C_{\Lambda, n, q}$ so that the estimate in part (a) holds with $p_0 = 1$.
\end{enumerate}
\end{lemm}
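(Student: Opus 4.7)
The plan is to prove (a) by a short covering argument using Lemma~\ref{lemm:moser}, and then to deduce (b) from (a) via the standard interpolation trick combined with Young's inequality and a well-known iteration lemma.

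For part (a), fix $\tau \in (0, 1)$ and an arbitrary point $y \in B_{\tau r}(x_0)$. Since $d(y, x_0) < \tau r$, one checks that $B_{(1-\tau)r}(y) \subset B_r(x_0) \subset B_{2r}(x_0)$, so the metric comparison~\eqref{eq:moser-metric-comparison} persists on $B_{(1-\tau)r}(y)$ (after passing to normal coordinates centered at $y$, the comparison constant $\Lambda$ picks up only a universal factor). Apply Lemma~\ref{lemm:moser} on $B_{(1-\tau)r}(y)$ with scaling factor $\tau' = 1/2$, use $\|u\|_{p_0; B_{(1-\tau)r/2}(y)} \leq \|u\|_{p_0; B_r(x_0)}$ together with $((1-\tau)r)^{2-n/q}\|c\|_q \leq r^{2-n/q}\|c\|_q$ (noting $2 - n/q > 0$ since $q > n/2$), and finally take the supremum over $y \in B_{\tau r}(x_0)$. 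This yields~\eqref{eq:moser-local-improved}.

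For part (b), first apply (a) on concentric balls $B_s(x_0) \subset B_\rho(x_0)$ with $0 < s < \rho \leq r$ (which is justified since $B_{2\rho}(x_0) \subset B_{2r}(x_0)$), to obtain
\[
M_s \leq C\, B^{\gamma_{n,q,p_0}} \big( (\rho-s)^{-n/p_0}\|u\|_{p_0; B_\rho(x_0)} + r^{2 - n/q}\|c\|_{q; B_r(x_0)} \big),
\]
where $M_\sigma := \|u\|_{\infty; B_\sigma(x_0)}$ and $B := 1 + r^{2-n/q}\|b\|_{q; B_r(x_0)}$. Then interpolate via $\|u\|_{p_0; B_\rho} \leq M_\rho^{1 - 1/p_0} \|u\|_{1; B_\rho}^{1/p_0}$ and apply Young's inequality with a small parameter $\epsilon$ to absorb the $M_\rho^{1-1/p_0}$ factor, arriving at
\[
M_s \leq \epsilon M_\rho + C_{\epsilon, p_0}\, B^{p_0 \gamma_{n, q, p_0}}(\rho - s)^{-n}\|u\|_{1; B_r(x_0)} + C\, B^{\gamma_{n, q, p_0}} r^{2-n/q}\|c\|_{q;B_r(x_0)}
\]
for all $0 < s < \rho \leq r$. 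With $\epsilon$ fixed (e.g.\ $\epsilon = 1/2$), the standard iteration lemma of Giaquinta (\emph{Multiple Integrals in the Calculus of Variations}, Ch.~V, Lemma~3.1) applied to $\sigma \mapsto M_\sigma$ on $[\tau r, r]$ produces
\[
M_{\tau r} \leq C_n\, B^{p_0 \gamma_{n, q, p_0}} \big(((1-\tau)r)^{-n}\|u\|_{1; B_r(x_0)} + r^{2-n/q}\|c\|_{q; B_r(x_0)}\big),
\]
and since $p_0 \gamma_{n,q,p_0} = \gamma_{n,q,1}$ (checked directly from the formula defining $\gamma_{n,q,p_0}$), this gives (b) after noting $B \geq 1$.

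The main obstacle is the iteration step in (b): Young's inequality must be applied in a careful way so as to produce a genuine contraction factor on $M_\rho$ \emph{before} the iteration lemma is invoked. A naive geometric iteration along $\rho_m = (\tau + (1-\tau)/2^m) r$ with contraction factor $1/2$ would fail, as the accumulated $(\rho_m - \rho_{m+1})^{-n} \sim 2^{nm}$ terms dominate and cause the series to diverge; only after absorbing $M_\rho^{1-1/p_0}$ into a small constant times $M_\rho$ (via Young) can the standard iteration lemma deliver the desired bound uniformly in $\tau$.
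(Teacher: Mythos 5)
Your proposal is correct and follows essentially the same route as the paper: part (a) by recentering and applying Lemma~\ref{lemm:moser} with scaling factor $\tfrac12$ on balls $B_{(1-\tau)r}(y)$, and part (b) by interpolating $\|u\|_{p_0}$ between $L^\infty$ and $L^1$, absorbing via Young's inequality, and invoking the standard iteration lemma (the paper cites Lemma 4.3 of Han--Lin, which is the same statement as the Giaquinta lemma you cite). The only cosmetic difference is that the paper specializes to $p_0=2$ in step (b) while you keep a general $p_0>1$; the exponent identity $p_0\gamma_{n,q,p_0}=\gamma_{n,q,1}$ makes both versions land on the same conclusion.
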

\begin{proof}
For part (a), we let $y$ be any point in $B_{\tau r}(x_0)$ and apply Lemma~\ref{lemm:moser} on the ball $B_{(1-\tau )r}(y)$ with the scaling factor taken to be $\frac{1}{2}$. After absorbing the term $2^{\frac{n}{p_0}}$ in the resulting estimate into $C_{\Lambda, n, q, p_0}$, and using the inclusion $B_{(1-\tau)r}(y) \subset B_r(x_0)$, we infer that
\[
\begin{split}
\|{u}\|_{\infty; B_{(\frac{1-\tau}{2})r}(y)} \leqslant\ & C_{\Lambda, n, q, p_0}\cdot \big(1 + r^{2-\frac{n}{q}}\|b\|_{q; B_r(x_0)} \big)^{\gamma_{n, q, p_0}}\\
&\ \times \big( [(1-\tau)r]^{-\frac{n}{p_0}}\|{u}\|_{p_0; B_{r}(x_0)} + r^{2-\frac{n}{q}}\|c\|_{q; B_r(x_0)} \big).
\end{split}
\]
Since $y \in B_{\tau r}(x_0)$ is arbitrary, we conclude that~\eqref{eq:moser-local-improved} holds. 

Moving to part (b), for convenience we define
\[
A = C_{\Lambda, n, q, 2}(1 + r^{2 - \frac{n}{q}}\|b\|_{q; B_r(x_0)})^{\frac{nq}{2(2q - n)}}, \ \ B = r^{2 - \frac{n}{q}}\|c\|_{q; B_r(x_0)}.
\]
For any $0 < \rho \leqslant r$ and $\lambda \in (0, 1)$, by~\eqref{eq:moser-local-improved} with $p_0 = 2$, followed by the interpolation inequality
\[
\|{u}\|_{2; B_{\rho}(x_0)} \leqslant \|{u}\|_{\infty; B_{\rho}(x_0)}^{\frac{1}{2}} \|{u}\|_{1; B_{\rho}(x_0)}^{\frac{1}{2}},
\]
we have that
\[
\begin{split}
\|{u}\|_{\infty; B_{\lambda \rho}(x_0)} \leqslant\ & A \cdot [(1-\lambda)\rho]^{-\frac{n}{2}}\|{u}\|_{\infty; B_{\rho}(x_0)}^{\frac{1}{2}} \|{u}\|_{1; B_{\rho}(x_0)}^{\frac{1}{2}} +A B\\
\leqslant\ & \frac{1}{4}\|{u}\|_{\infty; B_{\rho}(x_0)} + \frac{A^2 \|{u}\|_{1; B_r(x_0)}}{[(1 - \lambda) \rho]^{n}} + AB.
\end{split}
\]
This being true for all $\rho \in (0, r]$ and $\lambda \in (0, 1)$, we get from Lemma 4.3 of~\cite{Han-Lin} some dimensional constant $c_n$ such that, for all $\tau \in (0, 1)$,
\[
\|{u}\|_{\infty; B_{\tau r}(x_0)} \leqslant c_n \Big( \frac{A^2 \|{u}\|_{1; B_r(x_0)}}{[(1 - \tau) r]^n} + AB \Big).
\]
Recalling the definitions of $A$ and $B$, and noting that $A \leqslant A^2$, we arrive at
\[
\begin{split}
\|{u}\|_{\infty; B_{\tau r}(x_0)} \leqslant\ & C_{\Lambda, n, q}\big(1 + r^{2 - \frac{n}{q}}\|b\|_{q; B_r(x_0)} \big)^{\frac{qn}{2q - n}}\\
&\ \times \big([(1-\tau)r]^{-n}\|{u}\|_{1; B_{r}(x_0)} + r^{2 - \frac{n}{q}}\|c\|_{q; B_r(x_0)} \big),
\end{split}
\]
which is the desired estimate.
\end{proof}
\section{Commuting the rough Laplacian with covariant derivatives}\label{sec:commute}
The main purpose of this appendix is to recall a standard commutator estimate involving the rough Laplacian. The dimension of $M$ is irrelevant here. Thus, we let $E$ be a complex rank-$2$ vector bundle associated with a principal $SU(2)$-bundle over a Riemannian $n$-manifold $M$, and denote by $\mathfrak{su}(E)$ the adjoint bundle of $E$. We remind the reader that by $\mathscr{A}(E)$ we mean the set of $SU(2)$-connections on $E$. Given a section $\Phi$ of $\fsu(E)$, we let $Z(\Phi) = \{x \in M\ |\ |\Phi(x)| = 0\}$. 

Fixing an arbitrary pair $(\nabla, \Phi) \in \mathscr{A}(E) \times \Gamma(\fsu(E))$, for use here and in Appendix~\ref{sec:proofs_derivative_formulas}, we mention a few basic inequalities concerning the norm of $\fsu(E)$-valued tensors, their derivatives with respect to $\nabla$, and their transversal and longitudinal parts with respect to the splitting~\eqref{eq: adjoint_decomp} induced by $\Phi$. 

Given $\fsu(E)$-valued tensors $A$ and $B$ of degree $p$ and $q$, respectively, we write $[A, B]$ for the $\fsu(E)$-valued $(p + q)$-tensor defined by
\[
[A, B]_{i_1, \cdots, i_{p+q}} = [A_{i_1, \cdots, i_p}, B_{i_{p + 1}, \cdots, i_{p+q}}].
\]
Then from~\eqref{eq: bracket_norm} applied to each component, we immediately get 
\begin{equation}\label{eq:tensor-bracket-norm}
\big|[A, B]\big| \leqslant |A||B|.
\end{equation}
By~\eqref{eq: double_bracket_ineq}, still applied component-wise, together with the triangle inequality, we have
\begin{equation}\label{eq:tensor-bracket-norm-with-decomp-1}
\big|[[A, B], \Phi]\big| \leqslant \big|[A, \Phi]\big||B| + |A|\big|[B, \Phi] \big|.
\end{equation}
Likewise, away from $Z(\Phi)$, we have thanks to~\eqref{eq: bracket_ineq} that
\begin{equation}\label{eq:tensor-bracket-norm-with-decomp-2}
\big|[A, B]\big| \leqslant |A^\perp||B| + |A||B^{\perp}|.
\end{equation}

Suppose in addition that $m \in \NN$. Then by Leibniz's rule, the triangle inequality and Schwarz's inequality, we have
\begin{equation}\label{eq:tensor-bracket-derivative-bound}
\sum_{i_1, \cdots, i_m}|\nabla^m_{i_1, \cdots, i_m}[A, B]|^2
\leqslant C_m\sum_{l = 0}^{m}|[\nabla^l A, \nabla^{m-l}B]|^2.
\end{equation}
By a similar argument we also get
\begin{equation}\label{eq:tensor-bracket-derivative-bound-no-head}
\sum_{i_1, \cdots, i_m}|\nabla_{i_1, \cdots, i_m}^m[A, B] - [\nabla_{i_1, \cdots, i_m}^m A, B]|^2 \leqslant C_m \sum_{l = 0}^{m-1}|[\nabla^l A, \nabla^{m-l}B]|^2,
\end{equation}
and, when $m \geqslant 2$,
\begin{equation}\label{eq:tensor-bracket-derivative-bound-no-ends}
\begin{split}
\sum_{i_1, \cdots, i_m}|\nabla_{i_1, \cdots, i_m}^m[A, B] - [A, \nabla_{i_1, \cdots, i_m}^m B]  -\ & [\nabla_{i_1, \cdots, i_m}^m A, B]|^2 \leqslant C_m \sum_{l = 1}^{m-1}|[\nabla^l A, \nabla^{m-l}B]|^2.
\end{split}
\end{equation}

Next, with $R$ denoting the Riemann curvature tensor of $M$, we let $R \cdot A$ be the $(2 + p)$-tensor given by 
\[
\begin{split}
(R_{ij} \cdot A)_{i_1, \cdots, i_p} =\ & (R \cdot A)_{i, j, i_1, \cdots, i_p}:= R_{i,j,i_1,k}A_{k, i_2, \cdots, i_p} + \cdots + R_{i,j,i_p,k}A_{i_1, \cdots, i_{p-1}, k},
\end{split}
\]
and define $(\nabla^m R) \cdot A$, which would be a $(2 + m + p)$-tensor, analogously. Each of the $p$ terms on the right-hand side above being a contraction of $R \otimes A$ with indices permuted, we have for all $m \in \NN \cup \{0\}$ that
\begin{equation}\label{eq:curvature-action-derivative-bound}
|\nabla^m(R \cdot A)| \leqslant C_{n, m, p} \sum_{l = 0}^{m} |\nabla^{l}R| |\nabla^{m  - l}A|,
\end{equation}
and a similar estimate holds when $R$ is replaced by any of its covariant derivatives. 

We now come to the standard commutator estimate mentioned above.
\begin{lemm}\label{lemm:commutator-estimate}
Suppose $(\nabla,\Phi)\in \mathscr{A}(E)\times\Gamma(\mathfrak{su}(E))$ and let $S$ be an $\fsu(E)$-valued $p$-tensor on $M$. Then for all $m \in \NN$, we have the following pointwise estimate:
\begin{equation}\label{eq:commutator-estimate}
|\nabla^m \nabla^* \nabla S - \nabla^*\nabla \nabla^m S| \leqslant C_{n, m, p}\sum_{l = 0}^{m} |[\nabla^l F, \nabla^{m - l}S]| + C_{n, m, p}\sum_{l = 0}^{m} |\nabla^l R| |\nabla^{m-l}S|.
\end{equation}
\end{lemm}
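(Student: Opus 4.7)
The plan is to prove the estimate by induction on $m$, working at the level of tensor identities before taking norms, so that the Leibniz rule can be applied cleanly. The only algebraic inputs needed are the Ricci identity
\[
[\nabla_i,\nabla_j]T = -[F_{ij},T] + R_{ij}\cdot T,
\]
valid for any $\fsu(E)$-valued tensor $T$ (the first term being the action of $\mathrm{ad}(F_\nabla)$ on the bundle factor, the second being the action of the Riemann tensor on the form factors), together with the pointwise bounds \eqref{eq:tensor-bracket-norm}, \eqref{eq:tensor-bracket-derivative-bound}, and \eqref{eq:curvature-action-derivative-bound}.

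For the base case $m=1$, I would work in a local orthonormal frame that is parallel at the point of interest, so that $\nabla^*\nabla S = -\sum_a \nabla_a\nabla_a S$ at that point. Then
\[
\nabla_k\nabla^*\nabla S - \nabla^*\nabla\nabla_k S = -\sum_a \big(\nabla_k\nabla_a\nabla_a S - \nabla_a\nabla_a\nabla_k S\big),
\]
and moving $\nabla_k$ past $\nabla_a$ twice, using the Ricci identity and expanding the resulting $\nabla_a[F_{ka},\,\cdot\,]$ and $\nabla_a(R_{ka}\cdot\,\cdot\,)$ via the Leibniz rule, produces precisely the four kinds of terms $[F,\nabla S]$, $[\nabla F,S]$, $R\cdot\nabla S$, and $\nabla R\cdot S$ (plus a Ricci contraction $\Ric\cdot\nabla S$, which is controlled by $|R||\nabla S|$). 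Bounding in norm via \eqref{eq:tensor-bracket-norm} and \eqref{eq:curvature-action-derivative-bound} with $m=0,1$ yields \eqref{eq:commutator-estimate} for $m=1$.

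For the inductive step, the key observation is that the inductive hypothesis should be promoted from a norm bound to a structural statement: for every $k\leqslant m-1$ one has
\[
\nabla^k\nabla^*\nabla S - \nabla^*\nabla\nabla^k S \;=\; \sum_{l=0}^{k} \big(c_{k,l}\,[\nabla^l F,\nabla^{k-l}S] + d_{k,l}\,(\nabla^l R)\cdot \nabla^{k-l}S\big),
\]
where the coefficient tensors $c_{k,l}, d_{k,l}$ depend only on $n,p,k,l$ and metric contractions. Writing $\nabla^m = \nabla\circ\nabla^{m-1}$, one decomposes
\[
\nabla^m\nabla^*\nabla S - \nabla^*\nabla\nabla^m S
= \nabla\big(\nabla^{m-1}\nabla^*\nabla S - \nabla^*\nabla\nabla^{m-1}S\big) + [\nabla,\nabla^*\nabla]\nabla^{m-1}S.
\]
The second term is handled by the base case applied to $\nabla^{m-1}S$, yielding terms $[F,\nabla^m S]$, $[\nabla F,\nabla^{m-1}S]$, $R\cdot\nabla^m S$, $\nabla R\cdot\nabla^{m-1}S$ — all within the target bound. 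For the first term, one differentiates the structural formula from the induction hypothesis with $k=m-1$; by the Leibniz rule each $\nabla[\nabla^l F,\nabla^{m-1-l}S]$ expands into a sum of $[\nabla^{l+1}F,\nabla^{m-1-l}S]$ and $[\nabla^l F,\nabla^{m-l}S]$, and similarly for the $\nabla R$-type terms using \eqref{eq:curvature-action-derivative-bound}. All resulting indices $l,m-l$ lie in $\{0,\dots,m\}$, so taking norms via \eqref{eq:tensor-bracket-derivative-bound} and \eqref{eq:curvature-action-derivative-bound} produces exactly the right-hand side of \eqref{eq:commutator-estimate}.

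The main obstacle is purely bookkeeping: one must carry the inductive hypothesis as an identity (not a norm estimate) in order to differentiate it, and one must verify that the combinatorial factors generated by repeated application of the Leibniz rule remain under control — which is automatic because at each induction step only binomial coefficients depending on $n$, $m$, $p$ enter. No analytic input beyond the Ricci identity and the basic tensor inequalities of Appendix~\ref{sec:commute} is required.
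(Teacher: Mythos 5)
Your proof is correct and follows essentially the same route as the paper: induction on $m$ over $\fsu(E)$-valued tensors of arbitrary degree, with the base case obtained by commuting one derivative past $\nabla^*\nabla$ via the Ricci identity and the inductive step by a Leibniz-type splitting, all bounded with the tensor inequalities of Appendix~\ref{sec:commute}. The only difference is in the splitting: you peel off the \emph{outermost} derivative (writing $\nabla^m=\nabla\circ\nabla^{m-1}$, which forces you to carry the induction hypothesis as a structural identity so it can be differentiated), whereas the paper peels off the \emph{innermost} one, writing the $(m+1)$-commutator as $\nabla^m(\nabla\nabla^*\nabla S-\nabla^*\nabla\nabla S)$ plus $\nabla^m\nabla^*\nabla(\nabla S)-\nabla^*\nabla\nabla^m(\nabla S)$, so that only the explicit base-case identity is ever differentiated and the inductive hypothesis is used purely as a norm bound applied to the $(p+1)$-tensor $\nabla S$; both variants are sound.
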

\begin{proof}
We prove by induction on $m \in \NN$ that~\eqref{eq:commutator-estimate} holds for any $\fsu(E)$-valued tensor $S$ of arbitrary degree $p$. For the base case, by a direct computation we have in terms of a local geodesic frame that
\[
\begin{split}
\nabla_i\nabla^*\nabla S =\ & -\nabla^3_{i, k, k}S\\
=\ & -(\nabla^3_{k, i, k}S + [F_{ik}, \nabla_k S] - \nabla_{\Ric(e_i)} S - R_{ik}\cdot \nabla_{k} S)\\
=\ & -\nabla_k \big(\nabla^{2}_{k, i}S + [F_{ik}, S] - R_{ik} \cdot S \big) - [F_{ik}, \nabla_k S] + \nabla_{\Ric(e_i)} S + R_{ik}\cdot \nabla_{k} S\\
=\ & (\nabla^*\nabla \nabla S)_{i} - [\nabla_{k}F_{ik}, S] - 2[F_{ik}, \nabla_{k}S] \\
&+ \nabla_{k}R_{ik} \cdot S + 2 (R_{ik} \cdot \nabla S)_{k, \cdots} - \Ric_{ij}\nabla_{j}S,
\end{split}
\]
where in getting the last equality we used the fact that $\nabla_k(R_{ik} \cdot S) = \nabla_k R_{ik} \cdot S + R_{ik} \cdot \nabla_k S$, along with the identity
\[
(R_{ik} \cdot \nabla S)_{k, \cdots} = R_{i k k j}\nabla_{j}S + R_{ik} \cdot \nabla_k S = \nabla_{\Ric(e_i)} S + R_{ik} \cdot \nabla_k S.
\]
Thus we have shown that
\begin{equation}\label{eq:commutator-identity-1}
\begin{split}
&\nabla \nabla^*\nabla S - \nabla^*\nabla \nabla S\\
=\ & -[\nabla_{e_k} F_{\cdot, e_k}, S] - 2[F_{\cdot, e_k}, \nabla_{e_k}S] + \nabla_{e_k}R_{\cdot, e_k}\cdot S + 2(R_{\cdot, e_k}\cdot \nabla S)_{e_k, \cdots} - \Ric_{\cdot, e_k}\nabla_{e_k}S,
\end{split}
\end{equation} from which we deduce, with the help of~\eqref{eq:curvature-action-derivative-bound}, that
\begin{equation}\label{eq:commutator-estimate-base}
|\nabla \nabla^*\nabla S - \nabla^*\nabla \nabla S| \leqslant C_{n, p}(|[\nabla F, S]| + |[F, \nabla S]| + |\nabla R||S| + |R||\nabla S|).
\end{equation}
This establishes the base step. For the induction step, suppose that for some $m \geqslant 1$, the estimate~\eqref{eq:commutator-estimate} holds for any $\fsu(E)$-valued $p$-tensor, $p$ being arbitrary. We split the commutator term for the $(m+1)$-case as
\[
\begin{split}
&\nabla^{m + 1} \nabla^*\nabla S - \nabla^*\nabla \nabla^{m + 1} S\\
=\ & \underbrace{\nabla^m(\nabla \nabla^*\nabla S - \nabla^*\nabla \nabla S)}_{T_1} + \underbrace{\nabla^m \nabla^*\nabla (\nabla S) - \nabla^*\nabla \nabla^m(\nabla S)}_{T_2}.
\end{split}
\]
The tensor $T_2$ we estimate by applying the induction hypothesis to $\nabla S$:
\begin{equation}\label{eq:commutator-estimate-T2}
|T_2| \leqslant C_{n, m, p}\sum_{l = 0}^{m}|[\nabla^l F, \nabla^{m + 1 - l}S]| + C_{n, m, p}\sum_{l = 0}^{m} |\nabla^l R||\nabla^{m + 1 - l}S|.
\end{equation}
For $T_1$ we use~\eqref{eq:commutator-identity-1} together with standard properties of contractions to see that 
\[
|T_1| \leqslant C_n(|\nabla^m[\nabla F, S]| + |\nabla^m [F, \nabla S]| + |\nabla^m(\nabla R \cdot S)| + |\nabla^m(R \cdot \nabla S)| + |\nabla^m (\Ric\otimes \nabla S)|).
\]
By~\eqref{eq:tensor-bracket-derivative-bound} we have
\begin{equation}\label{eq:commutator-estimate-T1-I}
|\nabla^m [\nabla F, S]| + |\nabla^m [F, \nabla S]| \leqslant C_m \sum_{l = 0}^{m + 1}|[\nabla^l F, \nabla^{m + 1 - l}S]|.
\end{equation}
Repeating the reasoning leading to~\eqref{eq:tensor-bracket-derivative-bound} shows that
\begin{equation}\label{eq:commutator-estimate-T1-II}
|\nabla^m (\Ric\otimes \nabla S)| \leqslant C_{n, m}\sum_{l = 0}^{m} |\nabla^{l}R| |\nabla^{m + 1 - l}S|.
\end{equation}
On the other hand, by~\eqref{eq:curvature-action-derivative-bound} we have
\[
|\nabla^m(\nabla R \cdot S)| + |\nabla^m (R \cdot \nabla S)| \leqslant C_{n, m, p} \sum_{l = 0}^{m+1}|\nabla^{l}R| |\nabla^{m + 1 - l}S|.
\]
Combining this with~\eqref{eq:commutator-estimate-T1-I} and~\eqref{eq:commutator-estimate-T1-II} yields
\[
|T_1| \leqslant C_{n,m, p} \sum_{l = 0}^{m+1}|[\nabla^l F, \nabla^{m + 1 - l}S]| + C_{n, m, p}\sum_{l = 0}^{m + 1} |\nabla^l R| |\nabla^{m + 1 - l}S|.
\]
We complete the inductive step upon recalling~\eqref{eq:commutator-estimate-T2}, so that~\eqref{eq:commutator-estimate} holds for any $m$. 
\end{proof}
\section{Proofs of some derivative formulas}\label{sec:proofs_derivative_formulas}
In this section we give the proofs of Lemma~\ref{lemm:Psi_Theta_relations}, Lemma~\ref{lemm:bochner_for_derivatives} and Lemma~\ref{lemm:trans_laplacian_diffed}. We shall make frequent use of the estimates mentioned at the start of Appendix~\ref{sec:commute}.

\begin{proof}[Proof of Lemma~\ref{lemm:Psi_Theta_relations}]
Let $S$ stand for either $\nabla\Phi$ or $\ep F_{\nabla}$. By~\eqref{eq:tensor-bracket-derivative-bound-no-head}, we have 
\begin{equation}\label{eq:Leibniz-and-Schwarz}
\begin{split}
|\nabla^m[S, \Phi] - [\nabla^m S, \Phi]| \leqslant\ & C_{m}\sum_{i = 0}^{m-1}|[\nabla^i S, \nabla^{m-i}\Phi]|.
\end{split}
\end{equation}
Temporarily setting
\[
A = |\nabla^m[\ep F_{\nabla}, \Phi]|,\ \ B = |\nabla^m[\nabla\Phi, \Phi]|,\ \ C = |[\nabla^m (\ep F_{\nabla}), \Phi]|,\ \ D = |[\nabla^{m + 1}\Phi, \Phi]|,
\]
we see after two applications of the triangle inequality, followed by~\eqref{eq:Leibniz-and-Schwarz}, that
\begin{equation}\label{eq:Psi_Theta_bridge}
\begin{split}
\big| \Theta_m - (C^2 + D^2)^{\frac{1}{2}} \big| =\ & \big| (A^2 + B^2)^{\frac{1}{2}} - (C^2 + D^2)^{\frac{1}{2}} \big|\\
\leqslant\ & (|A - C|^2 + |B - D|^2)^{\frac{1}{2}}\\
\leqslant\ & \big( \big|\nabla^m[\ep F_{\nabla}, \Phi] - [\nabla^m (\ep F_{\nabla}), \Phi]\big|^2 +  \big| \nabla^m[\nabla\Phi, \Phi] - [\nabla^{m}(\nabla\Phi), \Phi] \big|^2\big)^{\frac{1}{2}}\\
\leqslant\ & C_{m}\sum_{i = 0}^{m-1}|[\nabla^i (\ep F_{\nabla}), \nabla^{m-i}\Phi]| + C_{m}\sum_{i = 0}^{m-1}|[\nabla^i (\nabla\Phi), \nabla^{m-i}\Phi]|.
\end{split}
\end{equation}
From this we easily get~\eqref{eq:Psi_Theta_relation} upon using~\eqref{eq:tensor-bracket-norm} and the fact that $(C^2 + D^2)^{\frac{1}{2}} \leqslant |\Phi|\Psi_m$. To establish~\eqref{eq:Theta_perp_relation} on $M \setminus Z(\Phi)$, we again use~\eqref{eq:Psi_Theta_bridge}, observing instead that $(C^2 + D^2)^{\frac{1}{2}} = |\Phi|\Psi_m^\perp$ on $M \setminus Z(\Phi)$, and that, by~\eqref{eq:tensor-bracket-norm-with-decomp-2},
\[
\begin{split}
\sum_{i = 0}^{m-1}\big|[\nabla^i S, \nabla^{m-i}\Phi]\big| 
\leqslant\ & \sum_{i = 0}^{m-1} (\Psi_i^\perp \Psi_{m-1-i} + \Psi_i \Psi_{m-1-i}^\perp) = 2\sum_{i=0}^{m-1}\Psi_i^\perp \Psi_{m-1-i}.
\end{split}
\]
\end{proof}

\begin{proof}[Proof of Lemma~\ref{lemm:bochner_for_derivatives}]
Each term on the right-hand side of~\eqref{eq:nablaPhi_laplacian_with_lambda} and~\eqref{eq:F_laplacian_with_lambda}, omitting some constant factors not depending on $\ep$ or $\lambda$, are of one of the following types: 
\vskip 1mm
\begin{enumerate}
\item[(1)] $a\ep^{-2}(w - |\Phi|^2)S - (1-a)\ep^{-2} [\Phi, [S, \Phi]]$, where we recall that $a = \lambda$ if $S = \nabla\Phi$, whereas $a = 0$ if $S = \ep F_{\nabla}$.
\vskip 1mm
\item[(2)] $\ep^{-1} [T_1, T_2]$ with at most one pair of indices contracted, where each of $T_1$ and $T_2$ can be either $\nabla \Phi$ or $\ep F_{\nabla}$. We denote such terms by $\ep^{-1} T_1 * T_2$. 
\vskip 1mm
\item[(3)] $R \otimes S$ with two pairs of indices contracted, which we abbreviate as $R* S$.
\end{enumerate}
\vskip 1mm
Accordingly, when computing $\nabla^m \nabla^*\nabla S$, we consider each type of terms separately. We first notice by~\eqref{eq:tensor-bracket-derivative-bound-no-head} that
\[
\begin{split}
&\Big|\nabla^m\big[ (w - |\Phi|^2)S\big] - (w - |\Phi|^2)\nabla^{m}S \Big|\\
\leqslant\ &   C_{m}\sum_{i = 0}^{m-1}|\nabla^{m-i}(\frac{1 - 3|\Phi|^2}{2})||\nabla^{i}S|\leqslant C_{m}\sum_{i = 0}^{m-1}\sum_{j + k = m-i}|\nabla^j \Phi| |\nabla^k\Phi| |\nabla^{i}S|,
\end{split}
\]
which implies
\begin{equation}\label{eq:bochner_for_derivatives_I}
\begin{split}
\bangle{\nabla^m\big( a\frac{w - |\Phi|^2}{\ep^2}S \big), \nabla^{m}S} \leqslant\ & a\frac{w - |\Phi|^2}{\ep^2}|\nabla^{m}S|^2 \\
&+  a \cdot \frac{C_{m}}{\ep^2}\sum_{i = 0}^{m-1}\sum_{j + k = m-i}|\nabla^j \Phi| |\nabla^k\Phi| |\nabla^{i}S||\nabla^{m}S|.
\end{split}
\end{equation}
Another application of~\eqref{eq:tensor-bracket-derivative-bound-no-head} gives
\[
\begin{split}
\big| \nabla^m [\Phi, [S, \Phi]] - [\Phi, \nabla^m[S, \Phi]] \big| \leqslant\ & C_{m} \sum_{i = 0}^{m-1}\big|[\nabla^{m-i}\Phi, \nabla^i[S, \Phi]]\big| \leqslant C_{m}\sum_{i=0}^{m-1} \big|\nabla^{m-i}\Phi\big|\big|\nabla^i[S, \Phi]\big|.
\end{split}
\]
On the other hand, the reasoning leading to~\eqref{eq:tensor-bracket-derivative-bound-no-head} also yields
\begin{equation}\label{eq:leading_double_bracket_bound_1}
\begin{split}
\big| [\Phi, \nabla^m[S, \Phi]] - [\Phi, [\nabla^m S, \Phi]] \big| \leqslant\ & C_{m}\sum_{i = 0}^{m-1}\big|[\Phi, [\nabla^i S, \nabla^{m-i}\Phi]]\big|\\
\leqslant\ & C_{m}\sum_{i=0}^{m-1} \big( |[\nabla^i S,\Phi]||\nabla^{m-i}\Phi| + |\nabla^i S||[\nabla^{m-i}\Phi, \Phi]| \big),
\end{split}
\end{equation}
where the second inequality follows from~\eqref{eq:tensor-bracket-norm-with-decomp-1}. Consequently, we get
\begin{small}
\begin{equation}\label{eq:bochner_for_derivatives_II}
\begin{split}
-\frac{1-a}{\ep^2}\bangle{\nabla^m[\Phi, [S, \Phi]], \nabla^m S} 
\leqslant\ & -\frac{1- a}{\ep^2}|[\nabla^m S, \Phi]|^2 + \frac{C_{m}|1-a|}{\ep^2}\sum_{i=0}^{m-1} |\nabla^i S||[\nabla^{m-i}\Phi, \Phi]||\nabla^m S| \\
& +  \frac{C_{m}|1-a|}{\ep^2}\sum_{i = 0}^{m-1}|\nabla^{m-i}\Phi| \big(|\nabla^i[S, \Phi]| + |[\nabla^i S, \Phi]| \big) |\nabla^m S|.
\end{split}
\end{equation}
\end{small}
Next, by~\eqref{eq:tensor-bracket-derivative-bound-no-ends} and the triangle inequality, we have that
\begin{equation}\label{eq:laplacian_quad_leading}
|\nabla^m[T_1, T_2]| \leqslant  |[\nabla^m T_1, T_2]| + |[T_1, \nabla^m T_2]| + C_{m} \sum_{i = 1}^{m-1}|[\nabla^{i}T_1, \nabla^{m-i}T_2]|,
\end{equation}
from which we deduce that
\begin{equation}\label{eq:bochner_for_derivatives_III}
\begin{split}
\frac{1}{\ep}\big|\bangle{\nabla^m(T_1 * T_2), \nabla^m S}\big| \leqslant\ & \frac{C_n}{\ep}|\nabla^m[T_1, T_2]| |\nabla^m S|\\
\leqslant \ & \frac{C_n}{\ep}\Psi_0\Psi_m |\nabla^m S| + \frac{C_{n, m}}{\ep} \sum_{i=1}^{m-1}\Psi_i \Psi_{m-i}|\nabla^m S|,
\end{split}
\end{equation}
where the summation is absent if $m = 1$. Finally, following the argument leading to~\eqref{eq:tensor-bracket-derivative-bound-no-head}, we get that
\[
|\nabla^m(R \otimes S)| \leqslant |R| |\nabla^m S| + C_{m}\sum_{i = 0}^{m-1} |\nabla^{m-i}R||\nabla^i S|,
\]
and hence 
\begin{equation}\label{eq:bochner_for_derivatives_IV}
\big|\bangle{\nabla^m(R * S), \nabla^m S}\big| \leqslant  C_{n}|R| |\nabla^m S|^2 + C_{m, n}\sum_{i=0}^{m-1} |\nabla^{m-i}R||\nabla^i S| |\nabla^m S|.
\end{equation}
Combining~\eqref{eq:bochner_for_derivatives_I},~\eqref{eq:bochner_for_derivatives_II},~\eqref{eq:bochner_for_derivatives_III} and~\eqref{eq:bochner_for_derivatives_IV}, 
and noting that
\begin{equation}\label{eq:bochner_for_derivatives_leading}
\begin{split}
\frac{aw - a|\Phi|^2}{\ep^2}|\nabla^m S|^2 - \frac{1 - a}{\ep^2}|[\nabla^mS, \Phi]|^2 \leqslant\ &\frac{aw}{\ep^2}|\nabla^m S|^2 - \frac{1}{\ep^2} |[\nabla^m S, \Phi]|^2, 
\end{split}
\end{equation}
we infer that~\eqref{eq:bochner_for_derivatives} holds. To establish the alternative estimate asserted in case (i), where $S = \nabla\Phi$ and $a = \lambda$, we note by~\eqref{eq:tensor-bracket-derivative-bound-no-head} that
\[
\begin{split}
\big| \nabla^m[\nabla\Phi, \ep F_{\nabla}] \big| 
\leqslant\ & |\nabla^{m + 1}\Phi||\ep F_{\nabla}| + C_{m}\sum_{i = 0}^{m-1} |[\nabla^{i + 1}\Phi, \nabla^{m-i}(\ep F_{\nabla})]|,
\end{split}
\]
which leads to
\[
\frac{1}{\ep}\big| \bangle{\nabla^m (\ep F_{\nabla}* \nabla\Phi) , \nabla^{m + 1}\Phi}\big| \leqslant \frac{C_n}{\ep}\Psi_0 |\nabla^{m + 1}\Phi|^2 + \frac{C_{m,n}}{\ep}\sum_{i=0}^{m-1} \Psi_{m-i}|\nabla^{i+1}\Phi||\nabla^{m + 1}\Phi|.
\]
In view of~\eqref{eq:nablaPhi_laplacian_with_lambda}, adding this estimate instead of~\eqref{eq:bochner_for_derivatives_III} with~\eqref{eq:bochner_for_derivatives_I},~\eqref{eq:bochner_for_derivatives_II} and~\eqref{eq:bochner_for_derivatives_IV} proves the asserted modifications of~\eqref{eq:bochner_for_derivatives} when $S = \nabla\Phi$. For case (ii), we estimate by~\eqref{eq:laplacian_quad_leading} and~\eqref{eq:tensor-bracket-norm-with-decomp-2} that
\[
\begin{split}
|\nabla^m [T_1, T_2]| \leqslant\ & 2(\Psi_m^{\perp}\Psi_0 + \Psi_m \Psi_0^{\perp}) + C_m \sum_{i = 1}^{m-1}(\Psi_i^{\perp}\Psi_{m-i} + \Psi_i \Psi_{m-i}^{\perp})\\
\leqslant\ & 2\Psi_0^{\perp}\Psi_m + C_m \sum_{i = 0}^{m-1} \Psi_{m-i}^{\perp}\Psi_i.
\end{split}
\]
From this we easily get
\[
\frac{1}{\ep}\big|\bangle{\nabla^m (T_1 * T_2), \nabla^m S}\big| \leqslant \frac{C_n}{\ep} \Psi_0^{\perp} \Psi_m |\nabla^m S| + \frac{C_{n, m}}{\ep}\sum_{i=0}^{m-1} \Psi_{m-i}^{\perp}\Psi_i |\nabla^m S|.
\]
Replacing~\eqref{eq:bochner_for_derivatives_III} by the above estimate proves the asserted modifications of~\eqref{eq:bochner_for_derivatives} on $M \setminus Z(\Phi)$, and the proof of the lemma is complete.
\end{proof}
\begin{proof}[Proof of Lemma~\ref{lemm:trans_laplacian_diffed}]
As above, in addition to $S$, we let $T_1$ and $T_2$ denote tensors that could be either $\nabla\Phi$ or $\ep F_{\nabla}$. To begin, observe that the right-hand sides of~\eqref{eq:nablaPhi_trans_laplacian_with_lambda} and~\eqref{eq:F_trans_laplacian_with_lambda} consist of terms of the following four types:
\vskip 1mm
\begin{enumerate}
\item[(1)] $\ep^{-2}(a\lambda w - |\Phi|^2)[S, \Phi]$, where $a = 1$ if $S = \ep F_{\nabla}$ while $a = 2$ if $S = \nabla\Phi$.
\vskip 1mm
\item[(2)] $\ep^{-1}[T_1 * T_2, \Phi]$.
\vskip 1mm
\item[(3)] $R * [S, \Phi]$.
\vskip 1mm
\item[(4)] $[\nabla S, \nabla\Phi]$ with a pair of indices contracted, which we write as $\nabla S * \nabla\Phi$.
\end{enumerate}
\vskip 1mm
As in the previous proof we treat these one by one. First, by~\eqref{eq:tensor-bracket-derivative-bound-no-head} we have 
\[
\begin{split}
\Big|\nabla^m\big( \frac{a\lambda w  -|\Phi|^2}{\ep^2}[S, \Phi] \big) - \frac{a\lambda w - |\Phi|^2}{\ep^2}\nabla^m[S, \Phi] \Big|\leqslant\ & C_{m}\sum_{i = 0}^{m-1} |\nabla^{m-i}\big(\frac{a\lambda w - |\Phi|^2}{\ep^2} \big)| |\nabla^{i}[S, \Phi]|\\
\leqslant\ & \frac{C_{m}(\lambda + 1)}{\ep^2} \sum_{i=0}^{m-1}\sum_{j + k = m-i} |\nabla^j\Phi||\nabla^k\Phi| \Theta_i,
\end{split}
\] and thus
\begin{equation}\label{eq:trans_laplacian_diffed_I}
\begin{split}
&\bangle{\nabla^m\big( \frac{a\lambda w - |\Phi|^2}{\ep^2}[S, \Phi] \big), \nabla^m [S, \Phi]}\\
\leqslant\ &  \frac{a\lambda w - |\Phi|^2}{\ep^2}|\nabla^m[S, \Phi]|^2 + \frac{C_{m}(\lambda + 1)}{\ep^2} \sum_{i=0}^{m-1}\sum_{j + k = m-i} |\nabla^j\Phi||\nabla^k\Phi| \Theta_i\Theta_m.
\end{split}
\end{equation}
Next, following the proof of~\eqref{eq:tensor-bracket-derivative-bound-no-ends}, we have
\[
\big| \nabla^m[[T_1, T_2], \Phi]\big| \leqslant  \big|[[\nabla^m T_1, T_2], \Phi]\big| + \big| [[T_1, \nabla^m T_2], \Phi] \big| + C_m \sum_{\substack{i + j + k = m \\ i, j \neq m}}\big| [[\nabla^i T_1, \nabla^j T_2], \nabla^k\Phi] \big|.
\]
Recalling~\eqref{eq:tensor-bracket-norm-with-decomp-2}, we deduce that
\[
\begin{split}
\big| \nabla^m[[T_1, T_2], \Phi]\big|\leqslant\ & (\big| [\nabla^m T_1, T_2] \big| + \big| [T_1, \nabla^m T_2] \big|)\cdot |\Phi| + C_m\sum_{\substack{i + j + k = m\\ i, j \neq m}} (\Psi_i^\perp\Psi_j + \Psi_i \Psi_j^\perp) |\nabla^{k}\Phi| \\
\leqslant\ & 2(\Psi_m^\perp\Psi_0  + \Psi_m\Psi_0^\perp )|\Phi| + 2C_m\sum_{\substack{i + j + k = m\\ i, j \neq m}} \Psi_i^\perp\Psi_j  |\nabla^{k}\Phi| \\
\leqslant\ & 2|\Phi|\Psi_0\Psi_m^{\perp}+ C_m\sum_{i = 0}^{m-1}\sum_{j + k = m-i} \Psi_i^\perp\Psi_j |\nabla^{k}\Phi|,
\end{split}
\]
from which we get
\begin{equation}\label{eq:trans_laplacian_diffed_II}
\begin{split}
\frac{1}{\ep}\big|\nabla^m[T_1 * T_2, \Phi]\big| \big|\nabla^m[S, \Phi]\big| \leqslant\ & \frac{C_n}{\ep}|\Phi| \Psi_0 \Psi_{m}^\perp\Theta_m  + \frac{C_{n, m}}{\ep}\sum_{i = 0}^{m-1} \sum_{j + k = m-i} \Psi^\perp_i \Psi_j |\nabla^k\Phi| \Theta_m.
\end{split}
\end{equation}
The curvature terms in~\eqref{eq:nablaPhi_trans_laplacian_with_lambda} and~\eqref{eq:F_trans_laplacian_with_lambda} are handled in exactly the same way as in the previous proof, and we have
\begin{equation}\label{eq:trans_laplacian_diffed_III}
\big|\bangle{\nabla^m(R*[S, \Phi]),\nabla^m[S, \Phi]}\big| \leqslant C_{n}|R| |\nabla^m[S, \Phi]|^2 + C_{m, n}\sum_{i=0}^{m-1}|\nabla^{m-i}R|\Theta_i\Theta_m.
\end{equation}
Finally, for the terms of the form $\nabla S * \nabla\Phi$, we observe by~\eqref{eq:tensor-bracket-derivative-bound} and~\eqref{eq:tensor-bracket-norm-with-decomp-2} that
\[
\begin{split}
|\nabla^m[\nabla S, \nabla\Phi]| \leqslant\ & C_{m}\sum_{i = 0}^{m}|[\nabla^{i + 1}S, \nabla^{m-i + 1}\Phi]|\leqslant C_{m}\sum_{i=0}^{m}\big(\Psi_{i+1}^\perp \Psi_{m-i} + \Psi_{i+1}\Psi_{m-i}^\perp\big)\\
=\ & C_{m}\Big( \sum_{i=1}^{m+1}\Psi_i^\perp \Psi_{m-i + 1} + \sum_{i = 0}^{m}\Psi_{m-i + 1}\Psi_{i}^\perp \Big)\leqslant 2C_{m}\sum_{i = 0}^{m + 1}\Psi_i^\perp \Psi_{m + 1-i}.
\end{split}
\]
Isolating the terms $i = m+1$ and $i = m$ and applying the estimate~\eqref{eq:Theta_perp_relation} with $m+ 1$ in place of $m$ to bound $\Psi_{m + 1}^{\perp}$, we get 
\[
\begin{split}
|\nabla^m[\nabla S,\nabla\Phi]| \leqslant\ & C_{m}\Psi_{m + 1}^\perp \Psi_0 + C_{m} \Psi_m^\perp \Psi_1  + C_{m}\sum_{i=0}^{m-1}\Psi_i^\perp \Psi_{m-i + 1}\\
\leqslant\ & C_{m}\Psi_0 |\Phi|^{-1} \big( \Theta_{m + 1} + \Psi_m^\perp \Psi_0 + \sum_{i = 0}^{m-1}\Psi_i^\perp \Psi_{m-i} \big)\\
& + C_{m}\Psi_1 \Psi_m^\perp + C_{m}\sum_{i=0}^{m-1}\Psi_i^\perp \Psi_{m-i + 1}\\
\leqslant \ & C_{m}\Theta_{m + 1}|\Phi|^{-1}\Psi_0 + C_{m} |\Phi|^{-1}\Psi_0^2\Psi_m^\perp + C_{m}\Psi_1\Psi_m^\perp\\
& + C_{m}\sum_{i = 0}^{m-1} \Psi_i^\perp\big( |\Phi|^{-1}\Psi_0\Psi_{m-i} + \Psi_{m+1-i} \big),
\end{split}
\] from which we deduce that
\begin{equation}\label{eq:trans_laplacian_diffed_IV}
\begin{split}
\big|\bangle{\nabla^m(\nabla S * \nabla\Phi), \nabla^m[S, \Phi]}\big| \leqslant\ & C_n |\nabla^m [\nabla S, \nabla \Phi]| |\nabla^m[S, \Phi]|\\
\leqslant\ & C_{n, m}\Theta_{m + 1} \cdot (|\Phi|^{-1}\Psi_0\Theta_m) + C_{n, m}\big( |\Phi|^{-1}\Psi_0^2 + \Psi_1 \big)\Psi_m^\perp\Theta_m\\
& + C_{n, m}\sum_{i = 0}^{m-1}\Psi_i^\perp \big( |\Phi|^{-1}\Psi_0\Psi_{m-i} + \Psi_{m+1-i} \big) \Theta_m.
\end{split}
\end{equation}
We conclude the proof of~\eqref{eq:trans_laplacian_diffed} by summing~\eqref{eq:trans_laplacian_diffed_I},~\eqref{eq:trans_laplacian_diffed_II},~\eqref{eq:trans_laplacian_diffed_III} and~\eqref{eq:trans_laplacian_diffed_IV}.
\end{proof}


\bibliographystyle{abbrv}
\bibliography{main}
\end{document}